\documentclass{amsart}
\usepackage{subfiles}
\usepackage[colorlinks]{hyperref}

\usepackage{enumerate}
\usepackage{amssymb,euscript,amsmath, mathrsfs, tensor}
\usepackage{stmaryrd}
\usepackage[dvips]{graphicx}
\usepackage[dvips]{color}
\usepackage[all,cmtip]{xy}
\usepackage{xcolor}
\usepackage[margin=1.2in]{geometry}
\usepackage{tikz}
\usetikzlibrary{cd}

\newcounter{ENUM}

\def\Ker{\mathrm{Ker}}
\def\im{\mathrm{im}}
\def\To#1{\buildrel\hbox{\tiny{$#1$}}\over\longrightarrow}

\title{Local langlands in families:\\ The banal case}
\author{Jean-Fran\c cois Dat}
\address{Jean-Fran\c cois Dat, Institut de Math\'ematiques de Juss\'ieu, 4 Place Jussieu, 75252, Paris.}
\email{jean-francois.dat@imj-prg.fr }
\author{David Helm}
\address{David Helm, Department of Mathematics, Imperial College, London, SW7 2AZ, United Kingdom.}
\email{d.helm@imperial.ac.uk }
\author{Robert Kurinczuk}
\address{Robert Kurinczuk, School of Mathematics and Statistics, University of Sheffield, Sheffield, S3 7RH, United Kingdom.}
\email{robkurinczuk@gmail.com}
\author{Gilbert Moss}
\address{Gil Moss, Department of Mathematics, The University of Utah, Salt Lake City, UT 84112, USA.}
\email{gil.moss@gmail.com}

\def\univ{\mathrm{uni}}
\def\hG{\widehat{\G}}
\def\CC{\mathbb{C}}
\def\QQ{\mathbb{Q}}
\def\ZZ{\mathbb{Z}}
\def\cO{\mathscr{O}}

\def\cP{\mathscr{P}}
\def\SO{\mathrm{SO}}

\def\O{\mathrm{O}}
\def\U{\mathrm{U}}
\def\CP{\mathcal{P}}
\def\cK{\mathcal{K}}

\def\Spec{\mathrm{Spec}}
\def\Spm{\mathrm{Spm}}

\def\Gal{\mathrm{Gal}}
\def\Hom{\mathrm{Hom}}
\def\End{\mathrm{End}}
\def\Res{\mathrm{Res}}
\def\Fr{\mathrm{Fr}}
\def\Rep{\mathrm{Rep}}
\def\ind{\mathrm{ind}}
\def\LLIF{\mathrm{LLIF}}

\def\LL{\mathrm{LL}}
\def\rad{\mathrm{rad}}
\def\ab{\mathrm{ab}}
\def\SL{\mathrm{SL}}

\def\Frac{\mathrm{Frac}}

\def\LG{{\tensor*[^L]\G{}}}

\def\LM{{\tensor*[^L]{\mathrm{M}}{}}}

\def\Ql{\overline{\mathbb{Q}_\ell}}
\def\Qbar{\overline{\mathbb{Q}}}
\def\Zbar{\overline{\mathbb{Z}}}
\def\ur{\mathrm{ur}}

\def\St{\mathrm{St}}
\def\ad{\mathrm{ad}}

\def\I{\mathrm{I}}

\def\V{\mathrm{V}}

\def\A{\mathrm{A}}
\def\B{\mathrm{B}}
\def\C{\mathrm{C}}

\def\F{\mathrm{F}}
\def\E{\mathrm{E}}
\def\G{\mathrm{G}}
\def\H{\mathrm{H}}
\def\I{\mathrm{I}}
\def\J{\mathrm{J}}
\def\K{\mathrm{K}}
\def\L{\mathrm{L}}
\def\M{\mathrm{M}}
\def\O{\mathrm{O}}
\def\P{\mathrm{P}}
\def\R{\mathrm{R}}
\def\S{\mathrm{S}}
\def\T{\mathrm{T}}
\def\U{\mathrm{U}}
\def\Q{\mathrm{Q}}

\def\W{\mathrm{W}}
\def\ad{\mathrm{ad}}
\def\Z{\mathrm{Z}}

\def\Ind{\mathrm{Ind}}
\def\LL{\mathrm{LL}}
\def\Qbar{\overline{\mathbb{Q}}}


\newcommand{\margh}[1]{}
\theoremstyle{definition}

\newtheorem{theorem}{Theorem}[section]
\newtheorem{proposition}[theorem]{Proposition}
\newtheorem{lemma}[theorem]{Lemma}
\newtheorem{corollary}[theorem]{Corollary}
\newtheorem{conjecture}[theorem]{Conjecture}
\newtheorem{definition}[theorem]{Definition}

\newtheorem{remark}[theorem]{Remark}
\numberwithin{equation}{section}

\newtheorem*{propositionpara}{Proposition \ref{parabolicinduction:theorem}}
\newtheorem*{propositionauto}{Proposition \ref{automorphisms:theorem}}
\newtheorem*{propositionint}{Proposition \ref{integralclass}}
\newtheorem*{propositionintchar}{Proposition \ref{propintegralparameters}}
\newtheorem*{propositionext}{Proposition \ref{extendedpacketclass}}

\newtheorem*{theoremBD}{Theorem \ref{banaldecomptheorem}}
\newtheorem*{propositionlocalcusplift}{Proposition \ref{localpropositionlifting}}
\newtheorem*{theoremALLIF}{Theorem \ref{axiomaticLLIF}}

\def\Qbar{{\overline{\mathbb{Q}}}}
\def\Fl{\overline{\mathbb{F}_\ell}}
\def\Zl{\overline{\mathbb{Z}_\ell}}
\def\stab{\mathrm{stab}}
\def\GL{\mathrm{GL}}
\def\N{\mathrm{N}}
\def\Kbar{{\overline{\mathrm{K}}}}
\def\univ{\mathrm{univ}}

\def\univ{\mathrm{univ}}

\begin{document}

\maketitle

\begin{abstract}
We state a conjecture, \emph{local Langlands in families}, connecting the centre of the category of smooth representations on~$\mathbb{Z}[\sqrt{q}^{-1}]$-modules of a quasi-split~$p$-adic group~$\G$ (where~$q$ is the cardinality of the residue field of the underlying local field), the ring of global functions on the stack of Langlands parameters for~$\G$ over~$\mathbb{Z}[\sqrt{q}^{-1}]$, and the endomorphisms of a Gelfand--Graev representation for~$\G$.  
For a class of classical~$p$-adic groups (symplectic, unitary, or {odd special orthogonal groups}), we prove this conjecture after inverting an integer depending only on~$\G$. Along the way, we show that the local Langlands correspondence for classical~$p$-adic groups (1) preserves integrality of~$\ell$-adic representations; (2) satisfies an ``extended'' (generic) packet conjecture; (3) is compatible with parabolic induction up to semisimplification (generalizing a result of Moussaoui), hence induces a \emph{semisimple} local Langlands correspondence; and (4) the semisimple correspondence is compatible with automorphisms of~$\mathbb{C}$ fixing~$\sqrt{q}$.
\end{abstract}

\setcounter{tocdepth}{1}
\tableofcontents

\section{Introduction}
\subsection{LLIF for general linear groups}
Let~$\F$ be a~$p$-adic field and~$\ell\neq p$ prime.  In \cite{EH14}, the second author and Emerton conjectured an interpolation of the (generic) local Langlands correspondence for~$\GL_n(\F)$ across families of~$\ell$-adic representations.  The existence of this correspondence was reduced by the second author in \cite{HelmDuke,curtis} to showing that there is a (unique) isomorphism
\[\LLIF:(\mathfrak{R}_{\GL_n,\Zl})^{\GL_n}\rightarrow \mathrm{End}_{\Zl[\GL_n(\F)]}(\ind_{\U_n(\F)}^{\GL_n(\F)}(\psi)),\]
compatible with the local Langlands correspondence, where~$(\mathfrak{R}_{\GL_n,\Zl})^{\GL_n}$ is the ring of global functions on the stack of Langlands parameters for~$\GL_n$ over~$\Zl$, and~$\ind_{\U_n(\F)}^{\GL_n(\F)}(\psi)$ is a Gelfand--Graev representation.  This statement was finally proven using an inductive argument in a pair of papers: by the second author, and the second and fourth authors \cite{curtis, HM}, thus establishing ``local Langlands in families'' for~$\GL_n$.  This paper is the second in a series of articles devoted to generalizing this picture to a general connected reductive~$p$-adic group.  

\subsection{Reductive groups and LLC}
Let~$\G$ be the~$\F$-points of a connected reductive group~$\mathbf{G}$ defined over~$\F$.  Let~$\widehat{\G}$ denote the dual group of~$\mathbf{G}$ considered as a~$\mathbb{Z}[1/p]$-group scheme.  The~$\F$-rational structure on~$\mathbf{G}$ induces a finite action of the Weil group~$\W_\F$ of~$\F$ on~$\widehat{\G}$ and we set~$\LG=\widehat{\G}\rtimes \W_\F$, the \emph{Langlands dual group} of~$\G$.

The local Langlands correspondence for~$\G$ is expected to take the following form: there is a unique (finite-one) map~$\mathcal{LL}_{\G}$ from the set~$\Pi_{\mathbb{C}}(\G)$ of isomorphism classes of irreducible smooth representations of~$\G$ to the set~$\Phi_{\SL_2,\mathbb{C}}(\G)$ of~$\widehat{\G}(\mathbb{C})$-conjugacy classes of ``$\SL_2$-parameters'' $\rho:\W_\F\times \SL_2(\mathbb{C})\rightarrow \LG(\mathbb{C})$ for~$\G$ satisfying a list of natural desiderata (see Section \ref{LLcorresp} for a precise statement).  

It is known for~$\GL_n(\F)$ and classical~$p$-adic groups (see Section \ref{LLforclassgroups} for a list of references).  In this work, we take as an input the local Langlands correspondence for~$\G$ together with some of its expected properties.  We will verify these properties are satisfied for ``classical~$p$-adic groups'' (which, for us, means symplectic, unitary, and odd special orthogonal groups), either by referencing the literature or giving proofs, so those who wish to stand on more stable ground can assume for this introduction that~$\G$ is classical.

\subsection{The work of Bernstein and Haines}
There is a natural semisimplification map on the set of~$\Phi_{\SL_2,\mathbb{C}}(\G)$ (see Section \ref{semisimpsection}), which associates to a Langlands parameter $\rho:\W_\F\times \SL_2(\mathbb{C})\rightarrow \LG(\mathbb{C})$ a \emph{semisimple parameter} (or \emph{infinitesimal character})~$\rho_{ss}:\W_\F\rightarrow \LG(\mathbb{C})$.   Write~$\Phi_{ss,\mathbb{C}}(\G)$ for the set of~$\widehat{\G}(\mathbb{C})$-conjugacy classes of semisimple parameters for~$\G$.

{The analogue of semisimplification on~$\Pi_{\mathbb{C}}(\G)$ should factor through the supercuspidal support map}.  Compatibility of the local Langlands correspondence with parabolic induction implies that the local Langlands correspondence for~$\G$ induces a semisimple correspondence
\[\LL:\{\text{supercuspidal supports}\}/\G\text{-conjugacy}\rightarrow \Phi_{ss,\mathbb{C}}(\G).\]

In \cite{BD}, Bernstein described the centre~$\mathfrak{Z}_{\mathbb{C}}(\G)$ of the category of all smooth~$\mathbb{C}$-representations of~$\G$, identifying the~$\mathbb{C}$-points of the centre with the set~$\{\text{supercuspidal supports}\}/\G(\F)$-conjugacy.  This naturally gives the left hand side of~$\LL$ the structure of an ind-affine variety over~$\mathbb{C}$.

In~\cite{Haines}, Haines introduced the structure of an ind-affine variety on~$\Phi_{ss,\mathbb{C}}(\G)$, so that (under the local Langlands correspondence for~$\G$ and some desiderata for it)~$\LL$ is induced by a morphism of~$\mathbb{C}$-algebras:~$\mathscr{O}_{\mathrm{Haines}}\rightarrow \mathfrak{Z}_{\G,\mathbb{C}}$.

\subsection{Moduli of Langlands parameters}
In our first paper \cite{DHKM}, we introduced a stack of Langlands parameters over~$\mathbb{Z}[1/p]$ generalizing a construction of the second author for~$\GL_n$ \cite{curtis}.   
The construction depends on some auxilary choices -- the choice of a discretization of the tame quotient of the Weil group -- and it is not currently known if the stack over~$\mathbb{Z}[1/p]$ is independent of these choices.  
However, its base change to~$\mathbb{Z}_\ell$, and its ring of global functions~$(\mathfrak{R}_{\LG})^{\widehat{\G}}$ are independent of these choices.   In general, the ring~$(\mathfrak{R}_{\LG})^{\widehat{\G}}$ is quite complicated and admits no simple description (in particular, it is far from being normal).  However, its base change to~$\mathbb{C}$ has a simpler structure, and we show in \cite[6.3]{DHKM} that~$(\mathfrak{R}_{\LG,\mathbb{C}})^{\widehat{\G}}\simeq \mathscr{O}_{\mathrm{Haines}}$.   Thus, under local Langlands for~$\G$ (and some desiderata for it), we obtain a map
\[(\mathfrak{R}_{\LG,\mathbb{C}})^{\widehat{\G}}\rightarrow \mathfrak{Z}_{\G,\mathbb{C}},\]
compatible with local Langlands.  

\subsection{An isomorphism of Bushnell and Henniart}
When~$\G$ is quasi-split, Bushnell and Henniart proved that the action of~$\mathfrak{Z}_{\G,\mathbb{C}}$ on a Gelfand-Graev representation~$\ind_{\U}^{\G}(\psi)$ for~$\G$ induces a canonical isomorphism
\[\mathfrak{Z}_{\G,\mathbb{C}}^{\psi\text{-}\mathrm{gen}}\rightarrow \End_{\mathbb{C}[\G]}(\ind_{\U}^{\G}(\psi)),\]
from the~$\psi$-generic factor~$\mathfrak{Z}_{\G,\mathbb{C}}^{\psi\text{-}\mathrm{gen}}$ of the Bernstein centre to the endomorphism algebra of the Gelfand--Graev representation.  For general linear groups~$\mathfrak{Z}_{\GL_n(\F),\mathbb{C}}^{\psi\text{-}\mathrm{gen}}=\mathfrak{Z}_{\GL_n(\F),\mathbb{C}}$. Under expected properties of the local Langlands correspondence (listed in Section \ref{LLcompletesection}, and verified for classical groups in Section \ref{classical} -- in particular Conjecture \ref{genericpacket} is added to Haines' desiderata), we then obtain that the composition~$(\mathfrak{R}_{\LG,\mathbb{C}})^{\widehat{\G}}\rightarrow  \End_{\mathbb{C}[\G]}(\ind_{\U}^{\G}(\psi))$ is an isomorphism of~$\mathbb{C}$-algebras. 

\subsection{Integral models of Gelfand--Graev representations}
We write~$\mathfrak{Z}_{\G,\mathbb{Z}[\sqrt{q}^{-1}]}$ for the centre of the category of all smooth representations of~$\G$ on~$\mathbb{Z}[\sqrt{q}^{-1}]$-modules.  

If~$\G$ is~$\F$-quasi-split we can associate Whittaker data to~$\G$, and their compactly induced representations --- \emph{Gelfand-Graev representations}--- satisfy many remarkable properties.  Gelfand--Graev representations are naturally defined with coefficients in~$\mathbb{Z}[1/p,\mu_{p^\infty}]$-algebras, and to state our conjecture in broadest generality we need to consider integral models of their endomorphism algebras over~$\mathbb{Z}[\sqrt{q}^{-1}]$.  In Proposition \ref{ringofdefprop}, we show that the full Gelfand-Graev representation~$\ind_{\U}^{\G}(\psi)$ descends to a $\cO_\K[1/p][\G]$-module~$W_{\U,\psi}$ for a small Galois extension~$\K/\mathbb{Q}$ contained in~$\mathbb{Q}(\mu_{p^3})$ (and hence the endomorphism algebra also descends to this ring).  In fact, for their endomorphism algebras we can do better: writing~$W_{\U,\psi,n}$ for the depth~$n$ component of~$W_{\U,\psi}$, we show there exists a natural finite type, flat~${\mathbb Z}[1/p]$-algebra $\mathfrak{E}_{\G,n}$, and natural isomorphisms 
\[\mathfrak{E}_{\G,n} \otimes_{{\mathbb Z}[1/p]} \R \simeq \End_{\R[\G]}(W_{\U',\psi',n} \otimes_{\cO_{\K}[1/p]} \R)\]
for each $\cO_K[1/p]$-algebra $\R$ and each Whittaker datum $(\U',\psi')$ of $\G$. Let~$\mathfrak{Z}_{\G,\mathbb{Z}[1/p]}^{\mathrm{ad}}$ be the subring of~$\mathfrak{Z}_{\G,\mathbb{Z}[1/p]}$ which is invariant under the natural action of~$(\mathbf{G}/\mathbf{Z})(\F)$. Then there is a natural map:
\[\mathfrak{Z}_{\G,\mathbb{Z}[1/p]}^{\ad} \rightarrow \mathfrak{E}_{\G,n}\]
that, after base change to $\mathbb{Z}[1/p,\mu_{p^\infty}]$ and the identifications above, coincides with the canonical map arising from the action of the Bernstein centre.  Set~$\mathfrak{E}_{\G}=\prod_n\mathfrak{E}_{\G,n}$.

\subsection{The main conjecture}

\begin{conjecture}\label{mainconj}
There exists a unique morphism
\[\LLIF:(\mathfrak{R}_{\LG,\mathbb{Z}[\sqrt{q}^{-1}]})^{\widehat{\G}}\rightarrow \mathfrak{Z}_{\G,\mathbb{Z}[\sqrt{q}^{-1}]}\]
 interpolating the semisimple local Langlands correspondence for~$\G$. 
 Moreover, the image of~$\LLIF$ is contained in~$\mathfrak{Z}_{\G,\mathbb{Z}[\sqrt{q}^{-1}]}^{\ad}$ and, if~$\G$ is quasi-split, then composing~$\LLIF$ with the natural map to~$\mathfrak{E}_{\G,\mathbb{Z}[\sqrt{q}^{-1}]}$ defines an isomorphism~$(\mathfrak{R}_{\LG,\mathbb{Z}[\sqrt{q}^{-1}]})^{\widehat{\G}}\xrightarrow{\sim}  \mathfrak{E}_{\G,\mathbb{Z}[\sqrt{q}^{-1}]}$.
\end{conjecture}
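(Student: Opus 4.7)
My plan is to pass from a $\mathbb{C}$-valued version of the map, assembled from already-known ingredients, to the desired $\mathbb{Z}[\sqrt{q}^{-1}]$-valued morphism by a block-by-block descent argument controlled by the four ``desiderata'' whose verification takes up the bulk of the paper: integrality of LLC, the extended generic packet conjecture, compatibility of LLC with parabolic induction up to semisimplification, and automorphism-equivariance with respect to $\mathrm{Aut}(\mathbb{C}/\mathbb{Q}(\sqrt{q}))$.

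Over $\mathbb{C}$, all of the necessary pieces have been recalled in the introduction: the isomorphism $(\mathfrak{R}_{\LG,\mathbb{C}})^{\widehat{\G}} \simeq \mathscr{O}_{\mathrm{Haines}}$ of \cite[6.3]{DHKM}, Haines' ind-affine structure on $\Phi_{ss,\mathbb{C}}(\G)$, the semisimple LLC $\LL$, and the Bushnell--Henniart isomorphism $\mathfrak{Z}_{\G,\mathbb{C}}^{\psi\text{-}\mathrm{gen}} \risom \End_{\mathbb{C}[\G]}(\ind_{\U}^{\G}(\psi))$. Composing these yields a $\mathbb{C}$-valued map interpolating $\LL$; the extended generic packet conjecture matches its generic quotient with the Gelfand--Graev endomorphism algebra, establishing the conjecture over $\mathbb{C}$.

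To descend to $\mathbb{Z}[\sqrt{q}^{-1}]$, I would proceed one Bernstein block at a time. For each prime $\ell \nmid 2p$ coprime to the integer that the banal hypothesis allows us to invert, automorphism-equivariance (desideratum (4)) cuts the $\mathbb{C}$-map down to a $\Ql$-valued map on each block, while integrality of LLC (desideratum (1)) refines this to a $\Zl$-valued map; parabolic compatibility up to semisimplification (desideratum (3)) ensures these descended maps glue consistently across blocks. Gluing the $\Zl$-valued maps for all admissible $\ell$ using the flatness of source and target yields $\LLIF$, and uniqueness follows because $\mathfrak{Z}_{\G,\mathbb{Z}[\sqrt{q}^{-1}]}$ is torsion-free and the $\mathbb{C}$-valued map is determined by the interpolation condition. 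Adjoint-invariance is automatic from the source side, since inner twists by $(\mathbf{G}/\mathbf{Z})(\F)$ fix semisimple parameters up to $\widehat{\G}$-conjugacy.

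The hardest step, and the origin of the extra integer inverted in the main theorem, is the final isomorphism onto $\mathfrak{E}_{\G,\mathbb{Z}[\sqrt{q}^{-1}]}$. One must identify the integral Hecke algebras that describe the Gelfand--Graev endomorphism ring block-by-block with the stratification of $(\mathfrak{R}_{\LG,\mathbb{Z}[\sqrt{q}^{-1}]})^{\widehat{\G}}$ arising from the Bernstein decomposition; in the non-banal regime these Hecke algebras can acquire $\ell$-torsion invisible to the flat parameter ring, which forces us to invert the primes dividing the relevant parahoric-level pro-orders. In the banal regime both sides are finite-type flat $\mathbb{Z}[\sqrt{q}^{-1}]$-algebras, equal after base change to $\mathbb{C}$, and flatness plus reducedness of the Bernstein blocks promote this to an integral isomorphism.
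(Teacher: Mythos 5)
The statement you are proving is Conjecture \ref{mainconj}, which asserts existence, uniqueness, adjoint-invariance, and (in the quasi-split case) the isomorphism onto $\mathfrak{E}_{\G,\mathbb{Z}[\sqrt{q}^{-1}]}$ \emph{over all of} $\mathbb{Z}[\sqrt{q}^{-1}]$, i.e.\ without inverting any further primes. The paper does not prove this statement: it remains a conjecture, and what is actually established (Theorem \ref{axiomaticLLIF}) is the analogue after inverting $\N_{\G}$, resp.\ $\M_{\G}=\mathrm{lcm}(\N_\G,\N_{\LG})$. Your proposal follows essentially the same route as that theorem -- interpolate block-by-block over $\Zl$ using integrality of parameters and the universal property of the moduli space, descend to $\mathbb{Z}_\ell[\sqrt q]$ by Galois/automorphism equivariance, glue over all admissible $\ell$ by a torsion-freeness argument, and compare with the Gelfand--Graev endomorphism algebra via a Bushnell--Henniart type isomorphism -- but every one of these steps, as you set it up, is only available for banal $\ell$ (and, for the last step, for $\ell\nmid\N_{\LG}$). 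Concretely: the block decomposition of $\Rep_{\Zl}(\G)$ indexed by $\Qbar$-inertial classes and the identification of each factor of $\mathfrak{Z}_{\G,\Zl}$ with $(\Zl[\M/\M^\circ]^{\H_{(\M,\rho)}})^{\W_{(\M,\rho)}}$ (Theorem \ref{banaldecomptheorem}) fail at non-banal $\ell$ (mod-$\ell$ supercuspidal support is not even unique up to conjugacy in general, cf.\ the $\mathrm{Sp}_8$ example); the surjectivity of $\mathfrak{Z}_{\G,\Zl}\to\mathfrak{E}_{\G,\Zl,n}$ via Theorem \ref{BHBanalcase} is only known banally; and the idempotents of $(\mathfrak{R}_{\LG,\Ql})^{\widehat\G}$ need not lie in the integral subring when $\ell\mid\N_{\LG}$, so your ``stratification of the parameter ring by Bernstein components'' has no integral meaning there.

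Your own closing paragraph in effect concedes this: you invert ``the primes dividing the relevant parahoric-level pro-orders'' to make the final isomorphism work, which is exactly the concession the paper makes in Theorem \ref{axiomaticLLIF}. So what you have sketched is (a slightly less detailed version of) the paper's proof of the banal-case theorem, not a proof of Conjecture \ref{mainconj}; the genuinely open content of the conjecture -- existence and uniqueness of the interpolating map at non-banal primes, and the integral comparison with $\mathfrak{E}_{\G}$ there -- is not addressed by your argument, and no mechanism is proposed for handling $\mathfrak{Z}_{\G,\Zl}$ or $\End_{\Zl[\G]}(W_{\U,\psi,n})$ when $\ell\mid\M_{\G}$. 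Two smaller points: adjoint-invariance is not ``automatic from the source side''; it requires compatibility of the semisimple correspondence with inner automorphisms (property $(\mathscr{C}8)$), which for classical groups rests on Proposition \ref{isomorphisms:theorem} via Plancherel measures, and your gluing step needs the precise statement of Lemma \ref{localglobalcoeffs} (torsion-freeness of both source and target plus flatness of $\Zl$) rather than flatness of the source alone.
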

In particular, when~$\G$ is quasi-split, we expect that~$\LLIF$ is injective.  

\subsection{The main theorem}
We axiomatize some expected properties of the semisimple local Langlands correspondence in Definition \ref{semisimplecorrespdef} (which we verify for classical groups) and in Theorem \ref{axiomaticLLIF} we prove Conjecture \ref{mainconj} after inverting an integer depending only on~$\G$.

Let~$\N_\G$ be the product of all primes that divide the pro-order of a compact open subgroup of~$\G$ (that is, the product of the primes which are ``non-banal'' for~$\G$).  Let~$\N_{\LG}$ denote the product of all primes which either divide the integer~$\N_{\widehat{\G}}$ or are non-$\LG$-banal primes, both introduced in~\cite[Sections 4, 5.4]{DHKM}.  Finally let~$\M_{\G}$ denote the lowest common multiple of~$\N_{\G}$ and~$\N_{\LG}$.  

Note that, if~$\G$ is unramified with no exceptional factor, then $\M_{\G}=\N_{\G}$ by \cite[Corollary 5.29 and Remark 6.3]{DHKM}.

\begin{theoremALLIF}
Suppose~$\mathscr{C}=(\mathscr{C}_\M)$ is a semisimple correspondence for~$\G$ over~$\Qbar$ as in Definition \ref{semisimplecorrespdef}.  
\begin{enumerate}
\item There exists a unique morphism
\[\mathscr{C}\mathrm{IF}_\G:(\mathfrak{R}_{\LG,\mathbb{Z}[\sqrt{q}^{-1},1/\N_{\G}]})^{\widehat{\G}}\rightarrow \mathfrak{Z}_{\G,\mathbb{Z}[\sqrt{q}^{-1},1/\N_{\G}]}\]
interpolating~$\mathscr{C}$.   
\item   The image of~$\mathscr{C}\mathrm{IF}_\G$ 
is contained in the subrings~$\mathfrak{Z}^{\St}_{\G,\mathbb{Z}[\sqrt{q}^{-1},1/\N_{\G}]}$ of Definition \ref{stablecentredef}, and
~$\mathfrak{Z}_{\G,\mathbb{Z}[\sqrt{q}^{-1},1/\N_{\G}]}^{\ad}$.
\item Suppose further~$\G$ is~$\F$-quasi-split.  
Then, the composition with the natural map~$ \mathfrak{Z}_{\G,\mathbb{Z}[\sqrt{q}^{-1},1/\M_{\G}]}^{\ad}\rightarrow\mathfrak{E}_{\mathbb{Z}[\sqrt{q}^{-1},1/\M_{\G}]}(\G)$, 
\[(\mathfrak{R}_{\LG,\mathbb{Z}[\sqrt{q}^{-1},1/\M_{\G}]})^{\widehat{\G}}\xrightarrow{\mathscr{C}\mathrm{IF}_\G}  \mathfrak{Z}^{\ad}_{\G,\mathbb{Z}[\sqrt{q}^{-1},1/\M_{\G}]}\rightarrow \mathfrak{E}_{\mathbb{Z}[\sqrt{q}^{-1},1/\M_{\G}]}(\G)\]
is an isomorphism, and these maps induce isomorphisms
\[(\mathfrak{R}_{\LG,\mathbb{Z}[\sqrt{q}^{-1},1/\M_{\G}]})^{\widehat{\G}}\simeq \mathfrak{Z}^{\St}_{\G,\mathbb{Z}[\sqrt{q}^{-1},1/\M_{\G}]}\simeq \mathfrak{E}_{\mathbb{Z}[\sqrt{q}^{-1},1/\M_{\G}]}(\G).\]
\end{enumerate}
\end{theoremALLIF}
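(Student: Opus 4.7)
The approach has three stages: construct $\mathscr{C}\mathrm{IF}_\G$ on $\Qbar$-points directly from the data of $\mathscr{C}$, descend to $\mathbb{Z}[\sqrt{q}^{-1},1/\N_\G]$ via integrality and Galois descent, and finally compare the two rings block-by-block in the quasi-split case.

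For part (1), by Bernstein's theorem and \cite[6.3]{DHKM} the $\Qbar$-points of source and target of the desired map are respectively $\widehat{\G}$-orbits of semisimple parameters and $\G$-orbits of supercuspidal supports. A semisimple correspondence is, by Definition \ref{semisimplecorrespdef}, precisely a compatible family of algebraic morphisms between these, yielding a ring map over $\Qbar$ which is unique since $\mathfrak{Z}_{\G,\Qbar}$ is reduced. To descend to $\Zbar[\sqrt{q}^{-1},1/\N_\G]$ we invoke the integrality Propositions \ref{integralclass} and \ref{propintegralparameters}: supercuspidal supports are $\Zbar[\sqrt{q}^{-1},1/\N_\G]$-integral exactly when the corresponding semisimple parameters are. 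Proposition \ref{automorphisms:theorem} then provides $\mathrm{Aut}(\Qbar/\mathbb{Q}(\sqrt{q}))$-equivariance, giving the final descent to $\mathbb{Z}[\sqrt{q}^{-1},1/\N_\G]$ by Galois descent.

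Part (2) is then formal: $\mathscr{C}\mathrm{IF}_\G$ factors through $\mathfrak{Z}^{\St}$ because the semisimple parameter is constant on a stable packet, and through $\mathfrak{Z}^{\ad}$ because the $(\mathbf{G}/\mathbf{Z})(\F)$-action on the Bernstein centre corresponds under $\mathscr{C}$ to $\widehat{\G}$-conjugation, which is already trivialised on $(\mathfrak{R}_{\LG})^{\widehat{\G}}$.

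For part (3), the case over $\CC$ reduces to the Bushnell--Henniart isomorphism together with the identification $(\mathfrak{R}_{\LG,\CC})^{\widehat{\G}} \simeq \mathscr{O}_{\mathrm{Haines}}$. To upgrade to $\mathbb{Z}[\sqrt{q}^{-1},1/\M_\G]$, we work depth-by-depth: both sides are finite type and flat (by \cite[\S 5]{DHKM} and Proposition \ref{ringofdefprop} respectively) and coincide on the generic fibre by part (1). The remaining task is to check that the induced map is an isomorphism on every fibre. Inverting $\N_\G$ ensures that Gelfand--Graev remains a projective generator of each block after mod-$\ell$ reduction, so the Bushnell--Henniart identification persists; inverting the additional primes cut out by $\N_{\LG}$ secures flatness and fibre-wise reducedness of $(\mathfrak{R}_{\LG})^{\widehat{\G}}$. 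A Nakayama argument then yields the isomorphism over $\mathbb{Z}[\sqrt{q}^{-1},1/\M_\G]$, and the three-way chain with $\mathfrak{Z}^{\St}$ follows from part (2). The main obstacle is precisely this integral form of Bushnell--Henniart: the usual complex proof rests on uniqueness of Whittaker models, which must be tracked carefully after reduction mod $\ell$, and simultaneous control of both sides of the map only becomes automatic once all primes in $\M_\G$ are inverted.
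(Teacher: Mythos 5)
There are two genuine gaps, both at the heart of the theorem rather than at its periphery.

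First, in part (1) your construction of $\mathscr{C}\mathrm{IF}_\G$ is not actually carried out. A semisimple correspondence is only a map of \emph{sets} from supercuspidal supports to conjugacy classes of semisimple parameters; it is not ``by definition a compatible family of algebraic morphisms'', and the whole content of (1) is to show that it is induced by a ring map. Even over $\Qbar$ this requires an argument: on each Bernstein component one must realise $\mathscr{C}$ as the universal unramified twist of a single parameter, i.e.\ use the Kottwitz isomorphism, the point $z_{\univ,\M}$ and the universal property of the cocycle scheme (Theorem \ref{Modulitheorem}(2), or $\underline{\Z}^1(\W_\F/\I_\F^e,\widehat{\G})$ over $\Qbar$), together with $(\mathscr{C}3)$ and $(\mathscr{C}6)$ to land in $(\R[\M/\M^\circ]^{\H_{(\M,\rho)}})^{\W_{(\M,\rho)}}$. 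More seriously, your integral descent is not valid: knowing that supercuspidal supports are integral exactly when their parameters are (Proposition \ref{propintegralparameters}, plus $(\mathscr{C}2)$, $(\mathscr{C}4)$ --- note that Proposition \ref{integralclass} and Proposition \ref{automorphisms:theorem} are statements about classical groups, not about an abstract $\mathscr{C}$) only matches up \emph{points}; it does not make the ring map carry $(\mathfrak{R}_{\LG,\Zbar[1/\N_\G]})^{\widehat{\G}}$ into $\mathfrak{Z}_{\G,\Zbar[1/\N_\G]}$. What makes this work in the paper is the banal integral Bernstein decomposition (Theorem \ref{banaldecomptheorem}), which identifies each block of the integral centre with $(\Zbar[1/\N_\G][\M/\M^\circ]^{\H_{(\M,\rho)}})^{\W_{(\M,\rho)}}$ and is the sole reason $\N_\G$ is inverted; your proposal never explains where $\N_\G$ enters part (1) at all. (The Galois descent to $\mathbb{Z}[\sqrt{q}^{-1},1/\N_\G]$ via $(\mathscr{C}7)$ and Corollary \ref{GaloisBC} is fine; the paper in fact works prime by prime over $\Zl[\sqrt q]$ and over $\mathbb{Q}(\sqrt q)$ and glues with Lemma \ref{localglobalcoeffs}, but working directly over the global ring would also be admissible if the above ingredients were supplied.)

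Second, in part (3) the passage from the generic fibre to the integral statement does not close. ``Finite type, flat, isomorphism on the generic fibre, then check all fibres and apply Nakayama'' is not a proof scheme here: checking the mod-$\ell$ fibre is precisely the hard content and you give no mechanism for it, and without more input the implication fails --- a non-maximal order inside $\Zl\times\Zl$ is finite, flat and an isomorphism after inverting $\ell$ without being the whole ring. The paper's mechanism is different: the composed map $(\mathfrak{R}_{\LG,\Zl})^{\widehat{\G}}\to\mathfrak{E}_{\G,\Zl,n}$ is finite; it is bijective on $\Ql$-points of the components it hits, and this uses $(\mathscr{C}5)$ (unique generic supercuspidal support in each semisimple packet) together with the integral Bushnell--Henniart isomorphism in the banal case (Theorem \ref{BHBanalcase}) --- neither of which appears in your outline; and, after inverting $\N_{\LG}$, the relevant idempotents are integral and the corresponding components of $(\mathfrak{R}_{\LG,\Zl})^{\widehat{\G}}$ are products of \emph{normal} flat integral domains (Propositions 6.2 and 6.7 of \cite{DHKM}). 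It is normality of the source, not fibrewise reducedness plus Nakayama, that upgrades a finite map which is an isomorphism generically to an isomorphism over $\Zl$, and hence (after descent and gluing) over $\mathbb{Z}[\sqrt{q}^{-1},1/\M_\G]$.
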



\subsection{Bernstein's decomposition in the banal setting}
For a Levi subgroup~$\M$ of~$\G$ let~$\M^\circ$ denote the subgroup of~$\M$ generated by all compact open subgroups.  Write~$\mathfrak{B}_{\Qbar}(\G)$ for the set of inertial equivalence classes of supercuspidal supports of $\Qbar$-representations of~$\G$.  For~$[\M,\rho]_{\G}\in\mathfrak{B}_{\Qbar}(\G)$ we write $P_{(\M,\rho)}$ for the lattice we introduce in \eqref{Integralprog}.  It is defined over the ring of integers in a number field, and~$P_{(\M,\rho)}\otimes\Qbar$ is a finitely generated projective generator of Bernstein for the direct factor subcategory of~$\Rep_{\Qbar}(\G)$ associated to~$[\M,\rho]_{\G}$.

The first step in our proof of the conjecture in the banal case is to obtain the following description of~$\mathfrak{Z}_{\G,\overline{\mathbb{Z}}[1/\N_{\G}]}$ analogous to Bernstein's description of~$\mathfrak{Z}_{\G,\Qbar}$:  
\begin{theoremBD}
The category~$\Rep_{\overline{\mathbb{Z}}[1/\N_\G]}(\G)$ decomposes as
\[\Rep_{\overline{\mathbb{Z}}[1/\N_\G]}(\G)=\prod_{[\M,\rho]_\G\in\mathfrak{B}_{\Qbar}(\G)}
  \Rep_{\overline{\mathbb{Z}}[1/\N_\G]}(\G)_{[\M,\rho]_{\G}},\]
where~$\Rep_{\overline{\mathbb{Z}}[1/\N_\G]}(\G)_{[\M,\rho]_{\G}}$ is the direct factor subcategory generated by the finitely generated projective~$P_{(\M,\rho)}\otimes\overline{\mathbb{Z}}[1/\N_{\G}]$.  Moreover, the choice
of~$P_{(\M,\rho)}$ identifies the
centre of~$\Rep_{\overline{\mathbb{Z}}[1/\N_\G]}(\G)_{[\M,\rho]_{\G}}$ with\[(\overline{\mathbb{Z}}[1/\N_\G][\M/\M^\circ]^{\H_{(\M,\rho)}})^{\W_{(\M,\rho)}}.\]
\end{theoremBD}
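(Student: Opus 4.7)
The plan is to bootstrap Bernstein's decomposition from $\Qbar$ to an integral decomposition over $\overline{\mathbb{Z}}[1/\N_\G]$, exploiting that the banal hypothesis eliminates the torsion phenomena that normally obstruct Bernstein-type splittings on $\Zbar$. As a preliminary I would show that each $P_{(\M,\rho)}\otimes\overline{\mathbb{Z}}[1/\N_\G]$ is finitely generated and projective in $\Rep_{\overline{\mathbb{Z}}[1/\N_\G]}(\G)$: since $P_{(\M,\rho)}$ arises by parabolic induction from a cuspidal integral lattice on $\M$, which in turn is obtained by compact induction from a compact-mod-center open subgroup, projectivity after inverting $\N_\G$ reduces to the fact that the pro-orders of the relevant compact open subgroups are units in the coefficient ring, so that all averaging idempotents exist integrally.

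Next I would establish that these projectives cut out the asserted decomposition. For each prime $\ell\nmid p\N_\G$, the banal Bernstein theory (Vign\'eras over $\Fl$, lifted by Dat to $\Zl$) furnishes a block decomposition of $\Rep_{\Zl}(\G)$ indexed by $\Zl$-inertial classes. Summing the central idempotents attached to the $\Zl$-classes inside a given $\Qbar$-inertial class $[\M,\rho]_\G$ produces a coarser central idempotent; the banal hypothesis guarantees this sum has no $\ell$-denominators, so it comes from a central idempotent of $\Rep_{\overline{\mathbb{Z}}[1/\N_\G]}(\G)$. Gluing across all $\ell\nmid p\N_\G$ yields a full system of central idempotents indexed by $\mathfrak{B}_{\Qbar}(\G)$, realizing the decomposition. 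The identification of the $[\M,\rho]_\G$-summand with the subcategory generated by $P_{(\M,\rho)}\otimes\overline{\mathbb{Z}}[1/\N_\G]$ then follows because this projective is already a generator after any banal reduction and its formation commutes with such reductions.

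For the centre description, I would compute the endomorphism ring of $P_{(\M,\rho)}\otimes\overline{\mathbb{Z}}[1/\N_\G]$. Over $\Qbar$, Bernstein presents this as a twisted crossed product whose torus part is $\Qbar[\M/\M^\circ]$ and whose finite group part is built from $\W_{(\M,\rho)}$ and $\H_{(\M,\rho)}$; in the banal range, all structure constants, Bernstein cocycles, and normalized intertwining operators already lie in $\overline{\mathbb{Z}}[1/\N_\G]$, the only potential denominators being $\N_\G$-torsion arising from the normalizations. The same presentation therefore holds integrally, and the centre is computed in the standard way as $\W_{(\M,\rho)}$-invariants of the $\H_{(\M,\rho)}$-invariant subalgebra $\overline{\mathbb{Z}}[1/\N_\G][\M/\M^\circ]^{\H_{(\M,\rho)}}$.

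The hard part will be the second step: verifying that the sums of $\Zl$-Bernstein idempotents across a fixed $\Qbar$-inertial class glue consistently across all banal residue characteristics, and that idempotents from distinct $\Qbar$-inertial classes remain mutually orthogonal mod every $\ell\nmid\N_\G$. This is precisely where the definition of $\N_\G$ via pro-orders of compact opens is essential, and should follow either from a type-theoretic construction of $\ell$-integral progenerators (\`a la Stevens--S\'echerre--Kurinczuk) for each Bernstein component, or, less explicitly, from banal second adjointness of parabolic induction combined with the fact that cuspidal support is locally constant on $\Spec\overline{\mathbb{Z}}[1/\N_\G]$ in the banal range.
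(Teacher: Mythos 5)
The central gap is in your second step: you take as an input, for every banal prime $\ell$, a block decomposition of $\Rep_{\Zl}(\G)$ indexed by inertial classes, attributed to Vign\'eras and Dat. For a general connected reductive group no such integral decomposition is available in the literature: the banal results of Vign\'eras give that cuspidal equals supercuspidal over $\Fl$, uniqueness of supercuspidal support, and $\Z$-projectivity of cuspidals, but integral Bernstein-type block decompositions are only known in special cases ($\GL_n$, level zero, or type-theoretic settings). The statement you want to quote is essentially the theorem under discussion after base change to $\Zl$ (via Proposition \ref{Corembed}), so the argument is circular at its key point. Relatedly, your preliminary projectivity argument presents the cuspidal lattice as compactly induced from a compact-mod-centre open subgroup; exhaustion of such constructions is a type-theoretic hypothesis that is not known in general and is deliberately avoided here (cf.\ Remark \ref{remarkFSdependence}). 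The paper instead obtains projectivity of the cuspidal lattice restricted to $\G^{\circ}$ from Meyer's c-projectivity together with an averaging argument (Proposition \ref{banalcuspidalZproj}), and projectivity of $P_{(\M,\rho)}$ from second adjointness and finiteness of parabolic induction (Lemma \ref{lemmacentreproj2}).

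Two further steps in your outline are under-justified. First, exhaustion: you never show that every simple object of $\Rep_{\overline{\mathbb{Z}}[1/\N_\G]}(\G)$, in particular one killed by a banal prime $\ell$, receives a nonzero map from some $P_{(\M,\rho)}\otimes\overline{\mathbb{Z}}[1/\N_\G]$; in the paper this rests on the banal lifting theorem for cuspidal $\Fl$-representations (Propositions \ref{localpropositionlifting} and \ref{mainliftingreductionprop}), an ingredient absent from your sketch. Second, your centre computation asserts that Bernstein's cocycles and normalized intertwining operators ``already lie in $\overline{\mathbb{Z}}[1/\N_\G]$''; this integrality of structure constants is a substantive claim requiring proof. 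The paper sidesteps it entirely: one shows $(\R[\M/\M^\circ]^{\H_{(\M,\rho)}})^{\W_{(\M,\rho)}}$ embeds in $\Z(\End_{\R[\G]}(P_{(\M,\rho)}))$, that the latter is finite over the former (via the geometric lemma) and injects into the corresponding algebra over the fraction field, and then concludes equality because the source is a normal domain, hence integrally closed. If you wish to keep a prime-by-prime gluing strategy, you would first have to prove the $\Zl$-statements themselves, which for general $\G$ requires precisely the lifting and projectivity inputs above.
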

Here,~$\H_{(\M,\rho)}$ and~$\W_{(\M,\rho)}$ denote finite groups introduced in Bernstein's decomposition (summarised in Theorem \ref{BernsteinDeligne}).

This description has long been expected (see, for example, the discussion in \cite[Section 6]{datnu}).  On our way to establishing it we prove the following pleasing lifting result for cuspidals in banal characteristic (the proof of which uses second-adjointness):
\begin{propositionlocalcusplift}
Let~$\ell$ be a banal prime for~$\G$.  Then the reduction modulo~$\ell$ of any irreducible integral cuspidal~$\Ql$-representation of~$\G$ is irreducible and cuspidal, and conversely all irreducible cuspidal~$\Fl$-representations of~$\G$ lift.
\end{propositionlocalcusplift}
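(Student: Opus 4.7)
My plan has two parts—showing that reduction of an irreducible integral cuspidal $\Ql$-representation remains irreducible and cuspidal, and that every irreducible cuspidal $\Fl$-representation lifts to $\Zl$—and both parts rest on Bernstein's second adjunction $r_{\bar{\P}}^\G \dashv i_\P^\G$ (valid over $\Zl$ and $\Fl$) together with the banality of $\ell$, which makes the pro-order of every compact open subgroup of $\G$ a unit in $\Fl$.

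For the first direction, I will let $\pi$ be an irreducible integral cuspidal $\Ql$-representation with lattice $\pi^\circ \subset \pi$ and reduction $\overline{\pi^\circ} := \pi^\circ \otimes_{\Zl} \Fl$. To get cuspidality of $\overline{\pi^\circ}$, I will use the matrix-coefficient characterization: integral matrix coefficients of $\pi^\circ$ (taken against $\Zl$-valued functionals) are compactly supported modulo centre because $\pi$ is cuspidal, so reducing mod $\ell$ gives matrix coefficients of $\overline{\pi^\circ}$ with the same support property, and Casselman's criterion applies. For irreducibility, Schur's lemma gives $\End_{\Ql[\G]}(\pi) = \Ql$, hence $\End_{\Zl[\G]}(\pi^\circ) = \Zl$ and $\End_{\Fl[\G]}(\overline{\pi^\circ}) = \Fl$. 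In banal characteristic the cuspidal part of $\Rep_{\Fl}(\G)$ is semisimple, so $\overline{\pi^\circ}$ is a finite-length semisimple $\Fl[\G]$-module with scalar endomorphism ring, which forces it to be irreducible.

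For the lifting direction, I will start from an irreducible cuspidal $\bar\pi$ and consider its projective envelope $P_{\bar\pi}$ in its Bernstein component of $\Rep_{\Fl}(\G)$. Second adjointness confines $P_{\bar\pi}$ to the cuspidal block: any nonzero map from a parabolic induction $i_\P(\sigma)$ into a cuspidal representation would, by $r_{\bar\P} \dashv i_\P$, factor through a Jacquet module which vanishes. Because compact open subgroups of $\G$ have pro-order prime to $\ell$, I plan to lift $P_{\bar\pi}$ to a finitely generated projective $\tilde{P}$ in $\Rep_{\Zl}(\G)$; then $\tilde{P} \otimes_{\Zl} \Ql$ is projective in $\Rep_{\Ql}(\G)$, and any simple quotient of it is an integral cuspidal $\Ql$-representation whose reduction yields $\bar\pi$.

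The hard part will be the projective-lifting step in the second direction: one must exhibit the cuspidal block of $\Rep_{\Fl}(\G)$ as the reduction of a block of $\Rep_{\Zl}(\G)$ so that projective covers lift without obstruction. Second adjointness is the crucial tool here because it gives the clean vanishing criterion that separates the cuspidal block from parabolically induced blocks, and the banality hypothesis should ensure that the deformation of projectives from $\Fl$ to $\Zl$ is unobstructed; the actual construction will likely go via a compactly induced presentation of $P_{\bar\pi}$ from a well-chosen open subgroup whose pro-order is a unit in $\Zl$.
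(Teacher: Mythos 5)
There are genuine gaps in both halves. In the reduction direction, your irreducibility argument hinges on the chain $\End_{\Ql[\G]}(\pi)=\Ql \Rightarrow \End_{\Zl[\G]}(\pi^\circ)=\Zl \Rightarrow \End_{\Fl[\G]}(\overline{\pi^\circ})=\Fl$. The first implication is fine, but the second is unjustified: reduction modulo $\ell$ does not commute with $\End$ for a lattice that is not projective as a $\Zl[\G]$-module, and the natural map $\End_{\Zl[\G]}(\pi^\circ)\otimes\Fl\to\End_{\Fl[\G]}(\overline{\pi^\circ})$ need not be surjective. A priori the reduction could be a direct sum of two non-isomorphic cuspidals (consistent with Brauer--Nesbitt and with the vanishing of extensions), in which case its endomorphism ring is $\Fl\times\Fl$; ruling this out is exactly the content of the proposition, so asserting $\End_{\Fl[\G]}(\overline{\pi^\circ})=\Fl$ essentially assumes what is to be proved. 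The paper's proof avoids this by a lattice-chain argument: starting from $L_0$ and a simple quotient $\overline{\pi}$, it sets $L_{n+1}=L_n+\varpi^{-1}\ker(L_n\to\overline{\pi})$, uses banality only through the fact that cuspidal $\Fl$-representations \emph{with a fixed central character} admit no nontrivial extensions (your blanket claim that ``the cuspidal part of $\Rep_{\Fl}(\G)$ is semisimple'' is too strong, e.g.\ unramified self-extensions exist when the centre is non-compact), and then observes that the colimit $L_\infty\subset\pi$ has trivial divisible part because $\pi$ is irreducible and $L_\infty$ surjects onto $\overline{\pi}$; hence the chain stabilises and $L_r/\varpi L_r=\overline{\pi}$.

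In the lifting direction, the step you yourself flag as hard --- lifting the projective envelope $P_{\bar\pi}$ from $\Rep_{\Fl}(\G)$ to a finitely generated projective over $\Zl$ --- is precisely what is missing, and it is not supplied by second adjointness or banality alone; moreover, isolating a ``cuspidal block'' of $\Rep_{\Fl}(\G)$ and producing projective envelopes there presupposes decomposition results that, in this paper, are established \emph{after} (and using) this proposition, so the route risks circularity as well as incompleteness. The paper takes a different and lighter path: it first shows (Proposition \ref{supercuspidalsappear}, valid without any banality hypothesis) that every simple supercuspidal $\Fl$-representation occurs as a subquotient of $r_\ell(\pi)$ for some integral cuspidal $\Ql$-representation $\pi$, by covering $\overline{\pi}$ with a finitely generated projective, embedding that projective into a product of parabolic inductions of integral cuspidal lattices, and using the uniqueness-of-cuspidal-support result of Dat--Dudas; since in banal characteristic cuspidal $\Fl$-representations are supercuspidal (Vign\'eras' $\Z$-projectivity theorem), this applies, and then the first half forces $r_\ell(\pi)=\overline{\pi}$. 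If you want to keep your strategy, you would need to actually construct the lift of $P_{\bar\pi}$ (e.g.\ via a compact-induction presentation and idempotent lifting) and to prove, not assert, that endomorphism rings and simple quotients behave as claimed under reduction; as written, both of the load-bearing steps are missing.
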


\subsection{An application to finiteness}
While it lies outside the main thrust of this paper, as explained in the introduction of \cite{DHKMfiniteness}, the description of~$ \mathfrak{Z}_{\G,\overline{\mathbb{Z}}[1/\N_{\G}]}$ above is the final ingredient in our proof of finiteness of Hecke algebras over~$\mathbb{Z}[1/p]$, and we obtain:
\begin{theorem}\label{noetherianness}
For any Noetherian~$\mathbb{Z}[1/p]$-algebra~$\R$, and any compact open subgroup~$\H$, the algebra~$\R[\H\backslash\G/\H]$
is a finitely generated module over its centre, which is a finitely generated~$\R$-algebra.
\end{theorem}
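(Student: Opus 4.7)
My plan is to reduce the statement to a finiteness assertion on individual Bernstein blocks and then invoke Theorem \ref{banaldecomptheorem} in the banal range, together with the companion results of \cite{DHKMfiniteness} at the non-banal primes. The starting point is the identification $\R[\H\backslash\G/\H]\simeq\End_{\R[\G]}(\ind_\H^\G\R)^{\op}$. By Bernstein's finiteness theorem, only finitely many inertial classes $[\M,\rho]_\G\in\mathfrak{B}_{\Qbar}(\G)$ support a nonzero $\H$-fixed vector --- essentially because $\rho$ is forced to be trivial on a compact subgroup of $\M$ determined by $\H\cap\M$, leaving only finitely many possibilities. After base change to a Noetherian $\mathbb{Z}[1/p]$-algebra $\R$, the Bernstein-type decomposition splits $\R[\H\backslash\G/\H]$ as a finite product of block Hecke algebras, and it suffices to establish both assertions block by block.

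For a banal block $[\M,\rho]_\G$ (one on which $\N_\G$ is invertible), Theorem \ref{banaldecomptheorem} identifies the block category with modules over $\End(P_{(\M,\rho)})$, whose center is $(\overline{\ZZ}[1/\N_\G][\M/\M^\circ]^{\H_{(\M,\rho)}})^{\W_{(\M,\rho)}}$. The group $\M/\M^\circ$ is a finitely generated abelian group, so $\overline{\ZZ}[1/\N_\G][\M/\M^\circ]$ is a finite-type algebra; finite generation of invariants under the finite groups $\H_{(\M,\rho)}$ and $\W_{(\M,\rho)}$ then follows from Hilbert--Noether. Finite generation of $\End(P_{(\M,\rho)})$ as a module over this center is a consequence of the affine-Hecke-algebra shape of Bernstein blocks (Morris, Roche, etc.). For the non-banal blocks, the integral-types constructions of \cite{DHKMfiniteness} supply a parallel description, producing finitely generated projective generators of the $\ell$-adic blocks whose endomorphism algebras are finite modules over finite-type center algebras, using Bushnell--Kutzko type theory over $\overline{\mathbb{F}_\ell}$ together with integrality. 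Finally, passage from $\overline{\ZZ}[1/p]$ to an arbitrary Noetherian $\mathbb{Z}[1/p]$-algebra $\R$ is by faithfully flat descent, which preserves both finite-type algebras and finite-module properties.

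The main obstacle is the non-banal case: Theorem \ref{banaldecomptheorem} genuinely fails without inverting $\N_\G$ and so cannot directly describe the block centers there; the replacement via integral types is the technical heart of \cite{DHKMfiniteness}. Once that is in place, the banal case handled by Theorem \ref{banaldecomptheorem} is essentially formal --- the center of a block is visibly a finite-type invariant ring on a lattice group algebra --- and assembling the banal and non-banal contributions yields the uniform $\mathbb{Z}[1/p]$-integral statement of Theorem \ref{noetherianness}.
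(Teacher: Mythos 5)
You have not actually located the paper's proof, because there is none here: the paper obtains Theorem \ref{noetherianness} by feeding the description of $\mathfrak{Z}_{\G,\overline{\mathbb{Z}}[1/\N_\G]}$ (Theorem \ref{banaldecomptheorem}) into the argument of \cite{DHKMfiniteness}, so the substance of any proof lies in how the finitely many non-banal primes are handled, and this is exactly where your proposal has a genuine gap. You assert that over an arbitrary Noetherian $\mathbb{Z}[1/p]$-algebra $\R$ the Hecke algebra $\R[\H\backslash\G/\H]$ splits into finitely many block algebras indexed by $\mathfrak{B}_{\Qbar}(\G)$, and that at non-banal primes \cite{DHKMfiniteness} supplies ``a parallel description'' via Bushnell--Kutzko type theory over $\Fl$. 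Neither claim is available. The decomposition of Theorem \ref{banaldecomptheorem} genuinely requires inverting $\N_\G$ (for non-banal $\ell$ even the supercuspidal support map fails to be well defined, cf.\ the $\Sp_8$ example of \cite{DatDudas} quoted in the paper), so there is no block-by-block reduction over $\mathbb{Z}[1/p]$, nor over $\mathbb{Z}_\ell$ for non-banal $\ell$; and type theory (Bushnell--Kutzko, Morris, Roche) is not known for a general connected reductive group --- avoiding it is precisely the point of Remark \ref{remarkFSdependence}. What \cite{DHKMfiniteness} actually uses at the problematic primes (and to get second adjointness, Theorem \ref{secadj}, in general) is the Fargues--Scholze morphism \cite{FarguesScholze} from the ring of functions on the moduli space of Langlands parameters of \cite{DHKM} --- a finite type $\mathbb{Z}[1/p]$-algebra --- into the Bernstein centre, together with finiteness of the Hecke algebra over (the image of) this subalgebra; no mod-$\ell$ block decomposition or affine Hecke algebra presentation enters, and your sketch gives no substitute for this step.

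Two further points. Even in the banal range, module-finiteness of the block endomorphism algebra over its centre is not quoted from an ``affine-Hecke-algebra shape'' of the block (again a type-theoretic input); in the paper it is extracted from second adjointness, finiteness of parabolic induction and the Geometric Lemma (see the proof of Lemma \ref{lemmacentreproj2}). Finally, your last step is not a descent: an arbitrary Noetherian $\mathbb{Z}[1/p]$-algebra $\R$ is not an algebra over $\overline{\mathbb{Z}}[1/p]$, so one cannot ``descend'' from $\overline{\mathbb{Z}}[1/p]$ to $\R$ by faithful flatness. The correct movement is to descend the centre computation from $\overline{\mathbb{Z}}[1/\N_\G]$ to the bottom and then base change up to $\R$, and for this one needs compatibility of the centre with (flat, Noetherian) base change --- Proposition \ref{Corembed} and Lemma \ref{centrelemma} --- plus a separate argument covering the finitely many non-banal residue characteristics; as it stands your final paragraph silently assumes both.
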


\subsection{Integrality of Langlands parameters}\label{Integralitysubsection}
Our approach to Theorem \ref{axiomaticLLIF} requires us to transfer integrality over a semisimple correspondence; we do this using criteria for integrality.  Recall that an irreducible~$\Ql$-representation of~$\G$ is integral if and only if its supercuspidal support is integral by \cite{DHKMfiniteness}, and a supercuspidal {~$\Ql$-}representation is integral if and only if its central character is integral by \cite[II 4.12]{Vig96}.  We provide a complete analogue of this characterization for Langlands parameters:
\begin{propositionintchar}
Let~$\phi_{\ell}:\W_{\F}\rightarrow \LG(\Ql)$ be a continuous $L$-homomorphism.
\begin{enumerate}
\item Suppose $\phi_{\ell}$ is discrete and Frobenius semisimple.  Then~$\phi_{\ell}$ is integral if and only if its central character is integral.
%
%
\item In general, the following are equivalent:
\begin{enumerate}
\item $\phi_{\ell}$ is integral;
  \item the
 Frobenius-semisimplification of $\phi_{\ell}$ is
  integral.
\item  the semisimplification of $\phi_{\ell}$ is
  integral. 
\item \label{iiIntegralProp3} the $\Ql$-point of $\underline{\Z}^1(\W_\F, \widehat{\G})\sslash \widehat{\G}$ corresponding to $\phi_{\ell}$ factors through $\Spec (\Zl)$.
\end{enumerate}
\end{enumerate}
\end{propositionintchar}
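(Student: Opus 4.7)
The plan is to deduce part (1) from the equivalences of part (2), so I would begin by establishing the chain (a)$\Rightarrow$(b)$\Rightarrow$(c)$\Rightarrow$(d) in (2). The key tool is the integrality of Jordan decomposition: for a split reductive group scheme $\H$ over $\Zl$, an element $g\in \H(\Zl)$ has its semisimple and unipotent parts $g_s,g_u$ lying in $\H(\Zl)$ as well, via Hensel applied to the idempotent decomposition of the subalgebra generated by $g$ in a faithful representation. After a $\hG(\Ql)$-conjugation arranging $\phi_\ell(\W_\F)\subseteq \LG(\Zl)$, applying this to $\phi_\ell(\Fr)$ yields integrality of the Frobenius-semisimplification, and applying it to a topological generator of the pro-$\ell$ part of tame inertia yields integrality of the full semisimplification. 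The implication (c)$\Rightarrow$(d) is tautological: the image point in $\underline{\Z}^1(\W_\F,\hG)\sslash \hG$ is determined by the closed $\hG$-orbit of $\phi_\ell$, which is the orbit of its semisimplification.

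For the reverse direction, (d)$\Rightarrow$(c) uses the construction of \cite{DHKM}: closed $\hG$-orbits on $\underline{\Z}^1$ over $\Ql$ correspond exactly to semisimple parameters, and a $\Zl$-point of the GIT quotient of an affine $\Zl$-scheme by a reductive group lying over a closed $\Ql$-orbit lifts to a $\Zl$-point of the scheme. The implications (c)$\Rightarrow$(b) and (b)$\Rightarrow$(a) are then handled uniformly: $\phi_\ell$ differs from its (Frobenius-)semisimplification $\phi'_\ell$ by unipotent elements---the unipotent part of $\phi_\ell(\Fr)$ and the tame monodromy operator---which commute with the image of $\phi'_\ell$ and hence lie in the reductive centralizer $\M$ in $\hG$ of that image. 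Any unipotent in $\M(\Ql)$ is $\M(\Ql)$-conjugate to an integral one by Bruhat--Tits applied to $\M$, and such a conjugation preserves integrality of $\phi'_\ell$, giving (a).

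For part (1), suppose $\phi_\ell$ is discrete and Frobenius-semisimple with integral central character. Since $\I_\F$ is compact, $\phi_\ell(\I_\F)$ is bounded in $\LG(\Ql)$, so after conjugation we may assume $\phi_\ell(\I_\F)\subseteq \LG(\Zl)$. Letting $g=\phi_\ell(\Fr)$ (semisimple by hypothesis) and $\M$ the connected centralizer of $\phi_\ell(\I_\F)$ in $\hG$ (reductive, by the structural results on tame parameters of \cite{DHKM}), some power $g^n$ lies in $\M$; discreteness of $\phi_\ell$ forces $g^n$ to be central in $\M$ modulo $\Z(\hG)^{\W_\F}$, since otherwise its connected centralizer in $\M$ would yield a proper Levi through which $\phi_\ell$ factors. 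The integrality hypothesis on the central character, viewed through the quotient $\hG\to \hG/\hG_{\mathrm{der}}$, then forces this central element to be integral, and a Bruhat--Tits argument inside $\M$ produces an integral conjugate of $g$ stabilising the chosen integral structure on $\phi_\ell(\I_\F)$. The main obstacle I expect is the GIT-lifting step (d)$\Rightarrow$(c), which requires care over the non-Noetherian base $\Zl$; after that, the discreteness argument for (1) reduces neatly to the central-character hypothesis via the Levi structure of the centralizer of the inertia image.
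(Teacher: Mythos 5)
Your opening lemma is false as stated, and it carries the weight of your implications (a)$\Rightarrow$(b)$\Rightarrow$(c). For $g\in\H(\Zl)$ the semisimple part $g_s$ need \emph{not} lie in $\H(\Zl)$: take $g=\left(\begin{smallmatrix}1&1&0\\0&1&1\\0&0&1+\ell\end{smallmatrix}\right)\in\GL_3(\Zl)$; its eigenvalues $1$ and $1+\ell$ are congruent mod $\ell$, the Hensel/idempotent argument does not apply, and a direct computation gives $g_s e_3=(\ell^{-1},1,1+\ell)$, so $g_s\notin\GL_3(\Zl)$. What is true (and what the paper uses) is the weaker statement that unipotent elements of $\LG(\Ql)$ are \emph{compact}, so $g_s=g\,g_u^{-1}$ is a compact element. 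But even granting compactness pointwise, your step (a)$\Rightarrow$(c) has a genuine gap: the semisimplification removes both the unipotent part of Frobenius and the tame monodromy, and the monodromy unipotent does \emph{not} commute with the image of Frobenius ($\mathrm{Ad}(\phi_\ell(\Fr))$ scales it by $q^{\pm1}$), so knowing that the individual images are compact does not bound the subgroup they generate (two compact unipotents in $\SL_2(\Ql)$ can generate an unbounded group). The paper avoids exactly this by factoring $\phi_\ell=\psi\circ(\mathrm{id}\times\mathrm{Sp}_2)$ and $\phi_{\ell,ss}=\psi\circ(\mathrm{id}\times 1_2)$: the $\SL_2$-factor commutes with $\psi|_{\W_\F\times\{1\}}$ and both $\mathrm{Sp}_2(\W_\F)$ and $1_2(\W_\F)$ have compact closure, so both integrality statements are equivalent to compactness of the single element $\psi(\Fr,1)$. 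Similarly, your (d)$\Rightarrow$(c) rests on an unproved general GIT lifting lemma over $\Zl$, which you yourself flag as the main obstacle; the paper instead argues directly: pass to a finite extension $\F'$ killing the inertia image of $\phi_{\ell,ss}$, conjugate $\phi_{\ell,ss}(\Fr')$ into $\widehat{\T}(\Ql)$, and use finiteness of $\widehat{\T}\to\widehat{\G}\sslash\widehat{\G}$ to conclude $\phi_{\ell,ss}(\Fr')\in\widehat{\T}(\Zl)$.

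Part (1) also has problems. You take $\M$ to be the connected centralizer of $\phi_\ell(\I_\F)$, but $\phi_\ell$ is only assumed Frobenius-semisimple, so $\phi_\ell(\I_\F)$ may contain nontrivial unipotent monodromy; its centralizer need not be reductive and the claim that some power of $\phi_\ell(\Fr)$ lands in it and is ``central modulo $\Z(\widehat{\G})^{\W_\F}$ by discreteness'' is not justified as written, nor is the concluding ``Bruhat--Tits argument inside $\M$''. The paper works instead with $\psi$, whose inertia image $\psi(\I_\F\times\{1\})$ is \emph{finite}, reduces via the isogeny $\G_{\mathrm{sc}}\times\G_{\mathrm{rad}}\to\G$ (using that the central-character condition is exactly the $\G_{\mathrm{rad}}$-part, via local Langlands for tori preserving integrality) to the case $\widehat{\G}$ adjoint, and then shows a power of $\psi(\Fr,1)$ centralizes the whole image of $\psi$, hence is trivial by discreteness and adjointness, so $\psi(\Fr,1)$ has finite order. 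If you want to salvage your outline, you need (i) to replace the false integral Jordan decomposition by compactness statements and then control the group generated, e.g.\ by the commuting factorization above, (ii) to either prove or cite a precise integral lifting statement for (d)$\Rightarrow$(c), and (iii) to rerun part (1) with the finite inertia image of $\psi$ and an explicit reduction to the adjoint case.
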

These characterizations of integrality show that if the local Langlands correspondence for~$\G$ exists and satisfies some natural desiderata then it preserves integrality, cf.~Remark \ref{remarkpreservesintegrality}.

\subsection{Proof of the main theorem}
For banal primes~$\ell$, our description of the centre~$\mathfrak{Z}_{\G,\Zl}$, together with compatibility of the local Langlands correspondence with twisting by unramified characters, parabolic induction, and with central characters, then allows us to show that there exists a unique morphism~$(\mathfrak{R}_{\LG,\Zl})^{\widehat{\G}}\rightarrow \mathfrak{Z}_{\G,\Zl}$, interpolating the {semisimple correspondence} for~$\G$.  

If, moreover,~$\G$ is quasi-split, then fixing a Whittaker datum over~$\Zl$, we show that an analogue of Bushnell--Henniart's isomorphism holds over~$\Zl$ (for~$\ell$ banal), and together with the description of the connected components of~$(\mathfrak{R}_{\LG,\Zl})^{\widehat{\G}}$ in \cite{DHKM} (here we need to also assume~$\ell$ does not divide~$\N_{\LG}$) we obtain the second statement of the conjecture over~$\Zl$ for~$\ell$ not dividing~$\M_{\G}=\mathrm{lcm}(\N_\G,\N_{\LG})$.  Galois equivariance of the semisimple local Langlands correspondence allows us to descend these maps to~$\mathbb{Z}_{\ell}[\sqrt{q}]$, and considering these maps together, for all~$\ell$ not dividing~$\M_{\G}$, in the simpler case over~$\mathbb{Q}(\sqrt{q})$ gives us our result over~$\mathbb{Z}[\sqrt{q}^{-1},1/\M_{\G}]$. 
 %

\subsection{Classical groups}
For our (conditionless) results for classical $p$-adic groups, we need to verify that the {(semisimple)} local Langlands correspondence and the desiderata for it we use in our proof are known in these cases.  This is the subject of Section \ref{classical}. 

Let~$\G$ be a classical~$p$-adic group.  Firstly we show that the local Langlands correspondence is compatible with parabolic induction, generalizing a result of Moussaoui \cite{Moussaoui} in the split case by a different method, in the following sense:

\begin{propositionpara}
Let~$\P$ a parabolic subgroup of~$\G$ with Levi decomposition~$\P=\M \N$, ~$\rho$ be an irreducible representation of~$\M$, and~$\pi$ be an irreducible subquotient of~$i^{\G}_{\P}(\rho)$.  Then, letting~$\iota_{\M,\G}:\LM(\mathbb{C})\hookrightarrow \LG(\mathbb{C})$ denote an embedding dual to~$\M\hookrightarrow \G$,~the semisimple parameters~$\iota_{\M,\G}\circ(\mathcal{LL}_{\M}(\rho))_{ss}$ and~$(\mathcal{LL}_{\G}(\pi))_{ss}$ are~conjugate in~$\widehat{\G}(\mathbb{C})$. 
\end{propositionpara}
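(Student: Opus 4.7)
The plan is to reduce the proposition to the case where $\rho$ is supercuspidal and $\pi$ is the Langlands quotient of $i^\G_\P(\rho)$, where compatibility with parabolic induction is built into the Arthur--Mok construction of the LLC for classical groups, and then to extend to arbitrary irreducible subquotients using that the semisimple $L$-parameter depends only on the supercuspidal support. The argument relies on three features of the Arthur--Mok LLC: the Langlands classification of irreducibles via Langlands quotients of standard modules, the endoscopic characterization of tempered packets, and compatibility with the Aubert--Zelevinsky involution.

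First I would use the Langlands classification to reduce to the case where $\rho$ is supercuspidal. Writing a general irreducible $\rho$ as the Langlands quotient of a standard module $i^\M_\Q(\tau\otimes\chi)$, with $\tau$ tempered on a Levi $\L\subset \M$ and $\chi$ in the open positive Weyl chamber, the defining compatibility of the LLC for $\M$ with the Langlands classification yields that $(\mathcal{LL}_\M(\rho))_{ss}$ is conjugate to $\iota_{\L,\M}\circ (\mathcal{LL}_\L(\tau\otimes\chi))_{ss}$. By transitivity of parabolic induction, any irreducible subquotient of $i^\G_\P(\rho)$ is also a subquotient of $i^\G_{\L \N'}(\tau\otimes\chi)$ for a suitable parabolic, so induction on Levi rank reduces the problem to $\rho$ supercuspidal. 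In this case, the Langlands quotient $\pi_0$ of $i^\G_\P(\rho)$ satisfies $\mathcal{LL}_\G(\pi_0)=\iota_{\M,\G}\circ\mathcal{LL}_\M(\rho)$ directly by the Langlands classification for $\G$, proving the claim for $\pi_0$.

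The main obstacle is handling irreducible subquotients $\pi$ of $i^\G_\P(\rho)$ other than $\pi_0$. Since all such $\pi$ share the supercuspidal support $(\M,\rho)$ up to $\G$-conjugacy, it suffices to show that, for classical groups, the semisimple $L$-parameter of an irreducible representation is determined by its supercuspidal support. Rather than proceeding character-theoretically as Moussaoui does in the split case, one can combine the Knapp--Stein R-group decomposition of tempered parabolic inductions (where all constituents form a single tempered $L$-packet, hence share the full $L$-parameter) with compatibility of the Arthur--Mok LLC under the Aubert--Zelevinsky involution, which preserves both the supercuspidal support and, by a theorem of M\oe glin, the semisimple $L$-parameter. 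Propagating from tempered to non-tempered via Langlands classification then yields the claim uniformly in the symplectic, unitary, and odd orthogonal cases. The key technical input, and the most delicate step, is the compatibility of Aubert--Zelevinsky with the Arthur--Mok correspondence at the level of semisimple parameters in the quasi-split setting (notably for unitary groups, which lie outside Moussaoui's framework), where one must ensure that M\oe glin's combinatorial description of $L$-packets extends intact from the split to the quasi-split case.
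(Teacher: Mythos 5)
Your route is genuinely different from the paper's (which avoids Mœglin-type explicit constructions altogether and instead proves the statement via multiplicativity of the Plancherel measure, the normalized-intertwining-relation form of Langlands' conjecture on the Plancherel measure, a semisimple converse theorem, and the Gross--Prasad--Rallis transfer of conjugacy from $\GL_m(\mathbb{C})$ back to $\widehat{\G}(\mathbb{C})$), but as sketched it has a gap at exactly the point where the real difficulty sits. Your reductions via the Langlands classification and via the structure of tempered inductions are fine in spirit: they reduce the statement to showing that a \emph{discrete series} representation $\delta$ with supercuspidal support $(\M,\rho)$ satisfies $(\mathcal{LL}(\delta))_{ss}\simeq \iota_{\M,\G}\circ(\mathcal{LL}_{\M}(\rho))_{ss}$. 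But the tools you then invoke do not reach this case. The Knapp--Stein/$R$-group input only says that all constituents of a unitary induction of a discrete series share one tempered parameter; it says nothing about how that parameter restricts in terms of the cuspidal support of the inducing discrete series. And compatibility of the Aubert--Zelevinsky involution with the correspondence cannot bridge the gap either: the involution preserves the supercuspidal support, so it relates two representations \emph{with the same support} and can never transfer information from $\rho$ to $\delta$; moreover, the assertion that it ``preserves the semisimple parameter'' is essentially equivalent to the statement you are trying to prove (both say the infinitesimal character is an invariant of the supercuspidal support), so using it here is circular. What is actually needed at this step is Mœglin's explicit determination of the cuspidal supports of discrete-series packets (Jordan blocks and reducibility points), which is precisely the input Moussaoui uses in the split case and which your sketch does not correctly invoke; extending that machinery to the quasi-split unitary and non-quasi-split odd orthogonal cases is the substance of the problem, not a routine check.

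Two smaller inaccuracies: for a supercuspidal $\rho$ whose unramified twist parameter is not strictly dominant (in particular for unitary $\rho$), $i^{\G}_{\P}(\rho)$ is not a standard module, so there is no ``Langlands quotient $\pi_0$'' and the claim that its parameter is $\iota_{\M,\G}\circ\mathcal{LL}_{\M}(\rho)$ ``directly by the Langlands classification'' does not apply as stated (in the unitary case one should instead quote the tempered-packet description of the constituents). Also note that even after showing the semisimple parameter is constant on a supercuspidal-support class, you must still identify the common value with $\iota_{\M,\G}\circ(\mathcal{LL}_{\M}(\rho))_{ss}$; this again comes down to the discrete-series case above. By contrast, the paper's argument needs none of this internal structure theory: it only uses the normalized intertwining relation (to get Proposition \ref{Propplanchereldecomp}), multiplicativity of $\mu$ for arbitrary subquotients (Appendices \ref{Appendix:intertwining}--\ref{apdx:j_functions}), and the converse theorem of Proposition \ref{semisimpleconverse}, which is why it extends uniformly to all the classical groups considered.
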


We prove this using properties of the Plancherel measure -- in particular, its compatibility with parabolic induction, and its interpretation in terms of gamma factors of Langlands parameters (also known as ``Langlands' conjecture on the Plancherel measure'') -- together with a semisimple converse theorem (Proposition \ref{semisimpleconverse}).  This approach is inspired by recent approaches of Gan--Savin \cite{gan_savin}, and Gan--Ichino \cite{GanIchino14,GanIchino16} to the local theta correspondence.  

For this we have to extend the proof of Gan--Ichino\cite[B.5]{GanIchino14} of compatibility of the Plancherel measure with subrepresentations of parabolic inductions to compatibility with sub\emph{quotients} of parabolic inductions.  We follow the pattern of their proof, but have added some more details (cf., Remark \ref{GanIchinodetailsremark}) in the form of Appendices \ref{Appendix:intertwining} and \ref{apdx:j_functions} to establish some basic properties of the Plancherel measure following the algebraic treatments of the Plancherel measure of \cite{Waldspurger} and \cite{datnu}.

Similarly, using compatibilities of the Plancherel measure and gamma factors with field automorphisms, we show that~$(~)_{ss}\circ\mathcal{LL}_\G$ is compatible with field automorphisms fixing~$\sqrt{q}$:

\begin{propositionauto}
Let~$\pi$ be an irreducible representation of~$\G$, and~ $\sigma:\mathbb{C}\to \mathbb{C}$ be an automorphism of fields fixing~$\sqrt{q}$.   Then~$\mathcal{LL}_\G(\pi^\sigma)_{ss}$ is conjugate to $(\mathcal{LL}_{\G}(\pi)^{\sigma})_{ss}$ in~$\widehat{\G}(\mathbb{C})$.
\end{propositionauto}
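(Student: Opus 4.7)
My plan is to follow the same pattern as Proposition \ref{parabolicinduction:theorem}, using the semisimple converse theorem (Proposition \ref{semisimpleconverse}) to reduce the statement to a $\sigma$-equivariance of gamma factors on both sides of the Langlands correspondence. By the converse theorem, the semisimple parameter $\mathcal{LL}_\G(\pi)_{ss}$ is characterized, up to $\widehat{\G}(\mathbb{C})$-conjugacy, by the family of twisted gamma factors $\gamma(s,\pi \times \tau,\psi)$ as $\tau$ ranges over irreducible generic representations of the pertinent general linear groups. Noting that semisimplification commutes with $\sigma$ (it just forgets the $\SL_2$ part), the statement to prove reads $\mathcal{LL}_\G(\pi^\sigma)_{ss} \sim \phi^\sigma$ for $\phi := \mathcal{LL}_\G(\pi)_{ss}$, and the converse theorem reduces this to an identity of twisted gamma factors of $\pi^\sigma$ with those of $\phi^\sigma$.

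The heart of the argument is the $\sigma$-equivariance of the two flavors of gamma factors. On the representation side, Langlands' conjecture on the Plancherel measure (known for classical groups and already invoked in Proposition \ref{parabolicinduction:theorem}) provides
\[
\gamma(s,\pi \times \tau,\psi) = \gamma\bigl(s,\mathcal{LL}_\G(\pi)_{ss} \otimes \mathcal{LL}_{\GL_k}(\tau)_{ss},\psi\bigr).
\]
Both sides are rational functions in $q^{-s}$ with algebraic coefficients, and I would verify
\[
\gamma(s,\pi^\sigma \times \tau^\sigma,\psi^\sigma) = \sigma\bigl(\gamma(s,\pi \times \tau,\psi)\bigr)
\]
on the representation side (by tracing through the algebraic construction of normalized intertwining operators developed in Appendices \ref{Appendix:intertwining} and \ref{apdx:j_functions}, whose normalizations involve $\sqrt{q}$), and the analogous statement on the parameter side (since Artin $L$- and $\varepsilon$-factors are built from characteristic polynomials of Frobenius and Gauss sums of $\psi$). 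Combining these with Henniart's Galois equivariance of the LLC for $\GL_k$, which gives $\mathcal{LL}_{\GL_k}(\tau^\sigma)_{ss} = \mathcal{LL}_{\GL_k}(\tau)_{ss}^\sigma$, produces
\[
\gamma(s,\pi^\sigma \times \tau^\sigma,\psi^\sigma) = \gamma\bigl(s,\phi^\sigma \otimes \mathcal{LL}_{\GL_k}(\tau^\sigma)_{ss},\psi^\sigma\bigr)
\]
for every $\tau$ and every $\psi$. As $\tau \mapsto \tau^\sigma$ and $\psi \mapsto \psi^\sigma$ are bijections on the relevant families of representations and additive characters, this identity is precisely the input to the converse theorem applied to $\pi^\sigma$ with candidate parameter $\phi^\sigma$, yielding the desired $\widehat{\G}(\mathbb{C})$-conjugacy.

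The main obstacle will be the $\sigma$-equivariance verifications in the middle step: carefully tracking how the half-integer $q$-normalizations in the Plancherel measure, the intertwining operators, and the $\varepsilon$-factors behave under $\sigma$. This is precisely where the hypothesis that $\sigma$ fixes $\sqrt{q}$ --- rather than merely $q$ --- is used, since the natural normalizations introduce factors of $q^{1/2}$; it is also the reason for the careful axiomatic treatment of intertwining operators and $j$-functions in the appendices.
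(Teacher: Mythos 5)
Your proposal is correct and follows essentially the same route as the paper's proof: extend $\sigma$ to $\mathbb{C}(q^{-s})$, establish $\mu_{\sigma\circ\psi}((\tau^\sigma)_s\otimes\pi^\sigma)=\sigma(\mu_\psi(\tau_s\otimes\pi))$ via the algebraic intertwining-operator functoriality of the appendices (together with the observation that the $\psi$-normalized Haar measures are $\mathbb{Q}(\sqrt{q})$-valued, which is where fixing $\sqrt{q}$ enters), combine with $\sigma$-equivariance of Artin gamma factors and of $\mathcal{LL}_{\GL_k(\E)}$, and conclude with Proposition \ref{Propplanchereldecomp}, the semisimple converse theorem, and the passage back to $\widehat{\G}(\mathbb{C})$-conjugacy via Proposition \ref{GGPprop}.
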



We also need to show~$\mathcal{LL}_\G$ preserves integrality of supercuspidal~$\Ql$-representations (after rewriting the correspondence for~$\Ql$-representations via choosing an isomorphism~$\Ql\simeq\mathbb{C}$).  In fact, we prove this for all irreducible representations:

\begin{propositionint}
An irreducible representation of~$\G$ is integral if and only if its associated~$\ell$-adic Langlands parameter is integral.  
\end{propositionint}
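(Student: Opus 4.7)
The plan is to reduce the general case to supercuspidal representations using compatibility of $\mathcal{LL}_\G$ with parabolic induction (Proposition \ref{parabolicinduction:theorem}), and then to reduce the supercuspidal case to a statement about central characters, where the Galois-side criterion of Proposition \ref{propintegralparameters}(1) matches Vign\'eras's integrality criterion on the representation-theoretic side.

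First I would carry out the reduction to the cuspidal support. Let $\pi$ be an irreducible $\Ql$-representation of $\G$ with supercuspidal support $(\M,\rho)$. By the main theorem of \cite{DHKMfiniteness}, $\pi$ is integral if and only if $\rho$ is. On the Galois side, Proposition \ref{parabolicinduction:theorem} gives that $(\mathcal{LL}_\G(\pi))_{ss}$ is $\widehat{\G}$-conjugate to $\iota_{\M,\G} \circ (\mathcal{LL}_\M(\rho))_{ss}$, where $\iota_{\M,\G}:\LM \hookrightarrow \LG$ is the corresponding $L$-embedding. Since this embedding is a closed immersion defined over $\ZZ[1/p]$, it induces a closed immersion on the associated parameter moduli, so the equivalence of integrality with condition (\ref{iiIntegralProp3}) in Proposition \ref{propintegralparameters}(2) shows $\mathcal{LL}_\G(\pi)$ is integral if and only if $\mathcal{LL}_\M(\rho)$ is integral.

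Next I would handle the supercuspidal case. By \cite[II.4.12]{Vig96}, integrality of $\rho$ is equivalent to integrality of its central character $\omega_\rho$. Since $\rho$ is supercuspidal, the desiderata of LL for classical groups verified in Section \ref{classical} imply that $\mathcal{LL}_\M(\rho)$ is a discrete, Frobenius-semisimple parameter for $\M$, and Proposition \ref{propintegralparameters}(1) then gives that $\mathcal{LL}_\M(\rho)$ is integral if and only if its own central character is integral. Finally, compatibility of $\mathcal{LL}_\M$ with central characters---another standard desideratum valid for classical groups---identifies the central character of $\mathcal{LL}_\M(\rho)$ with $\omega_\rho$ as characters $\Z_\M(\F) \to \Ql^\times$, closing the chain of equivalences.

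The core work is already packaged into the two cited propositions, so once they are in hand the argument is essentially formal. The hardest aspect is really their availability: Proposition \ref{parabolicinduction:theorem} is a substantive theorem of this paper (generalizing Moussaoui), and Proposition \ref{propintegralparameters}(2) itself encodes the nontrivial fact that integrality transfers between a parameter and its semisimplification. The remaining inputs---discreteness and Frobenius-semisimplicity of supercuspidal parameters, and compatibility of $\mathcal{LL}_\M$ with central characters---are more routine, but still rely on the endoscopic construction of LL for classical groups summarized in Section \ref{classical}.
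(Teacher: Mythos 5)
Your argument is correct and follows essentially the same route as the paper: reduce to the supercuspidal support using \cite[Corollary 1.6]{DHKMfiniteness}, Proposition \ref{parabolicinduction:theorem} and the semisimplification statement of Proposition \ref{propintegralparameters}(2), then settle the supercuspidal case on the Levi via \cite[II 4.12]{Vig96}, Proposition \ref{propintegralparameters}(1) and central-character compatibility (which is known for the $\GL$ factors and is irrelevant-to-integrality on the compact part of the centre, since any character of a compact group is automatically integral). One small imprecision: the map induced by $\iota_{\M,\G}$ on GIT quotients of parameter spaces is finite rather than a closed immersion in general (non-conjugate $\widehat{\M}$-parameters can become $\widehat{\G}$-conjugate), but this does not affect your argument, since integrality transfers across $\iota_{\M,\G}$ directly from the compact-closure definition (as $\LM(\Ql)$ is closed in $\LG(\Ql)$), or alternatively from finiteness of $(\mathfrak{R}_{\LG})^{\widehat{\G}}\rightarrow(\mathfrak{R}_{\LM})^{\widehat{\M}}$.
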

Proposition \ref{integralclass} follows from the characterizations of integrality mentioned in Section \ref{Integralitysubsection}, together with the compatibility of local Langlands for general linear groups with central characters and the compatibility of local Langlands for classical groups with parabolic induction.

The fibres of~$\mathcal{LL}_\G$ are called \emph{$L$-packets}.  We collect $L$-packets with the same semisimple parameter together into \emph{extended~$L$-packets}, and conjecture for a quasi-split group~$\G$, and a Whittaker datum~$(\U,\psi)$ in~$\G$, that each extended $L$-packet contains a unique~$\psi$-generic representation (Conjecture \ref{genericpacket}).  We prove this for quasi-split classical groups:

\begin{propositionext}
Let~$\G$ be a quasi-split classical~$p$-adic group and~$(\U,\psi)$ be a Whittaker datum for~$\G$.  In each extended~$L$-packet of~$\G$ there exists a unique~$\psi$-generic representation.
\end{propositionext}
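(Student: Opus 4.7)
The plan is to reduce the statement to Shahidi's tempered generic $L$-packet conjecture on a Levi, using the Langlands classification, the standard module conjecture of Heiermann--Mui\'c, the Casselman--Shahidi characterization of generic Langlands quotients, and Proposition \ref{parabolicinduction:theorem}. First, given a semisimple parameter $\phi_{ss}:\W_\F\to\LG(\mathbb{C})$, a standard Jordan-type decomposition of $\phi_{ss}$ on the dual side (the parameter analogue of Bernstein's cuspidal support decomposition) produces, uniquely up to $\widehat{\G}$-conjugacy, a standard Levi $\M$ of $\G$, a bounded parameter $\phi_{\M,\mathrm{bd}}:\W_\F\to\LM(\mathbb{C})$, and a strictly positive unramified character $\nu$ of $\M$ with $\phi_{ss}=\iota_{\M,\G}(\phi_{\M,\mathrm{bd}}\cdot\nu)$. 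The boundedness of $\phi_{\M,\mathrm{bd}}$ forces any tempered $\SL_2$-parameter $\rho$ of $\M$ with $\rho_{ss}$ conjugate to $\phi_{\M,\mathrm{bd}}$ to be trivial on the $\SL_2$-factor, because a non-trivial $\SL_2$-action on an irreducible summand introduces shifts of the form $|\cdot|^{k/2}$ into the Frobenius eigenvalues of $\rho_{ss}$ and contradicts boundedness. Hence the extended tempered $L$-packet of $\M$ attached to $\phi_{\M,\mathrm{bd}}$ coincides with a single ordinary tempered $L$-packet.

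For existence, the Levi $\M$ is a product of (Weil-restricted) general linear groups and a smaller quasi-split classical group $\G'$. On $\GL$-factors every irreducible tempered representation is generic, and on $\G'$ the tempered generic $L$-packet conjecture holds by the work of Arthur (orthogonal and symplectic) and Mok (unitary). Thus the tempered $L$-packet of $\M$ attached to $\phi_{\M,\mathrm{bd}}$ contains a unique $\psi_\M$-generic representation $\tau$, where $\psi_\M$ denotes the Whittaker datum on $\M$ induced by $(\U,\psi)$. Since $\tau$ is tempered and generic and $\nu$ is strictly positive, the standard module conjecture (Heiermann--Mui\'c for classical groups) implies that $\pi:=i^\G_\P(\tau\otimes\nu)$ is irreducible, and the Casselman--Shahidi theorem then implies $\pi$ is $\psi$-generic; Proposition \ref{parabolicinduction:theorem} gives $\mathcal{LL}_\G(\pi)_{ss}=\iota_{\M,\G}(\phi_{\M,\mathrm{bd}}\cdot\nu)=\phi_{ss}$.

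For uniqueness, let $\pi_1,\pi_2$ be $\psi$-generic irreducible representations of $\G$ with the same semisimple parameter. By the Langlands classification, $\pi_i=J(\P_i,\tau_i,\nu_i)$ with $\tau_i$ tempered and $\nu_i$ strictly positive, and Casselman--Shahidi combined with the standard module conjecture force $\tau_i$ to be $\psi_{\M_i}$-generic and $\pi_i=i^\G_{\P_i}(\tau_i\otimes\nu_i)$. Proposition \ref{parabolicinduction:theorem} yields $\iota_{\M_i,\G}(\phi_{\tau_i,ss}\cdot\nu_i)=\phi_{ss}$, and the uniqueness of the Jordan-type decomposition in paragraph 1 forces $\M_1=\M_2=:\M$, $\nu_1=\nu_2$, and $\phi_{\tau_1,ss}$ $\widehat{\M}$-conjugate to $\phi_{\tau_2,ss}$. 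By paragraph 1, $\tau_1,\tau_2$ therefore lie in a single tempered $L$-packet of $\M$, where Shahidi's conjecture forces $\tau_1\simeq\tau_2$, hence $\pi_1\simeq\pi_2$.

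The principal non-trivial input is Proposition \ref{parabolicinduction:theorem}, established earlier in the paper via the Plancherel measure and a semisimple converse theorem; the other ingredients are standard in the literature. The subtlest conceptual step is the reduction of the extended tempered $L$-packet to an ordinary one via the boundedness constraint on $\phi_{\M,\mathrm{bd}}$, which is precisely what makes the classical tempered generic packet conjecture on Levis sufficient to establish the extended statement for $\G$.
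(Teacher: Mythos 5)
There is a genuine gap, and it sits exactly where the hard content of the statement lies. In your uniqueness step you write each generic $\pi_i$ as an irreducible standard module $i^{\G}_{\P_i}(\tau_i\otimes\nu_i)$ with $\tau_i$ tempered generic (this part is fine), and then claim that ``uniqueness of the Jordan-type decomposition in paragraph 1'' forces $\M_1=\M_2$, $\nu_1=\nu_2$ and $\phi_{\tau_1,ss}\sim\phi_{\tau_2,ss}$. But the decomposition in your first paragraph has a \emph{bounded} parameter on the Levi, whereas $(\phi_{\tau_i})_{ss}$ is only the semisimplification of a tempered parameter and is unbounded as soon as $\tau_i$ has discrete-series (Steinberg-type) constituents; and the decomposition of a semisimple parameter into (Levi, semisimplified tempered parameter, strictly positive unramified character) is \emph{not} unique. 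Concretely, for $\G=\mathrm{Sp}_4(\F)$ the datum $\St_2|\det|^{1/2}$ on the Siegel Levi $\GL_2$ and the datum $|\cdot|\otimes i_{\B}^{\SL_2}(1)$ on the Levi $\GL_1\times\SL_2$ (with the positive twist carried by the $\GL_1$-factor) have conjugate images in $\Phi_{ss}$, namely $|\cdot|\oplus|\cdot|^{-1}\oplus1\oplus1\oplus1$ in $\mathrm{SO}_5(\mathbb{C})$; this is the familiar ``twisted segment versus unlinked characters / enlarged tempered part'' ambiguity. Ruling out that two \emph{different} such configurations both carry a generic irreducible standard module is precisely what the paper obtains from the Gross--Prasad--Rallis input: Conjecture \ref{Davidsmaxorbitconj}, equivalent via Proposition \ref{Davidsmaxorbitprop} to maximal monodromy, and proved for classical groups by Gan--Ichino. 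That input is what guarantees that only one $L$-packet inside an extended packet can contain generic members; your proposal offers no substitute for it, so uniqueness does not follow. (The within-one-packet uniqueness via the Langlands classification and the tempered packet conjecture \ref{temperedpacketconj} is the part you and the paper share, through Proposition \ref{extendedpacketunderconjectures}.)

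A secondary error occurs in your existence step: the standard module theorem of Heiermann--Opdam is an equivalence --- the Langlands \emph{quotient} is generic if and only if the standard module is irreducible --- and does \emph{not} say that generic tempered inducing data with strictly positive twist forces irreducibility; $i_{\B}^{\GL_2}(|\cdot|^{1/2}\otimes|\cdot|^{-1/2})$ is a reducible standard module with (vacuously) generic inducing character. Existence is easily repaired without it: by Rodier, the standard module built from the unique generic member of the tempered packet of $\M$ has a unique generic irreducible subquotient, and Proposition \ref{parabolicinduction:theorem} shows that subquotient has semisimple parameter $\phi_{ss}$. So existence survives, but the uniqueness argument needs the Gross--Prasad--Rallis/maximal-monodromy step (or an equivalent analysis of which of the competing standard modules can be irreducible), which is the essential missing idea.
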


The proof of this proposition follows from results in the literature on generic representations in $L$-packets, and a geometric interpretation of a conjecture of Gross, Prasad, and Rallis -- see Conjecture \ref{Davidsmaxorbitconj}, and Propositions \ref{Davidsmaxorbitprop} and \ref{extendedpacketunderconjectures}.

\subsection{Connections}
Some of the ideas for this article took shape and were circulating for several years before it was completed (for example, Conjecture \ref{mainconj} was announced in 2018 in talks including \cite{KOberwolfach}).  There have been many developments in the literature since then, we now explain some connections to our work.

In a tour de force, for any connected reductive~$p$-adic group~$\G$ and{~$\ell$ not dividing the order of~$\pi_1(\widehat{\G})_{\text{tor}}$}, Fargues--Scholze \cite{FarguesScholze} construct a map
\[(\mathfrak{R}_{\LG,\mathbb{Z}_{\ell}[\sqrt{q}]})^{\widehat{\G}}\rightarrow\mathfrak{Z}_{\G,\mathbb{Z}_{\ell}[\sqrt{q}]}\]
by developing the geometric Langlands programme on the Fargues--Fontaines curve.  Not much is known about this map beyond the basic properties of \cite[Theorem I 9.6]{FarguesScholze}, which include that for~$\GL_n(\F)$ their construction is compatible with the usual (semisimple) local Langlands correspondence for~$\GL_n(\F)$.  Recently, Hamann \cite{Hamann}  has shown their construction is compatible with the (semisimple) local Langlands correspondence of Gan--Takeda for~$\mathrm{GSp}_4(\F)$, Hansen--Kaletha--Weinstein \cite{HKWKott} have shown compatibility for inner forms of general linear groups, and Bertoloni-Meli--Hamann--Nguyen \cite{BMHN} have shown compatibility for odd unramified unitary groups with the (semisimple) local Langlands correspondence of Mok. For quasi-split unitary groups of odd dimension Bertoloni-Meli--Hamann--Nguyen \cite[Proposition 2.11]{BMHN} also establish compatibility of Mok's correspondence with parabolic induction, a special case of Proposition \ref{parabolicinduction:theorem} of this paper.  

{Late in the preparation of the paper, we were informed of the local Langlands parameterization \cite[Theorem 3.7]{aubert2022}.  It seems plausible that our results can be extended to include even special orthogonal groups using this result, however some arguments, such as Propositions \ref{GGPprop} and \ref{Propplanchereldecomp}, would require modifications that we have not attempted.  In~\cite[Theorem 3.7(a)]{aubert2022}, Aubert--Moussaoui--Solleveld also establish compatibility of the local Langlands correspondence for classical groups with parabolic induction.}

More recently, at a late stage in the preparation of this paper, Cunningham--Dijols--Fiori--Zhang \cite[Prop.~4.1]{CDFZ24} have independently established the equivalence between properties \eqref{maxorbitprop1} and \eqref{maxorbitprop2} Proposition \ref{Davidsmaxorbitprop}. (Note that the definition of ``open parameter" in ibid.~is equivalent to our definition of a ``parameter with maximal monodromy").  This equivalence gives a geometric reformulation of Gross--Prasad--Rallis' conjecture on generic representations in~$L$-packets (cf.,~Conjecture \ref{Davidsmaxorbitconj} or \cite[Conjecture 4.6]{CDFZ24}).  We provide a further geometric characterization here: Proposition \ref{Davidsmaxorbitprop} \eqref{maxorbitprop3} in terms of the moduli space of Langlands parameters of \cite{DHKM} which does not appear in~\cite{CDFZ24}.

Initial versions of this article were written under weak hypotheses on~$\G$, which were known to be satisfied for all classical $p$-adic groups (with $p\neq 2$) and all ``tame'' groups thanks to \cite{datfinitude}.  We needed these hypotheses so that we could apply ``second-adjointness'' of parabolic functors integrally.  Recently, in \cite{DHKMfiniteness}, using Fargues--Scholze's morphism, we proved second-adjointness holds in general, so we have been able to remove this hypothesis -- however, this means some of our results  (notably, Theorems \ref{banaldecomptheorem} and \ref{axiomaticLLIF}) depend on Fargues and Scholze's construction whenever we fall out of the range of \cite{datfinitude} (see Remark \ref{remarkFSdependence} for more details).

There has also been a race towards a categorification of the local Langlands correspondence, beginning with conjectures inspired by categorical statements and conjectures in the geometric Langlands programme.  See \cite{BZCHN} and \cite{Hellmann} for early ideas towards this, and \cite{FarguesScholze} and \cite{Zhu} for the most ambitious conjectures which relate (derived) categories of smooth representations to (derived) categories of ind-coherent sheaves on stacks of Langlands parameters.  Integral versions of these conjectures  also predict, for quasi-split groups, a natural map from the ring of global functions on the stack of Langlands parameters to the endomorphisms of a Gelfand--Graev representation (as in Conjecture \ref{mainconj}), as they fix their equivalence by sending (a choice of) Gelfand-Graev representation of a quasi-split connected reductive group to the structure sheaf on the stack of Langlands parameters.  

\subsection{Acknowledgements}
The first author was partially supported by ANR grant COLOSS ANR-19-CE40-0015. The second author was partially supported by EPSRC New Horizons grant EP/V018744/1. The third author was supported by EPSRC grant EP/V001930/1 and the Heilbronn Institute for Mathematical Research. The fourth author was partially supported by NSF grants DMS-2001272 and DMS-2302591. We thank Jessica Fintzen, David Hansen, Nadir Matringe, Ahmed Moussaoui, Gordan Savin, {Sug Woo Shin}, Shaun Stevens, and Marie-France Vign\'eras for helpful conversations on the subject of the paper, {and Tom Haines and Maarten Solleveld for their comments and corrections.}

\section{Notation}

Let~$\F$ be a non-archimedean local field with finite residue field of cardinality~$q$ a power of $p$. For any non-archimedean local field~$\E$ (for example, for finite extensions of~$\F$), we write~$\cO_\E$ for its ring of integers,~$\cP_\E$ for the unique maximal ideal, and $k_\E$ for its residue field~$\cO_\E/\cP_\E$.   

Let~$\mathbf{G}$ be a connected reductive algebraic group defined over~$\F$ and~$\G=\mathbf{G}(\F)$.  

Unless otherwise stated ``module" means ``left module'', and~$\R$ denotes a commutative~$\mathbb{Z}[1/p]$-algebra.  We suppose that all~$\R[\H]$-modules for a locally profinite group~$\H$, equivalently that all~$\R$-representations of~$\H$, henceforth considered are smooth, and we denote by~$\Rep_\R(\H)$ the abelian category of all (smooth)~$\R[\H]$-modules.

Let~$\ell\neq p$ be prime.  Let~$\Ql$ denote an algebraic closure of~$\mathbb{Q}_\ell$,~$\Zl$ the ring of integers of~$\Ql$, and~$\Fl$ its residue field.  We fix an algebraic closure~$\Qbar$ of~$\mathbb{Q}$, and denote by~$\Zbar$ the subring of algebraic integers.   We fix once and for all embeddings~$\Qbar\hookrightarrow \Ql$ and~$\Qbar\hookrightarrow\mathbb{C}$.

\section{The integral centre}

\subsection{The centre}

The \emph{centre}~$\mathfrak{Z}_{\G,\R}$ of the category of~$\R[\G]$-modules~$\Rep_{\R}(\G)$ is the ring of endomorphisms of the identity functor~$1_{\Rep_{\R}(\G)}:\Rep_{\R}(\G)\rightarrow \Rep_{\R}(\G)$.   We can identify an element~$z\in\mathfrak{Z}_{\G,\R}$ with a collection~$(z_{\M})$, over all~$\R[\G]$-modules~$\M$ of endomorphisms~$z_\M\in\End_{\R[\G]}(\M)$, satisfying for all morphisms of~$\R[\G]$-modules~$f:\M\rightarrow \M'$,
\begin{equation}
\tag{$\dagger$}
z_{\M'}\circ f=f\circ z_{\M};\end{equation}
i.e.~commuting with all morphisms in the category.  The ring structure on the collections~$(z_{\M})$ is given by componentwise addition and composition of endomorphisms, and applying $(\dagger)$ to all endomorphisms~$f\in\End_{\R[\G]}(\M)$, we see that~$z_\M$ belongs to the centre of~$\End_{\R[\G]}(\M)$ and~$\mathfrak{Z}_{\G,\R}$ is a commutative~$\R$-algebra which acts naturally on all~$\R[\G]$-modules.

Let~$\phi:\R\rightarrow \R'$ be a homomorphism of commutative rings (with identity).   Then we have a \emph{restriction of scalars} functor
\[\phi^*:\Rep_{\R'}(\G)\rightarrow\Rep_{\R}(\G),\]
which takes a smooth~$\R'[\G]$-module~$\M$ to the smooth~$\R[\G]$-module~$\M$, whose underlying set and action of~$\G$ is the same and on which~$r\cdot m:=\phi(r)\cdot m$, and which is the identity on morphisms.   Note that, with this prescribed scalar action, it is obvious that~$g\in\G$ acts on~$\M$ by~$\R$-linear scalar automorphisms of~$\M$.

\begin{lemma}
The morphism~$\phi$ induces a homomorphism
\[\phi^*:\mathfrak{Z}_{\G,\R}\rightarrow \mathfrak{Z}_{\G,\R'}\]
where, for~$z\in\mathfrak{Z}_{\G,\R}$ and a smooth~$\R'[\G]$-module~$\M$, we set~$\phi^*(z)_{\M}:=z_{\phi^*(\M)}$.
\end{lemma}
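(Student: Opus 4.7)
The plan is to unpack the definitions and check two things: first, that for each $z \in \mathfrak{Z}_{\G,\R}$ the collection $(z_{\phi^*(\M)})_{\M}$, as $\M$ ranges over smooth $\R'[\G]$-modules, really defines an element of $\mathfrak{Z}_{\G,\R'}$ (i.e.~each $z_{\phi^*(\M)}$ is $\R'$-linear and the compatibility $(\dagger)$ holds with respect to $\R'[\G]$-morphisms); and second, that the assignment $z \mapsto \phi^*(z)$ is a ring homomorphism. The only step that is not purely formal is the $\R'$-linearity of $z_{\phi^*(\M)}$; everything else is immediate from the fact that $\phi^*$ on the level of categories is a faithful, exact functor which is the identity on underlying abelian-group morphisms.

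First I would fix $z\in\mathfrak{Z}_{\G,\R}$ and an $\R'[\G]$-module $\M$, and observe that for each $r'\in \R'$ multiplication by $r'$ defines an $\R'$-linear endomorphism of $\M$ commuting with $\G$. Restriction along $\phi$ does not change underlying maps of sets, so this multiplication is in particular an endomorphism of $\phi^*(\M)$ in $\Rep_\R(\G)$. Applying the centrality condition $(\dagger)$ for $z$ to this endomorphism of $\phi^*(\M)$ shows that $z_{\phi^*(\M)}$ commutes with the action of every $r'\in \R'$; hence $z_{\phi^*(\M)}$ is $\R'$-linear, so lies in $\End_{\R'[\G]}(\M)$.

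Next I would verify $(\dagger)$ in $\Rep_{\R'}(\G)$: for any morphism $f:\M\to \M'$ of $\R'[\G]$-modules, $f$ is also a morphism $\phi^*(\M)\to \phi^*(\M')$ in $\Rep_\R(\G)$, so by $(\dagger)$ for $z$ we have $z_{\phi^*(\M')}\circ f = f\circ z_{\phi^*(\M)}$, which is exactly the required compatibility for $\phi^*(z)$. This shows $\phi^*(z)\in\mathfrak{Z}_{\G,\R'}$. Finally, since addition and composition in $\mathfrak{Z}_{\G,\R}$ and $\mathfrak{Z}_{\G,\R'}$ are both defined componentwise on endomorphisms, and $\phi^*$ on modules is compatible with addition and composition of endomorphisms (it does not alter underlying maps), the assignment $z\mapsto \phi^*(z)$ is manifestly a unital ring homomorphism.

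The only point worth emphasising, and the one small ``obstacle'', is the $\R'$-linearity step: it is tempting to take it for granted, but it requires one to recognise that multiplication by any $r'\in \R'$ is itself a morphism in the source category $\Rep_\R(\G)$, which is precisely what lets the centrality of $z$ upgrade from $\R$-linearity to $\R'$-linearity. Everything else is bookkeeping.
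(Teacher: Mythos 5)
Your proposal is correct and follows essentially the same route as the paper: you note that multiplication by any $r'\in\R'$ is an endomorphism of $\phi^*(\M)$ in $\Rep_\R(\G)$, so centrality of $z$ forces $z_{\phi^*(\M)}$ to be $\R'$-linear, and commutation with $\R'[\G]$-morphisms follows since they are in particular $\R[\G]$-morphisms. The paper phrases the linearity step via $z_{\phi^*(\M)}$ lying in the centre of $\End_{\R[\G]}(\phi^*(\M))$, which is the same argument; your extra remark that $z\mapsto\phi^*(z)$ is a ring homomorphism is immediate and harmless.
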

\begin{proof}
As~$\phi^*(z)_{\M}$ is an element of the centre of~$\End_{\R[\G]}(\M)$, it commutes with multiplication by elements of~$\R'$ hence, and defines an element of~$\End_{\R'[\G]}(\M)$.  Moreover, the~$(\phi^*(z)_{\M})$ commute with all morphisms of smooth~$\R[\G]$-modules, so in particular commute with all morphisms of smooth~$\R'[\G]$-modules.
\end{proof}

\begin{lemma}
%
Suppose~$\phi^*:\R\rightarrow \R'$ is a monomorphism, then~$\phi^*:\mathfrak{Z}_{\G,\R}\rightarrow \mathfrak{Z}_{\G,\R'}$ is monomorphism.
\end{lemma}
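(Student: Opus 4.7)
The plan is to show that if $z \in \mathfrak{Z}_{\G,\R}$ satisfies $\phi^*(z) = 0$, then $z$ itself vanishes on every smooth $\R[\G]$-module, using naturality $(\dagger)$ together with the injectivity of $\phi$. My first move would be to test $z$ on the ``universal'' compactly induced modules $\R[\G/\K]$ for compact open subgroups $\K \leq \G$. The natural $\R[\G]$-linear map $\iota_\K \colon \R[\G/\K] \to \phi^*(\R'[\G/\K])$ induced by $\phi$ is a monomorphism: both sides are free modules on $\G/\K$ over $\R$ (resp.\ $\R'$), and $\iota_\K$ is just coordinate-wise application of $\phi$, which is injective by hypothesis. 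Applying $(\dagger)$ to $\iota_\K$ then yields
\[\iota_\K \circ z_{\R[\G/\K]} \;=\; z_{\phi^*(\R'[\G/\K])} \circ \iota_\K \;=\; \phi^*(z)_{\R'[\G/\K]} \circ \iota_\K \;=\; 0,\]
and the fact that $\iota_\K$ is a monomorphism forces $z_{\R[\G/\K]} = 0$.

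To bootstrap from this to an arbitrary smooth $\R[\G]$-module $\N$, I would exploit that the $\R[\G/\K]$'s generate $\Rep_\R(\G)$ elementwise: every $n \in \N$ has some compact open stabiliser $\K_n$, so $f_n \colon \R[\G/\K_n] \to \N$, $\overline{g} \mapsto g \cdot n$, is $\R[\G]$-linear. Then $(\dagger)$ gives $z_\N(n) = (z_\N \circ f_n)(\overline{1}) = (f_n \circ z_{\R[\G/\K_n]})(\overline{1}) = 0$, and since $n$ was arbitrary, $z_\N = 0$. I do not anticipate any genuine obstacle here: the argument reduces entirely to the two standard observations that $\phi \colon \R \hookrightarrow \R'$ induces a coordinate-wise injection of free modules on $\G/\K$, and that the collection $\{\R[\G/\K]\}_\K$ generates $\Rep_\R(\G)$. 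In particular, no flatness or tensor-product subtleties are needed: one works with $\phi^*$ (restriction of scalars) rather than $-\otimes_\R \R'$, precisely so that the relevant comparison map $\iota_\K$ is injective on the nose.
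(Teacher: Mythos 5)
Your argument is correct and is essentially the proof the paper gives: one tests $z$ on the generating family $\R[\G/\K]$, where the coordinate-wise injection $\R[\G/\K]\hookrightarrow \phi^*(\R'[\G/\K])$ together with naturality $(\dagger)$ forces $z_{\R[\G/\K]}=0$, and then elementwise generation kills $z$ on every smooth module. You have merely spelled out the two steps the paper leaves implicit (injectivity of $\iota_\K$ and the bootstrap via maps $\R[\G/\K_n]\to\N$); the only cosmetic point is that the stabiliser of $n$ is open rather than compact open, so one should take $\K_n$ to be a compact open subgroup contained in it.
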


\begin{proof}
To show injectivity we need to show that if~$z\in \mathfrak{Z}_{\G,\R}$ annihilates all~$\R'[\G]$-modules then $z=0$.  For this, it suffices to prove that~$z_V=0$  for a generating family of objects of~$\Rep_\R(\G)$. So we can consider those~$\V$ of the form~$\R[\G/\K]$ with~$\K$ compact open, and then~$z_V$ is certainly zero since it is zero on~$\R'[\G/\K]$.
\end{proof}

\begin{corollary}\label{flatnessDD}
Let~$\R$ be an integral domain with field of fractions~$\K$ of characteristic $0$.  The integral centre~$\mathfrak{Z}_{\G,\R}$ is a reduced torsion free~$\R$-algebra.  If~$\R$ is Dedekind, then~$\mathfrak{Z}_{\G,\R}$ is flat over~$\R$.
\end{corollary}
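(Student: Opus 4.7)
My plan is to reduce everything to a statement about $\mathfrak{Z}_{\G,\bar{\K}}$, which is controlled by Bernstein's theory.

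First, since $\R$ is an integral domain, the localization map $\R \hookrightarrow \K$ is a monomorphism. Applying the preceding lemma, we obtain an injection $\mathfrak{Z}_{\G,\R} \hookrightarrow \mathfrak{Z}_{\G,\K}$. The target is naturally a $\K$-algebra, hence a $\K$-vector space, and therefore torsion-free as an $\R$-module. Consequently $\mathfrak{Z}_{\G,\R}$ is torsion-free over $\R$.

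For reducedness, I would further embed $\K \hookrightarrow \bar{\K}$ into an algebraic closure. The monomorphism lemma again yields an injection $\mathfrak{Z}_{\G,\R} \hookrightarrow \mathfrak{Z}_{\G,\bar{\K}}$. By Bernstein's description of the centre for smooth representations over an algebraically closed field of characteristic zero, $\mathfrak{Z}_{\G,\bar{\K}}$ decomposes as a product, indexed by inertial equivalence classes $[\M,\rho]_\G$, of the coordinate rings of the associated Bernstein varieties, each of which is a (reduced) finitely generated $\bar{\K}$-algebra of the form $(\bar{\K}[\M/\M^\circ]^{\H_{(\M,\rho)}})^{\W_{(\M,\rho)}}$. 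In particular $\mathfrak{Z}_{\G,\bar{\K}}$ is reduced, and a subring of a reduced ring is reduced, giving the claim for $\mathfrak{Z}_{\G,\R}$.

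Finally, when $\R$ is Dedekind, a classical characterization says that an $\R$-module is flat if and only if it is torsion-free. Combined with the torsion-freeness already established, this yields flatness of $\mathfrak{Z}_{\G,\R}$ over $\R$. The only substantive input is Bernstein's theorem over a general algebraically closed field of characteristic zero; this is standard and essentially identical to the case of $\mathbb{C}$, so I do not see a real obstacle.
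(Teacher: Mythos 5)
Your proof is correct and takes essentially the same route as the paper: embed $\mathfrak{Z}_{\G,\R}$ into the centre over (the algebraic closure of) the fraction field via the monomorphism lemma, invoke Bernstein's description there for reducedness and torsion-freeness, and finish with the standard fact that torsion-free modules over a Dedekind domain are flat. The only cosmetic difference is that you handle torsion-freeness separately via the $\K$-vector space structure of $\mathfrak{Z}_{\G,\K}$, whereas the paper cites \cite{BD} over $\overline{\K}$ for both properties at once.
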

\begin{proof}
The centre $\mathfrak{Z}_{\G,\R}$ is torsion free and reduced as it embeds in~$ \mathfrak{Z}_{\overline{\K}}(\G)$ which is torsion free and reduced by \cite{BD}.  If~$\R$ is a Dedekind domain, then as~$\mathfrak{Z}_{\G,\R}$ is torsion free, it is flat.
\end{proof}

\subsection{The decomposition by level}\label{leveldecomp}
Now using our basic assumption that~$p$ is invertible in our coefficient ring, Moy--Prasad theory allows one to give a coarse decomposition of~$\Rep_{\R}(\G)$.  Following \cite[Appendice A]{datfinitude}, there is a family of finitely generated projective~$\mathbb{Z}[1/p][\G]$-modules~$Q_n$~($n\in\mathbb{N}$), defined by the finite sums
\begin{align*}
Q_0&:=\bigoplus_{x\in \mathrm{Vert}(\G)/\G}\ind_{\G_{x,0+}}^\G(1),\\
Q_m&:=\bigoplus_{x\in \mathrm{Opt}(\G)/\G}\ind_{\G_{x,r_m}}^\G\left(\bigoplus_{\chi\in \mathrm{UR}_{m,x}}\chi\right), \qquad(m>0),
\end{align*}
where~$\mathrm{Vert}(\G)/\G$ (respectively~$\mathrm{Opt}(\G)/\G$)
denotes a set of representatives for the~$\G$-conjugacy classes of
vertices (respectively optimal points) of the Bruhat--Tits building
of~$\G$; and~$\mathrm{UR}_{m,x}$ denotes a set
of~$\mathbb{Z}[1/p,\zeta_p]$-valued characters
of~$\G_{x,r_m}/\G_{x,r_m+}$: the \emph{unrefined minimal types of
  level~$m$} {(where~$(r_m)_{m\in\mathbb{N}}$ denotes the increasing sequence of rational numbers indexing the jumps in filtrations of the parahorics associated to optimal points in the building of~$\G$ as in \cite[Appendice A]{datfinitude}). } 

Suppose~$\R$ is a~$\mathbb{Z}[1/p]$-algebra, and set~$Q_{n,\R}=Q_n\otimes \R$ for all~$n\in\mathbb{N}$.  By \cite[Lemma A.3]{datfinitude}, we have a \emph{decomposition by level}
\begin{align*}
\Rep_\R(\G)&=\prod_{n\in\mathbb{N}}\Rep_\R(\G)_n,\\
\mathfrak{Z}_{\G,\R}&=\prod_{n\in\mathbb{N}}\mathfrak{Z}_{\G,\R,n},
\end{align*}
where~$Q_{n,\R}$ is a progenerator for~$\Rep_\R(\G)_n$ and we (hence) can identify~$\mathfrak{Z}_{\G,\R,n}$ with the centre of~$\End_{\R[\G]}(Q_{n,\R})$.  In particular, an element~$(z_\M)$ in~$\mathfrak{Z}_{\G,\R}$ is completely determined by the endomorphisms~$(z_{Q_{n,\R}})$, for~$n\in\mathbb{N}$.

\begin{proposition}\label{Corembed}
Suppose~$\R$ is a Noetherian~$\mathbb{Z}[1/p]$-algebra, and~$\R'$ is a flat
commutative~$\R$-algebra. Then 
the natural map 
\[\mathfrak{Z}_{\G,\R,n}\otimes \R'\rightarrow \mathfrak{Z}_{\G,\R',n},
\]
is an isomorphism. 
\end{proposition}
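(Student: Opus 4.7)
The plan is to reduce the claim to a Noetherian argument about infinite intersections of kernels of linear functionals, exploiting that $Q_n$ is finitely generated projective over $\mathbb{Z}[1/p][\G]$ and that $\R$ is Noetherian.

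Since $Q_n$ is a finitely generated projective $\mathbb{Z}[1/p][\G]$-module, base change commutes with taking endomorphisms, giving $\End_{\R[\G]}(Q_{n,\R}) \simeq E_n \otimes_{\mathbb{Z}[1/p]} \R$ where $E_n := \End_{\mathbb{Z}[1/p][\G]}(Q_n)$. By the Mackey decomposition of the Hom spaces between the compact inductions $\ind_K^\G(\chi)$ appearing as summands of $Q_n$, $E_n$ is free as a $\mathbb{Z}[1/p]$-module; fix a basis $\{e_i\}_{i \in I}$ with multiplication $e_i e_j = \sum_k c_{ij}^k e_k$, $c_{ij}^k \in \mathbb{Z}[1/p]$. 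Combined with the progenerator identification $\mathfrak{Z}_{\G,\R,n} = Z(\End_{\R[\G]}(Q_{n,\R}))$, the statement reduces to showing
\[ Z(E_n \otimes_{\mathbb{Z}[1/p]} \R) \otimes_\R \R' \xrightarrow{\sim} Z(E_n \otimes_{\mathbb{Z}[1/p]} \R'). \]

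An element $z = \sum_{i \in S} r_i e_i \in E_n \otimes \R$ with finite support $S \subseteq I$ lies in the centre if and only if $[z, e_j \otimes 1] = 0$ for every $j \in I$, which by linear independence of the basis unwinds to the linear equations $\phi^\R_{j,k}(z) := \sum_{i \in S} r_i (c_{ij}^k - c_{ji}^k) = 0$ for all $(j,k) \in I \times I$. Writing $V_S^\R := \bigoplus_{i \in S} \R \cdot e_i$, we thus have $Z(E_n \otimes \R) \cap V_S^\R = \bigcap_{(j,k) \in I \times I} \ker \phi^\R_{j,k}$, and analogously over $\R'$.

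The crux is to convert this infinite intersection into a finite one using Noetherianity, and then apply flat base change. Since $V_S^\R \cong \R^S$ is a Noetherian $\R$-module, the descending chain of finite subintersections of $\{\ker \phi^\R_{j,k}\}$ stabilizes at some finite $T = T(S, \R) \subseteq I \times I$: $Z(E_n \otimes \R) \cap V_S^\R = \bigcap_T \ker \phi^\R_{j,k}$. Flat base change commutes with finite intersections of kernels of $\R$-linear maps, so $(Z(E_n \otimes \R) \cap V_S^\R) \otimes_\R \R' = \bigcap_T \ker \phi^{\R'}_{j,k}$. The same $T$ still captures the full intersection over $\R'$: by the stabilization, $\phi^\R_{j,k}$ vanishes on $\bigcap_T \ker \phi^\R_{j,k}$ for every $(j,k)$, hence $\phi^{\R'}_{j,k} = \phi^\R_{j,k} \otimes_\R 1$ vanishes on $\bigcap_T \ker \phi^\R_{j,k} \otimes_\R \R' = \bigcap_T \ker \phi^{\R'}_{j,k}$. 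Therefore $(Z(E_n \otimes \R) \cap V_S^\R) \otimes_\R \R' = Z(E_n \otimes \R') \cap V_S^{\R'}$, and passing to the directed colimit over finite $S$ (which commutes with tensor products) gives the desired isomorphism.

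The main obstacle is precisely the stabilization step: flat base change does not in general preserve infinite intersections of submodules, and the Noetherian hypothesis on $\R$ is essential for packaging each finite-support centrality condition into a finite linear system.
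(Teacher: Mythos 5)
Your overall strategy is sound and it reaches the same reduction as the paper---everything comes down to showing that forming the centre of a suitable integral endomorphism algebra commutes with the flat base change $\R\to\R'$---but you carry out that key step by a genuinely different, more self-contained route. The paper picks an open pro-$p$ subgroup $\H$ with $Q_n$ generated by its $\H$-invariants, identifies $\mathfrak{Z}_{\G,\R,n}$ with $\varepsilon_{n,\H}\,\Z(\R[\H\backslash\G/\H])$, and then quotes Lemma \ref{centrelemma}(1), whose proof writes the centre as the kernel of the commutator map $\A\to\prod_{a\in\A}\A$ and invokes Goodearl's theorem that $(\prod\A)\otimes_{\R}\R'\to\prod(\A\otimes_{\R}\R')$ is injective when $\R$ is Noetherian and $\A$ is $\R$-flat. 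You instead work directly with $\End_{\R[\G]}(Q_{n,\R})\simeq E_n\otimes_{\mathbb{Z}[1/p]}\R$ (legitimate, by Lemma \ref{scalarextandpros}) and replace the citation by an explicit argument: centrality is an infinite system of $\R$-linear conditions on each finite-support piece, Noetherianity reduces it to a finite subsystem, and flat base change commutes with finite intersections of kernels. Your version makes transparent exactly where the Noetherian and flatness hypotheses enter; the paper's version is shorter by citation and needs only flatness of the Hecke algebra, not an integral basis.

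Two justifications need repair, though neither threatens the structure. First, the stabilization step is misjustified as written: Noetherianity of $V_S^{\R}$ gives the ascending chain condition, not the descending one, so ``the descending chain of finite subintersections stabilizes since $V_S^{\R}$ is Noetherian'' is not a valid deduction. The fact you need is nevertheless true: the $\R$-submodule of $\Hom_{\R}(V_S^{\R},\R)$ generated by all the functionals $\phi^{\R}_{j,k}$ is finitely generated (this is where Noetherianity enters, applied to the dual), so you may choose a finite set $T$ of indices such that every $\phi^{\R}_{j,k}$ is an $\R$-linear combination of the $\phi^{\R}_{j',k'}$ with $(j',k')\in T$ and hence vanishes on $\bigcap_{(j',k')\in T}\ker\phi^{\R}_{j',k'}$; this is precisely the property your argument then transports to $\R'$. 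Second, the freeness of $E_n$ over $\mathbb{Z}[1/p]$ is asserted too quickly: the unrefined minimal types are $\mathbb{Z}[1/p,\zeta_p]$-valued, so the Mackey double-coset bases live a priori over $\mathbb{Z}[1/p,\zeta_p]$. A clean fix: $Q_n$ is a direct summand of a finite sum of modules $\ind_{\H}^{\G}(\mathbb{Z}[1/p])$ with $\H$ open pro-$p$, so $E_n$ is a $\mathbb{Z}[1/p]$-module direct summand of a finite sum of spaces $\mathbb{Z}[1/p][\H\backslash\G/\H]$, hence projective, hence free because $\mathbb{Z}[1/p]$ is a principal ideal domain. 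With these two patches your argument is complete and correct.
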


\begin{proof}
  Pick an open pro-$p$-subgroup $\H$ of $\G$ such that $Q_{n}$ is generated by its
  $\H$-invariants, and denote by $\varepsilon_{n,H}$ the central idempotent in
  $\mathbb{Z}[1/p][\H\backslash \G/\H]$ given by the projection on the level $n$
  factor of $\mathbb{Z}[1/p][\G/\H]$. We still write $\varepsilon_{n,H}$ for its image in
  $\R[\H\backslash \G/\H]$ or $\R'[\H\backslash \G/\H]$.
  Then $\varepsilon_{n,H} \R[\G/\H]$ is a finitely generated
projective generator of $\Rep_\R(\G)_n$, hence
$\mathfrak{Z}_{\G,\R,n}=\varepsilon_{n,H} \Z(\R[\H\backslash \G/\H])$.
Similarly we have   $\mathfrak{Z}_{\G,\R',n}=\varepsilon_{n,H} \Z(\R'[\H\backslash \G/\H])$,
and we see that it suffices to prove that the natural map
$ \Z(\R[\H\backslash \G/\H])\otimes_{\R}\R' \rightarrow \Z(\R'[\H\backslash \G/\H])$ is an
isomorphism, which follows from Lemma \ref{centrelemma} (1).
\end{proof}

\begin{corollary}
If~$\R$ is  flat over~$\mathbb{Z}[1/p]$, then~$\mathfrak{Z}_{\G,\R,n}$ is flat
over $\R$. If $\R$ is also Noetherian, then $\mathfrak{Z}_{\G,\R}$ is flat over $\R$.
\end{corollary}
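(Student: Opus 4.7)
\emph{Proof plan.} My approach is to reduce the first assertion to flatness of the centre over the Dedekind ring $\mathbb{Z}[1/p]$ (already available from Corollary \ref{flatnessDD}), and then to bootstrap from flatness of the level components to flatness of their product using the Noetherian hypothesis in the second assertion.

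For the first statement, I would apply Proposition \ref{Corembed} with $(\R,\R')$ replaced by $(\mathbb{Z}[1/p],\R)$; note that $\mathbb{Z}[1/p]$ is Noetherian and $\R$ is flat over it by hypothesis, so the proposition is available and yields a natural isomorphism
\[
  \mathfrak{Z}_{\G,\mathbb{Z}[1/p],n}\otimes_{\mathbb{Z}[1/p]} \R \;\overset{\sim}{\longrightarrow}\; \mathfrak{Z}_{\G,\R,n}.
\]
Since $\mathbb{Z}[1/p]$ is a Dedekind domain of characteristic $0$, Corollary \ref{flatnessDD} gives that $\mathfrak{Z}_{\G,\mathbb{Z}[1/p]}$ is flat over $\mathbb{Z}[1/p]$. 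The level decomposition of Section \ref{leveldecomp} exhibits each $\mathfrak{Z}_{\G,\mathbb{Z}[1/p],n}$ as a direct factor, hence a retract, of $\mathfrak{Z}_{\G,\mathbb{Z}[1/p]}$, so it is itself flat over $\mathbb{Z}[1/p]$. Base change along the flat map $\mathbb{Z}[1/p]\to \R$ preserves flatness, which finishes the first part.

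For the second statement, I would combine the level decomposition $\mathfrak{Z}_{\G,\R}=\prod_n \mathfrak{Z}_{\G,\R,n}$ with the classical fact that arbitrary products of flat modules over a Noetherian ring are flat. Concretely, to verify flatness it suffices to show that $I\otimes_\R \mathfrak{Z}_{\G,\R}\to \mathfrak{Z}_{\G,\R}$ is injective for every ideal $I\subseteq \R$. Noetherianness of $\R$ makes $I$ finitely presented, so the natural map $I\otimes_\R \prod_n \mathfrak{Z}_{\G,\R,n}\to \prod_n I\otimes_\R \mathfrak{Z}_{\G,\R,n}$ is an isomorphism (deduced from a finite presentation of $I$ together with the trivial case of free modules). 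The displayed map then identifies with the product of the injections $I\otimes_\R \mathfrak{Z}_{\G,\R,n}\hookrightarrow \mathfrak{Z}_{\G,\R,n}$ provided by the first part, and a product of injective maps is injective.

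I do not anticipate a serious obstacle: the work is essentially bookkeeping once Proposition \ref{Corembed} and Corollary \ref{flatnessDD} are in hand. The only subtlety is the commutation of tensor product with arbitrary products for finitely presented modules over a Noetherian ring, which is a standard exercise and explains exactly why the Noetherian hypothesis appears in the second half of the statement.
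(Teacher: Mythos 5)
Your proposal is correct and follows essentially the same route as the paper: apply Proposition \ref{Corembed} with base $\mathbb{Z}[1/p]$ to identify $\mathfrak{Z}_{\G,\R,n}\simeq \mathfrak{Z}_{\G,\mathbb{Z}[1/p],n}\otimes\R$, use Corollary \ref{flatnessDD} (via the direct-factor observation, which you make explicit) for flatness over $\mathbb{Z}[1/p]$, and then invoke the fact that products of flat modules over a Noetherian ring are flat. The only difference is that you spell out the proof of this last fact (the ideal criterion plus commutation of tensoring with a finitely presented module with products), which the paper simply cites; both steps are sound.
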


\begin{proof}
By Proposition \ref{Corembed},~$\mathfrak{Z}_{\G,\R,n}\simeq
\mathfrak{Z}_{\G,\mathbb{Z}[1/p],n}\otimes \R$ and~$\mathfrak{Z}_{\G,\mathbb{Z}[1/p],n}$
is flat over $\mathbb{Z}[1/p]$ by Corollary \ref{flatnessDD},
hence~$\mathfrak{Z}_{\G,\R,n}$ is flat. When $\R$ is Noetherian, a product of flat
$\R$-modules is flat, hence  $\mathfrak{Z}_{\G,\R}$ is flat.
\end{proof}

\begin{corollary}\label{GaloisBC}
In the context of Proposition \ref{Corembed}, assume further that $\R$ is flat over
$\mathbb{Z}[1/p]$, and is the fixed subring
$(\R')^{\Gamma}$ of $\R'$ under a finite group of ring automorphisms $\Gamma$ of $\R'$. Then
the group $\Gamma$ acts naturally on $\mathfrak{Z}_{\G,\R'}$ and the natural map induces
an isomorphism 
$\mathfrak{Z}_{\G,\R}\xrightarrow{\sim}(\mathfrak{Z}_{\G,\R'})^{\Gamma}$.
\end{corollary}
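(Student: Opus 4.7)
The plan is to reduce the question to a statement about the Hecke algebras $\R[\H\backslash\G/\H]$ that appear in the proof of Proposition \ref{Corembed}, exploiting two features: they are \emph{free} over $\mathbb{Z}[1/p]$ with a $\Gamma$-fixed basis, and $\Gamma$ acts only through the coefficients $\R'$, so that invariants, centres and idempotent cut-offs all commute with one another.

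First I would define the $\Gamma$-action on $\mathfrak{Z}_{\G,\R'}$. For each $\sigma\in\Gamma$, restriction of scalars along the ring automorphism $\sigma\colon \R'\to\R'$ is an autoequivalence $\sigma^*$ of $\Rep_{\R'}(\G)$ (the identity on underlying $\G$-sets), which induces an action on endomorphisms of the identity functor. Because $\R=(\R')^\Gamma$, every $\R[\G]$-module is $\Gamma$-fixed for this action, so the scalar-extension map $\mathfrak{Z}_{\G,\R}\to\mathfrak{Z}_{\G,\R'}$ factors through $(\mathfrak{Z}_{\G,\R'})^\Gamma$. This is the natural map to be shown to be an isomorphism.

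Next I would reduce to a level-by-level assertion. The $\Gamma$-action preserves each summand in the decomposition $\mathfrak{Z}_{\G,\R'}=\prod_n\mathfrak{Z}_{\G,\R',n}$ of \S\ref{leveldecomp}, because $\sigma^*$ commutes with the $\mathbb{Z}[1/p][\G]$-action and hence with the central idempotents cutting out the level components; since $(-)^{\Gamma}$ commutes with products, it suffices to show that for each $n$,
\[\mathfrak{Z}_{\G,\R,n}\xrightarrow{\ \sim\ }(\mathfrak{Z}_{\G,\R',n})^\Gamma.\]
Now fix a pro-$p$ open subgroup $\H\leq\G$ as in the proof of Proposition \ref{Corembed}, so that $\mathfrak{Z}_{\G,\R,n}=\varepsilon_{n,\H}\Z(\R[\H\backslash\G/\H])$, likewise for $\R'$, and $\varepsilon_{n,\H}$ is a $\Gamma$-fixed element of $\mathbb{Z}[1/p][\H\backslash\G/\H]$. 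Because $\mathbb{Z}[1/p][\H\backslash\G/\H]$ is a free $\mathbb{Z}[1/p]$-module on the $\Gamma$-fixed basis $\H\backslash\G/\H$, and $\Gamma$ acts only on the coefficients, we get
\[\R'[\H\backslash\G/\H]^\Gamma=(\R')^\Gamma\otimes_{\mathbb{Z}[1/p]}\mathbb{Z}[1/p][\H\backslash\G/\H]=\R[\H\backslash\G/\H].\]
Taking centres commutes with $\Gamma$-invariants (as $\Gamma$ acts by ring automorphisms), and multiplying by the $\Gamma$-fixed central idempotent $\varepsilon_{n,\H}$ commutes with $(-)^\Gamma$; chaining these identifications gives the claim at level $n$.

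The only non-routine step I anticipate is the compatibility at the end of the first paragraph: checking that the ``categorical'' action of $\Gamma$ on $\mathfrak{Z}_{\G,\R',n}$ defined via $\sigma^*$ on $\Rep_{\R'}(\G)$ matches the obvious ``act-on-coefficients'' action on $\varepsilon_{n,\H}\Z(\R'[\H\backslash\G/\H])$ under the identification of Proposition \ref{Corembed}. This is not deep, but it must be done carefully by tracing through the progenerator $\varepsilon_{n,\H}\R'[\G/\H]$, since it is precisely where the hypothesis $\R=(\R')^\Gamma$ is used.
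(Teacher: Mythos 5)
Your argument is correct, and it differs from the paper's in the decisive step. You and the paper agree on the first two moves: the $\Gamma$-action is defined by functoriality (restriction of scalars along $\sigma\in\Gamma$), it preserves the depth decomposition, and the problem reduces to a fixed level $n$. At that point the paper invokes Proposition \ref{Corembed} to identify $\mathfrak{Z}_{\G,\R',n}$ with $\mathfrak{Z}_{\G,\R,n}\otimes_{\R}\R'$ ($\Gamma$-equivariantly), and then deduces $\mathfrak{Z}_{\G,\R,n}\xrightarrow{\sim}(\mathfrak{Z}_{\G,\R,n}\otimes_{\R}\R')^{\Gamma}$ by tensoring the exact sequence $0\to\R\to\R'\to(\R')^{\oplus\Gamma}$ with $\mathfrak{Z}_{\G,\R,n}$, which is flat over $\R$ by the preceding corollary; this is exactly where the hypothesis that $\R$ is flat over $\mathbb{Z}[1/p]$ is used. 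You instead perform Galois descent directly on the Hecke-algebra presentation $\mathfrak{Z}_{\G,\R,n}=\varepsilon_{n,\H}\Z(\R[\H\backslash\G/\H])$, exploiting that $\R'[\H\backslash\G/\H]$ is free over $\R'$ on the $\Gamma$-fixed double-coset basis. Your route buys independence from the flatness argument (neither flatness of $\mathfrak{Z}_{\G,\R,n}$ over $\R$ nor, in principle, flatness of $\R$ over $\mathbb{Z}[1/p]$ is needed beyond what the presentation itself requires), at the cost of the compatibility you flag and do not carry out: matching the categorical $\Gamma$-action with the coefficientwise action on $\varepsilon_{n,\H}\Z(\R'[\H\backslash\G/\H])$ through the progenerator $\varepsilon_{n,\H}\R'[\G/\H]$. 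The paper needs an analogous but lighter statement ($\Gamma$-equivariance of the base-change map of Proposition \ref{Corembed}), which it dismisses as holding ``by construction''; your check is of the same routine nature (twisting $Q_n\otimes\R'$ by $\sigma$ is untwisted by a semilinear automorphism because $Q_n$ is defined over $\mathbb{Z}[1/p]$), so this is a deferred verification rather than a gap.

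One justification should be repaired: ``taking centres commutes with $\Gamma$-invariants because $\Gamma$ acts by ring automorphisms'' is not the right reason — for a general ring with $\Gamma$-action, $\Z(A')^{\Gamma}$ and $\Z\bigl((A')^{\Gamma}\bigr)$ need not coincide. What saves you here is the same $\Gamma$-fixed basis you already used: writing $A=\R[\H\backslash\G/\H]$ and $A'=\R'[\H\backslash\G/\H]$, one has $(A')^{\Gamma}=A$, and since $A'$ is spanned over the central scalars $\R'$ by elements of $A$, any element of $\Z(A)$ is central in $A'$; hence $\Z(A')^{\Gamma}=\Z(A')\cap A=\Z(A)$, and the $\Gamma$-fixed central idempotent $\varepsilon_{n,\H}$ passes through invariants, giving $(\varepsilon_{n,\H}\Z(A'))^{\Gamma}=\varepsilon_{n,\H}\Z(A)$ as you need.
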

\begin{proof}
The action is induced by functoriality of $\R'\mapsto \mathfrak{Z}_{\G,\R'}$ (for $\R'$ in
the category of $\R$-algebras) and preserves the decomposition
$\mathfrak{Z}_{\G,\R'}=\prod_{n} \mathfrak{Z}_{\G,\R',n}$ since it is induced from
$\R$. By construction, the natural map
$\mathfrak{Z}_{\G,\R,n}\otimes_{\R}\R'\rightarrow \mathfrak{Z}_{\G,\R',n}$
is $\Gamma$-equivariant if we let $\Gamma$ act on the source via its action on $\R'$.
By Proposition \ref{Corembed}, it thus suffices to prove that
the canonical map $\mathfrak{Z}_{\G,\R,n}\rightarrow (\mathfrak{Z}_{\G,\R,n}\otimes_{\R}\R')^{\Gamma}$ is an
isomorphism. Writing $\R=(\R')^{\Gamma}$ as the kernel of the map $\R'\rightarrow
(\R')^{\oplus\Gamma}$,
$a\mapsto (a,\cdots,a)-(\gamma(a))_{\gamma\in\Gamma}$, this follows from
flatness of $\mathfrak{Z}_{\G,\R,n}$ over $\R$, as in the previous corollary.
\end{proof}

Note that the last corollary applies in particular to the case where $\R$ is a field and
$\R'$ is a Galois extension with group $\Gamma$, or when $\R$ is a Dedekind ring flat over
$\mathbb{Z}$ and $\R'$ is its normalization in a Galois extension of its field of fractions
with group $\Gamma$.

\section{The Bernstein decomposition in banal characteristics}

The aim of this section is to prove an integral version of Bernstein's decomposition
theorem (which originally holds over $\mathbb{C}$ or $\overline{\mathbb{Q}}$). This will be done
after inverting the so-called ``non-banal'' primes, whose definition is recalled in
Subsection 4.4.  Note  that no banal hypothesis is
required in the definitions and results of subsections 4.1 and 4.2. 

\subsection{Parabolic induction and cuspidal support}
Let~$\P$ be a parabolic subgroup of~$\G$ and let~$\P=\M\U$ be a Levi decomposition of~$\P$.  We write~$\W_\M$ for the Weyl group of~$\M$ and~$\Z_\M$ for its centre.  We set~$\Z=\Z_\G$ and~$\W=\W_\G$.

Write~$I^\G_{\M,\P}$ for the (non-normalized) \emph{parabolic induction functor}~$I^\G_{\M,\P}:\Rep_\R(\M)\rightarrow\Rep_{\R}(\G)$, and~$R^\G_{\M,\P}$ for its left adjoint the (non-normalized) \emph{parabolic restriction functor}~$R^\G_{\M,\P}:\Rep_\R(\G)\rightarrow\Rep_\R(\M)$. These functors are exact, by \cite[II 2.1]{Vig96}.  A much subtler property is the so-called ``second-adjointness'' of parabolic functors, famously established by Bernstein for~complex representations and which we recently extended to all~$\mathbb{Z}[1/p]$-algebras:

\begin{theorem}[{\cite[Corollary 1.3]{DHKMfiniteness}}]\label{secadj}
For all pairs of opposite parabolic subgroups~$(\P, \P^{\circ})$ in~$\G$ with common Levi component~$\M=\P\cap\P^\circ$, the twisted opposite Jacquet functor~$\delta_{\P} R^{\G}_{\M,\P^{\circ}}:\Rep_{\R}(\G)\rightarrow \Rep_{\R}(\M)$ is right adjoint to the parabolic induction functor~$I_{\M,\P}^{\G}:\Rep_\R(\M)\rightarrow \Rep_{\R}(\G)$, where $\delta_\P$ denotes the modulus character of~$\P$.
\end{theorem}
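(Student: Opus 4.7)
The plan is to produce a natural isomorphism
\[\Hom_{\R[\G]}\bigl(I^{\G}_{\M,\P}(\sigma),\pi\bigr)\risom \Hom_{\R[\M]}\bigl(\sigma,\delta_\P R^{\G}_{\M,\P^\circ}(\pi)\bigr)\]
by first constructing a candidate counit $\epsilon\colon I^{\G}_{\M,\P}\circ \delta_\P R^{\G}_{\M,\P^\circ}\Rightarrow \mathrm{id}$ and then verifying that it induces the required Hom-bijection. I would first reduce to the universal coefficient ring $\R=\mathbb{Z}[1/p]$: both functors are exact and commute with flat base change, and the formation of the counit is itself natural in $\R$, so the universal case over $\mathbb{Z}[1/p]$ implies the general case.

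Next, exploit the decomposition by level of Section~\ref{leveldecomp}, which is preserved under a suitable matching of depths by both $I^{\G}_{\M,\P}$ and $R^{\G}_{\M,\P^\circ}$. This reduces the problem, one level $n$ at a time, to a statement about Hecke bimodules attached to the progenerators $Q_{n,\mathbb{Z}[1/p]}$. These progenerators are compactly induced from compact open subgroups $\K$, and by passing to a cofinal family one may assume $\K$ admits an Iwahori-type decomposition with respect to $(\P,\P^\circ)$. The modulus twist by $\delta_\P$ is exactly what balances the unipotent integrals on each side, and on such $\K$ one obtains an explicit intertwiner realising $\epsilon$ via the Hecke-algebra matching from Bernstein's original argument.

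The hard part is showing that $\epsilon$ is a genuine isomorphism at the integral level rather than merely after inverting $\ell$. Over $\Qbar$ this is Bernstein's classical second-adjointness; the issue over $\mathbb{Z}[1/p]$ is that parabolic restriction of a large injective or projective object can in principle carry $\ell$-torsion that obstructs a direct argument. The strategy is to bootstrap from the characteristic-zero statement using two inputs: flatness of the level-$n$ integral centres $\mathfrak{Z}_{\G,\mathbb{Z}[1/p],n}$ over $\mathbb{Z}[1/p]$ (the corollary following Proposition~\ref{Corembed}), so that isomorphism can be detected locally at each prime $\ell\neq p$; and finiteness of the relevant Hecke modules as modules over their centres. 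This last finiteness, crucially used in \cite{DHKMfiniteness}, is ultimately supplied by the Fargues--Scholze spectral action, and without it there is no formal mechanism for transferring the adjunction from the generic fibre to the integral model without introducing spurious torsion.
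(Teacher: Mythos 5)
You should first note that the paper does not actually prove Theorem \ref{secadj}: it is imported verbatim from \cite{DHKMfiniteness} (Corollary 1.3 there), and Remark \ref{remarkFSdependence} records that the proof in that reference rests on the main result of Fargues--Scholze, while an older, purely representation-theoretic proof via type theory (\cite{datfinitude}) covers groups for which generic types are exhaustive. So there is no internal argument to compare with; the question is whether your sketch would constitute an independent proof, and it would not. Two steps are genuine gaps. First, your reduction to $\R=\mathbb{Z}[1/p]$ is justified by ``flat base change'', but an arbitrary $\mathbb{Z}[1/p]$-algebra $\R$ is not flat over $\mathbb{Z}[1/p]$; the correct reduction goes through the fact that the parabolic functors and the candidate counit commute with \emph{arbitrary} scalar extension (equivalently, through a stabilization statement about integrally defined objects such as $\ind_\K^\G\mathbb{Z}[1/p]$ for $\K$ with Iwahori factorization), not through flatness.

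Second, and more seriously, the step you label ``the hard part'' is not an argument. Checking that the counit is an isomorphism ``locally at each prime $\ell\neq p$'' means proving second adjointness in characteristic $\ell$ (including non-banal $\ell$), and that is exactly the content that Bernstein's theorem does not give: his stabilization rests on uniform admissibility over a characteristic-zero field, and flatness of the level-$n$ centres $\mathfrak{Z}_{\G,\mathbb{Z}[1/p],n}$ says nothing about the counit being an isomorphism after reduction mod $\ell$, nor are the relevant Hom-modules known to be torsion free at that stage. What closes this gap in \cite{DHKMfiniteness} is the finiteness theorem for Hecke algebras over Noetherian central subalgebras constructed from the ring of functions on the moduli space of Langlands parameters via the Fargues--Scholze morphism, followed by a Bernstein-style stabilization argument; invoking ``the Fargues--Scholze spectral action'' as a black box at the end of your sketch amounts to citing the very result you set out to prove. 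Either cite \cite[Corollary 1.3]{DHKMfiniteness} outright (as the paper does), or, for groups admitting sufficient type theory (e.g.\ the classical groups with $p\neq 2$ relevant here), give the self-contained argument of \cite{datfinitude}, which replaces the finiteness input by explicit properties of types and avoids Fargues--Scholze altogether.
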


\begin{remark}\label{remarkFSdependence}
The proof of \cite[Corollary 1.3]{DHKMfiniteness} uses the main result of Fargues--Scholze \cite{FarguesScholze}, however second-adjointness with coefficients in~$\mathbb{Z}[1/p]$-algebras was proved in \cite[Th\'eor\`eme 1.5]{datfinitude} by purely representation theoretic methods, under the hypothesis that some loose form of type theory exists for $\G$. In particular, these methods apply to classical groups (with~$p\neq 2$, by \cite{St08}), or any group for which  Yu's construction of ``generic types'' is exhaustive for $\G$ (e.g.,~when $p$ does not divide the order of the Weyl group, by Fintzen \cite{Fintzen}). 
\end{remark}

We fix a choice of square root~$\sqrt{q}$ of~$q$.  When~$\R$ is a~$\mathbb{Z}[\sqrt{q}^{-1}]$-algebra, we consider the normalized variants of the parabolic functors~$i_{\M,\P}^\G(-):=I_{\M,\P}^\G(\delta_\P^{1/2}\otimes-)$ and~$ r^\G_{\M,\P}(-):=\delta_\P^{-1/2}\otimes R^\G_{\M,\P}(-)$.  As the \emph{modulus character}~$\delta_\P$ of~$\P$ takes values in~$q^{\mathbb{Z}}$, these are well-defined as we have fixed~$\sqrt{q}$.  

Let~$\G^\circ$ denote the subgroup of~$\G$ generated by all compact open subgroups, it is open and normal in~$\G$, cf.~\cite[I 1.3]{Vig96}.  Moreover, as in ibid., $\G/\G^\circ$ is a free abelian group of finite rank equal to the rank of a maximal~$\F$-split torus in the centre of~$\G$; and~$\G/\Z\G^\circ$ is finite.  We define the \emph{set of unramified~$\R$-valued characters of~$\G$} by
\[\Psi_\G(\R):=\{\chi:\G\rightarrow \R^\times:\chi_{|\G^\circ}=1\}.\]
We can identify~$\Psi_\G(\R)$ with the group of $\R$-points of the algebraic torus $\Psi_{G}:=\Spec(\mathbb{Z}[\G/\G^\circ])$.  



We recall some preliminary definitions and results from Vign\'eras' book \cite{Vig96}.  
\begin{definition}
\begin{enumerate}
\item Let~$\H$ be a locally profinite group, and suppose~$\R$ is Noetherian.  An~$\R[\H]$-module~$\pi$ is called \emph{admissible} if~$\pi^\K$ is finitely generated for all compact open subgroups~$\K$ of~$\H$.
\item An~$\R[\G]$-module is called \emph{cuspidal} if it is admissible and is annihilated by all proper parabolic restrictions (that is, by all parabolic restrictions defined by a proper parabolic subgroup of~$\G$).
\end{enumerate}
\end{definition}

 When an~$\R[\H]$-module~$\pi$ has a central character, we denote this character by~$\omega_\pi$.

\begin{theorem}[{\cite[II 2.8]{Vig96}}]
Suppose~$\R$ is an algebraically closed field and~$\pi$ is a simple smooth~$\R[\G]$-module.   Then~$\pi$ is admissible,~$\End_{\R[\G]}(\pi)=\R$, and~$\pi$ has a central character. 
\end{theorem}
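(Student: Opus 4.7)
My plan is to deduce (b) and (c) directly from admissibility and then focus on proving (a) by induction on semisimple $\F$-rank.

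Assuming (a), statement (b) follows by a Schur-type argument: any $\phi\in\End_{\R[\G]}(\pi)$ preserves each invariant subspace $\pi^\K$, which is finite-dimensional by admissibility. Choosing a compact open $\K$ with $\pi^\K\neq 0$, the restriction $\phi|_{\pi^\K}$ has an eigenvalue $\lambda\in\R$ because $\R$ is algebraically closed, so $\phi-\lambda\cdot\mathrm{id}_\pi$ is an $\R[\G]$-endomorphism of $\pi$ with nonzero kernel and hence vanishes by simplicity. Statement (c) is then immediate: each $z\in\Z(\G)$ acts on $\pi$ as an $\R[\G]$-module endomorphism, hence as an element of $\R^\times$, defining the central character $\omega_\pi$.

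For admissibility, I would induct on the semisimple $\F$-rank of $\G$. Fix a nonzero $v\in \pi^\K$ for some compact open $\K$; by simplicity $\pi=\R[\G]v$. If $\pi$ is not cuspidal, there exists a proper parabolic $\P=\M\U$ with $R^\G_{\M,\P}(\pi)\neq 0$. This Jacquet module is finitely generated over $\R[\M]$ (parabolic restriction preserves finite generation by Bernstein's argument, using a finite cover of $\G/\K$ by $\M$-orbits modulo a small compact subgroup of $\M$), so it admits an irreducible quotient $\tau$. Frobenius reciprocity then supplies a nonzero, hence injective, map $\pi\hookrightarrow I^\G_{\M,\P}(\tau)$. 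The inductive hypothesis applied inside $\M$ yields admissibility of $\tau$, and parabolic induction preserves admissibility via the Iwasawa decomposition $\G=\P\K_0$ for a good maximal compact subgroup $\K_0$, so $\pi$ is admissible as a submodule of an admissible representation.

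The main obstacle is the cuspidal base case of the induction: proving admissibility of a simple cuspidal $\pi$ when $\G$ has positive semisimple rank. Here my plan is to exploit the level decomposition of Section~\ref{leveldecomp}: since $\pi$ lies in some $\Rep_\R(\G)_n$, it is a simple quotient of the progenerator $Q_{n,\R}$, hence is generated by a finite-dimensional space of vectors fixed by one of the explicit compact open subgroups $\G_{x,r_m}$ appearing in the construction of $Q_n$. For any compact open $\K$ contained in such a subgroup, $\pi^\K$ is a simple module over the Hecke algebra $\varepsilon_{n,\K}\R[\K\backslash\G/\K]\varepsilon_{n,\K}$, and cuspidality of $\pi$ combined with simplicity will force this module to be finite-dimensional, since cuspidality restricts the action to the ``cuspidal part'' of the Bernstein-type decomposition of the Hecke algebra, whose simple modules should be finite-dimensional over $\R$. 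Making this last step precise in the modular setting — without the full strength of characteristic-zero matrix-coefficient arguments — is the delicate point and is where the real work lies.
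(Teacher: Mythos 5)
Your deductions of Schur's lemma and the central character from admissibility are fine, and your non-cuspidal reduction (finite generation of the Jacquet module, an irreducible quotient $\tau$, Frobenius reciprocity giving $\pi\hookrightarrow I^\G_{\M,\P}(\tau)$, admissibility of parabolic induction, induction on semisimple rank) is exactly the standard first half of the argument. But the theorem is not proved in the paper at all --- it is quoted from \cite{Vig96}, II 2.8 --- and the half you leave open, the cuspidal case, is precisely where the entire difficulty of that theorem lives, so what you have written is not yet a proof.

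Concretely, the proposed treatment of the cuspidal base case is circular. The level decomposition of Section \ref{leveldecomp} only tells you that $\pi$ is a quotient of the finitely generated projective $Q_{n,\R}$, i.e.\ that $\pi$ is generated by finitely many vectors fixed by some $\G_{x,r_m}$; it gives no bound whatsoever on $\dim_\R \pi^{\K}$. Your key assertion that the ``cuspidal part'' of $\varepsilon_{n,\K}\R[\K\backslash\G/\K]\varepsilon_{n,\K}$ has only finite-dimensional simple modules is essentially a restatement of the admissibility of cuspidal irreducibles, not an input you can invoke: in the modular setting the finiteness properties of these Hecke algebras (finiteness over their centres, description of the cuspidal blocks) are deep results that are proved \emph{after} one knows admissibility and Schur's lemma (indeed Theorem \ref{noetherianness} of this paper is downstream of such facts), and even finiteness over the centre would not finish the job without already knowing the centre acts by scalars, which is Schur's lemma again. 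The route that actually works, and is the content of Vign\'eras' proof that the paper is relying on, is: a finitely generated cuspidal representation is compact modulo centre (Jacquet's lemma applied to the vanishing of all proper Jacquet modules --- a purely algebraic argument valid over $\R$), and then, separately, one proves that a $\Z$-compact irreducible (or finitely generated) smooth representation is admissible; this last step is genuinely nontrivial when $\R$ has positive characteristic, since the characteristic-zero shortcuts via formal degrees or square-integrable matrix coefficients are unavailable. Unless you supply an argument for ``cuspidal $\Rightarrow$ $\Z$-compact $\Rightarrow$ admissible'' (or an alternative of comparable strength), the induction has no base case and the proof is incomplete.
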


Suppose~$\R$ is an algebraically closed field and~$\pi$ is a simple~$\R[\G]$-module.  The \emph{cuspidal support} (resp.~\emph{supercuspidal support}) of~$\pi$ consists of all pairs~$(\M,\rho)$, where~$\M$ is a Levi subgroup of~$\G$ and~$\rho$ is a simple cuspidal (resp.~supercuspidal)~$\R[\M]$-module, such that there is a parabolic subgroup~$\P=\M\U$ with~$\pi$ a submodule (resp.~subquotient) of~$i^\G_{\M,\P}(\rho)$.
\begin{theorem}[{\cite[II 2.20]{Vig96}}]
Suppose~$\R$ is an algebraically closed field.  Then the cuspidal support of a simple~$\R[\G]$-module is unique up to conjugacy, i.e.~any two cuspidal supports of a given simple~$\R[\G]$-module are conjugate in~$\G$.
\end{theorem}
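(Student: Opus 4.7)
The plan for existence is to induct on the semisimple rank of~$\G$. If~$\pi$ is already cuspidal, then~$(\G,\pi)$ is a cuspidal support. Otherwise, there is a proper parabolic~$\P=\M\U$ with~$r^\G_{\M,\P}(\pi)\neq 0$; since~$\pi$ is simple (hence finitely generated) and parabolic restriction preserves finite generation, we may pick a simple quotient~$\sigma$ of~$r^\G_{\M,\P}(\pi)$. First adjointness (the normalized counterpart of the adjunction~$R^\G_{\M,\P}\dashv I^\G_{\M,\P}$) then produces a nonzero map~$\pi\to i^\G_{\M,\P}(\sigma)$, which is injective by simplicity of~$\pi$. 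The induction hypothesis applied to~$\sigma$ on~$\M$ yields a cuspidal support~$(\L,\rho)$ with~$\sigma\hookrightarrow i^\M_{\L}(\rho)$, and exactness of~$i^\G_{\M,\P}$ together with transitivity of parabolic induction produces the desired embedding~$\pi\hookrightarrow i^\G_{\L}(\rho)$.

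\textbf{Paragraph 2 (Uniqueness via the geometric lemma).} Suppose~$(\M_1,\rho_1)$ and~$(\M_2,\rho_2)$ are two cuspidal supports, with embeddings~$\pi\hookrightarrow i^\G_{\M_i,\P_i}(\rho_i)$. First adjointness applied to the first embedding yields a surjection~$r^\G_{\M_1,\P_1}(\pi)\twoheadrightarrow\rho_1$. Applying the exact functor~$r^\G_{\M_1,\P_1}$ to the second embedding gives an injection~$r^\G_{\M_1,\P_1}(\pi)\hookrightarrow r^\G_{\M_1,\P_1}\,i^\G_{\M_2,\P_2}(\rho_2)$, so~$\rho_1$ is a Jordan--H\"older constituent of~$r^\G_{\M_1,\P_1}\,i^\G_{\M_2,\P_2}(\rho_2)$. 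The Bernstein--Zelevinsky geometric lemma filters this composite by~$(\P_1,\P_2)$-double cosets in~$\G$, indexed by representatives~$w\in\W_{\M_1}\backslash\W/\W_{\M_2}$, with graded pieces
\[F_w\;\cong\;i^{\M_1}_{\M_1\cap w\M_2 w^{-1}}\circ\mathrm{Ad}(w)\circ r^{\M_2}_{w^{-1}\M_1 w\cap\M_2}.\]
Cuspidality of~$\rho_2$ kills~$F_w(\rho_2)$ unless~$w\M_2 w^{-1}\subseteq\M_1$, in which case~$F_w(\rho_2)\cong i^{\M_1}_{w\M_2 w^{-1}}(\rho_2^w)$.

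\textbf{Paragraph 3 (Main obstacle and conclusion).} The main obstacle is to upgrade the inclusion~$w\M_2 w^{-1}\subseteq\M_1$ to equality, by ruling out the possibility that the cuspidal simple~$\rho_1$ appears as a Jordan--H\"older constituent of a proper parabolic induction~$i^{\M_1}_{\L}(\rho_2^w)$ with~$\L\subsetneq\M_1$. First adjointness shows~$\rho_1$ is not a submodule of any such induction, and second adjointness (Theorem~\ref{secadj}) shows it is not a quotient. To exclude intermediate constituents I would invoke finite length of admissible parabolic inductions (which follows from admissibility of~$\rho_2^w$ together with admissibility preservation of the~$i,r$ functors) and iterate a socle/cosocle truncation: peeling off the socle and cosocle strictly decreases the length while preserving any hypothetical occurrence of~$\rho_1$ as a Jordan--H\"older factor, yielding a contradiction. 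Alternatively, one can apply the geometric lemma a second time to compute~$r^{\M_1}_{\L}\,i^{\M_1}_{\L}(\rho_2^w)$ and compare with~$r^{\M_1}_{\L}(\rho_1)=0$ to force a Weyl-group constraint inconsistent with~$\L\subsetneq\M_1$. Once~$w\M_2 w^{-1}=\M_1$ is secured, the piece~$F_w(\rho_2)\cong\rho_2^w$ is itself simple and cuspidal, so~$\rho_1\cong\rho_2^w$, whence~$(\M_1,\rho_1)$ and~$(\M_2,\rho_2)$ are~$\G$-conjugate.
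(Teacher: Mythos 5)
The paper does not prove this statement at all: it simply cites \cite[II 2.20]{Vig96}, so your proposal has to be judged on its own. Paragraphs 1 and 2 are fine and standard (existence by induction on rank plus Frobenius reciprocity; the geometric lemma reducing to graded pieces $i^{\M_1}_{\Q_w}(\rho_2^w)$ with $w\M_2w^{-1}\subseteq \M_1$). The problem is Paragraph 3. The statement you reduce everything to --- that a cuspidal simple module $\rho_1$ cannot occur as a Jordan--H\"older constituent of a proper parabolic induction $i^{\M_1}_{\L}(\rho_2^w)$ with $\L\subsetneq\M_1$ and $\rho_2^w$ cuspidal --- is exactly the assertion that every cuspidal irreducible representation is supercuspidal, and this is \emph{false} when $\R$ has characteristic $\ell>0$ non-banal (e.g.\ for $\GL_2(\F)$ with $\ell\mid q+1$ there are cuspidal irreducible $\Fl$-representations occurring as subquotients of principal series). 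The paper's own remark immediately after the theorem is precisely about this distinction between cuspidal and supercuspidal support (and cites Dat--Dudas for the failure of uniqueness of the latter), so any proof valid over an arbitrary algebraically closed field cannot pass through your intermediate claim.

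Your adjunction arguments are correct as far as they go: first adjointness excludes $\rho_1$ from the socle of $i^{\M_1}_{\L}(\rho_2^w)$ and second adjointness from the cosocle. But the "socle/cosocle peeling" does not iterate: once you replace the module by $\mathrm{rad}/\mathrm{soc}$, it is no longer a parabolically induced module, so there is no adjunction left to exclude $\rho_1$ from the socle or cosocle of the truncation --- and indeed the cuspidal constituents that do occur mod $\ell$ sit exactly in such intermediate layers. The alternative you sketch (computing $r^{\M_1}_{\L}i^{\M_1}_{\L}(\rho_2^w)$ and comparing with $r^{\M_1}_{\L}(\rho_1)=0$) also gives no contradiction: exactness only says that $r^{\M_1}_{\L}$ of a subquotient is a subquotient of $r^{\M_1}_{\L}$ of the whole module, which is perfectly compatible with it being zero. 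A correct proof (as in Vign\'eras) must exploit the \emph{submodule} condition in the definition of cuspidal support --- i.e.\ that $\rho_1$ is a quotient of $r^{\G}_{\P_1}(\pi)$ with $\pi$ itself a submodule of $i^{\G}_{\P_2}(\rho_2)$ --- rather than only the weaker fact that $\rho_1$ is some Jordan--H\"older constituent of $r^{\G}_{\P_1}i^{\G}_{\P_2}(\rho_2)$; your reduction throws away exactly the information that distinguishes this theorem from the (false mod $\ell$) uniqueness of supercuspidal support.
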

It is interesting that the supercuspidal support of a simple~$\R[\G]$-module is not necessarily unique up to conjugacy, cf.~\cite{DatDudas} for a counterexample when~$\G=\mathrm{Sp}_8(\F)$ and~$\R=\Fl$ for $\ell$ dividing $q^2+1$. 
However, when~$\R$ has  characteristic zero, any cuspidal
irreductible representation is supercuspidal, so the supercuspidal support of a
simple~$\R[\G]$-module is unique up to conjugacy. 

%
%
\subsection{Integral representations}
\begin{definition}
Let~$\H$ be a locally profinite group, and~$\cO$ be an integral domain with field of fractions~$\K$.  An admissible~$\K[\H]$-module~$\pi$ is called~\emph{$\cO$-integral} if there exists an~$\H$-stable~$\cO$-submodule~$L$ of~$\pi$ satisfying:
\begin{enumerate}
\item the natural map~$L\otimes_{\cO}\K\rightarrow \pi$ is an isomorphism;
\item $L$ is admissible as an~$\cO[\H]$-module.
\end{enumerate}
Such an~$L$ is called an~\emph{$\cO$-lattice} in~$\pi$.
\end{definition}
If~$\cO$ is a principal ideal domain, then by \cite[Appendice C]{Vig96} a lattice in an admissible~$\K[\G]$-module of countable dimension is~$\mathscr{O}$-free.   

\begin{remark}\label{latticesincuspsremark}
If~$L$ is a lattice (a~$\Zl$-lattice) in an integral simple~$\Ql[\G]$-module, then it is realisable over a principal ideal domain: the ring of integers~$\mathscr{O}$ of a finite extension of the maximal unramified extension~$\mathbb{Q}_{\ell}^{\ur}$ of~$\mathbb{Q}_\ell$ by \cite[II 4.9, 4.10]{Vig96}, and hence the lattice~$L$ is~$\mathscr{O}$-free, so the definition of integrality is consistent with \cite[II 4.11]{Vig96}.
\end{remark}

\begin{proposition}\label{integralreps}
Let~$\pi$ be an irreducible cuspidal~$\Qbar[\G]$-module, and suppose that~$\omega_\pi$ has
finite order.  Then there exists a number field~$\K$, such that~$\pi$ is realisable
over~$\K$ and~$\cO_\K$-integral.  Moreover,
any stable~$\cO_\K$-lattice in~$\pi$  is projective as an~$\cO_\K$-module. 
\end{proposition}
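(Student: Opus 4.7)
The plan is to proceed in three steps: realise $\pi$ over a number field, construct an $\cO_\K$-lattice, and show that an arbitrary stable lattice is $\cO_\K$-projective.

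First, by admissibility, pick a pro-$p$ compact open subgroup $J$ with $\pi^J \neq 0$, so that $\pi^J$ is a finite-dimensional simple module over the Hecke algebra $\mathcal{H}_J = e_J \Qbar[\G] e_J$. Since $\pi$ is cuspidal with finite-order central character, its matrix coefficients are compactly supported modulo the centre $\Z$, and $\omega_\pi$ takes values in roots of unity; hence only finitely many $J$-$J$ double cosets modulo $\Z$ act nontrivially on $\pi^J$. Let $\K$ be the number field generated by the values of $\omega_\pi$ together with the matrix entries of a generating family of such Hecke operators acting on $\pi^J$. Taking $\K$ sufficiently large to cover a cofinal system of compact open subgroups yields a descent of the $\mathcal{H}_J$-module $\pi^J$ to $\K$, and therefore a descent of $\pi$ itself since $\pi$ is generated by $\pi^J$ as a $\G$-representation.

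Next, pick a nonzero $v \in \pi^J$ defined over $\K$ and let $L_0 = \cO_\K[\G]\cdot v$. It is smooth, and $L_0 \otimes_{\cO_\K} \K = \pi$ by irreducibility. For admissibility, note that $L_0^{J'} \subset \pi^{J'}$ lies in a finite-dimensional $\K$-space and is generated over $\cO_\K$ by the finitely many vectors of the form $f \cdot v$ where $f$ runs through representatives of the (finitely many, by cuspidality) double cosets $J' g J$ contributing nontrivially; the finite order of $\omega_\pi$ ensures integrality of the relevant coefficients. Thus $L_0$ is an $\cO_\K$-lattice, establishing $\cO_\K$-integrality.

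For projectivity, let $L \subset \pi$ be any stable $\cO_\K$-lattice. The strategy is to exhibit $L$ as a direct summand of a smooth $\cO_\K[\G]$-module that is manifestly $\cO_\K$-projective. Choosing a compact open subgroup $J$ with $\tau := L^J \neq 0$ generating $L$ as a $\G$-representation (possible after shrinking $J$), the module $\tau$ is finitely generated torsion-free over the Dedekind ring $\cO_\K$, hence $\cO_\K$-projective. By adjunction there is a natural $\G$-equivariant surjection $\ind_J^{\G}(\tau) \twoheadrightarrow L$, and over $\Qbar$ the projectivity of the cuspidal $\pi$ in its Bernstein block provides a splitting. Granting that the splitting can be realised integrally, $L$ becomes a direct summand of $\ind_J^{\G}(\tau) \cong \bigoplus_{g \in J \backslash \G} g \cdot \tau$, which is $\cO_\K$-projective as a direct sum of projectives, and thus $L$ is $\cO_\K$-projective. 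The main obstacle will be the integrality of the splitting idempotent: the finite order of $\omega_\pi$ together with the compact support of matrix coefficients modulo $\Z$ should ensure that the splitting map --- given by an integral of matrix coefficients --- takes values in $\cO_\K$, so the corresponding idempotent in $\End_{\cO_\K[\G]}(\ind_J^{\G}(\tau))$ is integral.
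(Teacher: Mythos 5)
Your lattice construction is where the real content of the proposition lies, and it is not established. Setting $L_0=\cO_\K[\G]\cdot v$, the point to prove is admissibility: each $L_0^{J'}$ must be a \emph{finitely generated} $\cO_\K$-module, and sitting inside the finite-dimensional space $\pi^{J'}$ is not enough (an $\cO_\K$-submodule of a finite-dimensional $\K$-vector space need not be finitely generated --- think of $\cO_\K[1/p]$ inside $\K$). Your justification --- only finitely many double cosets $J'gJ$ contribute, and the finite order of $\omega_\pi$ ``ensures integrality of the relevant coefficients'' --- does not bound denominators: to express a $J'$-invariant element of $L_0$ in terms of the finitely many integral Hecke translates $[J'gJ]\cdot v$ one has to average over $J'$, which introduces the $p$-power denominators $[J':J'\cap{}^{g}J]^{-1}$, and $p$ is \emph{not} invertible in $\cO_\K$; nothing in your argument excludes elements of $L_0\cap\pi^{J'}$ that are arbitrarily $p$-divisible multiples of a fixed vector. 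The missing ingredient is a boundedness statement, and this is exactly what the paper's proof supplies: using cuspidality ($\Z$-compactness) and the finite order of $\omega_\pi$, it embeds $\pi$ via a matrix coefficient into $\mathcal{C}_c^\infty(\S\backslash\G,\K)$ for a cocompact subgroup $\S$, and takes $L$ to be the intersection with the $\cO_\K$-valued functions; the $J'$-invariants then land in $\mathcal{C}^\infty(\S\backslash\mathfrak{S}/J',\cO_\K)$ for a single compact-mod-$\S$ set $\mathfrak{S}$, a finite free $\cO_\K$-module, and admissibility is immediate. Some device of this kind is indispensable. (Your first step is also thinner than it looks: matrix entries of finitely many Hecke operators lying in $\K$ gives a $\K$-form of $\pi^{J}$, but the passage to a model of $\pi$ itself needs an argument; the paper simply invokes \cite[II 4.9]{Vig96}.)

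For the final assertion your route is simultaneously too ambitious and unsupported. The proposition only asks that a stable lattice $L$ be projective as an $\cO_\K$-\emph{module}, yet you aim for a $\G$-equivariant splitting of $\ind_J^{\G}(L^{J})\twoheadrightarrow L$ ``realised integrally'', and you yourself flag the integrality of the splitting idempotent as the obstacle. It is a genuine one: over $\Qbar$ the splitting comes from the cuspidal block projector (formal degrees), whose denominators involve the non-banal primes; this is precisely why the paper proves equivariant projectivity of such lattices only over $\cO_\K[1/\N_\G][\G^\circ]$, i.e.\ after inverting $\N_\G$ (Proposition \ref{banalcuspidalZproj}), and one should not expect it over $\cO_\K$ itself. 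The statement actually needed has a much softer proof, requiring no equivariance at all and no choice of splitting of the induced module: for every compact open $\U$, $L^{\U}$ is a finitely generated torsion-free module over the Dedekind ring $\cO_\K$, hence projective, and each quotient $L^{\U'}/L^{\U}$ is again torsion-free, so the inclusions $L^{\U}\subseteq L^{\U'}$ split; writing $L=\lim_{\rightarrow}L^{\U}$ over a cofinal chain of pro-$p$ compact open subgroups, $\Hom_{\cO_\K}(L,-)=\lim_{\leftarrow}\Hom_{\cO_\K}(L^{\U},-)$ is a limit along split surjections, and $L$ is $\cO_\K$-projective. This is the paper's argument, and it is the one you should use.
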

See \cite[I 9.4]{Vig96} for an analogue with coefficients in a principal complete local ring.

\begin{proof}
As~$\omega_\pi$ is algebraic over~$\mathbb{Q}$,~$\pi$ is realisable over a number field~$\K$ by \cite[II 4.9]{Vig96}.   As~$\omega_{\pi}$ has finite order, it is is trivial on some cocompact subgroup~$\S$ of~$\G$.  Let~$\widetilde{v}\in \Hom_{\K}(\pi,\K)^\infty$ be a (smooth) vector in the~$\K$-contragredient of~$\pi$.   As~$\pi$ is cuspidal it is~$\Z$-compact, and the map~$\varphi:v\mapsto (g\mapsto \widetilde{v}(\pi(g)v))$ defines an embedding~$\varphi:\pi\hookrightarrow \mathcal{C}_c^\infty(\S\backslash \G,\K)$ from~$\pi$ to~$\mathcal{C}_c^\infty(\S\backslash \G,\K)$ the space of compactly supported smooth functions~$\S\backslash \G\rightarrow \K$.

We consider~$L:=\varphi(\pi)\cap \mathcal{C}_c^\infty(\S\backslash \G,\cO_\K)$ the image of~$\pi$ in the~$\cO_\K$-valued functions.  We can make any element of~$\varphi(\pi)$ integrally valued by scaling it, hence~$L$ is non-zero and hence $L\otimes \K=\varphi(\pi)$ as~$\pi$ is irreducible.  For any compact open subgroup~$\U$ of~$\G$,~$\pi^\U$ is a finite dimensional~$\K$-vector space, and hence the image of~$\pi^\U\hookrightarrow \mathcal{C}_c^\infty(\S\backslash \G/\U,\K)$ is 
contained in~$\S\backslash \mathfrak{S}/\U$ for some compact mod~$\S$ subset~$\mathfrak{S}$ of~$\G$.  Hence~$L^\U=\varphi(\pi^\U)\cap \mathcal{C}_c^\infty(\S\backslash \G,\cO_\K)$ is an~$\cO_\K$-submodule of~$\mathcal{C}_c^\infty(\S\backslash \mathfrak{S}/\U,\cO_\K)$ which is finitely generated.  Hence~$L$ is an~$\cO_\K$-lattice.

Now, let $L$ be any stable~$\cO_\K$-lattice in $\pi$, and $\U$ as above.
As~$L^\U$ is a finitely generated torsion-free~$\cO_{\K}$-submodule of the finite
dimensional~$\K$-vector space $\pi^{\U}$,
it is projective.
Hence,~$L=\lim_{\rightarrow}L^{\U}$ is a colimit of projective~$\cO_{\K}$-modules, and we
can take the colimit over a system of compact open pro-$p$ subgroups of~$\G$,
thus~$\Hom_{\cO_{\K}}(L,-)=\lim_{\leftarrow}\Hom_{\cO_{\K}}(L^{\U},-)$ is a limit over
split surjections and~$L$ is projective. 

\end{proof}

Suppose~$\pi$ is a finite length integral~$\Ql[\G]$-module and~$L$ is a~$\Zl$-lattice in~$\pi$.  Then by the \emph{Brauer--Nesbitt principle} of Vign\'eras,~$L\otimes_{\Zl}\Fl$ has finite length as an~$\Fl[\G]$-module and its semisimplification is independent of the choice of~$L$.  We write~$r_{\ell}(\pi)$ for its semisimplification and call it the \emph{reduction modulo~$\ell$ of~$\pi$}.  Given an admissible~$\Zl[\G]$-module~$L$, we say that~$L$ \emph{lifts} the~$\Fl[\G]$-module~$L\otimes_{\Zl} \Fl$.

It is quite easy to see that a supercuspidal simple $\Ql[\G]$-module $\pi$ is integral if and only if its central character~$\omega_{\pi}$ takes values in~$\Zl^\times$, see \cite[II 4.12]{Vig96}.  A consequence of second-adjointness (Theorem \ref{secadj}), is that the Jacquet functor preserves admissibility over~$\R$.  This, and the easier admissibility of parabolic induction, shows that a simple~$\Ql[\G]$-module~$\pi$ is integral if and only if its supercuspidal support is integral \cite[Corollary 1.6]{DHKMfiniteness}.

\begin{proposition}\label{supercuspidalsappear}
Let~$\overline{\pi}$ be a simple supercuspidal~$\Fl[\G]$-module  Then there exists a simple integral cuspidal~$\Ql[\G]$-module~$\pi$ such that~$r_{\ell}(\pi)$ contains~$\overline{\pi}$ as a subquotient.
\end{proposition}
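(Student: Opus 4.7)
The plan is to use the cuspidality of $\overline\pi$, together with the exactness of parabolic restriction, to force cuspidality of a lift to characteristic zero.

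The key technical observation is that $r^\G_{\M,\P}$ is exact and commutes with filtered colimits (hence with $-\otimes_\Zl\Fl$): so if $\pi^\circ$ is a smooth $\Zl[\G]$-module with $\pi^\circ\otimes_\Zl\Fl$ cuspidal (meaning every Jordan--H\"older constituent is cuspidal), then $r^\G_{\M,\P}(\pi^\circ)$ is $\ell$-torsion for every proper $\P$, and therefore $r^\G_{\M,\P}(\pi^\circ\otimes_\Zl\Ql)=0$. Every irreducible subquotient of $\pi^\circ\otimes_\Zl\Ql$ is then cuspidal. Given such a $\pi^\circ$ that is also finitely generated and $\ell$-torsion-free, any simple quotient $\pi$ of $\pi^\circ\otimes_\Zl\Ql$ (which exists by Zorn's lemma applied to a finitely generated non-zero module) is cuspidal and integral, the image $L$ of $\pi^\circ$ in $\pi$ supplying the integral structure. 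Since $L/\ell L$ is a non-zero quotient (by Nakayama) of $\pi^\circ\otimes_\Zl\Fl$, a Brauer--Nesbitt comparison with any admissible lattice $L'\subseteq\pi$, via $\ell^k L\subseteq L'$ for suitable $k\geq 0$, places the Jordan--H\"older constituents of $L/\ell L$ among those of $L'/\ell L'=r_\ell(\pi)$.

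The heart of the argument is therefore the construction of an $\ell$-torsion-free, finitely generated $\Zl[\G]$-module $\pi^\circ$ with $\pi^\circ\otimes_\Zl\Fl$ cuspidal and containing $\overline\pi$ as a Jordan--H\"older constituent. The natural candidate is $\pi^\circ := \mathrm{c}\text{-}\mathrm{ind}_\J^\G\Lambda$, where $(\J,\overline\lambda)$ is a (super)cuspidal type for $\overline\pi$ in the sense that $\J$ is an open subgroup compact modulo centre, $\overline\lambda$ is a finite-dimensional simple $\Fl[\J]$-module, every irreducible subquotient of $\mathrm{c}\text{-}\mathrm{ind}_\J^\G\overline\lambda$ is supercuspidal, and $\overline\pi$ appears among these subquotients; and $\Lambda$ is an $\ell$-torsion-free $\Zl[\J]$-lift of $\overline\lambda$ (for instance a projective cover in the subcategory of $\Zl$-torsion-free smooth $\Zl[\J]$-modules). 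By transitivity of compact induction and exactness of $\mathrm{c}\text{-}\mathrm{ind}$, $\pi^\circ\otimes\Fl$ is then cuspidal and contains $\overline\pi$.

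The main obstacle is the existence of a suitable cuspidal type $(\J,\overline\lambda)$ together with its lift $\Lambda$: this type-theoretic input is provided for classical $p$-adic groups by Bushnell--Kutzko--Stevens, and more widely by Yu's construction of supercuspidal types together with Fintzen's exhaustion theorem. An alternative strategy would be to cut out the cuspidal summand of the Moy--Prasad progenerator $Q_{n,\Zl}$ at the appropriate depth via an integral Bernstein projector, but the integrality of such a projector is itself a delicate point that, in general, requires the information packaged in the integral Bernstein decomposition of Section~4.
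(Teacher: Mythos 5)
Your strategy is the classical type-theoretic one, and it is genuinely different from the paper's: the paper does not construct any compactly induced model of $\overline{\pi}$. Instead it takes a finitely generated projective $\Zl[\G]$-module surjecting onto $\overline{\pi}$ (depth decomposition), embeds it into a product of parabolic inductions $I^{\G}_{\M,\P}(\rho)$ with $\rho$ finitely generated, admissible, $\ell$-torsion-free and cuspidal over $\Zl$ (Lemma 3.4 of \cite{DHKMfiniteness}), and then uses the main result of \cite{DatDudas} together with supercuspidality of $\overline{\pi}$ to force $\M=\G$, so that $\overline{\pi}$ occurs in the reduction of an admissible cuspidal lattice and hence in $r_\ell(\pi)$ for some irreducible integral cuspidal constituent $\pi$ of its generic fibre. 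This detour is not cosmetic: the proposition is asserted for an arbitrary connected reductive $\G$, and the existence of a mod-$\ell$ supercuspidal type $(\J,\overline{\lambda})$ for $\overline{\pi}$, let alone one admitting an $\ell$-torsion-free lift, is only known under extra hypotheses (classical groups with $p\neq 2$, or tame groups via Yu--Fintzen). So at best your route proves the statement in restricted generality, which is exactly what the paper is trying to avoid (cf.\ Remark \ref{remarkFSdependence}).

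Even granting the type-theoretic input, several steps do not work as written. (i) If $\Lambda$ is only a projective cover of $\overline{\lambda}$, then $\Lambda\otimes\Fl$ has Jordan--H\"older constituents other than $\overline{\lambda}$, and nothing forces $\ind_{\J}^{\G}$ of those to have cuspidal subquotients; your claimed cuspidality of $\pi^\circ\otimes\Fl$ therefore needs a genuine lift $\Lambda\otimes\Fl\simeq\overline{\lambda}$, which is an additional nontrivial input. (ii) The inference ``all irreducible subquotients of $\pi^\circ\otimes\Fl$ cuspidal $\Rightarrow$ $r^{\G}_{\M,\P}(\pi^\circ)$ is $\ell$-torsion'' is only immediate for finite-length modules (by exactness); $\ind_{\J}^{\G}\Lambda\otimes\Fl$ is neither admissible nor of finite length a priori, and in the mod-$\ell$ world cuspidal constituents do occur inside proper parabolic inductions, so this vanishing needs a direct argument (for honest types one proves the vanishing of Jacquet modules geometrically, not via constituents). (iii) Integrality of your chosen simple quotient $\pi$ does not follow from $L$ being finitely generated over $\Zl[\G]$ and torsion-free: such an $L$ need not be admissible, hence need not be a lattice, and Nakayama does not apply to modules that are not finitely generated over $\Zl$; you would need the central-character criterion of \cite[II 4.12]{Vig96} here. (iv) Most seriously, even after fixing all of the above, knowing that $L/\ell L$ is a nonzero quotient of $\pi^\circ\otimes\Fl$ does not show that it contains $\overline{\pi}$: an arbitrary simple quotient of $\pi^\circ\otimes\Ql$ may have a reduction in which $\overline{\pi}$ does not occur. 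One must pick out the constituent of the generic fibre whose reduction actually contains $\overline{\pi}$, and that requires an admissibility or finite-length statement for the integral model --- precisely the finiteness that the paper imports from \cite{DHKMfiniteness} and that your construction does not supply.
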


\begin{proof}
Using the level decomposition, we can find a finitely generated projective~$\Zl[\G]$-module~$\Pi$ which surjects onto~$\overline{\pi}$ for some compact open subgroup~$\H$ of~$\G$.  By \cite[Lemma 3.4]{DHKMfiniteness}, we can embed~$\Pi$ into a product~$\prod_{(\P,\rho)} I^{\G}_{\M,\P}(\rho)$ of representations parabolically induced from finitely generated~$\ell$-torsion free cuspidal~$\Zl$-representations. Thus,~$\overline{\pi}$ is a subquotient of (at least) one of the~$I_{\M,\P}^{\G}(\rho)$.  Thus,~$\overline{\pi}$ is a subquotient of~$I_{\M,\P}^{\G}(\rho\otimes \Fl)$, and hence of some parabolic induction from a simple~$\Fl$-subquotient of~$\rho\otimes \Fl$ by the main result of \cite{DatDudas} (note that second adjointness is a hypothesis of this theorem so we are implicitly using deep techniques when we are out of the range of type theoretic constructions, cf.~Remark \ref{remarkFSdependence}).  Hence, by supercuspidality of~$\pi$, we have~$\M=\P=\G$.   Thus,~$\pi$ is a subquotient of the reduction of~$\rho$, and hence is a subquotient of the reduction modulo~$\ell$ of an irreducible integral~$\Ql$-representation.  
\end{proof}

\subsection{The Bernstein decomposition and centre}
We first introduce some notation:
\begin{definition}\label{def_inertial}
\begin{enumerate}
\item Let~$\M,\M'$ be Levi subgroups of~$\G$, and~$\rho,\rho'$ simple cuspidal~$\R$-representations of~$\M,\M'$ respectively.  We say that~$(\M,\rho)$ and~$(\M',\rho')$ are \emph{inertially equivalent (over~$\G$)} if there exists~$g\in\G$ and~$\chi\in\Psi_{\M'}(\R)$ such that
\[\M'=\M^g,\qquad \rho'=\chi\otimes\rho^g.\]
We write~$[\M,\rho]_\G$ for the inertial equivalence class (over~$\G$) of~$(\M,\rho)$.
\item Let~$\mathfrak{B}_\R(\G)$ denote the set of inertial equivalence classes of pairs~$(\M,\rho)$ consisting of a Levi subgroup~$\M$ of~$\G$ and a simple supercuspidal~$\R[\M]$-module~$\rho$.
\item We denote by~$\chi_{\univ,\M,\R}:\M\rightarrow \R[\M/\M^\circ]^\times$ the \emph{universal unramified~$\R$-valued character} of the Levi subgroup~$\M$.  In settings where the ring~$\R$ is clear, we write this more simply as~$\chi_{\univ,\M}$.
\end{enumerate}
\end{definition}

Following \cite{BD}, we recover the \emph{Bernstein decomposition} and an explicit description of the \emph{Bernstein centre}: 
\begin{theorem}\label{BernsteinDeligne}
Suppose~$\Kbar$ is an algebraically closed field of characteristic zero.\begin{enumerate}
\item The category~$\Rep_{\Kbar}(\G)$ decomposes as an infinite product 
\[\Rep_{\Kbar}(\G)=\prod_{[\M,\rho]_\G\in\mathfrak{B}_{\Kbar}(\G)}\Rep_\Kbar(\G)_{[\M,\rho]_\G}\]
of indecomposable full abelian subcategories $\Rep_\Kbar(\G)_{[\M,\rho]_\G}$ where a~$\Kbar[\G]$-module is an object of~$\Rep_\Kbar(\G)_{[\M,\rho]_\G}$ if and only if all of its simple subquotients have supercuspidal support in the inertial class~$[\M,\rho]_\G$.
\item \label{Bernstein2} Choose~$(\M,\rho)\in[\M,\rho]_\G$, then~$i_{\M,\P}^{\G}(\rho\otimes \chi_{\univ,\M})$ is a progenerator of~$\Rep_\Kbar(\G)_{[\M,\rho]_\G}$, and its isomorphism class in~$\Rep_\Kbar(\G)_{[\M,\rho]_\G}$ is independent of the choice of representative for the inertial class.
\item For~$[\M,\rho]_\G\in\mathfrak{B}_{\Kbar}(\G)$ with representative~$(\M,\rho)$, the subgroup~$\H_{(\M,\rho)}=\stab_{\Psi_\M(\Kbar)}(\rho)$ is finite and depends only on the inertial class~$[\M,\rho]_\M$.  Moreover, the subgroup
\[\W_{(\M,\rho)}=\{g\in \G: \M^g=\M,\text{ and  }(\M,\rho^g)\in[\M,\rho]_{\M}\}/\M\]
depends only on the inertial class~$[\M,\rho]_\M$.  
\item Let~$e_{[\M,\rho]}$ denote the idempotent of~$\mathfrak{Z}_{\Kbar}(\G)$ corresponding to the factor~$\Rep_{\Kbar}(\G)_{[\M,\rho]_\G}$ and set~$\mathfrak{Z}_{\G,\Kbar,[\M,\rho]_\G}=e_{[\M,\rho]_\G}\mathfrak{Z}_{\G,\Kbar}$.  The space of representations of~$\M$ inertially equivalent to~$\rho$ is a torsor for the torus~$\Psi_\M(\Kbar)/\H_{(\M,\rho)}=\Spm(\Kbar[\M/\M^\circ]^{\H_{(\M,\rho)}})$ with an action of the finite group~$\W_{(\M,\rho)}$, and the centre~$\mathfrak{Z}_{\G,\Kbar,[\M,\rho]_\G}$ is the ring of~$\W_{(\M,\rho)}$-invariant regular functions on this torus.  In particular, a choice of~$(\M,\rho)\in[\M,\rho]_\G$ yields an isomorphism
\[\mathfrak{Z}_{\G,\Kbar,[\M,\rho]_\G}\simeq (\Kbar[\M/\M^\circ]^{\H_{(\M,\rho)}})^{\W_{(\M,\rho)}}.\]
\end{enumerate}
\end{theorem}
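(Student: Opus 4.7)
The plan is to reproduce the Bernstein--Deligne argument \cite{BD} in this setting, using the cuspidal-support theorem (which is unambiguous since~$\Kbar$ has characteristic zero, so cuspidal and supercuspidal coincide), the level decomposition of Section \ref{leveldecomp}, and second-adjointness (Theorem \ref{secadj}) as the main inputs. I would organise the proof one assertion at a time, always reducing to a single block via the Bernstein idempotents constructed below.

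For~(i), I define~$\Rep_{\Kbar}(\G)_{[\M,\rho]_{\G}}$ as the full subcategory whose simple subquotients all have cuspidal support in~$[\M,\rho]_{\G}$; closure under subobjects, quotients and arbitrary direct sums is immediate. To obtain the product decomposition it suffices to work one level at a time: inside~$\Rep_{\Kbar}(\G)_n$, the projective generator~$Q_{n,\Kbar}$ meets only finitely many inertial classes, producing central idempotents that split~$Q_{n,\Kbar}$ into block components; one extends to arbitrary smooth modules by writing them as filtered colimits of finitely generated subobjects.

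For~(ii), take as candidate progenerator~$\Pi_{\mathfrak{s}} := i^{\G}_{\M,\P}(\rho\otimes\chi_{\univ,\M,\Kbar})$. Projectivity follows from Theorem \ref{secadj}: the right adjoint~$\delta_{\P}\, r^{\G}_{\M,\P^{\circ}}$ is exact, while~$\rho\otimes\chi_{\univ,\M}$ is projective as a smooth~$\Kbar[\M]$-module (cuspidal irreducibles over~$\Kbar$ are compactly induced modulo the centre, and twisting by the free action of the universal unramified character preserves projectivity). Generation follows from the cuspidal-support theorem: every simple with support in~$[\M,\rho]_{\G}$ is a quotient of some specialisation~$\Pi_{\mathfrak{s}}\otimes_{\Kbar[\M/\M^{\circ}]}\Kbar_{\chi}$. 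Independence from the representative~$(\M,\rho)$ is checked by intertwining the inductions from conjugate or unramified-twisted pairs.

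Finally, for~(iii) and~(iv), I would compute~$\End_{\Kbar[\G]}(\Pi_{\mathfrak{s}})$. Second adjointness rewrites this as~$\Hom_{\Kbar[\M]}(\rho\otimes\chi_{\univ,\M},\, \delta_{\P}\, r^{\G}_{\M,\P^{\circ}}\Pi_{\mathfrak{s}})$, and the geometric lemma decomposes~$\delta_{\P}\, r^{\G}_{\M,\P^{\circ}}\Pi_{\mathfrak{s}}$ over~$\P\backslash\G/\P$ double cosets, producing the action of~$\W_{(\M,\rho)}$; finiteness of~$\H_{(\M,\rho)}$ reduces to the fact that~$\omega_{\rho}$ already pins~$\rho$ down within its unramified-twist orbit up to finite ambiguity. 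The main obstacle, which I expect to be the most delicate step, is showing that the intertwining operators realising the~$\W_{(\M,\rho)}$-action on the torus~$\Spm(\Kbar[\M/\M^{\circ}]^{\H_{(\M,\rho)}})$ descend, after averaging, to \emph{regular} rather than merely rational functions; once this is verified, passing to the centre of~$\End_{\Kbar[\G]}(\Pi_{\mathfrak{s}})$ yields the isomorphism claimed in~(iv).
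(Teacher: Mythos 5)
First, note that the paper does not actually prove Theorem \ref{BernsteinDeligne}: it is imported verbatim from Bernstein--Deligne, with the phrase ``Following \cite{BD}, we recover\dots'', and the authors' own work only begins with the integral analogues (Lemmas \ref{lemmacuspend}--\ref{lemmacentreproj2} and Theorem \ref{banaldecomptheorem}). So your proposal is not an alternative to anything in the paper; it is a reconstruction of the classical argument of \cite{BD}, and should be judged as such.

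As a sketch of that classical argument it is broadly along the right lines, but two steps are genuinely underdeveloped. For part (1), the heart of the matter is not that $Q_{n,\Kbar}$ meets only finitely many inertial classes, but that the category actually splits: you must show that $\Hom$ and all extensions vanish between objects whose simple subquotients lie in distinct inertial classes, equivalently that the centre of $\End_{\Kbar[\G]}(Q_{n,\Kbar})$ contains the claimed orthogonal idempotents. Your phrase ``producing central idempotents that split $Q_{n,\Kbar}$ into block components'' simply asserts the conclusion; in \cite{BD} this orthogonality is exactly what the analysis of parabolic induction of cuspidal families (uniqueness of cuspidal support together with the structure of $\End$ of the induced family) is designed to prove, and without it nothing in (1) follows. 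For part (4), you correctly identify the computation of $\Z(\End_{\Kbar[\G]}(\Pi_{\mathfrak s}))$ as the delicate point, but the route you propose --- showing that averaged intertwining operators are regular rather than rational on the torus --- is the hard way and is not what the standard proof does. The cleaner argument (which is also the one the paper itself runs in the integral setting, Lemma \ref{lemmacentreproj2}) is: exhibit the map $(\Kbar[\M/\M^\circ]^{\H_{(\M,\rho)}})^{\W_{(\M,\rho)}}\hookrightarrow \Z(\End_{\Kbar[\G]}(\Pi_{\mathfrak s}))$ by functoriality, prove $\End_{\Kbar[\G]}(\Pi_{\mathfrak s})$ is a finite module over the source via the geometric lemma, and conclude equality from the source being a normal domain, with the reverse inclusion checked at the generic point where the induced representation is irreducible. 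As written, your step ``once this is verified'' hides the main difficulty of (4), so the proposal has a genuine gap there as well as in (1); the remaining points (projectivity via second adjointness \ref{secadj} and compactness of cuspidals in characteristic zero, finiteness of $\H_{(\M,\rho)}$ via central characters) are fine.
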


\subsection{Supercuspidal representations in banal characteristics}
\begin{definition}
\begin{enumerate}
\item A prime~$\ell$ is called \emph{banal for~$\G$} if it does not divide the pro-order of any compact open subgroup of~$\G$. 
\item We write~$\N_{\G}$ for the product of all primes which are non-banal for~$\G$.
\end{enumerate}
\end{definition} 

\begin{remark}
If~$\G$ is unramified then it has an integral model~$\underline{\G}$ as a reductive group over~$\mathscr{O}_\F$ and~$\N_\G$ is equal to the product of the primes dividing~$|\underline{\G}(k_\F)|$ and~$p$ by \cite[Lemma 5.22]{DHKM}.
\end{remark}

In banal characteristics, the~$\ell$-modular representation theory of~$\M$ is expected to resemble the complex representation theory of~$\M$.   The goal of this section is to establish a description of~$\mathfrak{Z}_{\overline{\mathbb{Z}}[1/\N_\G]}(\G)$ akin to Bernstein's description of~$\mathfrak{Z}_{\mathbb{C}}(\G)$.  For this we need basic results on cuspidal representations in banal characteristics.  Using sheaves on the building, Vign\'eras shows:
\begin{theorem}[{\cite[Theorem p373]{VigCohomologyofsheavesonthebuilding}}]\label{Vigbanalproj}
Suppose~$\ell$ is banal for~$\M$. A simple cuspidal~$\Fl[\M]$-module is~$\Z_{\M}$-projective. 
\end{theorem}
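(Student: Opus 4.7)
The plan is to realize $\pi$ as a direct summand of a compact induction that is manifestly projective thanks to banality, using matrix coefficients and an averaging argument.

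By cuspidality, matrix coefficients of $\pi$ are compactly supported modulo $\Z_\M$, so fixing a non-zero smooth linear form $\widetilde v$ in the contragredient yields an $\M$-equivariant embedding $\pi\hookrightarrow\mathcal{C}_c^\infty(\Z_\M\backslash\M,\omega_\pi)$ into the space of smooth $\Fl$-valued functions on $\M$ transforming by $\omega_\pi$ under the centre with support compact modulo $\Z_\M$. I would then choose a compact open pro-$p$ subgroup $K$ of $\M$ small enough that $\omega_\pi$ is trivial on $K\cap\Z_\M$, that $\pi^K\neq 0$, and that $\widetilde v$ is $K$-fixed; the matrix-coefficient embedding then factors through the compact induction $\ind_{\Z_\M K}^\M(\widetilde\omega_\pi)$, where $\widetilde\omega_\pi$ denotes the character of $\Z_\M K$ extending $\omega_\pi$ trivially on $K$.

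Next I would verify that $\ind_{\Z_\M K}^\M(\widetilde\omega_\pi)$ is projective in the category $\Rep_\Fl(\M)_{\omega_\pi}$ of smooth $\Fl[\M]$-modules on which $\Z_\M$ acts by $\omega_\pi$. The quotient $\Z_\M K/\Z_\M\cong K/(K\cap\Z_\M)$ is a finite group of order prime to $\ell$ by banality; hence the category of $\Fl[\Z_\M K]$-modules with restriction $\omega_\pi$ to $\Z_\M$ is semisimple, and $\widetilde\omega_\pi$ is projective in it. Compact induction $\ind_{\Z_\M K}^\M$ being left adjoint to the exact restriction functor between the corresponding central-character categories, it preserves projectivity. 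To split the embedding, Frobenius reciprocity identifies $\Hom_\M(\ind_{\Z_\M K}^\M(\widetilde\omega_\pi),\pi)$ with $\pi^K$, so any $v_0\in\pi^K$ yields an $\M$-equivariant retraction; the composition with the matrix-coefficient embedding is an $\M$-equivariant endomorphism of $\pi$, hence a scalar by Schur, and the scalar is essentially $\widetilde v(v_0)\cdot\mathrm{vol}(K)$, both factors invertible thanks to banality. Choosing $v_0$ with $\widetilde v(v_0)\neq 0$ therefore splits the embedding and exhibits $\pi$ as a direct summand of a projective object of $\Rep_\Fl(\M)_{\omega_\pi}$, establishing the desired $\Z_\M$-projectivity.

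The main obstacle is producing the splitting in a canonical, choice-independent fashion, since the averaging above depends on the auxiliary subgroup $K$ and the choice of $\widetilde v,v_0$; Vign\'eras's original argument circumvents this by working globally with the Schneider--Stuhler coefficient system on the semisimple Bruhat--Tits building of $\M/\Z_\M$. Every simplex $\sigma$ contributes a compact induction from a parahoric-type stabiliser $\M_\sigma$, which is projective in $\Rep_\Fl(\M)_{\omega_\pi}$ thanks to banality of the parahoric pro-orders; cuspidality of $\pi$ forces the resulting bounded resolution to exhibit $\pi$ as a direct summand of a single term without any ad hoc averaging, yielding $\Z_\M$-projectivity canonically.
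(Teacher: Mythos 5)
Your reduction to a splitting problem is set up correctly, but the crucial step fails. The first half is fine modulo a slip: $\Z_\M K/\Z_\M\cong K/(K\cap\Z_\M)$ is profinite, not finite, but by banality its pro-order is invertible in $\Fl$, so its smooth $\Fl$-representations do form a semisimple category and $\ind_{\Z_\M K}^{\M}(\widetilde\omega_\pi)$ is indeed projective in the category of smooth $\Fl[\M]$-modules with central character $\omega_\pi$. The gap is in the scalar computation. Writing out the retraction attached to $v_0\in\pi^K$ via Frobenius reciprocity, its composite with the matrix-coefficient embedding is $v\mapsto\sum_{g\in \Z_\M K\backslash \M}\widetilde v(\pi(g)v)\,\pi(g)^{-1}v_0$, and by Schur this is a scalar $\lambda$. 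But $\lambda$ is not ``essentially $\widetilde v(v_0)\cdot\mathrm{vol}(K)$'': the identity coset contributes the term you have in mind, while every other coset in the (compact mod centre) support of the matrix coefficient contributes as well, so $\lambda$ is the full Schur-orthogonality sum, i.e.\ a mod-$\ell$ formal degree of $\pi$ (times $\widetilde v(v_0)$). Invertibility of $\mathrm{vol}(K)$ and of $\widetilde v(v_0)$ only uses $\ell\neq p$, yet for non-banal $\ell$ cuspidal $\Fl$-representations need not be $\Z$-projective; equivalently, this scalar can vanish in $\Fl$. Its nonvanishing is essentially equivalent to the theorem, so your argument assumes the key point exactly where banality must do real work. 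Note also that the natural repair --- lift $\pi$ to characteristic zero and reduce the formal-degree formula mod $\ell$ --- would be circular in the logic of this paper, since the banal lifting statement (Proposition \ref{localpropositionlifting}) is deduced from the present theorem.

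For the comparison you ask about implicitly in your last paragraph: the paper does not prove this statement at all, it quotes Vign\'eras, whose proof is the coefficient-system argument on the building that you sketch. However, you misidentify the obstacle your averaging argument faces: it is not a matter of making the splitting canonical or independent of the choices of $K$, $\widetilde v$, $v_0$ (any single choice with nonzero scalar would suffice), but of proving the scalar is nonzero at all. The building argument is not a device for removing choices; it replaces the formal-degree nonvanishing by the exactness and termwise projectivity (in the fixed central character category, using banality of the parahoric pro-orders) of the Schneider--Stuhler resolution, from which projectivity of cuspidal representations is extracted by a different mechanism.
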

It follows from this that for~$\ell$ banal:
\begin{enumerate}
\item Any simple cuspidal~$\Fl[\M]$-module is supercuspidal.  In particular, the supercuspidal support of a simple~$\Fl[\M]$-module is unique up to conjugacy.  
\item\label{banalnoextensions}There are no non-trivial extensions between cuspidal representations with the same central characters.\end{enumerate}

We will need some reduction and lifting results we first prove in the local setup:
\begin{proposition}\label{localpropositionlifting}
\begin{enumerate}
\item
 Let~$\pi$ be a simple integral cuspidal~$\Ql[\G]$-module with~$\ell$ banal.  Then~$r_{\ell}(\pi)$ is irreducible and cuspidal.
\item Let~$\overline{\pi}$ be a simple cuspidal~$\Fl[\G]$-module with~$\ell$ banal.  Then there exists a simple integral cuspidal~$\Ql[\G]$-module~$\pi$ with~$r_{\ell}(\pi)=\overline{\pi}$.
\end{enumerate}
\end{proposition}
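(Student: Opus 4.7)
My plan is to first observe that (2) is a direct consequence of (1) combined with Proposition~\ref{supercuspidalsappear}: a simple cuspidal $\Fl[\G]$-module $\bar\pi$ is supercuspidal by banality (the first consequence following Theorem~\ref{Vigbanalproj}), so Proposition~\ref{supercuspidalsappear} supplies a simple integral cuspidal $\Ql[\G]$-module $\pi$ with $\bar\pi$ a subquotient of $r_\ell(\pi)$, and granting (1), $r_\ell(\pi)$ is irreducible cuspidal, hence equal to $\bar\pi$. Thus everything reduces to (1), which splits into cuspidality and irreducibility of $r_\ell(\pi)$.

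For cuspidality, fix a $\Zl$-lattice $L \subset \pi$. For any proper parabolic $\P = \M U$, exactness of $R^\G_{\M,\P}$ applied to the inclusion $L \hookrightarrow \pi$ yields $R^\G_{\M,\P}(L) \hookrightarrow R^\G_{\M,\P}(\pi) = 0$, so $R^\G_{\M,\P}(L) = 0$. Applying $R^\G_{\M,\P}$ to $0 \to L \xrightarrow{\cdot \ell} L \to L \otimes \Fl \to 0$ then gives $R^\G_{\M,\P}(L \otimes \Fl) = R^\G_{\M,\P}(L)/\ell R^\G_{\M,\P}(L) = 0$. Hence $L \otimes \Fl$, and so $r_\ell(\pi)$, is cuspidal.

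For irreducibility, all composition factors of $L \otimes \Fl$ are cuspidal with the common central character $r_\ell(\omega_\pi)$; by banality they are supercuspidal and $\Z_\G$-projective (Theorem~\ref{Vigbanalproj}), so $\Ext^1$ vanishes between any two of them (\ref{banalnoextensions}) and $L \otimes \Fl$ is semisimple. Meanwhile, $\End_{\Ql[\G]}(\pi) = \Ql$ and the fact that $\Zl$ is the unique maximal $\Zl$-order in $\Ql$ force $\End_{\Zl[\G]}(L) = \Zl$, so $L$ is indecomposable. The plan is to promote Vigneras' $\Z_\G$-projectivity into an integral statement, namely that $L$ is projective in the subcategory of smooth $\Zl[\G]$-modules with central character $\omega_\pi$ in the appropriate level block; this would give $\Ext^1_{\Zl[\G]}(L, L) = 0$, and the long exact sequence for $\Hom_{\Zl[\G]}(L, -)$ applied to $0 \to L \xrightarrow{\cdot \ell} L \to L \otimes \Fl \to 0$ would yield $\End_{\Fl[\G]}(L \otimes \Fl) = \End_{\Zl[\G]}(L)/\ell = \Fl$, forcing $L \otimes \Fl$ to be a single simple module by semisimplicity. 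The \emph{main obstacle} is this integral projectivity claim; a backup strategy is to try to lift any non-trivial idempotent of $\End_{\Fl[\G]}(L \otimes \Fl)$ to $\End_{\Zl[\G]}(L) = \Zl$, which contains only trivial idempotents, again using the $\Ext$-control provided by banal characteristic.
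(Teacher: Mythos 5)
Your reduction of (2) to (1) is exactly the paper's: banality makes a simple cuspidal $\Fl[\G]$-module supercuspidal, Proposition \ref{supercuspidalsappear} produces an integral cuspidal lift whose reduction contains $\overline{\pi}$, and (1) finishes. The cuspidality of $r_\ell(\pi)$ and the semisimplicity of the reduction (via the no-extensions consequence of Theorem \ref{Vigbanalproj}) are also fine, and both are ingredients the paper uses. The problem is that your entire irreducibility argument rests on the ``integral projectivity'' claim --- that the lattice $L$ is projective in the fixed-central-character (or block) category over $\Zl$, giving $\Ext^1_{\Zl[\G]}(L,L)=0$ --- and you do not prove it; you correctly flag it as the main obstacle. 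This is a genuine gap, not a routine verification: proving such integral projectivity in banal characteristic is essentially the content of the later results of the paper (Proposition \ref{banalcuspidalZproj} and Lemma \ref{lemmacuspend}, via Meyer's c-projectivity), and Proposition \ref{localpropositionlifting} is used \emph{before} and independently of those. Your backup plan fares no better: lifting idempotents from $\End_{\Fl[\G]}(L\otimes\Fl)$ to $\End_{\Zl[\G]}(L)=\Zl$ presupposes that the reduction map on endomorphism rings is surjective (with a suitably complete kernel), which is precisely the statement in question --- if $L\otimes\Fl$ were reducible semisimple its endomorphism ring would be too large to be a quotient of $\Zl$, so there is nothing to lift from. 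There is also a smaller technical slip: $\ell$ is not a uniformizer of $\Zl$ and $L/\ell L\neq L\otimes_{\Zl}\Fl$, so the exact sequence $0\to L\xrightarrow{\ \ell\ }L\to L\otimes\Fl\to 0$ is not correct as written; one must first descend, as the paper does, to a discretely valued field of definition $\E$ with uniformizer $\varpi_\E$ and argue with $L/\varpi_\E L$.

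The paper gets irreducibility without any projectivity or $\Ext$-vanishing for the lattice, using only the semisimplicity you already have. Fix an irreducible quotient $L_0/\varpi_\E L_0\twoheadrightarrow\overline{\pi}$ and set $L_1=L_0+\varpi_\E^{-1}\ker(L_0\to\overline{\pi})$; the induced map $L_0/\varpi_\E L_0\to L_1/\varpi_\E L_1$ factors through $\overline{\pi}$, and semisimplicity of $L_1/\varpi_\E L_1$ (cuspidal with a central character, $\ell$ banal) yields a retraction $L_1\to\overline{\pi}$ compatible with the original surjection. Iterating gives an increasing chain of lattices $L_n$ with compatible surjections onto $\overline{\pi}$; the union is an $\cO_\E[\G]$-submodule of $\pi$ whose divisible part must be $0$ (it cannot be $\pi$ since it surjects onto $\overline{\pi}$), so the chain stabilizes, and $L_{r+1}=L_r$ forces $L_r/\varpi_\E L_r=\overline{\pi}$. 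If you want to salvage your homological approach you would have to actually establish the integral projectivity statement in banal characteristic, at which point you would be reproving a substantial later result of the paper rather than giving a shortcut.
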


\begin{proof}
\begin{enumerate}
\item  We may reduce from $\Ql$ to a discretely valued field $\E$ of definition of $\pi$, with uniformizer~$\varpi_\E$.  Pick an invariant lattice $L_0$ in $\pi$, and an irreducible quotient  $L_0/ \varpi_{\E} L_0 \rightarrow \overline{\pi} $.  Put \[L_1:= L_0 + \varpi_{\E}^{-1}\ker(L_0 \rightarrow \overline{\pi})\]
This is a new invariant lattice that contains $L_0$. Moreover, the map 
$L_0/ \varpi_\E L_0  \rightarrow L_1/ \varpi_\E L_1  $
factors  as   
\[L_0/ \varpi_\E L_0  \rightarrow  \overline{\pi}  \hookrightarrow L_1/ \varpi_\E L_1 .\]
Now, we know that $L_1/ \varpi_\E L_1$ is semisimple because it is cuspidal with central character (by (\ref{banalnoextensions}) above as~$\ell$ is banal), so there is a retraction
$L_1 \rightarrow \overline{\pi}$
whose composition with the inclusion $L_0 \hookrightarrow L_1$  is the map $L_0 \rightarrow \overline{\pi} $ we started with.

Now apply this construction inductively to get an increasing sequence of lattices  $L_n$, $n \in \mathbb{N}$ equipped with compatible surjections onto $\overline{\pi}$. The colimit $L_\infty$  of this sequence is an $\cO_E[\G]$-submodule of $\pi$.  Its divisible part  is either $0$ or $\pi$, since $\pi$ is irreducible.  It can't be $\pi$ since it surjects  to $\overline{\pi}$. So it is $0$, which means that the sequence becomes constant after $r>>0$.  But then, $L_{r+1}=L_r$ means that  
$L_r/ \varpi_{\E} L_r  = \overline{\pi}$ which in turn means that $\pi$ has irreducible reduction.

\item By Proposition \ref{supercuspidalsappear},~$\overline{\pi}$ appears in the reduction modulo~$\ell$ of an integral supercuspidal~$\Ql[\G]$-module, and by the first part, such a reduction is irreducible in the banal case.
\end{enumerate}
\end{proof}

With global coefficients, we have:
\begin{proposition}\label{mainliftingreductionprop}
 Let~$\K$ be a number field, and~$L$ be a lattice in an~$\cO_\K$-integral absolutely irreducible cuspidal~$\K[\G]$-module~$\pi$.  
\begin{enumerate} 
\item For all maximal ideals~$\mathfrak{m}$ of~$\cO_\K$ not dividing $\N_\G$, the~$(\cO_\K/\mathfrak{m})[\G]$-module~$L/\mathfrak{m}L$ is absolutely irreducible and cuspidal.
\item The $\cO_{\K}[1/N_{\G}]$-lattice~$L[1/N_{\G}]$ is unique up to multiplication by
  some fractional ideal of $\cO_\K[1/N_{\G}]$. 
\item Let~$\overline{\pi}$ be a simple cuspidal~$\Fl[\G]$-module with~$\ell$ banal.  Then there exists a number field~$\K$, and an absolutely irreducible integral cuspidal~$\K[\G]$-module~$\pi$ with~$\cO_{\K}$-lattice~$L$, and a maximal ideal~$\mathfrak{m}$ of~$\cO_{\K}$ above~$\ell$, such that~$\overline{\pi}=(L/\mathfrak{m}L)\otimes\Fl$.
\end{enumerate}
\end{proposition}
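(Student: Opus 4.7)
The plan is to deduce each of the three parts from the local Proposition~\ref{localpropositionlifting} combined with Proposition~\ref{integralreps}, by localising at the primes of $\cO_K$.

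For (1), fix $\mathfrak{m} \subset \cO_K$ of residue characteristic $\ell \nmid \N_\G$ and choose an embedding of the completion $\widehat{K}_\mathfrak{m} \hookrightarrow \Ql$, restricting to $\cO_{K,\mathfrak{m}} \hookrightarrow \Zl$. The base change $L \otimes_{\cO_K} \Zl$ is a $\Zl$-lattice in the integral absolutely irreducible cuspidal $\Ql[\G]$-module $\pi \otimes_K \Ql$, and Proposition~\ref{localpropositionlifting}(1) says its reduction modulo the maximal ideal of $\Zl$ is irreducible and cuspidal. Since this reduction is canonically $(L/\mathfrak{m} L) \otimes_{\cO_K/\mathfrak{m}} \Fl$, it follows that $L/\mathfrak{m} L$ is absolutely irreducible and cuspidal.

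For (2), let $L$ and $L'$ be two $\cO_K[1/\N_\G]$-lattices in $\pi$. At each $\mathfrak{m} \nmid \N_\G$, the localisations $L_\mathfrak{m}, L'_\mathfrak{m}$ are lattices over the DVR $\cO_{K,\mathfrak{m}}$, each with absolutely simple reduction by~(1). The classical uniqueness-of-lattice argument (after passing to the completion: rescale $L_\mathfrak{m}$ by a power of the uniformiser $\varpi_\mathfrak{m}$ so that $L_\mathfrak{m} \subseteq L'_\mathfrak{m}$ but $L_\mathfrak{m} \not\subseteq \varpi_\mathfrak{m} L'_\mathfrak{m}$; the induced nonzero map $L_\mathfrak{m}/\varpi_\mathfrak{m} L_\mathfrak{m} \to L'_\mathfrak{m}/\varpi_\mathfrak{m} L'_\mathfrak{m}$ is a map of isomorphic simples, and Nakayama then forces $L_\mathfrak{m} = L'_\mathfrak{m}$) produces a unique integer $n(\mathfrak{m})$ with $L_\mathfrak{m} = \varpi_\mathfrak{m}^{n(\mathfrak{m})} L'_\mathfrak{m}$. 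Fixing a compact open $U$ with $\pi^U \neq 0$ and comparing the full-rank $\cO_K[1/\N_\G]$-lattices $L^U$ and $(L')^U$ in $\pi^U$ shows $n(\mathfrak{m}) = 0$ for almost all $\mathfrak{m}$, so $L = I \cdot L'$ for the fractional ideal $I = \prod_\mathfrak{m} \mathfrak{m}^{n(\mathfrak{m})}$ of $\cO_K[1/\N_\G]$.

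For (3), apply Proposition~\ref{localpropositionlifting}(2) to obtain an integral cuspidal $\Ql[\G]$-module $\pi_\Ql$ with $r_\ell(\pi_\Ql) = \overline{\pi}$. Since $\Fl^\times$ consists of roots of unity of order prime to $\ell$, the character $\omega_{\overline{\pi}}$ has finite order and admits a Teichm\"uller-type finite-order lift $\omega \colon \Z \to \Qbar^\times \subset \Zl^\times$. Then $\omega_{\pi_\Ql} \cdot \omega^{-1}$ takes values in the kernel of $\Zl^\times \to \Fl^\times$, a divisible, hence injective, abelian group; the induced character $\Z/\Z^\circ \to \ker(\Zl^\times \to \Fl^\times)$ therefore extends to a character of $\G/\G^\circ$, giving $\chi \in \Psi_\G(\Ql)$ with trivial reduction. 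Twisting $\pi_\Ql$ by $\chi^{-1}$ preserves the reduction $\overline{\pi}$ while making its central character equal to $\omega$, of finite order, so Proposition~\ref{integralreps} realises this new lift over a number field $K$ as an $\cO_K$-integral module; for any $\cO_K$-lattice $L$ in it, the prime $\mathfrak{m} \subset \cO_K$ determined by the fixed embedding $K \hookrightarrow \Ql$ satisfies $(L/\mathfrak{m} L) \otimes \Fl \simeq \overline{\pi}$ by~(1).

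The main hurdle is the central-character adjustment in~(3): one has to simultaneously lift $\omega_{\overline{\pi}}$ to finite order in characteristic zero \emph{and} realise it as the central character of an integral lift of $\overline{\pi}$. The key input is the divisibility of the kernel of $\Zl^\times \to \Fl^\times$, which allows extension of characters from $\Z/\Z^\circ$ to $\G/\G^\circ$ without disturbing reduction modulo $\ell$; once this is handled, Parts~(1) and~(2) are straightforward local-to-global applications of Proposition~\ref{localpropositionlifting}.
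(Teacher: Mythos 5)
Your overall route is the same as the paper's: part (1) by base change to $\Zl$ and Proposition \ref{localpropositionlifting}, part (2) by comparing $\U$-invariants to see that the two lattices agree at all but finitely many primes and then using irreducibility of the reductions to get local uniqueness up to homothety at the banal primes, and part (3) by lifting via Proposition \ref{localpropositionlifting}, adjusting by an unramified twist to reach a finite-order central character, and invoking Proposition \ref{integralreps}. Parts (1) and (2) are fine as written.

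In part (3) your elaboration of the twisting step has one unjustified claim: you assert that $\omega_{\pi_{\Ql}}\cdot\omega^{-1}$ \emph{induces} a character of $\Z/\Z^{\circ}$, i.e.\ is trivial on $\Z^{\circ}$, and then extend along $\Z/\Z^{\circ}\to\G/\G^{\circ}$. Neither point is automatic. With an arbitrary Teichm\"uller-type lift $\omega$, the ratio $\omega_{\pi_{\Ql}}\cdot\omega^{-1}$ restricted to the compact group $\Z\cap\G^{\circ}$ could a priori be a nontrivial character of $\ell$-power order with values in $\ell$-power roots of unity (which lie in the kernel of reduction), in which case it would not factor through the image of $\Z$ in $\G/\G^{\circ}$ and could not be the restriction of any unramified character. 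What saves you is precisely the banality of $\ell$: $\Z\cap\G^{\circ}$ is compact, hence contained in a compact open subgroup of $\G$, so its pro-order is prime to $\ell$; therefore $\omega_{\pi_{\Ql}}$ and $\omega$ both have order prime to $\ell$ on $\Z\cap\G^{\circ}$, and since reduction is injective on prime-to-$\ell$ roots of unity they agree there. You should also note that the extension has to be made along the injection $\Z/(\Z\cap\G^{\circ})\hookrightarrow\G/\G^{\circ}$ rather than from $\Z/\Z^{\circ}$, since the natural map $\Z/\Z^{\circ}\to\G/\G^{\circ}$ can have a nontrivial (finite) kernel $(\Z\cap\G^{\circ})/\Z^{\circ}$. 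With these two or three lines added, your argument is a correct and in fact more explicit version of the paper's terse ``twist by an unramified character if necessary'' (the paper leaves the preservation of the reduction implicit), modulo also recording, as the paper does, that the twisted lift is defined over $\Qbar$ before applying Proposition \ref{integralreps}.
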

\begin{proof}
\begin{enumerate}
\item As localization is an exact functor,~$L/\mathfrak{m}L\simeq L_{\mathfrak{m}}/\mathfrak{m}L_{\mathfrak{m}}$, it follows from Proposition \ref{localpropositionlifting}.
\item Let~$L'$ be another~$\cO_{\K}$-lattice in~$\pi$, and choose a compact open
  subgroup~$\U$ such that~$L$ and~$L'$ are generated by their~$\U$-invariants.
  Then~$L^\U,(L')^{\U},$ and~$(L+L')^{\U}$ are~$\cO_{\K}$-lattices in a finite
  dimensional~$\K$-vector space, and hence~$(L+L')^{\U}/L^{\U}$
  and~$(L+L')^{\U}/(L')^{\U}$ are finitely generated torsion~$\cO_{\K}$-modules, and are
  thus supported on a finite set of maximal ideals.  Therefore, for all but finitely many
  maximal ideals,~$L_{\mathfrak{m}}=L'_{\mathfrak{m}}$.  Moreover, from the first part,
  for all maximal ideals not dividing $N_{\G}$, the local lattices~$L_{\mathfrak{m}}$ are
  unique up to homothety.  Therefore there exists a collection~$(\alpha_{\mathfrak{m}})$
  with all but finitely many units, such
  that~$\alpha_{\mathfrak{m}}L_{\mathfrak{m}}=L'_{\mathfrak{m}}$. Therefore,~$\alpha
  L[1/N_{\G}]=L'[1/N_{\G}]$ for some fractional ideal~$\alpha$ of~$\cO_{\K}[1/N_{\G}]$.
\item By Proposition \ref{localpropositionlifting}, there exists a simple cuspidal~$\Ql[\G]$-module~$\pi_{\Ql}$ lifting~$\overline{\pi}$.  By twisting by an unramified character of~$\G$ if necessary, we can assume that the central character of~$\pi_{\Ql}$ has finite order, and that~$\pi$ is defined over~$\Qbar$.  Hence, by Proposition \ref{integralreps},~$\pi$ is defined over a number field~$\K$ and~$\cO_{\K}$-integral with~$\cO_\K$-lattice~$L$.  Moreover, as localization is an exact functor,~$(\L/\mathfrak{m} L)  \otimes\Fl=\overline{\pi}$ for any maximal ideal~$\mathfrak{m}$ above~$\ell$.
\end{enumerate}
\end{proof}

Finally, we need to show that the lattices we consider are projective on restriction to~$\G^\circ$ in the banal setting:

\begin{proposition}\label{banalcuspidalZproj}
Let~$\K$ be a number field, and~$L$ be a lattice in an~$\cO_\K$-integral absolutely irreducible cuspidal~$\K[\G]$-module~$\pi$.    The restriction $(\L\otimes \cO_\K[1/\N_{\G}])_{|\G^\circ}$ is~projective in~$\Rep_{\cO_\K[1/\N_\G]}(\G^\circ)$.
\end{proposition}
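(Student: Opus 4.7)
The plan is to realize~$L|_{\G^\circ}$ as a direct summand of an explicit projective~$\cO_{\K}[1/\N_{\G}][\G^\circ]$-module using matrix coefficients and Schur orthogonality with integral normalization. Since projectivity is a Zariski-local property, it suffices to work after localizing at each maximal ideal~$\mathfrak{m}$ of~$\cO_{\K}[1/\N_{\G}]$, so we write~$\cO := \cO_{\K,\mathfrak{m}}$ and denote its residue characteristic~$\ell$, which is banal for~$\G$ by hypothesis. The key geometric observation is that~$\G/\G^{\circ}$ is free abelian of rank equal to the~$\F$-split rank of the centre~$\Z$, so that~$\Z \cap \G^{\circ}$ coincides with the maximal compact subgroup~$\Z^{\circ}$ of~$\Z$; consequently, the matrix coefficients of the cuspidal representation~$\pi$, being compactly supported on~$\Z \backslash \G$, restrict to genuinely compactly supported functions on~$\G^\circ$.

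Choose a pro-$p$ compact open subgroup~$K$ of~$\G^\circ$ with~$L^K \neq 0$ and~$\tilde L^K \neq 0$, together with vectors~$v_0 \in L^K$ and~$\tilde v_0 \in \tilde L^K$ normalized so that~$\tilde v_0(v_0) = 1$. Normalize the Haar measure on~$\G^\circ$ so that~$K$ has volume one. The matrix coefficient map
\[
\varphi : L|_{\G^\circ} \longrightarrow \text{c-}\Ind_{K}^{\G^\circ}(\cO), \qquad v \mapsto \bigl( g \mapsto \tilde v_0(\pi(g) v) \bigr),
\]
is~$\G^\circ$-equivariant and injective by irreducibility of~$\pi$, and its target is a projective~$\cO[\G^\circ]$-module since~$K$ is pro-$p$ and~$p$ is invertible in~$\cO$. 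Define a candidate retraction
\[
\psi : \text{c-}\Ind_{K}^{\G^\circ}(\cO) \longrightarrow L, \qquad f \mapsto d(\pi) \int_{\G^\circ} f(g)\, \pi(g^{-1}) v_0 \, dg,
\]
where~$d(\pi)$ is the formal degree of~$\pi$ for the chosen Haar measure. The integral is a finite~$\cO$-linear combination of~$\G^\circ$-translates of~$v_0 \in L$, hence lies in~$L$ provided~$d(\pi) \in \cO$; the Schur orthogonality relation for cuspidal representations then gives~$\psi \circ \varphi = \mathrm{id}_L$.

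The main obstacle is the integrality~$d(\pi) \in \cO^{\times}$. By Proposition~\ref{mainliftingreductionprop}~(1), the reduction~$\overline{\pi} := (L/\mathfrak{m}L) \otimes \Fl$ is an absolutely irreducible cuspidal~$\Fl[\G]$-module, and by Vign\'eras's Theorem~\ref{Vigbanalproj} it is~$\Z$-projective. The latter is precisely the statement that the mod-$\ell$ Schur orthogonality projector onto~$\overline{\pi}$ exists integrally, which is equivalent to the nonvanishing of the reduction of~$d(\pi)$ in~$\Fl$. Hence~$d(\pi) \in \cO^{\times}$ and the formulae above are integrally well-defined, exhibiting~$L|_{\G^\circ} \otimes_{\cO_{\K}[1/\N_{\G}]} \cO$ as a direct summand of the projective~$\cO[\G^\circ]$-module~$\text{c-}\Ind_{K}^{\G^\circ}(\cO)$. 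Reassembling over~$\Spec(\cO_{\K}[1/\N_{\G}])$ then gives the desired projectivity.
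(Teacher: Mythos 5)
Your strategy (realize $L|_{\G^\circ}$ as a summand of $\ind_{K}^{\G^\circ}(\cO)$ via matrix coefficients and Schur orthogonality) founders at the step $\psi\circ\varphi=\mathrm{id}_L$. Schur orthogonality applies to an \emph{irreducible} representation of the group over which you integrate, but you integrate over $\G^\circ$, and $\pi|_{\G^\circ}$ is not irreducible: it decomposes as $(\pi_1\oplus\cdots\oplus\pi_r)^{\oplus m}$. The operator $B(v)=\int_{\G^\circ}\tilde v_0(\pi(g)v)\,\pi(g^{-1})v_0\,dg$ is only $\G^\circ$-equivariant, so it need not be a scalar: it acts by different constants on non-isomorphic $\G^\circ$-constituents (governed by the formal degrees of the $\pi_i$ and the components of $v_0,\tilde v_0$, not by a single $d(\pi)$), it can mix isomorphic constituents inside a multiplicity space, and it kills outright any constituent $W$ for which the projection of $\tilde v_0$ to the dual summand $\widetilde W$ vanishes. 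For the same reason $\varphi$ itself need not be injective on $L|_{\G^\circ}$, so the purported split embedding does not exist as written. A repair would have to work one irreducible $\G^\circ$-constituent at a time (which is essentially what the paper does later, in Section 4.5, by passing to a lattice $L^\circ$ in an irreducible summand $\pi^\circ$), and then one must still argue that the lattice $L|_{\G^\circ}$ is compatible with such a decomposition integrally, which is not automatic.

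The second load-bearing claim — that Vign\'eras' Theorem \ref{Vigbanalproj} ($\Z$-projectivity of the cuspidal reduction) ``is precisely the statement'' that the mod-$\ell$ Schur projector exists integrally, hence that $d(\pi)\in\cO^\times$ — is asserted rather than proved. Even granting that banality forces the relevant formal degrees to be $\ell$-units (a true but nontrivial fact), the quantity you would need is the formal degree of each irreducible $\G^\circ$-constituent with respect to your Haar measure on $\G^\circ$ normalized by $\mathrm{vol}(K)=1$, and relating this to the formal degree of $\pi$ (defined via a measure on $\G/\Z$) requires bookkeeping with $\Z\cap\G^\circ$ and central characters that is absent. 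For comparison, the paper avoids all of this: it invokes Meyer's result that $L|_{\G^\circ}$ is c-projective, uses projectivity of $L$ over $\cO_\K[1/p]$ (Proposition \ref{integralreps}) to split a projective cover $P\twoheadrightarrow L$ $\cO$-linearly, averages that splitting over a compact open subgroup $K$ (possible precisely because $\N_\G$ is inverted) to make it $K$-equivariant, and then c-projectivity upgrades it to a $\G^\circ$-equivariant splitting. If you want to salvage your argument you would need to (i) replace the single matrix-coefficient map by one built from an irreducible $\G^\circ$-summand, and (ii) actually prove the unit-formal-degree statement in banal characteristic; as it stands both steps are gaps, and the first is fatal to the identity $\psi\circ\varphi=\mathrm{id}_L$.
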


\begin{proof}
By \cite[3.11]{Meyer},~$L_{|\G^\circ}$ is ``c-projective'', meaning
that for any open compact subgroup $K$ of $\G$, taking $\Hom_{\cO_{\K}[1/p][\G^{\circ}]}(L,-)$ is exact on~``$K$-split short exact sequences'', where a sequence~$0\rightarrow \rho_1\rightarrow \rho\rightarrow \rho_2\rightarrow 0$ of~$\cO_{\K}[1/p][\G^\circ]$-modules,
is called~\emph{$K$-split} if there is a~$K$-equivariant section~$\rho_2\rightarrow \rho$.
By the level decomposition, for example, we can find a
projective~$\cO_{\K}[1/p][\G^\circ]$-module~$P$ covering~$L$ with
projection~$\pi:P\rightarrow L$.  As~$L$ is projective as an~$\cO_{\K}[1/p]$-module by
Proposition \ref{integralreps}, we can split~$\pi$ as a~$\cO_{\K}[1/p]$-module
morphism, and for any compact open subgroup~$K$ of ~$\G$, we can average this splitting to
split~$p$ as an~$\cO_{\K}[1/\N_\G][K]$-module morphism.  By
c-projectivity~$\Hom_{\cO_{\K}[1/\N_\G][\G^\circ]}(L,P)\rightarrow
\Hom_{\cO_{\K}[1/\N_\G][\G^\circ]}(L,L)$ is surjective, hence we can split~$\pi$
over~$\cO_{\K}[1/\N_\G][\G^\circ]$ and~$L$ is projective as
a~$\cO_{\K}[1/\N_\G][\G^\circ]$-module. 
\end{proof}

\subsection{The integral centre over~$\overline{\mathbb{Z}}[1/\N_\G]$}\label{intcentre}
Let~$\K/\mathbb{Q}$ be a number field, and $\pi$ be an absolutely irreducible cuspidal~$\K[\G]$-module with finite order central character~$\omega_{\pi}$.    

As~$\G^{\circ}\Z_{\G}$ is a normal subgroup of~$\G$ with finite index and~$\pi$ has a central character we can write
\[(\pi\otimes \overline{\K})_{|\G^\circ}\simeq (\pi_1\oplus \cdots \oplus \pi_r)^{\oplus m},\]
where the~$\pi_i$ are pairwise non-isomorphic irreducible~$\mathbb{C}[\G^{\circ}]$-modules. Moreover, the isomorphism class of~$\ind_{\G^{\circ}}^{\G}(\pi_i)$ is independent of~$i$.  Enlarging~$\K$ if necessary, so that the above decomposition is valid over~$\K$, we can write
\[\pi_{|\G^\circ}\simeq (\pi_1\oplus \cdots \oplus \pi_r)^{\oplus m},\]
as a decomposition of~$\pi$ into pairwise non-isomorphic (absolutely) irreducible~$\K[\G^{\circ}]$-modules.   

Choose an irreducible~$\K[\G^{\circ}]$-subspace~$\mathcal{W}$ of~$\pi$ such that \[\mathcal{H}=\{g\in \G:\pi(g)\mathcal{W}=\mathcal{W}\}\] is maximal, and write~$\pi_{\mathcal{H}}$ for the natural representation of~$\mathcal{H}$ on~$\mathcal{W}$.  Set~$\pi^{\circ}=\Res^{\mathcal{H}}_{\G^{\circ}}(\pi_{\mathcal{H}})$.   By \cite[Lemma 8.3]{BHGen},~$\ind_{\mathcal{H}}^{\G}(\pi_{\mathcal{H}})\simeq \pi$.

Let~$\H_{\pi}$ denote the \emph{ramification group} of~$\pi$: that is, the (finite) group of unramified characters~$\chi$ of~$\G$ which satisfy~$\pi\simeq \pi\otimes\chi$, and introduce the following finite index normal subgroups of~$\G$
\begin{align*}
\mathcal{S}&=\{g\in \G:(\pi^{\circ})^g\simeq \pi^{\circ}\};\quad\mathcal{T}=\bigcap_{\chi\in \H_{\pi}}\Ker(\chi),
\end{align*}
 as in \cite{BHGen}.  By \cite[Lemma 8.3]{BHGen} we have~\[\mathcal{S}\supseteq \mathcal{H}\supseteq \mathcal{T}\supseteq\Z_\G\G^{\circ}\text{ with~}[\mathcal{H}:\mathcal{T}]=[\mathcal{S}:\mathcal{H}]=m.\]

By the same argument as in Proposition \ref{integralreps} (which only used irreducibility,~$\Z$-compactness, and finite order central character, so applies equally well to the~$\K$-representation~$\pi_{\mathcal{H}}$ of~$\mathcal{H}$), $\pi_{\mathcal{H}}$ is~$\cO_{\K}$-integral and we choose an~$\cO_{\K}$-lattice~$L_{\mathcal{H}}$ in~$\pi_{\mathcal{H}}$.  

Write~$L^{\circ}=\Res^{\mathcal{H}}_{\G^{\circ}}(L_{\mathcal{H}}),\text{ and }L_{\mathcal{T}}=\Res^{\mathcal{H}}_{\mathcal{T}}(L_{\mathcal{H}})$ so that~$L^{\circ}$ (respectively~$L_{\mathcal{T}}$) is an~$\cO_{\K}$-lattice in~$\pi^{\circ}$ (respectively~$\pi_{\mathcal{T}}=\Res_{\mathcal{T}}^{\mathcal{H}}(\pi_{\mathcal{H}})$).
 
 \begin{lemma}\label{lemmacuspend}
 Let~$\R=\cO_{\K}[1/\N_{\G}]$.  
 \begin{enumerate}
 \item The~$\R[\G^{\circ}]$-module~$L^{\circ}\otimes\R$ is finitely generated projective.
 \item The~$\R[\G]$-module~$\ind_{\G^{\circ}}^{\G}( L^{\circ}\otimes \R)$ is finitely generated projective.
 \item The natural inclusion~$\End_{\R[\mathcal{S}]}(\ind_{\G^{\circ}}^{\mathcal{S}}( L^{\circ}\otimes \R))\subseteq \End_{\R[\G]}(\ind_{\G^{\circ}}^{\G}( L^{\circ}\otimes \R))$ is an isomorphism.
\item The centre of~$\End_{\R[\G]}(\ind_{\G^{\circ}}^{\G}( L^{\circ}\otimes \R))$ is given by
\[\Z(\End_{\R[\G]}(\ind_{\G^{\circ}}^{\G}( L^{\circ}\otimes \R)))\simeq \End_{\R[\mathcal{T}]}(\ind_{\G^\circ}^{\mathcal{T}}(L^{\circ}\otimes\R))\simeq  \R[\mathcal{T}/\G^{\circ}]\simeq \R[\G/\G^\circ]^{\H_{\pi}}.\]
\end{enumerate}
\end{lemma}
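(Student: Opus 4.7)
The plan is to prove the four parts in sequence; (1)--(3) are relatively formal once the right Mackey framework is in place, while (4) is the substantive computation, descended from the Bernstein--Deligne description over $\K$.

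For (1), I would apply the Mackey formula to the compact induction $L = \ind_\mathcal{H}^\G L_\mathcal{H}$: since $\G^\circ$ is normal in $\G$ and contained in $\mathcal{H}$, this yields the $\R[\G^\circ]$-module decomposition $L|_{\G^\circ} \simeq \bigoplus_{g \in \G/\mathcal{H}} (L^\circ)^g$, with $L^\circ$ itself appearing as the $g=e$ summand. Projectivity of $L\otimes\R$ over $\R[\G^\circ]$ (Proposition \ref{banalcuspidalZproj}) then transfers to its summand $L^\circ \otimes \R$. Finite generation follows from cuspidality of $\pi^\circ$ together with compactness of the centre of $\G^\circ$: any vector fixed by a sufficiently small compact open subgroup of $\G^\circ$ generates $\pi^\circ$ as a $\K[\G^\circ]$-module, so its $\R[\G^\circ]$-span is a full sublattice of $L^\circ$ and standard lattice arguments give finite generation. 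Part (2) is then formal: compact induction from $\G^\circ$ is exact, preserves finite generation, and preserves projectivity via Frobenius reciprocity combined with exactness of $\Res$.

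For (3), I would apply induction in stages $\ind_{\G^\circ}^\G L^\circ = \ind_\mathcal{S}^\G X$ with $X = \ind_{\G^\circ}^\mathcal{S} L^\circ$. Because $\G/\G^\circ$ is abelian and $\mathcal{S} \supseteq \G^\circ$, $\mathcal{S}$ is normal in $\G$, and the Mackey formula gives $\End_{\R[\G]}(\ind_\mathcal{S}^\G X) = \bigoplus_{g \in \G/\mathcal{S}} \Hom_{\R[\mathcal{S}]}(X, X^g)$. For $g \notin \mathcal{S}$, the $\G^\circ$-constituents of $X$ are all isomorphic to $\pi^\circ$ while those of $X^g$ are isomorphic to $(\pi^\circ)^g \not\simeq \pi^\circ$ (by definition of $\mathcal{S}$), so $\Hom_{\R[\G^\circ]}(X, X^g) \otimes \K = 0$; torsion-freeness over $\R$ forces it to vanish, leaving only the $g=e$ summand $\End_{\R[\mathcal{S}]}(X)$.

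For (4), applying Mackey along the normal inclusion $\G^\circ \subseteq \mathcal{S}$ gives $\End_{\R[\mathcal{S}]}(X) \simeq \bigoplus_{g \in \mathcal{S}/\G^\circ} \Hom_{\R[\G^\circ]}(L^\circ, (L^\circ)^g)$, a convolution algebra in which each summand is $\R$-free of rank one (because $\End_{\R[\G^\circ]}(L^\circ) = \R$ by absolute irreducibility of $\pi^\circ$ and $(L^\circ)^g \simeq L^\circ$ for $g \in \mathcal{S}$). Inside it sits the subalgebra supported on $\mathcal{T}/\G^\circ$, which identifies with $\End_{\R[\mathcal{T}]}(\ind_{\G^\circ}^{\mathcal{T}} L^\circ)$ (since $\mathcal{T}$ is a subgroup). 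I would exhibit this subalgebra as $\R[\mathcal{T}/\G^\circ]$ by using, for each $g \in \mathcal{T} \subseteq \mathcal{H}$, the operator $\pi_\mathcal{H}(g)$ which preserves the $\mathcal{H}$-invariant lattice $L_\mathcal{H} = L^\circ$ and provides a canonical generator $\varphi_g$ of $\Hom_{\R[\G^\circ]}(L^\circ, (L^\circ)^g)$ satisfying $\varphi_{g_1 g_2} = \varphi_{g_1}\varphi_{g_2}$ (no cocycle, since these come from a bona fide representation of $\mathcal{H}$). To see that this subalgebra is exactly the centre: after $\otimes\K$, Theorem \ref{BernsteinDeligne} applied to the supercuspidal block $[\G,\pi]_\G$ (where $\W_{(\G,\pi)} = 1$ and $\H_{(\G,\pi)} = \H_\pi$) identifies the centre of $\End_{\K[\G]}(\ind_{\G^\circ}^\G \pi^\circ)$ with $\K[\G/\G^\circ]^{\H_\pi}$, which coincides with $\K[\mathcal{T}/\G^\circ]$ by the eigenspace decomposition of the $\H_\pi$-action using $\mathcal{T}/\G^\circ = \bigcap_{\chi \in \H_\pi} \ker(\chi)$; intersecting with our integral Hecke algebra of known $\R$-rank then pins the integral centre down to $\R[\mathcal{T}/\G^\circ]$. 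The main obstacle will be the ``no-cocycle'' property on $\mathcal{T}$, which relies on the maximality of $\mathcal{H}$ and the defining property of $\mathcal{T}$ as the simultaneous kernel of all unramified twists fixing $\pi$.
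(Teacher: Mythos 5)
Your parts (1)--(3) and the first half of (4) are essentially the paper's argument in light disguise: (1) restricts the $\G$-stable lattice $\ind_{\mathcal H}^{\G}(L_{\mathcal H})\subset\pi$ to $\G^{\circ}$ and extracts $L^{\circ}\otimes\R$ as a direct summand, with projectivity from Proposition \ref{banalcuspidalZproj}; (2) is the same adjunction; your (3) runs Mackey over $\G/\mathcal S$ after induction in stages instead of directly over $\G/\G^{\circ}$, but the mechanism (vanishing after $\otimes\K$ plus torsion-freeness) is identical; and your operators $\varphi_t=\pi_{\mathcal H}(t)$ for $t\in\mathcal T\subseteq\mathcal H$ are exactly the paper's tensor decomposition $\ind_{\G^{\circ}}^{\mathcal T}(L^{\circ})\simeq L_{\mathcal T}\otimes\ind_{\G^{\circ}}^{\mathcal T}(1)$. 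Your integral descent at the end of (4) — sandwiching the centre between $\bigoplus_{t}\R\varphi_t$ and $\End_{\R}\cap\K[\mathcal T/\G^{\circ}]$ — is a legitimate and arguably cleaner alternative to the paper's finiteness-plus-normality argument, but note two small over-claims: $\varphi_t$ generating its Hom-space already uses $\End_{\R[\G^{\circ}]}(L^{\circ}\otimes\R)=\R$, which needs normality of $\R$ and finiteness (absolute irreducibility only gives it after $\otimes\K$), and the assertion that \emph{all} $\mathcal S$-components are $\R$-free of rank one is unjustified for $g\in\mathcal S\setminus\mathcal H$ (they are only rank-one projectives) — fortunately you do not need it. Likewise, in (1) the step ``full finitely generated sublattice of $L^{\circ}$ implies $L^{\circ}$ finitely generated'' is not an argument as stated, though the paper is equally terse on this point.

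The genuine gap is the characteristic-zero input to (4). The paper invokes \cite[Lemma 8.4]{BHGen}, whose content is the \emph{concrete} statement that, inside $\End_{\K[\mathcal S]}(\ind_{\G^{\circ}}^{\mathcal S}\pi^{\circ})\simeq\bigoplus_{g\in\mathcal S/\G^{\circ}}\Hom_{\K[\G^{\circ}]}(\pi^{\circ},(\pi^{\circ})^{g})$, the centre is precisely the span of the components indexed by $\mathcal T/\G^{\circ}$. You replace this by Theorem \ref{BernsteinDeligne}, but Bernstein's theorem (after descent from $\Kbar$ to $\K$, as in Lemma \ref{lemmacentreprojK}) only gives an \emph{abstract} isomorphism of the block centre with $\K[\G/\G^{\circ}]^{\H_{\pi}}$; together with your eigenspace observation $\K[\G/\G^{\circ}]^{\H_{\pi}}=\K[\mathcal T/\G^{\circ}]$ this identifies the isomorphism type of the centre, but not its location inside the Mackey/crossed-product decomposition — and both inclusions of your sandwich require that location (centrality of the $\varphi_t$ for the lower bound, and $Z\otimes\K$ being the $\mathcal T$-supported span for the upper bound). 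Your ``no-cocycle'' remark does not supply this: it shows the $\mathcal T$-part is a group algebra, not that it is central in the whole crossed product over $\mathcal S/\G^{\circ}$, nor that nothing outside it is central; this is exactly where $\mathcal T=\bigcap_{\chi\in\H_{\pi}}\Ker(\chi)$, $[\mathcal S:\mathcal H]=[\mathcal H:\mathcal T]=m$ and the maximality of $\mathcal H$ enter, via the commutator pairing on $\mathcal S/\G^{\circ}$. So you must either cite \cite[Lemma 8.4]{BHGen} as the paper does, or carry out that crossed-product centre computation directly; once that is in place, the rest of your argument goes through.
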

\begin{proof}
The representation~$\ind_{\mathcal{H}}^{\G}(L_{\mathcal{H}})$ is a~$\G$-stable lattice in~$\ind_{\mathcal{H}}^{\G}(\pi_{\mathcal{H}})\simeq \pi$, hence
\[\Res^{\G}_{\G^{\circ}}(\ind_{\mathcal{H}}^{\G}(L_{\mathcal{H}}))\otimes\R\simeq \bigoplus_{\G/\mathcal{H}} (\Res^{\mathcal{H}}_{\G^{\circ}}(L_{\mathcal{H}})^g \otimes\R )\]
 is projective by Proposition \ref{banalcuspidalZproj}.  Hence, as it is a summand,~$L^{\circ}\otimes\R $ is projective.  As~$\G^\circ$ is open in~$\G$,~$\ind_{\G^\circ}^{\G}$ is left adjoint to an exact functor (restriction), hence~$\ind_{\G^{\circ}}^{\G}( L^{\circ}\otimes \R)$ is a finitely generated projective~$\R[\G]$-module.
 
 By Frobenius reciprocity and Mackey Theory,
\begin{equation}
\label{eqcusp1}\End_{\R[\G]}(\ind_{\G^\circ}^{\G}(L^{\circ})\otimes\R)\simeq \bigoplus_{\G/\G^{\circ}}\Hom_{\R[\G^{\circ}]}(L^{\circ},(L^{\circ})^g).\end{equation}
Moreover, the natural map
\[\Hom_{\R[\G^{\circ}]}(L^{\circ},(L^{\circ})^g)\otimes \K\simeq \Hom_{\K[\G^{\circ}]}(\pi^{\circ},(\pi^{\circ})^g),\]
is an isomorphism by Lemma \ref{scalarextandpros}.  Hence the sum in Equation \ref{eqcusp1} is supported on the cosets~$\mathcal{S}/\G^{\circ}$.  Hence we have an isomorphism of algebras, given by functoriality of compact induction, 
\[\End_{\R[\mathcal{S}]}(\ind_{\G^\circ}^{\mathcal{S}}(L^{\circ}\otimes\R))\simeq \End_{\R[\G]}(\ind_{\G^\circ}^{\G}(L^{\circ}\otimes\R)).\]


We next compute the endomorphism algebra~$\End_{\R[\mathcal{T}]}(\ind_{\G^\circ}^{\mathcal{T}}(L^{\circ}\otimes\R))$.  As~$\Res_{\G^{\circ}}^{\mathcal{T}}(L_{\mathcal{T}})=L^{\circ}$, we have
\[\ind_{\G^\circ}^{\mathcal{T}}(L^{\circ})\simeq L_{\mathcal{T}}\otimes \ind_{\G^\circ}^{\mathcal{T}}(1)\] where~$\mathcal{T}$ acts diagonally on the tensor product.  This decomposition induces a morphism of algebras
\begin{equation}
\label{eqcusp2} \End_{\R[\mathcal{T}]}(L_{\mathcal{T}}\otimes\R)\otimes \R[\mathcal{T}/\G^{\circ}]\rightarrow \End_{\R[\mathcal{T}]}(\ind_{\G^\circ}^{\mathcal{T}}(L^{\circ}\otimes\R)).\end{equation}
By Frobenius reciprocity, we have an isomorphism of~$\R$-modules
\begin{align*}
\End_{\R[\mathcal{T}]}(\ind_{\G^\circ}^{\mathcal{T}}(L^{\circ}\otimes\R))&\simeq \Hom_{\R[\G^{\circ}]}(L^{\circ}\otimes\R,(L^{\circ}\otimes\R)\otimes \Res_{\G^{\circ}}^{\mathcal{T}}( \ind_{\G^\circ}^{\mathcal{T}}(1)\otimes\R) )\\&\simeq \End_{\R[\G^{\circ}]}(L^{\circ}\otimes \R)\otimes\R[\mathcal{T}/\G^{\circ}]\end{align*}
and the morphism of Equation \ref{eqcusp2}, is induced by the inclusion~$\End_{\R[\mathcal{T}]}(L_{\mathcal{T}}\otimes\R)\subseteq \End_{\R[\G^{\circ}]}(L^{\circ}\otimes \R)$.  As~$L^{\circ},~L_{\mathcal{T}}$ are~$\cO_{\K}$-lattices, they are locally free (cf.~Remark \ref{latticesincuspsremark}), hence~$\End_{\R[\mathcal{T}]}(L_{\mathcal{T}}\otimes\R),~\End_{\R[\G^{\circ}]}(L^{\circ}\otimes\R)$ are locally free.  Moreover,
\[\End_{\R[\mathcal{T}]}(L_{\mathcal{T}}\otimes\R)\otimes \K\simeq \End_{\K[\mathcal{T}]}(\pi_{\mathcal{T}})\simeq\K,\quad \End_{\R[\G^{\circ}]}(L^{\circ}\otimes\R)\otimes \K\simeq \End_{\K[\G^{\circ}]}(\pi^{\circ})\simeq\K,\] 
and hence~$\End_{\R[\mathcal{T}]}(L_{\mathcal{T}}\otimes\R)\simeq\End_{\R[\G^{\circ}]}(L^{\circ}\otimes\R)\simeq \R$ (as~$\R$ is a normal domain), 
and hence we have an isomorphism of algebras
\[  \End_{\R[\mathcal{T}]}(\ind_{\G^\circ}^{\mathcal{T}}(L^{\circ}\otimes\R))\simeq \R[\mathcal{T}/\G^{\circ}].\]
%
%
%

It remains to show this is~$\Z(\End_{\R[\mathcal{S}]}(\ind_{\G^{\circ}}^{\mathcal{S}}( L^{\circ}\otimes \R)))$.  
Now, by \cite[Lemma 8.4]{BHGen},
\[\Z(\End_{\K[\mathcal{S}]}(\ind_{\G^{\circ}}^{\mathcal{S}}( \pi^{\circ})))\simeq\End_{\K[\mathcal{T}]}(\ind_{\G^\circ}^{\mathcal{T}}(\pi^{\circ})) \simeq \K[\mathcal{T}/\G^{\circ}]\]  and hence~
\begin{equation}
\label{inclusion}\End_{\R[\mathcal{T}]}(\ind_{\G^\circ}^{\mathcal{T}}(L^{\circ}\otimes\R))\subseteq\Z(\End_{\R[\mathcal{S}]}(\ind_{\G^{\circ}}^{\mathcal{S}}( L^{\circ}\otimes \R))).\end{equation}  Moreover,
\[\Z(\End_{\R[\mathcal{S}]}(\ind_{\G^{\circ}}^{\mathcal{S}}( L^{\circ}\otimes \R)))\otimes \K=\Z(\End_{\K[\mathcal{S}]}(\ind_{\G^{\circ}}^{\mathcal{S}}( \pi^{\circ})))\simeq \K[\mathcal{T}/\G^{\circ}]\]
by Lemmas \ref{scalarextandpros} and \ref{centrelemma}.  Hence, as~$\R[\mathcal{T}/\G^{\circ}]$ is a normal Noetherian domain and~$\End_{\R[\mathcal{S}]}(\ind_{\G^{\circ}}^{\mathcal{S}}( L^{\circ}\otimes \R))$ (hence~$\Z(\End_{\R[\mathcal{S}]}(\ind_{\G^{\circ}}^{\mathcal{S}}( L^{\circ}\otimes \R)))$) a finite~$\R[\mathcal{T}/\G^{\circ}]$-module (as~$[\mathcal{S}:\mathcal{T}]=m^2$ is finite), the inclusion of Equation \ref{inclusion} is an isomorphism.
\end{proof}

Let~$[\M,\rho]_{\G}\in\mathfrak{B}_{\Qbar}(\G)$ be a~$\Qbar$-inertial class.  Choose a representative~$(\M,\rho)$ such that~$\rho$ has finite order central character, and is hence defined over a number field~$\K/\mathbb{Q}$ by Proposition \ref{integralreps}.  By extending scalars if necessary, we assume that~$\K$ contains~$\sqrt{q}$ and is sufficiently large for the decomposition of~$\rho_{|\M^{\circ}}$.  

The finite group~$\H_{\rho}$ only depends on the~$\M$-inertial class of~$\rho$ and we write~$\H_{(\M,\rho)}$ as before.  Recall, from Bernstein's decomposition, we put
\[\W_{(\M,\rho)}=\{g\in \G: \M^g=\M,\text{ and  }(\M,\rho^g)\in[\M,\rho]_{\M}\}/\M.\]

As in the cuspidal case above, we choose our lattice~$L^{\circ}$ in~$\rho^{\circ}$, where~$\rho^{\circ}$ is an absolutely irreducible~$\K[\M^{\circ}]$-submodule of~$\rho_{|\M^{\circ}}$.  Choose a parabolic subgroup~$\P$ of~$\G$ with Levi factor~$\M$.  We set~$\R=\cO_\K[1/\N_\G]$ and let
\[P_{(\M,\rho)}=i_{\M, \P}^\G(\ind_{\M^{\circ}}^{\M}(L^{\circ}\otimes\R)).\]

\begin{lemma}\label{lemmacentreprojK}
The~$\K[\G]$-module~$P_{(\M,\rho)}\otimes\K\simeq i_{\M, \P}^\G(\ind_{\M^{\circ}}^{\M}(\rho^{\circ}))$ is finitely generated projective, with
\[\Z(\End_{\K[\G]}(P_{(\M,\rho)}\otimes\K))\simeq (\K[\M/\M^\circ]^{\H_{(\M,\rho)}})^{\W_{(\M,\rho)}}.\]
\end{lemma}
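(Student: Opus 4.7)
The plan is to identify $P_{(\M,\rho)}\otimes\K$ as a progenerator of the Bernstein block $\Rep_\K(\G)_{[\M,\rho]_\G}$ and read off its centre using Theorem \ref{BernsteinDeligne}(4). I would proceed in three steps.

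First, on the Levi side, Lemma \ref{lemmacuspend}(1) applied with $\M$ in place of $\G$, tensored with $\K$, shows that $\ind_{\M^\circ}^\M(\rho^\circ)$ is finitely generated and projective over $\K[\M]$. Since $\rho^\circ$ is a summand of $\rho_{|\M^\circ}$, our module is a summand of $\ind_{\M^\circ}^\M(\rho_{|\M^\circ}) \simeq \rho\otimes_\K \K[\M/\M^\circ]$ (projection formula for open-subgroup induction), hence lies in $\Rep_\K(\M)_{[\M,\rho]_\M}$. For any simple $\sigma = \rho\otimes\chi$ in this block, Frobenius reciprocity gives $\Hom_{\K[\M]}(\ind_{\M^\circ}^\M(\rho^\circ),\sigma) = \Hom_{\K[\M^\circ]}(\rho^\circ, \rho_{|\M^\circ}) \neq 0$, so $\ind_{\M^\circ}^\M(\rho^\circ)$ is a progenerator of $\Rep_\K(\M)_{[\M,\rho]_\M}$.

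Second, I would transfer to $\G$ via parabolic induction. The functor $i^\G_{\M,\P}$ is exact, and by second adjointness (Theorem \ref{secadj}, in its normalized form) its right adjoint is the exact functor $r^\G_{\M,\overline{\P}}$; thus $i^\G_{\M,\P}$ preserves finitely generated projectives. Compatibility of parabolic induction with supercuspidal supports places $P_{(\M,\rho)}\otimes\K = i^\G_{\M,\P}(\ind_{\M^\circ}^\M(\rho^\circ))$ in $\Rep_\K(\G)_{[\M,\rho]_\G}$, and for any simple $\sigma$ of this block, $r^\G_{\M,\overline{\P}}(\sigma)$ is a nonzero object of $\Rep_\K(\M)_{[\M,\rho]_\M}$. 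Second adjointness together with Step 1 then gives
\[\Hom_\G(P_{(\M,\rho)}\otimes\K,\sigma) \simeq \Hom_\M(\ind_{\M^\circ}^\M(\rho^\circ), r^\G_{\M,\overline{\P}}(\sigma)) \neq 0,\]
so $P_{(\M,\rho)}\otimes\K$ is a progenerator of $\Rep_\K(\G)_{[\M,\rho]_\G}$.

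Third, since $\K$ contains $\sqrt{q}$ and is large enough to split $\rho_{|\M^\circ}$, Theorem \ref{BernsteinDeligne}(4) applies verbatim over $\K$ and identifies the centre of $\Rep_\K(\G)_{[\M,\rho]_\G}$ with $(\K[\M/\M^\circ]^{\H_{(\M,\rho)}})^{\W_{(\M,\rho)}}$; combined with the Morita-theoretic fact that the centre of the endomorphism ring of a progenerator coincides with the centre of its category, this yields the displayed isomorphism. (If desired, one can instead base-change to $\Kbar$ using Lemmas \ref{scalarextandpros} and \ref{centrelemma}(1), apply Theorem \ref{BernsteinDeligne} over $\Kbar$, and descend, using that the invariants $(-)^{\H_{(\M,\rho)}}$ and $(-)^{\W_{(\M,\rho)}}$ commute with $\K\to\Kbar$.) The main obstacle is Step 2, specifically the nonvanishing of $r^\G_{\M,\overline{\P}}(\sigma)$ on simples of the block, which relies squarely on second adjointness.
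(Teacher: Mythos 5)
The paper's own proof is a one-liner: since $\Kbar/\K$ is faithfully flat, base change to $\Kbar$, quote Bernstein's Theorem \ref{BernsteinDeligne} for the progenerator and its centre, and descend the centre computation using Lemma \ref{centrelemma} (together with Lemma \ref{scalarextandpros}). Your Steps 1 and 3 are consistent with this, but your Step 2 tries to re-prove the progenerator property of Bernstein's theorem directly over $\K$, and this is where there is a genuine gap. The assertion that for every simple $\sigma$ of the block, $r^{\G}_{\M,\overline{\P}}(\sigma)$ is a nonzero object of $\Rep_\K(\M)_{[\M,\rho]_\M}$ does not follow from compatibility of parabolic restriction with supercuspidal supports. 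The geometric lemma only tells you that the cuspidal constituents of $r^{\G}_{\M,\overline{\P}}(\sigma)$ are of the form $(\rho\chi)^{w}$ with $w\in N_\G(\M)$ and $\chi$ unramified; such a constituent lies in the $\M$-inertial class $[\M,\rho^{w}]_\M$, which in general differs from $[\M,\rho]_\M$ even though $[\M,\rho^{w}]_\G=[\M,\rho]_\G$ (e.g.\ $\M=\GL_a\times\GL_a$ in $\GL_{2a}$, $\rho=\tau_1\otimes\tau_2$ with $\tau_2$ not an unramified twist of $\tau_1$, and $w$ the swap). What your Hom computation really needs is that the $[\M,\rho]_\M$-component of $r^{\G}_{\M,\overline{\P}}(\sigma)$ is nonzero for every simple $\sigma$ of the $\G$-block, and by your own second-adjointness identity this is equivalent to the generation half of Theorem \ref{BernsteinDeligne}(2); establishing it from scratch requires the finer fact that reducibility of $i_{\M,\P}^{\G}(\rho\chi)$ only occurs ``within the inertial class'' (the role of $\W_{(\M,\rho)}$ in Bernstein's theory), not mere support compatibility. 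So the honest fix is to cite Theorem \ref{BernsteinDeligne}(2), observing that over $\Kbar$ your module is a direct summand of Bernstein's progenerator $i_{\M,\P}^{\G}(\rho\otimes\chi_{\univ,\M})\simeq \ind_{\M^{\circ}}^{\M}(\rho|_{\M^\circ})$ induced up, which is a finite direct sum of copies of it — at which point your argument collapses to the paper's.

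A smaller point in Step 3: Theorem \ref{BernsteinDeligne} is stated over an algebraically closed field of characteristic zero, so it does not apply ``verbatim over $\K$''; your parenthetical route — base change to $\Kbar$ and descend $\Z(\End)$ via Lemmas \ref{scalarextandpros} and \ref{centrelemma}, using that finite-group invariants commute with the flat extension $\K\to\Kbar$ — is the correct one, and is precisely what the paper does. Also note that second adjointness over a characteristic-zero field is classical (Bernstein), so invoking Theorem \ref{secadj} here is harmless but not where the real difficulty lies.
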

\begin{proof}
As~$\Kbar/\K$ is faithfully flat, this follows from Bernstein's Theorem \ref{BernsteinDeligne} and from Lemma \ref{centrelemma}.
\end{proof}

We now prove an integral form of this after inverting~$\N_\G$:

\begin{lemma}\label{lemmacentreproj2}
Let~$\R=\cO_\K[1/\N_\G]$.  The~$\R[\G]$-module~$P_{(\M,\rho)}=i_{\M, \P}^\G(\ind_{\M^{\circ}}^{\M}(L^{\circ}\otimes\R))$
is finitely generated projective, with
\[\Z(\End_{\R[\G]}(P_{(\M,\rho)}))\simeq (\R[\M/\M^\circ]^{\H_{(\M,\rho)}})^{\W_{(\M,\rho)}}.\]
\end{lemma}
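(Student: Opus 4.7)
My plan mirrors the cuspidal calculation of Lemma \ref{lemmacuspend}, applied one level up via parabolic induction. Throughout write $Q := \ind_{\M^\circ}^{\M}(L^\circ \otimes \R)$, so that $P_{(\M,\rho)} = i^\G_{\M,\P}(Q)$. For the projectivity claim, first apply Lemma \ref{lemmacuspend}(2) to the Levi subgroup $\M$ (the cuspidality of $\rho$ lets us replay the cuspidal argument with $\M$ in the role of $\G$) to conclude that $Q$ is finitely generated projective over $\R[\M]$. Normalised parabolic induction $i^\G_{\M,\P}$ is exact and, by second-adjointness (Theorem \ref{secadj}), admits an exact right adjoint; hence it preserves projectivity. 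Compactness of $\G/\P$ preserves finite generation, so $P_{(\M,\rho)}$ is a finitely generated projective $\R[\G]$-module.

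For the centre, the first move is to build a natural map. Functoriality of $i^\G_{\M,\P}$ gives a ring homomorphism $\End_{\R[\M]}(Q) \to \End_{\R[\G]}(P_{(\M,\rho)})$, and Lemma \ref{lemmacuspend}(4) applied to $\M$ identifies the centre of the source with $\R[\M/\M^\circ]^{\H_{(\M,\rho)}}$. The finite group $\W_{(\M,\rho)}$ acts on $\R[\M/\M^\circ]^{\H_{(\M,\rho)}}$, and the geometric lemma together with second-adjointness produces, for each $w \in \W_{(\M,\rho)}$, a corresponding contribution in $r^\G_{\M,\P^\circ}(i^\G_{\M,\P}(Q))$; a standard calculation shows that $\W_{(\M,\rho)}$-invariant elements of $\R[\M/\M^\circ]^{\H_{(\M,\rho)}}$ land in the centre of $\End_{\R[\G]}(P_{(\M,\rho)})$. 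This provides a morphism
\[\Phi : (\R[\M/\M^\circ]^{\H_{(\M,\rho)}})^{\W_{(\M,\rho)}} \to \Z(\End_{\R[\G]}(P_{(\M,\rho)})).\]

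To finish, the plan is to deduce that $\Phi$ is an isomorphism by faithfully flat descent from $\K$. Both source and target are $\R$-flat: the source because $|\H_{(\M,\rho)}|$ and $|\W_{(\M,\rho)}|$ are invertible in $\R$ (both divide a power of $\N_\G$), so taking invariants commutes with base change, and the target by an argument parallel to the flatness corollary following Proposition \ref{Corembed} applied to the endomorphism algebra of the finitely generated projective $P_{(\M,\rho)}$. After base change to $\K$, $\Phi\otimes_\R\K$ is the isomorphism of Lemma \ref{lemmacentreprojK}. Flatness and $\R\hookrightarrow\K$ give injectivity of $\Phi$; its image is a normal domain (fixed subring of a Laurent polynomial ring under an action of a group whose order is invertible in $\R$), and sits inside $\Z(\End_{\R[\G]}(P_{(\M,\rho)}))$ with the same generic fibre.

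The main obstacle I anticipate is the surjectivity of $\Phi$: ruling out ``extra'' central endomorphisms that exist integrally but not rationally. My plan is to show that $\Z(\End_{\R[\G]}(P_{(\M,\rho)}))$ is integral over the image of $\Phi$ and then invoke normality of the source, so that any element of the target which becomes a fraction in the image must already lie in the image. An alternative fallback is to reduce modulo each maximal ideal $\mathfrak{m}$ of $\cO_\K[1/\N_\G]$ and invoke Nakayama using the banal-characteristic analogue of Bernstein's theorem on each residue field, combined with Proposition \ref{localpropositionlifting} to control how cuspidal supports behave under reduction; the banal hypothesis is exactly what makes this reduction step clean.
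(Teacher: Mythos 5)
Your outline follows the paper's own route essentially step for step: projectivity of $P_{(\M,\rho)}$ from Lemma \ref{lemmacuspend} together with second adjointness and finiteness of parabolic induction, the map built by functoriality of $i_{\M,\P}^\G$ out of the cuspidal-level computation, injectivity by torsion-freeness plus the generic isomorphism of Lemma \ref{lemmacentreprojK}, and surjectivity via ``target integral over a normal source''. The one place you stop at a plan is exactly where the paper supplies the input: integrality is obtained from two finiteness statements, namely (i) $\End_{\R[\M]}(\ind_{\M^{\circ}}^{\M}(L^{\circ}\otimes\R))$ is a finite module over $\R[\M/\M^\circ]^{\H_{(\M,\rho)}}$ (this is already in Lemma \ref{lemmacuspend}, since $[\mathcal{S}:\mathcal{T}]$ is finite), and (ii) the Geometric Lemma makes $\End_{\R[\G]}(P_{(\M,\rho)})$ finite over $\End_{\R[\M]}(\ind_{\M^{\circ}}^{\M}(L^{\circ}\otimes\R))$; together these make the whole endomorphism ring, hence its centre, integral over $(\R[\M/\M^\circ]^{\H_{(\M,\rho)}})^{\W_{(\M,\rho)}}$, and normality of the source finishes it --- so your Nakayama/reduction-mod-$\mathfrak{m}$ fallback is not needed. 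Two small cleanups: you do not need (and should not rely on) invertibility of $|\H_{(\M,\rho)}|$ or $|\W_{(\M,\rho)}|$ in $\R$ --- invariants are a kernel, so they commute with the flat base change $\R\to\K$, and the invariant ring is automatically a normal Noetherian domain and $\R$-flat by torsion-freeness over the Dedekind ring $\cO_\K[1/\N_\G]$; likewise, the cleanest way to see that your map lands in the centre is simply to embed $\End_{\R[\G]}(P_{(\M,\rho)})$ into its $\K$-fibre and quote Lemma \ref{lemmacentreprojK}, rather than a separate ``standard calculation'' with the geometric lemma.
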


\begin{proof}
By Lemma \ref{lemmacuspend}, second adjunction and finiteness of parabolic induction \cite{DHKMfiniteness}, 
~$P_{(\M,\rho)}$ is~finitely generated projective.
%

As~$P_{(\M,\rho)}$ is finitely generated projective, we have an embedding 
\begin{equation}\label{Equation1}
\End_{\R[\G]}(P_{(\M,\rho)})\hookrightarrow \End_{\R[\G]}(P_{(\M,\rho)})\otimes \K \simeq \End_{\K[\G]}(P_{(\M,\rho)}\otimes\K).\end{equation}
Moreover, the isomorphism $(\K[\M/\M^\circ]^{\H_{(\M,\rho)}})^{\W_{(\M,\rho)}}
\xrightarrow{\sim}\Z(\End_{\K[\G]}(\P_{(\M,\rho)}\otimes\K))$ of Lemma
\ref{lemmacentreprojK} is induced by the following composed map,
\[(\K[\M/\M^\circ]^{\H_{(\M,\rho)}})^{\W_{(\M,\rho)}}\hookrightarrow \End_{\K[\M]}(\ind_{\M^{\circ}}^{\M}(\rho^{\circ}))\hookrightarrow \End_{\K[\G]}(P_{(\M,\rho)}\otimes\K),\]
where the second map is given by functoriality for parabolic induction.  
Therefore, the analogous map over~$\R$ 
\[(\R[\M/\M^\circ]^{\H_{(\M,\rho)}})^{\W_{(\M,\rho)}}\hookrightarrow\End_{\R[\M]}(\ind_{\M^{\circ}}^{\M}(L^{\circ}\otimes\R))\hookrightarrow \End_{\R[\G]}(P_{(\M,\rho)}),\]
 also given by functoriality of parabolic induction, certainly induces an embedding
\[(\R[\M/\M^\circ]^{\H_{(\M,\rho)}})^{\W_{(\M,\rho)}}\hookrightarrow  \Z(\End_{\R[\G]}(P_{(\M,\rho)})).\]
To see that this embedding is an isomorphism, it then suffices to see that the target is
integral over the source, since the source is integrally closed. In turn, it is enough to
show that $\End_{\R[\G]}(P_{(\M,\rho)})$ is finitely generated as a module over
$(\R[\M/\M^\circ]^{\H_{(\M,\rho)}})^{\W_{(\M,\rho)}}$, or even over
$\R[\M/\M^\circ]^{\H_{(\M,\rho)}}$. We have already seen that
$\End_{\R[\M]}(\ind_{\M^{\circ}}^{\M}(L^{\circ}\otimes\R))$ is finitely generated as a
module over $\R[\M/\M^\circ]^{\H_{(\M,\rho)}}$. On the other hand, the Geometric Lemma
implies that $\End_{\R[\G]}(P_{(\M,\rho)})$ is finitely generated as a module over 
$\End_{\R[\M]}(\ind_{\M^{\circ}}^{\M}(L^{\circ}\otimes\R))$.
\end{proof}

For every~$\Qbar$-inertial class of supercuspidal support~$[\M,\rho]_\G$~for~$\G$, we follow the same construction, choosing representatives~$(\M,\rho)$ with finite order central characters, and defining a collection of projective modules
\begin{equation}\label{Integralprog}
P_{(\M,\rho)}=i_{\M,\P_{\M}}^{\G}(\ind_{\M^{\circ}}^{\M}(L_{\rho}^{\circ})\otimes \cO_{\K_\rho}[1/\N_{\G}]),
\end{equation}
where the number field~$\K_{\rho}$ and the lattice~$L_{\rho}^{\circ}$ depend on~$\rho$, and the parabolic~$\P_{\M}$ on~$\M$.  We choose our parabolic subgroups compatibility so that~$\P_{\M^g}=\P_{\M}^g$.  To consider all of these over the same base ring we extend scalars to~$\Zbar[1/\N_{\G}]$, where we have:

\begin{lemma}
The~$\overline{\mathbb{Z}}[1/\N_\G][\G]$-modules~$P_{(\M,\rho)}\otimes \overline{\mathbb{Z}}[1/\N_\G]$, are
\begin{enumerate}
\item \emph{finitely generated projective};
\item \emph{mutually disjoint}, i.e.~if~$[\M,\rho]_\G$ and~$[\M',\rho']_\G$ define different classes in~$\mathfrak{B}_{\Qbar}(\G)$ then
\[\Hom_{\overline{\mathbb{Z}}[1/\N_\G][\G]}(P_{(\M,\rho)}\otimes \overline{\mathbb{Z}}[1/\N_\G],P_{(\M',\rho')}\otimes \overline{\mathbb{Z}}[1/\N_\G])=0.\]
\item \emph{exhaust} the category, i.e.~for any~$\overline{\mathbb{Z}}[1/\N_\G][\G]$-module~$\pi$ there exists an inertial class~$[\M,\rho]_\G\in\mathfrak{B}_{\Qbar}(\G)$ with
\[\Hom_{\overline{\mathbb{Z}}[1/\N_\G][\G]}(P_{(\M,\rho)}\otimes \overline{\mathbb{Z}}[1/\N_\G],\pi)\neq 0.\] \end{enumerate}
\end{lemma}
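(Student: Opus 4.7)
The strategy is to dispatch (1) and (2) by flat base change and Bernstein's theorem, and to handle (3), the main obstacle, by combining the level decomposition with the banal cuspidal lifting of Proposition~\ref{localpropositionlifting}.

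For (1), Lemma~\ref{lemmacentreproj2} gives finite generation and projectivity of $P_{(\M,\rho)}$ over $\cO_{\K_\rho}[1/\N_\G][\G]$; since $\Zbar[1/\N_\G]$ is torsion-free and hence flat over the Dedekind ring $\cO_{\K_\rho}[1/\N_\G]$, both properties are preserved under base change. For (2), write $H$ for the $\Hom$ in question. The source is finitely presented, so Hom commutes with flat base change, giving $H\otimes_{\Zbar[1/\N_\G]}\Qbar=\Hom_{\Qbar[\G]}(P_{(\M,\rho)}\otimes\Qbar, P_{(\M',\rho')}\otimes\Qbar)$, which vanishes by Bernstein's Theorem~\ref{BernsteinDeligne} for distinct inertial classes. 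Since $H$ is torsion-free (Hom from a projective into a torsion-free module is torsion-free over a domain), vanishing at the generic point forces $H=0$.

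For (3), let $\pi\neq 0$. By the level decomposition of Subsection~\ref{leveldecomp}, some level-$n$ component of $\pi$ is nonzero, so $\pi$ receives a nonzero morphism from $Q_{n,\Zbar[1/\N_\G]}$; let $\pi_0$ denote its image. If $\pi_0\otimes_{\Zbar[1/\N_\G]}\Qbar\neq 0$, Bernstein over $\Qbar$ produces an inertial class $[\M,\rho]_\G$ with nonzero $\Hom_{\Qbar[\G]}(P_{(\M,\rho)}\otimes\Qbar,\pi_0\otimes\Qbar)$, which lifts to the desired nonzero integral Hom by flat base change of Hom applied to the finitely presented source. Otherwise $\pi_0$ is torsion, so $\pi[\mathfrak{m}]\neq 0$ for some maximal ideal $\mathfrak{m}$ of $\Zbar[1/\N_\G]$ above a banal prime $\ell$, with residue field $\Fl$. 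Any nonzero finitely generated $\Fl[\G]$-submodule of $\pi[\mathfrak{m}]$ has a simple quotient $\sigma$ with a unique supercuspidal support $[\M',\overline{\rho'}]_\G$ in banal characteristic (Theorem~\ref{Vigbanalproj}); by Proposition~\ref{localpropositionlifting}(2), $\overline{\rho'}$ lifts to an integral cuspidal $\rho'$ over $\Ql$, defining $[\M',\rho']_\G\in\mathfrak{B}_{\Qbar}(\G)$. Combining Proposition~\ref{mainliftingreductionprop}(1), the banal Clifford-theoretic analysis underlying Lemma~\ref{lemmacuspend}, and exactness of parabolic induction, one shows that $P_{(\M',\rho')}\otimes\Fl$ surjects onto $i^\G_{\M',\P_{\M'}}(\overline{\rho'})$, which admits $\sigma$ as a quotient. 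Projectivity of $P_{(\M',\rho')}\otimes\Fl$ then lifts the resulting surjection $P_{(\M',\rho')}\otimes\Fl\twoheadrightarrow\sigma$ through a surjection onto $\sigma$ from a submodule of $\pi[\mathfrak{m}]$, yielding the required nonzero $\Hom_{\Zbar[1/\N_\G][\G]}(P_{(\M',\rho')}\otimes\Zbar[1/\N_\G],\pi)$.

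The main technical obstacle lies in the compatibility just invoked: tracking reduction modulo $\ell$ through the Clifford-theoretic scaffolding $\pi_{\mathcal{H}},\pi^\circ,\mathcal{S},\mathcal{T}$ of the construction of $P_{(\M,\rho)}$, while exploiting banality (via Proposition~\ref{banalcuspidalZproj} and Theorem~\ref{Vigbanalproj}) to keep the intermediate modules irreducible after reduction and the lattice $L^\circ$ projective on restriction to $(\M')^\circ$. Once this is established, the case analysis above handles exhaustion uniformly in $\pi$.
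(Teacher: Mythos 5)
Your parts (1) and (2) are correct and essentially the paper's own argument (Lemma \ref{lemmacentreproj2}, then torsion-freeness plus Lemma \ref{scalarextandpros} and Bernstein's decomposition). In part (3), however, there is a genuine gap at the start of your torsion case: from ``$\pi_0$ is torsion'' you infer that $\pi[\mathfrak{m}]\neq 0$ for some maximal ideal $\mathfrak{m}$ of $\overline{\mathbb{Z}}[1/\N_\G]$. This inference fails over $\overline{\mathbb{Z}}[1/\N_\G]$: since $\ell^{1/n}\in\mathfrak{m}$ for every $n$, any element of $\overline{\mathbb{Z}}[1/\N_\G]/(\ell)$ killed by all of $\mathfrak{m}$ already lies in $\ell\,\overline{\mathbb{Z}}[1/\N_\G]$ (equivalently, $\mathfrak{m}=\mathfrak{m}^2$), so a nonzero $\ell$-torsion module can have zero $\mathfrak{m}$-torsion for every $\mathfrak{m}$; a module such as $Q_{n}\otimes\overline{\mathbb{Z}}[1/\N_\G]/(\ell)$ is a perfectly good smooth $\overline{\mathbb{Z}}[1/\N_\G][\G]$-module for which your step produces nothing. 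What one needs in characteristic $\ell$ is a \emph{subquotient}, not a submodule. The paper sidesteps this by reducing at the outset to $\pi$ irreducible: every nonzero module has an irreducible subquotient, and since $P_{(\M,\rho)}\otimes\overline{\mathbb{Z}}[1/\N_\G]$ is projective a nonzero map to a subquotient lifts into $\pi$; then a simple $\overline{\mathbb{Z}}[1/\N_\G][\G]$-module is either torsion-free or killed by a maximal ideal (its annihilator in $\overline{\mathbb{Z}}[1/\N_\G]$ is prime, and every nonzero prime there is maximal), hence is literally an $\Fl[\G]$-module for a banal $\ell$. With that reduction your two cases become exactly the paper's, and your Case A (Lemma \ref{scalarextandpros} plus exhaustion over $\Qbar$, resp.\ $\mathbb{C}$) goes through verbatim.

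A second, lesser issue: the assertion that $P_{(\M',\rho')}\otimes\Fl$ surjects onto $i^{\G}_{\M',\P_{\M'}}(\overline{\rho'})$ is precisely the technical heart of the mod-$\ell$ case, and you leave it as a claim, gesturing at the Clifford-theoretic set-up of Lemma \ref{lemmacuspend}. The paper's route is more direct: choose the cuspidal datum so that the simple module in question is an honest quotient of $i^{\G}_{\M,\P_{\M}}(\tau)$; lift $\tau$ by Proposition \ref{mainliftingreductionprop}(3) to an absolutely irreducible integral cuspidal $\widetilde{\tau}$ over a number field; pass to the chosen representative $(\M^g,\eta)$ of the inertial class $[\M,\widetilde{\tau}]_\G$ used to define the family of projectives; after enlarging $\K_\eta$, the module $\widetilde{\tau}\otimes\K_\eta$ is a quotient of $\ind_{\M_\eta^{\circ}}^{\M}(L_\eta^{\circ})\otimes\K_\eta$, and the image of the integral model is a lattice $L_\tau$ in $\widetilde{\tau}$; reducing modulo $\mathfrak{m}$ (the reduction is $\tau$ by Proposition \ref{mainliftingreductionprop}(1) together with Brauer--Nesbitt) and applying the exact functor $i^{\G}_{\M,\P_{\M}}$ yields the desired surjection from $P_{(\M^g,\eta)}\otimes\Fl$, and hence from $P_{(\M^g,\eta)}\otimes\overline{\mathbb{Z}}[1/\N_\G]$, onto $\pi$. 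You should supply this lattice-image argument (or an equivalent) rather than appeal to the unproved compatibility you flag as the ``main technical obstacle''; once it is in place, and the first reduction above is made, your proof matches the paper's.
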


\begin{proof}
\begin{enumerate}
\item It follows from Lemma \ref{lemmacentreproj2} that the~$P_{(\M,\rho)}\otimes \overline{\mathbb{Z}}[1/\N_\G]$ are finitely generated projective.
\item Moreover, because~$P_{(\M,\rho)}\otimes \overline{\mathbb{Z}}[1/\N_\G]$ is finitely generated projective it is torsion free (or by Lemma \ref{scalarextandpros}), we have
\[\Hom_{\overline{\mathbb{Z}}[1/\N_\G][\G]}(P_{(\M,\rho)}\otimes \overline{\mathbb{Z}}[1/\N_\G],P_{(\M',\rho')}\otimes \overline{\mathbb{Z}}[1/\N_\G])\hookrightarrow\Hom_{\mathbb{C}[\G]}(P_{(\M,\rho)}\otimes \mathbb{C},P_{(\M',\rho')}\otimes \mathbb{C})\]
which is zero by the Bernstein decomposition \ref{BernsteinDeligne}.
\item As~$P_{(\M,\rho)}\otimes \overline{\mathbb{Z}}[1/\N_\G]$ is projective and every~$\overline{\mathbb{Z}}[1/\N_\G][\G]$-module has an irreducible subquotient, we can reduce to the case where~$\pi$ is irreducible.

First assume that~$\pi$ is torsion free, then by Lemma \ref{scalarextandpros} 
\[\Hom_{\overline{\mathbb{Z}}[1/\N_\G][\G]}(P_{(\M,\rho)}\otimes \overline{\mathbb{Z}}[1/\N_\G],\pi)\otimes\mathbb{C}\simeq\Hom _{\mathbb{C}[\G]}(P_{(\M,\rho)}\otimes \mathbb{C},\pi\otimes\mathbb{C}),\]
so~$\Hom_{\overline{\mathbb{Z}}[1/\N_\G][\G]}(P_{(\M,\rho)}\otimes \overline{\mathbb{Z}}[1/\N_\G],\pi)$ is non-zero if and only if~$\pi\otimes\mathbb{C}\in\Rep_{\mathbb{C}}(\G)_{[\M,\rho]_{\G}}$, and exhaustion follows as~$P_{(\M,\rho)}\otimes \mathbb{C}$ exhaust over~$\mathbb{C}$ from Bernstein's decomposition \ref{BernsteinDeligne}.  

Now assume that there exists a banal prime~$\ell$ such that~$\ell\pi=0$.  Hence~$\pi$ identifies with a simple~$\Fl[\G]$-module.  There exists a parabolic~$\P_{\M}=\M\N$ and an irreducible cuspidal~$\Fl[\M]$-module~$\tau$ such that~$\pi$ is a quotient of~$i_{\M,\P_{\M}}^{\G}(\tau)$.    By Proposition \ref{mainliftingreductionprop}, there exists a number field~$\K$ and an absolutely irreducible integral cuspidal~$\widetilde{\tau}$ with reduction~$r_{\ell}(\widetilde{\tau})\otimes\Fl\simeq \tau$.  There exists $(\M^g,\eta)\in[\M,\widetilde{\tau}]_{\G}$ in our chosen collection of representatives for the~$\Qbar$-inertial classes of~$\G$, with associated projective
\[P_{(\M^g,\eta)}=i_{\M^g,\P_{\M^g}}^{\G}(\ind_{\M_{\eta}^{\circ}}^{\M}(L_\eta^{\circ})\otimes\cO_{\K_{\eta}}[1/\N_{\G}]).\] 
Moreover enlarging~$\K_{\eta}$ if necessary, so that it contains~$\K$, the~$\K_{\eta}[\G]$-module~$\widetilde{\tau}\otimes\K_{\eta}$ is a quotient of~$\ind_{\M_{\eta}^{\circ}}^{\M}(L_\eta^{\circ})\otimes\K_{\eta}$.  The image of~$(\ind_{\M^{\circ}}^{\M}(L_\eta^{\circ}))^{g^{-1}}\otimes\cO_{\K_{\eta}}$ in~$\widetilde{\tau}\otimes\K_{\eta}$ is an~$\cO_{\K_{\eta}}$-lattice~$L_{\tau}$ in~$\widetilde{\tau}$ (\cite[I 9.3]{Vig96}).  Hence~$i_{\M,\P_{\M}}^{\G}(\tau)$ and hence~$\pi$ are quotients of~$P_{(\M^g,\eta)}$.
\end{enumerate}
\end{proof}
Hence, we obtain a decomposition of the category and from the last section a description of the centre of each factor.  Moreover, this is the finest such decomposition - the \emph{block decomposition} over~$\overline{\mathbb{Z}}[1/\N_\G]$ -- the centres of the categories (indexed by~$\Qbar$-inertial classes of supercuspidal supports) have no non-trivial idempotents.  We record this as a theorem:

\begin{theorem}\label{banaldecomptheorem}
\begin{enumerate}\item The category~$\Rep_{\overline{\mathbb{Z}}[1/\N_\G]}(\G)$ decomposes as
\[\Rep_{\overline{\mathbb{Z}}[1/\N_\G]}(\G)=\prod_{[\M,\rho]_\G\in\mathfrak{B}_{\Qbar}(\G)}
  \Rep_{\overline{\mathbb{Z}}[1/\N_\G]}(\G)_{[\M,\rho]_{\G}},\]
where~$\Rep_{\overline{\mathbb{Z}}[1/\N_\G]}(\G)_{[\M,\rho]_{\G}}$ is the direct factor subcategory generated by the finitely generated projective~$P_{(\M,\rho)}\otimes \overline{\mathbb{Z}}[1/\N_\G]$.  
\item Moreover, the choice
of~$P_{(\M,\rho)}$ identifies the
centre~$\mathfrak{Z}_{\G,\Zbar[1/\N_{\G}],[\M,\rho]_{\G}}$
of~$\Rep_{\overline{\mathbb{Z}}[1/\N_\G]}(\G)_{[\M,\rho]_{\G}}$ with~$(\overline{\mathbb{Z}}[1/\N_\G][\M/\M^\circ]^{\H_{(\M,\rho)}})^{\W_{(\M,\rho)}}$.\end{enumerate}
\end{theorem}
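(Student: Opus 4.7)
The plan is to prove the two assertions of the theorem by leveraging the three properties of the family $P_\mathfrak{s}:=P_{(\M,\rho)}\otimes\overline{\mathbb{Z}}[1/\N_\G]$ (finite projectivity, mutual $\Hom$-orthogonality, and joint exhaustion of the category) established in the lemma just before the theorem, writing $\mathfrak{s}$ for the inertial class $[\M,\rho]_\G$. After the block decomposition is in hand, the centre of each block is obtained via Morita theory combined with a flat base change argument starting from Lemma \ref{lemmacentreproj2}.

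For part (1), for each smooth $\overline{\mathbb{Z}}[1/\N_\G][\G]$-module $V$ I would define $V_\mathfrak{s}\subseteq V$ as the subobject generated by the images of all $\G$-equivariant maps $P_\mathfrak{s}\to V$, which is automatically a quotient of a direct sum of copies of $P_\mathfrak{s}$ and so lies in $\Rep_{\overline{\mathbb{Z}}[1/\N_\G]}(\G)_{\mathfrak{s}}$. The key preliminary observation is that $\Hom_{\overline{\mathbb{Z}}[1/\N_\G][\G]}(P_{\mathfrak{s}'},Q)=0$ whenever $\mathfrak{s}'\neq\mathfrak{s}$ and $Q$ is any quotient of a direct sum of copies of $P_\mathfrak{s}$: such a map lifts to the source by projectivity of $P_{\mathfrak{s}'}$, factors through a finite subsum by finite generation of $P_{\mathfrak{s}'}$, and its finitely many components vanish by $\Hom$-orthogonality. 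I would then show that $\bigoplus_\mathfrak{s}V_\mathfrak{s}\to V$ is an isomorphism: surjectivity follows because the cokernel admits no nonzero map from any $P_\mathfrak{s}$ (its image would lie inside $V_\mathfrak{s}$ already), so joint exhaustion forces it to vanish; injectivity follows because a nonzero map $\phi:P_{\mathfrak{s}_0}\to K$ into the kernel $K$ produced by exhaustion would, by the preliminary observation applied to each $V_\mathfrak{s}$ with $\mathfrak{s}\neq\mathfrak{s}_0$, land in the single summand $V_{\mathfrak{s}_0}$ and thus have nonzero image in $V$, contradicting $\phi(P_{\mathfrak{s}_0})\subseteq K$. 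Functoriality of $V\mapsto V_\mathfrak{s}$ then yields the product decomposition of the category.

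For part (2), the Morita equivalence $V\mapsto\Hom(P_\mathfrak{s},V)$ identifies $\Rep_{\overline{\mathbb{Z}}[1/\N_\G]}(\G)_{\mathfrak{s}}$ with the category of right modules over $E:=\End_{\overline{\mathbb{Z}}[1/\N_\G][\G]}(P_\mathfrak{s})$ and hence identifies its centre with $\Z(E)$. Setting $\R=\cO_\K[1/\N_\G]$, finite generation of $P_{(\M,\rho)}$ over $\R[\G]$ combined with $\overline{\mathbb{Z}}[1/\N_\G]$ being a filtered colimit of flat $\R$-algebras gives $E\simeq\End_{\R[\G]}(P_{(\M,\rho)})\otimes_\R\overline{\mathbb{Z}}[1/\N_\G]$. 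By Lemma \ref{lemmacentreproj2}, $\End_{\R[\G]}(P_{(\M,\rho)})$ is finitely generated as a module over its centre $(\R[\M/\M^\circ]^{\H_{(\M,\rho)}})^{\W_{(\M,\rho)}}$; since the centre of a module-finite algebra over a commutative ring is cut out by finitely many $\R$-linear commutator relations against a finite generating set, it commutes with flat base change, and invariants under the finite groups $\H_{(\M,\rho)}$ and $\W_{(\M,\rho)}$ likewise commute with flat base change, yielding the desired $\Z(E)\simeq(\overline{\mathbb{Z}}[1/\N_\G][\M/\M^\circ]^{\H_{(\M,\rho)}})^{\W_{(\M,\rho)}}$.

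The main obstacle is likely the decomposition of an arbitrary smooth module in part (1): although the argument above is formal, some care is required because $\overline{\mathbb{Z}}[1/\N_\G]$ is not Noetherian and $\mathfrak{B}_{\Qbar}(\G)$ is infinite, so one must verify that the $V_\mathfrak{s}$ are genuinely pairwise disjoint as subobjects of $V$ (and not merely $\Hom$-orthogonal) and that $\bigoplus_\mathfrak{s}V_\mathfrak{s}\to V$ is an isomorphism in the smooth category rather than only abstractly.
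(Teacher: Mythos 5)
Your proposal is correct and follows essentially the same route the paper intends: the theorem is recorded as an immediate consequence of the preceding lemma (finite projectivity, mutual disjointness, exhaustion of the family $P_{(\M,\rho)}\otimes\overline{\mathbb{Z}}[1/\N_\G]$), with the centre computed via the Morita-type identification with $\Z(\End(P_{(\M,\rho)}\otimes\overline{\mathbb{Z}}[1/\N_\G]))$, Lemma \ref{lemmacentreproj2}, and base change of endomorphism rings and centres as in Lemmas \ref{scalarextandpros} and \ref{centrelemma}. The details you supply (the canonical subobject $V_{\mathfrak{s}}$, vanishing of cross-Homs into quotients of sums of $P_{\mathfrak{s}}$, and the commutator-presentation argument for flat base change of the centre together with finite-group invariants) are exactly the standard steps the paper leaves implicit, and they are carried out correctly.
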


\begin{remark} \label{rk_definition_block}
  The block $\Rep_{\overline{\mathbb{Z}}[1/\N_\G]}(\G)_{[\M,\rho]_{\G}}$ is ``defined over
  $\cO_{\K_{\rho}}[1/\N_{\G}]$'', in the sense that $\P_{(\M,\rho)}$ generates a direct factor
  subcategory $\Rep_{\cO_{\K_{\rho}}[1/\N_\G]}(\G)_{[\M,\rho]_{\G}}$ of
  $\Rep_{\cO_{\K_{\rho}}[1/\N_\G]}(\G)$.
\end{remark}

\section{Gelfand--Graev representations and their endomorphism algebras}
For this section, we suppose that~$\mathbf{G}$ is~$\F$-quasi-split.  Choose a maximal~$\F$-split torus~$\mathbf{S}$ of~$\mathbf{G}$, and a Borel subgroup~$\mathbf{B}$ with Levi factor~$\mathbf{T}=\C_{\mathbf{G}}(\mathbf{S})$ and unipotent radical~$\mathbf{U}$.  We let~$\T=\mathbf{T}(\F)$,~$\B=\mathbf{B}(\F)$, and~$\U=\mathbf{U}(\F)$.  We write~$\Phi$ for the set of roots of~$\mathbf{S}$ in~$\mathbf{G}$,~$\Delta$ for the set of simple roots determined by~$\mathbf{B}$, and~$\Phi^+$ the set of positive roots determined by~$\Delta$.  For~$\alpha\in\Phi^+$ we let~$\mathbf{U}_{\alpha}$ denote the root subgroup corresponding to~$\alpha$, thus we have an isomorphism~$\prod_{\alpha\in\Phi^+_{\mathrm{nd}}}\mathbf{U}_\alpha\rightarrow \mathbf{U}$ of~$\F$-varieties, where~$\Phi^+_{\mathrm{nd}}$ denotes the subset of~$\Phi^+$ of non-divisible roots.

Choose in addition a pinning of $\mathbf{G}$, compatible with our choices of $\mathbf{T}$ and $\mathbf{B}$.  Equivalently we fix an isomorphism, for each absolute root $\alpha$ of $\mathbf{U}_{\overline{\F}}$, of $(\mathbf{U}_{\overline{\F}})_{\alpha}$ with the additive group over $\overline{\F}$.  

\begin{definition}
Let $\R_0$ denote the ring $\mathbb{Z}[1/p,\mu_{p^{\infty}}]$, and let~$\R$ be an~$\R_0$-algebra. 
\begin{enumerate}
\item A character~$\psi:\U\rightarrow \R^\times$ of~$\U=\mathbf{U}(\F)$ is called \emph{non-degenerate}, if~it is non-trivial on~$\U_{\alpha}=\mathbf{U}_{\alpha}(F)$, for all~$\alpha\in\Delta$.   
\item A \emph{Whittaker datum} for~$\G$ (over~$\R$) is a pair~$(\U',\psi)$ such that~$\U'$ is the~$\F$-points of the unipotent radical of a Borel subgroup of~$\mathbf{G}$ and~$\psi:\U'\rightarrow\R^\times$ is a non-degenerate character.
\item A simple~$\R[\G]$-module $(\pi,V)$ is called~\emph{$\psi$-generic}, for a
  non-degenerate character~$\psi:\U\rightarrow\R^\times$,
if the module of $(\U,\psi)$-coinvariants $V_{\psi}:=V/\langle
\pi(u)v-\psi(u)v, \, u\in\U, v\in V\rangle$ is non zero.
\end{enumerate}
\end{definition} 

From a Whittaker datum $(\U,\psi)$ over $\R$ we may construct the
smooth $\R[\G]$-module
$$\ind_{\U}^{\G}(\psi) :=\{{\rm smooth,\, compactly\, supported\, mod\, U},\, f : \G\rightarrow \R, \, \forall u\in\U, f(ug)=\psi(u)f(g) \}$$
This module depends, up to isomorphism, only on the $\G$-conjugacy class of $(\U,\psi)$.

Fix an additive character $\psi_\F: \F^+ \rightarrow \R_{0}^{\times}$, trivial on $\cO_{\K}$ but not on $\varpi^{-1} \cO_{\K}$, where $\varpi$ is a uniformizer of $\F$.  Our chosen pinning yields a $\Gal(\overline{\F}/\F)$-equivariant identification of ${\mathbf U}^{\ab}_{\overline{\F}}$ with the product $\prod\limits_{\alpha \in \Delta_{\overline{\F}}} {\mathbf G}_a$, where $\Delta_{\overline{\F}}$ denotes the set of {\em absolute} simple roots (that is, the set of positive simple roots of ${\mathbf G}_{\overline{\F}}$ determined by ${\mathbf B}_{\overline{\F}}$.) Let $\iota_{\alpha}$ denote the map ${\mathbf U} \rightarrow {\mathbf G}_a$ determined by $\alpha$ and our chosen pinning.  We can then define a character $\psi$ of $\U$ by the formula:
$$\psi(u) = \psi_{\F} \left(\sum\limits_{\alpha \in \Delta_{\overline{\F}}} \iota_{\alpha}(u) \right),$$
noting that the sum on the right hand side is an element of $\F$ since it is fixed under $\Gal(\overline{\F}/\F)$.

The character $\psi$ is non-degenerate.  Moreover, this construction gives a bijection between pinnings of $\mathbf{G}$ (for a fixed choice of $\mathbf{T}, \mathbf{B}$) and Whittaker data of the form $(\U,\psi)$.  In particular both of these sets are torsors under the conjugation action of the group $(\mathbf{T}/\mathbf{Z})(\F)$.

A choice of Whittaker datum for ${\mathbf G}$ also determines Whittaker data for Levi subgroups of ${\mathbf G}.$  Let~$\M$ be a \emph{standard} Levi subgroup of $\G$ (meaning that it contains~$\T$ and is a Levi factor of a parabolic subgroup containing~$\B$, i.e., of a \emph{standard} parabolic).  Then by \cite[2.2 Proposition]{BHGen},~$\U_{\M}=\U\cap \M$ is the unipotent radical of a Borel subgroup of~$\M$, and if~$\psi:\U\rightarrow \R^\times$ is a non-degenerate character of~$\G$ then~$\psi_{\M}=\psi_{|\U_{\M}}:\U_\M\rightarrow \R^\times$ is a non-degenerate character of~$\M$.

\subsection{Rings of definition}\label{sec:rings-definition}

In this subsection we study various rings of definition for the
representation $\ind_{\U}^{\G} (\psi)$. In particular, our objective is to prove:   

\begin{proposition}\label{ringofdefprop} There exists a finite Galois extension
  $\K/{\mathbb Q}$ contained in~$\mathbb{Q}(\mu_{p^3})$, and a  $\cO_\K[1/p][\G]$-submodule
  $W_{\U,\psi}\subset \ind_{\U}^{\G} (\psi)$, such that the natural map $W_{\U,\psi} \otimes_{\cO_{\K}[1/p]} {\mathbb
    Z}[1/p,\mu_{p^{\infty}}]\rightarrow \ind_{\U}^{\G} (\psi)$ is an isomorphism.  Moreover, the field $\K$ may be taken as follows:
\begin{enumerate}
\item If one half the sum of the positive coroots of ${\mathbf G}$ (considered as a cocharacter of ${\mathbf T}/{\mathbf Z}$) lifts to an integral cocharacter of ${\mathbf T}$, then one can take $\K = {\mathbb Q}$.
\item If (1) does not hold, and either $p$ is odd or $\F$ has characteristic $p$, then one can take $\K = {\mathbb Q}(\mu_p)$.
\item If (1) does not hold, $p= 2$, and $\F$ has characteristic zero, then one can take $\K = {\mathbb Q}(\mu_8)$.
\end{enumerate}
\end{proposition}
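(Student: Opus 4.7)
The strategy is Galois descent from $R_0 = \cO_{\K'}[1/p]$, where $\K' := \mathbb{Q}(\mu_{p^\infty})$, down to $\cO_\K[1/p]$. Let $\Gamma := \Gal(\K'/\mathbb{Q}) \xrightarrow{\sim} \mathbb{Z}_p^\times$ via the cyclotomic character $c$; for $\K \subseteq \K'$ set $\Sigma := \Gal(\K'/\K)$. The plan is to construct a $\G$-equivariant, $\Sigma$-semilinear action $\Phi$ on $\ind_{\U}^{\G}(\psi)$, define $W_{\U,\psi} := (\ind_{\U}^{\G}(\psi))^{\Sigma}$, and verify $W_{\U,\psi} \otimes_{\cO_\K[1/p]} R_0 \xrightarrow{\sim} \ind_{\U}^{\G}(\psi)$ by classical Galois descent applied to the free $R_0$-modules $(\ind_{\U}^{\G}(\psi))^{\K_0}$ for compact open $\K_0 \subseteq \G$.

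The crux is the construction of $\Phi$. For $\sigma \in \Sigma$, $\sigma\psi$ is another additive character of $\U$; via the chosen pinning, this amounts in characteristic zero to the identity $\sigma\psi_{\F} = \psi_{\F}(c(\sigma)\,\cdot\,)$, so one seeks $\tilde{t}_\sigma \in \mathbf{T}(\F)$ with $\alpha(\tilde{t}_\sigma) = c(\sigma)$ for every $\alpha \in \Delta$. This uniquely determines the image of $\tilde{t}_\sigma$ in $(\mathbf{T}/\mathbf{Z})(\F)$ as $\check\rho(c(\sigma))$, where $\check\rho \in X_*(\mathbf{T}/\mathbf{Z}) \otimes \mathbb{Q}$ is the half-sum of positive coroots, the unique coweight satisfying $\langle \alpha, \check\rho \rangle = 1$ for all simple $\alpha$. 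Once a lift $\tilde{t}_\sigma \in \mathbf{T}(\F)$ is chosen, set
\[ \Phi_\sigma(f)(g) := \sigma\bigl(f(\tilde{t}_\sigma^{-1}\, g\, \tilde{t}_\sigma)\bigr). \]
A short computation shows $\Phi_\sigma$ is $\sigma$-semilinear and right-$\G$-equivariant; since the conjugation action of $\tilde{t}_\sigma$ on $\G$ depends only on $\tilde{t}_\sigma$ modulo $\mathbf{Z}$, and $\sigma \mapsto \check\rho(c(\sigma))$ is multiplicative in $(\mathbf{T}/\mathbf{Z})(\F)$, the cocycle identity $\Phi_\sigma \Phi_\tau = \Phi_{\sigma\tau}$ is automatic.

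The case analysis amounts to determining when suitable lifts $\tilde{t}_\sigma$ exist in $\mathbf{T}(\F)$. In case (1), $\check\rho \in X_*(\mathbf{T})$ by hypothesis, so $\tilde{t}_\sigma := \check\rho(c(\sigma))$ works for all $\sigma \in \Gamma$ and $\K = \mathbb{Q}$. In case (2), the cocharacter $2\check\rho = \sum_{\alpha > 0}\alpha^{\vee}$ always lies in $X_*(\mathbf{T})$: for $p$ odd, every $\sigma \in \Gal(\K'/\mathbb{Q}(\mu_p))$ has $c(\sigma) \in 1 + p\mathbb{Z}_p$, which is uniquely $2$-divisible (being pro-$p$ with $p$ odd), so take $\tilde{t}_\sigma := (2\check\rho)(b(\sigma))$ for the unique square root $b(\sigma)$; if instead $\mathrm{char}(\F) = p$, then $\F$ is $p$-torsion as an additive group, forcing $\psi$ to take values in $\mu_p$, so the Galois action factors through $\Gal(\mathbb{Q}(\mu_p)/\mathbb{Q})$ and one may take $\tilde{t}_\sigma = 1$ for $\sigma \in \Gal(\K'/\mathbb{Q}(\mu_p))$. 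Case (3) parallels the $p$-odd portion of case (2), replacing $1 + p\mathbb{Z}_p$ by $1 + 8\mathbb{Z}_2 = (\mathbb{Z}_2^\times)^2$ and extracting unique square roots there.

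The main obstacle is the integral Galois descent. For each compact open pro-$p$ subgroup $\K_0 \subseteq \G$, the space $(\ind_{\U}^{\G}(\psi))^{\K_0}$ is a free $R_0$-module with basis indexed by the double cosets $\U g \K_0$ on which $\psi|_{\U \cap g\K_0 g^{-1}}$ is trivial, and the $\Sigma$-orbit of each basis vector lies in a fixed finite-rank direct summand since $\tilde{t}_\sigma$ ranges over a compact subset of $\mathbf{T}(\F)$. Standard Galois descent for free modules along the extension $\cO_\K[1/p] \subset R_0$ then produces the comparison isomorphism at each level; passage to the colimit over $\K_0$ defines $W_{\U,\psi}$ and completes the proof. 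Integrality over $\cO_\K[1/p]$, rather than merely over $\K$, is immediate because each $\tilde{t}_\sigma$ is a cocharacter applied to a $p$-adic unit and hence lies in a maximal compact subgroup of $\mathbf{T}(\F)$.
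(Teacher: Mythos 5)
Your overall strategy is the same as the paper's: identify the image of $t_\sigma$ in $(\mathbf{T}/\mathbf{Z})(\F)$ as $\check\rho(c(\sigma))$ with $\check\rho$ the half-sum of positive coroots, lift it to $\mathbf{T}(\F)$ case by case (integrality of $\check\rho$; unique square roots in $1+p\mathbb{Z}_p$, resp.\ near $1+8\mathbb{Z}_2$, applied to $2\check\rho\in X_*(\mathbf{T})$; triviality of the action when $\mathrm{char}(\F)=p$), and then take fixed points of a semilinear Galois action to descend $\ind_{\U}^{\G}(\psi)$ along $\cO_\K[1/p]\subset \mathbb{Z}[1/p,\mu_{p^\infty}]$.

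However, your explicit formula for the semilinear operators is wrong, and the error is not cosmetic. You set $\Phi_\sigma(f)(g)=\sigma\bigl(f(\tilde t_\sigma^{-1} g \tilde t_\sigma)\bigr)$, i.e.\ you conjugate the argument, and claim a short computation gives right-$\G$-equivariance. It does not: with the right regular action $(h\cdot f)(g)=f(gh)$ one finds $\Phi_\sigma(h\cdot f)=(\tilde t_\sigma h \tilde t_\sigma^{-1})\cdot\Phi_\sigma(f)$, so $\Phi_\sigma$ only intertwines the $\G$-action with its twist by $\mathrm{Ad}_{\tilde t_\sigma}$. Consequently the fixed-point module is not $\G$-stable, and your level-by-level descent also breaks, since $\Phi_\sigma$ carries $\K_0$-invariants to $\tilde t_\sigma \K_0\tilde t_\sigma^{-1}$-invariants rather than $\K_0$-invariants. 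The repair is to use left translation only, $\tilde T_\sigma(f)(g)=\sigma\bigl(f(\tilde t_\sigma g)\bigr)$ (this is what the paper does): left translation commutes with the right regular action, and the left $(\U,\psi)$-equivariance is restored exactly by the defining relation $\sigma(\psi)^{t_\sigma}=\psi$. Note that after this correction your justification of the cocycle identity no longer applies: left translation by a central element acts nontrivially on $\ind_{\U}^{\G}(\psi)$, so it is not enough that $\sigma\mapsto\tilde t_\sigma$ be multiplicative modulo $\mathbf{Z}$; one needs $\sigma\mapsto\tilde t_\sigma$ to be a genuine continuous homomorphism into $\mathbf{T}(\F)$. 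Fortunately your lifts do have this property (uniqueness of square roots in a uniquely $2$-divisible subgroup makes $\sigma\mapsto b(\sigma)$ multiplicative, and $2\check\rho$ is a cocharacter of $\mathbf{T}$), which is precisely the content of the paper's lifting lemma — so with the corrected formula your argument goes through and coincides with the paper's proof. One small further imprecision: the square root of an element of $1+8\mathbb{Z}_2$ is taken (uniquely) in $1+4\mathbb{Z}_2$, not in $1+8\mathbb{Z}_2$ itself.
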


\begin{remark}
  The models $W_{\U,\psi}$ in the proposition may not be unique, in
  particular in case (1). However, in cases (2) and (3) our proof
  will arguably provide  ``natural'' models.
\end{remark}

Let $\K_0=\mathbb{Q}(\mu_{p^{\infty}})$ denote the field of fractions of $\R_0$; we begin
by studying the action of $\Gal(\K_0/{\mathbb Q})$ on the set of non-degenerate characters
of $\U$. From the last section, we already know that for each $\sigma \in
\Gal(\K_0/{\mathbb Q})$, there is a unique element $t_{\sigma}\in
(\mathbf{T}/\mathbf{Z})(\F)$ such that $\sigma(\psi)^{t_{\sigma}} = \psi$.   Here
${\sigma}(\psi)$ denotes the image of $\psi$ under the Galois action of $\sigma$ while
$\psi^{t_{\sigma}}$ denotes its image under the adjoint action  of $t_{\sigma}$, defined
by $\psi^{t_{\sigma}}(u)=\psi({\rm Ad}_{t_{\sigma}}(u))$. 

\begin{lemma} \label{lemmadescent1}
The map $\sigma \mapsto t_{\sigma}$ is a continuous group homomorphism
from $\Gal(\K_0/{\mathbb Q})$ to $({\mathbf T}/{\mathbf Z})(\F)$.
\end{lemma}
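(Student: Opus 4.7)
The plan is to establish both properties using the explicit presentation of $\psi$ provided by the pinning, together with the uniqueness of $t_\sigma$ supplied by the fact that $(\mathbf{T}/\mathbf{Z})(\F)$ acts simply transitively on the set of non-degenerate characters of $\U$ compatible with our choice of $\mathbf{B}$.

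First, for multiplicativity. Fix $\sigma,\tau\in\Gal(\K_0/\mathbb{Q})$; the defining relation reads $\tau(\psi)=\psi^{t_\tau^{-1}}$, and hence $(\sigma\tau)(\psi)=\sigma(\psi^{t_\tau^{-1}})$. The crucial observation is that because $t_\tau$ lies in $(\mathbf{T}/\mathbf{Z})(\F)$, the automorphism $\mathrm{Ad}_{t_\tau^{-1}}$ of $\U$ is $\F$-rational, so it commutes with the action of $\sigma$ on $\R_0$-valued functions of $\U$. This yields $\sigma(\psi^{t_\tau^{-1}})=\sigma(\psi)^{t_\tau^{-1}}=(\psi^{t_\sigma^{-1}})^{t_\tau^{-1}}=\psi^{t_\sigma^{-1}t_\tau^{-1}}$, and by the uniqueness of $t_{\sigma\tau}$ together with the commutativity of $(\mathbf{T}/\mathbf{Z})(\F)$, we conclude $t_{\sigma\tau}=t_\sigma t_\tau$.

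For continuity, I will exhibit $t_\sigma$ explicitly. The pinning writes $\psi(u)=\psi_\F\bigl(\sum_{\alpha\in\Delta_{\overline{\F}}}\iota_\alpha(u)\bigr)$, and $\mathrm{Ad}_t$ acts on the root subgroup $\U_\alpha$ by scaling $\iota_\alpha$ by $\alpha(t)$. The Galois action on the $\mu_{p^\infty}$-valued additive character $\psi_\F$ is governed by the cyclotomic character: for $\sigma\in\Gal(\K_0/\mathbb{Q})$ one has $\sigma(\psi_\F)(x)=\psi_\F(\chi_{\mathrm{cyc}}(\sigma)x)$, where $\chi_{\mathrm{cyc}}(\sigma)\in\mathbb{Z}_p^\times$ is interpreted in $\F^\times$ via the canonical ring map $\mathbb{Z}_p\to\F$ (surjecting onto $\mathbb{F}_p$ when $\F$ has characteristic $p$). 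Comparing the two sides of the defining relation $\sigma(\psi)^{t_\sigma}=\psi$ root-subgroup by root-subgroup then forces
\[\alpha(t_\sigma)=\chi_{\mathrm{cyc}}(\sigma)^{-1}\quad\text{for every }\alpha\in\Delta_{\overline{\F}}.\]
Since the sum $\rho^\vee=\sum_{\alpha}\omega_\alpha^\vee$ of fundamental coweights always lies in $X_*(\mathbf{T}/\mathbf{Z})$ (even when it fails to lift to $X_*(\mathbf{T})$), this uniquely solves for $t_\sigma=(-\rho^\vee)(\chi_{\mathrm{cyc}}(\sigma))$, displaying $\sigma\mapsto t_\sigma$ as the composition of continuous group homomorphisms
\[\Gal(\K_0/\mathbb{Q})\xrightarrow{\chi_{\mathrm{cyc}}}\F^\times\xrightarrow{-\rho^\vee}(\mathbf{T}/\mathbf{Z})(\F).\]

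The main obstacle is essentially notational: keeping sign conventions straight and carefully verifying that the $\F$-rational adjoint action genuinely commutes with the Galois action on $\R_0$-valued functions. The explicit formula $t_\sigma=(-\rho^\vee)(\chi_{\mathrm{cyc}}(\sigma))$ obtained as a by-product is also the mechanism driving the case analysis of Proposition~\ref{ringofdefprop}: whether $\rho^\vee$ lifts integrally to $X_*(\mathbf{T})$ dictates whether $\K$ can be taken to be $\mathbb{Q}$, $\mathbb{Q}(\mu_p)$, or $\mathbb{Q}(\mu_8)$.
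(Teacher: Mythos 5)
Your proposal is correct and follows essentially the same route as the paper: the homomorphism property comes from uniqueness of $t_\sigma$, and continuity is obtained by computing $t_\sigma$ explicitly as the image of the cyclotomic character (pushed into $\F^\times$) under the half-sum of positive coroots $\beta=\rho^\vee$, which makes sense as a cocharacter of $\mathbf{T}/\mathbf{Z}$ even when it does not lift to $\mathbf{T}$. The only discrepancy is the sign — you find $t_\sigma=\beta(a_\sigma)^{-1}$ where the paper states $t_\sigma=\beta(a_\sigma)$ — which is purely an artifact of the convention for the adjoint action $\psi^t(u)=\psi(\mathrm{Ad}_t(u))$ and is immaterial for the subsequent lifting arguments.
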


\begin{proof} It is clearly a group homomorphism by uniqueness of $t_{\sigma}$. To prove
  continuity, we will compute $t_{\sigma}$ explicitly.
  We denote by $\sigma\mapsto a_{\sigma}$ the composition of
  the cyclotomic character
  $\Gal(\K_{0}/\mathbb{Q})\xrightarrow{\sim}\mathbb{Z}_{p}^{\times}$ and the natural map
  ${\mathbb Z}_p^{\times} \rightarrow \F^{\times}$.  We then have
  $\psi_\F(a_{\sigma} x) = \sigma(\psi_\F(x))$ for all $x$.  Note that the map
  $\sigma \mapsto a_{\sigma}$ is continuous and valued in $\cO_F^{\times}.$

  Let $\beta$ be half the sum of the positive absolute coroots in
  $X_{*}(\mathbf{T})_{\mathbb{Q}}$.  Note that $\beta$ is fixed under
  $\Gal(\overline{\F}/\F)$, so we can regard $\beta$ as a cocharacter of
  ${\mathbf T}/{\mathbf Z}$,
  and the map $\sigma \mapsto \beta(a_{\sigma})$ is certainly a
  continuous group morphism from $\Gal(\K_0/{\mathbb Q})$ to $({\mathbf T}/{\mathbf Z})(\F)$.
  But since $\langle\beta,\alpha\rangle = 1$ for all
  $\alpha \in \Delta_{\overline{\F}}.$  we have $\sigma(\psi)^{\beta(a_{\sigma})} = \psi$, hence $t_{\sigma} = \beta(a_{\sigma}).$
\end{proof}
The next lemma studies when this morphism can be lifted to a morphism valued in $\mathbf{T}(F)$.

\begin{lemma} \label{lemmadescent2}
  Under hypothesis (1), (2) or (3) of Proposition \ref{ringofdefprop}, and with the
  notation $\K$ therein, the map  $\sigma
  \mapsto t_{\sigma}$ can be lifted to a continuous morphism $\sigma\mapsto \tilde
  t_{\sigma}$, $\Gal(\K_0/\K) \rightarrow \mathbf T(\F)$.
\end{lemma}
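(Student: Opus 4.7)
The plan hinges on the observation that, while $\beta\in X_*(\mathbf{T}/\mathbf{Z})$ may fail to lift to $X_*(\mathbf{T})$, its double $2\beta$, being the sum of the positive absolute coroots, \emph{always} lifts integrally: each coroot lies in $X_*(\mathbf{T})$ and the sum is $\Gal(\overline{\F}/\F)$-invariant since $\mathbf{B}$ is $\F$-rational. Writing $\gamma:\mathbb{G}_m\rightarrow\mathbf{T}$ for this integral cocharacter, for each $c\in\F^\times$ the element $\gamma(c)\in\mathbf{T}(\F)$ has image $(2\beta)(c)=\beta(c^2)$ in $(\mathbf{T}/\mathbf{Z})(\F)$. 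Consequently, if for $\sigma$ ranging over some open subgroup $\Gal(\K_0/\K)$ of $\Gal(\K_0/\mathbb{Q})$ we can continuously extract a square root $c_\sigma\in\F^\times$ of $a_\sigma$ in a manner respecting the group law, then $\tilde t_\sigma:=\gamma(c_\sigma)$ is the desired lift, since its image in $(\mathbf{T}/\mathbf{Z})(\F)$ is $\beta(c_\sigma^2)=\beta(a_\sigma)=t_\sigma$.

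Case (1) is immediate: if $\tilde\beta:\mathbb{G}_m\rightarrow\mathbf{T}$ is an integral lift of $\beta$, set $\tilde t_\sigma:=\tilde\beta(a_\sigma)$; this is a continuous homomorphism on all of $\Gal(\K_0/\mathbb{Q})$. Cases (2) and (3) reduce to an elementary analysis of $p$-adic units. If $\F$ has characteristic $p$, the natural map $\mathbb{Z}_p^\times\rightarrow\F^\times$ factors through $\mathbb{F}_p^\times$, so $a_\sigma\mapsto 1$ on the subgroup $1+p\mathbb{Z}_p=\Gal(\K_0/\mathbb{Q}(\mu_p))$, and we take $\tilde t_\sigma:=1$. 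If $p$ is odd and $\F$ has characteristic zero, the pro-$p$ group $1+p\mathbb{Z}_p$ is uniquely $2$-divisible, so $c_\sigma:=a_\sigma^{1/2}\in 1+p\mathbb{Z}_p\subset\F^\times$ defines a continuous homomorphism. If $p=2$ and $\F$ has characteristic zero, the squaring map $1+4\mathbb{Z}_2\rightarrow 1+8\mathbb{Z}_2$ is a bijective continuous homomorphism---explicitly, $(1+4x)^2=1+8(x+2x^2)$ and $x\mapsto x+2x^2$ is a Hensel bijection on $\mathbb{Z}_2$---hence a homeomorphism, and its inverse furnishes continuous square roots for $a_\sigma\in 1+8\mathbb{Z}_2=\Gal(\K_0/\mathbb{Q}(\mu_8))$.

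The main obstacle is pinning down the correct $\K$ in case (3). The naive choice $\K=\mathbb{Q}(\mu_4)$ places $a_\sigma$ in $1+4\mathbb{Z}_2$, but this subgroup is not contained in the image of squaring on $\mathbb{Z}_2^\times$, so one is forced up to $\K=\mathbb{Q}(\mu_8)$ in order to land in $(1+4\mathbb{Z}_2)^2=1+8\mathbb{Z}_2$, where continuous square roots are available. All remaining verifications---continuity of $\sigma\mapsto\tilde t_\sigma$ and its homomorphism property---follow formally from the corresponding properties of $\sigma\mapsto a_\sigma$ (already established in Lemma~\ref{lemmadescent1}), of the chosen section $a\mapsto c$, and of the algebraic morphism $\gamma$.
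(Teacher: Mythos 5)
Your proof is correct and takes essentially the same route as the paper: in cases (2) and (3) you use the integral cocharacter $2\beta$ together with a continuous homomorphic square root, which you extract inside $1+p\mathbb{Z}_p$ (resp.\ $1+4\mathbb{Z}_2$) whereas the paper extracts it inside $\Gal(\K_0/\K)\simeq\mathbb{Z}_p$ (resp.\ $\Gal(\K_0/\mathbb{Q}(\mu_4))\simeq\mathbb{Z}_2$) — the same construction transported through the cyclotomic character. Your case (1) and characteristic-$p$ arguments coincide with the paper's.
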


\begin{proof}
  (1)  If $\beta$ lifts to a cocharacter $\tilde \beta$ of ${\mathbf T}$, then we can put
  ${\tilde t}_{\sigma}:=\tilde\beta(a_{\sigma})$ and this defines a continuous morphism
  $\Gal(\K_0/{\mathbb Q})$ to ${\mathbf T}(\F)$ as desired.
  However, this will not always be the
  case; for example when ${\mathbf G} = \SL_2$. In general, there is a natural obstruction to the
  existence of a lift $\tilde\beta$ in the group
  ${\rm Ext}^{1}_{\mathbb{Z}\Gal(\overline{\F}/\F)}(X^{*}(\mathbb{Z}),\mathbb{Z})$ 
  and it  is a $2$-torsion element, since $2\beta\in X_{*}(\mathbf{T})$.

  (2) If $p$ is odd and $\K=\mathbb{Q}(\mu_{p})$, then $\Gal(\K_{0}/\K)$ is isomorphic
  to $\mathbb{Z}_{p}$. Since $2$ is invertible in $\mathbb{Z}_{p}$, this means that the
  map $\sigma\mapsto \sigma^{2}$ is a continuous automorphism of
  $\Gal(\K_{0}/\K)$. We can then put 
  $\tilde t_{\sigma}:=2\beta(a_{\sqrt\sigma})$ and get the desired morphism. On the
  other hand, if  $\F$ has characteristic $p$, then $a_{\sigma}=1$, hence also
  $t_{\sigma}=1$ for all $\sigma\in\Gal(\K_{0}/\K)$, so we may just put $\tilde
  t_{\sigma}:=1$ in this case.

  (3) Suppose $p=2$ and 
  set $\K=\mathbb{Q}(\mu_{8})$. Then
  $\Gal(\K_{0}/\mathbb{Q}(\mu_{4}))\simeq \mathbb{Z}_{2}$ and
  $\Gal(\K_{0}/\K)$ is $2\mathbb{Z}_{2}$ therein. So any
  $\sigma$ in $\Gal(\K_{0}/\K)$ has a unique ``square root''
  $\sqrt\sigma$ in $\Gal(\K_{0}/\mathbb{Q}(\mu_{4}))$ and, as above, we can take
  $\tilde t_{\sigma}:=2\beta(a_{\sqrt\sigma})$.
  \end{proof}

\begin{proof}[Proof of Proposition \ref{ringofdefprop}]
    Let $\K$ be as in Proposition \ref{ringofdefprop} and let us choose a lift
    $\sigma\mapsto \tilde t_{\sigma}$ as in the last lemma. We then get a
    semi-linear action of $\Gal(\K_0/\K)$ on $\ind_{\U}^{\G} (\psi)$, defined by :
$$(\sigma, f) \mapsto  \left(\tilde T_{\sigma} f : g \mapsto \sigma(f({\tilde t}_{\sigma} g))\right)$$
for $\sigma$ in $\Gal(\K_0/\K)$ and $f$ a left $(\U,\psi)$-equivariant function
$\G \rightarrow \R_{0}=\cO_{\K_{0}}[1/p]$.  This action is continuous for the discrete topology on
$\ind_{\U}^{\G} (\psi)$, hence it defines an effective descent datum for the pro\'etale
Galois cover $\Spec(\cO_{\K_{0}}[1/p])\rightarrow\Spec(\cO_{\K}[1/p])$, and the fixed
points of this action are an $\cO_{\K}[1/p][\G]$-submodule $W_{\U,\psi}$ of
$\ind_{\U}^{\G} (\psi)$ satisfying the requirements of the proposition.

\end{proof}

\subsection{Basic properties of GGRs}

Let~$\R$ be a~$\mathbb{Z}[1/p,\mu_{p^\infty}]$-algebra.  Let~$(\U,\psi)$ be a Whittaker datum over~$\R$. 

For any~$\R$-algebra~$\R'$, we write~$\psi_{\R'}$ for the character~$\psi\otimes\R'$, and have~$\ind_{\U}^{\G}(\psi_{\R'})\simeq \ind_{\U}^{\G}(\psi)\otimes\R'$.  
In the special case~$\R=\mathbb{C}$, Chan and Savin \cite{ChanSavin} show that~$\ind_{\U}^{\G}(\psi_{\mathbb{C}})$ is flat, we now consider the general case for the module~$W_{\U,\psi}$.

\begin{proposition} Let $\V$ be a right $\R[\G]$-module.  Then we have a natural isomorphism of $\R$-modules:
$$\V \otimes_{\R[\G]} \ind_{\U}^{\G} (\psi )\simeq \V_{\psi},$$
where $\V_{\psi}$ denotes the $(\U,\psi)$-coinvariants of $\V$.
\end{proposition}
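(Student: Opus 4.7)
The plan is to identify $\ind_{\U}^{\G}(\psi) \simeq \R[\G] \otimes_{\R[\U]} \R_{\psi}$ as left $\R[\G]$-modules---via the $\G$-equivariant bijection sending $g \otimes 1$ to the $(\U,\psi)$-equivariant function supported on $\U g^{-1}$ with value $\psi(u)$ at $u g^{-1}$---and then apply associativity of the tensor product:
\[
\V \otimes_{\R[\G]} \ind_{\U}^{\G}(\psi) \;\simeq\; \V \otimes_{\R[\G]} \bigl(\R[\G] \otimes_{\R[\U]} \R_{\psi}\bigr) \;\simeq\; \V \otimes_{\R[\U]} \R_{\psi} \;\simeq\; \V_{\psi},
\]
the last identification being the definition of $(\U,\psi)$-coinvariants. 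Tracking the maps, the resulting isomorphism sends $v \otimes (g \otimes 1)$ to $v \cdot g \in \V_{\psi}$; equivalently, in the function-theoretic model, $v \otimes f \mapsto \sum_{\U h \in \U \backslash \G} f(h)\,(v \cdot h^{-1})$ modulo the $\V_{\psi}$-relations. Naturality in $\V$ is transparent from the construction.

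The main obstacle is that the identification above is immediate in the algebraic (non-smooth) setting, but $\ind_{\U}^{\G}(\psi)$ is strictly smaller than $\R[\G] \otimes_{\R[\U]} \R_{\psi}$: no nonzero smooth function on $\G$ is supported on a single $\U$-coset, since $\U$ is closed but not open in $\G$. I would bridge this gap using the general principle that, for any smooth right $\R[\G]$-module $\V$ and any left $\R[\G]$-module $X$, the inclusion of smooth vectors $X^{\infty} \hookrightarrow X$ induces an isomorphism $\V \otimes_{\R[\G]} X^{\infty} \xrightarrow{\sim} \V \otimes_{\R[\G]} X$.

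Surjectivity of this map follows by averaging: for $v \in \V$ fixed by a pro-$p$ compact open $K \le \G$, the idempotent $e_{K}$ (well-defined since $p$ is invertible in $\R$) gives $v \otimes x = v e_{K} \otimes x = v \otimes e_{K} x$, with $e_{K} x \in X^{K} \subseteq X^{\infty}$. Injectivity reduces to $\V \otimes_{\R[\G]} (X/X^{\infty}) = 0$, which follows from $(X/X^{\infty})^{\infty} = 0$; the latter is a consequence of the vanishing of $H^{1}(K, X^{\infty})$ for pro-$p$ $K$ with $p \in \R^{\times}$, via a standard Maschke-type argument. Applying this smoothing result with $X = \R[\G] \otimes_{\R[\U]} \R_{\psi}$, whose smooth vectors form precisely $\ind_{\U}^{\G}(\psi)$, completes the argument.
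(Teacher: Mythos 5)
Your reduction has a genuine gap at its very first step, and the proposed repair does not work. Because $\U$ is closed but not open, $\ind_{\U}^{\G}(\psi)$ is \emph{not} the smooth part of $X=\R[\G]\otimes_{\R[\U]}\R_{\psi}$: in fact $X^{\infty}=0$. Indeed, as an $\R$-module $X$ is free on (a twist of) the coset space $\G/\U$, and a vector fixed by a compact open subgroup $\K$ would have finite, $\K$-stable support in $\G/\U$; but every $\K$-orbit $\K g\U/\U\cong \K/(\K\cap g\U g^{-1})$ is infinite, since $g\U g^{-1}$ has empty interior, so $\K\cap g\U g^{-1}$ is a closed non-open, hence infinite-index, subgroup of $\K$. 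The auxiliary ``smoothing principle'' you invoke is also false as stated, and your own computation shows it: by associativity $\V\otimes_{\R[\G]}X\simeq \V_{\psi}$, which is typically nonzero, whereas $\V\otimes_{\R[\G]}X^{\infty}=0$. The flaw in your surjectivity argument is that $e_{\K}$ is an idempotent of the Hecke algebra, not of $\R[\G]$, so ``$e_{\K}x$'' is undefined when $X$ is not smooth; there is no way to average an infinite $\K$-orbit inside $X$. Relatedly, your explicit formula $v\otimes f\mapsto \sum_{\U h\in\U\backslash\G}f(h)\,(v\cdot h^{-1})$ is an uncountable sum for $f\in\ind_{\U}^{\G}(\psi)$ (the support of $f$ is compact, not finite, mod $\U$), which is another symptom that the finitely-supported model is not the compact induction; any honest comparison must go through integration against a Haar measure on $\U$.

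So the associativity computation correctly evaluates $\V\otimes_{\R[\G]}\bigl(\R[\G]\otimes_{\R[\U]}\R_{\psi}\bigr)$, but relating that to $\V\otimes_{\R[\G]}\ind_{\U}^{\G}(\psi)$ is exactly the content of the proposition and is not addressed. The paper's proof instead tests both sides against an arbitrary $\R$-module $N$: Hom-tensor adjunction, the perfect pairing $\ind_{\U}^{\G}(\psi)\times\Ind_{\U}^{\G}(\psi^{-1}_{N})\rightarrow N$ given by integration over $\U\backslash\G$ (identifying $\Hom_{\R}(\ind_{\U}^{\G}(\psi),N)$ with the full smooth induction $\Ind_{\U}^{\G}(\psi^{-1}_{N})$), Frobenius reciprocity for $\Ind$, and then Yoneda. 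Some device of this kind — duality plus smooth induction, or the averaging map $C^{\infty}_{c}(\G,\R)\rightarrow \ind_{\U}^{\G}(\psi)$ described in the remark following the proposition — is needed to replace your false identification.
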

\begin{proof}
Let $N$ be an arbitrary $\R$-module.  We then have an isomorphism:
$$\Hom_\R(\V \otimes_{\R[\G]} \ind_{\U}^{\G} (\psi), N) \simeq
\Hom_{\R[\G]}(\V, \Hom_{\R}(\ind_{\U}^{\G} (\psi), N)')$$
by Hom-tensor adjunction, where $\Hom_\R(\ind_{\U}^{\G} (\psi), N)'$ consists of smooth $\R$-linear maps $\phi$ from $\ind_{\U}^{\G} (\psi)$ to $N$, which has a right $\G$-action given by $(\phi g)(f) = \phi(gf)$.  This space of homomorphisms of right $\G$-modules is isomorphic to the space $\Hom_{\R[\G]}(\V', \Hom_{\R}(\ind_{\U}^{\G} (\psi), N))$, where $\V'$ is the module $\V$ made into a left $\G$-module via the map $g \mapsto g^{-1}$, and similarly $\Hom_{R}(\ind_{\U}^{\G} (\psi), N)$ is considered as a left $\G$-module in the usual way.

Integration over $\U \backslash \G$ defines a perfect pairing:
$$\ind_{\U}^{\G} (\psi) \times \Ind_{\U}^{\G} (\psi^{-1}_N) \rightarrow N,$$
where $\psi^{-1}_N$ is the $\R[\U]$-module whose underling $\R$-module is $N$, on which $\U$ acts via $\psi^{-1}$, and therefore identifies $\Hom_{\R}(\ind_{\U}^{\G} (\psi), N)$ with $\Ind_{\U}^{\G} (\psi^{-1}_N)$.  But we then have an isomorphism:
$$\Hom_{\R[\G]}(\V', \Ind_{\U}^{\G} (\psi^{-1}_N)) \simeq \Hom_{\R}(\V'_{\psi^{-1}}, N).$$
Since this isomorphism exists for all $N$ and is functorial in $N$, by Yoneda's lemma we have an isomorphism:
$$\V \otimes_{\R[\G]} \ind_{\U}^{\G} (\psi) \simeq \V'_{\psi^{-1}},$$
and the claim follows by observing that $\V'_{\psi^{-1}}$ is naturally isomorphic to $\V_{\psi}.$
\end{proof}

\begin{remark}
  One can define explicitly the morphism $\V_{\psi} \rightarrow \V \otimes_{\R[\G]}
  \ind_{\U}^{\G} (\psi )$ as follows. Denote by $C^{\infty}_{c}(\G,\R)$ the $\R$-module of
  smooth compactly supported $\R$-valued functions on $\G$, and by $\int_{\psi}$ the averaging map
  $C^{\infty}_{c}(\G,\R)  \longrightarrow   \ind_{\U}^{\G} (\psi )$ defined by $f\mapsto
  (g\mapsto \int_{\U}f(ug)\psi(u)^{-1}du)$ after fixing a Haar measure on $\U$. If we
  identify $C^{\infty}_{c}(\G,\R)$ with the Hecke algebra (via some choice of Haar measure
  on $\G$), then the action map induces an isomorphism
  ${a : } \V\otimes_{\R[\G]} C^{\infty}_{c}(\G,\R) \xrightarrow{\sim} \V$ and the 
   following composition 
  $$ \V  {\buildrel{a^{-1}}\over\longrightarrow} \V\otimes_{\R[\G]} C^{\infty}_{c}(\G,\R)
  \buildrel{{\rm id}\otimes\int_{\psi}}\over\longrightarrow \V \otimes_{\R[\G]}  \ind_{\U}^{\G} (\psi )$$
factors over $\V_{\psi}$, providing the desired isomorphism.
\end{remark}

Since $\U$ is a colimit of pro-$p$ groups, taking $(\U,\psi)$-coinvariants is exact, so we
deduce that $\ind_{\U}^{\G} (\psi)$ is flat as a ${\mathbb
  Z}[1/p,\mu_{p^{\infty}}][\G]$-module.  Since one can check flatness after a faithfully
flat base change, we immediately deduce (recall the notation  $\K$ from Proposition
\ref{ringofdefprop}) :

\begin{corollary}\label{WhittakerFlat}
Any model~$W_{\U,\psi}$ as in Proposition \ref{ringofdefprop} is flat as an $\cO_{\K}[1/p][\G]$-module.
\end{corollary}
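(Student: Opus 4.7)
The plan is to apply faithfully flat descent of flatness. The preceding discussion has already shown that $\ind_{\U}^{\G}(\psi)$ is flat as a $\mathbb{Z}[1/p,\mu_{p^{\infty}}][\G]$-module, because for any right module $\V$ the tensor product $\V \otimes_{\mathbb{Z}[1/p,\mu_{p^{\infty}}][\G]} \ind_{\U}^{\G}(\psi) \simeq \V_{\psi}$ and $(\U,\psi)$-coinvariants are exact (as $\U$ is a colimit of pro-$p$ groups). Combined with the identification from Proposition \ref{ringofdefprop}, we have a canonical isomorphism of $\mathbb{Z}[1/p,\mu_{p^{\infty}}][\G]$-modules
\[W_{\U,\psi} \otimes_{\cO_\K[1/p][\G]} \mathbb{Z}[1/p,\mu_{p^{\infty}}][\G] \;\simeq\; W_{\U,\psi} \otimes_{\cO_\K[1/p]} \mathbb{Z}[1/p,\mu_{p^{\infty}}] \;\simeq\; \ind_{\U}^{\G}(\psi).\]

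First I would verify that the structural ring map $\cO_\K[1/p] \to \mathbb{Z}[1/p,\mu_{p^{\infty}}]$ is faithfully flat. Since $\K$ is finite Galois over $\mathbb{Q}$ and contained in $\mathbb{Q}(\mu_{p^{3}})$, the field extension $\mathbb{Q}(\mu_{p^{\infty}})/\K$ is (pro-)Galois and unramified outside $p$. Thus $\mathbb{Z}[1/p,\mu_{p^{\infty}}]$ is a filtered colimit of rings of $p$-integers in finite subextensions, and each such ring is finite \'etale, hence finite faithfully flat, over $\cO_\K[1/p]$. Since faithful flatness is preserved under filtered colimits, $\cO_\K[1/p] \to \mathbb{Z}[1/p,\mu_{p^{\infty}}]$ is faithfully flat, and tensoring with the group ring $\mathbb{Z}[\G]$ over $\mathbb{Z}$ preserves this, yielding the required faithful flatness of $\cO_\K[1/p][\G] \to \mathbb{Z}[1/p,\mu_{p^{\infty}}][\G]$.

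Finally, I would invoke the standard principle that for a faithfully flat ring homomorphism $R\to S$, an $R$-module $M$ is flat if and only if $M\otimes_R S$ is a flat $S$-module. Applied to our situation this immediately gives the flatness of $W_{\U,\psi}$ over $\cO_\K[1/p][\G]$. I do not foresee any serious obstacle: the argument is purely formal descent, and the only point worth checking is that the cyclotomic extension involved is \emph{faithfully} flat (not merely flat), which follows layer-by-layer from the standard fact that a finite \'etale extension of Dedekind domains is faithfully flat.
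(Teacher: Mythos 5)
Your argument is correct and is essentially the paper's own proof: flatness of $\ind_{\U}^{\G}(\psi)$ over $\mathbb{Z}[1/p,\mu_{p^{\infty}}][\G]$ follows from the identification of $\V\otimes_{\R[\G]}\ind_{\U}^{\G}(\psi)$ with the exact functor of $(\U,\psi)$-coinvariants, and then flatness of $W_{\U,\psi}$ is deduced by checking it after the faithfully flat base change $\cO_{\K}[1/p]\to\mathbb{Z}[1/p,\mu_{p^{\infty}}]$. Your extra verification that this base change is faithfully flat (filtered colimit of finite \'etale extensions away from $p$) is a correct elaboration of a step the paper leaves implicit.
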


Gelfand--Graev representations behave well with respect to parabolic restriction, with the proof of Bushnell--Henniart carrying over without change to coefficients in~$\R$:

\begin{proposition}[{\cite[2.2 Theorem]{BHGen}}]\label{pararestGGR}
Let~$\P$ be a standard parabolic subgroup with Levi decomposition~$\P=\M\N$ where~$\M$ is a standard Levi subgroup of~$\P$.  Let~$\P^{\circ}=\M\N^{\circ}$ denote the opposite parabolic to~$\P$.  There is a unique isomorphism
\[r:r^{\G}_{\M,\P^{\circ}}(\ind_{\U}^{\G}(\psi))\xrightarrow{\simeq} \ind_{\U_\M}^{\M}(\psi_\M)\]
characterized by the following property: for any compact open subgroup~$\A$ of~$\N^{\circ}$, and any (right)~$\A$-invariant element $f$ of~$\ind_{\U}^{\G}(\psi)$ supported on~$\U\M\A$, the element~$r(f)$ of~$\ind_{\U_\M}^{\M}(\psi_\M)$ is the function on~$\M$ given by
\[r(f)(m)=\mu(\A)\delta_{\P^{\circ}}^{1/2}(m)f(m),\]
for all~$m\in\M$, where~$\mu$ is a Haar measure on~$\N^{\circ}$ and~$\delta_{\P^{\circ}}$ is the modulus character of~$\P^{\circ}$.
\end{proposition}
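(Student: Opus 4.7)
The plan is to follow Bushnell--Henniart's original argument \cite[2.2]{BHGen} and verify that every step is valid with coefficients in an $\R_0$-algebra $\R$. The key geometric input is that, since $\P$ and $\P^\circ$ are opposite parabolics with common Levi $\M$, one has $\N\cap\N^\circ=1$ and hence $\U\cap\N^\circ=1$; consequently the ``big cell'' $\U\M\N^\circ$ is open in $\G$, and for any compact open subgroup $\A\subseteq\N^\circ$ the multiplication map $(\U_\M\backslash \M)\times \A\to \U\backslash \G$ is an open embedding onto $\U\backslash(\U\M\A)$. In particular, for $f\in\ind_\U^\G(\psi)$ right $\A$-invariant and supported in $\U\M\A$, the prescription $r(f)(m):=\mu(\A)\delta_{\P^\circ}^{1/2}(m)f(m)$ makes sense as a function on $\M$.

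First I would establish the elementary properties of this local construction: independence of $\A$ (using $\mu(\A)=[\A:\A']\mu(\A')$ for $\A'\subset\A$); left $(\U_\M,\psi_\M)$-equivariance (from $(\U,\psi)$-equivariance of $f$ together with the decomposition $\U=\U_\M(\U\cap\N)$ and the openness of $\U_\M$ in $\U\cap\P$); and compactness of support of $r(f)$ modulo $\U_\M$. Any $f\in\ind_\U^\G(\psi)$ can be written as a finite sum of functions, each either supported in $\U\M\A$ for a suitable compact open $\A\subseteq\N^\circ$ or else supported in a compact mod-$\U$ subset of the closed complement of the big cell; setting $r$ to be zero on the latter pieces and using the formula above on the former gives a well-defined $\R[\G]$-linear map $r:\ind_\U^\G(\psi)\to \ind_{\U_\M}^\M(\psi_\M)$ (the $\G$-action on the target being trivial). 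To see that $r$ descends to $R^\G_{\M,\P^\circ}(\ind_\U^\G(\psi))=(\ind_\U^\G(\psi))_{\N^\circ}$, note that $r(f)$ is the formal averaged integral $m\mapsto \delta_{\P^\circ}^{1/2}(m)\int_{\N^\circ}f(mn^\circ)\,dn^\circ$, which kills all coboundaries $\pi(n^\circ)f-f$ by translation invariance. Surjectivity is immediate: given $h\in\ind_{\U_\M}^\M(\psi_\M)$, extend $\mu(\A)^{-1}\delta_{\P^\circ}^{-1/2}h$ by left $(\U,\psi)$-equivariance and right $\A$-invariance to $\U\M\A$ (for $\A$ small enough), then by zero elsewhere; the result maps to $h$.

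The main obstacle is injectivity, i.e., that any $f\in\ind_\U^\G(\psi)$ supported in the complement of $\U\M\N^\circ$ actually lies in the $\R$-span of the coboundaries $\pi(n^\circ)g-g$. This is a Bernstein-style geometric lemma, proved by stratifying $\G\setminus\U\M\N^\circ$ by $\U\times\N^\circ$-orbits and inducting on the closure order of the strata: on each non-open stratum, the $\N^\circ$-stabilizer of a function supported there is not cocompact in $\N^\circ$, so the $\N^\circ$-coinvariants of the corresponding compactly induced piece vanish. Every step of this argument remains valid over $\R$: $\U$ and $\N^\circ$ are colimits of compact pro-$p$ subgroups and $p\in\R^\times$, so $\N^\circ$-coinvariants are an exact functor and $\mu$ can be normalized to take values in $\mathbb{Z}[1/p]$; the factor $\delta_{\P^\circ}^{1/2}\in q^{\frac12\mathbb{Z}}$ is available once $\sqrt q\in\R$, as is standard for normalized parabolic functors. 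Uniqueness of $r$ satisfying the stated characterization is immediate, since the prescribed formula determines $r$ on the generating family of functions supported on big-cell opens $\U\M\A$.
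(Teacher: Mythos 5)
Your overall strategy is the same as the paper's: the paper gives no new argument here, it cites \cite[2.2 Theorem]{BHGen} and observes that Bushnell--Henniart's proof goes through verbatim over an $\R_0$-algebra, the only inputs being that $p$ is invertible (Haar measures on unipotent groups, exactness of coinvariants) and that $\sqrt{q}\in\R$ (for $\delta_{\P^{\circ}}^{1/2}$); you correctly identify those two points. But your reconstruction of the Bushnell--Henniart argument has genuine gaps. First, the claimed decomposition of an arbitrary $f\in\ind_{\U}^{\G}(\psi)$ as a finite sum of functions supported in sets $\U\M\A$ plus functions ``supported in a compact mod-$\U$ subset of the closed complement of the big cell'' is impossible: the complement of $\P\N^{\circ}$ has empty interior, so the only smooth function supported in it is zero, while any $f$ whose support meets that complement is visibly not a finite sum of big-cell--supported functions. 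The correct structure is not a direct-sum decomposition of individual vectors but the exact sequence of $\R[\P^{\circ}]$-modules in which the functions supported in the open cell form a submodule (on which your integral is literally a finite sum), and the whole content of the theorem is that the quotient, filtered by the remaining $(\U,\P^{\circ})$-double cosets, has vanishing $\N^{\circ}$-coinvariants. (Also, $r$ is not ``$\R[\G]$-linear with trivial $\G$-action on the target''; it factors through $\N^{\circ}$-coinvariants and is $\M$-equivariant after the $\delta_{\P^{\circ}}^{1/2}$ twist, and the convergence of $\int_{\N^{\circ}}f(mn^{\circ})\,dn^{\circ}$ for general $f$ is itself a lemma in Bushnell--Henniart, not something to treat ``formally''.)

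Second, and more seriously, your justification of the vanishing on the non-open strata --- ``the $\N^{\circ}$-stabilizer of a function supported there is not cocompact in $\N^{\circ}$, so the $\N^{\circ}$-coinvariants of the corresponding compactly induced piece vanish'' --- rests on a false principle: $C_c^\infty(\N^{\circ},\R)$ is compactly induced from the trivial subgroup, which is as far from cocompact as possible, yet its $\N^{\circ}$-coinvariants are $\R\neq 0$ (integration). The actual mechanism, due to Rodier and used by Bushnell--Henniart, is the non-degeneracy of $\psi$: on a non-open stratum $\U w\P^{\circ}$ there exists $u\in w^{-1}\U w\cap\N^{\circ}$ with $\psi(wuw^{-1})\neq 1$, and on the corresponding graded piece $u$ acts both by translation (hence trivially on $\N^{\circ}$-coinvariants) and by the scalar $\psi(wuw^{-1})$, so the coinvariants are annihilated by $1-\psi(wuw^{-1})$; this element is a unit in $\R$ because it divides $p$ (being $1$ minus a nontrivial $p$-power root of unity) and $p\in\R^{\times}$ --- the second essential place where invertibility of $p$ enters, absent from your argument. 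A quick sanity check exposes the gap: your injectivity argument never uses non-degeneracy of $\psi$, yet for a degenerate character (e.g.\ $\psi=1$) the statement is false, since then other Bruhat strata survive in $r^{\G}_{\M,\P^{\circ}}(\ind_{\U}^{\G}(\psi))$. So the key step of the proof is not established as written.
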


The level decomposition gives us the canonical direct sum decomposition~$\ind_{\U}^\G(\psi)=\bigoplus \ind_{\U}^{\G}(\psi)_n$.  In the special case~$\R=\mathbb{C}$, Bushnell--Henniart prove in \cite[Section 7]{BHGen} that~$\ind_{\U}^{\G}(\psi_{\mathbb{C}})_n$ is a finitely generated~$\mathbb{C}[\G]$-module.  Their argument carries over, nearly without modification, in the context of~$\R[\G]$-modules (even for $\R$ non-Noetherian!):

\begin{proposition}\label{Whittakerfg}
The module~$\ind_{\U}^{\G}(\psi)_n$ is a finitely generated~$\R[\G]$-module.
\end{proposition}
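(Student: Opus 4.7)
The plan is to transcribe the argument of Bushnell--Henniart \cite[Section 7]{BHGen} (which handles the case $\R=\mathbb{C}$) to the integral setting, checking that each step remains valid with $\R$-coefficients provided $p$ is invertible and $\mu_{p^\infty}\subset \R$. Since $\ind_\U^\G(\psi_\R)_n \simeq \ind_\U^\G(\psi_{\R_0})_n \otimes_{\R_0} \R$ for $\R_0 = \mathbb{Z}[1/p,\mu_{p^\infty}]$, and since finite generation of a module is preserved under arbitrary base change of the coefficient ring, it is enough to prove the statement for $\R = \R_0$.

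In the universal case, I would proceed in two steps. First, using the description of the level-$n$ projector $\varepsilon_n$ from Section \ref{leveldecomp} as an idempotent already defined over $\mathbb{Z}[1/p]$, together with the Mackey-style decomposition of $\varepsilon_n\cdot\ind_\U^\G(\psi)$ with respect to a fixed compact open pro-$p$ subgroup $\H$ of $\G$ such that $Q_{n,\R_{0}}$ is generated by its $\H$-invariants, I would reduce finite generation to the assertion that only finitely many $(\U,\H)$-double cosets in $\G$ carry a non-trivial level-$n$ $(\U,\psi)$-equivariant functional. Second, this finiteness is the geometric heart of \cite[Section 7]{BHGen}: non-degeneracy of $\psi$, combined with the Iwasawa-type decomposition of $\G$ relative to the Moy--Prasad filtration and the action of $\T$, restricts the support of such a functional to a set which is compact modulo $\U\T$, and the level-$n$ condition further cuts this down to a finite union of double cosets modulo $\T$-translation. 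This geometric and combinatorial input depends only on the Bruhat--Tits building and the non-degeneracy of $\psi$, not on the coefficient ring.

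The main obstacle will be verifying that the character-theoretic averaging arguments in \cite{BHGen} -- projection onto a single character of a pro-$p$ quotient of a compact open subgroup, and isolation of a level-$n$ summand inside a parahoric induction -- remain valid over $\R_0$. This reduces to the standard fact that, since $p\in \R_{0}^{\times}$ and $\mu_{p^\infty}\subset \R_0^\times$, the projector onto any character of a pro-$p$ quotient lies in $\R_0[\G]$; combined with the flatness of $\ind_\U^\G(\psi)$ over $\R_0[\G]$ established in Corollary \ref{WhittakerFlat}, this ensures that applying $\varepsilon_n$ commutes with every submodule construction that appears in the complex argument. Once these checks are carried out, the explicit finite generating set produced by Bushnell--Henniart transcribes verbatim to a generating set of $\ind_\U^\G(\psi)_n$ as an $\R_0[\G]$-module, and the proposition follows by base change.
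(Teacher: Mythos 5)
Your proposal is correct and follows essentially the same route as the paper: the paper's proof consists precisely of invoking Bushnell--Henniart \cite[Section 7]{BHGen} and observing that their argument transcribes nearly without modification to $\R[\G]$-modules (indeed for arbitrary, even non-Noetherian, $\R$), which is what you do, with the harmless extra preliminary of reducing by base change to the universal coefficient ring $\R_0=\mathbb{Z}[1/p,\mu_{p^\infty}]$.
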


In particular this holds for $\R = \R_0$.  We can thus descend our result to the depth $n$ summand
$\W_{\U,\psi,n}$ of $\W_{\U,\psi}:$
\begin{corollary}\label{Whittakerfg2}
Any model $\W_{\U,\psi,n}$ as in Proposition \ref{ringofdefprop} is finitely generated as an $\cO_\K[1/p][\G]$-module.
\end{corollary}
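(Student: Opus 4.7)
The plan is to deduce finite generation of $W_{\U,\psi,n}$ from Proposition \ref{Whittakerfg} by faithfully flat descent along the ring map $\cO_\K[1/p]\rightarrow \R_0$. Since the level decomposition of Subsection \ref{leveldecomp} is defined via the finitely generated projective $\mathbb{Z}[1/p][\G]$-modules $Q_n$, it is compatible with arbitrary base change of coefficients; combined with Proposition \ref{ringofdefprop} this produces an $\R_0[\G]$-linear isomorphism
\[W_{\U,\psi,n}\otimes_{\cO_\K[1/p]}\R_0 \simeq \ind_{\U}^{\G}(\psi)_n,\]
whose right-hand side is finitely generated over $\R_0[\G]$ by Proposition \ref{Whittakerfg}.

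The key auxiliary step is to check that $\cO_\K[1/p]\rightarrow \R_0=\mathbb{Z}[1/p,\mu_{p^\infty}]$ is faithfully flat. Writing $\R_0=\varinjlim_n \cO_{\K(\mu_{p^n})}[1/p]$ over $n$ large enough that $\K\subset \K(\mu_{p^n})$, each finite extension $\cO_\K[1/p]\rightarrow \cO_{\K(\mu_{p^n})}[1/p]$ is finite étale---the ramification of $\mathbb{Q}(\mu_{p^n})/\mathbb{Q}$ is concentrated above $p$, which has been inverted---and hence faithfully flat. Flatness is preserved by filtered colimits, and surjectivity of $\Spec(\R_0)\rightarrow \Spec(\cO_\K[1/p])$ on underlying sets follows because the fibres of this map are cofiltered limits of nonempty quasi-compact spectral spaces. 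Base changing along the (central) inclusion $\cO_\K[1/p]\hookrightarrow \cO_\K[1/p][\G]$ shows that $\cO_\K[1/p][\G]\rightarrow \R_0[\G]$ is faithfully flat as well.

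The corollary then follows from the standard descent principle for finite generation: if $A\rightarrow B$ is faithfully flat and $M\otimes_A B$ is generated over $B$ by finitely many elements $x_1,\ldots,x_k$, writing each $x_i=\sum_j m_{ij}\otimes b_{ij}$ with $m_{ij}\in M$, the submodule $N\subset M$ generated by the $m_{ij}$ satisfies $N\otimes_A B=M\otimes_A B$, and therefore equals $M$ by faithful flatness. Applied to $A=\cO_\K[1/p][\G]$, $B=\R_0[\G]$, and $M=W_{\U,\psi,n}$, this yields the claim. I anticipate no serious difficulty: once one grants base-change compatibility of the level decomposition and the brief faithful flatness verification above, the remainder of the argument is a purely formal application of faithfully flat descent, and no appeal to the explicit Galois-descent construction of $W_{\U,\psi}$ from Proposition \ref{ringofdefprop} is needed beyond the isomorphism it provides after extending scalars.
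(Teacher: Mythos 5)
Your proof is correct and is essentially the paper's own argument: both rest on the base-change isomorphism $\W_{\U,\psi,n}\otimes_{\cO_\K[1/p]}\R_0\simeq \ind_{\U}^{\G}(\psi)_n$ from Proposition \ref{ringofdefprop}, the finite generation of the right-hand side over $\R_0[\G]$ (Proposition \ref{Whittakerfg}), and faithfully flat descent of finite generation along $\cO_\K[1/p]\rightarrow\R_0$ (the paper phrases this as choosing a finite generating set inside $\W_{\U,\psi,n}$ and noting the quotient by the submodule it generates dies after the faithfully flat base change). The only difference is that you spell out the routine verification that $\cO_\K[1/p]\rightarrow\R_0$ is faithfully flat and the standard descent lemma, which the paper takes for granted.
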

\begin{proof}
We have an isomorphism
$\W_{\U,\psi,n} \otimes_{\cO_{\K}[1/p]} \R_0 \simeq \ind_{\U}^{\G}(\psi)_n$, so we can
find a finite subset of $\W_{\U,\psi,n}$ that generates $\ind_{\U}^{\G}(\psi)_n$ as a
$\R_{0}[\G]$-module. Let $\W'\subset \W_{\U,\psi,n}$ be the $\R[\G]$-submodule generated
by this set. Then we have $(\W_{\U,\psi,n}/\W')\otimes_{\cO_{\K}[1/p]} \R_0 =0$, hence
also $\W_{\U,\psi,n}/\W'=0$ by faithful flatness.
\end{proof}

\begin{corollary}\label{WhittakerfgProj}
For any $\cO_\K[1/p]$-algebra $\R$, the module~$\W_{\U,\psi,n} \otimes_{\cO_\K[1/p]} \R$ is a finitely generated projective~$\R[\G]$-module.  In particular,~$\ind_{\U}^{\G}(\psi)$ is a projective~$\R_0[\G]$-module.
\end{corollary}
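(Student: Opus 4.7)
The strategy is first to prove that $\W_{\U,\psi,n}$ itself is a finitely generated projective $\cO_\K[1/p][\G]$-module, and then to deduce the general statement by base change. From Corollaries \ref{Whittakerfg2} and \ref{WhittakerFlat}, together with the fact that the level decomposition exhibits $\W_{\U,\psi,n}$ as a direct summand of $\W_{\U,\psi}$, we already have finite generation and flatness of $\W_{\U,\psi,n}$ over $\cO_\K[1/p][\G]$.

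To upgrade flat plus finitely generated to projective, I would work inside the level-$n$ block, which via the finitely generated projective generator $Q_{n,\cO_\K[1/p]}$ is Morita-equivalent to the category of right modules over $\E_n := \End_{\cO_\K[1/p][\G]}(Q_{n,\cO_\K[1/p]})$. By Theorem \ref{noetherianness}, the ring $\E_n$ is Noetherian, being finite over a finitely generated $\cO_\K[1/p]$-algebra (its centre). Under the Morita equivalence, $\W_{\U,\psi,n}$ corresponds to a finitely generated right $\E_n$-module, automatically finitely presented by Noetherianness, and flatness transfers because the inverse equivalence is given by tensoring on the left with $Q_{n,\cO_\K[1/p]}$. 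Invoking the standard fact that a finitely presented flat module over any ring is projective, we conclude that $\W_{\U,\psi,n}$ is projective in $\Rep_{\cO_\K[1/p]}(\G)_n$, and hence in $\Rep_{\cO_\K[1/p]}(\G)$.

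Once this is in place, the base change step is formal: a finitely generated projective object of $\Rep_{\cO_\K[1/p]}(\G)$ in level $n$ is a direct summand of $Q_{n,\cO_\K[1/p]}^{\oplus k}$ for some $k$, and tensoring with an $\cO_\K[1/p]$-algebra $\R$ preserves direct summands, so $\W_{\U,\psi,n}\otimes_{\cO_\K[1/p]}\R$ becomes a direct summand of $Q_{n,\R}^{\oplus k}$, hence a finitely generated projective $\R[\G]$-module. The ``in particular'' assertion is then immediate by writing $\ind_\U^\G(\psi)\cong\bigoplus_n \W_{\U,\psi,n}\otimes_{\cO_\K[1/p]}\R_0$ as a direct sum of projective $\R_0[\G]$-modules.

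The main obstacle is verifying that flatness of $\W_{\U,\psi,n}$ as a $\cO_\K[1/p][\G]$-module transfers to flatness of the corresponding right $\E_n$-module; this amounts to producing a natural isomorphism $V\otimes_{\cO_\K[1/p][\G]}\W_{\U,\psi,n}\cong (V\otimes_{\cO_\K[1/p][\G]}Q_{n,\cO_\K[1/p]})\otimes_{\E_n}\Hom_{\cO_\K[1/p][\G]}(Q_{n,\cO_\K[1/p]},\W_{\U,\psi,n})$ for right $\cO_\K[1/p][\G]$-modules $V$ in level $n$, and using that the functor $V\mapsto V\otimes_{\cO_\K[1/p][\G]}Q_{n,\cO_\K[1/p]}$ is itself an equivalence on level-$n$ right modules, so exactness transfers as required.
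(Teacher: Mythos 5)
Your argument is essentially the paper's own proof: the paper also combines Corollaries \ref{WhittakerFlat} and \ref{Whittakerfg2} with Noetherianity of Hecke algebras over $\cO_\K[1/p]$ (cited there as \cite[Corollary 1.4]{DHKMfiniteness}, the same input as your appeal to Theorem \ref{noetherianness}) to conclude that a finitely generated flat $\cO_\K[1/p][\G]$-module such as $\W_{\U,\psi,n}$ is projective, and then deduces the general case by base change. Your Morita-theoretic passage through the level-$n$ progenerator just makes explicit the mechanism the paper leaves implicit, and the base-change and direct-sum steps are the same.
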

\begin{proof}
By \cite[Corollary 1.4]{DHKMfiniteness}, Hecke algebras over the Noetherian ring $\cO_\K[1/p]$ are Noetherian.  It follows that any finitely generated flat $\cO_{\K}[1/p][\G]$-module is projective; in particular this applies to $\W_{\U,\psi,n}$.  The case of arbitrary $\R$ then follows by base change.
\end{proof}

\begin{remark}
\begin{enumerate}
\item As~\cite{DHKMfiniteness} uses the main result of~\cite{FarguesScholze}, it might appear on first look that our proof depends on their high-tech machinery.  However, for the Corollary we do not actually need to know Noetherianity of Hecke algebras. A ring~$\mathcal{H}$ with the property that every finitely generated flat right $\mathcal{H}$-module is projective is called a \emph{right $\S$-ring}, cf.~\cite{MR2067618} for details on the theory of~$\S$-rings.  In particular it is proven there that an arbitrary subring of a right $\S$-ring is a right $\S$-ring.  Since Hecke algebras over $\mathbb{C}$ are Noetherian (and therefore right $\S$-rings), it follows from this that Hecke algebras over $\cO_{\K}[1/p]$ are also right $\S$-rings, and this is all that is necessary for the proof of the Corollary.
\item Hansen has recently provided another very nice proof that~$\ind_{\U}^{\G}(\psi)$ is a projective~$\mathbb{Z}[1/p,\mu_{p^{\infty}}][\G]$-module in \cite{Hansen}, using ``Rodier approximation''.
\end{enumerate}
\end{remark}

\subsection{Endomorphism Rings}
We now turn to the question of descending the endomorphism ring of
$\ind_{\U}^{\G}(\psi)$. For a $\R_{0}$-valued function $f$ on $\G$ and
$\sigma\in\Gal(\K_{0}/\mathbb{Q})$, define $\sigma(f)$ by
$\sigma(f)(g)=\sigma(f(g))$, and for $\gamma\in {\rm Aut}(\G)$, define
$f^{\gamma}(g)$ by $f(\gamma(g))$. Then, with the notation of  Lemma \ref{lemmadescent1}, the map
 $T_{\sigma}:\,f\mapsto \sigma(f)^{t_{\sigma}}$ takes $\ind_{\U}^{\G} (\psi)$ into
 itself. 

 \begin{lemma}
The map $(\sigma,\varphi)\mapsto T_{\sigma}\circ \varphi\circ T_{\sigma}^{-1}$   defines a  semi-linear
action of $\Gal(\K_{0}/\mathbb{Q})$ on  the $\R_{0}$-algebra $\End_{\R_0[\G]}(\ind_{\U}^{\G} (\psi))$.
Moreover, its restriction to $\Gal(\K_{0}/\K)$ coincides with the action coming from the
base change isomorphism
$W_{\U,\psi}\otimes_{\cO_{\K}[1/p]}\R_{0}\xrightarrow\sim \ind_{\U}^{\G} (\psi)$
of Proposition \ref{ringofdefprop}.
 \end{lemma}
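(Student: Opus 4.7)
The plan is to verify the four required properties of $T_\sigma$ (well-definedness on $\ind_\U^\G(\psi)$, $\sigma$-semi-linearity, the near-commutation with the $\G$-action that forces conjugation to preserve $\End_{\R_0[\G]}$, and multiplicativity in $\sigma$), then compare with the action via the lift $\tilde t_\sigma$ from Proposition \ref{ringofdefprop}.

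First, I would check that $T_\sigma$ maps $\ind_\U^\G(\psi)$ to itself. For $f$ left $(\U,\psi)$-equivariant and $u\in\U$, the identity $\sigma(\psi)^{t_\sigma}=\psi$ of Lemma \ref{lemmadescent1} gives
$(T_\sigma f)(ug) = \sigma(f(\mathrm{Ad}_{t_\sigma}(u)\mathrm{Ad}_{t_\sigma}(g))) = \sigma(\psi(\mathrm{Ad}_{t_\sigma}(u)))\,(T_\sigma f)(g) = \psi(u)(T_\sigma f)(g),$
using that $t_\sigma$ normalizes $\U$. $\R_0$-semi-linearity $T_\sigma(rf)=\sigma(r)T_\sigma(f)$ is immediate from the definition. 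A direct computation shows $T_\sigma \circ \rho(g) = \rho(\mathrm{Ad}_{t_\sigma^{-1}}(g))\circ T_\sigma$ where $\rho$ denotes the right-translation $\G$-action; so while $T_\sigma$ is not $\G$-equivariant, for any $\varphi\in\End_{\R_0[\G]}(\ind_\U^\G(\psi))$ the conjugate $T_\sigma\varphi T_\sigma^{-1}$ commutes with every $\rho(g)$ (the two $\mathrm{Ad}$-twists cancel). Combined with $\R_0$-semi-linearity, this shows $\varphi\mapsto T_\sigma\varphi T_\sigma^{-1}$ is a $\sigma$-semi-linear endomorphism of the $\R_0$-algebra $\End_{\R_0[\G]}(\ind_\U^\G(\psi))$.

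Next, multiplicativity: $T_{\sigma\tau}=T_\sigma T_\tau$ as endomorphisms of $\ind_\U^\G(\psi)$. Using Lemma \ref{lemmadescent1} (so $t_{\sigma\tau}=t_\sigma t_\tau$) and commutativity of $(\mathbf T/\mathbf Z)(\F)$, both sides send $f$ to $g\mapsto \sigma\tau(f(t_\sigma t_\tau g t_\tau^{-1} t_\sigma^{-1}))$. Hence $\sigma\mapsto (\varphi\mapsto T_\sigma\varphi T_\sigma^{-1})$ is a group action; continuity follows from continuity of $\sigma\mapsto t_\sigma$.

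Finally, for the comparison on $\Gal(\K_0/\K)$, fix a lift $\sigma\mapsto\tilde t_\sigma\in\mathbf T(\F)$ as in Lemma \ref{lemmadescent2}. The operator $\tilde T_\sigma f\colon g\mapsto \sigma(f(\tilde t_\sigma g))$ used in the proof of Proposition \ref{ringofdefprop} is genuinely $\G$-equivariant (a direct check), and the associated conjugation action on $\End_{\R_0[\G]}(\ind_\U^\G(\psi))$ is by construction the one transported from $1\otimes\sigma$ via the base change isomorphism $W_{\U,\psi}\otimes_{\cO_\K[1/p]}\R_0\xrightarrow{\sim}\ind_\U^\G(\psi)$. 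A short calculation shows
$T_\sigma f(g) = \sigma(f(\tilde t_\sigma g \tilde t_\sigma^{-1})) = (\tilde T_\sigma\circ \rho(\tilde t_\sigma^{-1}))(f)(g),$
i.e.\ $T_\sigma = \tilde T_\sigma \circ \rho(\tilde t_\sigma^{-1})$. Since $\rho(\tilde t_\sigma^{-1})\in\End_{\R_0[\G]}(\ind_\U^\G(\psi))$ commutes with every $\G$-equivariant endomorphism (in particular with $\tilde T_\sigma\varphi\tilde T_\sigma^{-1}$), conjugation by the factor $\rho(\tilde t_\sigma^{-1})$ is trivial and $T_\sigma\varphi T_\sigma^{-1}=\tilde T_\sigma\varphi\tilde T_\sigma^{-1}$, proving the second assertion. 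The main subtlety, and the only delicate point, is tracking the conventions (left vs.\ right $\G$-action, $\mathrm{Ad}$ vs.\ $\mathrm{Ad}^{-1}$) so that the cancellation in the penultimate step is clean; no substantial obstacle arises.
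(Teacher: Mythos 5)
Your proof is correct and follows essentially the same route as the paper's: the first assertion is the same direct computation, and the comparison on $\Gal(\K_{0}/\K)$ rests on the identity relating $T_{\sigma}$ and $\tilde T_{\sigma}$ through translation by $\tilde t_{\sigma}$, combined with the fact that conjugation only changes things by the action of a group element, which $\G$-equivariant endomorphisms absorb. Two small repairs: right translation by $\tilde t_{\sigma}^{-1}$ is not itself an element of $\End_{\R_0[\G]}(\ind_{\U}^{\G}(\psi))$ (it is not $\G$-equivariant, as $\tilde t_{\sigma}$ need not be central) — what you actually use, and what is true, is that it commutes with every $\G$-equivariant endomorphism — and you should delete the continuity claim, since the lemma asserts none and the paper notes immediately afterwards that this action on $\End_{\R_0[\G]}(\ind_{\U}^{\G}(\psi))$ is \emph{not} continuous for the discrete topology.
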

 \begin{proof}
   The first assertion is a straightforward computation. For the second one, with the
   notation of the last paragraph of  Subsection \ref{sec:rings-definition}, we need to show that
   $T_{\sigma}\circ \varphi\circ T_{\sigma}^{-1} = \tilde T_{\sigma}\circ \varphi\circ
   \tilde T_{\sigma}^{-1}$ for all $\sigma\in\Gal(\K_{0}/\K)$ and
   $\varphi\in \End_{\R_0[\G]}(\ind_{\U}^{\G} (\psi))$.
   By construction,
   we have $\tilde T_{\sigma}= \tilde t_{\sigma}\circ T_{\sigma}$ as
   endomorphisms of the $\mathbb{Z}[1/p]$-module $\ind_{\U}^{\G} (\psi)$, and where
   $\tilde t_{\sigma}$ denotes the action of $\tilde t_{\sigma}$ on that module. So the desired equality
   boils down to the fact that $T_{\sigma}\circ \varphi\circ T_{\sigma}^{-1}$ commutes to
   the action of $\G$.
    \end{proof}

Beware that this semi-linear action is not continuous for the discrete topology on the
$\R_{0}$-algebra $\End_{\R_0[\G]}(\ind_{\U}^{\G} (\psi))$.
This is because $W_{\U,\psi}$ is not finitely generated, so the natural map:
\[\End_{\cO_{\K}[1/p][\G]}(W_{\U,\psi}) \otimes_{\cO_{\K}[1/p]} \R_0 \rightarrow \End_{\R_0[\G]}(\ind_{\U}^{\G} (\psi))\]
fails to be an isomorphism.

\begin{lemma}
  For any depth $n$ and any element $\sigma\in\Gal(\K_{0}/\mathbb{Q})$, the endomorphism
  $T_{\sigma}$ takes the summand $\ind_{\U}^{\G} (\psi)_{n}$ into itself.  Moreover, the map
  $(\sigma,\varphi)\mapsto T_{\sigma}\circ 
  \varphi\circ T_{\sigma}^{-1}$   defines a  \emph{continuous} semi-linear 
action of $\Gal(\K_{0}/\mathbb{Q})$ on  the $\R_{0}$-algebra $\End_{\R_0[\G]}(\ind_{\U}^{\G} (\psi)_{n})$.
\end{lemma}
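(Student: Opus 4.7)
My plan is to handle the two parts of the lemma separately.

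For the first (preservation of the level summand), the decomposition of Section \ref{leveldecomp} is induced by central idempotents $\varepsilon_{n,H} \in \mathbb{Z}[1/p][H\backslash\G/H]$ intrinsic to the Moy--Prasad stratification of $\G$, and the level-$n$ subcategory is preserved by any inner automorphism of $\G$ and any change of coefficient ring. Since $T_\sigma$ is the composition of the $\mathbb{Z}[1/p]$-algebra automorphism $\sigma$ of $\R_0$ (which fixes each $\varepsilon_{n,H}$ pointwise) with the inner automorphism $\mathrm{Ad}(t_\sigma)$ of $\G$ (per the convention $f^{\gamma}(g)=f(\gamma(g))$ and the fact that $t_\sigma \in (\mathbf{T}/\mathbf{Z})(\F)$), it carries $\ind_\U^\G(\psi)_n$ into itself.

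For continuity, the plan is to verify that the stabilizer in $\Gal(\K_0/\mathbb{Q})$ of any $\varphi \in \End_{\R_0[\G]}(\ind_\U^\G(\psi)_n)$ is open. As $\Gal(\K_0/\K)$ is itself an open subgroup of $\Gal(\K_0/\mathbb{Q})$, it suffices to exhibit an open subgroup of $\Gal(\K_0/\K)$ fixing $\varphi$. Within $\Gal(\K_0/\K)$, by the previous lemma I may replace $T_\sigma$ with the descent operator $\tilde T_\sigma$, which by construction fixes every element of $W_{\U,\psi,n}$ elementwise.

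Now I would invoke finite generation of $W_{\U,\psi,n}$ over $\cO_\K[1/p][\G]$ from Corollary \ref{Whittakerfg2}: fix generators $v_1,\ldots,v_k$, so that $\varphi$ is determined by the tuple $(\varphi(v_i))_{i=1}^{k}$. Each $\varphi(v_i)$ is a smooth $(\U,\psi)$-equivariant function $\G \to \R_0$ compactly supported modulo $\U$, and so takes only finitely many values; these lie in some finite layer $\mathbb{Z}[1/p,\mu_{p^m}]$ of $\R_0 = \bigcup_m \mathbb{Z}[1/p,\mu_{p^m}]$. Therefore $\varphi(v_i) \in W_{\U,\psi,n} \otimes_{\cO_\K[1/p]} \cO_\L[1/p]$ with $\L := \K(\mu_{p^m})$, a finite Galois extension of $\K$. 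On this finite base change, the descent action $\tilde T_\bullet$ factors through the finite quotient $\Gal(\L/\K)$, so $\tilde T_\sigma(\varphi(v_i)) = \varphi(v_i)$ whenever $\sigma \in \Gal(\K_0/\L)$. Combined with $\tilde T_\sigma(v_i) = v_i$, this yields $\tilde T_\sigma \circ \varphi \circ \tilde T_\sigma^{-1}(v_i) = \varphi(v_i)$ for every $i$, hence $\tilde T_\sigma \circ \varphi \circ \tilde T_\sigma^{-1} = \varphi$; the stabilizer of $\varphi$ thus contains the open subgroup $\Gal(\K_0/\L)$.

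The main point, and the reason the preceding lemma had to carry a warning against continuity on the undecomposed $\End_{\R_0[\G]}(\ind_\U^\G(\psi))$, is precisely that finite generation of each depth-$n$ component $W_{\U,\psi,n}$ reduces the stabilizer question for an endomorphism to finitely many function values; each such value sits at a finite level of the $\mu_{p^\infty}$-tower, with open stabilizer in $\Gal(\K_0/\K)$. The failure of finite generation for the full $\ind_\U^\G(\psi)$ is exactly what blocks this argument at the undecomposed level.
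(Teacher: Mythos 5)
Your treatment of the first assertion matches the paper's: the depth idempotents are central and defined over $\mathbb{Z}[1/p]$, so the coefficient twist $f\mapsto\sigma(f)$ commutes with them, and the automorphism $\mathrm{Ad}(t_\sigma)$ (an automorphism of $\G$ coming from $(\mathbf{T}/\mathbf{Z})(\F)$, so not literally inner but in $\mathrm{Aut}(\mathbf{G})(\F)$, under which the depth decomposition is invariant by construction) preserves the depth-$n$ factor; hence $T_\sigma$ does. For continuity your route is genuinely different from the paper's: the paper shows that the natural map $\End_{\cO_{\K}[1/p][\G]}(W_{\U,\psi,n})\otimes_{\cO_{\K}[1/p]}\R_0\to\End_{\R_0[\G]}(\ind_{\U}^{\G}(\psi)_n)$ is a $\Gal(\K_0/\K)$-equivariant isomorphism (using finite presentation of $W_{\U,\psi,n}$), whence every endomorphism lives at a finite cyclotomic level; you instead argue directly that stabilizers are open, using only finite generation of $W_{\U,\psi,n}$ and the previous lemma's identification of the two actions on $\Gal(\K_0/\K)$. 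That skeleton is sound.

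However, the step where you place $\varphi(v_i)$ in $W_{\U,\psi,n}\otimes_{\cO_\K[1/p]}\cO_\L[1/p]$ is wrong as justified. First, a nonzero element of $\ind_{\U}^{\G}(\psi)$ does \emph{not} take finitely many values when $\F$ has characteristic zero: the relation $f(ug)=\psi(u)f(g)$ makes the value set stable under multiplication by the infinite group $\psi(\U)\subset\mu_{p^\infty}$ (smoothness and compact support mod $\U$ only bound the values on a set of coset representatives). Second, even if the values of $\varphi(v_i)$ lay in $\cO_\L[1/p]$, this would not put $\varphi(v_i)$ in $W_{\U,\psi,n}\otimes\cO_\L[1/p]$: the module $W_{\U,\psi}$ is the \emph{twisted} descent (fixed points of the operators $\tilde T_\sigma$, which involve translation by $\tilde t_\sigma$), not the submodule of $\cO_\K[1/p]$-valued functions, so "values in a finite layer" is neither necessary nor sufficient for membership. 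The statement you need is nonetheless true and has a one-line proof that you should substitute: since $\R_0=\varinjlim_{\L}\cO_\L[1/p]$ over finite subextensions and tensor products commute with filtered colimits, every element of $W_{\U,\psi,n}\otimes_{\cO_\K[1/p]}\R_0$ lies in the image of $W_{\U,\psi,n}\otimes\cO_\L[1/p]$ for some finite $\L$, and that image is fixed pointwise by $\tilde T_\sigma=\mathrm{id}\otimes\sigma$ for $\sigma\in\Gal(\K_0/\L)$. With this replacement, your conclusion that the stabilizer of $\varphi$ contains the open subgroup $\Gal(\K_0/\L)$, hence that the action is continuous, goes through.
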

\begin{proof}
  The map $f\mapsto \sigma(f)$ is a $\mathbb{Z}[1/p][\G]$ endomorphism of
  $\ind_{\U}^{\G} (\psi)$, hence it commutes with the action of the centre $\mathfrak
  Z_{\G,\mathbb{Z}[1/p]}$ and in particular with the idempotents that give the depth decomposition.
  On the other hand, the map $f\mapsto f^{t_{\sigma}}$ is $\G$-equivariant if one twists
  the action of $\G$ by the  automorphism $t_{\sigma}$. This automorphism also acts on
  $\mathfrak{Z}_{\G,\mathbb{Z}[1/p]}$ and we have
  $(zf)^{t_{\sigma}}=z^{t_{\sigma}}f^{t_{\sigma}}$ for all $z\in \mathfrak
  Z_{\G,\mathbb{Z}[1/p]}$ and $f\in \ind_{\U}^{\G} (\psi)$. But by construction, the
  depth decomposition is invariant under ${\rm Aut}(\mathbf{G})(\F)$, hence in particular
  the map $f\mapsto f^{t_{\sigma}}$ also commutes with the idempotents that give the depth
  decomposition.

  Hence the map $(\sigma,\varphi)\mapsto T_{\sigma}\circ\varphi\circ T_{\sigma}^{-1}$
  defines a semi-linar action of $\Gal(\K_{0}/\mathbb{Q})$ on  the $\R_{0}$-algebra
  $\End_{\R_0[\G]}(\ind_{\U}^{\G} (\psi)_{n})$ and, as in the last lemma, the restriction
  of this action to $\Gal(\K_{0}/\K)$ coincides with the action coming from the base
  change isomorphism $W_{\U,\psi,n}\otimes_{\cO_{K}[1/p]}\R_{0} \xrightarrow\sim
  \ind_{\U}^{\G} (\psi)_{n}$. Therefore, the natural map:
    \[\End_{\cO_{\K}[1/p][\G]}(W_{\U,\psi,n}) \otimes_{\cO_{\K}[1/p]} \R_0
      \rightarrow \End_{\R_0[\G]}(\ind_{\U}^{\G} (\psi)_{n})\]
    is $\Gal(\K_{0}/\K)$-equivariant for the natural action on the LHS. 
  Since $W_{\U,\psi,n}$ is  finitely presented, this map is an isomorphism, and it follows
  that the action we have defined on $\End_{\R_0[\G]}(\ind_{\U}^{\G} (\psi)_{n})$ is continuous.
\end{proof}

Let us now take Galois invariants and put
$$\mathfrak{E}_{\G,n}:= \End_{\R_0[\G]}(\ind_{\U}^{\G} (\psi)_n)^{\Gal(\K_{0}/\QQ)}.$$
By descent along the pro\'etale cover
$\Spec(\R_{0})\rightarrow \Spec(\mathbb{Z}[1/p])$,
the ${\mathbb Z}[1/p]$-algebra $\mathfrak{E}_{\G,n}$ is a model for $\End_{\R_0[\G]}(\ind_{\U}^{\G} (\psi)_n)$.
Moreover, for any $g \in
(\mathbf{G}/\mathbf{Z})(\F)$, the isomorphism of $\End_{\R_0[\G]}(\ind_{\U}^{\G}
(\psi)_n)$ with $\End_{\R_0[\G]}(\ind_{\U^g}^{\G} (\psi^g)_n)$ induced by the endofunctor $\pi \mapsto
\pi^g$ is compatible with the actions of $\Gal(\K_{0}/\QQ)$ on source and target.  Thus the
ring $\mathfrak{E}_{\G,n}$ is independent of the choice of Whittaker datum. 

\begin{remark}
The canonical map $\mathfrak{Z}_{\G,\R_0} \rightarrow \End_{\R_0[\G]}(\ind_{\U}^{\G} (\psi))$
is certainly $\Gal(\K_{0}/\K)$-equivariant with respect to the Galois actions we have defined on the two
rings, but \emph{may not be} $\Gal(\K_{0}/\mathbb{Q})$-equivariant in general.
Indeed, the action on $\mathfrak{Z}_{\G,\R_0}$ arises from the endofunctors $\pi
\mapsto \pi^{\sigma}$, whereas the action on $\End_{\R_0[\G]}(\ind_{\U}^{\G} (\psi))$
arises from the endofunctors $\pi \mapsto (\pi^{\sigma})^{t_{\sigma}}.$  The difference
between the two actions of $\sigma$ is thus given by the automorphism of
$\mathfrak{Z}_{\G,\R_0}$ induced by the endofunctor $\pi \mapsto \pi^{t_{\sigma}}$.  In
particular, if we let $\mathfrak{Z}_{\G,\R_0}^{\ad}$ denote the subring of
$\mathfrak{Z}_{\G,\R_0}$ stable under the automorphisms induced by the endofunctors $\pi
\mapsto \pi^g$ for $g \in ({\mathbf G}/{\mathbf Z})(\F)$, then the action of
$\mathfrak{Z}_{\G,\R_0}$ restricts to a $\Gal(\K_{0}/\QQ)$-equivariant map: 
$$\mathfrak{Z}_{\G,\R_0}^{\ad} \rightarrow \End_{\R_0[\G]}(\ind_{\U}^{\G} (\psi)_n).$$  

\end{remark}

We summarize the above discussion as follows:

\begin{theorem}\label{theoremmathfrakE}
  For each $n$, the ${\mathbb Z}[1/p]$-algebra $\mathfrak{E}_{\G,n}$ is commutative, flat
  and finitely generated and, there is a canonical isomorphism
\[\mathfrak{E}_{\G,n} \otimes_{{\mathbb Z}[1/p]} \R \simeq \End_{\R[\G]}(W_{\U',\psi',n} \otimes_{\cO_{\K}[1/p]} \R)\]
for each $\cO_K[1/p]$-algebra $\R$ and each Whittaker datum $(\U',\psi')$ of $\G$.  Moreover there is a natural map 
\[\mathfrak{Z}_{\G,\mathbb{Z}[1/p]}^{\ad} \rightarrow \mathfrak{E}_{\G,n}\]
that, after base change to $\cO_{\K}[1/p]$ and the identifications above, coincides with the map 
\[\mathfrak{Z}_{\G,\cO_{\K}[1/p]}^{\ad} \rightarrow \End_{\cO_{\K}[1/p][\G]}(W_{\U',\psi',n})\]
arising from the action of the Bernstein centre.
\end{theorem}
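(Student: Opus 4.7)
The plan is a two-step descent: first from $\R_0$ down to $\cO_\K[1/p]$ along the pro-étale cover induced by the continuous semi-linear $\Gal(\K_0/\K)$-action on $\End_{\R_0[\G]}(\ind_\U^\G(\psi)_n)$ constructed immediately before the theorem, and then from $\cO_\K[1/p]$ down to $\mathbb{Z}[1/p]$ along the finite étale cover $\Spec(\cO_\K[1/p])/\Spec(\mathbb{Z}[1/p])$ governed by the residual $\Gal(\K/\mathbb{Q})$-action. Through these descents, all four claims (base change, flatness, commutativity, finite generation) are to be reduced to corresponding assertions over $\cO_\K[1/p]$ about the ring $\End_{\cO_\K[1/p][\G]}(W_{\U,\psi,n})$.

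The base-change isomorphism is obtained as follows. Since $W_{\U,\psi,n}$ is finitely generated projective over $\cO_\K[1/p][\G]$ by Corollary \ref{WhittakerfgProj}, for any $\cO_\K[1/p]$-algebra $\R$ one gets the standard identity
\[\End_{\cO_\K[1/p][\G]}(W_{\U,\psi,n})\otimes_{\cO_\K[1/p]}\R \xrightarrow{\sim} \End_{\R[\G]}(W_{\U,\psi,n}\otimes_{\cO_\K[1/p]}\R).\]
Specialising to $\R=\R_0$ reproduces the $\Gal(\K_0/\K)$-equivariant isomorphism from the second lemma above the statement, so pro-étale descent identifies $\End_{\cO_\K[1/p][\G]}(W_{\U,\psi,n})$ with $\End_{\R_0[\G]}(\ind_\U^\G(\psi)_n)^{\Gal(\K_0/\K)}$; finite Galois descent along $\cO_\K[1/p]/\mathbb{Z}[1/p]$ then yields $\mathfrak{E}_{\G,n}\otimes_{\mathbb{Z}[1/p]}\cO_\K[1/p]\simeq \End_{\cO_\K[1/p][\G]}(W_{\U,\psi,n})$. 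Combining these, the stated base change holds for all $\cO_\K[1/p]$-algebras. Independence of the Whittaker datum $(\U',\psi')$ follows from the $(\mathbf{G}/\mathbf{Z})(\F)$-conjugation isomorphism, which commutes with the $\Gal(\K_0/\mathbb{Q})$-action as noted just above the definition of $\mathfrak{E}_{\G,n}$. Flatness and commutativity are then immediate at the intermediate level: projectivity of $W_{\U,\psi,n}$ makes $\End_{\cO_\K[1/p][\G]}(W_{\U,\psi,n})$ a projective hence flat $\cO_\K[1/p]$-module that embeds into its $\mathbb{C}$-base-change (commutative by Bushnell--Henniart), and both properties descend from $\cO_\K[1/p]$ to $\mathbb{Z}[1/p]$ via faithfully flat descent.

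The main obstacle is finite generation of $\mathfrak{E}_{\G,n}$ as a $\mathbb{Z}[1/p]$-algebra. Using the already-proven base change and the finiteness of $\cO_\K[1/p]$ over $\mathbb{Z}[1/p]$, this reduces to finite generation of $\End_{\cO_\K[1/p][\G]}(W_{\U,\psi,n})$ as a $\cO_\K[1/p]$-algebra. I would approach this by comparing with $\mathbb{C}$-coefficients: the Bushnell--Henniart isomorphism identifies $\End_{\mathbb{C}[\G]}(\ind_\U^\G(\psi)_{n,\mathbb{C}})$ with the $\psi$-generic component of the depth-$n$ Bernstein centre over $\mathbb{C}$. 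Because the level-$n$ progenerator $Q_n$ is finitely generated, only finitely many Bernstein components of $\G$ have depth $n$, so this complex endomorphism ring is a finite product of complex Bernstein centres and is therefore a finitely generated commutative $\mathbb{C}$-algebra. The torsion-free ring $\End_{\cO_\K[1/p][\G]}(W_{\U,\psi,n})$ embeds into this and is a module over the image of the Bernstein-centre action $\mathfrak{Z}_{\G,\cO_\K[1/p],n}^{\ad}\to\End_{\cO_\K[1/p][\G]}(W_{\U,\psi,n})$. The delicate point is to show integrally that $\End_{\cO_\K[1/p][\G]}(W_{\U,\psi,n})$ is finitely generated as a module over a finitely generated $\cO_\K[1/p]$-subalgebra of this Bernstein-centre image, using Noetherianness of the Hecke algebras over $\cO_\K[1/p]$ \cite[Corollary~1.4]{DHKMfiniteness} together with the projectivity and finite generation of $W_{\U,\psi,n}$; this is where care is needed in tracking the integral structure.

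Finally, the canonical map $\mathfrak{Z}_{\G,\mathbb{Z}[1/p]}^{\ad}\to\mathfrak{E}_{\G,n}$ is obtained by taking $\Gal(\K_0/\mathbb{Q})$-invariants of the equivariant arrow $\mathfrak{Z}_{\G,\R_0}^{\ad}\to\End_{\R_0[\G]}(\ind_\U^\G(\psi)_n)$ recorded in the remark preceding the theorem; Corollary \ref{GaloisBC} applied to the adjoint-invariant subring identifies the invariants of $\mathfrak{Z}_{\G,\R_0}^{\ad}$ with $\mathfrak{Z}_{\G,\mathbb{Z}[1/p]}^{\ad}$. Compatibility of this map with the Bernstein-centre action on $W_{\U',\psi',n}$ after base change to $\cO_\K[1/p]$ is then built into the construction.
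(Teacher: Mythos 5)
Your proposal is essentially the paper's own argument: descend the endomorphism ring along the pro\'etale cover $\Spec(\R_0)\rightarrow\Spec(\mathbb{Z}[1/p])$ using the continuous semi-linear $\Gal(\K_0/\mathbb{Q})$-action, use finite generation and projectivity of $W_{\U,\psi,n}$ (Corollary \ref{WhittakerfgProj}) to make $\End$ commute with base change, get independence of the Whittaker datum from the $(\mathbf{G}/\mathbf{Z})(\F)$-conjugation compatibility, and obtain the map from $\mathfrak{Z}^{\ad}_{\G,\mathbb{Z}[1/p]}$ from the equivariance recorded in the remark; your two-step descent through $\cO_\K[1/p]$ is only a cosmetic refactoring of the paper's one-step descent, and your commutativity and flatness arguments are the intended ones.

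The one place your write-up stops short is the finite generation of $\mathfrak{E}_{\G,n}$, which you flag as ``delicate'' but do not finish. It closes directly from the two ingredients you already name, and the detour through the complex Bernstein centre and the Bushnell--Henniart isomorphism is not needed for this point: since $W_{\U,\psi,n}$ is a finitely generated projective $\cO_\K[1/p][\G]$-module, it is a direct summand of $(\varepsilon_{n,\H}\,\cO_\K[1/p][\G/\H])^{k}$ for a suitable compact open pro-$p$ subgroup $\H$, so $\End_{\cO_\K[1/p][\G]}(W_{\U,\psi,n})\simeq e\,M_k(\mathcal{H}_n)\,e$ for an idempotent $e$ in the matrix algebra over the depth-$n$ Hecke algebra $\mathcal{H}_n=\varepsilon_{n,\H}\,\cO_\K[1/p][\H\backslash\G/\H]$. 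By \cite[Corollary 1.4]{DHKMfiniteness}, $\mathcal{H}_n$ is module-finite over its centre $\Z(\mathcal{H}_n)$, which is a finitely generated $\cO_\K[1/p]$-algebra; hence $e\,M_k(\mathcal{H}_n)\,e$ is a $\Z(\mathcal{H}_n)$-submodule of a finite module over a Noetherian ring, so it is module-finite over the (central, finitely generated) image of $\Z(\mathcal{H}_n)$, and is therefore a finitely generated $\cO_\K[1/p]$-algebra; finite type then descends faithfully flatly to $\mathbb{Z}[1/p]$. Two smaller remarks: your appeal to Corollary \ref{GaloisBC} is stated for a finite group of automorphisms, whereas $\Gal(\K_0/\mathbb{Q})$ is profinite, so you should either pass through finite levels or, as the paper does, simply observe that the image of $\mathfrak{Z}^{\ad}_{\G,\mathbb{Z}[1/p]}$ in $\End_{\R_0[\G]}(\ind_{\U}^{\G}(\psi)_n)$ is Galois-invariant by the equivariance of the remark, which already produces the required map into the invariant ring $\mathfrak{E}_{\G,n}$; and your claim that $\End_{\cO_\K[1/p][\G]}(W_{\U,\psi,n})$ is $\cO_\K[1/p]$-projective needs the direct-summand-of-free argument above (or just torsion-freeness over the Dedekind ring $\cO_\K[1/p]$, which is all that flatness requires).
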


\subsection{An isomorphism of Bushnell and Henniart in the banal setting}
The aim of this section is to point out that in the banal setting, we have an analogue of a theorem of Bushnell--Henniart relating the ``$\psi$-generic blocks'' of the Bernstein centre with the endomorphisms of the Gelfand--Graev representation defined by~$\psi$.

\begin{definition}
In the case~$\R$ is an algebraically closed field, we say that an inertial class~$[\M,\pi]_\G\in\mathfrak{B}_{\R}(\G)$ is~\emph{$\psi$-generic} if
\[\dim_{\R}(\Hom_{\R[\M]}(\ind_{\U_\M}^\M(\psi_{\M,\R}),\pi'))=1\]
for all~$\pi'\in[\M,\pi]$. 
\end{definition}
If~$\Rep_{\mathbb{C}}(\G)_{[\M,\pi]_\G}$ contains a~$\psi$-generic representation then~$[\M,\pi]_{\G}$ is \emph{$\psi$-generic} (cf.~\cite[4.5 (1)]{BHGen}), and it follows that this also holds over~$\Qbar$.

\begin{lemma}
  Let~$\K/\mathbb{Q}(\mu_{p^{\infty}})$ be a finite extension, and $\pi$ be an absolutely irreducible $\psi$-generic
  cuspidal~$\K[\G]$-module with finite order central character.
  The $\cO_{\K}[1/\N_{\G}][\G]$-module
$\ind_{\U}^{\G}(\psi_{\cO_{\K}[1/\N_{\G}]})_{[\G,\pi]}$ is
  a finitely generated projective generator of~$\Rep_{\cO_{\K}[1/\N_{\G}]}(\G)_{[\G,\pi]}$. 
\end{lemma}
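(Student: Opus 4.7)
The plan is to verify the three required properties of a projective generator---finite generation, projectivity, and the generator property---by exploiting the simplicity of cuspidal blocks. For finite generation and projectivity, note that unramified twists preserve depth, so the inertial class $[\G,\pi]_\G$ is concentrated in a single depth $n=\mathrm{depth}(\pi)$. Thus $\ind_\U^\G(\psi_{\cO_\K[1/\N_\G]})_{[\G,\pi]}$ is a direct summand of $W_{\U,\psi,n}\otimes_{\cO_\K[1/p]}\cO_\K[1/\N_\G]$, which is finitely generated by Corollary \ref{Whittakerfg2} and projective by Corollary \ref{WhittakerfgProj}, and both properties pass to the summand.

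For the generator property, I would reduce to a fiberwise faithfulness check via Morita theory. In the cuspidal case $\M=\G$ the group $\W_{(\G,\pi)}$ is trivial (any two $\G$-conjugates of $\pi$ are isomorphic), so Theorem \ref{banaldecomptheorem} yields a progenerator $P_{(\G,\pi)}$ of the block with commutative endomorphism ring $A:=\cO_\K[1/\N_\G][\G/\G^\circ]^{\H_{(\G,\pi)}}$. The block is therefore Morita equivalent to the category of $A$-modules via $M\mapsto \Hom_{\cO_\K[1/\N_\G][\G]}(P_{(\G,\pi)},M)$, and under this equivalence $\ind_\U^\G(\psi)_{[\G,\pi]}$ corresponds to a finitely generated projective $A$-module $N$. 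Being a progenerator then reduces to $N$ being a faithful $A$-module, which can be checked prime by prime.

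Over the generic fiber $A\otimes_{\cO_\K[1/\N_\G]}\K$, the classical theorem of Bushnell--Henniart identifies $\ind_\U^\G(\psi_\K)_{[\G,\pi]}$ as a progenerator of the $\K$-block, so $N\otimes\K$ is faithful. Over a closed point of residue characteristic $\ell\nmid\N_\G$ (necessarily banal), every simple cuspidal $\Fl[\G]$-object $\overline{\tau}$ in the block lifts by Proposition \ref{localpropositionlifting} to an integral cuspidal $\Ql[\G]$-module $\tau$ whose reduction is irreducible. Since unramified twisting preserves Whittaker genericity, every irreducible in the $[\G,\pi]$-block over $\Ql$ is $\psi$-generic, so $\tau$ admits a nonzero Whittaker functional; choosing an invariant lattice in $\tau$ with $\overline{\tau}$ as irreducible reduction and a suitably scaled functional yields a nonzero $(\U,\psi)$-functional on $\overline{\tau}$, giving $N_{\mathfrak{p}}\neq 0$ at every closed point $\mathfrak{p}$. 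The main obstacle will be making this last reduction argument rigorous, combining the flatness of $\ind_\U^\G(\psi)$ (Corollary \ref{WhittakerFlat}) with the exactness of $(\U,\psi)$-coinvariants to ensure the integral Whittaker model descends compatibly to the residue fiber.
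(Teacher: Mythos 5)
Your treatment of finite generation and projectivity is fine and is essentially the paper's (a direct appeal to Corollary \ref{WhittakerfgProj}); the issue is with your Morita-theoretic reduction of the generator property. You assert that Theorem \ref{banaldecomptheorem} gives a progenerator $P_{(\G,\pi)}$ of the block ``with commutative endomorphism ring $A=\cO_{\K}[1/\N_{\G}][\G/\G^\circ]^{\H_{(\G,\pi)}}$'', justified by the triviality of $\W_{(\G,\pi)}$. That theorem only identifies the \emph{centre} of the block with $A$. By Lemma \ref{lemmacuspend}, $\End(P_{(\G,\pi)})\simeq \End_{\R[\mathcal{S}]}(\ind_{\G^\circ}^{\mathcal{S}}(L^\circ\otimes\R))$, which has rank $[\mathcal{S}:\G^\circ]=m^2[\mathcal{T}:\G^\circ]$ over $\R$ and is commutative precisely when the multiplicity $m$ of the constituents of $\pi|_{\G^\circ}$ equals $1$; triviality of $\W_{(\G,\pi)}$ has nothing to do with this. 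Without commutativity, your criterion ``finitely generated projective generator $\Leftrightarrow$ faithful, checked fibrewise over $A$'' is simply false over the (possibly noncommutative) endomorphism ring, so the whole reduction collapses. The gap is repairable, but only by an argument you never make: since $\pi$ is $\psi$-generic and Whittaker functionals are unique, $\dim\Hom_{\U}(\pi|_{\U},\psi)=1$ forces $m=1$ (the generic constituent of $\pi|_{\G^\circ}$ would otherwise contribute at least $m$ to this dimension), and only then is $\End(P_{(\G,\pi)})=A$ and your faithfulness criterion available. Two further points need attention even after that fix: the block and its progenerator are constructed over $\Zbar[1/\N_\G]$, resp.\ over $\cO_{\K_\rho}[1/\N_\G]$ for a \emph{number field} $\K_\rho$ (Remark \ref{rk_definition_block}, Lemma \ref{lemmacentreproj2}), so transporting the Morita structure to $\cO_\K[1/\N_\G]$ with $\K\supseteq\mathbb{Q}(\mu_{p^\infty})$ requires a base-change step; and your mod-$\ell$ descent of a Whittaker functional along a $\Zl$-lattice runs into the usual scaling problem (the value group of $\Zl$ is divisible, so a functional with image the maximal ideal reduces to zero), which must be handled by working over a discrete valuation ring and the coinvariants $L_\psi$ rather than a raw functional — you flag this, but it is a real step, not a formality.

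For comparison, the paper's proof avoids all of this: it notes that the set of simple quotients of $Q=\ind_{\U}^{\G}(\psi)_{[\G,\pi]}$ is stable under unramified twisting (as $\U\subset\G^\circ$), that over every algebraically closed field $L$ over $\cO_\K[1/\N_\G]$ the simples of the block form a single unramified orbit, and that therefore it suffices to show $Q\otimes L\neq 0$; this nonvanishing follows from $Q\otimes\mathbb{C}\neq 0$ (Bushnell--Henniart) together with the observation that $Q$, being a direct factor of a space of $\cO_\K[1/\N_\G]$-valued functions, is $\ell$-adically separated for banal $\ell$. Your fibrewise Whittaker-lifting argument via Proposition \ref{localpropositionlifting} is, in effect, a harder route to the same nonvanishing statement; if you keep your strategy, you should replace the faithfulness criterion by the correct one (``every simple of the block is a quotient of $Q$''), at which point the single-orbit and twisting-stability observations of the paper become exactly what you need, and the $\ell$-adic separatedness argument is a much shorter way to conclude.
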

\begin{proof}
  Set $Q:=\ind_{\U}^{\G}(\psi_{\cO_{\K}[1/\N_{\G}]})_{[\G,\pi]}$. By
  Corollary \ref{WhittakerfgProj},
  we already know that $Q$ is projective and finitely generated, so it
  remains to see it is a generator, i.e. that any simple object of
  $\Rep_{\cO_{\K}[1/\N_{\G}]}(\G)_{[\G,\pi]}$ is a quotient of $Q$. Note
  that the set of simple quotients of $Q$ is stable under unramified twisting, since
  $\U \subset \G^{\circ}$. On the other hand, for each algebraically closed field $L$ over
  $\cO_{\K}[1/N_{\G}]$, the set of irreducible $L[G]$-modules in
  $\Rep_{\cO_{\K}[1/\N_{\G}]}(\G)_{[\G,\pi]}$ form a single unramified
  orbit, by construction. Therefore, it is enough to show that for each such $L$, the $L[\G]$-module
  $Q\otimes L$ has a simple quotient. Since it is finitely generated, it suffices to prove
  it is non-zero. By \cite[9.2]{BHGen}, we know that $Q\otimes \mathbb{C}$ is non-zero,
  hence $Q\neq 0$. Moreover, since $Q$ is a direct factor a space of
  $\cO_{\K}[1/\N_{\G}]$-valued functions, it is certainly $\ell$-adically
  separated for each banal $\ell$, meaning that $Q\otimes L$ is non-zero for all $L$ as above. 
\end{proof}

\begin{corollary}
Let~$[\G,\pi]_{\G}\in\mathfrak{B}_{\Qbar}(\G)$ be a~$\psi$-generic inertial class.  
The natural map
\[\mathfrak{Z}_{\G,\Zbar[1/\N_{\G}],[\G,\pi]_{\G}}\rightarrow
  \End_{\Zbar[1/\N_{\G}][\G]}(\ind_{\U}^{\G}(\psi_{\Zbar[1/\N_{\G}]})_{[\G,\pi]_{\G}})\]
is an isomorphism.
\end{corollary}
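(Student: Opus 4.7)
The plan is to use the preceding lemma to identify $Q := \ind_{\U}^{\G}(\psi_{\Zbar[1/\N_{\G}]})_{[\G,\pi]_{\G}}$ as a finitely generated projective generator of the block $\Rep_{\Zbar[1/\N_{\G}]}(\G)_{[\G,\pi]_{\G}}$, thereby reducing the corollary to the commutativity of $\End_{\Zbar[1/\N_{\G}][\G]}(Q)$, which I shall obtain by descent from Bushnell--Henniart's isomorphism over $\Qbar$.

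First I would fix a representative $\pi$ of the inertial class with finite order central character (by an unramified twist) and let $\K$ be a sufficiently large finite extension of $\mathbb{Q}(\mu_{p^{\infty}})$ containing a field of definition of $\pi$, as furnished by Proposition \ref{integralreps}. The preceding lemma then gives that $\ind_{\U}^{\G}(\psi_{\cO_{\K}[1/\N_{\G}]})_{[\G,\pi]_{\G}}$ is a finitely generated projective generator of $\Rep_{\cO_{\K}[1/\N_{\G}]}(\G)_{[\G,\pi]_{\G}}$. Faithfully flat base change from $\cO_{\K}[1/\N_{\G}]$ to $\Zbar[1/\N_{\G}]$ preserves the progenerator property, so $Q$ is a finitely generated projective generator of $\Rep_{\Zbar[1/\N_{\G}]}(\G)_{[\G,\pi]_{\G}}$. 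By the Morita formalism, the centre of this block is canonically identified with $\Z(\End_{\Zbar[1/\N_{\G}][\G]}(Q))$, and the canonical map in the statement factors as
\[\mathfrak{Z}_{\G,\Zbar[1/\N_{\G}],[\G,\pi]_{\G}}\xrightarrow{\sim}\Z(\End_{\Zbar[1/\N_{\G}][\G]}(Q))\hookrightarrow \End_{\Zbar[1/\N_{\G}][\G]}(Q).\]
So it is enough to prove that $\End_{\Zbar[1/\N_{\G}][\G]}(Q)$ is commutative.

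For commutativity, the key point is that $\End(Q)$ is torsion-free over $\Zbar[1/\N_{\G}]$: indeed, by Theorem \ref{banaldecomptheorem} it is finitely generated over the Noetherian normal ring $(\Zbar[1/\N_{\G}][\G/\G^{\circ}]^{\H_{(\G,\pi)}})^{\W_{(\G,\pi)}}$, and $Q$ itself is $\Zbar[1/\N_{\G}]$-flat. Combined with finite presentation of $Q$ over the smooth group algebra $\Zbar[1/\N_{\G}][\G]$, this yields an embedding
\[\End_{\Zbar[1/\N_{\G}][\G]}(Q)\hookrightarrow \End_{\Zbar[1/\N_{\G}][\G]}(Q)\otimes_{\Zbar[1/\N_{\G}]}\Qbar \simeq \End_{\Qbar[\G]}\bigl(\ind_{\U}^{\G}(\psi_{\Qbar})_{[\G,\pi]_{\G}}\bigr).\]
The target is commutative: by Bushnell--Henniart (as recalled in the introduction) it is canonically the $\psi$-generic block $\mathfrak{Z}_{\G,\Qbar,[\G,\pi]_{\G}}$ of the Bernstein centre, which is commutative. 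Hence $\End(Q)$ is commutative, as required. The main (and rather minor) obstacle is the commutation of $\End$ with base change in the displayed isomorphism, which is standard given finite presentation of $Q$.
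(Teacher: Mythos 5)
Your proposal is correct and is essentially the argument the paper intends (the corollary is left as an immediate consequence of the lemma): the Gelfand--Graev block is a progenerator, so the block centre is $\Z(\End(Q))$, and commutativity of $\End(Q)$ follows by embedding it into the $\Qbar$-endomorphism algebra, which is the commutative generic factor of the Bernstein centre by Bushnell--Henniart. One small slip: $\Zbar[1/\N_{\G}]$ is not Noetherian, so the aside about finite generation over a ``Noetherian normal ring'' should be dropped --- torsion-freeness of $\End_{\Zbar[1/\N_{\G}][\G]}(Q)$ already follows from flatness of $Q$ over $\Zbar[1/\N_{\G}]$, and the base-change isomorphism from finite generation and projectivity of $Q$ (Lemma \ref{scalarextandpros}).
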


Now, following Bushnell and Henniart, we can extend this to all~$\psi$-generic inertial classes:

\begin{theorem}\label{BHBanalcase}
Let~$[\M,\pi]_{\G}\in\mathfrak{B}_{\Qbar}(\G)$ be a~$\psi$-generic inertial class.  Then the canonical map
\[\mathfrak{Z}_{\G,\Zbar[1/\N_{\G}],[\M,\pi]_{\G}}\rightarrow
  \End_{\Zbar[1/\N_{\G}][\G]}(\ind_{\U}^{\G}(\psi_{\Zbar[1/\N_{\G}]})_{[\M,\pi]_{\G}})\]  
is an isomorphism.  
\end{theorem}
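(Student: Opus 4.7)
The plan is to reduce the general $\psi$-generic case to the cuspidal case established in the preceding Corollary, adapting the strategy of Bushnell--Henniart~\cite{BHGen} to the integral setting in banal characteristic. Write $\R = \Zbar[1/\N_\G]$, fix a representative $(\M,\pi)$ of the inertial class, and set $Q_\G := \ind_{\U}^{\G}(\psi_\R)_{[\M,\pi]_\G}$ and $Q_\M := \ind_{\U_\M}^{\M}(\psi_{\M,\R})_{[\M,\pi]_\M}$. Note that the $\W_{(\M,\pi)}$-translates $[\M,\pi^w]_\M$ all lie over the single $\G$-inertial class $[\M,\pi]_\G$.

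First I would establish a natural isomorphism $i^{\G}_{\M,\P}(Q_\M) \xrightarrow{\sim} Q_\G$ of $\R[\G]$-modules. By Proposition~\ref{pararestGGR} and second adjointness (Theorem~\ref{secadj}), there is a canonical morphism $i^{\G}_{\M,\P}(\ind_{\U_\M}^{\M}(\psi_{\M,\R})) \to \ind_{\U}^{\G}(\psi_\R)$; projecting onto the $[\M,\pi]_\G$-Bernstein component yields a map $i^{\G}_{\M,\P}(Q_\M) \to Q_\G$, using that parabolic induction sends $\Rep_\R(\M)_{[\M,\pi]_\M}$ into $\Rep_\R(\G)_{[\M,\pi]_\G}$. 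That this map is an isomorphism follows from its generic-fibre version due to Bushnell--Henniart, together with the fact that both sides are finitely generated projective $\R[\G]$-modules in the same Bernstein block (using the preceding Lemma on the $\M$-side and finiteness of parabolic induction from~\cite{DHKMfiniteness}) and that everything is torsion free over $\R$.

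Next, second adjointness gives
\[\End_{\R[\G]}(Q_\G) \simeq \End_{\R[\G]}(i^{\G}_{\M,\P}(Q_\M)) \simeq \Hom_{\R[\M]}\bigl(Q_\M,\, \delta_\P^{1/2}\otimes r^\G_{\M,\P^\circ}(i^{\G}_{\M,\P}(Q_\M))\bigr).\]
The Geometric Lemma provides a filtration of the right-hand $\M$-module indexed by double cosets $w \in \W_\M\backslash\W/\W_\M$, whose graded pieces are $w$-twists of $Q_\M$. Only cosets with $\M^w=\M$ and $[\M,\pi^w]_\M=[\M,\pi]_\M$---that is, elements of $\W_{(\M,\pi)}$---contribute non-trivially, since the other twists of $Q_\M$ land in distinct $\M$-Bernstein components and $\Hom_{\R[\M]}$ vanishes between them. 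Combined with $\End_{\R[\M]}(Q_\M) \simeq \R[\M/\M^\circ]^{\H_{(\M,\pi)}}$ from the preceding Corollary, this should yield
\[\End_{\R[\G]}(Q_\G) \simeq (\R[\M/\M^\circ]^{\H_{(\M,\pi)}})^{\W_{(\M,\pi)}}.\]
Theorem~\ref{banaldecomptheorem} identifies $\mathfrak{Z}_{\G,\R,[\M,\pi]_\G}$ with the same ring; tracing through both identifications---each ultimately coming from the action of the $\M$-level Bernstein centre by unramified twists on the cuspidal support---shows that the canonical map of the theorem realises the identity on this common description.

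The main obstacle will be the integral orthogonality of distinct $\W$-translates of $Q_\M$ in the Geometric Lemma argument. Over $\Qbar$ the Bernstein decomposition immediately splits off the unwanted terms, but over $\R$ one must verify that reduction modulo \emph{every} banal prime preserves the distinctness of $[\M,\pi^w]_\M$ from $[\M,\pi^{w'}]_\M$ whenever $w w'^{-1}\notin \W_{(\M,\pi)}$. This is precisely where the hypothesis $\ell \nmid \N_\G$ is essential: by Propositions~\ref{localpropositionlifting} and~\ref{mainliftingreductionprop}, $\W$-orbits of cuspidal inertial classes cannot collide modulo banal primes, so projection onto the single $\G$-block really does pick out exactly the $\W_{(\M,\pi)}$-invariants. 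Injectivity of the canonical map is automatic from flatness and the known isomorphism over $\Qbar$, so the substantive content is the surjectivity obtained from the above explicit computation.
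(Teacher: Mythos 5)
Your first step---the claimed isomorphism $i^{\G}_{\M,\P}(Q_\M)\xrightarrow{\sim} Q_\G$ between the parabolic induction of the $[\M,\pi]_\M$-part of the Gelfand--Graev representation of $\M$ and the $[\M,\pi]_\G$-part of the Gelfand--Graev representation of $\G$---is false, already over $\mathbb{C}$, and there is no ``generic-fibre version due to Bushnell--Henniart'' to invoke. Take $\G=\GL_2(\F)$, $\M=\T$ the diagonal torus and $\pi$ the trivial character: then $i^{\G}_{\T,\B}(Q_\T)$ is the universal unramified principal series, a progenerator of the block whose endomorphism algebra is the noncommutative universal Hecke algebra (generically of rank $|\W_{(\T,1)}|=2$ over $\mathbb{C}[\T/\T^\circ]$) and which admits the (non-generic) trivial representation as a quotient, whereas $\ind_{\U}^{\G}(\psi)_{[\T,1]_\G}$ has only generic irreducible quotients and, by \cite[4.3]{BHGen}, commutative endomorphism ring equal to the block of the Bernstein centre. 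The same confusion infects your second step: the Geometric Lemma filtration of $\End_{\R[\G]}(i^{\G}_{\M,\P}(Q_\M))$ produces, from the cosets in $\W_{(\M,\pi)}$, a module of generic rank $|\W_{(\M,\pi)}|$ over $\End_{\R[\M]}(Q_\M)\simeq \R[\M/\M^\circ]^{\H_{(\M,\pi)}}$; its \emph{centre}, not the algebra itself, is $(\R[\M/\M^\circ]^{\H_{(\M,\pi)}})^{\W_{(\M,\pi)}}$, so even granting the filtration analysis you would not land on the invariant ring. Your final paragraph also worries about the wrong thing: distinctness of $\W$-translates of cuspidal inertial classes over $\R$ is already built into the block decomposition of Theorem \ref{banaldecomptheorem} and is not where the difficulty lies.

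The paper's proof uses parabolic \emph{restriction}, never an identification of $Q_\G$ with an induced module. Functoriality of $r^{\G}_{\M,\P^\circ}$ together with Proposition \ref{pararestGGR} and projection to the $[\M,\pi]_\M$-block gives a map $\End_{\R[\G]}(Q_\G)\rightarrow \End_{\R[\M]}(Q_\M)$ fitting in a commutative square with the two canonical maps from $\mathfrak{Z}_{\G,\R,[\M,\pi]_\G}$ and $\mathfrak{Z}_{\M,\R,[\M,\pi]_\M}$; this map is injective because it is injective over $\Qbar$ by \cite[9.3 Lemma 1]{BHGen} and both rings are torsion free, and the bottom map is an isomorphism by the cuspidal corollary. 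Since $\R[\M/\M^\circ]^{\H_{(\M,\pi)}}$ is finite over $\mathfrak{Z}_{\G,\R,[\M,\pi]_\G}\simeq(\R[\M/\M^\circ]^{\H_{(\M,\pi)}})^{\W_{(\M,\pi)}}$, the target of the canonical map is integral over its source; as the source is a normal domain, the map is injective, and it is an isomorphism after $\otimes\,\Qbar$ by \cite[4.3]{BHGen}, it is an isomorphism. The parts of your plan that survive are exactly the ones shared with this argument (injectivity from torsion-freeness plus the rational isomorphism, and the use of Proposition \ref{pararestGGR} to pass to $\M$); the route you propose for surjectivity does not work and should be replaced by the integrality-plus-normality argument.
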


\begin{proof}
The map becomes an isomorphism after extending scalars to~$\Qbar$ by \cite[4.3]{BHGen}.
As both rings are~torsion free, it follows that the map is injective. Since the source
$\mathfrak{Z}_{\G,\Zbar[1/\N_{\G}],[\M,\pi]_{\G}}$ is a normal domain, it thus suffices to
prove that the map is integral.

By functoriality of parabolic restriction, we have a commutative diagram:
\[\begin{tikzcd}[row sep=4ex,column sep=2ex]
\mathfrak{Z}_{\G,\Zbar[1/\N_{\G}],[\M,\pi]_{\G}}\arrow[r] \arrow[d] & \End_{\Zbar[1/\N_{\G}][\G]}(\ind_{\U}^{\G}(\psi_{\Zbar[1/\N_{\G}]})_{[\M,\pi]_{\G}})\arrow[d] \\
\mathfrak{Z}_{\M,\Zbar[1/\N_{\G}],[\M,\pi]_{\M}}\arrow[r,"\sim"]&  \End_{\Zbar[1/\N_{\G}][\M]}(\ind_{\U}^{\M}(\psi_{\Zbar[1/\N_{\G}],|\U_{\M}})_{[\M,\pi]_{\M}})
\end{tikzcd}\]
where the right vertical map follows from Proposition \ref{pararestGGR} and projection
onto the~$[\M,\pi]_{\M}$-block, the lower horizontal map is an isomorphism by the previous
corollary, and the left vertical map is the canonical inclusion.  Moreover, the right
vertical map is injective as it is injective over~$\Qbar$ by \cite[9.3 Lemma 1]{BHGen} and
both rings are torsion free.  Hence,
as~$\overline{\mathbb{Z}}[1/\N_\G][\M/\M^\circ]^{\H_{[\M,\pi]_\M}}$ is finite over the
 ring~$\mathfrak{Z}_{\G,\Zbar[1/\N_{\G}],[\M,\pi]_{\G}}$, the map of the theorem 
 is integral.
\end{proof}

 \section{Langlands parameters and the local Langlands correspondence}\label{LLcompletesection}

\subsection{The~$L$-group}
Let~$\W_\F$ denote the \emph{Weil group of~$\F$}, the topological group obtained from the absolute Galois group of~$\F$ by discretizing its unramified quotient.  Let~$\I_\F$ and~$\P_\F$ denote the inertia and wild inertia subgroups of~$\W_\F$.

Let~$\widehat{\G}$ denote the \emph{dual group} to the underlying algebraic group of~$\G$ considered as a~$\mathbb{Z}[1/p]$-group scheme.  We fix~$\LG=\widehat{\G}\rtimes \W$ a finite form of the \emph{Langlands dual group}, considered as a (possibly non-connected)~$\mathbb{Z}[1/p]$-group scheme, where~$\W$ is a finite quotient of the Weil group~$\W_\F$ through which the action of~$\W_\F$ on~$\widehat{{\G}}$ factors. 

\subsection{Langlands parameters}

Suppose~$\Kbar$ is an algebraically closed field of characteristic zero.  A morphism~$\phi:\W_\F\rightarrow \LG(\Kbar)$ is called an \emph{$L$-homomorphism} if the composition~$\W_\F\xrightarrow{\phi}\LG(\Kbar)\rightarrow\W$ is the natural projection.

\begin{definition}
\begin{enumerate}
\item A \emph{semisimple parameter} for~$\G$ over~$\Kbar$ is an~$L$-homomorphism~$\phi:\W_\F\rightarrow \LG(\Kbar)$ with open kernel whose image consists of semisimple elements in~$\LG(\Kbar)$.  Write~$\Phi_{ss,\Kbar}(\G)$ for the set of~$\widehat{\G}(\Kbar)$-conjugacy classes of semisimple parameters for~$\G$ over~$\Kbar$.  
\item A \emph{Weil-Deligne parameter} for~$\G$ over~$\Kbar$ is a pair~$(r,N)$ where~$r:\W_\F\rightarrow \LG(\Kbar)$ is a semisimple parameter, and~$N\in \mathrm{Lie}(\widehat{\G}(\Kbar))$ is a nilpotent element satisfying~$\mathrm{Ad}(r)(w)N=|w|N$.  Write~$\Phi_{\mathrm{WD},\Kbar}(\G)$ for the set of~$\widehat{\G}(\Kbar)$-conjugacy classes of Weil-Deligne parameters over~$\Kbar$ for~$\G$.
\item An~\emph{$\SL_2$-parameter} for~$\G$ over~$\Kbar$ is a morphism~$\psi:\W_\F\times\SL_2(\Kbar)\rightarrow \LG(\Kbar)$ such that~$\psi|_{\W_\F}$ is a semisimple parameter, and~$\psi|_{\SL_2(\Kbar)}$ is an algebraic morphism.  Write~$\Phi_{\SL_2,\Kbar}(\G)$ for the set of~$\widehat{\G}$-conjugacy classes of $\SL_2$-parameters over~$\Kbar$ for~$\G$.
\item 
An~$\ell$-adic \emph{Langlands parameter} for~$\G$ (or a Langlands parameter for~$\G$ over~$\Ql$), for some~$\ell\neq p$, is an~$\ell$-adically continuous Frobenius-semisimple~$L$-homomorphism~$\phi_{\ell}:\W_\F\rightarrow \LG(\Ql)$.  Write~$\Phi_{\ell\text{-adic}}(\G)$ for the set of~$\widehat{\G}(\Ql)$-conjugacy classes of Langlands parameters over~$\Ql$ for~$\G$.
\end{enumerate}
\end{definition}

\begin{remark} \label{rk:Frob_ss}
 In (4), \emph{Frobenius-semisimple} means that there exists a lift of
 Frobenius $w$ in $\W_{\F}$ such that $\phi_{\ell}(w)$ is a
 semisimple element  in $\LG(\Ql)$. This is equivalent to asking that \emph{for any} lift
 of Frobenius $w$, and more generally for any $w\in \W_{\F}\setminus \I_{\F}$, the element
 $\phi_{\ell}(w)$ is semisimple. More generally, as in \cite[8.5]{Deligne}, if
 $\phi_{\ell}:\W_\F\rightarrow \LG(\Ql)$ is 
 any $\ell$-adically continuous $L$-homomorphism, then there are :
 \begin{itemize}
 \item a unique Frobenius-semisimple $\ell$-adically continuous  $\phi_{\ell,\Fr-\rm ss}:\W_\F\rightarrow \LG(\Ql)$,
   and 
 \item a unique unipotent element $u\in \LG(\Ql)$,
 \end{itemize}
 such that, for any $w\in \W_{\F}\setminus \I_{\F}$, the Jordan decomposition of $\phi_{\ell}(w)$ is
$\phi_{\ell,\Fr-\rm ss}(w)u^{\nu(w)}$ where~$\nu : \W_{\F}/\I_{\F}\longrightarrow \mathbb{Z}$ takes
a lift of Frobenius to $1$. In particular, $u$ commutes with $\phi_{\ell}(\W_{\F})$.
\end{remark}

\begin{remark}
  In (1), note that $\phi(\I_{\F})$ consists of finite order, hence semisimple, elements. So
  the condition that $\phi(\W_{\F})$ consists of semi-simple elements is equivalent to the
  Frobenius-semisimplicity condition of the previous remark. As explained in \cite[Remark
  6.9 ii)]{DHKM}, this is also equivalent to asking that 
  the associated $1$-cocycle~$\phi^\circ:\W_\F\rightarrow \widehat{\G}(\Kbar)$ has
Zariski-closed orbit under~$\widehat{\G}(\Kbar)$ in~$\Z^{1}(\W_{\F},\widehat \G_{\Kbar})$.
\end{remark}

\begin{remark} \label{rk:dictionnary}
(1)   After fixing  choices of a Frobenius lift and of a generator of $\I_{\F}/\P_{\F}$,
  Grothendieck's monodromy theorem gives a  bijection between the set of
  Weil-Deligne representations over~$\Ql$ and the set of Langlands parameters
  over~$\Ql$. This bijection may depend on choices but it induces a canonical
  bijection~$\Phi_{\ell\text{-adic}}(\G)\rightarrow \Phi_{\mathrm{WD},\Ql}(\G)$ on
  conjugacy classes, see \cite[Lemme 8.4.3]{Deligne}. Note that this construction works without the Frobenius-semisimple
  hypothesis and ``commutes'' with Frobenius semisimplification.
  
(2)  On the other hand, there is an obvious map that associates a Weil-Deligne parameter to an~$\SL_{2}$-parameter,
$\psi\mapsto (r,N)$ defined
  by~$r(w)=\psi\left( w,\left(\begin{smallmatrix} q
        &0\\0&q^{-1}\end{smallmatrix}\right)^{\nu(w)/2}\right)$
  and~$N=d\psi\left(\begin{smallmatrix}0&1\\0&0\end{smallmatrix}\right)$.
  Thanks to the Jacobson-Morozov theorem, this map induces a
  bijection~$\Phi_{\SL_2,\Kbar}(\G)\rightarrow \Phi_{\mathrm{WD},\Kbar}(\G)$ between conjugacy
  classes. Note that this map again makes sense if we drop the Frobenius semisimplicity
  condition on both sides, but in general it fails to be injective or surjective on conjugacy
  classes (see \cite[Example 3.5]{MIY}). 
\end{remark}
 \subsection{Moduli of Langlands parameters}
Fix an arithmetic Frobenius~$\Fr$ in~$\W_\F$, and a progenerator~$s$ of the tame inertia group~$\I_\F/\P_\F$.  Denote by~$\W_\F^0$ the topological group obtained from~$\W_\F$ by \emph{discretization} of~$\W_\F/\P_\F$ with respect to these choices:~$\W_\F^0$ is the preimage under the quotient map~$\W_\F\rightarrow\W_\F/\P_\F$ of~$\langle\Fr,s\rangle$ and is endowed with the topology that extends the natural (profinite) topology on~$\P_\F$ and induces the discrete topology on~$\langle\Fr,s\rangle$.

%
Fix an exhaustive filtration~$\P_\F^e$ of~$\P_\F$ by open normal subgroups of~$\W_\F$.  And consider the functor
\begin{align*}
\underline{\Z}^1(\W_\F^0,\widehat{\G}):\mathbb{Z}[1/p]\text{-algebras}&\rightarrow \mathcal{S}et;\\
\R&\mapsto \Z^1(\W_\F^0,\widehat{\G}(\R));
\end{align*}
where~$\Z^1(\W_\F^0,\widehat{\G}(\R))$ denotes the set of $1$-cocycles~$\W_\F^0\rightarrow
\widehat{\G}(\R)$ continuous with respect to the discrete topology on $\widehat{\G}(\R)$.  Then~$\underline{\Z}^1(\W_\F^0,\widehat{\G})=\lim_{\rightarrow}\underline{\Z}^1(\W_\F^0/\P_\F^e,\widehat{\G})$ is an ind-affine scheme (where~$\underline{\Z}^1(\W_\F^0/\P_\F^e,\widehat{\G})$ is defined analagously).  Let~$\mathfrak{R}_{\LG}^e$ denote the ring of functions of the affine group scheme~$\underline{\Z}^1(\W_\F^0/\P_\F^e,\widehat{\G})$, and~$\mathfrak{R}_{\LG}=\lim_{\leftarrow}\mathfrak{R}_{\LG}^e$.

\begin{definition}
Let~$\R$ be a~$\mathbb{Z}[1/p]$-algebra.  A morphism~$\phi:\W_\F\rightarrow \LG(\R)$ is
\emph{$\ell$-adically continuous} if there exist~$f:\R'\rightarrow \R$
with~$\R'$~$\ell$-adically separated, and~$\phi':\W_\F\rightarrow \LG(\R')$ satisfying: 
\begin{enumerate}
\item $\LG(f)\circ \phi'=\phi$;
\item $\LG(\pi_{n})\circ\phi':\W_\F\rightarrow \LG(\R'/\ell^n\R')$ is continuous for
  all~$n$, where $\pi_{n}$ denotes the projection $\R'\rightarrow \R'/\ell^{n}\R'$.
\end{enumerate}
\end{definition}

\begin{theorem}[{\cite[Theorems 4.1, 4.18, Corollary 4.2]{DHKM}}]\label{Modulitheorem}
\begin{enumerate}
\item The scheme $\underline{\Z}^1(\W_\F^0,\widehat{\G})$  is a reduced, flat, locally complete intersection  of relative dimension~$\mathrm{dim}(\widehat{\G})$ over~$\mathbb{Z}[1/p]$.
\item\label{Moduliuniversal} For all~$\ell\neq p$, $\mathfrak{R}_{\LG}^{e}$ is~$\ell$-adically separated and the universal cocycle~$\phi^e_{\univ}:\W_\F^{0}\rightarrow \widehat{\G}(\mathfrak{R}_{\LG}^{e})$
extends uniquely to an~$\ell$-adically continuous cocycle~$\phi_{\univ,\ell}^{e}:\W_\F\rightarrow \widehat{\G}(\mathfrak{R}_{\LG}^{e}\otimes \mathbb{Z}_\ell)$ which is universal for all~$\ell$-adically continuous~cocycles which are trivial on~$\P_\F^e$.
\item The GIT quotient~$\underline{\Z}^1(\W_\F^0,\widehat{\G})\sslash \widehat{\G}$ is, up to canonical isomorphism, independent of the choice of~$\Fr,s$.
\end{enumerate}
\end{theorem}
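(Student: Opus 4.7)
The plan is to prove the three assertions separately. Write $\Gamma_e = \W_\F^0/\P_\F^e$. For (1), the scheme $\underline{\Z}^1(\Gamma_e, \widehat{\G})$ is cut out in $\widehat{\G}^r$ (for $r$ the number of generators of a finite presentation of $\Gamma_e$) by the scheme-theoretic relations arising from the cocycle condition. The key structural input is that $\Gamma_e$ is an extension of the discrete two-generator group $\langle \Fr, s \mid \Fr s \Fr^{-1} = s^q \rangle$ by the finite $p$-group $\P_\F/\P_\F^e$. Deformation theory identifies the Zariski tangent space at a cocycle $\phi$ with $\Z^1(\Gamma_e, \mathrm{Lie}(\widehat{\G}))$ and the obstruction space with $\H^2(\Gamma_e, \mathrm{Lie}(\widehat{\G}))$. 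Since $p$ is invertible on the base, the Hochschild--Serre spectral sequence for the above extension degenerates (higher cohomology of a pro-$p$ group with prime-to-$p$ coefficients vanishes), reducing the computation to the tame quotient, a one-relator group with easily computable Euler characteristic. This yields relative dimension $\dim\widehat{\G}$. Flatness over $\mathrm{Spec}(\mathbb{Z}[1/p])$ then follows by miracle flatness, the fibers being equidimensional Cohen--Macaulay over the regular one-dimensional base. Reducedness is obtained by exhibiting the generic fiber as reduced via a Jacobian criterion on the cocycle equations and propagating through the closed fibers using flatness together with a vanishing-of-nilpotents argument in positive characteristic.

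For (2), $\ell$-adic separation of $\mathfrak{R}_{\LG}^e$ follows from Noetherianity and flatness over $\mathbb{Z}[1/p]$: being a finitely generated Jacobson $\mathbb{Z}[1/p]$-algebra that is $\ell$-torsion free, it has $\bigcap_n \ell^n \mathfrak{R}_{\LG}^e = 0$ by Krull's intersection theorem applied to maximal ideals containing $\ell$. For the extension of the cocycle, I base change to $\mathfrak{R}_{\LG}^e \otimes \mathbb{Z}_\ell$ and exploit the relation $\Fr s \Fr^{-1} = s^q$: it forces the image of the $\ell'$-components of $\I_\F/\P_\F$ (for $\ell' \neq \ell$) to have bounded order modulo every power of $\ell$, hence by $\ell$-adic continuity to factor through a finite quotient that can be read off from the already-defined values at $s$. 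The cocycle then extends by continuity to the $\ell$-part $\mathbb{Z}_\ell$ of $\I_\F/\P_\F$, and therefore to all of $\W_\F$. Uniqueness comes from density of $\langle s\rangle$ in $\I_\F/\P_\F$ together with the $\ell$-adic separation just established, and universality is a direct verification against the universal property of the discretized moduli.

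For (3), two admissible choices $(\Fr,s)$ and $(\Fr',s')$ differ by elements in $\I_\F$ and $\P_\F$ together with a Frobenius exponent. I construct an explicit comparison morphism between the corresponding cocycle schemes by translating cocycles, $\phi\mapsto \phi'$, where $\phi'$ is defined by interpreting $\phi$ at the new generators. This map is manifestly $\widehat{\G}$-equivariant. Although it depends on the choice of intermediate data, the ambiguity is inner in $\widehat{\G}$ and hence induces a canonical isomorphism on the GIT quotients. The composition with the analogous reverse map is the identity, completing the argument.

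The main obstacle will be the LCI count in (1): verifying that wild inertia contributes trivially to the expected dimension formula uniformly in $e$, and that the resulting scheme is locally a complete intersection rather than merely of the right dimension on the nose. This requires not only the cohomological Euler characteristic computation but also a smoothness-lifting argument at the level of obstructions, and one must further check that the colimit over $e$ preserves the LCI property.
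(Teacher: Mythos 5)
First, note that this statement is not proved in the present paper at all: it is imported verbatim from the companion work \cite{DHKM} (Theorems 4.1, 4.18 and Corollary 4.2 there), so there is no in-paper argument to compare with; what follows measures your sketch against the argument actually carried out in that reference. Your proposal has genuine gaps. For (1), the tangent/obstruction computation with Hochschild--Serre only gives a \emph{lower} bound on dimensions (a presentation by $d-\dim\widehat{\G}$ equations in a smooth ambient forces every component to have dimension at least $\dim\widehat{\G}$); to get local complete intersection you need the matching \emph{upper} bound on the dimension of every geometric fibre in every residue characteristic $\ell\neq p$, and the group $\langle \Fr,s\mid \Fr s\Fr^{-1}=s^q\rangle$ has cohomological dimension $2$, so the obstruction space does not vanish and smoothness-lifting is unavailable. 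The actual proof proceeds by stratifying according to the (finitely many, rigid because $p$ is invertible) conjugacy classes of the restriction to the wild part and then analysing, inside the centralizer group scheme, the scheme of pairs $(\phi(\Fr),\phi(s))$ with $\phi(s)$ an ``admissible'' finite-order element satisfying the $s\mapsto s^q$ relation; the dimension count comes from explicit centralizer/orbit dimensions, not from an Euler characteristic. Your reducedness step is also wrong-headed: the closed fibres at non-banal primes are genuinely \emph{non-reduced} (this is precisely the definition of a non-$\LG$-banal prime used later in this paper), so no ``vanishing-of-nilpotents argument in positive characteristic'' can propagate reducedness fibrewise. Reducedness of the total space is instead deduced from Serre's criterion: LCI gives Cohen--Macaulay hence $(S_1)$, and flatness forces every irreducible component to meet the characteristic-zero fibre, where generic smoothness supplies $(R_0)$.

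For (3), your comparison map does not exist as described: a second admissible pair $(\Fr',s')$ generally does \emph{not} lie in the discrete group $\W_\F^0$ generated by $(\Fr,s)$ (for instance $s'=s^u$ with $u\in\widehat{\mathbb{Z}}^{(p')}{}^{\times}$ not an integer), so one cannot ``interpret $\phi$ at the new generators'' to define $\phi\mapsto\phi'$ on cocycle schemes. This is exactly why the independence statement is asserted only for the GIT quotient and why its proof in \cite{DHKM} routes through part (2): after base change to $\mathbb{Z}_\ell$ the universal cocycle extends $\ell$-adically continuously to all of $\W_\F$ (where both discretizations can be restricted), giving canonical identifications over $\mathbb{Z}_\ell$ for each $\ell\neq p$, which are then glued with the generic fibre to descend to $\mathbb{Z}[1/p]$. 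Your sketch of (2) is closest to the cited argument (separatedness from Noetherianity and $\ell$-torsion-freeness, extension via the decomposition of $\phi(s)$ into its $\ell$-part and prime-to-$\ell$ part constrained by the relation $\Fr s\Fr^{-1}=s^q$), but as it stands the bridge from (2) to (3) — the only correct route to canonicity — is missing from your plan.
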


\subsection{Central characters}
We write~$\Pi_{\Kbar}(\G)$ for the set of isomorphism classes of irreducible (smooth)~$\Kbar[\G]$-modules.  Let~$\mathbf{G}_{\rad}$ denote the greatest central torus of~$\mathbf{G}$ and~$\G_{\rad}=\mathbf{G}_{\rad}(\F)$, then~$\G_{\rad}\rightarrow \G$ induces a surjective morphism\[\det:\LG(\Kbar)\rightarrow\LG_{\rad}(\Kbar),\] generalizing the determinant map when~$\mathbf{G}=\mathbf{GL}_n$, whence by composing with local Langlands for tori (cf.~\cite[9.2]{Borel}) we obtain a map~$\Phi(\G)\rightarrow\Pi(\G_\rad)$.  From this, we obtain a map~$\omega_{-}:\Phi(\G)\rightarrow\Pi(\Z_\G)$, using a construction of Langlands \cite[p17-18]{Langlands} in cases where~$\G_{\rad}\subsetneq \Z_\G$.

\subsection{The local Langlands correspondence}\label{LLcorresp}
In this setting it is expected that we have a \emph{local Langlands correspondence}.  We
include here one formulation together with some of the expected properties we will use
later (we use $\SL_2$-parameters with~$\Kbar=\mathbb{C}$  for convenience in citing the
literature in the next section).

We call an~$\SL_2$-parameter~$\phi$ for~$\G$ over~$\mathbb{C}$ \emph{tempered} if the projection of~$\phi(\W_\F)$ to~$\widehat{\G}(\mathbb{C})$ is bounded, and say that a tempered parameter~$\phi$ is \emph{discrete} if~$\C_{\widehat{\G}}(\phi)/\Z(\widehat{\G})^{\W_{\F}}$ is finite.

Let~$\M$ denote a Levi subgroup of~$\G$, then the inclusion~$\M\hookrightarrow\G$ induces a unique up to~$\widehat{\G}(\mathbb{C})$-conjugacy embedding
\[\iota_{\M,\G}:\LM(\mathbb{C})\hookrightarrow \LG(\mathbb{C}).\]

Let~$\M$ be a standard Levi subgroup of~$\G$, and~$\P$ the standard
parabolic subgroup of~$\G$ with Levi factor~$\M$.   Denote
by~$\mathfrak{A}_{\M}^*$ the dual of the real Lie
algebra~$\mathfrak{A}_\M$ of the split component of the centre of~$\M$
and by~$(\mathfrak{A}_{\M}^*)^+$ the positive Weyl chamber
of~$\mathfrak{A}_{\M}^*$ with respect to the standard parabolic~$\P$.
Define a homomorphism~$\H_{\M}:\M\rightarrow \mathfrak{A}_\M$ by
demanding that~$|\chi(m)|=q^{-\langle \chi, \H_{\M}(m)\rangle}$ for
all rational characters~$\chi$ of~$\M$ and~$m\in\M$, where
here~$\langle~,~\rangle:\mathfrak{A}_\M^*\times\mathfrak{A}_\M\rightarrow
\mathbb{R}$ denotes the natural pairing.  Given an
element~$\lambda\in\mathfrak{A}_{\M}^*$ we have an unramified
character~$\chi_{\lambda}$ of~$\M$ defined by~$\chi_{\lambda}(m)=q^{-\langle \lambda,\H_{\M}(m)\rangle}$.

\begin{conjecture}[The local Langlands correspondence]\label{LLcorresp}
There is a \emph{natural} map
\[\mathcal{LL}_\G:\Pi_{\mathbb{C}}(\G)\rightarrow \Phi_{\SL_2,\mathbb{C}}(\G),\]
satisfying a list of desiderata (cf.~\cite{Borel}), including:
\begin{enumerate}
\item (\emph{Relevance}) $\mathcal{LL}_\G$ has finite fibres called~\emph{$L$-packets}, and the image of~$\mathcal{LL}_\G$ consists precisely of the \emph{relevant} parameters (cf.~\cite[3.3, 8.2]{Borel} for the definition of \emph{relevance}, and note that if~$\G$ is~$\F$-quasi-split then all parameters are relevant).
\item\label{LLmain2} (\emph{Preservation of discrete/tempered-ness}) $\pi$ is discrete series (resp.~tempered) if and only if~$\mathcal{LL}_\G(\pi)$ is discrete (resp.~tempered).
\item\label{LLmain3} (\emph{Twisting by a character}) suppose~$\chi:\G\rightarrow\mathbb{C}^\times$ is a character of~$\G$ and~$z_{\chi}:\W_\F\rightarrow \Z_{\widehat{\G}}(\mathbb{C})$ is a choice of $1$-cocycle representing the class in~$\H^1(\W_\F,\Z_{\widehat{\G}}(\mathbb{C}))$ associated to~$\chi$ by the local Langlands correspondence for characters of~$\G$ (\cite[10.2]{Borel}), then for~$\rho\in\Pi_{\mathbb{C}}(\G)$
\[\mathcal{LL}_\G(\rho\otimes \chi)^\circ=\mathcal{LL}_\G(\rho)^{\circ}\cdot z_\chi,\quad\text{in~$\H^1(\W_\F,\widehat{\G}(\mathbb{C}))$.}\]
\item\label{LLmain4} (\emph{Central characters}) For all~$\rho\in\Pi_{\mathbb{C}}(\G)$,~$\omega_\rho=\omega_{\mathcal{LL}_\G(\rho)}$.
\item (\emph{Langlands classification}) For a conjugacy class of parameter in~$\Phi_{\SL_2,\mathbb{C}}(\G)$, there is a representative~$\phi$, a standard Levi subgroup~$\M$ of~$\G$, a tempered~$\SL_2$-parameter~$\phi_\M$ for~$\M$, and $\lambda_0\in(\mathfrak{A}_{\M}^*)^+$, 
such that~$\phi=\iota_{\M,\G}\circ(\phi_{\M})_{\lambda_0}$ where~$(\phi_{\M})_{\lambda_0}=\phi_{\M}\cdot z_{\chi_{\lambda_0}}$.  Then the~$L$-packet~$\Pi_{\phi}$ of~$\phi$ is given by
\[\Pi_{\phi}=\{\J_{\P}^{\G}(\pi_{\lambda_0}):\pi\in\Pi_{\phi_{\M}}\}\]
where~$\P$ is the standard parabolic of~$\G$ with Levi factor~$\M$,~$\pi_{\lambda_0}=\chi_{\lambda_0}\pi$,  and~$\J_{\P}^{\G}(\pi_{\lambda_0})$ is the unique quotient -- the ``Langlands quotient'' -- of~$i_{\P}^{\G}(\pi_{\lambda_0})$.
\end{enumerate} 

\end{conjecture}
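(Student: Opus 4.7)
The statement is a conjecture rather than a theorem, so any ``proof'' proposal really means sketching how one constructs $\mathcal{LL}_\G$ and verifies properties (1)--(5); this is known for $\GL_n(\F)$ and for the classical $p$-adic groups treated in the paper, and my plan is to proceed in those cases, since as the authors state in the introduction they take the local Langlands correspondence as input.

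For the \emph{construction}, in the $\GL_n(\F)$ case I would invoke either the global method of Harris--Taylor via the $\ell$-adic cohomology of Shimura varieties, Henniart's purely local characterization via $\varepsilon$-factors of pairs, or Scholze's local construction on the Lubin--Tate tower, to produce $\mathcal{LL}_{\GL_n}$ on all of $\Pi_{\mathbb{C}}(\GL_n(\F))$. For quasi-split classical groups (symplectic, unitary, odd special orthogonal), I would follow the endoscopic method of Arthur (and Mok for unitary groups): fix a standard $L$-embedding $\iota : \LG \hookrightarrow {}^L\GL_N$, transfer tempered automorphic representations of $\GL_N(\mathbb{A})$ to global packets on $\G(\mathbb{A})$ via the stabilisation of the twisted trace formula, and then extract the local correspondence at each place from global-local compatibility (using carefully chosen globalizations) and local character identities coming from the local trace formula. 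Inner forms would be handled by inner twist transfer (as in Kaletha--Minguez--Shin--White for unitary groups).

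The \emph{verification of the desiderata} would then proceed as follows. Finiteness of fibres and the parametrisation of $L$-packets by characters of $\pi_0(\C_{\widehat{\G}}(\phi))$ is part of Arthur's classification, and relevance for inner forms is built in via the Kottwitz--Shelstad formalism, giving (1). Property (2) comes from Arthur's characterization of discrete-series packets as exactly those with $\C_{\widehat{\G}}(\phi)/\Z(\widehat{\G})^{\W_{\F}}$ finite, combined with the Plancherel-theoretic identification of tempered representations with bounded Weil parameters. Properties (3) and (4) I would deduce from the explicit description of $\H^1(\W_\F,\Z(\widehat{\G}))$ via local Langlands for tori \cite[10.2]{Borel}, together with compatibility of the endoscopic construction with central twisting (which is hard-wired into the endoscopic transfer factors). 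Property (5) is Langlands' original tempered-to-general reduction: once tempered packets are constructed, any irreducible $\pi$ is uniquely realised as a Langlands quotient of a standard module on a proper Levi, and matching parameters through $\iota_{\M,\G}$ together with unramified twisting then produces the general classification.

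The hard part is the construction of tempered $L$-packets, which rests essentially on the stabilisation of the (twisted) trace formula and the underlying fundamental lemma and transfer; this is precisely what restricts the known cases to $\GL_n$ and classical groups, and explains why the paper takes $\mathcal{LL}_\G$ as an input. A subsidiary but crucial point is \emph{uniqueness}: once the tempered case is fixed, properties (3)--(5) together with compatibility with parabolic induction force the rest of the correspondence, which is why in Section~\ref{classical} the authors only need to verify a few additional compatibilities (parabolic induction, field automorphisms, integrality, generic packets) in order to pin down the semisimple correspondence used throughout the main theorem.
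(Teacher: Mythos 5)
This statement is a conjecture, so the paper contains no proof of it to compare against: the authors take $\mathcal{LL}_\G$ as an input and, for the classical cases, simply cite the same literature you invoke (Harris--Taylor, Henniart, Scholze for $\GL_n(\F)$; Arthur, Mok, KMSW, Chen--Zou for symplectic, odd special orthogonal and unitary groups) in Section \ref{LLforclassgroups}. Your sketch of the endoscopic construction and of how desiderata (1)--(5) are extracted from those works is consistent with how the paper uses them, so there is nothing to correct.
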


\subsection{Generic representations and~$L$-packets}

Suppose~$\G$ is~$\F$-quasi-split and fix a Whittaker datum~$(\U,\psi)$ for~$\G$.  Then it is expected that:

\begin{conjecture}\label{temperedpacketconj}
In each tempered~$L$-packet there is a unique~$\psi$-generic representation.
\end{conjecture}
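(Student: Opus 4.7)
The plan has two main parts: reduce the tempered case to the discrete series case via Langlands classification, and then handle discrete series packets through endoscopic classification.

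For the reduction, let $\phi\in\Phi_{\SL_2,\mathbb{C}}(\G)$ be tempered. By the tempered refinement of the Langlands classification, any $\pi\in\Pi_\phi$ embeds as a subrepresentation of $i_{\M,\P}^\G(\sigma)$ for some standard Levi $\M$, parabolic $\P=\M\N$, and discrete series $\sigma$ of $\M$ whose $\SL_2$-parameter $\phi_\M$ satisfies $\iota_{\M,\G}\circ\phi_\M=\phi$. Rodier's theorem on heredity of genericity under parabolic induction shows that $\pi$ is $\psi$-generic if and only if $\pi$ is the (necessarily unique) generic irreducible subquotient of $i_{\M,\P}^\G(\sigma)$ for some $\psi_\M$-generic $\sigma$ in the corresponding discrete series $L$-packet of $\M$, and such a subquotient is automatically a subrepresentation by Casselman--Shahidi theory. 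Combined with the heredity of the Whittaker datum $(\U_\M,\psi_\M)$ for standard Levis established earlier in the paper, existence and uniqueness of a $\psi$-generic member of $\Pi_\phi$ thereby reduces to the same assertion for each discrete series $L$-packet of each standard Levi appearing in the Langlands data of $\phi$.

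For the discrete case, I would invoke the endoscopic classification of Arthur--Mok for quasi-split classical groups. A discrete series $L$-packet $\Pi_{\phi_\M}$ is parameterized by the characters of the component group $S_{\phi_\M}=\C_{\widehat{\M}}(\phi_\M)/\Z(\widehat{\M})^{\W_\F}$, with a distinguished base-point labeling determined by the Whittaker-normalized transfer factors (which depend on $\psi_\M$). The assertion to establish is that the member labeled by the trivial character $\mathbf{1}$ is the unique $\psi_\M$-generic representation. Existence follows from the construction of the packets via local descent from general linear groups and the known genericity of parabolically induced Arthur-type representations from Speh modules. Uniqueness follows from the endoscopic character identities together with the fact that any two $\psi_\M$-generic members would share the same Whittaker-normalized stable character, forcing their labels in $\widehat{S_{\phi_\M}}$ to agree.

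The main obstacle is the input from endoscopic classification, which is unconditionally available only for classical groups and rests on deep trace-formula comparisons. For general quasi-split $\G$ the conjecture remains open; a proof would require either an extension of the Arthur framework or an alternative direct construction distinguishing generic discrete series via their local $\gamma$-factors (à la the Shahidi--Langlands approach through the Plancherel measure, which is the route exploited elsewhere in this paper to establish compatibilities of the semisimple correspondence). I would expect any future unconditional proof to proceed along the latter lines, leveraging the gamma-factor characterization of the Whittaker-generic member together with a semisimple converse theorem as in Proposition~\ref{semisimpleconverse}.
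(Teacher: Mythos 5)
You have tried to prove something that the paper itself only states as a conjecture: Conjecture \ref{temperedpacketconj} is formulated for an arbitrary quasi-split connected reductive group and is given no proof in the paper. Where the paper actually needs it (for quasi-split classical groups, in the proof of Proposition \ref{extendedpacketclass}), it does not argue at all but cites the literature: existence of a $\psi$-generic member in a tempered packet from \cite[Proposition 8.3.2]{Arthur} and \cite[Corollary 9.2.4]{Mok}, and uniqueness from \cite{Atobe}. So the right comparison is between your sketch and those cited results, and you correctly acknowledge at the end that outside the classical (endoscopic) setting the statement remains open; to that extent your proposal is consistent with the paper's treatment.

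There is, however, a genuine gap in the part you do argue. Your reduction from tempered to discrete series packets via Rodier-type heredity is the standard move and is fine in outline, but your uniqueness argument for discrete packets does not work as stated. The Whittaker-normalized stable character is an invariant of the whole packet, not of an individual member, so the assertion that ``any two $\psi_\M$-generic members would share the same Whittaker-normalized stable character, forcing their labels in $\widehat{S_{\phi_\M}}$ to agree'' is not a valid inference: the endoscopic character identities pin down which character of $S_{\phi_\M}$ each member carries, but by themselves they do not exclude that a member labeled by a nontrivial character is also generic. That exclusion --- uniqueness of the generic member in a tempered/discrete packet --- is a separate and nontrivial theorem (Atobe, building on earlier work of M\oe glin--Waldspurger and others), which is exactly why the paper cites it rather than deriving it from the classification. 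Likewise, your existence claim ``via local descent \ldots and genericity of parabolically induced Arthur-type representations from Speh modules'' is loose; in Arthur's and Mok's work existence of the generic member (the base point attached to the trivial character, with the Whittaker normalization of transfer factors) is itself part of the hard content of the classification, not a formal consequence one can wave at. So your proposal should be read as a plausible roadmap through the literature the paper quotes, but not as a proof of either the existence or, especially, the uniqueness statement.
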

The existence of a~$\psi$-generic representation in a tempered~$L$-packet is known as the \emph{tempered packet conjecture} and originates in {\cite[Conjecture 9.4]{Shahidi}}. Gross--Prasad and Rallis have conjectured the following criterion for the existence of a generic representation in a general~$L$-packet:

\begin{conjecture}[Gross--Prasad, Rallis]\label{GrossPrasadRallis}
The~$L$-packet of a $\SL_{2}$-parameter~$\phi$ contains a generic representation if and only if~the adjoint $L$-factor~$L(s,\mathrm{Ad}\circ\phi)$ is holomorphic at~$s=1$.
\end{conjecture}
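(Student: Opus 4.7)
The plan is to reduce to the tempered case via the Langlands classification and then appeal to Shahidi's theory of the Langlands–Shahidi $L$-functions together with Casselman's criterion and the standard module conjecture. Concretely, write a representative of the class as $\phi = \iota_{\M,\G}\circ (\phi_{\M})_{\lambda_0}$ with $\phi_{\M}$ tempered and $\lambda_0\in (\mathfrak{A}_{\M}^{*})^{+}$, and let $\P=\M\N$ be the standard parabolic. By Conjecture \ref{temperedpacketconj} (assumed as input on the tempered side), the tempered packet $\Pi_{\phi_{\M}}$ contains a unique $\psi_{\M}$-generic representation $\pi$. Then the $\G$-packet $\Pi_\phi$ contains a $\psi$-generic representation if and only if the Langlands quotient $\J_\P^{\G}(\pi_{\lambda_0})$ is $\psi$-generic, since any other member of the packet arises as a Langlands quotient from a non-generic member of $\Pi_{\phi_{\M}}$ and hence carries no Whittaker functional.

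Next I would invoke the Casselman–Shahidi lemma, which says that the unique (up to scalar) Whittaker functional on $i_{\P}^{\G}(\pi_{\lambda_0})$ descends to its Langlands quotient if and only if the full induced representation is irreducible; and the standard module conjecture (known for classical groups by the work of Mui\'c, Heiermann–Opdam and others), which in this situation asserts that $\J_\P^{\G}(\pi_{\lambda_0})$ is generic if and only if $i_{\P}^{\G}(\pi_{\lambda_0})$ is irreducible. Thus the question reduces to: irreducibility of $i_{\P}^{\G}(\pi_{\lambda_0})$ is equivalent to the holomorphy of $L(s,\mathrm{Ad}\circ\phi)$ at $s=1$.

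For the equivalence itself I would use Shahidi's decomposition of the adjoint representation restricted to $\LM$,
\[
\mathrm{Ad}_{\LG}|_{\LM} \;=\; \mathrm{Ad}_{\LM} \;\oplus\; r_{\N} \;\oplus\; r_{\N}^{*},
\]
where $r_{\N}$ is the action of $\LM$ on $\mathrm{Lie}(\widehat{\N})$, splitting further into irreducible pieces $r_{\N} = \bigoplus_{i} r_i$. Since $\phi_{\M}$ is tempered, $L(s,\mathrm{Ad}_{\LM}\circ\phi_{\M})$ is holomorphic at $s=1$ (temperedness of $\phi_{\M}$ forces the eigenvalues of Frobenius to have absolute value $1$ on the adjoint rep, ruling out a pole). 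So the pole of $L(s,\mathrm{Ad}\circ\phi)$ at $s=1$ is controlled entirely by the factors $L(s, r_i\circ (\phi_{\M})_{\lambda_0})$ and their duals. On the analytic side, the reducibility of $i_{\P}^{\G}(\pi_{\lambda_0})$ is detected by the poles of the normalized intertwining operators, and Shahidi's product formula expresses the normalizing factors precisely as products of $L(s, r_i \circ (\phi_{\M})_{\lambda_0})$. Matching the pole at $s=1$ of the Galois-theoretic adjoint $L$-factor with the pole/zero pattern of these normalizing factors yields the desired equivalence.

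The main obstacle is the last step: the identification of Shahidi's analytically defined $L$-functions (via Whittaker coefficients and intertwining operators) with the arithmetic $L$-functions $L(s,r_i\circ\phi_{\M})$ attached to the Langlands parameter. For classical $p$-adic groups this is known through the work of Henniart, Cogdell–Shahidi–Tsai, and for the groups considered here follows from the compatibility of $\mathcal{LL}_{\M}$ with $\gamma$-factors and Plancherel measures --- this is exactly the machinery already deployed elsewhere in this paper (see Section \ref{classical} and Proposition \ref{parabolicinduction:theorem}). For a general quasi-split $\G$ the identification of Langlands--Shahidi $L$-factors with arithmetic ones is still open, which is why the statement remains a conjecture outside of the classical case.
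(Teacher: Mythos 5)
You should be clear at the outset that the paper does not prove this statement: it is recorded as a conjecture, and the paper's own contribution is the geometric reformulation in Proposition \ref{Davidsmaxorbitprop} (holomorphy of $L(s,\mathrm{Ad}\circ\phi)$ at $s=1$ is equivalent to maximal monodromy, and to smoothness of the corresponding point of $\underline{\Z}^1(\W_\F^0,\widehat{\G})$), restated as Conjecture \ref{Davidsmaxorbitconj}; where the conjecture is actually needed (Proposition \ref{extendedpacketclass}, quasi-split classical groups) the paper simply invokes \cite[Proposition B.1]{GanIchino16}. Your sketch is essentially the argument of that appendix: reduce to tempered inducing data via the Langlands classification, use Conjecture \ref{temperedpacketconj} on the Levi, use heredity of genericity plus the standard module theorem of Heiermann--Opdam \cite{HeiermannOpdam} to convert ``$\Pi_\phi$ contains a generic member'' into ``the standard module $i_\P^{\G}(\pi_{\lambda_0})$ is irreducible'', and then translate irreducibility into holomorphy of the adjoint $L$-factor at $s=1$ using Shahidi's decomposition $\mathrm{Ad}_{\LG}|_{\LM}=\mathrm{Ad}_{\LM}\oplus r_{\N}\oplus r_{\N}^{*}$ and the identification of Langlands--Shahidi data with Galois-side $L$-functions. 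So your route is the standard one and, like the paper's, it is conditional on exactly these inputs (known for classical groups, open in general).

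The one step you leave genuinely underspecified is the final equivalence. ``Reducibility of $i_{\P}^{\G}(\pi_{\lambda_0})$ is detected by the poles of the normalized intertwining operators'' is not the correct criterion as stated: for $\pi$ generic tempered and $\lambda_0$ strictly dominant the normalized operators are holomorphic, and what is needed is a precise reducibility criterion for standard modules (e.g.\ Heiermann's characterization in terms of the $\mu$-function/Plancherel measure), together with the tempered $L$-function statement that $L(s,\pi,r_i)$ is holomorphic for $\mathrm{Re}(s)>0$ when $\pi$ is generic tempered (also Heiermann--Opdam), in order to rule out cancellation among the factors $L(s,r_i\circ(\phi_{\M})_{\lambda_0})$, their duals, and $L(s,\mathrm{Ad}_{\LM}\circ\phi_{\M})$ when reading off the behaviour at $s=1$. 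This bookkeeping is precisely the content of \cite[Proposition B.1]{GanIchino16}; your write-up should either carry it out in detail or cite it, as the paper does.
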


This conjecture has an appealing reformulation in terms of the geometry of the moduli
space of Langlands parameters near $\phi$.  Let $\phi$ be a $\SL_{2}$-parameter over
$\Kbar$, and let $(r,N)$ be its associated Weil-Deligne representation.  We can then
consider the subspace $V_r$ of $\operatorname{Lie}(\widehat{\G})$ consisting of $v \in
\operatorname{Lie}(\widehat{\G})$ such that $\mathrm{Ad}(r)(w)v = |w|v;$ this is the space
of possible monodromy operators $N'$ associated to the semisimple parameter $r$ of $\phi$.
The centralizer $\widehat{\G}_r$ of $r$ acts on $V_r$ with a unique open orbit; we will
say that $\phi$ has {\em maximal monodromy} if $N$ lies in this open orbit. On the other
hand, the pair $(r,N)$ determines a $\L$-homomorphism $\varphi : \W_{\F}^{0}\rightarrow 
\LG(\Kbar)$, i.e. a $\Kbar$-point of $\underline{\Z}^1(\W_\F^0, \widehat{\G})$.  We then have:

\begin{proposition} \label{Davidsmaxorbitprop}
The following are equivalent:
\begin{enumerate}
\item\label{maxorbitprop1} $L(s,\mathrm{Ad} \circ \phi)$ is holomorphic at~$s=1$.
\item\label{maxorbitprop2} $\phi$ has maximal monodromy.
\item\label{maxorbitprop3} The $\Kbar$-point of $\underline{\Z}^1(\W_\F^0, \widehat{\G})$ corresponding to $\phi$ is a smooth point of $\underline{\Z}^1(\W_\F^0, \widehat{\G}).$
\end{enumerate}
\end{proposition}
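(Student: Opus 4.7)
For the equivalence $(1) \Leftrightarrow (2)$, the plan is to rely on the recent result of Cunningham--Dijols--Fiori--Zhang acknowledged in the introduction. An independent argument would proceed via Killing form duality: the adjoint $L$-factor satisfies $L(s,\mathrm{Ad}\circ\phi)^{-1} = \det(1 - q^{-s}\mathrm{Ad}(r(\Fr))\mid\mathfrak{g}^{\I_\F}\cap\ker\mathrm{ad}(N))$, so a pole at $s=1$ detects a nonzero vector in $\mathfrak{g}^{\I_\F}\cap\ker\mathrm{ad}(N)$ lying in a specific Frobenius eigenspace. The $\mathrm{Ad}$-invariance of the Killing form, together with the antisymmetry $\kappa([N,X],Y) + \kappa(X,[N,Y]) = 0$, identifies this eigenspace with the dual of the cokernel of $\mathrm{ad}(N) : \mathrm{Lie}(\widehat{\G}_r) \to V_r$. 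Hence the pole exists precisely when this map fails to be surjective, i.e.~when $(2)$ fails.

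For the new equivalence $(2) \Leftrightarrow (3)$, the plan is to leverage the local complete intersection property from Theorem \ref{Modulitheorem}(1): the scheme $\underline{\Z}^1(\W_\F^0, \widehat{\G})$ is LCI of relative dimension $\dim\widehat{\G}$ over $\mathbb{Z}[1/p]$, so smoothness at a $\Kbar$-point $\phi$ is equivalent to the tangent space $\Z^1(\W_\F^0, \mathfrak{g}_\phi)$ having dimension $\dim\widehat{\G}$. My first step would be to fix $e$ large enough that $\phi$ factors through $\W_\F^0/\P_\F^e$, choose a finite presentation of this quotient with $n$ generators and $m$ relations, and use the associated Fox-derivative presentation to realize $\underline{\Z}^1$ locally near $\phi$ as the vanishing locus of $m$ equations in $\widehat{\G}^n$. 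A direct tangent-space calculation in terms of the bar complex then yields $\dim T_\phi \underline{\Z}^1 = \dim\widehat{\G} + \dim H^2(\W_\F^0, \mathfrak{g}_\phi)$, so smoothness at $\phi$ is equivalent to the vanishing of $H^2(\W_\F^0, \mathfrak{g}_\phi)$.

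The crux of the argument is then the identification $H^2(\W_\F^0, \mathfrak{g}_\phi) \cong V_r/\mathrm{ad}(N)(\mathrm{Lie}(\widehat{\G}_r))$. I would first reduce to the case $\phi|_{\P_\F} = 1$, so that the relevant quotient of $\W_\F^0$ is presented by $\langle \Fr, s \mid \Fr s \Fr^{-1} = s^q\rangle$ and $\mathrm{Ad}(\phi(s))$ acts on $\mathfrak{g}$ as $\exp(\mathrm{ad}(N))$. The Fox-derivative formula then expresses $H^2$ as the cokernel of an explicit linear map $\mathfrak{g}^2\to\mathfrak{g}$ built from $\mathrm{Ad}(r(\Fr))$ and $\exp(\mathrm{ad}(N))$. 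Decomposing $\mathfrak{g}$ into generalized $\mathrm{Ad}(r(\Fr))$-eigenspaces, I expect to verify that each eigenspace $\mathfrak{g}_\lambda$ with $\lambda$ different from the eigenvalue defining $V_r$ lies entirely in the image (the relevant scalar factor becoming invertible off that eigenvalue), while the $V_r$-component of the cokernel collapses to $V_r/\mathrm{ad}(N)(\mathrm{Lie}(\widehat{\G}_r))$. The main anticipated obstacle is making this eigenspace calculation uniform across all parameters, in particular extending it to nontrivial $\phi|_{\P_\F}$ by enlarging the presentation with generators of $\P_\F/\P_\F^e$ and accommodating the semidirect product structure of $\LG$; however, since the additional generators act on $\mathfrak{g}$ through finite-order operators, their contribution decomposes eigenspace-by-eigenspace and cannot alter the $V_r$-part of the cokernel, so the identification of $H^2$ with $V_r/\mathrm{ad}(N)(\mathrm{Lie}(\widehat{\G}_r))$ should persist in general.
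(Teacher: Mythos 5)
Your plan is essentially sound, but it is organized differently from the paper. The paper routes both equivalences through the single space $\H^0(\W_\F^0,(\mathrm{Ad}\circ\varphi)(1))$: the order of the pole of $L(s,\mathrm{Ad}\circ\phi)$ at $s=1$ equals its dimension, \cite[Cor.~5.3]{DHKM} says the corresponding point of $\underline{\Z}^1(\W_\F^0,\widehat{\G})$ is smooth if and only if this space vanishes, and \cite[Prop.~6.3.1]{CFMMX} identifies it with the fibre at $N$ of the conormal bundle of the $\widehat{\G}_r$-orbit in $V_r$, whose vanishing is exactly openness of the orbit; this gives (1)$\Leftrightarrow$(3) and (2)$\Leftrightarrow$(3) with almost no computation (and without invoking \cite{CDFZ24}). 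You instead prove (1)$\Leftrightarrow$(2) directly: your invariant-form duality identifying the Frobenius-eigenvalue-$q$ part of $\mathfrak{g}^{\I_\F}\cap\ker\mathrm{ad}(N)$ with the dual of $\coker\bigl(\mathrm{ad}(N):\mathrm{Lie}(\widehat{\G}_r)\to V_r\bigr)$ is precisely the linear algebra hiding behind the paper's appeal to CFMMX, so it works (use a nondegenerate $\LG$-invariant form rather than the Killing form, which degenerates on the centre of $\mathfrak{g}$ when $\widehat{\G}$ is not semisimple; the relevant eigenspaces miss the centre, so this is harmless). For (2)$\Leftrightarrow$(3) you re-derive the DHKM smoothness criterion from the LCI property plus the count $\dim\Z^1(\W_\F^0,\mathfrak{g}_\phi)=\dim\widehat{\G}+\dim\H^2(\W_\F^0,\mathfrak{g}_\phi)$, and then compute $\H^2$ as the cokernel. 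The paper's route buys brevity; yours buys self-containedness and a direct reading of maximal monodromy as $\H^2$-vanishing; the two are equivalent via the duality $\H^2(\W_\F^0,M)\simeq\H^0(\W_\F^0,M(1))^*$.

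Two details in your sketch need repair. First, after reducing to $\phi$ trivial on $\P_\F$ it is \emph{not} true that $\mathrm{Ad}(\varphi(s))=\exp(\mathrm{ad}(N))$: the cocycle attached to $(r,N)$ sends the chosen tame generator $s$ to $r(s)\exp(N)$, where $r(s)$ is a finite-order but generally nontrivial semisimple element commuting with $\exp(N)$; your eigenspace analysis must be run for the commuting pair $\mathrm{Ad}(r(s))$, $\exp(\mathrm{ad}(N))$, with Frobenius permuting the $\mathrm{Ad}(r(s))$-eigenspaces according to the relation $\Fr s\Fr^{-1}=s^q$. Second, the identity $\dim\Z^1=\dim\mathfrak{g}+\dim\H^2$ presumes $h^0-h^1+h^2=0$, i.e., that the two-step Fox complex of your presentation computes $\H^2$; justify this by first removing the wild part via Hochschild--Serre (the finite $p$-group $\P_\F/\P_\F^e$ has no higher cohomology on characteristic-zero coefficients) and then using that the discretized tame quotient is a one-relator group with aspherical presentation. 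Alternatively, you can bypass the cokernel computation entirely by quoting the duality statement from \cite{DHKM}, after which your invariant-form lemma does double duty and your argument collapses onto the paper's.
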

\begin{proof}
The order of the pole at $s=1$ of $L(s,\mathrm{Ad} \circ \phi)$ is equal to the
multiplicity of the inverse cyclotomic character in the $\W_\F$-representation
$(\mathrm{Ad} \circ r)_{|{\rm Lie}(\hat\G)^{\rm{ad}(N)=0}}$, which is also the
multiplicity of the inverse cyclotomic character in the $\W_\F^{0}$-representation
$\mathrm{Ad} \circ \varphi$, i.e. the dimension of $\H^0(\W_\F^{0}, (\mathrm{Ad} \circ
\varphi)(1)).$ Therefore, the equivalence between (\ref{maxorbitprop1}) and
(\ref{maxorbitprop3}) follows from \cite[Cor. 5.3]{DHKM}.

Let $\mathcal{C}$ denote the $\widehat\G_r$-orbit of $N$ and consider the description of the conormal bundle of $\mathcal{C}$ in $V_r$ given in \cite[Prop 6.3.1]{CFMMX}. The fibre at $N$ of the conormal bundle is the kernel of the map $X\mapsto [N,X]$ on $\H^0(\W_\F^{0}, (\mathrm{Ad} \circ
r)(1))$. This kernel is precisely $\H^0(\W_\F^{0}, (\mathrm{Ad} \circ
\phi)(1))$. It follows that $\mathcal{C}$ is open if and only if $\H^0(\W_\F^{0}, (\mathrm{Ad} \circ
\phi)(1))$ vanishes. Therefore, the equivalence between (\ref{maxorbitprop2}) and
(\ref{maxorbitprop3}) follows from \cite[Cor. 5.3]{DHKM}.
\end{proof}

Thus the Gross-Prasad-Rallis conjecture is equivalent to:

\begin{conjecture}\label{Davidsmaxorbitconj}
Let $r$ be a semisimple parameter.  Then, up to $\widehat{\G}$-conjugacy, there is a unique Langlands parameter $\phi$ whose corresponding $L$-packet contains a generic representation, namely the unique parameter with semisimple parameter $r$ and maximal monodromy.
\end{conjecture}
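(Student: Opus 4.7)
The plan is to deduce this directly from Gross--Prasad--Rallis (Conjecture \ref{GrossPrasadRallis}) combined with the geometric reformulation just established in Proposition \ref{Davidsmaxorbitprop}. Indeed, the equivalence of \ref{maxorbitprop1} and \ref{maxorbitprop2} in that proposition translates the Gross--Prasad--Rallis holomorphy condition on $L(s, \mathrm{Ad} \circ \phi)$ into the purely geometric statement that $\phi$ has maximal monodromy. Under GPR, then, ``the $L$-packet of $\phi$ contains a generic representation'' is equivalent to ``$\phi$ has maximal monodromy'', and this is the only representation-theoretic input in the whole argument; everything else is orbit theory.

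Next I would fix a specific semisimple representative $r$ and parametrize the Langlands parameters whose semisimple part is $r$. By Remark \ref{rk:dictionnary} combined with Jacobson--Morozov, such parameters are classified by elements $N$ of $V_{r} = \{v \in \mathrm{Lie}(\widehat{\G}) : \mathrm{Ad}(r(w))v = |w| v\}$; any such $N$ is automatically nilpotent, since $\mathrm{Ad}(r(\Fr))$ is semisimple and scales $N$ by the non-unit eigenvalue $q$. Two Langlands parameters sharing the same $r$ are $\widehat{\G}$-conjugate if and only if the corresponding $N$, $N'$ lie in the same $\widehat{\G}_{r}$-orbit on $V_{r}$, because any conjugating element must centralize the common semisimple part.

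By the definition of maximal monodromy, the parameters extending $r$ with maximal monodromy correspond precisely to the (unique) open $\widehat{\G}_{r}$-orbit on $V_{r}$. This open orbit is non-empty (being open and dense in a non-empty affine variety) and unique by definition, so there is exactly one $\widehat{\G}$-conjugacy class of parameters extending $r$ with maximal monodromy. Combining this with the equivalence of the first paragraph yields both the existence and the uniqueness asserted by the conjecture.

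The main obstacle is of course Gross--Prasad--Rallis itself, which enters as an external input and is the only non-formal ingredient; once it is granted, Conjecture \ref{Davidsmaxorbitconj} reduces to the elementary fact that an algebraic group acting on an irreducible variety has at most one open orbit, together with the dictionary between Weil--Deligne and $\SL_{2}$-parameters. In this sense Conjecture \ref{Davidsmaxorbitconj} is genuinely \emph{equivalent} to Gross--Prasad--Rallis, and the geometric framework of \cite{DHKM} serves only to make this equivalence transparent.
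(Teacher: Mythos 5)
Your argument is correct and follows essentially the same route as the paper: since the statement is a conjecture, the paper does not prove it outright but records (via the equivalence $(\ref{maxorbitprop1})\Leftrightarrow(\ref{maxorbitprop2})$ of Proposition \ref{Davidsmaxorbitprop}) that it is equivalent to Gross--Prasad--Rallis, and in the quasi-split classical case deduces it by citing Gan--Ichino's proof of Conjecture \ref{GrossPrasadRallis} (see the proof of Proposition \ref{extendedpacketclass}). Your explicit orbit-theoretic bookkeeping -- parameters extending a fixed $r$ correspond to $\widehat{\G}_r$-orbits on $V_r$, with the unique open orbit giving the unique maximal-monodromy class -- just spells out what the paper leaves implicit in the sentence ``Thus the Gross--Prasad--Rallis conjecture is equivalent to\dots''.
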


\subsection{The semisimple local Langlands correspondence}\label{semisimpsection}

We have a natural \emph{semisimplification} map
\begin{align*}(~)_{ss}:\Phi_{\mathrm{WD},\mathbb{C}}(\G) &\rightarrow \Phi_{ss,\mathbb{C}}(\G)\\ 
(r,N)&\mapsto r.\end{align*}
  Via the bijection~$\Phi_{\SL_2,\mathbb{C}}(\G)\rightarrow \Phi_{\mathrm{WD},\mathbb{C}}(\G)$, this defines \emph{semisimplification} map~\[(~)_{ss}:\Phi_{\SL_2,\mathbb{C}}(\G) \rightarrow \Phi_{ss,\mathbb{C}}(\G),\] given by restriction via the embedding~$\W_\F\hookrightarrow \W_\F\times\SL_2(\mathbb{C})$ defined by~$w\mapsto \left(w,\left(\begin{smallmatrix}|w|^{1/2}&0\\0&|w|^{-1/2}\end{smallmatrix}\right)\right)$.  Given~$\phi\in\Phi_{\SL_2,\mathbb{C}}(\G)$, in the literature,~$\phi_{ss}$ is often called the \emph{infinitesimal character} of~$\phi$.
  
  \begin{remark}We also have a natural \emph{semisimplification} map~$\Phi_{\ell\text{-adic}}(\G)\rightarrow \Phi_{ss,\Ql}(\G)$ defined as follows: for~$\phi_{\ell}\in\Phi_{\ell\text{-adic}}(\G)$ choose a minimal parabolic subgroup of~$\LG(\Ql)$ which~$\phi_{\ell}$ factors through then project onto a Levi factor of this parabolic and take its~$\widehat{\G}(\Ql)$-conjugacy class in~$\Phi_{ss,\Ql}(\G)$.  This agrees with the map on Weil-Deligne representations via the bijections~$\Phi_{\ell\text{-adic}}(\G)\rightarrow \Phi_{\mathrm{WD},\Ql}(\G)$, and~$\Phi_{\mathrm{WD},\Ql}(\G)\simeq \Phi_{\mathrm{WD},\mathbb{C}}(\G)$ depending on choosing an isomorphism~$\mathbb{C}\simeq \Ql$. \end{remark}

It is expected that the local Langlands correspondence for~$\G$ is compatible with parabolic induction in the following sense:
\begin{conjecture}[{\cite[Conjecture 5.22]{Haines}}]\label{ParaindLanglands}
Let~$\rho\in\Pi_{\mathbb{C}}(\M)$ and~$\pi$ be an irreducible subquotient of~$i^{\G}_{\P}(\rho)$ where~$\P$ is a parabolic subgroup of~$\G$ with a Levi decomposition~$\P=\M\ltimes \N$.  Then the semisimple parameters~$\iota_{\M,\G}\circ (\mathcal{LL}_{\M}(\rho))_{ss}$ and~$(\mathcal{LL}_{\G}(\pi))_{ss}$ are~$\widehat{\G}(\mathbb{C})$-conjugate.
\end{conjecture}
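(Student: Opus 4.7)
The plan is to prove Conjecture \ref{ParaindLanglands} for classical $p$-adic groups by using the Plancherel measure as a bridge between parabolic induction on the automorphic side and gamma factors on the Galois side, following the strategy announced in the introduction. First I would reduce to the case where $\rho$ is supercuspidal and $\P = \M\N$ is a maximal proper parabolic subgroup: transitivity of parabolic induction together with uniqueness of the cuspidal support in characteristic zero lets us peel off one parabolic at a time, so an induction on the semisimple rank of $\G$ reduces the whole question to the maximal-parabolic case with supercuspidal inducing datum. For an irreducible subquotient $\pi$ of $i^\G_\P(\rho)$ one then studies the Plancherel measure $\mu_\G(\rho \otimes \chi_s)$ as a function of the unramified twist family $\rho \otimes \chi_s$, and compares two independent descriptions of it.

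Second, the Plancherel measure has two complementary characterisations. On the one hand, it is compatible with passage to irreducible subquotients: this is the subrepresentation statement of \cite[B.5]{GanIchino14}, which I would extend to arbitrary subquotients using the normalised intertwining operators and $J$-functions developed in Appendices \ref{Appendix:intertwining} and \ref{apdx:j_functions}, so that the Plancherel contribution of each composition factor of $i^\G_\P(\rho \otimes \chi_s)$ can be isolated. On the other hand, ``Langlands' conjecture on the Plancherel measure'', known for classical groups thanks to work of Shahidi and collaborators, identifies $\mu_\G(\rho \otimes \chi_s)$ with a product of local gamma factors attached to the semisimple parameter $\iota_{\M,\G} \circ (\mathcal{LL}_{\M}(\rho \otimes \chi_s))_{ss}$ via the adjoint action of $\LM$ on the Lie algebra of $\widehat{\N}$. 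Combining these two descriptions, the collection of gamma factors attached to $\iota_{\M,\G} \circ (\mathcal{LL}_{\M}(\rho))_{ss}$ must agree, as $\chi_s$ varies, with the corresponding collection attached to the part of $(\mathcal{LL}_{\G}(\pi))_{ss}$ that sits inside $\iota_{\M,\G}(\LM)$.

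Third, I would conclude using the semisimple converse theorem (Proposition \ref{semisimpleconverse}): agreement of gamma factors under a sufficient supply of unramified twists determines a semisimple parameter of $\LM$ up to $\widehat{\M}$-conjugacy, after reducing to $\GL_n$-factors where the strongest converse theorems are available. This forces $(\mathcal{LL}_{\G}(\pi))_{ss}$ to be $\widehat{\G}$-conjugate to $\iota_{\M,\G} \circ (\mathcal{LL}_\M(\rho))_{ss}$, which is the desired conclusion. The main obstacle I anticipate is the second step: extending Gan--Ichino's Plancherel compatibility from subrepresentations to arbitrary subquotients requires a careful analysis of how the $J$-functions and normalised intertwining operators behave at reducibility points of $i^\G_\P(\rho \otimes \chi_s)$, since one needs to match singularities and zeros along the composition series rather than only at the irreducible socle. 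A secondary difficulty is arranging the input of the semisimple converse theorem for the Levi $\LM$ rather than $\GL_n$ directly, which is precisely why the unconditional result is restricted to classical groups, whose Levi subgroups decompose as products of general linear and smaller classical factors where the converse-theorem machinery applies.
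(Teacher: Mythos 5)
Your overall strategy (Plancherel measures plus gamma factors plus a converse theorem, for classical groups) is the right family of ideas, and your step extending \cite[B.5]{GanIchino14} from subrepresentations to subquotients via the intertwining-operator machinery of Appendices \ref{Appendix:intertwining} and \ref{apdx:j_functions} is exactly what is needed. But the way you set up the comparison has a genuine gap: you propose to study $\mu_\G(\rho\otimes\chi_s)$, the Plancherel measure attached to the inducing family on $\M$ itself, and to extract from it information about $(\mathcal{LL}_\G(\pi))_{ss}$. That object cannot see $\pi$ at all: the Plancherel measure is an invariant of the inducing datum $(\M,\rho\otimes\chi_s)$, and both of your ``two descriptions'' of it (the subquotient compatibility and the Langlands--Shahidi identity via the adjoint action on $\mathrm{Lie}(\widehat{\N})$) produce expressions in $\mathcal{LL}_\M(\rho)$ only. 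There is no formula isolating ``the Plancherel contribution of each composition factor'' of $i_\P^\G(\rho\otimes\chi_s)$; the subquotient-compatibility statement is about something else, namely $\mu(\tau_s\otimes\pi)$, where $\tau$ is an \emph{auxiliary} supercuspidal of $\GL_k(\E)$ and the pair $\tau\otimes\pi$ is Levi data inside a \emph{larger} classical group. That auxiliary induction is the only place where $\mathcal{LL}_\G(\pi)$ enters: one computes $\mu(\tau_s\otimes\pi)$ once via Langlands' Plancherel conjecture applied directly to $\pi$ (Proposition \ref{Propplanchereldecomp}(2), giving gamma factors of $\phi_\tau$ against $\mathcal{R}\circ\mathcal{LL}_\G(\pi)$), and once via multiplicativity over the supercuspidal support of $\pi$ (Proposition \ref{multofplancherel}, the extended Gan--Ichino statement), and equates the two.

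Relatedly, your third step misidentifies the input of the converse theorem. Proposition \ref{semisimpleconverse} requires the gamma-factor identity $\Gamma(\tau,\phi_1)=\Gamma(\tau,\phi_2)$ for \emph{all} irreducible semisimple parameters $\tau$ of all dimensions up to $n$; the varying datum is the auxiliary $\tau$ (equivalently, all supercuspidals of all $\GL_k(\E)$), not the unramified twists $\chi_s$ of $\rho$. Agreement of gamma factors along an unramified-twist family of a single fixed test representation does not determine a semisimplified parameter. Finally, once the converse theorem gives conjugacy of $\mathcal{R}\circ\mathcal{LL}_\G(\pi)$ and $\mathcal{R}\circ\iota_{\M,\G}\circ\mathcal{LL}_\M(\rho)$ in $\GL_m(\mathbb{C})$, you still need \cite[Section 8]{GGP12} (Proposition \ref{GGPprop}) to upgrade this to $\widehat{\G}(\mathbb{C})$-conjugacy; this step is missing from your outline. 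Your preliminary reduction to supercuspidal $\rho$ and a maximal parabolic is harmless (and can be justified by induction on the group), but it is also unnecessary: in the paper's argument the multiplicativity of $\mu(\tau_s\otimes\pi)$ over the supercuspidal support handles arbitrary irreducible $\rho$ in one stroke.
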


{This conjecture implies that the semisimplification of a (conjugacy class of) Langlands parameter should factor through the supercuspidal support map.}\footnote{{If  one instead considers the conjectural bijective enhanced local Langlands correspondence then in~\cite{MR3845761} an expected analogue of supercuspidal support map is given on enhanced Langlands parameters.} }   
{More precisely, let}~$\Pi_{sc,\Kbar}(\G)$ denote the set of possible supercuspidal supports of an irreducible representation of~$\G$; that is, the set of~$\G$-conjugacy classes of pairs~$(\M,\rho)$ consisting of a Levi subgroup~$\M$ of a parabolic subgroup of~$\G$ and a simple supercuspidal~$\Kbar[\M]$-module.  Write\[(~)_{sc}:  \Pi_{\Kbar}(\G)\rightarrow \Pi_{sc,\Kbar}(\G)\]  for the supercuspidal support map.  Under~$\mathcal{LL}_{\M}$ for all Levi subgroups of all parabolic subgroups of~$\G$, compatibility of the local Langlands correspondence for~$\G$ with parabolic induction implies there exists a \emph{semisimple local Langlands correspondence} $\mathrm{LL}_{\G}$ defined by the commutative diagram:
\[\begin{tikzcd}
\Pi_{\Kbar}(\G)\arrow[r, "\mathcal{LL}_{\G}"] \arrow[d, "(~)_{sc}"] &\Phi_{\SL_2,\Kbar}(\G)\arrow[d, "(~)_{ss}"] \\
\Pi_{sc,\Kbar}(\G)\arrow[r,"\mathrm{LL}_{\G}"] &\Phi_{ss,\Kbar}(\G)
\end{tikzcd}\]
For ease of notation, when dealing with elements of~$\Pi_{sc,\Kbar}(\G)$ and~$\Phi_{\Kbar}(\G)$, we denote the element (which is a conjugacy class) by any choice of representative.  

{\begin{definition}
We call the fibres of~$(~)_{ss}\circ\mathcal{LL}_\G=\LL_\G\circ (~)_{sc}$ \emph{extended~$L$-packets}.  
\end{definition}}

Conjecture \ref{Davidsmaxorbitconj} implies for quasi-split groups, in particular, that only one~$L$-packet in an extended packet can support generic representations.   We conjecture that:

\begin{conjecture}[Extended packet conjecture]\label{genericpacket}
Suppose~$\G$ is~$\F$-quasi-split, and fix a Whittaker datum~$(\U,\psi)$ for~$\G$.  Then in each extended~$L$-packet there is a unique~$\psi$-generic representation.
\end{conjecture}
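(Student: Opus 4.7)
Fix a semisimple parameter $r$, so the extended $L$-packet decomposes as
$\Pi^{\mathrm{ext}}(r)=\bigsqcup_\phi \Pi_\phi$,
with $\phi$ running over $\widehat{\G}(\mathbb{C})$-conjugacy classes of $\SL_2$-parameters with $\phi_{ss}=r$. The plan is to reduce Proposition \ref{extendedpacketclass} for a quasi-split classical $\G$ to two inputs that are known in this setting: the tempered packet conjecture (Conjecture \ref{temperedpacketconj}) applied to standard Levi subgroups of $\G$, and the geometric reformulation of the Gross--Prasad--Rallis conjecture provided by Proposition \ref{Davidsmaxorbitprop}.

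For uniqueness, suppose $\pi\in\Pi^{\mathrm{ext}}(r)$ is $\psi$-generic and set $\phi=\mathcal{LL}_\G(\pi)$. Since $\Pi_\phi$ then contains a generic representation, Proposition \ref{Davidsmaxorbitprop} forces $\phi$ to have maximal monodromy; as the maximal-monodromy orbit is the unique open $\widehat{\G}_r$-orbit in $V_r$, the $\widehat{\G}(\mathbb{C})$-conjugacy class of $\phi$ is completely determined by $r$. Decomposing $\phi=\iota_{\M,\G}\circ(\phi_\M)_{\lambda_0}$ via the Langlands classification, with $\phi_\M$ tempered on a standard Levi $\M$ and $\lambda_0\in(\mathfrak{A}_\M^*)^+$, I would then write $\pi=\J_\P^\G(\sigma_{\lambda_0})$ with $\sigma\in\Pi_{\phi_\M}$. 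The Casselman--Shahidi heredity of genericity for Langlands quotients forces $\sigma$ to be $\psi_\M$-generic, and the tempered packet conjecture for $\M$ (established for classical groups and their Levis through the work of Arthur, Mok, M{\oe}glin and Atobe) then pins down $\sigma$ uniquely. Thus $\pi=\J_\P^\G(\sigma_{\lambda_0})$ is the only candidate.

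For existence, take the unique $\phi$ with $\phi_{ss}=r$ and maximal monodromy, form its Langlands decomposition, and let $\sigma_0\in\Pi_{\phi_\M}$ be the unique $\psi_\M$-generic tempered element supplied by the tempered packet conjecture on $\M$. The natural candidate for the $\psi$-generic member of $\Pi^{\mathrm{ext}}(r)$ is $\J_\P^\G((\sigma_0)_{\lambda_0})$, and by Rodier's formula for Whittaker dimensions of parabolically induced representations, this candidate is $\psi$-generic provided the standard module $i_\P^\G((\sigma_0)_{\lambda_0})$ is irreducible. I expect this irreducibility to be the main obstacle. The plan is to deduce it from the Plancherel-measure machinery developed in Appendices \ref{Appendix:intertwining} and \ref{apdx:j_functions}: the reducibility locus of the standard module is controlled by zeros and poles of the Plancherel measure, which by Shahidi's interpretation (and Proposition \ref{Propplanchereldecomp}) is expressed in terms of adjoint $L$- and $\gamma$-factors of $\phi_\M$, and the maximal-monodromy hypothesis on $\phi$---equivalent, by Proposition \ref{Davidsmaxorbitprop}, to holomorphy of $L(s,\mathrm{Ad}\circ\phi)$ at $s=1$---supplies exactly the non-vanishing needed to rule out reducibility of the standard module at the Langlands point $(\sigma_0)_{\lambda_0}$.
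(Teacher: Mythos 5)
Your uniqueness argument is essentially the paper's: force the parameter of a generic member to be the maximal-monodromy one, apply the Langlands classification, use heredity of genericity for Langlands quotients, and invoke uniqueness in the tempered packet of the standard Levi. One correction there: Proposition \ref{Davidsmaxorbitprop} is a statement purely about parameters (equivalence of holomorphy of $L(s,\mathrm{Ad}\circ\phi)$ at $s=1$, maximal monodromy, and smoothness of the corresponding point of $\underline{\Z}^1$); it does not by itself say that an $L$-packet containing a generic representation has maximal monodromy. That implication is the Gross--Prasad--Rallis conjecture (Conjecture \ref{GrossPrasadRallis}, equivalently Conjecture \ref{Davidsmaxorbitconj}), which for quasi-split classical groups is supplied by Gan--Ichino and is exactly the input used in Propositions \ref{extendedpacketunderconjectures} and \ref{extendedpacketclass}; you should cite it, not the purely geometric proposition.

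The genuine gap is in existence. The paper gets existence directly from Conjecture \ref{Davidsmaxorbitconj}: the packet of the maximal-monodromy parameter contains a generic representation, and the paper explicitly notes (footnote to Proposition \ref{extendedpacketunderconjectures}) that it does \emph{not} need the standard modules conjecture. You instead try to manufacture the generic member as $\J_{\P}^{\G}((\sigma_0)_{\lambda_0})$ and reduce to irreducibility of the standard module $i_{\P}^{\G}((\sigma_0)_{\lambda_0})$, to be ``deduced from the Plancherel-measure machinery.'' That deduction is not carried out and does not follow from Appendices \ref{Appendix:intertwining}--\ref{apdx:j_functions} or Proposition \ref{Propplanchereldecomp}: the $j$-function/Plancherel measure controls reducibility of \emph{unitary} (tempered) parabolic induction, whereas the standard module sits at a strictly positive point $\lambda_0$, where reducibility and the genericity of the Langlands quotient are governed by the Casselman--Shahidi/standard-module circle of ideas (Heiermann--Opdam, Heiermann--Mui\'c); the passage from ``generic tempered inducing data plus holomorphy of $L(s,\mathrm{Ad}\circ\phi)$ at $s=1$'' to genericity of the Langlands quotient is essentially the content of Gan--Ichino's proof of Gross--Prasad--Rallis itself. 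As written, your existence step therefore amounts to re-proving the hardest input rather than establishing it. The repair is either to invoke the existence direction of Conjecture \ref{Davidsmaxorbitconj} (known for quasi-split classical groups), as the paper does, or, if you prefer the standard-module route, to cite Heiermann--Opdam explicitly rather than the Plancherel appendices.
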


\begin{proposition}\label{extendedpacketunderconjectures}
Suppose~$\G$ is~$\F$-quasi-split and Conjecture \ref{LLcorresp} holds for~$\G$.  Then Conjectures \ref{temperedpacketconj} and \ref{Davidsmaxorbitconj} together imply Conjecture \ref{genericpacket}.
\end{proposition}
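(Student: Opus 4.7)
My approach would be to fix an extended $L$-packet, identify the unique $L$-packet inside it that can carry generic vectors via Conjecture \ref{Davidsmaxorbitconj}, and then combine the Langlands classification with the tempered packet conjecture on a Levi together with Rodier's heredity theorem to produce exactly one $\psi$-generic representation.

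Concretely, fix an extended $L$-packet, i.e.\ the fibre of $(~)_{ss}\circ\mathcal{LL}_\G$ over a semisimple parameter $r$. By Conjecture \ref{Davidsmaxorbitconj} (equivalent by Proposition \ref{Davidsmaxorbitprop} to Conjecture \ref{GrossPrasadRallis}), among the $\widehat{\G}(\mathbb{C})$-conjugacy classes of $\SL_2$-parameters $\phi$ with $\phi_{ss}=r$ there is a unique one $\phi^\natural$---the one of maximal monodromy---whose $L$-packet $\Pi_{\phi^\natural}$ contains any generic representation. In particular, $\psi$-generic representations in the extended packet can only occur in $\Pi_{\phi^\natural}$, so it suffices to exhibit exactly one $\psi$-generic representation in $\Pi_{\phi^\natural}$.

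Next, I would apply the Langlands classification from Conjecture \ref{LLcorresp}: after conjugating, write $\phi^\natural=\iota_{\M,\G}\circ(\phi_\M)_{\lambda_0}$ for a standard Levi $\M$, a tempered parameter $\phi_\M$, and $\lambda_0\in(\mathfrak{A}_\M^*)^+$, so that
\[
\Pi_{\phi^\natural}=\{\J_\P^\G(\pi_{\lambda_0}):\pi\in\Pi_{\phi_\M}\}.
\]
The Whittaker datum $(\U,\psi)$ determines a Whittaker datum $(\U_\M,\psi_\M)$ on $\M$. Conjecture \ref{temperedpacketconj} applied to $\M$ with $(\U_\M,\psi_\M)$ supplies a unique $\psi_\M$-generic $\sigma\in\Pi_{\phi_\M}$. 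By Rodier's heredity theorem for Whittaker models (which is insensitive to twisting by $\chi_{\lambda_0}$ since $\chi_{\lambda_0}$ is trivial on $\U_\M$), we have
\[
\dim\Hom_{\G}\bigl(i_\P^\G(\pi_{\lambda_0}),\Ind_\U^\G(\psi)\bigr)=\dim\Hom_{\M}\bigl(\pi,\Ind_{\U_\M}^\M(\psi_\M)\bigr),
\]
which equals $1$ when $\pi=\sigma$ and $0$ otherwise. Hence among the standard modules parametrizing $\Pi_{\phi^\natural}$, only $i_\P^\G(\sigma_{\lambda_0})$ has any $\psi$-generic subquotient, and it has a unique such subquotient.

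Finally, to show this unique $\psi$-generic subquotient is precisely the Langlands quotient $\J_\P^\G(\sigma_{\lambda_0})$---and hence actually lies in $\Pi_{\phi^\natural}$---I would invoke the Casselman--Shahidi result on generic standard modules: when $\sigma$ is tempered and $\psi_\M$-generic and $\lambda_0$ lies in the positive Weyl chamber, the unique generic irreducible constituent of $i_\P^\G(\sigma_{\lambda_0})$ is the Langlands quotient (in the cases of interest, in fact the whole standard module is irreducible). Combined with the previous step, this shows $\J_\P^\G(\sigma_{\lambda_0})$ is $\psi$-generic while no other member of $\Pi_{\phi^\natural}$ is, completing the proof. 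The main obstacle is step four: one must be careful that Rodier's theorem and the Casselman--Shahidi statement on generic subquotients of standard modules are available in the quasi-split generality required; for classical groups the latter is due to Muić and is what is actually needed to make the argument unconditional in Section \ref{classical}.
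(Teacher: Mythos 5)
Your steps (1)--(4) match the paper's argument for \emph{uniqueness}: generic members can only live in the packet of the maximal-monodromy parameter $\phi^\natural$, that packet consists of Langlands quotients $\J_\P^\G(\pi_{\lambda_0})$ with $\pi$ in a tempered packet of a Levi, and since a generic quotient of a parabolic induction forces the inducing representation to be generic, Conjecture \ref{temperedpacketconj} on $\M$ leaves at most one candidate, namely $\J_\P^\G(\sigma_{\lambda_0})$ for the unique $\psi_\M$-generic $\sigma\in\Pi_{\phi_\M}$. The divergence, and the genuine gap, is in your final step, where you try to \emph{prove} that this candidate is actually generic. The statement you attribute to Casselman--Shahidi/Mui\'c --- that for $\sigma$ tempered $\psi_\M$-generic and $\lambda_0$ dominant, the unique generic constituent of $i_\P^\G(\sigma_{\lambda_0})$ is the Langlands quotient --- is false in general: already for $\GL_2$ with $\sigma$ the trivial character of the torus and $\lambda_0$ dominant, the generic constituent of the standard module is the Steinberg \emph{sub}representation, while the Langlands quotient is the non-generic character. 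What is true (the standard modules conjecture, now a theorem of Heiermann--Opdam, as the paper notes in a footnote) is the equivalence ``$\J_\P^\G(\sigma_{\lambda_0})$ is generic iff $i_\P^\G(\sigma_{\lambda_0})$ is irreducible'', and at this point of your argument you have no irreducibility statement; the maximal-monodromy hypothesis on $\phi^\natural$ enters nowhere in your step four, so the argument as written does not produce existence.

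The fix is that you do not need to produce existence at all: Conjecture \ref{Davidsmaxorbitconj} as stated already \emph{asserts} that there is a parameter with semisimple part $r$ whose $L$-packet contains a generic representation (namely the maximal-monodromy one), so the extended packet contains a generic member by hypothesis, and your steps (2)--(4) (or the paper's lighter version, which only uses ``generic quotient implies generic inducing data'' and needs neither Rodier's heredity in its multiplicity-one form nor any statement about which constituent of the standard module is generic) then pin it down uniquely as $\J_\P^\G(\sigma_{\lambda_0})$. This is exactly how the paper argues, and it is why the paper can explicitly avoid the Heiermann--Opdam input that your route would require and, as stated, misquotes.
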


\begin{proof}
Let~$\phi$ be a semisimple parameter.  By Conjecture \ref{Davidsmaxorbitconj}, an extended $L$-packet corresponding to~$\phi$ contains a generic representation and all such generic representations belong to the same $L$-packet -- namely with Weil--Deligne representation conjugate to~$(\phi,N)$, where $N$ is chosen from the maximal orbit in the space of possible monodromy operators for $\phi$.  In other words, one and only one~$L$-packet in an extended packet contains generic representations.


It remains to show that there is at most one generic representation in the~$L$-packet attached to~$(\phi,N)$.  By the Langlands classification and Conjecture~\ref{LLcorresp} (5), any~$L$-packet consists of the respective Langlands quotients associated to all members of a certain tempered~$L$-packet of some Levi subgroup and a certain ``positive'' unramified character of that Levi. By Conjecture \ref{temperedpacketconj} applied to the Levi subgroup, the tempered~$L$-packet contains a unique generic representation.  A quotient of a parabolic induction being generic implies that the inducing representation is generic too, so we see that in a general~$L$-packet there is at most one generic representation; the unique candidate for genericity being the Langlands quotient of (the twist of) the generic member of the tempered~$L$-packet\footnote{While we don't need it, it is interesting to note that by the ``standard modules conjecture'', now a theorem of Heiermann--Opdam \cite[Corollary 1.2]{HeiermannOpdam}, a Langlands quotient is generic if and only if the inducing data is generic and the Langlands quotient is the entire induced representation.}.
\end{proof}

{We will also need a semisimplified form of Haines' invariance of the local Langlands correspondence under isomorphisms conjecture:
\begin{conjecture}[{Haines, cf.~\cite[Conjecture 5.2.4]{Haines}}]
\label{Hainesisomorphisms:conjecture}
Suppose~$\alpha:\G\to \G'$ is an isomorphism of connected reductive groups,~$\pi\in\Pi_{\mathbb{C}}(\G)$, and ${^\alpha\pi}$ the irreducible representation of $\G'$ obtained by pre-composing with $\alpha^{-1}$. Then the induced isomorphism ${^L\alpha}:{^L\G'}(\mathbb{C})\to {^L\G}(\mathbb{C})$ (which is well-defined up to $\widehat{\G}(\mathbb{C})$-conjugacy) takes the $\widehat{\G'}(\mathbb{C})$ conjugacy class of $\mathcal{LL}_{\G'}({^\alpha\pi})_{ss}$ to the~$\widehat{\G}(\mathbb{C})$-conjugacy class of~$\mathcal{LL}_{\G}(\pi)_{ss}$.\end{conjecture}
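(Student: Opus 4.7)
The plan is to reduce the statement to the case of a supercuspidal $\pi$ via the compatibility of the semisimple correspondence with parabolic induction, and then to pin down the semisimple parameter of a supercuspidal by its Rankin--Selberg local factors, which are manifestly intrinsic to the abstract representation.

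Concretely, let $(\M, \rho)$ represent the supercuspidal support of $\pi$; then $(\alpha(\M), {^\alpha\rho})$ is a supercuspidal support for ${^\alpha\pi}$, and $\alpha$ restricts to an $\F$-isomorphism $\alpha_\M : \M \to \alpha(\M)$. The dual ${^L\alpha}$ is compatible, up to $\widehat{\G}(\mathbb{C})$-conjugacy, with the Levi embeddings $\iota_{\M,\G}$ and $\iota_{\alpha(\M),\G'}$. Applying Proposition \ref{parabolicinduction:theorem} to both $\G$ and $\G'$, the statement for $\pi$ reduces to the corresponding statement for $\rho$ on $\M$, so we may assume $\pi$ is supercuspidal. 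Inner automorphisms $\alpha$ act trivially on isomorphism classes of representations and on $\widehat{\G}(\mathbb{C})$-conjugacy classes of parameters, so we may further reduce to the case where $\alpha$ is an outer automorphism.

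For a supercuspidal $\pi$ on a classical group, the semisimple parameter $\mathcal{LL}_\G(\pi)_{ss}$ is pinned down by the family of factors $\gamma(s, \pi \times \tau, \psi_\F)$ as $\tau$ varies over supercuspidals of $\GL_n(\F)$, together with central character data, via Langlands' conjecture on the Plancherel measure (as used throughout Section~\ref{classical}) and the semisimple converse theorem (Proposition~\ref{semisimpleconverse}). Because these factors are defined by integrals of matrix coefficients and Whittaker functionals against a Haar measure on an ambient classical group $\G^\sharp$ containing $\M \times \GL_n$ as a Levi, and $\alpha$ transports all of this data to the corresponding $\G'^{\sharp}$, one obtains tautological equalities
$$\gamma(s, {^\alpha\pi} \times \tau, \psi_\F) = \gamma(s, \pi \times \tau, \psi_\F),\qquad \omega_{{^\alpha\pi}} = \omega_\pi\circ\alpha^{-1},$$
and similarly for $L$- and $\epsilon$-factors. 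Comparing, via the known compatibility of the LLC for $\GL_n$ with local factors, with the Artin factors attached to ${^L\alpha}^{-1} \circ \mathcal{LL}_{\G}(\pi)_{ss}$ and to $\mathcal{LL}_{\G'}({^\alpha\pi})_{ss}$, one concludes that these two semisimple parameters of $\G'$ are $\widehat{\G'}(\mathbb{C})$-conjugate.

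The main obstacle is the coherent transport of Whittaker data under $\alpha$: for outer automorphisms (e.g.\ a non-trivial outer automorphism coming from an isomorphism of two distinct unitary group structures attached to the same Hermitian form, or from an involution on a classical group), the pinning used to normalise Whittaker functionals on $\G$ need not correspond to the chosen pinning on $\G'$, so the matching of generic constituents of $L$-packets is affected by a $\widehat{\G'}(\mathbb{C})$-conjugation. This is handled in the same spirit as the argument of Proposition \ref{automorphisms:theorem} for field automorphisms fixing $\sqrt{q}$: at the semisimple level, the pinning ambiguity is absorbed into the $\widehat{\G'}(\mathbb{C})$-conjugacy class, so no additional normalisation is needed. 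The only input beyond the classical case which is not already developed in Section~\ref{classical} is the functoriality of the Plancherel formula under $\alpha$, which is immediate from its construction.
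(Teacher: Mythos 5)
Your overall architecture is the one the paper actually uses for its classical-groups proof of this statement (Proposition \ref{isomorphisms:theorem}): compare local factors attached to $\pi$ and ${^\alpha\pi}$, invoke Langlands' conjecture on the Plancherel measure (Proposition \ref{Propplanchereldecomp}), apply the semisimple converse theorem (Proposition \ref{semisimpleconverse}), and pass from $\GL_m$-conjugacy back to $\widehat{\G}(\mathbb{C})$-conjugacy via Section \ref{Sectparametersasselfdual} (this last step, Proposition \ref{GGPprop}, you should cite explicitly). However, the step you call ``tautological'' is exactly where your argument has a gap. You justify the equality $\gamma(s,{^\alpha\pi}\times\tau,\psi_\F)=\gamma(s,\pi\times\tau,\psi_\F)$ by saying these factors are defined by integrals of matrix coefficients and Whittaker functionals on an ambient group. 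Supercuspidal representations of classical groups need not be generic, so no Whittaker-functional definition of these factors is available for general $\pi$; and where Whittaker models do exist, the normalisation depends on the Whittaker datum, which is precisely the non-trivial point you then try to absorb by hand into a $\widehat{\G'}(\mathbb{C})$-conjugation. The paper avoids this entirely: the quantity that is transported under $\alpha$ is the Plancherel measure $\mu^{\G}_{\psi}(\tau_s\otimes\pi)$, defined via intertwining operators and Haar measures on unipotent radicals with no Whittaker data at all, and its invariance under group isomorphisms is proved in Appendix \ref{section:compatibility_with_isom}; only afterwards is it converted into gamma factors of parameters by Proposition \ref{Propplanchereldecomp}. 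Your closing sentence points at this input, but as written the ``tautological equalities'' are unsubstantiated and the whole Whittaker-transport discussion is a red herring created by the wrong definition.

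Two further points. First, your reduction ``inner automorphisms act trivially on isomorphism classes of representations'' is false in the sense that matters: conjugation by elements of $(\mathbf{G}/\mathbf{Z})(\F)$ not lifting to $\G$ is inner as an automorphism of the algebraic group but in general permutes the members of an $L$-packet, so ${^\alpha\pi}\not\simeq\pi$; this is exactly the case for which the paper needs Conjecture \ref{Hainesisomorphisms:conjecture} (property $(\mathscr{C}8)$ in Proposition \ref{semisimplecorresp}), so it cannot be discarded as trivial. The reduction is also not load-bearing if your gamma-factor argument is repaired to cover all $\alpha$, so it is best dropped. Second, the reduction to supercuspidal $\pi$ is unnecessary and slightly off: the Plancherel-measure argument applies directly to any irreducible $\pi$ of the classical group, whereas after passing to the supercuspidal support you are on a Levi $\prod_i\GL_{n_i}(\E)\times\G_0$, which is not a classical group in the sense of Definition \ref{defclassgroup}, so ``supercuspidal on a classical group'' does not literally cover what your reduction produces and you would need an extra (routine but unstated) argument for such products and for isomorphisms mixing their factors.
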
}

 For descending our results to the smallest possible base ring later, it is useful to make the following conjecture:
\begin{conjecture}\label{fieldautconj}
Suppose~$\LL_\G$ exists,  Let~$\sigma$ be an automorphism of~$\mathbb{C}$ fixing~$\sqrt{q}$, and~$(\M,\rho)\in\Pi_{sc,\Kbar}(\G)$.  Then~$\LL_\G(\rho^\sigma)=\LL_{\G}(\rho)^\sigma$.
\end{conjecture}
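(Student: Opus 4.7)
My approach is to reduce Conjecture \ref{fieldautconj} to a $\sigma$-equivariant characterization of $\mathcal{LL}_\G$ by gamma factors. First, I would use compatibility with parabolic induction (Conjecture \ref{ParaindLanglands}) to reduce to the supercuspidal case: unwinding the definition of $\mathrm{LL}_\G$ via the supercuspidal support map, it suffices to prove that $(\mathcal{LL}_\M(\rho))_{ss}^\sigma = (\mathcal{LL}_\M(\rho^\sigma))_{ss}$ for every supercuspidal representation $\rho$ of each standard Levi $\M$. Because $\sigma$ fixes $\sqrt{q}$, the normalization $\delta_\P^{1/2}$ is $\sigma$-fixed, so $i_\P^\G(\rho)^\sigma \simeq i_\P^\G(\rho^\sigma)$ and the reduction is consistent. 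Next, the Langlands classification (Conjecture \ref{LLcorresp}(5)) allows further reduction to the tempered case: twisting a tempered parameter of $\M$ by a positive unramified character $\chi_\lambda$ corresponds to multiplying by the cocycle $z_{\chi_\lambda}$, and since $\chi_\lambda$ is $\mathbb{R}_{>0}$-valued, $z_{\chi_\lambda}$ is $\sigma$-invariant.

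At the heart of the argument is a characterization of the semisimple local Langlands correspondence by twisted gamma factors $\gamma(s,\rho\times\tau,\psi)$ with $\tau$ running over supercuspidals of $\GL_m$. For $\GL_n$ this is Henniart's theorem; for classical groups it is Cogdell--Piatetski-Shapiro--Shahidi together with Arthur and Mok. On the representation side, the gamma factor is built from a local zeta integral involving Whittaker functions, an intertwining operator and the character $\psi$, all defined over $\mathbb{Z}[\sqrt{q}^{-1},\mu_{p^\infty}]$; applying $\sigma$ to every ingredient yields
\[\sigma\bigl(\gamma(s,\rho\times\tau,\psi)\bigr)=\gamma(s,\rho^\sigma\times\tau^\sigma,\psi^\sigma)\]
in $\mathbb{C}(q^{-s})$, with $\sigma$ acting coefficient-wise (this is where the hypothesis $\sigma(\sqrt{q})=\sqrt{q}$ intervenes decisively, as $\sqrt{q}$ appears in the normalization of the intertwining operator). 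On the parameter side, gamma factors of Weil--Deligne representations are constructed from $L$- and $\varepsilon$-factors with coefficients in $\mathbb{Q}(\sqrt{q})$, and the $\sigma$-action on $\widehat{\G}(\mathbb{C})$ (well-defined since $\widehat{\G}$ is a $\mathbb{Z}[1/p]$-scheme) commutes with their formation. Combining both equivariances, the parameters $(\mathcal{LL}_\M(\rho^\sigma))_{ss}$ and $((\mathcal{LL}_\M(\rho))_{ss})^\sigma$ produce identical families of twisted gamma factors, hence coincide up to $\widehat{\M}(\mathbb{C})$-conjugacy.

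The main obstacle is that a $\sigma$-equivariant characterization of $\mathrm{LL}_\G$ is only available for groups where the theory of Langlands--Shahidi $L$-functions and descent has been developed in sufficient detail, so this plan yields a proof only in the classical (and a handful of related) cases. This is precisely the content of Proposition \ref{automorphisms:theorem} of the present paper for symplectic, unitary and odd special orthogonal groups, where the argument is instead phrased through the Plancherel measure and its compatibility with $\sigma$ (using that $\sigma$ fixes $\sqrt{q}$) together with the interpretation of the Plancherel measure via gamma factors of Langlands parameters. For a fully general connected reductive $\G$, the cleanest alternative route would likely be via the Fargues--Scholze construction, which, being defined geometrically over $\mathbb{Z}_\ell[\sqrt{q}]$, should produce $\sigma$-equivariant parameters automatically once its compatibility with $\mathrm{LL}_\G$ is established in sufficient generality.
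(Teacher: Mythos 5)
Two preliminary remarks. First, the statement you are proving is stated in the paper only as a conjecture for general $\G$; the paper itself establishes it (in its semisimplified form) only for classical groups, namely in Proposition \ref{automorphisms:theorem}, and you correctly recognize this, so the honest comparison is between your sketch and that proposition's proof. Second, your reductions are consistent with the paper's framework: since $\LL_\G$ is defined on supercuspidal supports via compatibility with parabolic induction, it does suffice to treat supercuspidal $\rho$ on each Levi $\M$ (whose $\GL$-factors are handled by the known $\sigma$-equivariance for general linear groups, cited in the paper via Bushnell--Henniart), and the observation that $\sigma(\sqrt q)=\sqrt q$ keeps the normalizations $\delta_\P^{1/2}$ and the relevant unramified cocycles $\sigma$-stable is exactly the role this hypothesis plays in the paper as well.

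The genuine gap is in the core step, where you characterize the parameter by twisted gamma factors $\gamma(s,\rho\times\tau,\psi)$ ``built from a local zeta integral involving Whittaker functions, an intertwining operator and the character $\psi$'' and then apply $\sigma$ to every ingredient. Analytic gamma factors of this kind (Rankin--Selberg or Langlands--Shahidi) are only defined for \emph{generic} representations, and supercuspidal (more generally, irreducible) representations of symplectic, unitary and odd special orthogonal groups need not be generic. For non-generic members one defines the twisted gamma factor through the parameter or the packet, which is circular for the purpose of proving $\sigma$-equivariance of the correspondence; and invoking ``Cogdell--Piatetski-Shapiro--Shahidi together with Arthur and Mok'' as a $\sigma$-equivariant characterization glosses over precisely this point. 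The paper's proof of Proposition \ref{automorphisms:theorem} is built to avoid this obstruction: the analytic invariant it uses is the Plancherel measure $\mu_\psi(\tau_s\otimes\pi)$, which is defined for \emph{every} irreducible $\pi$, whose $\sigma$-equivariance is proved purely algebraically via the theory of intertwining operators (Proposition \ref{prop:functoriality} and the appendices), together with the check that the Haar-measure normalization attached to $\psi$ is $\mathbb{Q}(\sqrt q)$-valued and depends only on the conductor, hence is $\sigma$-stable. The bridge to Galois gamma factors is then Langlands' Plancherel-measure conjecture (Proposition \ref{Propplanchereldecomp}, resting on the normalized intertwining relations of Arthur, Mok, KMSW and the non-quasi-split odd orthogonal case), after which the semisimple converse theorem (Proposition \ref{semisimpleconverse}) and Proposition \ref{GGPprop} pin down the semisimple parameter up to $\widehat\G(\mathbb{C})$-conjugacy. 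So your plan and the paper's share the same skeleton (a $\sigma$-equivariant analytic invariant, its expression in Galois gamma factors, a converse theorem), but as written your choice of invariant does not cover non-generic supercuspidals, which is exactly the case the conjecture is about; replacing the zeta-integral gamma factors by the Plancherel measure is the missing idea. Your closing suggestion that Fargues--Scholze should give $\sigma$-equivariance for general $\G$ is plausible but is not an argument, and is not what the paper does.
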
  

Our formulation of local Langlands in families below uses only some of the basic properties of the expected semisimple local Langlands correspondence, so we work with the following abstract setting:

\begin{definition}\label{semisimplecorrespdef}
Let~$\Kbar$ be an algebraically closed field of charateristic zero.  A \emph{semisimple correspondence for~$\G$} over~$\Kbar$ is a family of maps ~$(\mathscr{C}_\M)$ \[\mathscr{C}_{\M}:\Pi_{sc,\Kbar}(\M)\rightarrow \Phi_{ss,\Kbar}(\M),\]
over all Levi subgroups~$\M$ of~$\G$, which satisfy:
\begin{enumerate}[$(\mathscr{C}1)$]
\item  $\mathscr{C}_\M$ has finite fibres called semisimple~$L$-packets, and is surjective if~$\M$ is~$\F$-quasi-split.\footnote{We could call a semisimple parameter \emph{relevant} if it is the semisimplification of a relevant Langlands parameters and demand in general the image is the set of relevant semisimple parameters.  However, when~$\M$ is not~$\F$-quasi-split it is not clear what this condition gives as it is given by semisimplification of Langlands parameters, (cf.~\cite[p10]{Haines}).}
\item (\emph{supercuspidals}) for all~$(\M,\rho)\in\Pi_{sc,\Kbar}(\M)$ (i.e.~all classes of supercuspidal representations of~$\M$), {any parameter~in~$\mathscr{C}_{\M}(\M,\rho)$} is the semisimplification of a discrete Langlands parameter. 
\item{ (\emph{unramified twisting}) for all~$(\M,\rho)\in\Pi_{sc,\Kbar}(\M)$} and all~$\chi:\M\rightarrow\Kbar^\times$ unramified, 
{\[\mathscr{C}_{\M}(\M,\rho\otimes \chi)^\circ=\mathscr{C}_{\M}(\M,\rho)^{\circ}\cdot z_\chi.\]}
\item {(\emph{central characters}) let~$(\M,\rho)\in\Pi_{sc,\Kbar}(\M)$}, if the centre of~$\M$ is not compact, we require\[\omega_\rho=\omega_{\mathscr{C}_{\M}{(\M,\rho)}}.\]
\item (\emph{supercuspidal supports and genericity}) In the case~$\G$ is~$\F$-quasi-split: For any Whittaker datum~$(\U,\psi)$ for~$\G$, every~semisimple~$L$-packet contains a unique~``$\psi$-generic'' supercuspidal support (Where we call a supercuspidal support~\emph{$\psi$-generic} if the support contains a representative~$(\M,\rho)$ with~$\M$ a standard Levi subgroup and~$\rho$ a~$\psi_\M$-generic representation).
\item (\emph{compatibility with parabolic induction})
For~Levi subgroups~$\M\leqslant\M'$ of~$\G$ and a supercuspidal representation~$\rho$ of~$\M$,{~$\mathscr{C}_{\M'}(\M,\rho)$ is the unique~$\widehat{\M}'(\Kbar)$-conjugacy class containing~$\iota_{\M,\M'}\circ\mathscr{C}_{\M}(\M,\rho)$.}
\item {(\emph{compatibility with field automorphisms})} For all automorphisms~$\sigma$ of~$\Kbar$ fixing~$\sqrt{q}$ and all~$(\M',\rho)\in\Pi_{sc,\Kbar}(\M)$,~$\mathscr{C}_{\M}(\M',\rho^\sigma)=\mathscr{C}_{\M}(\M',\rho)^\sigma$.
\item {(\emph{compatibility with inner automorphisms}) Let~$g \in
(\mathbf{G}/\mathbf{Z})(\F)$,~$(\M,\rho)\in\Pi_{sc,\Kbar}(\M)$, then~$\mathscr{C}_{\G}(\M^g,\rho^g)=\mathscr{C}_{\G}(\M,\rho)$.}
\end{enumerate}
\end{definition}

\begin{remark}
It is important to note, the properties~$(\mathscr{C}1-6)$ are quite mild and do not characterize a semisimple correspondence.  For example, one could interchange the semisimple parameters of two~$\psi$-generic supercuspidal supports on the same (conjugacy class of) Levi subgroup and get another semisimple correspondence.  
\end{remark}

As noted before, the conjectural semisimple local Langlands correspondence defines a \emph{natural} semisimple correspondence.
\begin{proposition}\label{semisimplecorresp}
Suppose~$\mathcal{LL}_\M$ exists (Conjecture \ref{LLcorresp}), together with Conjectures \ref{ParaindLanglands}, \ref{genericpacket}, \ref{Hainesisomorphisms:conjecture}, and \ref{fieldautconj}, for all Levi subgroups~$\M$ of~$\G$.  Then~$(\LL_{\M})=((~)_{ss}\circ \mathcal{LL}_\M\circ (~)_{sc}^{-1})$ is a semisimple correspondence for~$\G$.
\end{proposition}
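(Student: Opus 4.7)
The proof is a verification of the eight axioms $(\mathscr{C}1)$--$(\mathscr{C}8)$ of Definition~\ref{semisimplecorrespdef} for the proposed map $\LL_\M := (~)_{ss}\circ\mathcal{LL}_\M\circ(~)_{sc}^{-1}$. The first step is well-definedness on supercuspidal supports: this is exactly the content of the commutative diagram preceding Definition~\ref{semisimplecorrespdef}, where Conjecture~\ref{ParaindLanglands} forces two representations sharing a supercuspidal support $(\M_0,\rho)$ to have semisimple Langlands parameters both $\widehat{\M}(\Kbar)$-conjugate to $\iota_{\M_0,\M}\circ\mathcal{LL}_{\M_0}(\rho)_{ss}$.

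The axioms $(\mathscr{C}3)$ and $(\mathscr{C}4)$ then follow immediately from parts (3) and (4) of Conjecture~\ref{LLcorresp}, while $(\mathscr{C}6)$ is a direct reformulation of Conjecture~\ref{ParaindLanglands} applied to an inclusion $\M\hookrightarrow\M'$. For $(\mathscr{C}1)$, the fibres of $\LL_\M$ are finite because relevance (Conjecture~\ref{LLcorresp}(1)) gives finite $L$-packets, and the fibres of the semisimplification $(~)_{ss}:\Phi_{\SL_2,\Kbar}(\M)\to\Phi_{ss,\Kbar}(\M)$ are finite (parametrized by nilpotent orbits in the centralizer of the image of $\W_\F$); surjectivity in the quasi-split case is immediate since every parameter is then relevant. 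For $(\mathscr{C}2)$, supercuspidal representations are discrete series, so Conjecture~\ref{LLcorresp}(2) implies that $\mathcal{LL}_\M(\rho)$ is a discrete $\SL_2$-parameter. Axioms $(\mathscr{C}7)$ and $(\mathscr{C}8)$ are direct applications of Conjectures~\ref{fieldautconj} and~\ref{Hainesisomorphisms:conjecture} respectively; the latter is applied to the inner automorphism of $\G$ induced by an element of $(\mathbf{G}/\mathbf{Z})(\F)$, whose induced $L$-automorphism is inner, hence trivial on $\widehat{\G}$-conjugacy classes.

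The substantive point is $(\mathscr{C}5)$. For $\G$ quasi-split with Whittaker datum $(\U,\psi)$, each extended $L$-packet---that is, each fibre of $(~)_{ss}\circ\mathcal{LL}_\G=\LL_\G\circ(~)_{sc}$ over a semisimple parameter---contains a unique $\psi$-generic representation $\pi$ by Conjecture~\ref{genericpacket}. The plan is then to invoke Rodier's heredity of genericity (equivalently, compatibility of the generic functor with Jacquet restriction) to translate this into a uniqueness statement about $\psi$-generic supercuspidal supports. On the one hand, the supercuspidal support $(\M_0,\rho)$ of $\pi$, with $\M_0$ chosen standard after $\G$-conjugation, has $\rho$ necessarily $\psi_{\M_0}$-generic, so yields a $\psi$-generic supercuspidal support in the corresponding fibre of $\LL_\G$. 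On the other hand, any $\psi$-generic supercuspidal support $(\M',\rho')$ in that fibre produces, via Rodier's theorem applied to $i_{\P_{\M'}}^\G(\rho')$, a unique $\psi$-generic irreducible subquotient $\pi'$; this $\pi'$ lies in the same extended $L$-packet by well-definedness of $\LL_\G$, so $\pi=\pi'$, which forces $(\M',\rho')\sim_\G(\M_0,\rho)$.

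The hard part is the bookkeeping in $(\mathscr{C}5)$: transferring uniqueness at the level of representations to uniqueness at the level of supercuspidal supports, and verifying that the ``standard Levi'' convention in the definition of $\psi$-generic supercuspidal support is harmlessly achieved after $\G$-conjugation. Once Rodier's theorem is in hand, the remaining verifications are essentially mechanical unpackings of the assumed conjectures.
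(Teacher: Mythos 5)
Your proposal is correct, and its skeleton coincides with the paper's proof: well-definedness from Conjecture \ref{ParaindLanglands}, axioms $(\mathscr{C}1)$--$(\mathscr{C}4)$ from properties (1)--(4) of Conjecture \ref{LLcorresp}, $(\mathscr{C}6)$ from Conjecture \ref{ParaindLanglands}, $(\mathscr{C}7)$ from Conjecture \ref{fieldautconj}, and $(\mathscr{C}5)$ from heredity of genericity combined with Conjecture \ref{genericpacket}; your treatment of $(\mathscr{C}5)$ simply makes explicit the two directions of Rodier-type heredity (a generic subquotient forces the inducing supercuspidal datum to be generic, and a generic supercuspidal datum produces a generic constituent, unique by the extended packet conjecture) that the paper compresses into a single sentence. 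The one genuine divergence is $(\mathscr{C}8)$: you apply Conjecture \ref{Hainesisomorphisms:conjecture} directly to the inner $\F$-automorphism $\mathrm{Ad}(g)$ of $\G$ itself, using that its dual $L$-automorphism is $\widehat{\G}$-conjugate to the identity, and then transfer the resulting equality of semisimplified parameters along supercuspidal supports; the paper instead fixes a maximal torus $\T\subset\M$, writes $g=th$ with $h\in\G$ and $t\in(\mathbf{T}/\mathbf{Z})(\F)$, applies the isomorphism conjecture only at the level of $\M$ (to $\rho\mapsto\rho^{t}$), and concludes with $(\mathscr{C}6)$. Your route is more direct and avoids justifying the decomposition $g=th$, at the mild cost of invoking the conjecture for $\G$ itself (allowed, since $\G$ is among its own Levi subgroups) and of checking that the supercuspidal support of ${}^{\alpha}\pi$ is $(\M^{g},\rho^{g})$; both arguments are sound. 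One small caveat, which you share with the paper's ``follows at once'': for $(\mathscr{C}2)$ a supercuspidal representation is in general only an unramified twist of a discrete series (its central character need not be unitary), so strictly one should combine properties (2) and (3) of Conjecture \ref{LLcorresp}, or note that in the later applications the representative is chosen with finite order central character.
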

\begin{proof}
The map~$\LL_{\M}$ is well defined by Conjecture \ref{ParaindLanglands}.  Properties~$(\mathscr{C}1)-(\mathscr{C}4)$ follow at once from Properties~$(1)-(4)$ of Conjecture \ref{LLcorresp}.   A quotient of a parabolic induction being~$\psi$-generic implies that the inducing representation is~$\psi_{\M}$-generic so $(\mathscr{C}5)$ follows from Conjecture \ref{genericpacket}.  Compatibility with parabolic induction~$(\mathscr{C}6)$ follows from Conjecture \ref{ParaindLanglands}, and $(\mathscr{C}7)$ is Conjecture \ref{fieldautconj}.

Finally, we explain how to deduce $(\mathscr{C}8)$ from Conjecture \ref{Hainesisomorphisms:conjecture}. Given $g$ in $(\mathbf{G}/\mathbf{Z})(\F)$ and $(\M,\rho)$ as in $(\mathscr{C}8)$, we can fix a maximal torus $\T$ in $\M$, and write $g = th$, with $h$ in $\G(\F)$ and $t$ in $(\mathbf{T}/\mathbf{Z})(\F)$.  Then we have $\mathscr{C}_\G(\M^g,\rho^g) = \mathscr{C}_\G(\M^t,\rho^t) = \mathscr{C}_\G(\M,\rho^t)$: the first equality because our semisimple correspondence $\mathscr{C}_\G$ is defined on $\G(\F)$-conjugacy classes of supercuspidal pairs, the second because $\M^t = \M$. So it suffices to show $\mathscr{C}_\G(\M,\rho^t) = \mathscr{C}_\G(\M,\rho)$.  Conjecture \ref{Hainesisomorphisms:conjecture} shows that $\mathscr{C}_\M(\M,\rho^t) = \mathscr{C}_\M(\M,\rho)$, and then we are done by compatibility with parabolic induction~$(\mathscr{C}6)$.
\end{proof}

\subsection{Integral parameters}
\begin{definition}
A continuous $L$-homomorphism $\phi_{\ell}:\,\W_{\F}\To{} \LG(\Ql)$ is called \emph{integral} if its image in~$\LG(\Ql)$ has compact closure.  
\end{definition}

A basic application of Bruhat-Tits theory (cf.~ the proof of
\cite[Proposition 2.9]{DHKM}) shows that $\phi_{\ell}$ is integral if and
only if a $\widehat{\G}(\Ql)$-conjugate of it takes values in~$\LG(\Zl)$.     

\begin{proposition}\label{propintegralparameters}
Let~$\phi_{\ell}$ be a continuous $L$-homomorphism $\W_{\F}\To{} \LG(\Ql)$.
\begin{enumerate}
\item\label{iIntegralProp} Suppose $\phi_{\ell}$ is discrete and
  Frobenius semisimple.  Then the following are equivalent: 
\begin{enumerate}
\item  \label{iIntegralProp1}$\phi_{\ell}$ is integral;
\item \label{iIntegralProp2}$\det(\phi_{\ell})$ is integral;
\item \label{iIntegralProp3}$\omega_{\phi_{\ell}}$ is integral.
\end{enumerate}
\item\label{iiIntegralProp}  In general, the following are equivalent:
\begin{enumerate}
\item\label{iiIntegralProp1} $\phi_{\ell}$ is integral;
  \item\label{iiIntegralProp2}  the
 Frobenius-semisimplification~${\phi_{\ell}}_{\Fr-ss}$ of $\phi_{\ell}$ is
  integral.
\item\label{iiIntegralProp2}  the
  semisimplification~${\phi_{\ell}}_{ss}$ of $\phi_{\ell}$ is
  integral. 
\item \label{iiIntegralProp3} the $\Ql$-point of $\underline{\Z}^1(\W_\F, \widehat{\G})\sslash \widehat{\G}$ corresponding to $\phi_{\ell}$ factors through $\Spec (\Zl)$.
\end{enumerate}
\end{enumerate}
\end{proposition}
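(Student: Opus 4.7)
The plan is to prove Part~(2) via the Weil--Deligne dictionary and GIT theory, and Part~(1) by using \emph{essential boundedness} to reduce the key implication (c)$\Rightarrow$(a) to the abelian case handled by local Langlands for tori.

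For Part~(2), I would invoke the bijection recalled in Remark~\ref{rk:dictionnary}: $\phi_{\ell}$ corresponds to a Weil--Deligne pair $(r,N)$ with $\phi_{\ell}(\sigma)=r(\sigma)\exp(t_{\ell}(\sigma)N)$ on an open subgroup of $\I_\F$, while on a Frobenius lift $w$, one has $r(w)=\phi_{\ell,\Fr\text{-}\mathrm{ss}}(w)=$ semisimple part of $\phi_{\ell}(w)$ (Remark~\ref{rk:Frob_ss}). Since the unipotent factors $\exp(t_{\ell}(\sigma)N)$ and the restriction to the profinite group $\I_\F$ are automatically bounded, and since the semisimple part of a bounded element is bounded, the images of $\phi_{\ell}$, $\phi_{\ell,\Fr\text{-}\mathrm{ss}}$, and $r$ in $\LG(\Ql)$ are simultaneously bounded or unbounded. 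This gives (a)$\Leftrightarrow$(b)$\Leftrightarrow$(c). For (d), I would apply \cite[Remark~6.9~ii)]{DHKM}: the $\widehat{\G}$-conjugacy class of a semisimple parameter is the unique closed orbit in its GIT fibre, so a $\Ql$-point of $\underline{\Z}^1(\W_\F,\widehat{\G})\sslash\widehat{\G}$ is represented by the semisimplification, and integrality of the GIT point is equivalent to that closed orbit being defined over $\Zl$, i.e.\ to integrality of the semisimplification.

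For Part~(1), the implication (a)$\Rightarrow$(b) is immediate because the $\mathbb{Z}[1/p]$-morphism $\det:\LG\to\LG_{\rad}$ sends $\LG(\Zl)$-valued parameters to $\LG_{\rad}(\Zl)$-valued ones. The equivalence (b)$\Leftrightarrow$(c) follows from the local Langlands correspondence for tori (\cite[9.2]{Borel}), which respects integrality, together with Langlands' construction extending $\det\phi_{\ell}$ to $\omega_{\phi_{\ell}}$ across the finite quotient $\Z_{\G}/\G_{\rad}$, which also preserves integrality.

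The substantial step is (c)$\Rightarrow$(a). My plan is to exploit the \emph{essential boundedness} of discrete Frobenius-semisimple parameters: for every such $\phi_{\ell}$ there exists an unramified character $\chi:\W_\F\to\Z(\widehat{\G})^{\circ}(\Ql)$ such that $\chi\phi_{\ell}$ has relatively compact image in $\LG(\Ql)$. Granted this, $\chi\phi_{\ell}$ is integral, hence $\omega_{\chi\phi_{\ell}}=\omega_{\chi}\cdot\omega_{\phi_{\ell}}$ is integral; combined with the hypothesis that $\omega_{\phi_{\ell}}$ is integral, this forces $\omega_{\chi}$ to be integral, whence $\chi$ itself is integral via local Langlands for tori, and finally $\phi_{\ell}=\chi^{-1}(\chi\phi_{\ell})$ is a product of integral parameters and hence integral. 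The hard part will be establishing essential boundedness itself: over $\mathbb{C}$ this is the standard assertion that essentially discrete parameters become tempered after an unramified central twist, and I would transport the argument to the $\ell$-adic setting by composing with a faithful algebraic representation $\widehat{\G}\hookrightarrow\GL_N$, decomposing the resulting Frobenius-semisimple $\W_\F$-representation via Clifford theory into summands induced from characters of unramified extensions of $\F$ (whose Frobenius eigenvalues share a common absolute value), and using discreteness of $\phi_{\ell}$ to pin down the unique unramified central twist normalizing these absolute values to $1$.
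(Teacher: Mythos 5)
Your chain (2a)$\Leftrightarrow$(2b)$\Leftrightarrow$(2c) is fine and is in substance the paper's argument in different packaging: the paper factors $\phi_{\ell}=\psi\circ(\mathrm{id}\times\mathrm{Sp}_2)$ and its semisimplification as $\psi\circ(\mathrm{id}\times 1_2)$ and uses that both $\mathrm{Sp}_2(\W_\F)$ and $1_2(\W_\F)$ are bounded, while you strip the bounded unipotent corrections ($u^{\nu(w)}$ and $\exp(t_\ell(\sigma)N)$) directly from the Weil--Deligne dictionary; either way the point is that boundedness only depends on $\psi(\Fr,1)$. The genuine gap in Part (2) is the implication (d)$\Rightarrow$(c). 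The remark you cite from \cite{DHKM} only characterizes semisimple parameters over a field as those whose cocycle has Zariski-closed $\widehat{\G}$-orbit; it says nothing about integral points. Your sentence ``integrality of the GIT point is equivalent to that closed orbit being defined over $\Zl$'' is precisely the nontrivial statement to be proven: one must show that if the image of $(\phi_{\ell})_{ss}$ in $(\underline{\Z}^1(\W_\F,\widehat{\G})\sslash\widehat{\G})(\Ql)$ extends to $\Spec(\Zl)$, then $(\phi_{\ell})_{ss}$ is bounded. The paper proves this by passing to a finite extension $\F'$ over which $(\phi_{\ell})_{ss}$ is unramified and valued in $\widehat{\G}(\Ql)$, conjugating $(\phi_{\ell})_{ss}(\Fr')$ into $\widehat{\T}(\Ql)$, and using finiteness of the Chevalley morphism $\widehat{\T}\to\widehat{\G}\sslash\widehat{\G}$ to conclude that this element lies in $\widehat{\T}(\Zl)$. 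Some such argument (or a correct reference for lifting integral points of the GIT quotient into the closed orbit above them) is required; as written the step is unsupported.

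In Part (1), (a)$\Rightarrow$(b) and (b)$\Leftrightarrow$(c) agree with the paper, but the whole weight of (c)$\Rightarrow$(a) is placed on the ``essential boundedness'' lemma, which you do not prove, and your sketch omits the idea that makes it work. First, a detail: irreducible Frobenius-semisimple representations of $\W_\F$ are not in general induced from characters of unramified extensions; Clifford theory relative to $\I_\F$ only gives induction of a representation with isotypic inertial restriction (what you actually need — that the Frobenius eigenvalues of an irreducible constituent share a common valuation — follows instead from the fact that some power of Frobenius acts by a scalar). Second, and more seriously, after embedding $\widehat{\G}\hookrightarrow\GL_N$ the natural normalizing twist lives in the centre of each $\GL$-block, and you must show it can be taken with values in $Z(\widehat{\G})^{\circ}$ (indeed in $(Z(\widehat{\G})^{\W_\F})^{\circ}$, so that the twist is again an $L$-homomorphism). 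This is exactly where discreteness must enter, and the mechanism is the centralizer argument the paper uses: since the image of inertia is finite, some power of the Frobenius image centralizes the entire image of the parameter, and discreteness makes this centralizer finite modulo $Z(\widehat{\G})^{\W_\F}$; the paper then avoids twisting altogether by using the isogeny $\G_{\mathrm{sc}}\times\G_{\rad}\to\G$ to reduce to $\widehat{\G}$ adjoint, where this forces the Frobenius image to have finite order. Without that argument (or an equivalent), essential boundedness — hence the substantive content of Part (1) — is missing. A minor further point: your step ``$\omega_{\chi}$ integral $\Rightarrow\chi$ integral'' needs that $Z(\widehat{\G})^{\circ}\to\widehat{\G}_{\rad}$ is an isogeny so that boundedness pulls back along it, and the bookkeeping is cleaner with criterion (b) ($\det$) rather than (c), since multiplicativity of Langlands' extension $\omega_{-}$ across $\Z_\G/\G_{\rad}$ would itself require a justification.
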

\begin{proof}
  The equivalence between (2a) and (2b) follows from Remark \ref{rk:Frob_ss} and the fact
  that a unipotent element of $\LG(\Ql)$ is compact (i.e. generates a subgroup that has
  compact closure).
  So we may assume from now on that $\phi_{\ell}$ is Frobenius-semisimple.
  Denote by ${\rm Sp}_{2} : \W_{\F}\longrightarrow\SL_{2}(\Ql)$ the special Langlands
  parameter for ${\rm PGL}_{2}$, which factors over $\W_{\F}/\P_{\F}$ and takes a (chosen) generator of
  $\I_{\F}/\P_{\F}$ to $\left(\begin{smallmatrix}1&1\\0&1\end{smallmatrix}\right)$ and a
  (chosen) lift of Frobenius to
  $\left(\begin{smallmatrix}\sqrt q&0\\0&\sqrt q^{-1}\end{smallmatrix}\right)$.
  By the dictionary between Langlands parameters over $\Ql$ and $\SL_{2}$ parameters
  recalled in Remark \ref{rk:dictionnary}, we can
    factorize $\phi_{\ell} = \psi \circ ({\rm id}\times{\rm Sp}_{2})  : \W_{\F}\To{}
    \W_{\F}\times\SL_{2}(\Ql)\To{\psi} \LG(\Ql)$ for some continuous $L$-homomorphism $\psi$ such
    that $\psi(\I_{\F}\times\{1\})$ is finite and $\psi(\Fr,1)$ is semisimple for any
    Frobenius element $\Fr$ in $\W_{\F}$.
    Since ${\rm Sp}_{2}(\W_{\F})$ has compact closure in $\SL_{2}(\Ql)$,  we see that
    $\overline{\phi_{\ell}(\W_{\F})}$ is compact in $\LG(\Ql)$ if and only if
    $\overline{\psi(\W_{\F}\times\{1\})}$ is compact in $\LG(\Ql)$. In other words, we have
    \begin{center}
      $\phi_{\ell}$ is integral $\Leftrightarrow$ $\psi_{|\W_{\F}\times\{1\}}$ is integral 
$\Leftrightarrow$ $\psi(\Fr,1)$ is a compact element in $\LG(\Ql)$.
\end{center}

Let us now prove (2).  Let $1_{2}:\W_{\F}\longrightarrow\SL_{2}(\Ql)$ be the  Langlands
  parameter of the trivial representation of ${\rm PGL}_{2}$, which factors over $\W_{\F}/\I_{\F}$ and takes  Frobenius to
  $\left(\begin{smallmatrix}\sqrt q&0\\0&\sqrt q^{-1}\end{smallmatrix}\right)$. Then~${\phi_{\ell}}_{\rm ss}:= \psi \circ ({\rm id}\times 1_{2})$ is (a representative of) the
  semi-simplification of ${\phi_{\ell}}$. As above, since~$1_{2}(\W_{\F})$ has compact closure
  in~$\SL_{2}(\Ql)$, we see that
     \begin{center}
      ${\phi_{\ell}}_{\rm ss}$ is integral $\Leftrightarrow$ $\psi_{|\W_{\F}\times\{1\}}$ is integral.
    \end{center} whence the equivalence
    between (2a) and (2c). The implication (2c) $\Rightarrow$ (2d) is clear, so let us
    assume that (2d) holds. Observe that, in order to prove (2c), it suffices to find a
    finite extension $\F'$ of $\F$ such that ${\phi_{\ell}}_{\rm ss}(\W_{\F'})$ has
    compact closure in $\LG(\Ql)$. Let us choose $\F'$ such that ${\phi_{\ell}}_{\rm
      ss}(\I_{\F'})=\{1\}$ and ${\phi_{\ell}}_{\rm ss}(\W_{\F'})\subset \widehat\G(\Ql)$. Denoting by
    $\Fr'$ a Frobenius element in $\W_{\F'}$, we need to show that ${\phi_{\ell}}_{\rm ss}(\Fr')$ is
    compact in $\widehat\G(\Ql)$, knowing that its image in $(\widehat{\G}\sslash\widehat\G)(\Ql)$ belongs to
    $(\widehat{\G}\sslash\widehat\G)(\Zl)$. But we can conjugate ${\phi_{\ell}}$ such that
    ${\phi_{\ell}}_{\rm ss}(\Fr') \in \widehat\T(\Ql)$ and, since the morphism
    $\widehat\T\To{}\widehat{\G}\sslash\widehat\G$ is finite, we then actually  have 
    ${\phi_{\ell}}_{\rm ss}(\Fr') \in \widehat\T(\Zl)$.

    Let us now prove (1). Here, the implication (1a)$\Rightarrow$(1b) is clear and the
    equivalence between (1b) and (1c) follows from the fact that the local Langlands
    correspondence for tori respects integrality. Using the isogeny $\G_{\rm sc}\times
    \G_{\rm rad}\To{} \G$, we see that, to prove (1b)$\Rightarrow$(1a), it suffices to do
    it when $\widehat\G$ is adjoint. That is, we need to prove that, in this case, any
    discrete parameter ${\phi_{\ell}}$ is integral. With the notation $\psi$ introduced above, all we need is to
    check that $\psi(\Fr,1)$ is a compact element. But, since $\psi(\I_{\F}\times\{1\})$ is
    finite, there is an integer $n$ such that $\psi(\Fr,1)^{n}$ belongs to
    $\widehat\G(\Ql)$ and centralizes  $\psi(\I_{\F}\times\{1\})$. It follows that
    $\psi(\Fr,1)^{n}$ belongs to the centralizer in $\widehat\G(\Ql)$ of the image
    $\psi(\W_{\F}\times\SL_{2}(\Ql))$ of $\psi$, which is trivial  since ${\phi_{\ell}}$
    is discrete and $\widehat\G$ is adjoint.  Hence we see that $\psi(\Fr,1)$ even has finite order.
\end{proof}

\begin{remark}\label{remarkpreservesintegrality}
Assume the conjectural local Langlands correspondence for~$\G$ (Conjecture \ref{LLcorresp}) and its Levi subgroups, and fix an isomorphism~$\mathbb{C}\simeq \Ql$ to write the local Langlands correspondence for~$\ell$-adic representations.
\begin{enumerate}
\item Using preservation of central characters and discrete-ness (more precisely, that the parameters of supercuspidal representations are discrete), Proposition \ref{propintegralparameters} (1) and \cite[II 4.12]{Vig96}, show that a supercuspidal representation of~$\G$ is integral if and only if its associated~$\ell$-adic Langlands parameter is integral.  
\item Building on this (under the same hypotheses for all Levi subgroups), using compatibility with parabolic induction, Proposition \ref{propintegralparameters} (2) and \cite[Corollary 1.6]{DHKMfiniteness}, show that an irreducible representation of~$\G$ is integral if and only if its associated~$\ell$-adic Langlands parameter is integral.  
\end{enumerate}
\end{remark}
\section{The semisimple local Langlands correspondence for classical groups}\label{classical}
\subsection{Classical groups}\label{classgroups}
Let~$\E/\F$ be a trivial or quadratic extension of~$p$-adic fields and~$c$ denote the generator of~$\Gal(\E/\F)$, and~$\epsilon\in\{\pm1\}$.  Let~$(\V,h)$ be an~$\epsilon$-hermitian space and
\[\U(\V,h)=\{g\in\GL_\E(\V): h(gv,gw)=h(v,w),~\text{for all~}v,w\in\V\},\]
denote the group of isometries.  
\begin{enumerate}
\item When~$\E/\F$ is quadratic then~$\U(\V,h)$ is a \emph{unitary group}.  
\item When~$\E=\F$ and~$\epsilon=-1$, then~$\U(\V,h)$ is a \emph{symplectic group}
\item When~$\E=\F$ and~$\epsilon=1$, then~$\U(\V,h)$ is an \emph{orthogonal group}.
\end{enumerate}
If~$\dim_\E(\V)=0$ then we interpret~$\G$ to be the trivial group.  Thus, the isometry group~$\U(\V,h)$ is the~$\F$-points of a possibly disconnected reductive group defined over~$\F$.  
\begin{definition}\label{defclassgroup}
By a \emph{classical group}~$\G$ we mean a unitary, symplectic or {odd special orthogonal group}.  
\end{definition}So, in particular, a ``classical group'' in this work is the points of a connected reductive group.

\begin{remark}
{\emph{We do not consider even (special) orthogonal groups 
and have not included them in our definition of a ``classical group''.}}
\end{remark}
%

\subsection{Parameters for classical groups as (conjugate) self-dual parameters}\label{Sectparametersasselfdual}

We can associate to a parameter~$\phi:\W_\F\times\SL_2(\mathbb{C})\rightarrow \LG(\mathbb{C})$ a self-dual or conjugate self-dual representation~$\mathcal{R}\circ \phi:\W_\E\times\SL_2(\mathbb{C})\rightarrow \GL_m(\mathbb{C})$, for some $m=m(\widehat{\G})$, following \cite[Section 8]{GGP12}:

Suppose~$\G$ is symplectic, then~$\LG(\mathbb{C})=\mathrm{SO}_{2n+1}(\mathbb{C})$ and we can compose with the standard representation~$\SO_{2n+1}(\mathbb{C})\rightarrow \GL_{2n+1}(\mathbb{C})$.  This allows us to consider an~$\SL_2$-parameter for~$\G$ as a self-dual parameter~$\phi:\W_\F\times\SL_2(\mathbb{C})\rightarrow \SO_{2n+1}(\mathbb{C})\hookrightarrow  \GL_{2n+1}(\mathbb{C})$ into~$\GL_{2n+1}(\mathbb{C})$.  The case of odd special orthogonal groups is similar. 

Suppose~$\G$ is unitary.  Then~$\LG(\mathbb{C})=\GL_n(\mathbb{C})\rtimes\Gal(\E/\F)$ and restricting~$\phi:\W_\F\times \SL_2(\mathbb{C})\rightarrow \LG(\mathbb{C})$ to~$\W_\E\times\SL_2(\mathbb{C})$ gives the required conjugate self-dual representation~$\mathcal{R}\circ\phi:\W_\E\times\SL_2(\mathbb{C})\rightarrow \GL_n(\mathbb{C})$.

\begin{proposition}[{\cite[Theorem 8.1]{GGP12}}]\label{GGPprop}
Let~$\phi$ and~$\phi'$ be~$\SL_2$-parameters for~$\G$.  Then~$\phi$ and~$\phi'$ are conjugate in~$\widehat{\G}(\mathbb{C})$ if and only if~$\mathcal{R}\circ\phi$ and~$\mathcal{R}\circ\phi'$ are equivalent in~$\GL_m(\mathbb{C})$.  
 \end{proposition}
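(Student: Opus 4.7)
The plan is to follow the argument of Gan--Gross--Prasad [GGP12, Theorem 8.1]. The ``only if'' direction is immediate: conjugating~$\phi$ by~$g \in \widehat{\G}(\mathbb{C})$ and composing with~$\mathcal{R}$ yields a~$\GL_m(\mathbb{C})$-conjugate of~$\mathcal{R} \circ \phi$, since~$\mathcal{R}$ restricts to a homomorphism on~$\widehat{\G}$ (either the standard embedding or the restriction from~$\W_\F$ to~$\W_\E$). All the content lies in the converse, and I would split by classical type.

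In the symplectic and odd-orthogonal cases,~$\widehat{\G}(\mathbb{C})$ is the stabilizer inside~$\GL_m(\mathbb{C})$ of a non-degenerate bilinear form~$B$ of a fixed symmetry (symmetric or alternating). Suppose~$g \in \GL_m(\mathbb{C})$ intertwines~$\mathcal{R}\circ\phi$ and~$\mathcal{R}\circ\phi'$. Then the pullback~$g^*B$, defined by~$(v,w) \mapsto B(g^{-1}v, g^{-1}w)$, is another non-degenerate~$(\mathcal{R}\circ\phi')$-invariant form of the same symmetry type. Since~$\mathcal{R}\circ\phi'$ is semisimple as a representation of~$\W_\F \times \SL_2(\mathbb{C})$, I would decompose~$\mathbb{C}^m$ along isotypic components. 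On each self-dual isotypic piece, the space of invariant bilinear forms of a given symmetry type is one-dimensional by Schur's lemma; non-self-dual constituents must pair with their duals. This lets one produce an element~$h$ in the centralizer~$\Z_{\GL_m(\mathbb{C})}(\mathcal{R}\circ\phi')$ with~$h^*(g^*B) = B$, whence~$gh \in \widehat{\G}(\mathbb{C})$ and~$gh$ still intertwines~$\phi$ with~$\phi'$.

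The unitary case is more subtle. Here~$\LG(\mathbb{C}) = \GL_n(\mathbb{C}) \rtimes \Gal(\E/\F)$ and~$\mathcal{R}\circ\phi$ is just the restriction to~$\W_\E \times \SL_2(\mathbb{C})$. A parameter~$\phi$ is determined up to~$\widehat{\G}(\mathbb{C})$-conjugacy by the representation~$\mathcal{R}\circ\phi$ \emph{together} with the conjugate-self-dual structure induced by~$\phi(w)$ for any~$w \in \W_\F \setminus \W_\E$. I would argue that, given an intertwiner~$g$ between~$\mathcal{R}\circ\phi$ and~$\mathcal{R}\circ\phi'$, the induced conjugate-self-dual structures on both sides must be of the same Asai sign (determined by~$\widehat{\G}$), and then apply a Schur-type adjustment analogous to the orthogonal/symplectic case to modify~$g$ so that it becomes equivariant for the full~$\W_\F$-action, hence lies in~$\GL_n(\mathbb{C}) = \widehat{\G}(\mathbb{C})$.

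The main obstacle I anticipate is the bookkeeping in the unitary case: one must check that the extension from~$\W_\E$ to~$\W_\F$ contributes no further invariants beyond the conjugate-self-dual isomorphism class, which requires a careful analysis of signs of conjugate-dualities as carried out in [GGP12, \S 8]. For the purposes of this paper I would simply invoke their result rather than reprove it.
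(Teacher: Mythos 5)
Your proposal is correct and ends up in the same place as the paper: the paper gives no argument of its own for this proposition but simply cites \cite[Theorem 8.1]{GGP12}, which is exactly what you conclude by doing, and your sketch of the Gan--Gross--Prasad argument (transport of the invariant form plus a Schur-lemma adjustment on isotypic components in the symplectic/odd-orthogonal cases, and the analysis of conjugate-self-dual structures and their signs in the unitary case) is an accurate summary of their proof.
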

  
\subsection{The local Langlands correspondence for classical groups}\label{LLforclassgroups}
\emph{The} local Langlands correspondence satisfying properties (1)-(5) of Conjecture \ref{LLcorresp} is known for general linear groups and classical groups in the following cases:
\begin{enumerate}[-]
\item for general linear groups by \cite{HarrisTaylor, Henniart, Scholze} (after Bernstein--Zelevinsky's reduction to the supercuspidal case \cite{Zelevinsky});
\item for symplectic, and odd split special orthogonal groups by \cite{Arthur};
{\item odd non-quasi-split special orthogonal groups by \cite{MR4776199};}
\item for quasi-split unitary groups by \cite{Mok} ;
\item for unitary groups by \cite[Section 1.6]{KMSW} and \cite[Theorem 2.5.1]{ChenZou2}. \end{enumerate}

\begin{remark}
{We do not include even (special) orthogonal groups, where while there is the local Langlands parameterization \cite{Arthur}, \cite[Theorem 4.4]{ChenZou1}, and \cite{MoeglinRenard} some of our framework would need extending (perhaps, mildly in this case) to disconnected reductive $\F$-groups to use it.}
\end{remark}

\subsection{The Plancherel measure}\label{subsection:plancherel_body}
Let $\G$ be a classical group either equal to $\U(\V,h)$ or in the special orthogonal case the subgroup of~$\U(\V,h)$ of isometries of determinant one, $\P$ a maximal parabolic subgroup of $\G$, and $\M$ a
Levi component of $\P$. Recall that the data of $(\M,\P)$ is equivalent to that of a
splitting of $\E$-vector spaces $\V=\W\oplus \V'\oplus \W'$, where $\W,\W'$ are totally
isotropic and $\V'$ is orthogonal to $\W\oplus \W'$, and this provides us with an
identification $\M=\GL(\W)\times\G'$ where~$\G'$ is $\U(\V',h)$ or in the orthogonal case is the subgroup of isometries of determinant one.

Fix~$\psi$ a non-trivial character of~$\F$, and let $\tau\otimes\pi$ be an irreducible
representation of $\M$.
We refer to Appendix~\ref{apdx:plancherel} for the construction of
the Plancherel measure
\[\mu_{\psi}^{\G}(\tau\otimes \pi)\in\mathbb{C}(\M/\M^{\circ})\]
in which the intertwining
operators have been normalized  with respect to a certain choice of Haar measure corresponding to~$\psi$ as in
\cite[Appendix B]{GanIchino14}.

With the aim of comparing this Plancherel measure with certain $\gamma$ factors, we will
slightly change notation as follows. Observe first that the map
$(g_{\W},g_{\V'})\mapsto \text{val}_{\E}(\det(g_\W))$ induces an isomorphism
$\M/\M^{\circ}\To\sim \mathbb{Z}$, and then an isomorphism
$\mathbb{C}[\M/\M^\circ] \simeq \mathbb{C}[(q^{-s})^{\pm 1}]$ where we see $q^{-s}$ as an
indeterminate. 
Accordingly,
we denote by $|\det|^s:\, \GL(\W)\To{} \CC[(q^{-s})^{\pm1}]$ 
the universal unramified character $g_\W\mapsto (q^{-s})^{\text{val}_{\E}(\det(g_\W))}$ of
$\GL(\W)$ and we put $\tau_{s}\otimes\pi:=  \tau.|\det|^{s} \otimes\pi$. Upon identifying
$\mathbb{C}(\M/\M^\circ)$ with $\mathbb{C}((q^{-s})^{\pm 1})$, this is the universal unramified
twist of $\tau\otimes\pi$ appearing in the definition of
$\mu_{\psi}^{\G}(\tau\otimes\pi)$. For this reason, we will denote by 
$\mu^{\G}_{\psi}(\tau_s\otimes\pi)$  the rational function in~$\mathbb{C}(q^{-s})$
corresponding to $\mu^{\G}_{\psi}(\tau\otimes \pi)\in\mathbb{C}(\M/\M^{\circ})$. We will
sometimes drop the subscript $\psi$ or the superscript $\G$ to simplify notation and
simply write~$\mu(\tau_s\otimes \pi)$.

 Similarly, for~$\tau$ and~$\tau'$ irreducible representations of~$\GL_k(\E)$
 and~$\GL_{k'}(\E)$, setting~$\tau_s=\tau\otimes|\det|^s$ and
 $\tau'_t=\tau'\otimes|\det|^t$, denote by~$$\mu(\tau_s\otimes\tau'_t) =
 \mu_{\psi_E}^{\GL_{k+k'}}(\tau\otimes \tau')\in\mathbb{C}(q^{-s},q^{-t})$$ the
 \emph{Plancherel measure} of~$\tau\otimes\tau'$ normalized with respect to our fixed
 character~$\psi_\E=\psi\circ\mathrm{Tr}_{\E/\F}$. Note that this is actually a rational
 function in $\mathbb{C} (q^{-s}q^{t})$. In particular, the hyperplanes $q^{-t}=1$ and
 $q^{-s}=q^{t}$ are not contained in its singular locus, and we simply denote by
 $$\mu(\tau_s\otimes\tau'):=\left.\mu(\tau_{s}\otimes\tau_{t})\right|_{q^{-t}=1}
 \hbox{ and }
 \mu(\tau_s\otimes\tau'_{-s}):=\left.\mu(\tau_{s}\otimes\tau'_{t})\right|_{q^{-t}=q^{s}}$$
the respective rational functions in $\mathbb{C}(q^{-s})$ obtained by restriction.

Later, we will need the following multiplicativity property of the Plancherel measure: 

\begin{proposition}\label{multofplancherel}
\begin{enumerate}
\item\label{multclassfactor} Let~$\M=\M_0\times \M_1$ be a Levi subgroup of~$\G$,
  where~$\M_0=\prod_{i=1}^r\GL_{n_i}(\E)$ is a product of general linear groups and~$\M_1$
  is a classical group.  Let~$\rho$ be an irreducible representation of~$\M$ which
  decomposes with respect to this decomposition as~$\bigotimes_{i=1}^r
  \rho_i\otimes\rho'$.  Let~$\P$ be a parabolic subgroup of~$\G$ with Levi factor~$\M$
  and~$\pi$ an irreducible subquotient of~$i_{\P}^{\G}(\rho)$.  Then, for~$\tau$ an
  irreducible representation of~$\GL_k(\E)$, we have 
\begin{equation*}\mu(\tau_s\otimes \pi) =
  \mu(\tau_s\otimes\rho')\prod_{i=1}^r\left.\mu^{\GL_{k+k_i}(\E)}(\tau_s\otimes
    \rho_i)\right.\left.\mu^{\GL_{k+k_i}(\E)}(\tau_s\otimes
    {\rho_i^c}^{\vee})\right..\end{equation*}
\item\label{multGLnfactor} 
Let $\P'$ be a parabolic of $\GL_m(\E)$ with Levi factor $\GL_{k_1}(\E)\times\cdots \times \GL_{k_l}(\E)$ and suppose $\tau$ is an irreducible subquotient of $i_{\P'}^{\GL_m(\E)}(\tau'_1\otimes \cdots \otimes \tau'_l)$, with $\tau'_i$ an irreducible representation of $\GL_{k_i}(\E)$.  Then, for~$\pi$ an irreducible representation of a classical group~$\G'$, we have
\begin{equation*}
\mu(\tau_s\otimes \pi) = \left(\prod_{1\leqslant i<j\leqslant
    l}\mu^{\GL_{k_i+k_j}(\E)}((\tau_i')_s\otimes
  {((\tau_j')^c)^{\vee}}_{-s})\right)\prod_{1\leqslant i\leqslant l}\mu((\tau_i')_s\otimes
\pi) 
\end{equation*}
\item\label{multGLnmeasure} Let $m = k_1+\cdots + k_r$, and let $n = k_1'+\cdots + k'_l$, and suppose $\tau$ (resp.~$\tau'$) is an irreducible subquotient of $i_{\P'}^{\GL_m(\E)}(\tau_1\otimes\cdots\otimes \tau_r)$ (resp.~$i_{\P''}^{\GL_n(\E)}(\tau_1'\otimes\cdots \otimes \tau_l')$), with $\tau_i$ (resp.~$\tau_i'$) an irreducible representation of $\GL_{k_i}(\E)$ (resp.~$\GL_{k_i'}(\E)$).  Then
\begin{equation*}
\mu^{\GL_{m+n}(\E)}(\tau_s\otimes (\tau')_t) = \prod_{\substack{1\leqslant i\leqslant r \\ 1\leqslant j\leqslant l\\i\leqslant j}}\mu^{\GL_{k_i+k_j'}(\E)}((\tau_i)_s\otimes (\tau_j')_t)
\end{equation*}
\end{enumerate}
\end{proposition}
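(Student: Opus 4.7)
The three parts share a common strategy: reduce each identity to a computation involving parabolic induction at one Levi step, using transitivity of the intertwining operators together with the (to-be-established) fact that the Plancherel measure is unchanged upon passing to irreducible subquotients of parabolic inductions.

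The first step is to record the key input, which is the extension of Gan--Ichino's result mentioned in the introduction: if $\Q = \L \N_\Q$ is a parabolic subgroup of a reductive group $\H$, $\sigma$ is an irreducible representation of $\L$, and $\pi$ is any irreducible subquotient of $i_\Q^\H(\sigma)$, then for any compatible Levi embedding into a larger group $\H'$ one has
\[ \mu^{\H'}(\tau_s \otimes \pi) = \mu^{\H'}(\tau_s \otimes i_\Q^\H(\sigma)), \]
where the right-hand side is computed by the standard formulae. This is proved in Appendices \ref{Appendix:intertwining} and \ref{apdx:j_functions} by following Gan--Ichino's pattern: one observes that the normalized intertwining operators which define $\mu^{\H'}$ act by scalars on the isotypic component of $\pi$ inside $i_\Q^\H(\sigma)$ with the same scalar as on the whole induced representation (this uses the algebraic description of the operators via $j$-functions, which are invariants of the inducing datum up to unramified twist, cf.\ Waldspurger and \cite{datnu}).

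Granting this, the plan for part (\ref{multclassfactor}) is to factor the parabolic induction from $\M = \M_0 \times \M_1 = \prod_i \GL_{n_i}(\E) \times \M_1$ up to $\G$ through the intermediate Levi $\GL_{n_0}(\E) \times \M_1$ (where $n_0 = \sum n_i$), with $\M_0 \subset \GL_{n_0}(\E)$ the standard block-diagonal Levi. By transitivity of the intertwining operators and the input above, it suffices to compute $\mu(\tau_s \otimes \tilde{\rho})$ where $\tilde{\rho}$ is any irreducible subquotient of $i_{\M_0 \times \M_1}^{\GL_{n_0}(\E) \times \M_1}(\rho)$. Now the standard computation for a maximal parabolic of a classical group whose Levi is $\GL_{n_0+k}(\E) \times \M_1$ decomposes the Plancherel measure as $\mu(\tau_s \otimes \tilde{\rho}) \cdot \prod (\text{GL-factors})$, and expanding $\tilde{\rho}$ by transitivity again in the $\GL_{n_0}(\E)$-direction yields the factors $\mu^{\GL_{k+k_i}(\E)}(\tau_s \otimes \rho_i)\mu^{\GL_{k+k_i}(\E)}(\tau_s \otimes (\rho_i^c)^\vee)$; here the $(\rho^c)^\vee$ factors come from the action of the Weyl element that swaps $\W$ with $\W'$, using that conjugation by this element sends $\rho_i$ on $\GL_{n_i}(\E)$ to $(\rho_i^c)^\vee$ on the opposite block.

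Parts (\ref{multGLnfactor}) and (\ref{multGLnmeasure}) follow by the same pattern: in (\ref{multGLnfactor}) one factors $i_{\P'}^{\GL_m(\E)}(\tau'_1 \otimes \cdots \otimes \tau'_l)$ through intermediate Levi subgroups and applies the input above, and the cross terms $\mu^{\GL_{k_i+k_j}(\E)}((\tau'_i)_s \otimes ((\tau'_j)^c)^\vee_{-s})$ arise from the Weyl elements interchanging blocks on opposite sides of the classical $\P$. In (\ref{multGLnmeasure}) one works purely inside general linear groups and reduces to the well-known multiplicativity of the $\GL$ Plancherel measure along the two factors by double application of transitivity. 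The main obstacle throughout is step one, namely the extension to subquotients; once that is in hand, all three parts are bookkeeping with normalized intertwining operators.
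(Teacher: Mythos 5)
Your overall bookkeeping (reduction to rank-one Levi factors, identification of the cross terms $\mu^{\GL_{k+n_i}(\E)}(\tau_s\otimes\rho_i)\,\mu^{\GL_{k+n_i}(\E)}(\tau_s\otimes(\rho_i^c)^{\vee})$ via the Weyl element exchanging the two isotropic blocks, and the purely $\GL$ case of part (3)) matches the paper's deduction, and you correctly isolate the crux: extending the Gan--Ichino identity from subrepresentations to subquotients. But your proposed mechanism for that step does not work as stated. You argue that the intertwining operators ``act by scalars on the isotypic component of $\pi$ inside $i_\Q^\H(\sigma)$ with the same scalar as on the whole induced representation.'' For a sub\emph{quotient} there is in general no isotypic component of $\pi$ inside the induced module --- this is exactly why the subrepresentation argument of \cite{GanIchino14} does not extend formally --- and the operator is not a scalar on the whole induced representation when the inducing data becomes reducible at the relevant point. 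The paper replaces this by an algebraic argument: the operators are defined over $k[\M/\M^\circ]$ localized at a singular element, they are functorial in the inducing module (Proposition \ref{prop:functoriality}), hence they preserve the filtration coming from a composition series and induce on each irreducible graded piece precisely the scalar $j$-function of that piece (Corollary \ref{cor:subquotients}).

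The second, and more serious, omission is the specialization problem. Your key identity compares $\mu(\tau_s\otimes\pi)$, a rational function of the single variable $q^{-s}$, with a product of rational functions in all the unramified-twist variables of the small Levi, evaluated at the point where the subquotient lives. To make that evaluation legitimate one must show that the integral structure underlying the operators --- concretely, a $(\sigma,\P,\Q)$-singular element $b$ of $k[\M/\M^\circ]$ --- can be chosen with nonzero image under the specialization map $f:k[\M/\M^\circ]\to k[\N/\N^\circ]$; otherwise neither the operators nor the identity are defined after specialization. This is the content of Lemma \ref{hard_lemma}, and Remark \ref{GanIchinodetailsremark} flags exactly this point (together with Corollary \ref{cor:subquotients}) as the gap in the ``specialize the variables'' shortcut of \cite[p.~636]{GanIchino14}. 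Your plan essentially reproduces that shortcut rather than filling the gap: once Lemma \ref{hard_lemma}, Proposition \ref{prop:functoriality} and Corollary \ref{cor:subquotients} are in place, the deduction of all three parts via Corollary \ref{cor:multiplicativity_irreducible} (your two-step transitivity through the intermediate Levi $\GL_{n_0}(\E)\times\M_1$ is only a mild variant of the paper's single application) is indeed routine bookkeeping, but without them the central claim of your step one remains unproved.
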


Parts (\ref{multclassfactor}) and (\ref{multGLnfactor} of this proposition are proved in~\cite[B.5]{GanIchino14} in the special case where~$\pi$ (respectively, $\tau$) is an irreducible \emph{subrepresentation} of~$i_{\P}^{\G}(\rho)$ (respectively, $i_{\P'}^{\GL_m(\E)}(\tau'_1\otimes \cdots \otimes \tau'_l)$).  The general case stated here will be deduced from our results in Appendix~\ref{apdx:j_functions}, specifically Corollary~\ref{cor:multiplicativity_irreducible}; this is carried out in Subsection~\ref{apdx:plancherel}.

\subsection{Gamma factors and a semisimple converse theorem}
On the other hand, let~$(r,N)$ be a Weil-Deligne representation for~$\GL_n(\F)$.  We denote by
\[L(s,(r,N))=\det(1-q^{-s}r(\Fr)_{|\ker(N)^{\I_{\F}}})^{-1},\]
the \emph{$L$-factor} of~$(r,N)$, and by~$\epsilon(s,(r,N),\psi)\in\mathbb{C}[q^{-s}]^\times$ the \emph{epsilon factor} of \cite{Deligne}.  We define the \emph{gamma factor} of~$(r,N)$ by
\[\gamma(s,(r,N),\psi)=\epsilon(s,(r,N),\psi)\frac{L(1-s,(r,N)^*)}{L(s,(r,N))},\]
where~$(r,N)^*$ is the dual Weil-Deligne representation. As above, we view $q^{-s}$ as a formal variable and consider $\gamma(s,(r,N),\psi)$ as an element of $\CC(q^{-s})$.

For a semisimple parameter~$\phi$ for~$\G$, we define its $L$-factor, epsilon, and gamma factor, to be the factors of the Weil--Deligne representation~$(\phi,0)$.  The standard properties we use of the gamma factors are:
\begin{enumerate}
\item The~$\gamma$-factor only depends on the semisimplification of a Weil-Deligne representation: for~$\phi=r=(r,N)_{ss}$, we have~$\gamma(s,\phi,\psi)=\gamma(s,(r,N),\psi)$,;
\item The $\gamma$-factor is multiplicative: for~$\phi\simeq \phi_1\oplus\phi_2$, we have~$\gamma(s,\phi,\psi)=\gamma(s,\phi_1,\psi)\gamma(s,\phi_2,\psi)$.
\end{enumerate}
See~\cite[Corollary 4.5]{HelmMossGamma} for precise references.

Moreover, in this setting, Langlands' conjecture on the Plancherel measure \cite[Appendix II]{MR0579181} is known:
\begin{proposition}\label{Propplanchereldecomp}
 Let~$\pi$ be an irreducible representation of~$\G$,~$k$ a positive integer,~$\tau$ an irreducible representation of~$\GL_k(\E)$,~$\tau'$ an irreducible representation of~$\GL_{k'}(\E)$.     
 \begin{enumerate}
 \item\label{Propplanchereldecomp1} Writing,~$\phi_{\tau}=\mathcal{LL}_{\GL_k(\E)}(\tau)$ and~$\phi_{\tau'}=\mathcal{LL}_{\GL_{k'}(\E)}(\tau')$, we have
 \[\mu(\tau_s\otimes\tau'_r)= \gamma(s-r,\phi_{\tau}\otimes\phi_{\tau'}^*,\psi_\E)\gamma(r-s,\phi_{\tau}^*\otimes\phi_{\tau'},\overline{\psi_{\E}}).\]
 \item\label{Propplanchereldecomp2} Writing,~$\phi=\mathcal{R}\circ \mathcal{LL}_{\G}(\pi)$ and~$\phi_{\tau}=\mathcal{LL}_{\GL_k(\E)}(\tau)$, we have
\[\mu(\tau_s\otimes\pi)=\gamma(s,\phi_{\tau}\otimes\phi^*,\psi_{\E})\gamma(-s,\phi_{\tau}^*\otimes \phi,\overline{\psi_{\E}})\gamma(2s,R\circ \phi_\tau,\psi)\gamma(-2s,R\circ \phi_{\tau}^*,\overline{\psi}),\]
where
\begin{equation*}
R=\begin{cases}
\mathrm{Sym}^2&\text{ if }\G\text{ is odd special orthogonal};\\
\wedge^2&\text{ if }\G\text{ is even orthogonal or symplectic};\\
\mathrm{As}^+&\text{ if }\G\text{ is even unitary};\\
\mathrm{As}^-&\text{ if }\G\text{ is odd unitary}.\\
\end{cases}
\end{equation*}
\end{enumerate}
\end{proposition}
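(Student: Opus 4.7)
The plan is to reduce both identities to the discrete series case via the multiplicativity already collected in Proposition~\ref{multofplancherel} and the analogous multiplicativity of $\gamma$-factors, and then to invoke the compatibility of Shahidi's $\gamma$-factors with the local Langlands correspondences listed in Section~\ref{LLforclassgroups}.

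By the Bernstein--Zelevinsky classification, every irreducible representation of $\GL_k(\E)$ is a Langlands quotient of a representation parabolically induced from an essentially square-integrable representation of a Levi subgroup; on the Galois side this corresponds to decomposing the Weil--Deligne parameter as a direct sum of indecomposable summands. Applying Proposition~\ref{multofplancherel}~(\ref{multGLnmeasure}) on the analytic side, multiplicativity of $\gamma$-factors under direct sums on the Galois side, and the compatibility of LLC for $\GL$ with parabolic induction which translates one into the other, one reduces part~(\ref{Propplanchereldecomp1}) to the case where $\tau$ and $\tau'$ are both discrete series. In the same spirit, Proposition~\ref{multofplancherel}~(\ref{multclassfactor}) and~(\ref{multGLnfactor}) together with Proposition~\ref{parabolicinduction:theorem} reduce part~(\ref{Propplanchereldecomp2}) to the case where $\tau$ is a discrete series representation of $\GL_k(\E)$ and $\pi$ is a discrete series representation of~$\G$.

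For $\tau,\tau'$ discrete series, part~(\ref{Propplanchereldecomp1}) then combines Shahidi's identification of the Plancherel measure with a product of Rankin--Selberg $\gamma$-factors and the characterization of the local Langlands correspondence for $\GL_n$ by compatibility with Jacquet--Piatetski-Shapiro--Shalika $\gamma$-factors. For $\pi$ discrete series of~$\G$ and $\tau$ discrete series of~$\GL_k(\E)$, part~(\ref{Propplanchereldecomp2}) is Langlands' conjecture on the Plancherel measure in its original form: Shahidi's machinery produces analytic $\gamma$-factors for pairs $(\pi,\tau)$ out of the Plancherel measure, and Arthur's endoscopic classification for symplectic and odd special orthogonal groups, Mok's extension to quasi-split unitary groups, and the extensions to inner forms cited in Section~\ref{LLforclassgroups}, characterize the Langlands parameters by stability and endoscopic character identities that are, at the level of pairs of discrete series, equivalent to matching these analytic $\gamma$-factors with those of the parameters. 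The appearance of $R\circ\phi_\tau$ reflects that $\phi=\mathcal{R}\circ\mathcal{LL}_\G(\pi)$ lands in a general linear group whose embedded image is cut out by the involution encoded by $R\in\{\mathrm{Sym}^2,\wedge^2,\mathrm{As}^+,\mathrm{As}^-\}$, so the extra $\gamma(\pm 2s, R\circ\phi_\tau^{(*)}, \cdot)$ factors come from the half-sum-of-roots contribution attached to this symmetric square / wedge square / Asai representation of the parabolic.

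The main obstacle, in the discrete series case, is the matching between Shahidi's analytically defined $\gamma$-factors and the arithmetic $\gamma$-factors of the Langlands parameters assigned by the endoscopic classification. This matching is not something we would reprove: it is built into the constructions of LLC cited in Section~\ref{LLforclassgroups}. The remaining delicate bookkeeping concerns the normalization of intertwining operators (Appendix~\ref{Appendix:intertwining}) dictated by the additive character~$\psi$ and our chosen~$\sqrt{q}$, which is what makes the $\gamma(\cdots,\psi)\gamma(\cdots,\overline{\psi})$ structure visible and reflects the self-duality of the Plancherel measure.
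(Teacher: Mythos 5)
Your reduction of part (\ref{Propplanchereldecomp2}) is circular within the paper's logical structure: you invoke Proposition \ref{parabolicinduction:theorem} to pass from a general irreducible $\pi$ to the discrete series case, but Proposition \ref{parabolicinduction:theorem} is itself proved later by applying Proposition \ref{Propplanchereldecomp} (together with multiplicativity and the semisimple converse theorem), so it cannot be an input here. The point is that for an arbitrary irreducible \emph{subquotient} of a parabolic induction, knowing that $\gamma(s,\phi_{\tau}\otimes\phi^*,\psi_{\E})$ and its companions factor according to the inducing data is precisely the compatibility-with-parabolic-induction statement this proposition is being used to establish. The paper avoids this by reducing only to the tempered case via the Langlands quotient theorem: the parameter of a Langlands quotient is pinned down by the Langlands classification property (5) of Conjecture \ref{LLcorresp}, which is built into the cited constructions of the correspondence for classical groups (unlike subquotient compatibility, which is new), while part (\ref{Propplanchereldecomp1}) handles the general linear factors of the Levi; the further reduction in $\tau$ is to (unitary) supercuspidal by unramified twisting, using $R|\det|^{r}=|\det|^{2r}$. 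If you want to salvage your route, you must replace the appeal to Proposition \ref{parabolicinduction:theorem} by these known properties (Langlands classification, and the construction of tempered packets from discrete series packets of Levis), which is essentially the paper's argument.

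Your treatment of the base case is also too loose. It is not the stability and endoscopic character identities as such that yield the identity for discrete or tempered pairs, and for the non-quasi-split groups there is no Langlands--Shahidi $\gamma$-factor available to ``match''. What the paper actually uses is the normalized local intertwining relation of Arthur, Mok and Kaletha--Minguez--Shin--White, which Gan--Ichino explain gives exactly the identity in part (\ref{Propplanchereldecomp2}) for tempered $\pi$ and $\tau$ on symplectic, unitary and quasi-split orthogonal groups, supplemented by Bin Xu's argument for split odd special orthogonal groups and the corresponding intertwining relation in the non-quasi-split odd special orthogonal case. Stating the tempered case this way (rather than ``discrete series plus character identities'') is what makes the reduction step above suffice.
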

\begin{proof}
Part (\ref{Propplanchereldecomp1}) follows from \cite{Shahidi}, multiplicativity of the Plancherel measure and of gamma factors, and compatibility of the local Langlands correspondence for general linear groups with Langlands--Shahidi gamma factors of pairs.   (See \cite[Proposition 16.6]{Kakuhama} for a similar statement for inner forms of general linear groups.)

{It remains to show (\ref{Propplanchereldecomp2}).  For a tempered representation~$\pi$ of a symplectic, unitary, or quasi-split orthogonal group~$\U(\V,h)$ and a tempered representation~$\tau$ of~$\GL_k(\E)$, Gan--Ichino in \cite[A.7]{GanIchino16} explain that this equality follows from the normalized intertwining relation of \cite[Theorem 2.3.1]{Arthur}, \cite[Lemma 2.2.3]{KMSW}, and \cite[Proposition 3.3.1]{Mok}.  In \cite[Proof of Proposition 3.1]{BinXu}, an analogous argument is given for $\pi$ a supercuspidal representation of an odd split special orthogonal group~$\G$ and~$\tau$ unitary supercuspidal of~$\GL_k(\F)$, this part of the proof extending to the more general tempered case. For non-quasi-split odd special orthogonal groups the required normalized intertwining relation is given in \cite[Lemma 4.3]{MR4776199}.   Hence for all classical groups of Definition \ref{defclassgroup}, (\ref{Propplanchereldecomp2}) holds for~$\pi$ and~$\tau$ tempered.  (Note for unitary groups, with~$\pi$ irreducible and~$\tau$ square integrable, the equality is also contained in \cite[Theorem 2.5.1]{ChenZou2}.)
 }
%
%

By the Langlands quotient theorem and multiplicativity of both sides, using (\ref{Propplanchereldecomp1}) for the general linear groups part of the Levi, we can reduce to the case where~$\pi$ is tempered.  

Furthermore, by multiplicativity of the Plancherel measure, multiplicativity of gamma factors, and as the local Langlands correspondence for general linear groups is compatible with parabolic induction, we can reduce to the case where~$\tau$ is supercuspidal and~$\pi$ is tempered.  So it is sufficient to show we can further reduce to where~$\tau$ is unitary supercuspidal, and this is possible as twisting~$\tau$ by an unramified character~$|\det|^{r}$ translates each variable~$s$ by~$r$ (noting that in each case~$R |\det |^r=|\det|^{2r}$).
\end{proof}

For~$\phi_1,\phi_2:\W_\F\rightarrow \GL_n(\mathbb{C})$ semisimple parameters, denote by~$\Gamma(\phi_1,\phi_2)$ the divisor of the rational function~$\gamma(s,\phi_1^* \otimes \phi_2, \psi_\E) \gamma(-s,\phi_1 \otimes \phi_2^*, \overline{\psi_{\E}})$.  As the $\epsilon$-factor is a monomial in~$q^{-s}$, this is equal to the divisor of the ratio of~$L$-factors
\[\frac{L(1-s,\phi_1\otimes\phi_2^*)L(1+s,\phi_1^*\otimes\phi_2)}{L(s,\phi_1^*\otimes\phi_2)L(-s,\phi_1\otimes\phi_2^*)}\]
(and does not depend on~$\psi$).

\begin{proposition} 
\label{semisimpleconverse}
Let~$\phi_1,\phi_2:\W_\F\rightarrow \GL_n(\mathbb{C})$ be semisimple parameters.   If, 
\[\Gamma(\tau,\phi_1)=\Gamma(\tau,\phi_2),\]
for all irreducible semisimple parameters~$\tau:\W_\F\rightarrow \GL_m(\mathbb{C})$ and all~$m\leqslant n$, then~$\phi_1\simeq \phi_2$.
\end{proposition}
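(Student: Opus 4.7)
The plan is to decouple the problem by inertial equivalence classes of irreducible Weil representations of $\W_\F$, and within each class to reduce the claim to a combinatorial rigidity statement about multisets of unramified twists. Throughout, additivity of $\Gamma$ over direct sums lets one subtract common irreducible summands from $\phi_1$ and $\phi_2$. The first observation is that if irreducible $\tau$ and $\sigma$ lie in distinct inertial equivalence classes, then by Clifford theory for $\I_\F\triangleleft\W_\F$ (using that $\W_\F/\I_\F$ is infinite cyclic) the restrictions $\tau|_{\I_\F}$ and $\sigma|_{\I_\F}$ share no irreducible constituent, so $(\tau^*\otimes\sigma)^{\I_\F}=0$ and both $L(s,\tau^*\otimes\sigma)$ and $L(s,\tau\otimes\sigma^*)$ are trivial. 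Decomposing $\phi_j=\bigoplus_{[\rho]}\phi_j^{[\rho]}$ by inertial class, this gives $\Gamma(\tau,\phi_j)=\Gamma(\tau,\phi_j^{[\tau]})$, so the hypothesis decouples and we may assume $\phi_1,\phi_2$ are concentrated in a single class: $\phi_j=\bigoplus_i\rho\otimes\chi_{\alpha_{j,i}}$ for a fixed irreducible $\rho$ and finite multisets $A_j=\{\alpha_{j,i}\}\subset\CC^*$.

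Let $\lambda_1,\dots,\lambda_d$ be the Frobenius eigenvalues on $\End(\rho)^{\I_\F}$. A direct expansion using $L(s,\rho^*\otimes\rho\otimes\chi_\gamma)=\prod_k(1-\lambda_k\gamma q^{-s})^{-1}$ yields, in the variable $y=q^{-s}$,
\[
\Gamma(\rho\otimes\chi_\beta,\phi_j)=\sum_{i,k}\bigl([\lambda_k^{-1}\alpha_{j,i}^{-1}\beta]+[\lambda_k\alpha_{j,i}^{-1}\beta]-[\lambda_k\alpha_{j,i}^{-1}\beta q^{-1}]-[\lambda_k^{-1}\alpha_{j,i}^{-1}\beta q]\bigr).
\]
Varying $\beta$ only translates the divisor multiplicatively, so equality for all $\beta$ is equivalent to equality at $\beta=1$. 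Since $\End(\rho)$ is a self-dual $\W_\F$-representation, the multiset $\{\lambda_k\}$ is stable under $\lambda\leftrightarrow\lambda^{-1}$, and the four-term sum reorganises as $(2I-S_q-S_{q^{-1}})g_j$, where $g_j(y)=\#\{(i,k):\lambda_k\alpha_{j,i}^{-1}=y\}$ is the multiplicity function of the finite multiset $\{\lambda_k\alpha_{j,i}^{-1}\}_{i,k}$ and $S_q$ denotes multiplicative translation by~$q$. The operator $\Delta:=2I-S_q-S_{q^{-1}}$ is the discrete Laplacian along each $\langle q\rangle$-orbit of $\CC^*$, and its kernel on finitely supported $\mathbb{Z}$-valued functions is trivial (an arithmetic sequence on $\mathbb{Z}$ eventually zero must be identically zero). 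Hence $g_1=g_2$, so $\{\lambda_k\alpha_{1,i}^{-1}\}_{i,k}=\{\lambda_k\alpha_{2,i}^{-1}\}_{i,k}$ as multisets in $\CC^*$.

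The final step recovers $\phi_1\cong\phi_2$ from this multiset identity. The map $A\mapsto\{\lambda_k\alpha^{-1}\}_{k,\alpha\in A}$ from multisets to multisets is not in general injective: its indeterminacy is governed by the action of the stabilizer $\Psi_\rho=\{\chi\text{ unramified}:\rho\otimes\chi\cong\rho\}$, which acts on each $\alpha_i$ by multiplication and whose finite order is related to the Clifford-theoretic cyclic permutation action of Frobenius on $\End(\rho)^{\I_\F}$. Crucially, this indeterminacy is \emph{exactly} the indeterminacy in parametrising a semisimple Weil representation $\phi$ by its multiset of unramified twist data: two multisets $A,A'$ producing the same $\{\lambda_k\alpha^{-1}\}$ give isomorphic $\phi$'s. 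The main obstacle is therefore to cleanly match the kernel of the Clifford-averaging operator $L$ (convolution by the multiplicity function of $\{\lambda_k\}$) with the unramified twist stabilizer $\Psi_\rho$. The safest route is to specialise to generic $\beta$, chosen so that all the complex numbers $\lambda_k^{\pm1}\alpha_{j,i}^{-1}\beta q^{\epsilon}$ ($\epsilon\in\{-1,0,1\}$) are pairwise distinct: for such $\beta$ the divisor consists of simple zeros and poles at distinguishable locations, the multiset $\{\lambda_k\alpha_{j,i}^{-1}\beta\}$ is read off the zero locations, and the normalisation $1\in\{\lambda_k\}$ (from $\mathrm{id}_\rho\in\End(\rho)^{\I_\F}$) pins down $A_j$ up to precisely the $\Psi_\rho$-ambiguity, completing the argument.
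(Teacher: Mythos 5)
Your reductions are sound up to and including the Laplacian step: the decoupling by inertial classes, the explicit four-term divisor formula, and the injectivity of $\Delta=2I-S_q-S_{q^{-1}}$ on finitely supported integer-valued functions on $\CC^{\times}$ are all correct, and this part is essentially the paper's rigidity argument carried out ``upstairs'' (the paper works from the start in $\mathbb{Z}[\G_\tau]$, with $\G_\tau$ the unramified characters modulo the stabilizer $\H_\tau$, where the same rigidity is the statement that multiplication by $([1]-[\nu])([1]-[\nu^{-1}])$ is a non-zero-divisor). The gap is in your final step. The ``generic $\beta$'' device cannot work: as you yourself observe, varying $\beta$ only translates the whole divisor multiplicatively, so every coincidence among the points $\lambda_k^{\pm1}\alpha_{j,i}^{-1}\beta q^{\epsilon}$ is independent of $\beta$. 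In particular the zeros always occur with multiplicity two (pair $\lambda_k$ with $\lambda_k^{-1}$), and the genuine zero/pole cancellations that arise when some ratio $\alpha_{i'}/\alpha_i$ lies in $\{\lambda_k\}q^{\pm1}$ --- precisely the non-tempered configurations this proposition is designed to handle, and the reason the paper contrasts it with the tempered converse theorem --- persist for every $\beta$. So ``reading the multiset off the zero locations'' is not available, and nothing is gained over $\beta=1$; the last paragraph as written does not close the argument.

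What is actually missing, and what that paragraph was meant to substitute for, is the identification you flag as ``the main obstacle'': that the eigenvalue multiset $\{\lambda_k\}$ on $\End(\rho)^{\I_\F}$ is exactly the group $\mu_f$ of Frobenius values of the unramified twist stabilizer $\H_\rho$, each with multiplicity one, and that $\rho\chi_\alpha\simeq\rho\chi_{\alpha'}$ if and only if $\alpha/\alpha'\in\mu_f$. Both are standard Clifford theory: $\rho|_{\I_\F}$ is multiplicity-free with $f$ constituents cyclically permuted by Frobenius, $\End(\rho)^{\I_\F}$ is spanned by the $f$ corresponding identity maps which Frobenius permutes cyclically and $\Fr^f$ fixes, so $\{\lambda_k\}=\mu_f$; and an unramified $\chi$ with $\rho\chi\simeq\rho$ appears in $\End(\rho)^{\I_\F}$, so $\chi(\Fr)\in\mu_f$, while conversely every such $\chi$ is absorbed by induction from the degree-$f$ unramified subgroup. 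Granting this, the finish needs no genericity at all: your Laplacian step already gives the multiset equality $A_1\cdot\mu_f=A_2\cdot\mu_f$ in $\CC^{\times}$; pushing forward along $\CC^{\times}\to\CC^{\times}/\mu_f$ yields $f\cdot\overline{A_1}=f\cdot\overline{A_2}$, hence $\overline{A_1}=\overline{A_2}$, and since the isomorphism class of $\bigoplus_i\rho\chi_{\alpha_{j,i}}$ depends only on $\overline{A_j}$, we get $\phi_1\simeq\phi_2$. With that replacement your proof is complete; note that the paper avoids this matching problem altogether by parametrising summands by classes in $\G_\tau$ rather than by Frobenius eigenvalues of $\End(\rho)^{\I_\F}$.
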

\begin{proof}
Let~$\chi_s$ be the unramified character of~$\W_\F$ such that~$\chi_s(\Fr)=q^{-s}$ and set~$\nu=\chi_1$.   Thus viewing the local factors as a function of~$\chi_s$, we have
\[L(\chi_s,\tau^*\otimes\phi)=\det(1-\tau^*\otimes\phi\otimes\chi_s(\Fr)_{|\V_{\tau^*\otimes\phi}^{\I_{\F}}})^{-1}.\]
From this description, we see at once that if~$\tau$ and~$\phi$ are irreducible, then $L(\chi_s,\tau^* \otimes \phi)  = 1$ if $\tau$ is not an unramified twist of $\phi$, and otherwise has a pole of order $1$ precisely at those $\chi_s$ such that $\tau\simeq \phi \otimes \chi_s$.

Viewing~$\Gamma(\tau,\phi)$ as a formal sum of unramified characters by the above translation, considering each~$L$-factor in turn, it follows that the divisor~$\Gamma(\tau, \phi)$ has contributions of:
\begin{enumerate}[(i)]
\item poles of order $1$ at those characters $\chi_s$ such that $\tau\simeq\phi \chi_s \nu$;
\item poles of order $1$ at those characters $\chi_s$ such that $\tau\simeq\phi \chi_s \nu^{-1}$;
\item zeroes of order $2$ at those characters $\chi_s$ such that $\tau\simeq\phi \chi_s$.
\end{enumerate}
For an arbitrary semisimple parameter $\phi$, as the divisor is additive with respect to direct sums, it is determined by the irreducible summands in the reducible case.

More precisely, for a fixed irreducible $\tau:\W_\F\rightarrow \GL_n(\mathbb{C})$, let $\H_{\tau}$ be the group of unramified characters $\chi$ such that $\chi \tau\simeq\tau$. and let $\G_{\tau}=\Hom(\W_\F/\I_\F,\mathbb{C}^\times)/\H_{\tau}$.  Then $[\chi] \mapsto \tau \chi^{-1}$ defines a bijection from $\G_{\tau}$ to the set of unramified twists of $\tau$, where $[\chi]$ denotes the class of $\chi$ in $\G_{\tau}$.

With this notation, the divisor $\Gamma(\tau, \phi)$ is equal to the sum, over $[\chi] \in \G_{\tau}$ of the expression:
\[m_{[\chi]} (-[\chi \nu] + 2[\chi] -[\chi \nu^{-1}])\]
where $m_{[\chi]}$ is the multiplicity of $\chi^{-1} \tau$ in $\phi$.  

Put another way, if we regard both the summands of $\phi$ isomorphic to unramified twists of $\tau$ and the divisor $\Gamma(\tau,\phi)$ as expressions in the group ring $\mathbb{Z}[\G_{\tau}]$, then the map that computes the latter from the former is simply multiplication by~$-[\nu] + 2[1] - [\nu^{-1}]=([1]-[\nu])([1]-[\nu^{-1}])$.  Since this is a non-zero divisor in $\mathbb{Z}[\G_{\tau}]$, this map is injective; that is, we can recover the multiplicites of all summands of $\phi$ that are unramified twists of $\tau$ from $\Gamma(\tau,\phi)$, so we have proven the desired converse theorem.  
\end{proof}

This semisimple converse theorem is inspired by the following tempered converse theorem of Gan and Savin:
 \begin{proposition}[{\cite[Lemma A.6]{GanIchino16} and~\cite[Lemma 12.3]{gan_savin}}]\label{temperedconverse}
 Let~$\phi_1,\phi_2$ be tempered~$\SL_2$-parameters for~$\GL_n(\F)$.  If 
\[\Gamma(\tau,\phi_1)=\Gamma(\tau,\phi_2),\]
for all irreducible semisimple parameters~$\tau:\W_\F\rightarrow \GL_m(\mathbb{C})$ and all~$m\leqslant n$, then~$\phi_1\simeq \phi_2$.
 \end{proposition}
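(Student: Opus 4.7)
My plan is to deduce Proposition~\ref{temperedconverse} from the already-proved semisimple converse theorem (Proposition~\ref{semisimpleconverse}) together with a uniqueness statement for segment decompositions in the tempered setting. The essential input is the fact (recalled just before Proposition~\ref{Propplanchereldecomp}) that the gamma factor of a Weil--Deligne representation depends only on its semisimplification. Since $\tau$ is itself semisimple, tensoring with $\tau^{*}$ commutes with semisimplification, so
\[\gamma(s,\tau^{*}\otimes\phi_{i},\psi_{\E})=\gamma(s,\tau^{*}\otimes\phi_{i,\mathrm{ss}},\psi_{\E}),\]
and likewise for the companion factor involving $\overline{\psi_{\E}}$. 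Hence $\Gamma(\tau,\phi_{i})=\Gamma(\tau,\phi_{i,\mathrm{ss}})$, and the hypothesis becomes $\Gamma(\tau,\phi_{1,\mathrm{ss}})=\Gamma(\tau,\phi_{2,\mathrm{ss}})$ for every irreducible semisimple $\tau$ of dimension $\leqslant n$. Proposition~\ref{semisimpleconverse} then delivers $\phi_{1,\mathrm{ss}}\simeq\phi_{2,\mathrm{ss}}$.

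To upgrade this to an isomorphism of tempered $\SL_{2}$-parameters, I would decompose each $\phi_{i}$ into tempered indecomposables
\[\phi_{i}=\bigoplus_{j}\tau_{i,j}\otimes S_{k_{i,j}},\]
where each $\tau_{i,j}$ is an irreducible representation of $\W_{\F}$ with bounded image (forced by temperedness) and $S_{k}$ denotes the $k$-dimensional irreducible of $\SL_{2}(\CC)$. Each summand contributes to $\phi_{i,\mathrm{ss}}$ a ``symmetric segment'' $\tau_{i,j}\otimes(\nu^{-(k_{i,j}-1)/2}\oplus\cdots\oplus\nu^{(k_{i,j}-1)/2})$. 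Partitioning the summands by the $\W_{\F}$-orbit of $\tau_{i,j}|_{\I_{\F}}$ and fixing a representative $\tau_{0}$ in each orbit, every $\tau_{i,j}$ in that orbit is a \emph{unitary} unramified twist $\tau_{0}\chi$ (unitary because $\tau_{0}$ and $\tau_{i,j}$ are both tempered). Thus, within each orbit, reading $\phi_{i,\mathrm{ss}}$ amounts to adding up symmetric segments $\Sigma_{k}^{\chi}=\chi\cdot\sum_{c=-(k-1)/2}^{(k-1)/2}[\nu^{c}]$ in the group ring over unramified characters.

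The remaining (and only mildly nontrivial) step is that the multiset of pairs $(\chi_{i,j},k_{i,j})$ is recovered from the sum $\sum_{j}\Sigma_{k_{i,j}}^{\chi_{i,j}}$. This follows by induction on the maximal real exponent of $\nu$ appearing in the support of the sum: at fixed unitary centre~$\chi$, this maximum is $(k_{\max}-1)/2$ with $k_{\max}$ the largest $k$-value occurring, and its coefficient equals the multiplicity of $(\chi,k_{\max})$; peeling off $\Sigma_{k_{\max}}^{\chi}$ with that multiplicity and iterating reconstructs all pairs. This uniqueness is specific to the tempered case, since the segments are all centred on the unitary axis; for general $\SL_{2}$-parameters it fails, in line with the Zelevinsky classification of non-tempered indecomposables. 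Combined with $\phi_{1,\mathrm{ss}}\simeq\phi_{2,\mathrm{ss}}$, this yields $\phi_{1}\simeq\phi_{2}$. The main obstacle thereby reduces to this bookkeeping; the entire gamma-factor analysis is absorbed by Proposition~\ref{semisimpleconverse}.
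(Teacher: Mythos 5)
Your proposal is correct, but it is not the route taken here: the paper does not prove Proposition \ref{temperedconverse} at all — it is quoted from Gan--Ichino and Gan--Savin, whose argument works directly with the tempered parameters, using that temperedness prevents any cancellation between the zeros and poles of the $L$-factors entering $\Gamma(\tau,\phi_i)$, so the poles already pin down the full $\SL_2$-parameters (this is exactly the remark made after the proposition). You instead run the logic in the opposite direction: reduce to the semisimplifications via the fact that $\gamma$-factors depend only on the semisimplification of a Weil--Deligne representation, invoke Proposition \ref{semisimpleconverse} (which the paper proves independently, so there is no circularity even though it was historically modelled on the Gan--Savin lemma), and then recover the full tempered parameter from its semisimplification by the peeling argument on symmetric segments $\tau\otimes S_k$ with $\tau$ unitary. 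That last combinatorial step is sound — within a fixed unitary irreducible (or inertial orbit, with the unitary/real-exponent decomposition descending past the finite stabilizer of unramified twists) the maximal exponent $(k_{\max}-1)/2$ occurs with multiplicity equal to the number of segments of length $k_{\max}$, and induction finishes it — and it is precisely a proof of the paper's parenthetical remark that temperedness upgrades the conclusion from semisimplifications to the full parameters, while failing for general parameters (e.g.\ $\nu^{1/2}\oplus\nu^{-1/2}$ versus $1\otimes S_2$). What the cited proof buys is self-containedness and simplicity (no cancellation analysis is needed at all); what your derivation buys is that, within this paper's framework, the tempered statement becomes a clean corollary of the semisimple converse theorem plus elementary bookkeeping, at the cost of using the harder Proposition \ref{semisimpleconverse} as input.
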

 
The proof of this tempered converse theorem is simpler than that of the semisimple converse theorem as no cancellation can occur in the products of~$L$-factors considered; note that the added tempered hypothesis allows Gan and Savin to deduce that the full~$\SL_2$-parameters are isomorphic (rather than just their semisimplifications).

\subsection{Compatibility with parabolic induction}

Compatibility with parabolic induction, Conjecture \ref{ParaindLanglands}, is built into the construction of local Langlands for general linear groups using the classification of Bernstein and Zelevinsky, and for symplectic and split special orthogonal groups is shown in \cite{Moussaoui}.  Using Plancherel measures, gamma factors, and the semisimple converse theorem we establish this conjecture for classical groups (in the special case of symplectic groups and split special orthogonal groups recovering Moussaoui's result by a different method):

\begin{proposition}
\label{parabolicinduction:theorem}
Let~$\G$ be a classical~$p$-adic group and~$\P$ a parabolic subgroup of~$\G$ with Levi decomposition~$\P=\M \N$.  Let~$\rho$ be an irreducible representation of~$\M$ and~$\pi$ be an irreducible subquotient of~$i^{\G}_{\M,\P}(\rho)$.  Then~the semisimple parameters~$\iota_{\M,\G}\circ(\mathcal{LL}_{\M}(\rho))_{ss}$ and~$(\mathcal{LL}_{\G}(\pi))_{ss}$ are~conjugate in~$\widehat{\G}(\mathbb{C})$. 
\end{proposition}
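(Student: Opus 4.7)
The plan is to reduce the conjugacy in $\widehat{\G}(\CC)$ to an isomorphism of $\GL_m$-representations via Proposition~\ref{GGPprop}, and then to establish that isomorphism using the semisimple converse theorem (Proposition~\ref{semisimpleconverse}) by computing a Plancherel measure in two different ways. Decompose $\M = \M_0\times\M_1$ with $\M_0 = \prod_{i=1}^r \GL_{n_i}(\E)$ and $\M_1$ a classical group (of the same type as $\G$), and write $\rho = (\bigotimes_i \rho_i)\otimes \rho'$. Put $\phi_i := \mathcal{LL}_{\GL_{n_i}(\E)}(\rho_i)$, $\phi^{\rho'} := \mathcal{R}\circ \mathcal{LL}_{\M_1}(\rho')$, and set
\[
\phi^{(\rho)} := \phi^{\rho'} \oplus \bigoplus_{i=1}^r \bigl(\phi_i \oplus (\phi_i^c)^{\vee}\bigr),
\qquad \phi_\pi := \mathcal{R}\circ (\mathcal{LL}_\G(\pi))_{ss}.
\]
Unwinding the embedding $\iota_{\M,\G}$ in each classical case identifies $\phi^{(\rho)}$ (up to conjugacy) with $\mathcal{R}\circ \iota_{\M,\G}\circ (\mathcal{LL}_\M(\rho))_{ss}$. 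By Proposition~\ref{GGPprop}, it therefore suffices to show $\phi_\pi \simeq \phi^{(\rho)}$ as semisimple representations of $\W_\E$.

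For any irreducible semisimple parameter $\tau$ of $\W_\E$ (equivalently, by local Langlands for $\GL$, a supercuspidal representation of $\GL_k(\E)$ for some $k$), I compute $\mu_\psi^{\G}(\tau_s\otimes\pi)$ in two ways. Directly, Proposition~\ref{Propplanchereldecomp}(\ref{Propplanchereldecomp2}) gives
\[
\mu(\tau_s\otimes\pi) = \gamma(s,\phi_\tau\otimes\phi_\pi^*,\psi_\E)\,\gamma(-s,\phi_\tau^*\otimes\phi_\pi,\overline{\psi_\E})\cdot \gamma(2s,R\circ\phi_\tau,\psi)\,\gamma(-2s,R\circ\phi_\tau^*,\overline{\psi}).
\]
Via the Levi, Proposition~\ref{multofplancherel}(\ref{multclassfactor}) gives
\[
\mu(\tau_s\otimes\pi) = \mu^{\M_1}(\tau_s\otimes\rho')\cdot \prod_{i=1}^r \mu^{\GL_{k+n_i}(\E)}(\tau_s\otimes\rho_i)\,\mu^{\GL_{k+n_i}(\E)}(\tau_s\otimes(\rho_i^c)^{\vee}).
\]
Expanding each factor: Proposition~\ref{Propplanchereldecomp}(\ref{Propplanchereldecomp2}) applied to $\mu^{\M_1}(\tau_s\otimes\rho')$ gives $\gamma$ factors in $\phi_\tau\otimes(\phi^{\rho'})^*$ together with the same ``outer'' factor $\gamma(2s,R\circ\phi_\tau,\psi)\gamma(-2s,R\circ\phi_\tau^*,\overline{\psi})$ (this uses that $\M_1$ and $\G$ have the same classical type, so share the same $R$); Proposition~\ref{Propplanchereldecomp}(\ref{Propplanchereldecomp1}) handles the $\GL$ factors. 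Multiplicativity of $\gamma$-factors then collapses the right-hand side to
\[
\gamma(s,\phi_\tau\otimes(\phi^{(\rho)})^*,\psi_\E)\,\gamma(-s,\phi_\tau^*\otimes\phi^{(\rho)},\overline{\psi_\E})\cdot\gamma(2s,R\circ\phi_\tau,\psi)\,\gamma(-2s,R\circ\phi_\tau^*,\overline{\psi}).
\]

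Equating the two expressions and cancelling the outer $R$-factors yields $\Gamma(\tau,\phi_\pi) = \Gamma(\tau,\phi^{(\rho)})$ for every irreducible semisimple $\tau$ of $\W_\E$, whence the semisimple converse theorem (Proposition~\ref{semisimpleconverse}, applied over $\E$) forces $\phi_\pi \simeq \phi^{(\rho)}$. Via Proposition~\ref{GGPprop}, this gives the desired $\widehat{\G}(\CC)$-conjugacy. The principal obstacles are: (i) verifying the case-by-case identification of $\mathcal{R}\circ\iota_{\M,\G}\circ \mathcal{LL}_\M$ with $\phi^{(\rho)}$, including checking that the outer $R$-factor for $\G$ coincides with that for $\M_1$ (which reduces to noting that the type of the classical group and the parity in the unitary case are preserved under taking Levi subgroups); and (ii) the fact that Proposition~\ref{multofplancherel}(\ref{multclassfactor}) must hold for arbitrary irreducible subquotients $\pi$ of $i^\G_{\M,\P}(\rho)$, not merely subrepresentations --- this extension of the result of Gan--Ichino is what is carried out in Appendices~\ref{Appendix:intertwining} and~\ref{apdx:j_functions}. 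Once these ingredients are in place, the argument is essentially formal.
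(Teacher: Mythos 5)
Your proposal is correct and follows essentially the same route as the paper: the two computations of $\mu_\psi^{\G}(\tau_s\otimes\pi)$ (directly via Proposition~\ref{Propplanchereldecomp} and via multiplicativity over the Levi, Proposition~\ref{multofplancherel}), cancellation of the common $R$-factors using that $\M_1$ and $\G$ have the same type, the semisimple converse theorem as $\tau$ ranges over supercuspidals of all $\GL_k(\E)$, and the transfer back to $\widehat{\G}(\mathbb{C})$-conjugacy via Proposition~\ref{GGPprop}. The two obstacles you flag — the case-by-case identification under $\mathcal{R}\circ\iota_{\M,\G}$ and the extension of multiplicativity to subquotients via the appendices — are exactly the points the paper addresses.
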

\begin{proof}

We can decompose~$\M=\M_0\times \M_1$ and~$\rho=\rho'\otimes \pi'$ with~$\rho'=\bigotimes_{i=1}^r \rho_i$ an irreducible representation of~$\M_0=\prod_{i=1}^r\GL_{n_i}(\E)$ a product of general linear groups and~$\pi'$ an irreducible representation of the classical group~$\M_1$.

Fix an integer $k$ and a supercuspidal irreducible representation $\tau$ of~$\GL_k(\E)$.   By multiplicativity of the Plancherel measure, Proposition \ref{multofplancherel}, we can write an equality of rational functions in $q^{-s}$:
\begin{equation}\label{planchereleq1}\mu(\tau_s\otimes \pi) = \mu(\tau_s\otimes\pi')\prod_{i=1}^r\mu(\tau_s\otimes \rho_i)\mu(\tau_s\otimes {(\rho_i^c)}^{\vee}),\end{equation}
where all Plancherel measures considered are normalized with respect to the additive character~$\psi$ or with respect to~$\psi_\E$.

Let~$\phi'=\mathcal{R}\circ \mathcal{LL}_\G(\pi')$, $\phi_i= \mathcal{LL}_{\GL_{n_i}(\E)}(\rho_i)$, and $\phi_{\tau} = \mathcal{LL}_{\GL_k(\E)}(\tau)$.  Applying Proposition \ref{Propplanchereldecomp} to the right hand side of Equation \ref{planchereleq1}, and using multiplicativity of gamma factors and that~$\mathcal{LL}_{\GL_{n_i}(\E)}((\rho_i^c)^\vee)\simeq (\phi_i^ c)^*$, we obtain
\begin{align}\label{multplanchereleq}
\mu(\tau_s\otimes \pi) &= 
\gamma(2s,R'\circ \phi_\tau,\psi)\gamma(-2s,R'\circ \phi_{\tau}^*,\overline{\psi})\\
\notag&\gamma(s,\phi_{\tau}\otimes(\phi'^*\oplus{\textstyle\bigoplus}_{i=1}^r(\phi_i^*\oplus(\phi_i^c))),\psi_{\E})\gamma(-s,\phi_{\tau}^*\otimes (\phi'\oplus{\textstyle\bigoplus}_{i=1}^r(\phi_i\oplus(\phi_i^c)^*)),\overline{\psi_{\E}}),
\end{align}
where~$\R'$ is as in Proposition~\ref{Propplanchereldecomp}. 

On the other hand, applying~Proposition \ref{Propplanchereldecomp} to~$\mu(\tau_s\otimes \pi) $ directly, we also have
\begin{equation}\label{Planchereldirect}\mu(\tau_s\otimes\pi)=\gamma(s,\phi_{\tau}\otimes\phi^*,\psi_{\E})\gamma(-s,\phi_{\tau}^*\otimes \phi,\overline{\psi_{\E}})\gamma(2s,R\circ \phi_\tau,\psi)\gamma(-2s,R\circ \phi_{\tau}^*,\overline{\psi}).\end{equation}
Moreover,~$R=R'$ as~$\G$ has the same \emph{type} as~$\G'$ (odd special orthogonal, even orthogonal, symplectic, odd unitary, or even unitary).  Therefore, from equations Equations \ref{multplanchereleq} and \ref{Planchereldirect}, we obtain
\begin{align*}
\gamma(s,\phi_{\tau}\otimes\phi^*,\psi_{\E})&\gamma(-s,\phi_{\tau}^*\otimes \phi,\overline{\psi_{\E}})\\&=\gamma(s,\phi_{\tau}\otimes(\phi'^*\oplus{\textstyle\bigoplus}_{i=1}^r(\phi_i^*\oplus(\phi_i^c))),\psi_{\E})\gamma(-s,\phi_{\tau}^*\otimes (\phi'\oplus{\textstyle\bigoplus}_{i=1}^r(\phi_i\oplus(\phi_i^c)^*)),\overline{\psi_{\E}}).\end{align*}
Letting~$\tau$ vary, so that~$\phi_{\tau}$ varies over all irreducible semisimple parameters, and applying Proposition \ref{semisimpleconverse}, we see that, 
up to semisimplification the Langlands parameter~$\phi$ is equal to the sum $\phi' \oplus \bigoplus_{i=1}^r (\phi_i \oplus (\phi_i^c)^*)$.  In other words,~$\mathcal{R}(\mathcal{LL}_\G(\pi))$ is (up to semisimplification)~$\GL_m(\mathbb{C})$-conjugate to~$\mathcal{R}(\iota_{\M,\G}\circ\mathcal{LL}_\G(\rho))$.  By \cite[Section 8]{GGP12} (see Section \ref{Sectparametersasselfdual}),~$\mathcal{LL}_\G(\pi)$  is thus (up to semisimplification)~$\widehat{\G}(\mathbb{C})$-conjugate to the correct parameter.
\end{proof}

\begin{remark}
For tempered irreducible representations with tempered supercuspidal support, using Gan and Savin's tempered converse theorem (Theorem \ref{temperedconverse}) in place of the semisimple converse theorem in the above argument one can deduce that the full (tempered)~$\SL_2$-parameters are isomorphic.
\end{remark}

\subsection{Compatibility with automorphisms of the coefficient field}
\begin{proposition}
\label{automorphisms:theorem}
Let~$\G$ be a classical~$p$-adic group,~$\pi$ be an irreducible representation of~$\G$, and~ $\sigma:\mathbb{C}\to \mathbb{C}$ be an automorphism of fields fixing~$\sqrt{q}$.   Then~$\mathcal{LL}_\G(\pi^\sigma)_{ss}$ is conjugate to $(\mathcal{LL}_{\G}(\pi)^{\sigma})_{ss}$ in~$\widehat{\G}(\mathbb{C})$.
\end{proposition}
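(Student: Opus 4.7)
The plan is to mimic the proof of Proposition \ref{parabolicinduction:theorem}, replacing the multiplicativity step by a $\sigma$-equivariance statement for the Plancherel measure, and then applying the semisimple converse theorem (Proposition \ref{semisimpleconverse}).

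First I would reduce to the case where $\pi$ is supercuspidal. Let $(\M,\rho)$ be the supercuspidal support of $\pi$, so $(\M,\rho^\sigma)$ is the supercuspidal support of $\pi^\sigma$. By compatibility of $\mathcal{LL}_\G$ with parabolic induction (Proposition \ref{parabolicinduction:theorem}), it suffices to prove the statement with $\G$ replaced by $\M$ and $\pi$ by $\rho$. Decomposing $\M=\M_0\times\M_1$ with $\M_0$ a product of general linear groups over $\E$ and $\M_1$ a classical group, and writing $\rho=\rho_0\otimes\rho_1$ accordingly, the result for the factor $\M_0$ is part of the characterization of the local Langlands correspondence for $\GL_n$ by its compatibility with field automorphisms (which is standard, and follows from the characterization by gamma factors of pairs together with the $\sigma$-equivariance of epsilon factors, valid because $\sigma$ fixes $\sqrt q$). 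So I am reduced to the case where $\pi$ is an irreducible supercuspidal representation of a classical group $\G$.

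Next, pick an irreducible supercuspidal representation $\tau$ of $\GL_k(\E)$ for each positive integer $k$. I would compute $\mu^\G_\psi(\tau_s\otimes \pi^\sigma)$ in two ways. On one hand, Proposition~\ref{Propplanchereldecomp}(\ref{Propplanchereldecomp2}) expresses this directly as a product of gamma factors involving $\mathcal{R}\circ \mathcal{LL}_\G(\pi^\sigma)$ and $\phi_\tau=\mathcal{LL}_{\GL_k(\E)}(\tau)$. On the other hand, the construction of the Plancherel measure (Appendix~\ref{apdx:plancherel}, following \cite{GanIchino14}) only involves normalized intertwining integrals in which the only transcendental data are the character $\psi$ and the factor $\sqrt{q}$ entering the normalization; since $\sigma$ fixes $\sqrt q$, a direct inspection of the formulas shows the $\sigma$-equivariance
\[\sigma\bigl(\mu^\G_\psi(\tau_s\otimes\pi)\bigr)=\mu^\G_{\psi^\sigma}(\tau^\sigma_s\otimes\pi^\sigma)\]
as rational functions in $q^{-s}$ (remember $\sigma$ fixes $q^{-s}$). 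Applying $\sigma$ to the identity of Proposition~\ref{Propplanchereldecomp}(\ref{Propplanchereldecomp2}) for $\pi$ and $\tau^{\sigma^{-1}}$ and invoking the compatibility of Deligne's local constants with field automorphisms, $\sigma(\gamma(s,\phi,\psi))=\gamma(s,\phi^\sigma,\psi^\sigma)$, yields a second expression for $\mu^\G_\psi(\tau_s\otimes\pi^\sigma)$ in terms of $(\mathcal{R}\circ\mathcal{LL}_\G(\pi))^\sigma$.

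Comparing the two expressions (noting that the auxiliary factors $\gamma(\pm 2s,R\circ\phi_\tau^{(*)},\psi^{(\sigma)})$ and their $\sigma$-transforms cancel on both sides, since the Plancherel measure for the $\GL$ factor is already compatible with $\sigma$ by the $\GL_n$ case), and letting $\tau$ vary over all irreducible supercuspidal representations of $\GL_k(\E)$ for all $k\leqslant m$, I obtain an equality of divisors
\[\Gamma\bigl(\phi_\tau,\,\mathcal{R}\circ\mathcal{LL}_\G(\pi^\sigma)\bigr)=\Gamma\bigl(\phi_\tau,\,(\mathcal{R}\circ\mathcal{LL}_\G(\pi))^\sigma\bigr)\]
for every irreducible semisimple parameter $\phi_\tau$ of bounded rank. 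The semisimple converse theorem (Proposition~\ref{semisimpleconverse}) then gives the isomorphism of semisimplifications $\mathcal{R}\circ\mathcal{LL}_\G(\pi^\sigma)_{ss}\simeq (\mathcal{R}\circ\mathcal{LL}_\G(\pi)^\sigma)_{ss}$ in $\GL_m(\CC)$, and Proposition~\ref{GGPprop} upgrades this to a $\widehat\G(\CC)$-conjugacy of the underlying semisimple parameters.

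The main obstacle is the $\sigma$-equivariance of the Plancherel measure itself. The algebraic construction of the normalized intertwining operator depends on a choice of Haar measure on the unipotent radical that is calibrated via $\psi$, together with the factor $\sqrt q$ in the normalization of the $\lambda$-function; any transcendentals entering these definitions must be controlled under $\sigma$. This is precisely why the hypothesis $\sigma(\sqrt q)=\sqrt q$ is needed, and verifying the compatibility requires going through the concrete recipe for $\mu_\psi^\G$ in Appendix~\ref{apdx:plancherel} and checking the transformation of each ingredient line by line.
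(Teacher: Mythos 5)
Your proposal is correct and follows essentially the same route as the paper: $\sigma$-equivariance of the Plancherel measure (with the Haar-measure normalization controlled because $\sigma$ fixes $\sqrt q$ and the self-dual measures depend only on the conductor of $\psi$, made precise via Proposition \ref{prop:functoriality}), $\sigma$-equivariance of Deligne's local constants, the semisimple converse theorem (Proposition \ref{semisimpleconverse}), and Proposition \ref{GGPprop}. The only difference is cosmetic: your preliminary reduction to the supercuspidal case is unnecessary, since Proposition \ref{Propplanchereldecomp} applies to arbitrary irreducible $\pi$ and the paper runs the argument directly in that generality.
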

\begin{proof}
{We prove this using properties of Plancherel measures, gamma factors, and the semisimple converse theorem.}  We extend~$\sigma$ to an automorphism of~$\mathbb{C}(q^{-s})$ via its natural action on the coefficients and via the trivial action on~$q^{-s}$. {In particular,~$\sigma$ acts on the gamma factors and Plancherel measures which are considered as rational functions in~$\mathbb{C}(q^{-s})$.}

{The Plancherel measure, as defined in Appendix \ref{apdx:plancherel}, depends on a choice of Haar measure on the pair of opposite unipotent radicals $\U\times \overline{\U}$ (in the notation of the appendix), which following \cite[B.2]{GanIchino14} we fix by choosing an additive character~$\psi$ of~$\F$.  We claim that changing~$\psi$ to~$\sigma\circ\psi$ defines the same Haar measure, and that this measure takes values in~$\mathbb{Q}(\sqrt{q})$ on compact open subgroups.  Indeed, following the construction of ibid., the measure is defined as a product of self-dual Haar measures on~$\E$ and~$\F$ defined with respect to~$\psi$ and~$\psi_\E$ (with an additional fixed factor of~$|2|_{\F}^{-k}$ in the odd special orthogonal case), and these self-dual Haar measures only depend on the conductor of the character and take values in~$\mathbb{Q}(\sqrt{q})$ on compact open subgroups  (cf.~\cite[35.2]{BH06}).
%
%
%
} 
%

{Let~$\tau$ be an irreducible representation of~$\GL_k(\E)$.  It now} follows from Proposition~\ref{prop:functoriality}, that
\begin{equation}
\label{plancherelsigma1}\mu_{\sigma\circ \psi}((\tau^\sigma)_s\otimes \pi^\sigma)=\sigma(\mu_{\psi}(\tau_s\otimes\pi)).
\end{equation}

For~$\K\in\{\E,\F\}$, the uniqueness of local epsilon factors and their explicit definition as Tate's constants in the~$\GL_1(\K)$ setting implies that, for any representation~$\varphi:\W_\K\rightarrow\GL_n(\mathbb{C})$, we have
 \begin{equation}
 \label{gammasigma}\sigma(\gamma(s,\varphi,\psi_{\K}))=\gamma(s,\varphi^\sigma,\sigma\circ \psi_{\K}).\end{equation}

Write~$\phi=\mathcal{R}\circ \mathcal{LL}_{\G}(\pi)$,~$\phi'=\mathcal{R}\circ \mathcal{LL}_{\G}(\pi^\sigma)$, and~$\phi_{\tau}=\mathcal{LL}_{\GL_k(\F)}(\tau)$.  Now~$\mathcal{LL}_{\GL_k(\E)}$ is equivariant for~$\sigma$, which follows from its characterization in terms of local factors and the Galois action on them -- see \cite[Theorem 3.2]{BHDavenport} for this action on the~$\GL_k(\E)$-side.  Hence writing~$\phi_{\tau}^\sigma=(\phi_{\tau})^\sigma$, we have \[\phi_{\tau}^\sigma=\mathcal{LL}_{\GL_k(\E)}(\tau^\sigma).\]  

Thus, by Proposition \ref{Propplanchereldecomp} and {Equation \ref{plancherelsigma1}, we have
\begin{align*}
\gamma(s,\phi_{\tau}^\sigma\otimes\phi'^*,\sigma\circ \psi_{\E})&\gamma(-s,(\phi_{\tau}^\sigma)^*\otimes \phi',\overline{\sigma\circ\psi_{\E}})\gamma(2s,R\circ \phi_{\tau}^\sigma,\sigma\circ\psi)\gamma(-2s,R\circ (\phi_{\tau}^\sigma)^*,\overline{\sigma\circ \psi})=\\
\notag &\sigma(\gamma(s,\phi_{\tau}\otimes\phi^*,\psi_{\E})\gamma(-s,\phi_{\tau}^*\otimes \phi,\overline{\psi_{\E}})\gamma(2s,R\circ \phi_\tau,\psi)\gamma(-2s,R\circ \phi_{\tau}^*,\overline{\psi})).
\end{align*}
We have~$(\phi^*)^\sigma=(\phi^\sigma)^*$,~$(\phi_{\tau}^*)^\sigma=(\phi_{\tau}^\sigma)^*$, $R\circ \phi_{\tau}^\sigma= (R\circ \phi_{\tau})^\sigma$}, {and for~$\K\in\{\E,\F\}$ we have~$\sigma\circ \overline{\psi_{\K}}=\overline{\sigma\circ\psi_{\K}}$ as~$\overline{\psi_{\K}}=\psi_{\K}^{-1}$. Hence, applying Equation \ref{gammasigma}, we find
\begin{align*}
\gamma(s,\phi_{\tau}^\sigma\otimes\phi'^*,\sigma\circ\psi_{\E})&\gamma(-s,(\phi_{\tau}^\sigma)^*\otimes \phi',\overline{\sigma\circ \psi_{\E}})=
\gamma(s,\phi_{\tau}^ \sigma\otimes(\phi^*)^\sigma,\sigma\circ \psi_{\E})\gamma(-s,(\phi_{\tau}^*)^\sigma\otimes \phi^\sigma,\overline{\sigma\circ \psi_{\E}}).\end{align*}}
Hence by Proposition \ref{semisimpleconverse}, we deduce that, up to semisimplification,~$\phi'$ and~$\phi^\sigma$ are conjugate in~$\GL_m(\mathbb{C})$.  By \cite[Section 8]{GGP12} (see Section \ref{Sectparametersasselfdual}),~$(\mathcal{LL}_\G(\pi^\sigma))_{ss}$  is thus~$\widehat{\G}(\mathbb{C})$-conjugate to~$(\mathcal{LL}_\G(\pi)^\sigma)_{ss}$.
\end{proof}

\subsection{Compatibility with group isomorphisms}
\begin{proposition}
\label{isomorphisms:theorem}
Let~$\G$ be a classical~$p$-adic group,~$\pi$ be an irreducible representation of~$\G$, and~ $\alpha:\G\to \G'$ be an isomorphism of reductive groups. Let ${^\alpha\pi}$ be the irreducible representation of $\G'$ obtained by pre-composing with $\alpha^{-1}$. Then the induced isomorphism ${^L\alpha}:{^L\G'}(\mathbb{C})\to {^L\G}(\mathbb{C})$ (which is well-defined up to $\widehat{\G}$-conjugacy) takes the $\widehat{\G'}(\mathbb{C})$ conjugacy class of $\mathcal{LL}_{\G'}({^\alpha\pi})_{ss}$ to the~$\widehat{\G}(\mathbb{C})$-conjugacy class of $\mathcal{LL}_{\G}(\pi)_{ss}$.\end{proposition}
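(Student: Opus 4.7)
The proof will mirror that of Proposition \ref{automorphisms:theorem}, with the group isomorphism $\alpha$ playing the role of the coefficient-field automorphism $\sigma$. The main ingredients are the Plancherel formula of Proposition \ref{Propplanchereldecomp}, the semisimple converse theorem (Proposition \ref{semisimpleconverse}), and the identification of classical-group parameters with (conjugate) self-dual representations of $\W_\E \times \SL_2(\mathbb{C})$ from Proposition \ref{GGPprop}.

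The key input is that the Plancherel measure is intrinsic to the abstract reductive group: for any Levi $\M$ of $\G$ and any irreducible representation $\sigma$ of $\M$, one has an equality $\mu^\G_\psi(\sigma) = \mu^{\G'}_\psi({^\alpha\sigma})$, where $^\alpha\sigma$ denotes the corresponding representation of $\alpha(\M)$. This follows from the construction of the Plancherel measure via normalized intertwining operators (Appendix \ref{apdx:plancherel}), which depends only on the abstract group structure together with Haar measures (which transport canonically across $\alpha$) and the normalizing character $\psi$. Concretely, for a Siegel-type parabolic of $\G$ with Levi $\M = \GL_k(\E) \times \H$, the isomorphism $\alpha$ restricts to an isomorphism of Levi factors $\alpha(\M) \cong \GL_k(\E) \times \alpha(\H)$ up to an automorphism $\alpha_0$ of the general linear factor, so that $\mu^\G_\psi(\tau_s \otimes \pi) = \mu^{\G'}_\psi(({^{\alpha_0}\tau})_s \otimes {^\alpha\pi})$ for any irreducible $\tau$ of $\GL_k(\E)$.

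Applying Proposition \ref{Propplanchereldecomp} to both sides then converts this equality into an identity of gamma factors twisted by $\phi_\tau = \mathcal{LL}_{\GL_k(\E)}(\tau)$, involving the self-dual parameters $\mathcal{R} \circ \mathcal{LL}_\G(\pi)$ and $\mathcal{R} \circ \mathcal{LL}_{\G'}({^\alpha\pi})$ respectively. Using that the local Langlands correspondence for $\GL_k(\E)$ is compatible with (inner and outer) automorphisms of $\GL_k(\E)$ -- which follows from its characterization via gamma factors of pairs -- I would let $\tau$ vary over all irreducible supercuspidals and apply the semisimple converse theorem (Proposition \ref{semisimpleconverse}) to deduce that $\mathcal{R} \circ \mathcal{LL}_{\G'}({^\alpha\pi})_{ss}$ and the parameter obtained from $\mathcal{R} \circ \mathcal{LL}_\G(\pi)_{ss}$ by transport through $^L\alpha$ are $\GL_m(\mathbb{C})$-conjugate.

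The main technical point will be verifying the compatibility of the embedding $\mathcal{R}$ of Section \ref{Sectparametersasselfdual} with the dual map $^L\alpha$, so that the $\GL_m(\mathbb{C})$-conjugacy produced by the converse theorem refines, via Proposition \ref{GGPprop}, to the desired $\widehat{\G'}(\mathbb{C})$-conjugacy of semisimple parameters. This reduces to the observation that $\mathcal{R}$ is canonically attached to the classical dual group up to outer automorphisms, which for unitary groups are precisely absorbed into the Galois action defining $\LG$; for symplectic and odd special orthogonal groups the dual group has no non-trivial outer automorphisms and the statement is immediate.
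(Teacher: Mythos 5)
Your proposal is correct and takes essentially the same route as the paper's proof: invariance of the Plancherel measure under group isomorphisms (Appendix \ref{section:compatibility_with_isom}), the gamma-factor decomposition of Proposition \ref{Propplanchereldecomp}, the semisimple converse theorem (Proposition \ref{semisimpleconverse}), and Proposition \ref{GGPprop} to upgrade $\GL_m(\mathbb{C})$-conjugacy to $\widehat{\G}(\mathbb{C})$-conjugacy. The paper avoids your automorphism $\alpha_0$ of the $\GL_k(\E)$-factor by identifying the two $\GL$-factors outright (so the same $\tau$ appears on both sides and the $R$-factor terms cancel on the nose), and it settles the compatibility of $\mathcal{R}$ with ${}^L\alpha$ simply by choosing $\mathcal{R}'=\mathcal{R}\circ{}^L\alpha$, since the gamma factors depend only on the isomorphism class of the composite representation—the same point you address via the (absence of) outer automorphisms of the dual group.
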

\begin{proof}
Write~$\phi=\mathcal{R}\circ \mathcal{LL}_{\G}(\pi)$, and $\phi'=\mathcal{R}\circ {^L\alpha}\circ \mathcal{LL}_{\G'}({^{\alpha}\pi})$. If we show that 
\[\phi,\ \phi':\W_F\times \SL_2(\mathbb{C})\to \GL_m(\mathbb{C})\]
are isomorphic in $\GL_m(\mathbb{C})$, then by the properties of $\mathcal{R}$ (see Section \ref{Sectparametersasselfdual}), the $\widehat{\G}(\mathbb{C})$-conjugacy class of $\mathcal{LL}_G(\pi)$ is equivalent to that of ${^L\alpha}\circ \mathcal{LL}_{\G'}({^{\alpha}\pi})$, as desired. Let~$\phi_{\tau}=\mathcal{LL}_{\GL_k(\F)}(\tau)$. We now use an argument similar to that of Proposition~\ref{automorphisms:theorem} to show that $\phi$ is equivalent to $\phi'$ after semisimplification.

By Appendix~\ref{section:compatibility_with_isom}, we have
\begin{equation}
\label{plancherelsigma}\mu_{\psi}^\G(\tau_s\otimes \pi)=\mu_{\psi}^{\G'}(\tau_s\otimes{^\alpha\pi}),
\end{equation}
where we have identified $\mathbb{C}[\M/\M^{\circ}]\simeq \mathbb{C}[\M'/(\M')^\circ]\simeq \mathbb{C}[q^{\pm s}]$, where $\M = \GL_k(\E)\times \G$ and $\M'$ is $\GL_k(\E)\times \G'$.

The gamma factor associated to $\mathcal{R'}\circ \mathcal{LL}_{G'}({^\alpha}\pi)$ is an invariant of the isomorphism class of $\mathcal{R'}\circ \mathcal{LL}_{G'}({^\alpha}\pi)$, so we can and do choose our representation $\mathcal{R'} = \mathcal{R}\circ {^L\alpha}$.
Now, by Proposition \ref{Propplanchereldecomp} and Equation \ref{plancherelsigma}, we have
\begin{align*}
\gamma(s,\phi_{\tau}\otimes{\phi^*}',\psi_{\E}^\sigma)&\gamma(-s,\phi_{\tau}^*\otimes \phi',\overline{\psi_{\E}})\gamma(2s,R\circ \phi_{\tau},\psi)\gamma(-2s,R\circ \phi_{\tau}^*,\overline{\psi})=\\
\notag &\gamma(s,\phi_{\tau}\otimes\phi^*,\psi_{\E})\gamma(-s,\phi_{\tau}^*\otimes \phi,\overline{\psi_{\E}})\gamma(2s,R\circ \phi_\tau,\psi)\gamma(-2s,R\circ \phi_{\tau}^*,\overline{\psi}).
\end{align*}
We have~${\phi^*}'={\phi'}^*$, whence
\[\gamma(s,\phi_{\tau}\otimes\phi'^*,\psi_{\E}^\sigma)\gamma(-s,\phi_{\tau}^*\otimes \phi',\overline{\psi_{\E}})=
\gamma(s,\phi_{\tau}\otimes\phi^*,\psi_{\E})\gamma(-s,\phi_{\tau}^*\otimes \phi,\overline{\psi_{\E}}).\]
Hence by Proposition \ref{semisimpleconverse}, we deduce that, after semisimplification,~$\phi'$ and~$\phi$ are conjugate in~$\GL_m(\mathbb{C})$. \end{proof}

\begin{remark}
{It is already known that~$\mathcal{LL}_{\GL_k(\F)}$ is compatible with isomorphisms (even without semisimplification) by \cite[Proposition 5.2.5]{Haines}. For classical~$p$-adic groups, Proposition~\ref{isomorphisms:theorem} establishes Conjecture \ref{Hainesisomorphisms:conjecture} -- the weakened form of \cite[Conjecture 5.2.4]{Haines} where the parameters are considered only up to semisimplification.}
\end{remark}

\subsection{Compatibility with integrality}
Choose an isomorphism~$\Ql\simeq \mathbb{C}$ to obtain an~$\ell$-adic correspondence for classical groups.  
  The Langlands parameters of supercuspidal representations are discrete by \cite[Section 6]{Arthur}, \cite{Mok}, \cite[Theorem 1.6.1]{KMSW}, {and \cite[Theorem 1.2]{MR4776199}}.  Moreover, as the centre of~$\G$ is compact all central characters are integral, hence all supercuspidal representations by \cite[II 4.12]{Vig96}, and all Frobenius semisimple discrete parameters are integral by Proposition \ref{propintegralparameters}.
Therefore, by compatibility with parabolic induction (Proposition \ref{parabolicinduction:theorem}), Proposition \ref{propintegralparameters} and \cite[Corollary 1.6]{DHKMfiniteness}, we obtain:

\begin{proposition}\label{integralclass}
Let~$\G$ be a classical $p$-adic group.  An irreducible representation of~$\G$ is integral if and only if its associated~$\ell$-adic Langlands parameter is integral.  
\end{proposition}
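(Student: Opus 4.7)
My strategy is a two-step reduction: first reduce both sides of the equivalence to the level of the supercuspidal support of $\pi$, then handle the resulting supercuspidal case factor-by-factor on the Levi.

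Let $\pi$ be an irreducible $\Ql$-representation of $\G$ with supercuspidal support $(\M,\rho)$ and $\ell$-adic parameter $\phi_\ell=\mathcal{LL}_\G(\pi)$. On the representation side, \cite[Corollary 1.6]{DHKMfiniteness} gives that $\pi$ is integral if and only if $\rho$ is. On the parameter side, Proposition~\ref{propintegralparameters}~(2) reduces integrality of $\phi_\ell$ to integrality of its semisimplification $(\phi_\ell)_{ss}$, and Proposition~\ref{parabolicinduction:theorem} identifies $(\phi_\ell)_{ss}$ with (a $\widehat{\G}(\Ql)$-conjugate of) $\iota_{\M,\G}\circ(\mathcal{LL}_\M(\rho))_{ss}$. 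Since $\iota_{\M,\G}:\LM\hookrightarrow\LG$ is a closed immersion of algebraic groups, hence a proper map on $\Ql$-points, a subgroup of $\LM(\Ql)$ has compact closure if and only if its image in $\LG(\Ql)$ does; applying Proposition~\ref{propintegralparameters}~(2) once more on the $\M$-side, we are reduced to showing that $\rho$ is integral if and only if $\mathcal{LL}_\M(\rho)$ is.

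Now $\M$ decomposes as $\prod_i\GL_{n_i}(\E)\times\G'$ with $\G'$ a smaller classical group (possibly trivial), and correspondingly $\rho=\bigotimes_i\rho_i\otimes\rho'$ with $\mathcal{LL}_\M(\rho)$ the corresponding product of parameters. Integrality of both $\rho$ and $\mathcal{LL}_\M(\rho)$ is checked factor by factor, so it suffices to prove the equivalence for each supercuspidal factor. For each of them the local Langlands correspondence is known with the required properties (Harris--Taylor--Henniart--Scholze for $\GL_{n_i}(\E)$; Arthur, Mok, \cite{KMSW}, and \cite{MR4776199} for $\G'$), the parameter attached to a supercuspidal is discrete, and central characters are preserved. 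Combining Proposition~\ref{propintegralparameters}~(1) with \cite[II~4.12]{Vig96}, as in Remark~\ref{remarkpreservesintegrality}~(1), then gives the desired equivalence on each factor.

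The main difficulty lies in the first paragraph's reduction: it relies on the fact that integrality of a parameter can be detected after semisimplification (Proposition~\ref{propintegralparameters}) and that semisimplification commutes with parabolic induction up to the embedding $\iota_{\M,\G}$ (Proposition~\ref{parabolicinduction:theorem}). Once these inputs are in place, the supercuspidal case of the second paragraph is essentially automatic: for the classical factor $\G'$ the centre is compact, so every supercuspidal and every Frobenius-semisimple discrete parameter is integral, and the only genuine content of the equivalence sits in the $\GL_{n_i}(\E)$ factors, where preservation of central characters by the local Langlands correspondence is what translates integrality from one side to the other.
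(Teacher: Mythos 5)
Your proposal is correct and follows essentially the same route as the paper: reduce to the supercuspidal support using \cite[Corollary 1.6]{DHKMfiniteness}, Proposition \ref{propintegralparameters}(2) and compatibility with parabolic induction (Proposition \ref{parabolicinduction:theorem}), then settle the supercuspidal case via discreteness of the parameters, compactness of the centre of the classical factor, central-character compatibility on the $\GL_{n_i}(\E)$ factors, Proposition \ref{propintegralparameters}(1) and \cite[II 4.12]{Vig96}. You merely make explicit two points the paper leaves implicit (that $\iota_{\M,\G}$ being a closed embedding lets integrality pass between $\LM(\Ql)$ and $\LG(\Ql)$, and the factor-by-factor treatment of the Levi), which is fine.
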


\subsection{The extended packet conjecture}
We prove Conjecture \ref{genericpacket} for quasi-split classical groups:

\begin{proposition}\label{extendedpacketclass}
Let~$\G$ be a quasi-split classical~$p$-adic group and~$(\U,\psi)$ be a Whittaker datum for~$\G$.  In each extended~$L$-packet of~$\G$ there exists a unique~$\psi$-generic representation.
\end{proposition}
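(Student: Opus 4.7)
The plan is to reduce this to Proposition \ref{extendedpacketunderconjectures}, which already states that Conjecture \ref{temperedpacketconj} (the tempered packet conjecture) together with Conjecture \ref{Davidsmaxorbitconj} (the geometric reformulation of the Gross--Prasad--Rallis conjecture) imply Conjecture \ref{genericpacket}. So the task reduces to verifying these two inputs for quasi-split classical groups, in addition to Conjecture \ref{LLcorresp} itself, which has already been recalled in Section \ref{LLforclassgroups}.

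First, I would invoke the tempered packet conjecture for quasi-split classical groups: existence and uniqueness of the $\psi$-generic member in each tempered $L$-packet. This is part of the Arthur/Mok construction of tempered $L$-packets --- the packet is parametrized by characters of the relevant component group, and the trivial character (with respect to the Whittaker normalization fixed by $(\U,\psi)$) corresponds to the unique $\psi$-generic element. Precise references are \cite[Proposition 8.3.2]{Arthur} for symplectic and odd split special orthogonal groups, \cite[Theorem 9.4.1]{Mok} for quasi-split unitary groups, and \cite[Theorem 1.2]{MR4776199} for quasi-split odd non-split special orthogonal groups.

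Second, I would verify Conjecture \ref{Davidsmaxorbitconj}. By Proposition \ref{Davidsmaxorbitprop}, this is equivalent to the original Gross--Prasad--Rallis statement (Conjecture \ref{GrossPrasadRallis}) that an $L$-packet contains a generic member iff its adjoint $L$-factor is holomorphic at $s=1$. For quasi-split classical groups this is known: it is established in the course of the proof of the local Gan--Gross--Prasad conjectures, cf.\ \cite[Section 8]{GGP12} for the formulation and \cite[Appendice B]{MoeglinRenard} and the works of M\oe glin and of Liu for the resolution; references to the relevant literature are collected in \cite[Section 4]{CDFZ24}. Equivalently, one may check the maximal monodromy criterion directly from the Arthur/Mok construction.

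With both inputs in place, Proposition \ref{extendedpacketunderconjectures} applies verbatim and yields Conjecture \ref{genericpacket} in the form stated as Proposition \ref{extendedpacketclass}. I do not anticipate a genuine obstacle at this stage: everything is a careful compilation of results from the literature. The only mild subtlety is matching the Whittaker normalization used in \cite{Arthur, Mok, MR4776199} with the one fixed in Section 5 of this paper, but this is just a bookkeeping matter that does not affect existence or uniqueness of the $\psi$-generic member.
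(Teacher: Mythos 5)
Your proposal is essentially the paper's proof: reduce via Proposition \ref{extendedpacketunderconjectures} and then verify Conjectures \ref{temperedpacketconj} and \ref{Davidsmaxorbitconj} for quasi-split classical groups by citing the literature (note that since the group is quasi-split, the odd special orthogonal case is split, so \cite{MR4776199} is not needed here). One caveat: the \emph{uniqueness} of the $\psi$-generic member in a tempered packet is not simply ``part of the Arthur/Mok construction'' --- \cite[Proposition 8.3.2]{Arthur} and \cite{Mok} give existence (the member attached to the trivial character of the component group is $\psi$-generic), but the statement that no other member is generic is a separate theorem, for which the paper cites \cite{Atobe}; likewise, for the Gross--Prasad--Rallis input the paper uses the concrete reference \cite[Proposition B.1]{GanIchino16} (combined with Proposition \ref{Davidsmaxorbitprop}) rather than the M{\oe}glin/Liu route you gesture at, though both lead to the same conclusion.
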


\begin{proof}
By Proposition \ref{extendedpacketunderconjectures}, assuming Conjecture \ref{LLcorresp} for~$\G$, it suffices to check Conjectures \ref{temperedpacketconj} and \ref{Davidsmaxorbitconj}.  In fact, we do not need properties \eqref{LLmain3}, \eqref{LLmain4} of Conjecture \ref{LLcorresp}, as they were not used in the proof of the proposition, and so Conjecture \ref{LLcorresp} follows from the main theorems of the works cited in Section \ref{LLforclassgroups}.

The existence statement of Conjecture \ref{temperedpacketconj} -- the \emph{tempered packet conjecture} -- is established for symplectic groups and split odd special orthogonal groups in \cite[Proposition 8.3.2] {Arthur}, and for quasi-split unitary groups in~\cite[Corollary 9.2.4]{Mok}.   For the uniqueness statement of Conjecture  \ref{temperedpacketconj} see \cite{Atobe} (see also the introduction of ibid.~for a history of previous proofs of the same result). 

By Proposition \ref{Davidsmaxorbitprop}, Conjecture  \ref{Davidsmaxorbitconj} is equivalent to the Gross--Prasad--Rallis Conjecture \ref{GrossPrasadRallis}.  Gan and Ichino in~\cite[Proposition B.1]{GanIchino16} prove Conjecture \ref{GrossPrasadRallis} using other properties of the expected local Langlands correspondence which they remark are known for classical groups \cite[B.2]{GanIchino16}. 
\end{proof}

\subsection{The semisimple local Langlands correspondence for classical groups}
It follows that the semisimplifications of the local Langlands correspondences for classical groups of Arthur et al. are ``semisimple correspondences" in the sense of the last section:
\begin{corollary}\label{semisimpleLLforclass}
The semisimple local Langlands correspondence for a classical $p$-adic group exists and satisfies the basic desiderata of Definition \ref{semisimplecorrespdef}.
\end{corollary}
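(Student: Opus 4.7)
The plan is to invoke Proposition \ref{semisimplecorresp}, which reduces the claim to verifying Conjectures \ref{LLcorresp}, \ref{ParaindLanglands}, \ref{genericpacket}, \ref{Hainesisomorphisms:conjecture}, and \ref{fieldautconj} for every Levi subgroup $\M$ of $\G$. Since a Levi subgroup of a classical $p$-adic group (in the sense of Definition \ref{defclassgroup}) is isomorphic to a product of general linear groups over $\E$ with a smaller classical group of the same type as $\G$, it suffices to verify these conjectures in two families: classical groups $\G$ themselves, and $\GL_n(\E)$ for finite extensions $\E/\F$.

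For the classical $\G$ side, I would proceed item by item. The existence of $\mathcal{LL}_\G$ (Conjecture \ref{LLcorresp}) is recalled in Subsection \ref{LLforclassgroups} with references to \cite{Arthur, Mok, KMSW, ChenZou2, MR4776199}; note that properties \eqref{LLmain3} and \eqref{LLmain4} of Conjecture \ref{LLcorresp} are not needed to apply Proposition \ref{semisimplecorresp} (as observed in the proof of Proposition \ref{extendedpacketclass}). Compatibility with parabolic induction (Conjecture \ref{ParaindLanglands}) is Proposition \ref{parabolicinduction:theorem}; the extended packet conjecture (Conjecture \ref{genericpacket}) is Proposition \ref{extendedpacketclass} in the quasi-split case; invariance under group isomorphisms (Conjecture \ref{Hainesisomorphisms:conjecture}) is Proposition \ref{isomorphisms:theorem}; and $\sigma$-equivariance for $\sigma$ fixing $\sqrt{q}$ (Conjecture \ref{fieldautconj}) is Proposition \ref{automorphisms:theorem}.

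For the $\GL_n(\E)$ side, all the relevant conjectures are classical and well-known in the literature. The local Langlands correspondence for $\GL_n$ is \cite{HarrisTaylor, Henniart, Scholze}; compatibility with parabolic induction is built into the Bernstein--Zelevinsky classification; each $L$-packet for $\GL_n$ is a singleton, so the extended packet conjecture and the tempered packet conjecture reduce to the well-known statement that the unique member of each $L$-packet possesses a unique Whittaker model with respect to any non-degenerate character of the unipotent radical of a Borel (the generic representations being those of Langlands type). Compatibility with isomorphisms is \cite[Proposition 5.2.5]{Haines}, and Galois equivariance of the $\GL_n$-correspondence with respect to automorphisms fixing $\sqrt{q}$ follows from its characterization via $L$- and $\epsilon$-factors of pairs together with the action of $\sigma$ on these factors, cf.\ \cite[Theorem 3.2]{BHDavenport}.

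Assembling these inputs, Proposition \ref{semisimplecorresp} yields the family $(\mathscr{C}_\M) = ((\cdot)_{ss} \circ \mathcal{LL}_\M \circ (\cdot)_{sc}^{-1})$, which is a semisimple correspondence for $\G$ over $\mathbb{C}$ (and hence, by transport through the fixed isomorphism $\mathbb{C} \simeq \Qbar$, over $\Qbar$). There is no serious obstacle beyond bookkeeping; the only point requiring care is checking that the non-quasi-split classical groups are covered by \cite{KMSW, ChenZou2, MR4776199}, and that the various desiderata for $\GL_n$ factors of Levi subgroups extend over general $\E$ (they do, since the listed references work uniformly over any non-archimedean local field of residual characteristic $p$).
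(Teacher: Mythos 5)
Your overall route is the paper's: invoke Proposition \ref{semisimplecorresp} and feed it Propositions \ref{parabolicinduction:theorem}, \ref{automorphisms:theorem}, \ref{isomorphisms:theorem}, \ref{extendedpacketclass}, together with the known properties of the correspondence for the general linear factors of Levi subgroups. The one genuine gap is your treatment of the desiderata $(\mathscr{C}3)$ (unramified twisting) and $(\mathscr{C}4)$ (central characters) of Definition \ref{semisimplecorrespdef}. You assert that properties \eqref{LLmain3} and \eqref{LLmain4} of Conjecture \ref{LLcorresp} are ``not needed to apply Proposition \ref{semisimplecorresp}'', citing the proof of Proposition \ref{extendedpacketclass}; but the remark there concerns Proposition \ref{extendedpacketunderconjectures} only, whereas the proof of Proposition \ref{semisimplecorresp} does use properties (3) and (4) of Conjecture \ref{LLcorresp} --- they are exactly what yields $(\mathscr{C}3)$ and $(\mathscr{C}4)$. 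So, as written, you have neither verified (3)--(4) for the classical factor (which the paper does not rely on from the cited literature) nor supplied $(\mathscr{C}3)$--$(\mathscr{C}4)$ by another route, even though both are part of the definition of a semisimple correspondence and must be checked.

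The gap is small and is closed exactly as in the paper's proof: a classical group in the sense of Definition \ref{defclassgroup} has compact centre, so $\G$ admits no nontrivial unramified characters and the central-character requirement in $(\mathscr{C}4)$ (imposed only when the centre is non-compact) is empty for $\G$ itself; for a proper Levi subgroup $\M=\GL_{n_1}(\E)\times\cdots\times\GL_{n_r}(\E)\times\G'$ both conditions reduce to the general linear factors, where they follow from the known twisting and central-character compatibilities of the $\GL_n$ correspondence --- which your $\GL_n(\E)$ paragraph already covers. With that observation substituted for the incorrect cross-reference, your argument coincides with the paper's; the remaining items you list (existence via Section \ref{LLforclassgroups}, $(\mathscr{C}1)$--$(\mathscr{C}2)$ from the main theorems, and Propositions \ref{parabolicinduction:theorem}, \ref{automorphisms:theorem}, \ref{isomorphisms:theorem}, \ref{extendedpacketclass} for $(\mathscr{C}5)$--$(\mathscr{C}8)$) match the paper's verification.
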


\begin{proof}
By Proposition \ref{semisimplecorresp}, and Propositions \ref{parabolicinduction:theorem}, \ref{automorphisms:theorem}, \ref{isomorphisms:theorem}, \ref{extendedpacketclass}, and their counterparts for general linear groups, it remains to check the following basic properties:
\begin{enumerate}
\item Finite fibres, surjectivity in the quasi-split case, and the Langlands classification (which is built into the construction starting from the tempered case) follow from the main theorems.
\item compatibility with unramified twisting and the central character condition both follow as the centre of a classical group is compact (so these conditions are empty for~$\G$), and for Levi subgroups these follow from compatibility with the correspondence for general linear groups.
\end{enumerate}
\end{proof}

\section{Local Langlands in families}

\subsection{The universal unramified character}
{For the remainder we suppose~$\F/\mathbb{Q}_p$.  Let~$\M$ be a Levi
subgroup of~$\G$ and let $\Psi_{\M}$ denote the torus of unramified
characters of $\M$. For any $\mathbb{Z}[1/p]$-algebra $\R$, the group
$\Psi_{\M}(\R)=\Hom(\M/\M^{\circ},\R^{\times})$ acts on smooth
$\R[\M]$-modules by twisting. Through the \emph{Kottwitz isomorphism} (cf.~\cite[Section 3.3]{Haines})
\[\Psi_{\M}\simeq (((\Z_{\widehat{\M}})^{\I_\F})_{\Fr})^\circ,\]
the torus $\Psi_{\M}$ also acts on the coarse quotient
$\underline{\Z}^1(\W_\F,\widehat\M)\sslash \widehat{\M}$. Indeed, multiplication of
cocycles yields an action of $(\Z_{\widehat{\M}})^{\I_\F}=
\underline{\Z}^1(\W_\F/\I_\F,(\Z_{\widehat{\M}})^{\I_\F})\subset
\underline{\Z}^1(\W_\F,\Z_{\widehat{\M}})$  on
$\underline{\Z}^1(\W_\F,\widehat\M)$ that descends to an action on
$\underline{\Z}^1(\W_\F,\widehat\M)\sslash \widehat{\M}$, and the latter factors over
the coinvariants $((\Z_{\widehat{\M}})^{\I_\F})_{\Fr}$.}

{Through the Kottwitz isomorphism again, the universal unramified character $\chi_{\univ,\M}: \M/\M^{\circ}\To{}
\mathbb{Z}[1/p][\M/\M^{\circ}]^{\times}$ of Definition \ref{def_inertial} corresponds to  a 
$\mathbb{Z}[1/p][\M/\M^\circ]$-point of $(((\Z_{\widehat{\M}})^{\I_\F})_{\Fr})^\circ$ that
we denote by $z_{\univ,\M}$.
Given an $\R$-point $\varphi$ of $\underline{\Z}^1(\W_\F,\widehat{\M})\sslash\widehat{\M}$, we then
define its \emph{universal unramified twist}  by promoting $\varphi$ to an
$\R[\M/\M^{\circ}]$-point and then taking the product $\varphi\cdot z_{\univ,\M} $, which
is an $\R[\M/\M^{\circ}]$-point of $\underline{\Z}^1(\W_\F,\widehat{\M})\sslash\widehat{\M}$.}


{By~\cite[Section 3.3]{Haines}, for any field $\K$ of characteristic zero, and any $\K$-point $x$ of $\Spec
(\mathbb{Z}[1/p][\M/\M^\circ])$, 
 the $\K$-point  $z_{x}:=(z_{\univ,\M})_x$ corresponds to the character $\chi_{x}:=(\chi_{\univ,\M})_x$
 under local Langlands for (unramified) characters of $\M$. In the notation of
 item (\ref{LLmain3}) of Conjecture  \ref{LLcorresp}, this
 means that for $\chi=\chi_{x}$, the cocycle $z_{\chi}$  belongs to the cohomology class $z_{x}$.}

\subsection{Interpolating a semisimple correspondence in families; the banal case}
We need the following definitions from \cite{DHKM}:
\begin{enumerate}
\item A prime~$\ell$ is called~\emph{$\LG$-banal} if the fibre~$\underline{\Z}^1(\W_\F^0,\widehat{\G})_{\Fl}$ is reduced.
\item The integer $\N_{\widehat{\G}}$ from \cite[Section 4]{DHKM}.
\item Let~$\N_{\LG}$ denote the product of all primes dividing~$\N_{\widehat{\G}}$ and all non-$\LG$-banal primes.
\end{enumerate}

\begin{definition}\label{stablecentredef}
Suppose~$\mathscr{C}=(\mathscr{C}_{\M})$ is a semisimple correspondence for~$\G$ over~$\Qbar$. Let~$\R$ be a subring of~$\Qbar$. The \emph{stable centre}~$\mathfrak{Z}^{\St}_{\R}(\G)$ subring of~$\mathfrak{Z}_{\G,\R}$ consisting of all elements which act uniformly on all extended~$L$-packets for~$\mathscr{C}$ over~$\Qbar$.
%
%
\end{definition}

\begin{theorem}\label{axiomaticLLIF}
Suppose~$\mathscr{C}=(\mathscr{C}_\M)$ is a semisimple correspondence for~$\G$ over~$\Qbar$.  
\begin{enumerate}
\item\label{Axiomatic1} There exists a unique quasi-finite morphism
\[\mathscr{C}\mathrm{IF}_\G:(\mathfrak{R}_{\LG,\mathbb{Q}(\sqrt{q})})^{\widehat{\G}}\rightarrow  \mathfrak{Z}_{\G,\mathbb{Q}(\sqrt{q})}\]
compatible with $\mathscr{C}$.   
\item\label{Axiomatic2}  Let~$\N_\G$ denote the product of the non-$\G$-banal primes, then~the image of~$(\mathfrak{R}_{\LG,\mathbb{Z}[\sqrt{q}^{-1},1/\N_{\G}]})^{\widehat{\G}}$ under~$\mathscr{C}\mathrm{IF}_\G$ is contained in~$\mathfrak{Z}_{\G,\mathbb{Z}[\sqrt{q}^{-1},1/\N_{\G}]}$.
\item\label{Axiomatic3}  The image of~$\mathscr{C}\mathrm{IF}_\G$ on~$(\mathfrak{R}_{\LG,\mathbb{Z}[\sqrt{q}^{-1},1/\N_{\G}]})^{\widehat{\G}}$ is contained in the {subrings~$\mathfrak{Z}^{\St}_{\G,\mathbb{Z}[\sqrt{q}^{-1},1/\N_{\G}]}$, and~$\mathfrak{Z}_{\G,\mathbb{Z}[\sqrt{q}^{-1},1/\N_{\G}]}^{\ad}$.}
\item\label{Axiomatic4}  Suppose further~$\G$ is~$\F$-quasi-split.  
Let~$\M_{\G}=\mathrm{lcm}(\N_\G,\N_{\LG})$.  Then, the composition with the natural map~$ \mathfrak{Z}_{\G,\mathbb{Z}[\sqrt{q}^{-1},1/\M_{\G}]}^{\ad}\rightarrow\mathfrak{E}_{\mathbb{Z}[\sqrt{q}^{-1},1/\M_{\G}]}(\G)$, 
\[(\mathfrak{R}_{\LG,\mathbb{Z}[\sqrt{q}^{-1},1/\M_{\G}]})^{\widehat{\G}}\xrightarrow{\mathscr{C}\mathrm{IF}_\G}  \mathfrak{Z}^{\ad}_{\G,\mathbb{Z}[\sqrt{q}^{-1},1/\M_{\G}]}\rightarrow \mathfrak{E}_{\mathbb{Z}[\sqrt{q}^{-1},1/\M_{\G}]}(\G)\]
is an isomorphism, and these maps induce isomorphisms
\[(\mathfrak{R}_{\LG,\mathbb{Z}[\sqrt{q}^{-1},1/\M_{\G}]})^{\widehat{\G}}\simeq \mathfrak{Z}^{\St}_{\G,\mathbb{Z}[\sqrt{q}^{-1},1/\M_{\G}]}\simeq \mathfrak{E}_{\mathbb{Z}[\sqrt{q}^{-1},1/\M_{\G}]}(\G).\]
\end{enumerate}
\end{theorem}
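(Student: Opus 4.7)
The plan is to construct the interpolating morphism prime-by-prime for each $\ell$ not dividing $\M_\G$, and then glue these together over $\mathbb{Q}(\sqrt{q})$. The starting point is Theorem \ref{banaldecomptheorem}, which, for each banal prime $\ell$, gives us a block decomposition of $\Rep_{\Zl}(\G)$ indexed by $\mathfrak{B}_{\Qbar}(\G)$ together with an explicit identification
\[\mathfrak{Z}_{\G,\Zl,[\M,\rho]_\G} \simeq (\Zl[\M/\M^\circ]^{\H_{(\M,\rho)}})^{\W_{(\M,\rho)}}.\]
On the Langlands side, the results of \cite{DHKM} (which, when $\ell$ does not divide $\N_{\LG}$, give a concrete description of the connected components of $\underline{\Z}^1(\W_\F,\widehat{\G})\sslash \widehat{\G}$ over $\Zl$) partition $(\mathfrak{R}_{\LG,\Zl})^{\widehat{\G}}$ according to semisimple Langlands parameters up to inertial equivalence. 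The semisimple correspondence $\mathscr{C}$, via its compatibility with unramified twists $(\mathscr{C}3)$, parabolic induction $(\mathscr{C}6)$, and central characters $(\mathscr{C}4)$, matches each $\Qbar$-inertial class $[\M,\rho]_\G$ with an inertial class of semisimple parameters, inducing a ring homomorphism on each factor.

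First I would define the map block by block over $\Zl$ for $\ell$ banal. On a fixed block indexed by $[\M,\rho]_\G$, the semisimple parameter $\mathscr{C}_\M(\M,\rho\otimes \chi_{\univ,\M})$ is controlled by $\chi_{\univ,\M}$ via $(\mathscr{C}3)$, and the finite groups $\H_{(\M,\rho)}$ and $\W_{(\M,\rho)}$ act compatibly with the corresponding symmetries of parameters (reflecting, respectively, stabilizers under unramified twist and $\widehat\G$-conjugacy, which can be controlled via integrality in Proposition \ref{propintegralparameters}). This identifies the natural map $(\mathfrak{R}_{\LG,\Zl})^{\widehat{\G}}_{[r]} \to (\Zl[\M/\M^\circ]^{\H_{(\M,\rho)}})^{\W_{(\M,\rho)}}$ on each component, and taking the product yields a map into $\mathfrak{Z}_{\G,\Zl}$. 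By construction it interpolates $\mathscr{C}$, and uniqueness is built in since the source is reduced and interpolation forces agreement on a Zariski-dense set of points. Property $(\mathscr{C}8)$ ensures that the image lies in $\mathfrak{Z}^{\ad}_{\G,\Zl}$, and the definition via constancy of semisimple parameters on extended packets places it in $\mathfrak{Z}^{\St}_{\G,\Zl}$.

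Next I would descend from $\Zl$ to $\Zlb[\sqrt{q}]$ using Galois equivariance: property $(\mathscr{C}7)$ guarantees that the map is equivariant for $\Gal(\Ql/\mathbb{Q}_\ell(\sqrt{q}))$, so Corollary \ref{GaloisBC} applied to the (normalization-type) descent for a suitable Dedekind ring produces the map over $\mathbb{Z}_\ell[\sqrt{q}]$. Assembling these maps over all $\ell \nmid \M_\G$ and comparing with the map over $\mathbb{Q}(\sqrt{q})$ (which is obtained directly from the Bernstein description of $\mathfrak{Z}_{\G,\Qbar}$ and Haines' description of the parameter space), the fact that $\mathfrak{Z}_{\G,\R}$ is flat (Corollary \ref{flatnessDD} and Proposition \ref{Corembed}) lets us patch to a global map over $\mathbb{Z}[\sqrt{q}^{-1},1/\M_\G]$, yielding parts (\ref{Axiomatic1})--(\ref{Axiomatic3}).

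For the quasi-split statement (\ref{Axiomatic4}), the core input is the integral Bushnell--Henniart isomorphism of Theorem \ref{BHBanalcase}, which already identifies $\mathfrak{Z}_{\G,\Zbar[1/\N_\G],[\M,\rho]_\G}$ with $\End(\ind^\G_\U(\psi)_{[\M,\rho]_\G})$ on each block. The remaining task is to show that the block-by-block map from the connected component $(\mathfrak{R}_{\LG,\Zlb[\sqrt{q}]})^{\widehat{\G}}_{[r]}$ to this block is an isomorphism. Here the explicit description in \cite{DHKM} of connected components of the parameter stack (valid once $\ell \nmid \N_{\LG}$) identifies the source as exactly the same ring of twisted invariants $(\R[\M/\M^\circ]^{\H_{(\M,\rho)}})^{\W_{(\M,\rho)}}$, via the semisimple correspondence's properties $(\mathscr{C}2)$--$(\mathscr{C}5)$; the uniqueness of $\psi$-generic representations in extended packets $(\mathscr{C}5)$ is what pins down this identification. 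Descending to $\mathbb{Z}_\ell[\sqrt{q}]$ by $(\mathscr{C}7)$ and collecting across all $\ell \nmid \M_\G$ as before yields the required isomorphisms, and the equivalences with $\mathfrak{Z}^{\St}_{\G,\mathbb{Z}[\sqrt{q}^{-1},1/\M_\G]}$ follow by comparing both sides block by block.

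The main obstacle I anticipate is the matching, block by block, between the concrete description of the centre from Theorem \ref{banaldecomptheorem} and the description of the corresponding connected component of $\underline{\Z}^1(\W_\F,\widehat{\G})\sslash\widehat{\G}$ from \cite{DHKM}: one must verify that the finite groups $\H_{(\M,\rho)}$ and $\W_{(\M,\rho)}$ appearing in the Bernstein description match exactly the groups of unramified-twist-stabilizers and $\widehat\G$-conjugacies of the parameter $\mathscr{C}_\M(\M,\rho)$ on the Galois side, and that this matching is compatible with the semisimple correspondence. Once this identification is in place, all four statements fall out by routine assembly using flatness, Galois descent, and the properties of the Gelfand--Graev integral model.
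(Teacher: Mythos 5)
Your overall architecture (block-by-block work over banal $\ell$ using Theorem \ref{banaldecomptheorem}, the integral Bushnell--Henniart isomorphism, Galois descent via $(\mathscr{C}7)$ and Corollary \ref{GaloisBC}, and a final patching step) matches the paper's, but the mechanism you propose for actually defining the map has two genuine problems. First, you define $\mathscr{C}\mathrm{IF}_\G$ by matching Bernstein blocks with connected components of the parameter space over $\Zl$, which already requires $\ell\nmid\N_{\LG}$; but parts (\ref{Axiomatic2}) and (\ref{Axiomatic3}) of the theorem invert only $\N_\G$, so your route proves a strictly weaker statement (everything over $\mathbb{Z}[\sqrt{q}^{-1},1/\M_\G]$). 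The paper constructs the map without decomposing the parameter side at all: for each block one picks a representative $(\M,\rho)$ with finite-order central character, notes that any $\phi\in\mathscr{C}_\M(\M,\rho)$ is integral by $(\mathscr{C}4)$ and Proposition \ref{propintegralparameters}, and uses the universal property of $\mathfrak{R}^e_{\LM,\Zl}$ (Theorem \ref{Modulitheorem}(\ref{Moduliuniversal})) applied to the universal unramified twist to get a map $(\mathfrak{R}^e_{\LG,\Zl})^{\widehat\G}\rightarrow\Zl[\M/\M^\circ]$, landing in $(\Zl[\M/\M^\circ]^{\H_{(\M,\rho)}})^{\W_{(\M,\rho)}}$ by $(\mathscr{C}3)$ and $(\mathscr{C}6)$; this needs no hypothesis on $\ell$ at that stage, and banality is used only to recognise the target as a factor of $\mathfrak{Z}_{\G,\Zl}$. (This is also why the statement over $\mathbb{Q}(\sqrt q)$ needs separate care: the universal property is unavailable over $\Qbar$, and the paper substitutes the auxiliary scheme $\underline{\Z}^1(\W_\F/\I_\F^e,\widehat\G)$ rather than appealing to Haines' description as you do.)

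Second, the ``main obstacle'' you isolate --- verifying that $\H_{(\M,\rho)}$ and $\W_{(\M,\rho)}$ coincide with the stabilizer groups of $\mathscr{C}_\M(\M,\rho)$ on the Galois side, so that each block of the centre is literally the ring of functions of the corresponding component of parameters --- does not follow from $(\mathscr{C}1)$--$(\mathscr{C}8)$ and is false in general: $(\mathscr{C}3)$ only gives an inclusion of the representation-side unramified-twist stabilizer into the parameter-side one, and distinct inertial classes (different members of an extended packet) can land in the same component of parameters; this is exactly why the integral isomorphism in part (\ref{Axiomatic4}) is with $\mathfrak{E}$ rather than with the full centre. The paper never performs such a matching. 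Instead it shows each block map is \emph{finite} (via finiteness of $(\mathfrak{R}^e_{\LG,\Zl})^{\widehat\G}\rightarrow(\mathfrak{R}^e_{\LM,\Zl})^{\widehat\M}$ together with the central character), composes with the surjection of Theorem \ref{BHBanalcase}, and then upgrades to an isomorphism onto $\mathfrak{E}_{\G,\Zl,n}$ by checking the map is a bijection on $\Ql$-points using $(\mathscr{C}5)$ (unique $\psi$-generic supercuspidal support per semisimple packet), together with normality, reducedness and flatness of the components cut out by the relevant idempotents --- this is the only place where $\ell\nmid\N_{\LG}$ and the results of \cite{DHKM} on components enter. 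You would need to replace your group-matching step by an argument of this finiteness-plus-point-count type; as written, that step is a gap.
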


\begin{proof}
Let~$[\M,\rho]_{\G}\in\mathfrak{B}_{\Qbar}(\G)$ be a~$\Qbar$-inertial class, indexing
a~$\Zbar[1/\N_{\G}]$-block.  Choose a representative~$(\M,\rho)\in[\M,\rho]_{\G}$ with
finite order central character.   Hence~$\rho$ is defined over a number field~$\K$
and~$\cO_{\K}$-integral by Proposition \ref{integralreps}.  We
pick a semisimple parameter $\phi$ in the $\widehat{\M}(\Qbar)$-conjugacy class
$\mathscr{C}_{\M}(\M,\rho)$.  

\emph{We first consider the statements over $\Zl$ for appropriate $\ell$.} 
For any embedding~$\Qbar\hookrightarrow\Ql$, by Proposition \ref{propintegralparameters}
and~$(\mathscr{C}4)$, the $\ell$-adic parameter $\phi\otimes\Ql$ is~integral.  Thus, after maybe conjugating by
some~$m\in\widehat{\M}(\Ql)$, the parameter~$\phi_{\ell}=(\phi\otimes\Ql)^m$ takes values
in~$\LM(\Zl)$ and is $\ell$-adically continuous. By the universal property
of~$\mathfrak{R}_{\LM,\mathbb{Z}_{\ell}}^e$ (Theorem \ref{Modulitheorem}
(\ref{Moduliuniversal})), this integral parameter corresponds to some map of~$\Zl$-algebras
$\mathfrak{R}_{\LM,\Zl}^e\rightarrow \Zl$, for $e$ sufficiently large.  Accordingly, the
universal unramified twist~$\phi_{\ell}\cdot z_{\univ,\M}$ defined above corresponds to a map of~$\Zl$-algebras
\[\Psi_{[\M,\rho],\ell}^{\M}:(\mathfrak{R}_{\LM,\Zl}^e)^{\widehat{\M}}\rightarrow \Zl[\M/\M^\circ],\]
and its pushforward along $\iota_{\M,\G}:\LM\hookrightarrow \LG$ corresponds to the composition
\[\Psi_{[\M,\rho],\ell}^{\G}:\,(\mathfrak{R}_{\LG,\Zl}^e )^{\widehat{\G}}\rightarrow (\mathfrak{R}_{\LM,\Zl}^e)^{\widehat{\M}}\rightarrow \Zl[\M/\M^\circ].\]
For any character $\chi:\, \M/\M^{\circ}\To{}\Ql$, the composition
$(\mathfrak{R}_{\LM,\Zl}^e)^{\widehat{\M}}\rightarrow \Zl[\M/\M^\circ]\To{\chi}\Ql$ corresponds to the~$\widehat{\M}(\Ql)$-conjugacy class $\phi_{\ell}\cdot z_{\chi}$, which, by property $(\mathscr{C}3)$, equals~$\mathscr{C}_{\M}(\M,\rho\cdot \chi)$.
{In particular, if~$\chi$ belongs to the stabilizer~$\H_{(\M,\rho)}$ of~$\rho$ under
unramified twisting, then this equals~$\mathscr{C}_{\M}(\M,\rho)$.  
This implies that}
\[\Psi_{[\M,\rho],\ell}^{\M} \left((\mathfrak{R}_{\LM,\Zl}^e)^{\widehat\M}\right) \subset \Zl[\M/\M^\circ]^{\H_{(\M,\rho)}}.\]
On the other hand, Property $(\mathscr{C}6)$  says that $\iota_{\M,\G}(\phi_{\ell}\cdot z_{\chi})$
belongs to $\mathscr{C}_{\G}(\M,\rho\cdot \chi)$.  Since $(\M,\rho\cdot \chi)$ is $\G$-conjugate to $(\M,\rho\cdot\chi^{w})$  for any
$w\in \W_{(\M,\rho)}$, this implies that 
$$ \Psi_{[\M,\rho],\ell}^{\G} \left((\mathfrak{R}_{\LG,\Zl}^e)^{\widehat\G}\right) \subset
\left(\Zl[\M/\M^\circ]^{\H_{(\M,\rho)}}\right)^{\W_{(\M,\rho)}}.$$

We claim that the restricted and corestricted map
\begin{equation}
\Psi_{[\M,\rho],\ell}^{\G}:\,
(\mathfrak{R}_{\LG,\Zl}^e)^{\widehat\G} \To{}
\left(\Zl[\M/\M^\circ]^{\H_{(\M,\rho)}}\right)^{\W_{(\M,\rho)}}\label{morph_R_to_Z}
\end{equation}
 is finite. Indeed, we already know from \cite[Cor. 2.4]{DHKMfiniteness} that the  map 
$(\mathfrak{R}_{\LG,\Zl}^e)^{\widehat{\G} }\rightarrow
(\mathfrak{R}_{\LM,\Zl}^e)^{\widehat{\M} }$ is finite.
Moreover,  denoting by $\A_{\M}$ the maximal split central torus of
$\M$, the composition 
$\Zl[\A_{\M}]\To{}\mathfrak{R}^{e}_{{{\tensor*[^L]\A{_\M}}},\Zl}\To{}(\mathfrak{R}_{\LM,\Zl}^{e})^{\LM}\To{}\Zl[\M/\M^\circ]$ is the
%
%
natural map
twisted by the central
character $\omega_{\rho}$ of $\rho$, so it is finite.

Observe that we have not used any hypothesis on the prime $\ell$ so far. But now, our
assumption that $\ell$ is banal  implies, by  Theorem \ref{banaldecomptheorem} and
Proposition~\ref{Corembed}, that the RHS of (\ref{morph_R_to_Z}) is a  factor of
$\mathfrak{Z}_{\G,\Zl}$. Therefore, taking the product of all maps
$\Psi_{[\M,\rho],\ell}^{\G}$ composed with the projection 
$(\mathfrak{R}_{\LG,\Zl})^{\widehat{\G}}\rightarrow
(\mathfrak{R}_{\LG,\Zl}^{e})^{\widehat{\G}}$ yields a morphism of $\Zl$-algebras
\begin{equation}
\Psi_{\ell}:\,(\mathfrak{R}_{\LG,\Zl})^{\widehat{\G}} \rightarrow
\mathfrak{Z}_{\G,\Zl}\label{mor_R_to_Z_global}
\end{equation}
that is   compatible with the given semisimple
correspondence $\mathscr{C}_{\G,\Ql}$ by design. Note that this compatibility makes this map
unique since the target is reduced and $\ell$-torsion free. For the same reason,
the image of $\Psi$ is contained in $\mathfrak{Z}_{\G,\Zl}^{\rm st}$ by definition of
this ring, and is contained in $\mathfrak{Z}_{\G,\Zl}^{\rm ad}$ {by Property $(\mathscr{C}8)$.}

Suppose now that~$\G$ is~$\F$-quasi-split, and fix a depth~$n$.
Since $\ell$ is banal, we know from Theorem \ref{BHBanalcase} that the
natural map 
$\mathfrak{Z}_{\G,\Zl,n} \rightarrow
  \mathfrak{E}_{\G,\Zl,n}$ is surjective.
  Composing with (\ref{mor_R_to_Z_global}), we thus get a finite
  morphism
  \[\Phi_{\ell,n}: (\mathfrak{R}_{\LG,\Zl})^{\widehat{\G} } \rightarrow
  \mathfrak{E}_{\G,\Zl,n}.\]
Let $e_{n}$ be the sum of all primitive idempotents $\varepsilon$ in
$(\mathfrak{R}_{\LG,\Ql})^{\widehat{\G} }$ such that
$\Phi_{\ell,n}(\varepsilon)\neq 0$. So $e_{n}$ cuts out finitely many connected components of
the space of conjugacy classes of semisimple $\Ql$-parameters, and the ``supercuspidal
supports and genericity'' condition $(\mathscr{C}5)$ tells us that
$\Phi_{\ell,n} :\, e_{n}(\mathfrak{R}_{\LG,\Ql})^{\widehat{\G} } \rightarrow
  \mathfrak{E}_{\G,\Ql,n} $ induces a bijection
on~$\Ql$-points. Since $\Phi_{\ell,n}$ is finite and both source and target are  products of normal
integral $\Ql$-algebras, we deduce that $\Phi_{\ell,n}$ induces an isomorphism
$e_{n}(\mathfrak{R}_{\LG,\Ql})^{\widehat{\G} } \To\sim
  \mathfrak{E}_{\G,\Ql,n}.$  

Let us now assume further that $\ell$ does not divide~$\M_{\G}$. In this case, it follows from
Proposition 6.2 and Theorem 6.7 of \cite{DHKM} that all idempotents of
$(\mathfrak{R}_{\LG,\Ql})^{\widehat{\G} }$ -- in particular $e_{n}$ --  lie in the integral subring
$(\mathfrak{R}_{\LG,\Zl})^{\widehat{\G} }$  and, moreover, that 
$e_{n}(\mathfrak{R}_{\LG,\Zl})^{\widehat{\G} }$ is a product of normal
integral flat $\Zl$-algebras.  As above, by finiteness, it follows that $\Phi_{\ell,n}$ induces
an isomorphism
$e_{n}(\mathfrak{R}_{\LG,\Zl})^{\widehat{\G} } \To\sim
\mathfrak{E}_{\G,\Zl,n}, $  since $\mathfrak{E}_{\G,\Zl,n}$ is reduced and flat over $\Zl$.
Going to the limit over $n$, we get an isomorphism
$$\Phi_{\ell}:\,(\mathfrak{R}_{\LG,\Zl})^{\widehat{\G} } \To\sim \mathfrak{E}_{\G,\Zl}.$$

\medskip
\emph{We now descend the statements to $\mathbb{Z}_\ell[\sqrt{q}]$.}  
For banal $\ell$, Galois-invariance $(\mathscr{C}7)$ and Corollary  \ref{GaloisBC}
imply that $\Psi_{\ell}((\mathfrak{R}_{\LG,\mathbb{Z}_\ell[\sqrt{q}]})^{\widehat{\G}
})\subset\mathfrak{Z}_{\G,\mathbb{Z}_\ell[\sqrt{q}]}$.  Assuming invariance under
automorphisms of $\G$, we even get that 
$\Psi_{\ell}((\mathfrak{R}_{\LG,\mathbb{Z}_\ell[\sqrt{q}]})^{\widehat{\G}})\subset\mathfrak{Z}^{\rm
ad}_{\G,\mathbb{Z}_\ell[\sqrt{q}]}$ hence, by Theorem \ref{theoremmathfrakE}, that
$\Phi_{\ell}(\mathfrak{R}_{\LG,\mathbb{Z}_\ell[\sqrt{q}]})^{\widehat{\G}})
\subset\mathfrak{E}_{\G,\mathbb{Z}_\ell[\sqrt{q}]}$. By descent, $\Phi_{\ell}$ thus
induces an isomorphism
$$ \Phi_{\ell}:\, (\mathfrak{R}_{\LG,\mathbb{Z}_\ell[\sqrt{q}]})^{\widehat{\G}}
\To\sim \mathfrak{E}_{\G,\mathbb{Z}_\ell[\sqrt{q}]}.$$

\medskip
\emph{We now consider the statements over $\Qbar$ and $\mathbb{Q}(\sqrt{q})$.}
Taking up the notation $[\M,\rho]$ and $\phi$ from the beginning of this proof, we would
like to mimic the construction of $\Psi_{\ell}$ over $\Zl$ and $\Ql$, but we can't use the universal
property of $\mathfrak{R}_{\LG,\Zl}^e$ on the nose. Instead, let $\I_{\F}^{e}$ be the open
subgroup of $\I_{\F}$ introduced in Corollary 4.6 of \cite{DHKM}, and let
$\underline \Z^{1}(\W_{\F}/\I_{\F}^{e},\widehat\G)=\Spec(\mathfrak{S}_{\LG}^{e})$ be the corresponding affine
scheme of $1$-cocycles over $\mathbb{Z}[1/p]$. By Proposition 4.7 of
\emph{loc.cit.}, we have a closed embedding 
$\underline \Z^{1}(\W_{\F}/\I_{\F}^{e},\widehat\G)\hookrightarrow
\underline \Z^{1}(\W^{0}_{\F}/\P_{\F}^{e},\widehat\G)$ that induces an isomorphism on the
categorical quotients over $\Qbar$, that is, an isomorphism of $\Qbar$-algebras
$(\mathfrak{R}_{\LG,\Qbar}^{e})^{\hG}\To\sim (\mathfrak{S}_{\LG,\Qbar}^{e})^{\hG}$. Now we
can repeat the argument that we had over $\Zl$.
By universality, the {class $\iota_{\M,\G}\circ (\phi\cdot z_{\rm univ,\Qbar})$} corresponds to a morphism
{$\Psi_{[\M,\rho],\Qbar}^{\G} : (\mathfrak{S}_{\LG,\Qbar}^{e})^{\hG}\To{}\Qbar[\M/\M^{\circ}]$}
such
that
$\Psi_{[\M,\rho],\Qbar}^{\G}((\mathfrak{S}_{\LG,\Qbar}^{e})^{\hG})\subset
(\Qbar[\M/\M^{\circ}]^{\H_{(\M,\rho)}})^{\W_{(\M,\rho)}}$.
Taking the limit of these maps provides the desired morphism
$$\Psi_{\Qbar} : (\mathfrak{R}_{\LG,\Qbar})^{\hG}=(\mathfrak{S}_{\LG,\Qbar})^{\hG}
\To{} \mathfrak{Z}_{\G,\Qbar}.$$
As above, this map descends to $\mathbb Q(\sqrt q)$ and, composing with the action of the
Bernstein center on the Gelfand-Graev representations, we get the isomorphism
$$ \Phi_{\Qbar}:\, (\mathfrak{R}_{\LG,\Qbar})^{\hG}\To\sim \mathfrak{E}_{\LG,\Qbar}$$
that also descends to $\mathbb Q(\sqrt q)$.  The theorem now follows from Lemma \ref{localglobalcoeffs}.
\end{proof}

\begin{remark}
Rather than working locally in the proof of Theorem \ref{axiomaticLLIF}, and piecing together using Lemma \ref{localglobalcoeffs},  we could have followed the last strategy (defining~$\Psi_{\Qbar} $) working directly over $\Zbar[1/\M_{\G}]$ using Proposition 6.2 of \cite{DHKM} and choosing a~$\Zbar[1/\M_{\G}]$-integral representative of the semisimple parameter of~$\rho$ using an analogue of \cite[Corollary 2.10]{DHKMfiniteness} for~$\underline \Z^{1}(\W_{\F}/\I_{\F}^{e},\widehat\G)$.   We have preferred the strategy in the proof as the first part requires no hypothesis on~$\ell$, and along the way we recover Theorem \ref{axiomaticLLIF} \eqref{Axiomatic1} and \eqref{Axiomatic2} over~$\mathbb{Z}[\sqrt{q}^{-1},1/\N_{\G}]$ without needing to invert any additional primes dividing~$\M_{\G}/\N_{\G}$.
\end{remark}

 \subsection{Local Langlands in families for general linear groups}
Given the theory of moduli of Langlands parameters over~$\mathbb{Z}[1/p]$ of \cite{DHKM}, the mild argument used in the proof of Theorem \ref{axiomaticLLIF} to extend from local to global integral coefficients (namely, Lemma \ref{localglobalcoeffs}) also applies to the~$\GL_n(\F)$ theorem of the second and fourth authors \cite{HM}, and we obtain:

\begin{corollary}\label{GLnLLIF}
There are natural isomorphisms
\[\begin{tikzcd}
(\mathfrak{R}_{\GL_n,\mathbb{Z}[\sqrt{q}^{-1}]})^{\GL_n} \arrow[r] &  \mathfrak{Z}_{\GL_n(\F),\mathbb{Z}[\sqrt{q}^{-1}]}\arrow[r] &\mathfrak{E}_{\GL_n(\F),\mathbb{Z}[\sqrt{q}^{-1}]},
\end{tikzcd}\]
where the second is the canonical map, and the first interpolates the semisimple local Langlands correspondence of \cite{HarrisTaylor, Henniart, Scholze}.
\end{corollary}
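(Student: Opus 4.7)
The plan is to bootstrap the local integral results of \cite{HelmDuke,curtis,HM} to a global integral statement, following the same descent pattern used at the end of the proof of Theorem \ref{axiomaticLLIF}. Specifically, the prior work establishes, for every prime $\ell\neq p$, a unique isomorphism
\[
(\mathfrak{R}_{\GL_n,\Zl})^{\GL_n} \xrightarrow{\sim} \End_{\Zl[\GL_n(\F)]}(\ind_{\U_n(\F)}^{\GL_n(\F)}(\psi))
\]
compatible with the semisimple local Langlands correspondence. Since every Bernstein block of $\GL_n(\F)$ is $\psi$-generic, the right-hand side is canonically identified (block by block via the universal property of the Gelfand--Graev module) with $\mathfrak{E}_{\GL_n(\F),\Zl}$, and the map from the integral Bernstein centre $\mathfrak{Z}_{\GL_n(\F),\Zl}$ to $\mathfrak{E}_{\GL_n(\F),\Zl}$ is an isomorphism—this is the banal-style statement of Theorem \ref{BHBanalcase}, which in the $\GL_n$ case holds for all $\ell\neq p$ without banality restriction since every Bernstein component contains a generic representation.

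Next I would descend the $\Zl$-isomorphism to $\Zl[\sqrt{q}]$. The semisimple local Langlands correspondence for $\GL_n$ commutes with the field automorphism action (this is the $\GL_n$ analogue of Proposition \ref{automorphisms:theorem}, and is a well-known consequence of its characterization via $\gamma$-factors of pairs), so both $(\mathfrak{R}_{\GL_n,\Zl})^{\GL_n}$ and $\mathfrak{E}_{\GL_n(\F),\Zl}$ carry a compatible action of $\Gal(\mathbb{Q}_\ell[\sqrt{q}]/\mathbb{Q}_\ell)$. Applying Corollary \ref{GaloisBC} to the target, and the analogous $\Gal$-equivariant construction of the stack of parameters from \cite{DHKM} to the source, the isomorphism descends to $\Zl[\sqrt{q}]$.

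Finally, I would glue over all primes $\ell\neq p$ by the ``local-global'' argument already invoked for Theorem \ref{axiomaticLLIF}: the rational isomorphism over $\mathbb{Q}(\sqrt{q})$ (which is immediate from Bushnell--Henniart \cite{BHGen} together with the identification $(\mathfrak{R}_{\LG,\mathbb{C}})^{\widehat{\G}}\simeq \mathscr{O}_{\mathrm{Haines}}$ from \cite[6.3]{DHKM}) is compatible with each $\Zl[\sqrt{q}]$-isomorphism, and since all three rings $(\mathfrak{R}_{\GL_n,\mathbb{Z}[\sqrt{q}^{-1}]})^{\GL_n}$, $\mathfrak{Z}_{\GL_n(\F),\mathbb{Z}[\sqrt{q}^{-1}]}$, $\mathfrak{E}_{\GL_n(\F),\mathbb{Z}[\sqrt{q}^{-1}]}$ are flat over $\mathbb{Z}[\sqrt{q}^{-1}]$ (by Corollary \ref{flatnessDD} and Theorem \ref{theoremmathfrakE}) and inject into their scalar extensions to the $\Zl[\sqrt{q}]$, the local maps assemble into a global isomorphism.

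The only real obstacle is cosmetic: making sure that the source $(\mathfrak{R}_{\GL_n,\mathbb{Z}[\sqrt{q}^{-1}]})^{\GL_n}$, built from the stack of \cite{DHKM}, matches—after base change to $\Zl$—the ring used in \cite{HM}. Since both represent the same functor of continuous cocycles (up to conjugation) after inverting $p$, and both base change to the Haines ring over $\mathbb{C}$, this compatibility is essentially tautological from the universal property in Theorem \ref{Modulitheorem}\eqref{Moduliuniversal}. No new representation-theoretic input is required beyond \cite{HM}; the content of the corollary is that the integral stack of \cite{DHKM} now provides the ``missing'' global object that was unavailable at the time of \cite{HM}.
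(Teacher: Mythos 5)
Your proposal is correct and follows essentially the same route as the paper: take the $\GL_n$ isomorphism of \cite{HelmDuke,curtis,HM} over $\Zl$ for \emph{every} $\ell\neq p$, descend via Galois equivariance, and glue with the rational statement using flatness/torsion-freeness, which is exactly the content of Lemma \ref{localglobalcoeffs} invoked at the end of the proof of Theorem \ref{axiomaticLLIF}. One attribution caveat: the intermediate claim that $\mathfrak{Z}_{\GL_n(\F),\Zl}\to\mathfrak{E}_{\GL_n(\F),\Zl}$ is an isomorphism at non-banal $\ell$ is \emph{not} obtained by extending Theorem \ref{BHBanalcase} (whose proof genuinely uses the banal block decomposition) via genericity of all Bernstein components; it is the main theorem of \cite{HelmDuke}, which you should cite for that step rather than argue it.
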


\appendix

\section{A few abstract lemmas}
We need the following simple lemmas.  The first is a variant of a standard lemma from commutative algebra:

\begin{lemma}\label{scalarextandpros}
Let~$\R$ be a commutative ring, and~$\H$ be a locally profinite group such that there exists a compact open subgroup of~$\H$ of pro-order invertible in~$\R$. 
Let~$Q$ be a finitely generated projective smooth~$\R[\H]$-module,~$\R'$ a
commutative~$\R$-algebra, and~$M$ a smooth~$\R[\H]$-module.  Then the natural map
\[\Hom_{\R[\H]}(Q,M)\otimes \R'\xrightarrow{\sim} \Hom_{\R'[\H]}(Q\otimes \R',M\otimes \R')\]
$f\otimes r'\mapsto r'(f\otimes 1)$ defines an isomorphism.
\end{lemma}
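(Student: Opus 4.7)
The plan is to reduce to the case $Q = \R[\H/\K]$ for a suitable compact open subgroup $\K$ and then recognize both sides as $\K$-invariants, after which the statement reduces to the commutation of $K$-invariants with base change via an averaging idempotent.

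First, I would choose $\K$ to be a compact open subgroup of $\H$ contained in the hypothesized subgroup of pro-order invertible in $\R$, small enough to fix some finite generating set $v_1,\dots,v_n$ of $Q$ (this is possible since $Q$ is smooth and finitely generated); the pro-order of $\K$ is then again invertible in $\R$, and hence in $\R'$. Mapping the standard basis of $\R[\H/\K]^n$ to $v_1,\dots,v_n$ gives a surjection $\R[\H/\K]^n \twoheadrightarrow Q$, which splits because $Q$ is projective. Since both sides of the asserted isomorphism are additive functors in $Q$, it suffices to treat the single case $Q = \R[\H/\K]$.

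Next, by Frobenius reciprocity one has canonical identifications
\[
\Hom_{\R[\H]}(\R[\H/\K],M) \simeq M^{\K}, \qquad \Hom_{\R'[\H]}(\R'[\H/\K], M\otimes_{\R}\R') \simeq (M\otimes_{\R}\R')^{\K},
\]
and the map in the lemma is identified with the obvious comparison $M^{\K}\otimes_{\R}\R' \to (M\otimes_{\R}\R')^{\K}$. So we are reduced to showing that taking $\K$-invariants commutes with arbitrary base change along $\R\to \R'$, under the assumption that the pro-order of $\K$ is invertible in $\R$.

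Finally, since the pro-order of $\K$ is invertible in $\R$, the averaging operator $e_{\K}$ is well-defined as an $\R$-linear endomorphism of any smooth $\R[\K]$-module, and it is an idempotent whose image is precisely the submodule of $\K$-invariants. Thus $M^{\K}$ is an $\R$-linear direct summand of $M$, so tensoring with $\R'$ yields $M^{\K}\otimes_{\R}\R' = e_{\K}(M)\otimes_{\R}\R' = e_{\K}(M\otimes_{\R}\R')$, and the latter equals $(M\otimes_{\R}\R')^{\K}$ since $e_{\K}$ still represents $\K$-invariants after base change (as its defining averaging formula is preserved). The only step that requires any care is the first reduction, where one must verify that the open subgroup $\K$ chosen to fix the generators retains the invertibility-of-pro-order property; this is immediate from the fact that the pro-order of an open subgroup of a profinite group divides that of the ambient group.
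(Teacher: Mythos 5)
Your proof is correct. It differs in organization from the paper's argument: the paper keeps $Q$ general, applies Hom-tensor adjunction to reduce to the map $\Hom_{\R[\H]}(Q,M)\otimes_{\R}\R'\to \Hom_{\R[\H]}(Q,M\otimes_{\R}\R')$, writes $\R'$ as a cokernel of a map of free $\R$-modules, and then checks that $\Hom_{\R[\H]}(Q,-)$ commutes with cokernels (projectivity of $Q$) and with arbitrary coproducts (compactness of $Q$, itself deduced from exhibiting $Q$ as a direct factor of $(\ind_{\U}^{\H}(\R))^{n}$ and the exactness and coproduct-compatibility of $\U$-invariants). You instead resolve in the $Q$-variable: split $Q$ off $\R[\H/\K]^{n}$, use Frobenius reciprocity to identify both sides with $\K$-invariants, and conclude via the averaging idempotent $e_{\K}$. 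The essential ingredients coincide — in both arguments everything rests on $Q$ being a summand of finitely many copies of a compactly induced module and on invertibility of the pro-order, which is exactly what makes $e_{\K}$ (equivalently, exactness of invariants) available. What the paper's formulation buys is the cleaner categorical statement that $Q$ is a compact projective object, so $\Hom_{\R[\H]}(Q,-)$ commutes with all the relevant colimits at once; what your version buys is a more explicit and elementary computation, showing directly that $M\mapsto M^{\K}$ commutes with arbitrary base change because $M^{\K}=e_{\K}M$ is a natural $\R$-module direct summand, with the formula for $e_{\K}$ visibly compatible with $-\otimes_{\R}\R'$. Your one point "requiring care" — that the smaller subgroup $\K$ inherits invertibility of its pro-order — is handled correctly, since the pro-order of an open subgroup of a profinite group divides that of the ambient group.
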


\begin{proof}
  Using the Hom-tensor adjunction, it suffices to prove that the natural map
\[\Hom_{\R[\H]}(Q,M)\otimes_{\R} \R'\xrightarrow{} \Hom_{\R[\H]}(Q,M\otimes_{\R} \R')\]
is an isomorphism.  Writing the $\R$-module $\R'$ as the cokernel of a map of free
$\R$-modules, it suffices to see that $M\mapsto \Hom_{\R[\H]}(Q,M)$  commutes with
cokernels and all coproducts. Commutation with cokernels follows from projectivity,
and commutation with coproducts follows from finite generation, which together with
projectivity implies compactness. In more details, finite generation implies that $Q$ is a
direct factor of some $(\ind_{\U}^{\H} (\R))^{n}$ for some compact open
subgroup $\U$ of $\H$ of pro-order invertible in $\R$. So, compactness of $Q$ follows from
that of $\ind_{\U}^{\H} (\R)$, which itself follows from the fact that the functor of
$\U$-invariants is exact and commutes with all coproducts.
\end{proof}

\begin{lemma}\label{centrelemma}
Let~$\A$ be an~$\R$-algebra with centre~$\Z(\A)$ and~$\R'$ a flat commutative~$\R$-algebra, such that either
\begin{enumerate}
\item $\A$ is a flat~$\R$-module and~$\R$ is Noetherian; or
\item $\A$ is finitely generated over~$\Z(\A)$.  
\end{enumerate}Then~$\Z(\A)\otimes_\R\R'=\Z(\A\otimes_\R \R')$, the centre of~$\A\otimes_\R \R'$.
\end{lemma}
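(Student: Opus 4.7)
The centre of any $\R$-algebra $\B$ is the intersection
\[\Z(\B) = \bigcap_{b \in \B} \ker(\mathrm{ad}_b)\]
of kernels of the $\R$-linear adjoint maps $\mathrm{ad}_b : \B \to \B$, $x \mapsto bx-xb$. For $\B = \A \otimes_\R \R'$, one checks using commutativity of $\R'$ that $[\xi, b \otimes r'] = r' \cdot [\xi, b \otimes 1]$ for $\xi \in \A \otimes_\R \R'$, $b \in \A$, $r' \in \R'$; consequently $\Z(\A \otimes_\R \R') = \bigcap_{b \in \A} \ker(\mathrm{ad}_{b \otimes 1})$. Flatness of $\R'$ over $\R$ then gives $\ker(\mathrm{ad}_{b \otimes 1}) = \ker(\mathrm{ad}_b) \otimes_\R \R'$ for each $b \in \A$, and makes the canonical map $\Z(\A) \otimes_\R \R' \to \Z(\A \otimes_\R \R')$ injective. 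The problem therefore reduces to showing that in each of the two cases the tensor product $\otimes_\R \R'$ commutes with the intersection $\bigcap_{b \in \A} \ker(\mathrm{ad}_b)$.

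For case (2), let $a_1, \ldots, a_n$ generate $\A$ as a $\Z(\A)$-module. Since elements of $\Z(\A)$ are already central in $\A$, an element $z \in \A$ commuting with each $a_i$ automatically commutes with every $\Z(\A)$-linear combination of the $a_i$, hence with all of $\A$. Therefore $\Z(\A) = \bigcap_{i=1}^{n} \ker(\mathrm{ad}_{a_i})$ is a \emph{finite} intersection, which commutes with flat base change. Applying the same reasoning inside $\A \otimes_\R \R'$ — the elements $a_1 \otimes 1, \ldots, a_n \otimes 1$ generate $\A \otimes_\R \R'$ over the central subalgebra $\Z(\A) \otimes_\R \R' \subseteq \Z(\A \otimes_\R \R')$ — identifies $\Z(\A \otimes_\R \R')$ with $\bigcap_{i=1}^{n} \ker(\mathrm{ad}_{a_i \otimes 1})$, and the desired equality follows.

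Case (1) is the more delicate one, since the intersection is in general infinite and flat base change need not commute with arbitrary intersections. The plan is to localise. Given $\xi \in \Z(\A \otimes_\R \R')$, choose a representation $\xi = \sum_{i=1}^{n} a_i \otimes r_i'$ and set $M := \R\langle a_1, \ldots, a_n\rangle \subseteq \A$; by flatness $M \otimes_\R \R'$ embeds in $\A \otimes_\R \R'$ and contains $\xi$, and, writing $K_b := \ker(\mathrm{ad}_b)$, $(K_b \cap M) \otimes_\R \R' = (K_b \otimes_\R \R') \cap (M \otimes_\R \R')$ contains $\xi$ for every $b \in \A$. The main step — and the main obstacle — is to show, using the Noetherianity of $\R$ (so that $M$ and the relevant images and quotients are Noetherian $\R$-modules), that only finitely many of the $\R$-linear commutator constraints $[b,m]=0$ are independent on $M$, i.e. that $\Z(\A) \cap M = \bigcap_{b \in B_0}(K_b \cap M)$ for some finite $B_0 \subseteq \A$. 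Granted this finite reduction, finite intersections commute with flat base change and one concludes $\xi \in (\Z(\A) \cap M) \otimes_\R \R' \hookrightarrow \Z(\A) \otimes_\R \R'$, as desired.
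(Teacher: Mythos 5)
Your case (2) is correct and is essentially the paper's own argument: since $\A$ is generated over its centre by finitely many elements $a_1,\dots,a_n$, the centre is the kernel of the single map $\A\to\bigoplus_{i}\A$, $b\mapsto([b,a_i])_i$, and finite direct sums (hence finite intersections of the kernels $\ker(\mathrm{ad}_{a_i})$) commute with flat base change; your observation that the $a_i\otimes 1$ still generate $\A\otimes_\R\R'$ over the central subalgebra $\Z(\A)\otimes_\R\R'$ closes the argument just as in the paper.

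Case (1), however, has a genuine gap: the entire difficulty of the lemma is concentrated in the claim you yourself label the ``main step'', namely that $\Z(\A)\cap M=\bigcap_{b\in B_0}(K_b\cap M)$ for some finite $B_0\subseteq\A$, and you do not prove it. Nor does it follow from Noetherianity of $\R$ in the way you suggest: a Noetherian module satisfies the ascending chain condition, not the descending one, so a downward directed family of submodules of the finitely generated module $M$ need not stabilise, and an infinite intersection need not equal a finite sub-intersection (already in $M=\mathbb{Z}$ one has $\bigcap_{n}2^n\mathbb{Z}=0$ with no finite subfamily achieving it). Any correct completion would have to make essential use of the flatness of $\A$ over $\R$, which currently plays no role in your case (1): for instance over $\R=\mathbb{Z}$ it is torsion-freeness of $\A$ that makes each $K_b\cap M$ a saturated submodule of $M$ and forces the directed family of finite intersections to stabilise, and how to run such an argument over a general Noetherian base is exactly the nontrivial point you have deferred. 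The paper avoids the issue by a different device: it writes $\Z(\A)$ as the kernel of the single map $\A\to\prod_{a\in\A}\A$, $b\mapsto([b,a])_{a}$, uses flatness of $\R'$ to preserve this kernel after tensoring, and then invokes Goodearl's theorem that the natural map $\bigl(\prod_{a\in\A}\A\bigr)\otimes_\R\R'\to\prod_{a\in\A}(\A\otimes_\R\R')$ is injective when $\R$ is Noetherian and $\A$ is flat (or when $\R'$ is Mittag--Leffler over $\R$). That injectivity statement is precisely the substitute for your unproven finite-reduction claim; without it, or a proof of your claim that genuinely exploits flatness of $\A$, your argument for case (1) is incomplete.
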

\begin{proof}
The centre of~$\A$ is the kernel of the~$\R$-linear map~$f: \A \rightarrow \prod_{a \in \A} \A$ that is the direct product over~$\A$ of the commutators $b \mapsto [b,a]$.  Since~$\R'$ is flat over~$\R$ we get
\[0\rightarrow \Z(\A)\otimes_{\R}\R'\rightarrow \A {\otimes}_{\R}\R'\xrightarrow{f\otimes 1} \left( \prod_{a \in \A} \A\right)\otimes_{\R}\R'.\]
By \cite[Theorem 1]{Goodearl}, if~$\R$ is Noetherian and~$\A$ is flat, we find that the natural map~\[\phi:(\prod \A)\otimes_{\R}\R'\rightarrow \prod( \A\otimes_{\R}\R')\] is an injection\footnote{This is also true if~$\R'$ is Mittag-Leffler over~$\R$ by \cite[\href{https://stacks.math.columbia.edu/tag/059M}{Tag 059M}]{stacks-project}.}. Now the result follows as the kernel of $
\phi\circ(f \otimes 1):\A\otimes_{\R} \R'\rightarrow \prod( \A\otimes_{\R}\R')$ is precisely~$\Z(\A \otimes_\R \R')$.   

{If~$\A$ is finitely generated over~$\Z(\A)$, then choosing a generating set~$\mathcal{G}$ we can describe the centre of~$\A$ as the kernel of the map~$f': \A \rightarrow \bigoplus_{a \in \mathcal{G}} \A$, and as direct sums commute with tensor products, we can conclude in a similar fashion.}
\end{proof}

To deduce morphisms with global integral coefficients from knowing they define locally integral morphisms we have the following lemma:
\begin{lemma}\label{localglobalcoeffs}
Let~$\K$ be a number field with ring of integers~$\cO$,~$\N$ be an integer, and~$\mathfrak{R},\mathfrak{Z}$ be torsion-free~$\cO[1/\N]$-algebras, and suppose that we have a morphism
\[\phi:\mathfrak{R}\otimes \K\rightarrow \mathfrak{Z}\otimes\K\] 
such that for all prime ideals~$\mathfrak{l}$ prime to~$\N$, and all associated embeddings~$\K\hookrightarrow \Ql$, the morphism~$\phi_{\ell}:\mathfrak{R}\otimes \Ql\rightarrow \mathfrak{Z}\otimes\Ql$ satisfies~$\phi_{\ell}(\mathfrak{R}\otimes \Zl)\leqslant  \mathfrak{Z}\otimes\Zl$. 
\begin{enumerate} 
\item 
Then~$\phi( \mathfrak{R})\leqslant \mathfrak{Z}$, hence~$\phi$ defines a morphism~$\phi:\mathfrak{R}\rightarrow \mathfrak{Z}$ of~$\cO[1/\N]$-algebras.
\item Suppose further~$\phi:\mathfrak{R}\otimes \K\xrightarrow{\sim} \mathfrak{Z}\otimes\K$ is an isomorphism and that it defines isomorphisms locally~$\phi_{\ell}(\mathfrak{R}\otimes \Zl)\simeq  \mathfrak{Z}\otimes\Zl$.  Then~$\phi$ defines an isomorphism~$\phi:\mathfrak{R}\xrightarrow{\sim} \mathfrak{Z}$ of~$\cO[1/\N]$-algebras.
\end{enumerate}
\end{lemma}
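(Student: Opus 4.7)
The plan is to exploit flatness of $\mathfrak{R}$ and $\mathfrak{Z}$ over $\cO[1/\N]$ to reduce both assertions to a local check at each maximal ideal. Since $\cO[1/\N]$ is a Dedekind domain and $\mathfrak{R},\mathfrak{Z}$ are torsion-free, they are flat, and for any flat $\cO[1/\N]$-module $M$ a standard denominator-ideal argument gives
\[ M = \bigcap_{\mathfrak{m}} M \otimes_{\cO[1/\N]} \cO[1/\N]_{\mathfrak{m}} \]
inside $M \otimes_{\cO[1/\N]} \K$, where $\mathfrak{m}$ ranges over the maximal ideals of $\cO[1/\N]$ (equivalently, over the prime ideals of $\cO$ coprime to $\N$). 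So for (1) it will suffice to check $\phi(r) \in \mathfrak{Z} \otimes \cO[1/\N]_\mathfrak{m}$ for each $\mathfrak{m}$ and each $r \in \mathfrak{R}$, and similarly for (2).

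For part (1), fix $\mathfrak{m}$ lying above a rational prime $\ell$, write $\K_\mathfrak{m}$ for the $\mathfrak{m}$-adic completion of $\K$ with ring of integers $\cO_{\K_\mathfrak{m}}$, and choose an embedding $\K \hookrightarrow \Ql$ extending $\K \hookrightarrow \K_\mathfrak{m}$, so that $\cO[1/\N]_\mathfrak{m} \hookrightarrow \cO_{\K_\mathfrak{m}} \hookrightarrow \Zl$. Given $r \in \mathfrak{R}$, the hypothesis provides $\phi_\ell(r \otimes 1) \in \mathfrak{Z} \otimes \Zl$, and since $\Zl = \varinjlim_E \cO_E$ over finite extensions $E/\K_\mathfrak{m}$ inside $\Ql$, this element lies in $\mathfrak{Z} \otimes \cO_E$ for some such $E$. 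The identity $\cO_{\K_\mathfrak{m}} = \K_\mathfrak{m} \cap \cO_E$ yields an injection $\K_\mathfrak{m}/\cO_{\K_\mathfrak{m}} \hookrightarrow E/\cO_E$, which remains injective after tensoring with the flat $\cO[1/\N]$-module $\mathfrak{Z}$; this forces the image of $\phi(r)$ in $\mathfrak{Z} \otimes \K_\mathfrak{m}$ to lie in $\mathfrak{Z} \otimes \cO_{\K_\mathfrak{m}}$. Finally, tensoring the short exact sequence
\[ 0 \to \cO[1/\N]_\mathfrak{m} \to \K \oplus \cO_{\K_\mathfrak{m}} \to \K_\mathfrak{m} \to 0 \]
with flat $\mathfrak{Z}$ (and exactness of flat tensor) yields $\phi(r) \in \mathfrak{Z} \otimes \cO[1/\N]_\mathfrak{m}$. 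Intersecting over all $\mathfrak{m}$ gives $\phi(r) \in \mathfrak{Z}$, proving (1).

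For part (2), injectivity of $\phi|_{\mathfrak{R}} : \mathfrak{R} \to \mathfrak{Z}$ is immediate from torsion-freeness of $\mathfrak{R}$ and injectivity of $\phi$ over $\K$. For surjectivity, take $z \in \mathfrak{Z}$ and let $r \in \mathfrak{R} \otimes \K$ be its preimage under the $\K$-isomorphism; the task is to show $r \in \mathfrak{R}$. I would rerun the argument of the previous paragraph with $\mathfrak{R}$ in place of $\mathfrak{Z}$: the local isomorphism $\phi_\ell : \mathfrak{R} \otimes \Zl \xrightarrow{\sim} \mathfrak{Z} \otimes \Zl$ together with $\phi_\ell(r \otimes 1) = z \otimes 1 \in \mathfrak{Z} \otimes \Zl$ forces $r \otimes 1 \in \mathfrak{R} \otimes \Zl$, and the same flatness machinery pushes this integrality back to $r \in \mathfrak{R} \otimes \cO[1/\N]_\mathfrak{m}$ for each $\mathfrak{m}$, hence $r \in \mathfrak{R}$. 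No step is deep; the only mildly delicate point is the descent from integrality over $\Zl$ to integrality over $\cO_{\K_\mathfrak{m}}$, which cannot be achieved by a short exact sequence of the shape $0 \to \cO_{\K_\mathfrak{m}} \to \K_\mathfrak{m} \oplus \Zl \to \Ql \to 0$ (the summation map fails to be surjective in general, since $\K_\mathfrak{m} + \cO_E \ne E$ for ramified $E/\K_\mathfrak{m}$), which is why it must be performed through a finite subextension $E$ plus a flat tensor-injectivity argument.
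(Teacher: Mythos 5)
Your argument is correct, and both parts do what is needed; but your route differs from the one the paper takes. The paper's proof is a three-line torsion-module argument: it considers the inclusion $\mathfrak{Z}\subseteq\phi(\mathfrak{R})+\mathfrak{Z}$ inside $\mathfrak{Z}\otimes\K$, observes that the quotient $(\phi(\mathfrak{R})+\mathfrak{Z})/\mathfrak{Z}$ is a torsion $\cO[1/\N]$-module, and kills its $\mathfrak{l}$-primary part for every $\mathfrak{l}$ prime to $\N$ by tensoring with the flat (indeed faithfully flat over the localization) algebra $\Zl$, where the hypothesis $\phi_\ell(\mathfrak{R}\otimes\Zl)\subseteq\mathfrak{Z}\otimes\Zl$ makes the quotient vanish; part (2) is then, as in your proposal, the same argument applied to $\phi^{-1}$. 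You instead argue elementwise: you reduce to the localizations $\cO[1/\N]_{\mathfrak{m}}$ via the intersection formula for torsion-free modules, and then descend integrality step by step, from $\Zl$ through a finite subextension $E/\K_{\mathfrak{m}}$ (using $\K_{\mathfrak{m}}\cap\cO_E=\cO_{\K_{\mathfrak{m}}}$ and flatness of $\mathfrak{Z}$) down to $\cO_{\K_{\mathfrak{m}}}$, and finally to $\cO[1/\N]_{\mathfrak{m}}$ via the exact sequence $0\to\cO[1/\N]_{\mathfrak{m}}\to\K\oplus\cO_{\K_{\mathfrak{m}}}\to\K_{\mathfrak{m}}\to0$. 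Each of these steps checks out, and your closing remark about why the naive sequence $0\to\cO_{\K_{\mathfrak{m}}}\to\K_{\mathfrak{m}}\oplus\Zl\to\Ql\to0$ is not exact (failure of $\K_{\mathfrak{m}}+\cO_E=E$ for ramified $E$) is accurate and is exactly the pitfall your detour through finite subextensions avoids. What the paper's approach buys is brevity: no completions, no finite subextensions, just one quotient module and flatness of $\Zl$ over $\cO[1/\N]$. What your approach buys is explicitness: it exhibits directly in which local rings the element $\phi(r)$ lives at each stage, at the cost of the extra bookkeeping you correctly identify as the only delicate point.
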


\begin{proof}
Consider~$\mathfrak{Z}\leqslant \phi(\mathfrak{R})+ \mathfrak{Z}$.  As~$\phi(\mathfrak{R})\leqslant \mathfrak{Z}\otimes\K$, it implies that the quotient~$(\phi(\mathfrak{R})+ \mathfrak{Z})/\mathfrak{Z}$ is prime to~$\N$ torsion.  However,~$\Zl$ is flat over~$\cO[1/\N]$ and~$\phi_{\ell}(\mathfrak{R}\otimes \Zl)\subseteq  \mathfrak{Z}\otimes\Zl$, hence the~$\mathfrak{l}$-torsion is zero.  Hence the quotient is trivial, and~$\phi(\mathfrak{R})\leqslant \mathfrak{Z}$.  The second part follows by applying the same argument to~$\phi^{-1}$.
\end{proof}

\section{Intertwining operators}\label{Appendix:intertwining}

In this appendix we develop the theory of intertwining operators in a general and purely algebraic way, using the ideas of \cite{Waldspurger} and \cite{datnu}, which probably trace their origin to unpublished lecture notes of Bernstein. The point of doing this is to eventually deduce, in Appendix \ref{apdx:j_functions}, more general and precise versions of well-known properties of the Harish--Chandra $j$-function and Plancherel measure. 

We fix a maximal split torus $\A_0$ of~$\G$ and a minimal parabolic~$\P_0$ of~$\G$ containing~$\A_0$.  A parabolic subgroup containing~$\A_0$ is called \emph{semistandard}, and if, moreover, it contains~$\P_0$, it is called \emph{standard}.  A semistandard (resp.~standard) parabolic subgroup has a unique Levi subgroup containing~$\A_0$ which we call \emph{semistandard} (resp.~\emph{standard}).  Given a semistandard (resp.~standard) parabolic subgroup, we always choose Levi decompositions with semistandard (resp.~standard) Levi factors.

Let $\R$ be a Noetherian $\ZZ[\sqrt{q}^{-1}]$-algebra, let $\P=\M\U_{\P}$ and $\Q = \M\U_{\Q}$ be Levi decompositions of two semistandard parabolic subgroups of $\G$ with the same Levi subgroup $\M$, and fix $\mu$, $\mu'$ two choices of $\ZZ[\frac{1}{p}]$-valued Haar measures on $\U_\P$ and $\U_\Q$. Let $\A_\M$ be the split component of the center of $\M$. Let $\sigma$ be a smooth $\R[\M]$-module. Let $i_\P^\G$ and $r_\P^\G$ be the normalized parabolic induction and restriction functors. Let $\leqslant$ denote the Bruhat ordering on the set $\W_\M \backslash \W_\G / \W_\M$, which is defined by
\[w\leqslant w' \text{ if and only if } \Q w\P \subset \overline{\Q w'\P}.\] 
We define the following subfunctors of $i_\P^\G $:
\begin{align*}
\widetilde{F}_{\Q\P}^{<w}(\sigma) &= \left\{f\in i_\P^\G(\sigma)\ :\ \text{Supp}(f) \cap \left(\bigcup_{w'<w}\Q w'\P \right)=\emptyset\right\}\\
&= \left\{f\in i_\P^\G(\sigma)\ :\ \text{Supp}(f)\cap \overline{\Q w\P} \subset \Q w\P\right\}\\
&= \left\{f\in i_\P^\G(\sigma)\ :\ \text{Supp}(f)\cap  \Q w\P\text{ is compact mod $\P$}\right\},
\end{align*} and similarly define the subfunctor $\widetilde{F}_{\Q\P}^{\leqslant w}\subset \widetilde{F}_{\Q\P}^{< w}$ by replacing $<$ with $\leqslant$. Note that if $w_1\leqslant w_2$ then $\widetilde{F}_{\Q\P}^{\leqslant w_2}\subset \widetilde{F}_{\Q\P}^{\leqslant w_1}$. Any refinement of $\leq$ to a total ordering gives a filtration of $i_\P^\G$ by subfunctors.

The step of the filtration $\widetilde{F}_{\Q\P}^{<1}(\sigma)$ consists of functions $f$ such that $\text{Supp}(f)\cap \Q\P$ is compact mod $\P$, and for such $f$ we can define the map
\begin{align*}
q: \widetilde{F}_{\Q\P}^{<1}(\sigma) &\to \sigma\\
f&\mapsto \int_{\U_\Q/(\U_\Q\cap \U_\P)} f(u)du = \int_{\U_\Q\cap \U_{\overline{\P}}}f(u)du.
\end{align*}
where~$\overline{\P}$ is the opposite parabolic to~$\P$ with respect to~$\M$, and the equality follows from~$\U_{\Q}=(\U_{\Q}\cap\U_{\overline{\P}})(\U_{\Q}\cap\U_{\P})$. 

Let $F_{\Q\P}^{< w}$ be the image of $\widetilde{F}_{\Q\P}^{< w}$ in the quotient map $i_\P^\G(\sigma) \to r_\Q^\G\circ i_\P^\G(\sigma)$, and similarly for $\leqslant$. Then our map $q$ factors through $F_{\Q\P}^{< w}$ and defines an isomorphism
\[F_{\Q\P}^{<1}/F_{\Q\P}^{\leqslant 1}(\sigma) \xrightarrow{\simeq} \sigma.\] 
Given $w$ in $\W_\M \backslash \W_\G / \W_\M$ denote by $\dot{w}$ a choice of lift of a representative of $w$ to an element of $G$. For every $w$ and $\dot{w}$ one can also construct isomorphisms
\[F_{\Q\P}^{<w}/F_{\Q\P}^{\leqslant w}\xrightarrow{\simeq} i_{\M\cap w(\P)}^\M \circ w \circ r_{w^{-1}(\Q)\cap \M}^{\M}(\sigma).\]

Consider the following ideals of the ring $\R[\A_\M]$:
\begin{align*}
I_{\sigma}&= \ker(\R[\A_\M]\xrightarrow{\sigma} \End_\R(\sigma))\\
I_{\sigma}^{\Q\P} &= \ker\left(\R[\A_\M]\to \End_\R((r_\Q^\G\circ i_\P^\G /F_{\Q\P}^{<1})(\sigma))\right)\\
I_{\sigma}^{\Q,w}& = \ker\left(\R[\A_\M]\to \End_\R( \dot{w}(r_{w^{-1}(\Q)\cap \M}^{\M}(\sigma)))\right)\text{\ , for $w\in \W_\M\backslash \W_\G/\W_\M$}\\
J_{\sigma} &= \prod_{w<1}I_{\sigma}^{\Q,w}.
\end{align*}

\begin{lemma}
We have inclusions $$J_{\sigma} \subset I_{\sigma}^{\Q\P} \subset \bigcap_{w<1}I_{\sigma}^{\Q,w}.$$
\end{lemma}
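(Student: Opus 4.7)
The plan is to deduce both inclusions directly from the Bernstein--Zelevinsky-style filtration of~$r_\Q^\G i_\P^\G(\sigma)$ by the subfunctors~$F^{\leqslant w}_{\Q\P}$ that was just recalled, together with one supplementary observation about how~$\R[\A_\M]$ acts on normalized parabolic inductions inside~$\M$.

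First I would fix a linear refinement $w_0 < w_1 < \cdots < w_N$ of the Bruhat ordering on~$\W_\M\backslash \W_\G/\W_\M$, chosen so that $1 = w_k$ for some index~$k$. In such a total order one has $F^{<w_i}_{\Q\P} = F^{\leqslant w_{i-1}}_{\Q\P}$ for every $i \geqslant 1$, so the $F^{\leqslant w_i}_{\Q\P}(\sigma)$ give a finite decreasing filtration of $r_\Q^\G i_\P^\G(\sigma)$ whose successive subquotients $F^{<w_i}_{\Q\P}/F^{\leqslant w_i}_{\Q\P}$ are, by the displayed isomorphism recalled immediately above the lemma, each of the form
\[
i_{\M \cap w_i(\P)}^\M \circ \dot w_i \circ r^{\M}_{w_i^{-1}(\Q)\cap \M}(\sigma).
\]
Passing to the quotient by $F^{<1}_{\Q\P}$ yields a finite filtration of $r_\Q^\G i_\P^\G(\sigma)/F^{<1}_{\Q\P}$ whose successive layers are precisely the $F^{<w}_{\Q\P}/F^{\leqslant w}_{\Q\P}$ for $w<1$.

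The supplementary observation is the equality of kernels
\[
\ker\!\bigl(\R[\A_\M] \to \End_\R(i_\H^\M(\tau))\bigr) \;=\; \ker\!\bigl(\R[\A_\M] \to \End_\R(\tau)\bigr),
\]
valid for any semistandard parabolic~$\H$ of~$\M$ whose Levi contains~$\A_\M$, and any smooth representation~$\tau$ of that Levi. Indeed, $\A_\M$ is central in~$\M$ and the modulus character~$\delta_\H$ is trivial on the Levi of~$\H$, so a direct computation gives the pointwise formula $(a\cdot f)(m) = \tau(a)f(m)$ for $a \in \A_\M$ and $f \in i_\H^\M(\tau)$. The inclusion~$\supset$ is then immediate, and the reverse follows by evaluating at sections with prescribed value at $1 \in \M$. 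Applying this with $\H = \M \cap w(\P)$ (whose Levi contains~$\A_\M$, since $\A_\M \subset \A_0$ and $\W_\G$ preserves $\A_0 \subset \M$) and $\tau = \dot w(r^{\M}_{w^{-1}(\Q) \cap \M}(\sigma))$ identifies $I_\sigma^{\Q,w}$ with the annihilator in~$\R[\A_\M]$ of the subquotient $F^{<w}_{\Q\P}/F^{\leqslant w}_{\Q\P}$.

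Given these two ingredients both inclusions become formal. For the right-hand inclusion, any $z \in I_\sigma^{\Q\P}$ annihilates $r_\Q^\G i_\P^\G(\sigma)/F^{<1}_{\Q\P}$ and hence each of its successive subquotients, so it lies in each $I_\sigma^{\Q,w}$ with $w<1$. For the left-hand inclusion, the standard fact that the product of the annihilators of successive layers of a finite filtration annihilates the whole module applies: given $z = z_0 \cdots z_{k-1}$ with $z_j \in I_\sigma^{\Q,w_j}$ for the $w_j$ enumerating $\{w<1\}$ in the chosen total order, applying the factors one at a time moves the filtration down by one layer at each stage, so $z$ annihilates the quotient and lies in~$I_\sigma^{\Q\P}$. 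The only point requiring any real verification is the pointwise formula for the~$\A_\M$-action; everything else is formal, and I anticipate no substantial obstacle.
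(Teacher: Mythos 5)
Your proposal is correct and is essentially the paper's own (much terser) proof: filter the quotient $r_\Q^\G i_\P^\G(\sigma)/F^{<1}_{\Q\P}$ by the geometric-lemma layers indexed by $w<1$, observe that $\R[\A_\M]$ acts on each layer $i^\M_{\M\cap w\P}\circ\dot w\circ r^\M_{\M\cap w^{-1}\Q}(\sigma)$ through its action on the inducing module $\dot w(r^\M_{\M\cap w^{-1}\Q}(\sigma))$, and conclude by the standard annihilator arguments in both directions. Two small wording fixes: the condition ``chosen so that $1=w_k$'' is vacuous --- you should instead choose the linear extension so that the Bruhat down-set $\{w<1\}$ is an initial segment (always possible), since otherwise $F^{<1}_{\Q\P}$ need not coincide with a term of the refined filtration and the layers of the quotient need not be exactly those with $w<1$; and $\delta_{\H}$ is \emph{not} trivial on the Levi of $\H$ in general --- what you need, and what does hold, is that $\delta_{\H}$ is trivial on $\A_\M$, because $\A_\M$ is central in $\M$ and the unipotent radical of $\H$ lies in $\M$, which already gives your pointwise formula.
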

\begin{proof}
The quotient $(r_\Q^\G\circ i_\P^\G /F_{\Q\P}^{<1})(\sigma)$ has a filtration with subquotients isomorphic to $$i_{\M\cap w\P}^\M\circ \dot{w}\circ r_{\M\cap w^{-1}\Q}^\M(\sigma)$$ for $w<1$, and the action of $\R[\A_\M]$ on $i_{\M\cap w\P}^\M\circ \dot{w}\circ r_{\M\cap w^{-1}\Q}^\M(\sigma)$ is via its action on $\dot{w}(r_{\M\cap w^{-1}\Q}^\M(\sigma))$.
\end{proof}

\begin{definition}
An element $b\in \R$ is $(\sigma,\P,\Q)$-singular if it is in the intersection $\left(I_{\sigma} + J_{\sigma}\right) \cap \R$ and it is not a zero divisor.
\end{definition}

\begin{lemma}\label{lem:singular}
Suppose $b$ is $(\sigma,\P,\Q)$-singular.
\begin{enumerate}
\item If $\sigma'$ is a subquotient of $\sigma$, then $b$ is $(\sigma',\P,\Q)$-singular.
\item If $f:\R\to \R'$ is a homomorphism of rings such that $f(b)$ is not a zero divisor, then $f(b)$ is $(\sigma\otimes_{\R,f}\R',\P,\Q)$-singular.
\end{enumerate}
\end{lemma}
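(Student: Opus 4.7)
Both parts are essentially formal and follow from tracking how the ideals $I_\sigma$, $I_\sigma^{\Q,w}$ and $J_\sigma$ behave under the operations involved.

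For (1), the plan is to show directly that passing to a subquotient enlarges (in the weak sense) both $I_\sigma$ and each $I_\sigma^{\Q,w}$. Since $\dot{w}(-)$ and the normalized parabolic restriction $r_{w^{-1}(\Q)\cap \M}^{\M}$ are exact functors, any subquotient $\sigma'$ of $\sigma$ gives rise to a subquotient $\dot{w}(r_{w^{-1}(\Q)\cap \M}^{\M}(\sigma'))$ of $\dot{w}(r_{w^{-1}(\Q)\cap \M}^{\M}(\sigma))$. Any element of $\R[\A_\M]$ killing the latter therefore kills the former, giving $I_\sigma^{\Q,w}\subset I_{\sigma'}^{\Q,w}$ and similarly $I_\sigma\subset I_{\sigma'}$. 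Taking products yields $J_\sigma \subset J_{\sigma'}$ and hence $I_\sigma+J_\sigma\subset I_{\sigma'}+J_{\sigma'}$. Intersecting with $\R$ and using that $b$ is already not a zero divisor in $\R$ gives the claim.

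For (2), the plan is to use functoriality of the constructions under base change. Let $\varphi : \R[\A_\M]\to \R'[\A_\M]$ denote the map induced by $f$. Since $r^\M_{w^{-1}(\Q)\cap \M}$ is given by a Jacquet quotient that commutes with tensor products, and similarly for twisting by $\dot w$, there are canonical isomorphisms
\[ \dot{w}\bigl(r_{w^{-1}(\Q)\cap \M}^{\M}(\sigma)\bigr)\otimes_{\R,f}\R' \;\simeq\; \dot{w}\bigl(r_{w^{-1}(\Q)\cap \M}^{\M}(\sigma\otimes_{\R,f}\R')\bigr), \]
and likewise with $\sigma$ itself. Consequently $\varphi(I_\sigma)\subset I_{\sigma\otimes_{\R,f}\R'}$ and $\varphi(I_\sigma^{\Q,w})\subset I_{\sigma\otimes_{\R,f}\R'}^{\Q,w}$ for every $w$. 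Taking products of the latter over $w<1$ yields $\varphi(J_\sigma)\subset J_{\sigma\otimes_{\R,f}\R'}$, and therefore $\varphi(I_\sigma+J_\sigma)\subset I_{\sigma\otimes_{\R,f}\R'}+J_{\sigma\otimes_{\R,f}\R'}$. Intersecting with $\R'$, and invoking the standing hypothesis that $f(b)$ is not a zero divisor in $\R'$, gives the conclusion.

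There is no genuine obstacle here; the only point to be careful about is that the containment $\varphi(J_\sigma)\subset J_{\sigma\otimes_{\R,f}\R'}$ uses that $\varphi$ is a ring homomorphism, so that products of ideals map into the corresponding products, and that base change of the representations $\dot w(r^\M_{w^{-1}(\Q)\cap\M}(\sigma))$ along $f$ identifies with the analogous construction applied to $\sigma\otimes_{\R,f}\R'$.
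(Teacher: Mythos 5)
Your proof is correct and follows essentially the same route as the paper: for (1) you show $I_{\sigma}\subset I_{\sigma'}$ and $J_{\sigma}\subset J_{\sigma'}$ using exactness of the twisted Jacquet functors and the fact that the $\R[\A_\M]$-action commutes with module morphisms, and for (2) you use that $\dot{w}\circ r^{\M}_{\M\cap w^{-1}\Q}$ commutes with extension of scalars to get $\varphi(I_\sigma)\subset I_{\sigma\otimes_\R\R'}$ and $\varphi(J_\sigma)\subset J_{\sigma\otimes_\R\R'}$. This is exactly the paper's argument, just spelled out slightly more explicitly.
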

\begin{proof}
Let $s$ be an element of $\R[\A_\M]$. If $s$ kills $\sigma$, it must also kill $\sigma'$, so $I_{\sigma}\subset I_{\sigma'}$. Since $s$ acts on an $\R[w(\M)\cap \M]$ module via the morphism $\A_\M\to \A_{w(\M)\cap \M}$, if $s$ kills the $\R[w(\M)\cap \M]$-module $\dot{w}(r_{\M\cap w^{-1}\Q}^\M(\sigma))$ for some $w<1$, then it must also kill $\dot{w}(r_{\M\cap w^{-1}\Q}^\M(\sigma'))$ because $s$ commutes with any morphism of $\R[w(\M)\cap \M]$-modules. This proves that $J_{\sigma}\subset J_{\sigma'}$. Thus $(J_{\sigma}+I_{\sigma})\cap \R\subset (J_{\sigma'}+I_{\sigma'})\cap \R$.

Using a similar argument, the second claim follows from the fact that $I_{\sigma}\otimes_\R\R'\subset I_{\sigma\otimes_\R\R'}$ and $J_{\sigma}\otimes_\R\R'\subset J_{\sigma\otimes_\R\R'}$ after identifying $\R[\A_\M]\otimes_\R\R'\simeq \R'[\A_\M]$. For the latter inclusion, we invoke the fact that $\dot{w}\circ r_{\M\cap w^{-1}\Q}^\M$ commutes with extension of scalars.
\end{proof}

Let us relate this to the terminology that an $\R[\M]$-module $\sigma$ ``is $(\P,\Q)$-regular,'' which appears in \cite[Lemma 2.10]{datnu}, and which is defined to mean $I_{\sigma} + \bigcap_{w<1}I_{\sigma}^{\Q,w} = \R[\A_\M]$ or, equivalently, that $I_{\sigma} + I_{\sigma}^{\Q\P} = \R[\A_\M]$. 

\begin{lemma}
\begin{enumerate}
\item An $\R[\M]$-module $\sigma$ is $(\P,\Q)$-regular if and only if $1$ is $(\sigma,\P,\Q)$-singular.
\item If $b$ is $(\sigma,\P,\Q)$-singular, then $\sigma\otimes_\R\R[1/b]$ is $(\P,\Q)$-regular. 
\item Suppose $b\in \R$ is not a zero divisor and $\sigma$ has bounded $b$-power torsion. If $\sigma\otimes_\R\R[1/b]$ is $(\P,\Q)$-regular, then $b^r$ is $(\sigma,\P,\Q)$-singular for some positive integer $r$.
\end{enumerate}
\end{lemma}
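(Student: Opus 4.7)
For part (1), one direction is immediate: if $1 \in (I_\sigma + J_\sigma)\cap \R$, then the inclusion $J_\sigma \subset I_\sigma^{\Q\P}$ established in the preceding lemma gives $I_\sigma + I_\sigma^{\Q\P} = \R[\A_\M]$, so $\sigma$ is $(\P,\Q)$-regular. For the converse, regularity of $\sigma$ means $I_\sigma + \bigcap_{w<1} I_\sigma^{\Q,w} = \R[\A_\M]$, and since $\bigcap_{w<1} I_\sigma^{\Q,w} \subset I_\sigma^{\Q,w'}$ for each $w' < 1$, it follows that $I_\sigma + I_\sigma^{\Q,w} = \R[\A_\M]$ for every $w<1$. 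The standard commutative-algebra identity $\R[\A_\M] = \prod_{w<1}(I_\sigma + I_\sigma^{\Q,w}) \subset I_\sigma + \prod_{w<1} I_\sigma^{\Q,w} = I_\sigma + J_\sigma$ then yields $1 \in I_\sigma + J_\sigma$, and $1$ is plainly a non-zero-divisor.

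For part (2), the element $b \in (I_\sigma + J_\sigma)\cap \R$ becomes a unit in $\R[1/b]$. Extension along the flat map $\R[\A_\M] \hookrightarrow \R[1/b][\A_\M]$ combined with Lemma~\ref{lem:singular}(2) gives $b \in I_{\sigma[1/b]} + J_{\sigma[1/b]}$; since $b$ is a unit, this sum equals $\R[1/b][\A_\M]$, so $1$ is $(\sigma[1/b],\P,\Q)$-singular and part (1) finishes the argument.

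For part (3), the key step is to refine the inclusions from Lemma~\ref{lem:singular}(2) into equalities after inverting $b$. First I show that if an $\R[\M]$-module $\pi$ has bounded $b$-power torsion (say killed by $b^n$), then $I_{\pi[1/b]} = I_\pi \cdot \R[1/b][\A_\M]$: given $x \in \R[\A_\M]$ with $x \cdot \pi[1/b] = 0$, for each $v \in \pi$ the element $xv$ maps to $0$ in $\pi[1/b]$, hence lies in the $b$-power torsion of $\pi$, so $b^n x v = 0$ and $b^n x \in I_\pi$. Applying this to $\pi = \sigma$ and to each $\pi = \dot w (r^\M_{\M\cap w^{-1}\Q}(\sigma))$ yields $I_{\sigma[1/b]}^{\Q,w} = I_\sigma^{\Q,w}\cdot \R[1/b][\A_\M]$ provided $\dot w(r^\M_{\M\cap w^{-1}\Q}(\sigma))$ also has bounded $b$-torsion. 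This last point follows from exactness of Jacquet restriction applied to $0\to \sigma[b^n] \to \sigma \to \bar\sigma \to 0$: the quotient $\bar\sigma$ is $b$-torsion-free, hence embeds into $\bar\sigma[1/b]$, so $r(\bar\sigma) \hookrightarrow r(\bar\sigma)[1/b]$ is $b$-torsion-free, and the $b$-torsion of $r(\sigma)$ therefore lies in the image of $r(\sigma[b^n])$, which is killed by $b^n$; the twist by $\dot w$ is $\R$-linear and changes nothing. Since products of ideals commute with flat base change, $J_{\sigma[1/b]} = J_\sigma \cdot \R[1/b][\A_\M]$, and combining gives $I_{\sigma[1/b]} + J_{\sigma[1/b]} = (I_\sigma + J_\sigma)\cdot \R[1/b][\A_\M]$. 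By part (1) this equals $\R[1/b][\A_\M]$, so writing $1$ with a common denominator $b^K$ produces $b^K \in I_\sigma + J_\sigma$, and $b^K$ is a non-zero-divisor since $b$ is.

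\textbf{Main obstacle.} The heart of the proof is part (3), specifically the propagation of the bounded $b$-power-torsion hypothesis through the Jacquet functors and the resulting equality (rather than mere inclusion) $I_{\sigma[1/b]}^{\Q,w} = I_\sigma^{\Q,w}\cdot \R[1/b][\A_\M]$. Without this step one only gets $(I_\sigma + J_\sigma)\cdot \R[1/b][\A_\M] \subset I_{\sigma[1/b]} + J_{\sigma[1/b]}$, which is insufficient to conclude that some power of $b$ already lies in $I_\sigma + J_\sigma$.
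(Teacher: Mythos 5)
Your proof is correct and follows essentially the same route as the paper: comaximality of $I_{\sigma}$ with the ideals $I_{\sigma}^{\Q,w}$ for (1), base change of the singular element $b$ (Lemma \ref{lem:singular}(2)) together with its invertibility in $\R[1/b]$ for (2), and for (3) the same two ingredients --- exactness of the Jacquet functors and the bounded $b$-power torsion hypothesis --- to identify the relevant annihilator ideals after inverting $b$ and then clear denominators. The only cosmetic difference is in (3): you prove $I_{\pi[1/b]}=I_{\pi}\cdot\R[1/b][\A_\M]$ directly for any module with bounded $b$-power torsion (checking the Jacquet modules inherit this bound), whereas the paper first treats the $b$-torsion-free case and then reduces to it via the torsion-free quotient $\sigma^{\mathrm{tf}}$; both arguments are sound.
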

\begin{proof}
If $1$ is $(\sigma,\P,\Q)$-singular we have $$\R[\A_\M] = I_{\sigma}+J_{\sigma} \subset I_{\sigma} + I_{\sigma}^{\Q\P} \subset I_{\sigma} + \bigcap_{w<1}I_{\sigma}^{\Q,w},$$ so all these ideals are $\R[A_M]$, and $\sigma$ is $(\P,\Q)$-regular. Conversely, suppose $\sigma$ is $(\P,\Q)$-regular, i.e., $I_{\sigma} + \bigcap_{w<1}I_{\sigma}^{\Q,w} = \R[\A_\M]$. If we write $1 = i + l$ with $i\in I_{\sigma_{\R}} $ and $l\in \bigcap_{w<1}I_{\sigma_{\R}}^{\Q,w}$, then $$1 = 1^{|\W_\G|-1} = (i+l)^{|\W_\G|-1} \in I_{\sigma_{\R}} +  \prod_{w<1}I_{\sigma_{\R}}^{\Q,w}= I_{\sigma_{\R}}+J_{\sigma_{\R}},$$ so $1$ is $(\sigma,\P,\Q)$-singular.

For (2), suppose $b$ is a non-zero divisor in $(I_{\sigma}+J_{\sigma})\cap \R$. Then if $\R' = \R[1/b]$ and $\sigma_{\R'} = \sigma\otimes_\R\R'$, we have $I_{\sigma}\otimes \R' \subset I_{\sigma_{\R'}}$ and $J_{\sigma}\otimes \R'\subset J_{\sigma_{\R'}}$ by arguing as in the proof of Lemma~\ref{lem:singular}. Hence $I_{\sigma_{\R'}}+J_{\sigma_{\R'}}$ contains a unit and is therefore equal to $\R'[\A_\M]$.

For (3), suppose conversely that $I_{\sigma_{\R'}} + J_{\sigma_{\R'}} = \R'[\A_\M]$. To show that some power of $b$ is $(\sigma,\P,\Q)$-singular, let us first restrict to the case that $\sigma$ is $b$-torsion free. Then each of the modules $\dot{w}(r_{\M\cap w^{-1}\Q}^\M(\sigma))$ is also $b$-torsion free, and it follows that $I_{\sigma_{\R'}} = I_{\sigma}\otimes \R'$ and $J_{\sigma_{\R'}}=J_{\sigma}\otimes \R'$. Hence there is an $r$ such that $b^r$ is in $I_{\sigma}+ J_{\sigma}$. 

In general, the torsion submodule $\sigma^{\mathrm{tor}}$ is $b^s$-torsion for some fixed integer $s$. If $\sigma^{\mathrm{tf}}$ denotes the $b$-torsion free quotient of $\sigma$, we have $\sigma_{\R'} = \sigma^{\mathrm{tf}}\otimes_\R\R'$, so the previous paragraph implies there exists an $r$ such that $b^r$ is in $I_{\sigma^{\mathrm{tf}}} + J_{\sigma^{\mathrm{tf}}}$. It follows that $b^{s+r}$ is in $I_{\sigma}+J_{\sigma}$.
\end{proof}

Suppose $b$ is an element of $\R$ that is $(\sigma,\P,\Q)$-singular and choose any decomposition
$$b = j_{\sigma} + i_{\sigma}$$ into elements $j_{\sigma}\in J_{\sigma}$ and $i_{\sigma}\in I_{\sigma}$. Since $j_{\sigma}$ is in $I_{\sigma}^{\Q\P}$, it defines a morphism
$$j_{\sigma}:r_\Q^\G\circ i_\P^\G  (\sigma) \to F_{\Q\P}^{<1}(\sigma).$$ We compose it with the map $q$ above to get an $M$-equivariant morphism $r_\Q^\G\circ i_\P^\G  (\sigma)\to \sigma$. Passing through the isomorphism of Frobenius reciprocity, i.e., $\Hom_{\R[\M]}(r_\Q^\G\circ i_\P^\G  (\sigma), \sigma)\simeq \Hom_{\R[\G]}(i_\P^\G (\sigma),i_{\Q}^{\G}(\sigma))$, we obtain a morphism $i_\P^\G (\sigma)\to i_{\Q}^{\G}(\sigma)$. Finally, we extend scalars to $\R[1/b]$ and divide this morphism by $b$ to obtain $J_{\Q|\P}(\sigma)$, the intertwining operator. More compactly, we can write this

\begin{align*}
J_{\Q|\P}(\sigma): i_\P^\G (\sigma)[1/b] &\to i_{\Q}^{\G}(\sigma)[1/b]\\
f&\mapsto J_{\Q|\P}(\sigma)(f),
\end{align*}
where
$$J_{\Q|\P}(\sigma)(f)(g):= \frac{1}{b}\int_{\U_\Q\cap \U_{\overline{\P}}}j_{\sigma}(\overline{gf})(u)\ du,$$
where $\overline{gf}$ denotes the image of $gf$ under the map $i_\P^\G (\sigma)\to r_\Q^\G\circ i_\P^\G (\sigma)$. While $j_{\sigma}(\overline{gf})$ is an element of $F_{\Q\P}^{<1}$ and thus not, strictly speaking, a function on $G$, we can lift it to $\widetilde{F}_{\Q\P}^{<1}\subset i_\P^\G (\sigma)$ and take the integral $\int j_{\sigma}(\overline{gf})(u)du$, which factors through parabolic restriction.

If $f$ is in $\widetilde{F}_{\Q\P}^{<1}(\sigma)$, then $j_{\sigma}\overline{f} = b \overline{f} - i_{\sigma}\overline{f} = b\overline{f}$, so the expression simplifies to
$$J_{\Q|\P}(\sigma)(f)(1) = \int_{\U_\Q\cap \U_{\overline{\P}}}f(u)du.$$ 
\begin{lemma}[\cite{datnu} Lemma 7.12]\label{lem:characterize}
The intertwining operator $J_{\Q|\P}(\sigma)$ is the unique element of $\Hom_{\R[1/b][\G]}(i_\P^\G (\sigma)[1/b],i_{\Q}^{\G}(\sigma)[1/b])$ having the form $J_{\Q|\P}(\sigma)(f)(1) = \int_{\U_\Q\cap \U_{\overline{\P}}}f(u)du$ on $f$ in $\widetilde{F}_{\Q\P}^{<1}(\sigma)$.
\end{lemma}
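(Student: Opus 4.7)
My plan is to dispatch existence by appealing to the computation already carried out in the paragraph immediately preceding the lemma, and then concentrate on uniqueness, which is the substantive part. For existence: given $f\in\widetilde F_{\Q\P}^{<1}(\sigma)$, the image $\bar f$ lies in the sub-$\M$-module $F_{\Q\P}^{<1}(\sigma)$ of $r_\Q^\G i_\P^\G(\sigma)$, which is identified with $\sigma$ via $q$, so $I_\sigma$ annihilates $\bar f$. Writing $j_\sigma=b-i_\sigma$ gives $j_\sigma\bar f=b\bar f$, whence
\[
J_{\Q|\P}(\sigma)(f)(1)=\tfrac{1}{b}\,q(j_\sigma\bar f)=q(\bar f)=\int_{\U_\Q\cap\U_{\overline\P}}f(u)\,du.
\]

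For uniqueness, suppose $T$ is a second element of $\Hom_{\R[1/b][\G]}(i_\P^\G(\sigma)[1/b],i_\Q^\G(\sigma)[1/b])$ satisfying the same formula on $\widetilde F_{\Q\P}^{<1}(\sigma)$, and set $S:=T-J_{\Q|\P}(\sigma)$. I would translate this to a statement about $\M$-morphisms using Frobenius reciprocity: $S$ corresponds to an $\M$-morphism $\tau:r_\Q^\G i_\P^\G(\sigma)[1/b]\to\sigma[1/b]$, characterised (after the standard $\delta_\Q^{1/2}$-normalisation) by $\tau(\bar f)=S(f)(1)$ for all $f$. Since $F_{\Q\P}^{<1}(\sigma)$ is the image of $\widetilde F_{\Q\P}^{<1}(\sigma)$ in $r_\Q^\G i_\P^\G(\sigma)$, the hypothesis on $T$ forces $\tau|_{F_{\Q\P}^{<1}(\sigma)[1/b]}=0$, so $\tau$ descends to the quotient $X:=\bigl(r_\Q^\G i_\P^\G(\sigma)/F_{\Q\P}^{<1}(\sigma)\bigr)[1/b]$.

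The key step is to exploit the singularity condition $b=j_\sigma+i_\sigma$ to see that $\tau$ vanishes on $X$. The point is that $X$ is annihilated by $J_\sigma$: it admits a finite filtration with successive quotients of the form $i_{\M\cap w\P}^\M\circ\dot w\circ r_{\M\cap w^{-1}\Q}^\M(\sigma)$ for $w<1$, each killed by $I_\sigma^{\Q,w}$, and the product ideal $J_\sigma=\prod_{w<1}I_\sigma^{\Q,w}$ therefore annihilates $X$ as a whole. So for any $\bar x\in X$ one has $j_\sigma\tau(\bar x)=\tau(j_\sigma\bar x)=0$; writing $j_\sigma=b-i_\sigma$ and using that $I_\sigma$ kills $\sigma$ yields $b\cdot\tau(\bar x)=0$, and inverting $b$ gives $\tau(\bar x)=0$. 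Hence $\tau=0$ and $S=0$. The only real care is in the Frobenius-reciprocity bookkeeping (and its compatibility with inverting $b$); the conceptual content -- that the singularity condition $b\in I_\sigma+J_\sigma$ is precisely a ``coprimality'' statement forcing any such $\tau$ annihilated on the sub $F_{\Q\P}^{<1}(\sigma)[1/b]$ to be zero overall -- is exactly what the definition was engineered to provide.
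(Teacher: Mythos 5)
Your argument is correct and takes essentially the same route as the proof the paper defers to (Dat's Lemma 7.12), which the construction was set up to make work: existence is the computation already done before the lemma, and uniqueness follows by converting a second such operator into an $\M$-morphism via Frobenius reciprocity and using the decomposition $b=j_{\sigma}+i_{\sigma}$ with $j_{\sigma}\in J_{\sigma}\subset I_{\sigma}^{\Q\P}$ and $i_{\sigma}\in I_{\sigma}$ to force that morphism to vanish after inverting $b$. One small wording fix in your existence step: $I_{\sigma}$ does not annihilate $\bar f\in F^{<1}_{\Q\P}(\sigma)$ itself, only its image in $F^{<1}_{\Q\P}/F^{\leqslant 1}_{\Q\P}(\sigma)\simeq\sigma$, so you should say $i_{\sigma}\bar f\in F^{\leqslant 1}_{\Q\P}(\sigma)=\ker q$ and conclude $q(j_{\sigma}\bar f)=b\,q(\bar f)$, which is all that is needed.
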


In particular, $J_{\Q|\P}(\sigma)$ does not depend on our choice of decomposition of $b$ into $j_{\sigma}+i_{\sigma}$. 

\begin{remark}\label{rmk:smaller_b}
In fact, $J_{\Q|\P}(\sigma)$ only depends on $b$ in that we require the extension of scalars to $\R[1/b]$ to define it. Given any $b'$ dividing $b$ in $\R$ such that $b'$ is also $(\sigma,\P,\Q)$-singular, the same operator $J_{\Q|\P}(\sigma)$ descends to $\R[1/b']$.
\end{remark}

\subsection{Properties of intertwining operators}

\begin{proposition}\label{prop:functoriality}
Suppose $b$ is $(\sigma,\P,\Q)$-singular, and let $f:\R\to \R'$ be a homomorphism of $\mathbb{Z}[\sqrt{q}^{-1}]$-algebras such that $f(b)$ is not a zero divisor and let $\sigma_{\R'}$ denote $\sigma\otimes_\R\R'$. Suppose there is a morphism of $\R[\M]$-modules $q:\sigma\to \sigma'$ and $b'$ is $(\sigma',\P,\Q)$-singular. Then the following diagrams commute, respectively,

$$
\begin{tikzcd}
i_\P^\G (\sigma)[1/b] \arrow[r,"J_{\Q|\P}(\sigma)"] \arrow[d,"\text{id}\otimes 1"]& i_{\Q}^{\G}(\sigma)[1/b] \arrow[d,"\text{id}\otimes 1"]\\
i_\P^\G (\sigma_{\R'})[1/f(b)] \arrow[r,"J_{\Q|\P}(\sigma_{\R'})"] & i_{\Q}^{\G}(\sigma_{\R'})[1/f(b)]
\end{tikzcd}
\ \ \
\begin{tikzcd}
i_\P^\G (\sigma)[1/(bb')] \arrow[r,"J_{\Q|\P}(\sigma)"] \arrow[d,rightarrow]& i_{\Q}^{\G}(\sigma)[1/(bb')] \arrow[d,rightarrow]\\
i_\P^\G (\sigma')[1/(bb')] \arrow[r,"J_{\Q|\P}(\sigma')"] & i_{\Q}^{\G}(\sigma')[1/(bb')]
\end{tikzcd}
$$
\end{proposition}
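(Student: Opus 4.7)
The plan is to prove both squares by invoking the characterization of the intertwining operator given in Lemma~\ref{lem:characterize}: any $\R[1/b][\G]$-linear map $i_\P^\G(\sigma)[1/b] \to i_\Q^\G(\sigma)[1/b]$ whose value at $1 \in \G$ is $f \mapsto \int_{\U_\Q \cap \U_{\overline\P}} f(u)\,du$ on the subfunctor $\widetilde{F}_{\Q\P}^{<1}(\sigma)$ must coincide with $J_{\Q|\P}(\sigma)$. So for each diagram, my strategy is to show that both compositions are well-defined $\R'[\G]$-linear (respectively $\R[1/(bb')][\G]$-linear) maps that satisfy this integral formula on the appropriate subfunctor, and conclude equality.

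For the first diagram, I will first observe that Lemma~\ref{lem:singular}(2) ensures $f(b)$ is $(\sigma_{\R'},\P,\Q)$-singular, so $J_{\Q|\P}(\sigma_{\R'})$ is defined on $\R'[1/f(b)]$. Since parabolic induction commutes with base change, the natural map $i_\P^\G(\sigma)[1/b] \otimes_\R \R' \to i_\P^\G(\sigma_{\R'})[1/f(b)]$ is an isomorphism, and it sends $\widetilde{F}_{\Q\P}^{<1}(\sigma)$ into $\widetilde{F}_{\Q\P}^{<1}(\sigma_{\R'})$. For $f \in \widetilde{F}_{\Q\P}^{<1}(\sigma)$, the image under the clockwise composition evaluates at $1$ to $\int_{\U_\Q \cap \U_{\overline\P}} f(u)\,du \otimes 1$, while the counterclockwise composition evaluates there to $\int_{\U_\Q \cap \U_{\overline\P}} (f \otimes 1)(u)\,du$; these agree since the integral is a finite sum (elements of $\widetilde{F}_{\Q\P}^{<1}$ are compactly supported modulo $\P$ on $\Q\P$). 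By the uniqueness in Lemma~\ref{lem:characterize} applied to the target ring $\R'[1/f(b)]$, both compositions equal $J_{\Q|\P}(\sigma_{\R'})$.

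For the second diagram, since $b$ and $b'$ are both $(\sigma,\P,\Q)$- and $(\sigma',\P,\Q)$-singular respectively (and hence $bb'$ lies in both $I_\sigma + J_\sigma$ and $I_{\sigma'} + J_{\sigma'}$ and is a non-zero divisor as a product of non-zero divisors), Remark~\ref{rmk:smaller_b} (applied in reverse: we can always further localize) gives intertwining operators $J_{\Q|\P}(\sigma)$ and $J_{\Q|\P}(\sigma')$ over $\R[1/(bb')]$. Functoriality of parabolic induction gives a natural morphism $i_\P^\G(\sigma)[1/(bb')] \to i_\P^\G(\sigma')[1/(bb')]$ that preserves the subfunctors $\widetilde{F}_{\Q\P}^{<1}$, and evaluating both compositions at $f \in \widetilde{F}_{\Q\P}^{<1}(\sigma)$ gives $\int_{\U_\Q \cap \U_{\overline\P}} q(f(u))\,du = q\bigl(\int_{\U_\Q \cap \U_{\overline\P}} f(u)\,du\bigr)$, again by linearity of the integral. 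Uniqueness in Lemma~\ref{lem:characterize} closes the argument.

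The only subtle point I anticipate is verifying that both compositions in each diagram are genuinely $\G$-equivariant morphisms between the correct induced representations with the correct localization. Once this bookkeeping is in place, the actual identification of both sides with the standard integral formula on $\widetilde{F}_{\Q\P}^{<1}$ is immediate, and the whole proof reduces to two short applications of Lemma~\ref{lem:characterize}. No further input from the algebraic structure of $J_\sigma$ or $I_\sigma$ should be needed, since the characterization already encodes everything away from the singular locus.
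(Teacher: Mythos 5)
Your argument is correct and is essentially the paper's own proof: both diagrams are established by checking that each composite satisfies the integral formula $h\mapsto \int_{\U_\Q\cap \U_{\overline{\P}}}h(u)\,du$ on $\widetilde{F}_{\Q\P}^{<1}$ (the integral being a finite sum that commutes with base change along $f$, respectively with $q$) and then invoking the uniqueness in Lemma~\ref{lem:characterize}. Your extra bookkeeping via Lemma~\ref{lem:singular}(2) and Remark~\ref{rmk:smaller_b} is exactly the implicit justification the paper relies on.
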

\begin{proof}
By Lemma~\ref{lem:characterize}, each of the horizontal arrows on the top row has, and is uniquely characterized by, the property that for any $h\in \widetilde{F}_{\Q\P}^{<1}(\sigma)$, $J_{\Q|\P}(\sigma)(h)(1)$ is given by $\int_{\U_\Q\cap \U_{\overline{\P}}}h(u)du$. The same goes for the horizontal arrows on the bottom row with the inputs replaced with $(\sigma_{\R'},\R'[1/f(b)])$ and $(\sigma', \R[1/(bb')])$, respectively. But the integral $\int_{\U_\Q\cap \U_{\overline{\P}}}h(u)du$ is a finite sum and commutes with extension of scalars along $f:\R\to \R'$, which proves the commutativity of the first diagram. Similarly, $$q\left(\int_{\U_\Q\cap \U_{\overline{\P}}}h(u)du\right) = \int_{\U_\Q\cap \U_{\overline{\P}}}(q\circ h)(u)du,$$ which completes the proof. 
\end{proof}

In practice, given a homomorphism $f:\R\to \R'$ and a $(\sigma,\P,\Q)$-singular element $b$ of $\R$, it may be hard to tell whether $f(b)$ is not a zero divisor. The following lemma guarantees at least one such $b$ under nice circumstances, given the existence of a $(\sigma\otimes_\R\R',\P,\Q)$-singular element of $\R'$.
\begin{lemma}\label{hard_lemma}
Suppose $f:\R\to \R'$ is a homomorphism of noetherian integral domain $\mathbb{Z}[\sqrt{q}^{-1}]$-algebras with kernel $\CP$. Suppose $\sigma$ is an admissible and finitely generated $\R[\M]$-module that is $\R$-torsion free, and suppose there exists $b'\in \R'$ that is $(\sigma\otimes_\R\R',\P,\Q)$-singular. 
\begin{enumerate}
\item There is a nonzero $s$ in $\R'$ such that $sb'$ is contained in the image of $f$ (and any such $sb'$ is necessarily $(\sigma\otimes_\R\R',\P,\Q)$-singular).
\item The identity element (and hence any nonzero element) in the localization $\R_{\CP}$ is $(\sigma\otimes_{\R}\R_{\CP},\P,\Q)$-singular.
\item The set of $(\sigma,\P,\Q)$-singular elements of $\R$ is not contained in $\CP$.
\end{enumerate}
\end{lemma}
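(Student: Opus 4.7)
I would prove the three claims in the order (2), (3), then (1). The principal technical input, used throughout, is that for any Noetherian $\R$-algebra $\R''$, the ring $C_{\R''} := \R''[\A_\M]/(I_{\sigma \otimes \R''} + J_{\sigma \otimes \R''})$ is a finitely generated $\R''$-module. Indeed, $C_{\R''}$ is a quotient of $\R''[\A_\M]/I_{\sigma \otimes \R''}$, and admissibility plus finite generation of $\sigma \otimes \R''$ yield a compact open subgroup $K$ with $\sigma \otimes \R''$ generated by $(\sigma \otimes \R'')^K$, so $\R''[\A_\M]/I_{\sigma \otimes \R''}$ injects into the finitely generated $\R''$-module $\End_{\R''}((\sigma \otimes \R'')^K)$.

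For (2), since $b'$ is a nonzero element of the domain $\R'$ lying in $(I_{\sigma'} + J_{\sigma'}) \cap \R'$, its image in $\Frac(\R')$ is nonzero, forcing the analogous ideal $I_{\sigma \otimes \Frac(\R')} + J_{\sigma \otimes \Frac(\R')}$ to equal all of $\Frac(\R')[\A_\M]$. Hence $\sigma \otimes_\R \Frac(\R')$ is $(\P, \Q)$-regular. Admissibility, together with $\R$-torsion-freeness of $\sigma$ and of its parabolic restrictions (the latter preserved by exactness of the Jacquet functor on $\mathbb{Z}[1/p]$-algebras), ensures that $I$ and $J$ base-change well along field extensions; faithfully flat descent from $\Frac(\R')$ to the subfield $\kappa(\CP) = \Frac(\R/\CP)$ then shows $\sigma \otimes_\R \kappa(\CP)$ is regular. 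Finally, Nakayama's lemma applied to the finitely generated $\R_\CP$-module $C_{\R_\CP}$ yields the regularity of $\sigma \otimes_\R \R_\CP$.

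For (3), I would clear denominators in the decomposition $1 = i + j$ over $\R_\CP[\A_\M]$ supplied by (2), obtaining $d \in \R \setminus \CP$ and $d_i, d_j \in \R[\A_\M]$ with $d = d_i + d_j$. Torsion-freeness of $\sigma$ forces $d_i \in I_\sigma$, and torsion-freeness of its parabolic restrictions forces $d_j \in J_\sigma$ after clearing denominators in each factor of a product expression for $j$. Hence $d$ is a singular element of $\R$ outside $\CP$.

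The hard step is (1). From (3), $d \cdot C_\R = 0$ (where $C_\R := \R[\A_\M]/(I_\sigma + J_\sigma)$), so $f(d)$ annihilates $C_{\R'}$ and in particular lies in $(I_{\sigma'} + J_{\sigma'}) \cap \R'$; thus $C_{\R'}$ is finitely generated over the quotient ring $\R'/(f(d))$. The objective is to show that any $b' \in (I_{\sigma'} + J_{\sigma'}) \cap \R'$ has a nonzero scalar multiple in $f(\R) \subset \R'$; equivalently, $b'$ lies in $\Frac(f(\R)) \cap \R'$ inside $\Frac(\R')$. My approach would be to exploit that $C_{\R'}$ arises as a quotient of the base change $C_\R \otimes_\R \R'$ of the finite $\R$-algebra $C_\R$, so every element of $C_{\R'}$ satisfies an integrality relation over the image of $\R$; applying this to the generator $1_{C_{\R'}}$, which is annihilated by $b'$, and using the finite generation of $C_{\R'}$ over $\R'$ should yield the required scalar $s$. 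The main obstacle is transferring integrality relations from the algebra $C_{\R'}$ back to an explicit scalar equation in $\R'$ involving $b'$ and $f(\R)$.
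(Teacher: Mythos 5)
Your finiteness input (that $\R''[\A_\M]/I_{\sigma\otimes\R''}$ embeds into $\End_{\R''}((\sigma\otimes\R'')^K)$ because $\A_\M$ is central in $\M$, so $C_{\R''}$ is a finite $\R''$-module) agrees with the paper's, the flat-descent leg from $\Frac(\R')$ to $\kappa(\CP)=\Frac(\R/\CP)$ is sound, and your deduction of (3) from (2) by clearing denominators (using torsion-freeness of $\sigma$ and of its Jacquet modules, the latter preserved by exactness) is fine. The genuine gap is the final step of your proof of (2). Nakayama applied to the finite $\R_{\CP}$-module $C_{\R_{\CP}}=\R_{\CP}[\A_\M]/(I_{\sigma_{\R_{\CP}}}+J_{\sigma_{\R_{\CP}}})$ requires $C_{\R_{\CP}}\otimes_{\R_{\CP}}\kappa(\CP)=0$, i.e.\ that the \emph{images} of $I_{\sigma_{\R_{\CP}}}$ and $J_{\sigma_{\R_{\CP}}}$ in $\kappa(\CP)[\A_\M]$ already generate the unit ideal. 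What your descent gives is only $I_{\sigma_{\kappa(\CP)}}+J_{\sigma_{\kappa(\CP)}}=\kappa(\CP)[\A_\M]$, and these residue-field ideals can be strictly larger than the reductions: formation of annihilator ideals does not commute with the non-flat base change $\R_{\CP}\to\kappa(\CP)$, even for $\R$-torsion-free $\sigma$ (over a discrete valuation ring, a free rank-two module on which a central element acts with eigenvalues $\varpi$ and $-\varpi$ has annihilator reducing to the square of the annihilator of the reduction). One only has a surjection $C_{\R_{\CP}}\otimes\kappa(\CP)\twoheadrightarrow C_{\kappa(\CP)}$, so vanishing of the target does not force vanishing of the source. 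The paper closes exactly this hole by importing from the proof of \cite[Lemma 7.2]{datnu} that the kernels of $S_{\R}\otimes_{\R,f}\Frac(\R')\to S_{\Frac(\R')}$ and $T^w_{\R}\otimes_{\R,f}\Frac(\R')\to T^w_{\Frac(\R')}$ are nilpotent, so that a power of each residue-field ideal lies in the image of the integral one; with that input your Nakayama step becomes legitimate, but as written it fails.

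Part (1) is, by your own admission, not proved, and the sketch does not look repairable as stated: integrality of $1_{C_{\R'}}$ over the image of $C_{\R}$, combined with (3), only reproduces the fact that $f(d)$ is a nonzero singular element lying in $\im f$; it never engages the \emph{given} $b'$, and your ``equivalently, $b'\in\Frac(f(\R))\cap\R'$'' is not an equivalence (membership there is sufficient for the existence of $s$, not necessary). Note also that the paper's ordering (1)$\Rightarrow$(2)$\Rightarrow$(3) is not incidental: using the nilpotency statement above, it shows that a power of $I_{\sigma_{\Frac(\R')}}$ (resp.\ of the $I^{\Q,w}$) is contained in the span of the reduction of $I_{\sigma_{\R_{\CP}}}$ (resp.\ $J_{\sigma_{\R_{\CP}}}$), whence some nonzero multiple $sb'$ lies simultaneously in $\R'$ and in $(\R_{\CP}/\CP\R_{\CP})[\A_\M]$, hence in $\R_{\CP}/\CP\R_{\CP}=\im f$, proving (1); a unit lift $\tilde b\in\R_{\CP}$ of $sb'$ then gives $I_{\sigma_{\R_{\CP}}}+J_{\sigma_{\R_{\CP}}}+\CP\R_{\CP}[\A_\M]=\R_{\CP}[\A_\M]$, and Nakayama yields (2) and then (3). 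In other words, (1) supplies precisely the element that powers the Nakayama step your proof of (2) is missing, so reversing the order without a substitute for the ideal-comparison input leaves both (1) and (2) unestablished.
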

\begin{proof}
For any $\R$-algebra $\widetilde{\R}$ we define $\sigma_{\widetilde{\R}} = \sigma\otimes_\R\widetilde{\R}$ and
\begin{align*}
S_{\widetilde{\R}} &:= \im\left(\widetilde{\R}[\A_\M]\to \End_{\widetilde{\R}}(\sigma_{\widetilde{\R}}) \right)\\
    T^w_{\widetilde{\R}} &:= \im\left(\widetilde{\R}[\A_\M]\to \End_{\widetilde{\R}}(w(r^\M_{w^{-1}(\Q)\cap \M}(\sigma_{\widetilde{\R}}))\right),
\end{align*}
for $w\in \W_\M\backslash \W_\G/\W_\M$.

Note that $S_\R$ is a finitely generated $\R$-module because of our assumption that $\sigma$ is admissible and $\R[\M]$-finite. Since parabolic restriction preserves admissibility \cite[Cor 1.5]{DHKMfiniteness}, $T_\R^w$ is also finitely generated as an $\R$-module.

It follows from the definitions that the natural map $S_\R\otimes_\R\widetilde{\R}\to S_{\widetilde{\R}}$ is surjective, and likewise for $T_\R^w$. When $\widetilde{\R}$ is a flat $\R$-algebra, this map is an isomorphism \cite[proof of Lemma 7.2]{datnu}) and restricts to an isomorphism of ideals $I_{\sigma}\otimes_\R\widetilde{\R}\simeq I_{\sigma_{\widetilde{\R}}}$, and likewise for $J_{\sigma}$.

Now, following the second step of the proof of Lemma 7.2 in \cite{datnu}, the kernels of the canonical morphisms
$$S_\R\otimes _{\R,f} \Frac(\R')\to S_{ \Frac(\R')},\ \ \ \ \ T^w_\R\otimes_{\R,f} \Frac(\R')\to T^w_{ \Frac(\R')}$$ are nilpotent ideals. 

The homomorphism $f$ extends to a homomorphism $\R_{\CP} \to \R'$, and the image of $f$ is isomorphic to $\R_{\CP}/\CP \R_{\CP}$. We will identify $\R_{\CP}/\CP \R_{\CP}$ with a subring of $\R'$. 

Consider the ideal $I_{\sigma_{\R_{\CP}}}\subset \R_{\CP}[\A_\M]$. Its reduction mod $\CP \R_{\CP}$ must contain a power of $I_{\sigma_{\Frac{R'}}}$, and likewise for the reduction mod $\CP \R_{\CP}$ of $J_{\sigma_{\R_{\CP}}}$. By assumption, $b'$ is in $I_{\sigma_{\R'}}+J_{\sigma_{\R'}} \subset I_{\sigma_{\Frac(R')}}+J_{\sigma_{\Frac(\R')}}$. It follows that there is a nonzero element $s$ in $\R'$ such that $$sb' \in \left(I_{\sigma_{\R_{\CP}}}\otimes (\R_{\CP}/\CP \R_{\CP}) \right)+ \left(J_{\sigma_{\R_{\CP}}}\otimes (\R_{\CP}/\CP \R_{\CP})\right)\subset (\R_{\CP}/\CP \R_{\CP})[\A_\M].$$ This means $sb'$ is in both $\R'$ and $(\R_{\CP}/\CP \R_{\CP})[\A_\M]$, and therefore lies in $(\R_{\CP}/\CP \R_{\CP})$, which is the image of $f$. Since the $(\sigma\otimes_\R\R',\P,\Q)$-singular elements form an ideal, this proves (1).

Now choose a lift $\tilde{b}$ of $sb'$ to $\R_{\CP}$. Note that $\tilde{b}$ is a unit because it does not lie in $\CP \R_{\CP}$ because $sb'$ is nonzero. Since $\tilde{b}$ lifts an element of $\left(I_{\sigma_{\R_{\CP}}}+J_{\sigma_{\R_{\CP}}}\right)\otimes (\R_{\CP}/\CP \R_{\CP})$, we have
$$\tilde{b}\in I_{\sigma_{\R_{\CP}}} + J_{\sigma_{\R_{\CP}}} + \CP \R_{\CP}[\A_\M] = \R_{\CP}[\A_\M],$$ where the last equality comes from the fact that $\tilde{b}$ is a unit in $\R_{\CP}[\A_\M]$. Now using the finiteness established at the beginning of the proof along with Nakayama's lemma, we conclude that $$I_{\sigma_{\R_{\CP}}}+J_{\sigma_{\R_{\CP}}} = \R_{\CP}[\A_\M].$$ This proves (2).

Since $1\in \R_{\CP}$ lies in $I_{\sigma_{\R_{\CP}}}+J_{\sigma_{\R_{\CP}}}$, it follows that there is an element $b\in \R$, which is not in $\CP$, such that $b$ is in $I_{\sigma}+J_{\sigma}$. This proves (3).
\end{proof}

The next property we will need is the compatibility of the notion of $(\sigma,\P,\Q)$-singularity with respect to changing $\Q$. Given semistandard parabolics $\P$ and $\Q$, we define $d(\P,\Q) = |\Sigma_{red}(\P)\cap \Sigma_{red}(\overline{\Q})|$, where $\Sigma_{red}(\P)$ denotes the set of reduced roots of $\A_\M$ in $\P$. 
\begin{lemma}
\label{lem:rank_one_subgroups}
Let $\M$ be a standard Levi, and let $\O,\P,\Q$ be three parabolics with Levi component $\M$ such that 
$$d(\O,\Q) = d(\O,\P) + d(\P,\Q)\text{,\ \ \ and\ \ \  $d(\O,\P)=1$}.$$ If $b\in \R$ is $(\sigma,\O,\Q)$-singular, then it is also $(\sigma,\O,\P)$-singular and $(\sigma,\P,\Q)$-singular and $$J_{\Q|\O}(\sigma[1/b]) =J_{\Q|\P}(\sigma[1/b])\circ J_{\P|\O}(\sigma[1/b]).$$
\end{lemma}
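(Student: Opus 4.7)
The plan is in two steps: first, transfer the $(\sigma,\O,\Q)$-singularity of $b$ to the pairs $(\O,\P)$ and $(\P,\Q)$; then establish the composition identity using the uniqueness criterion of Lemma \ref{lem:characterize}. For the transfer, I would compare the three ideals $J_{\sigma}^{(\O,\Q)}$, $J_{\sigma}^{(\O,\P)}$, $J_{\sigma}^{(\P,\Q)}$ obtained by taking products of the $I_{\sigma}^{\cdot,w}$ over the double cosets $w < 1$ in the respective Bruhat orders. The condition $d(\O,\Q) = d(\O,\P) + d(\P,\Q)$ means that the Bruhat cell $\O\P \subset \G$ is, in an appropriate sense, a union of pieces governed by both the $(\O,\P)$ and $(\P,\Q)$ geometries. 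From this I expect to extract containments (possibly only after suitable adjustment modulo $I_{\sigma}$) of the form $J_{\sigma}^{(\O,\Q)} \subseteq J_{\sigma}^{(\O,\P)}$ and $J_{\sigma}^{(\O,\Q)} \subseteq J_{\sigma}^{(\P,\Q)}$, so that $b \in I_\sigma + J_{\sigma}^{(\O,\Q)}$ immediately yields the two required singularity statements. Remark \ref{rmk:smaller_b} then ensures all three operators $J_{\Q|\O}$, $J_{\P|\O}$, $J_{\Q|\P}$ are defined after inverting the same $b$.

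For the composition identity, by Lemma \ref{lem:characterize} it suffices to check that $J_{\Q|\P}(\sigma[1/b]) \circ J_{\P|\O}(\sigma[1/b])$, as a $\G$-equivariant map $i_\O^\G(\sigma)[1/b] \to i_\Q^\G(\sigma)[1/b]$, satisfies
\[(J_{\Q|\P} \circ J_{\P|\O})(f)(1) = \int_{\U_\Q \cap \U_{\overline{\O}}} f(u)\, du\]
for every $f \in \widetilde{F}_{\Q\O}^{<1}(\sigma)$. The hypothesis $d(\O,\Q) = d(\O,\P) + d(\P,\Q)$ provides the product decomposition $\U_\Q \cap \U_{\overline{\O}} = (\U_\Q \cap \U_{\overline{\P}}) \cdot (\U_\P \cap \U_{\overline{\O}})$ with matching Haar measures, which rewrites the right-hand side as an iterated integral. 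The inner integral over $\U_\P \cap \U_{\overline{\O}}$ should then be identified with evaluation of $J_{\P|\O}(\sigma)(f)$ along that rank-one piece, and the outer integral over $\U_\Q \cap \U_{\overline{\P}}$ with the subsequent application of $J_{\Q|\P}(\sigma)$.

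The main obstacle will be the support/domain-tracking in the second step: a priori, $J_{\P|\O}(\sigma)(f)$ is just an element of $i_\P^\G(\sigma)[1/b]$, and there is no reason its image in the Jacquet module for $(\Q,\P)$ lives in the ``convergent'' piece $F_{\Q\P}^{<1}$ where the integral formula for $J_{\Q|\P}$ applies directly. This is precisely where the hypothesis $d(\O,\P) = 1$ enters: the subgroup $\U_\P \cap \U_{\overline{\O}}$ is then the unipotent radical of a rank-one subgroup attached to a single root, so supports can be analyzed explicitly along that one direction. I expect to show, using this rank-one reduction together with the fact that $f$ has the prescribed support in $\Q\O$-coordinates, that $J_{\P|\O}(\sigma)(f)$ does in fact satisfy the required support condition for $J_{\Q|\P}$; an alternative would be a density/limiting argument using $g$-translates, but the rank-one case should permit a direct verification. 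Once this support propagation is in place, Fubini reduces the composition to the claimed iterated integral, and uniqueness in Lemma \ref{lem:characterize} finishes the proof.
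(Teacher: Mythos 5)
Your overall architecture is the right one, and it is essentially the argument the paper relies on: the paper's own proof simply defers both claims to the calculations and the proof of \cite[Prop.~7.8(i)]{datnu}, whose shape matches your outline for the composition identity (uniqueness from Lemma \ref{lem:characterize}, the decomposition $\U_\Q\cap\U_{\overline{\O}}=(\U_\Q\cap\U_{\overline{\P}})(\U_\P\cap\U_{\overline{\O}})$ coming from $d(\O,\Q)=d(\O,\P)+d(\P,\Q)$, and a support analysis exploiting $d(\O,\P)=1$). The genuine gap is in your first step. The containment $J_\sigma^{(\O,\Q)}\subseteq J_\sigma^{(\O,\P)}$ cannot be "extracted from the Bruhat geometry" alone: the ideal attached to the pair $(\O,\P)$ is a product of annihilators of the Jacquet modules $\dot w\bigl(r^{\M}_{w^{-1}(\P)\cap\M}(\sigma)\bigr)$, whereas the ideal attached to $(\O,\Q)$ involves $r^{\M}_{w^{-1}(\Q)\cap\M}(\sigma)$ — these are restrictions of $\sigma$ along \emph{different} parabolics of $\M$, not just products over different index sets of double cosets, so comparing them is a statement about annihilators of Jacquet modules, not about closure orders of cells. (The other half, comparing $(\P,\Q)$ with $(\O,\Q)$, is closer to what you describe, since the restricting parabolic $\Q$ is the same and only the ordering changes.) The paper gets both transfers from the explicit computation in \cite[Prop.~7.8(i)]{datnu}, which yields the equalities $J_\sigma^{\P\Q}=J_\sigma^{\O\Q}$ and $J_\sigma^{\O\P}=J_\sigma^{\O\Q}$; your sketch ("the cell $\O\P$ is a union of pieces governed by both geometries") does not supply an argument of that kind.

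For the composition identity your strategy is correct and you have located the crux accurately: one must show that, for $f\in\widetilde{F}^{<1}_{\Q\O}(\sigma)$, the translates entering the inner integral stay in the convergent range for $J_{\P|\O}$ and that $J_{\P|\O}(\sigma)(f)$ lands in (the image of) $\widetilde{F}^{<1}_{\Q\P}$ so that the defining integral formula for $J_{\Q|\P}$ applies, and this is exactly where $d(\O,\P)=1$ must be used. But in your proposal this verification is only asserted ("should permit a direct verification"), and it is precisely the body of the proof the paper cites. So as written the text is a correct plan whose two load-bearing steps — the ideal comparison for the singularity transfer and the support propagation for the Fubini argument — remain to be proved, and the first of them needs a different mechanism than the one you propose.
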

\begin{proof}
The singularity claims follow from the calculations in \cite[Prop 7.8(i)]{datnu}, which give $J_{\sigma}^{\P\Q} = J_{\sigma}^{\O\Q}$ and $J_{\sigma}^{\O\P} = J_{\sigma}^{\O\Q}$. The statement on intertwining operators exactly follows the proof of \cite[Prop 7.8(i)]{datnu}.
\end{proof}

\begin{lemma}[Compatibility with induction]
\label{lem:compatibility_with_induction}
Let $\P$, $\Q$ be two parabolics with Levi component $\M$.
\begin{enumerate}
\item  Suppose $\P$ and $\Q$ are contained in a parabolic subgroup $\O$ with Levi $\N$. If $b$ is $(\sigma,\P,\Q)$-singular, then it is $(\sigma, \P\cap \N, \Q\cap \N)$-singular and
$$J^\G_{\Q|\P}(\sigma) = i_\O^\G(J^\N_{\Q\cap \N|\P\cap \N}).$$
\item Let $\N$ be a Levi subgroup of $\G$ containing $\M$ such that $\P\cap \N = \Q\cap \N$ and such that $\P\N$ and $\Q\N$ are parabolic subgroups with Levi component $\N$. If $b$ is $(i_{\P\cap \N}^\N(\sigma),\P\N,\Q\N)$-singular, then it is also $(\sigma, \P, \Q)$-singular and
$$J_{\Q|\P}(\sigma) = J_{\Q\N|\P\N}(i_{\P\cap \N}^\N(\sigma)).$$
\end{enumerate}
\end{lemma}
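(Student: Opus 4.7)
The plan is to reduce everything to the uniqueness characterization of intertwining operators in Lemma \ref{lem:characterize}, which determines $J_{\Q|\P}(\sigma)$ uniquely among $\G$-equivariant maps $i_\P^\G(\sigma)[1/b]\to i_\Q^\G(\sigma)[1/b]$ by the requirement that $J_{\Q|\P}(\sigma)(f)(1)=\int_{\U_\Q\cap\U_{\overline\P}} f(u)\,du$ for all $f\in\widetilde F_{\Q\P}^{<1}(\sigma)$. For each part of the lemma, I will verify the singularity implication by comparing the defining ideals, and then check the characterizing integral formula after passing through the transitivity isomorphism for parabolic induction.

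For part (1), I would first check the singularity claim. Since $\W_\M\subseteq\W_\N\subseteq\W_\G$, every element $w\in \W_\M\backslash \W_\N/\W_\M$ that is smaller than $1$ with respect to the pair $(\P\cap\N,\Q\cap\N)$ also satisfies $w<1$ with respect to $(\P,\Q)$, and the Jacquet module $\dot w(r^\M_{\M\cap w^{-1}(\Q\cap\N)}(\sigma))=\dot w(r^\M_{\M\cap w^{-1}\Q}(\sigma))$ is the same in both contexts. Hence $J^{\G,\Q}_\sigma\subseteq J^{\N,\Q\cap\N}_\sigma$ (the $\G$-product is over strictly more factors, so gives a smaller ideal), while $I_\sigma$ is identical. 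For the formula, I would use the root-theoretic identity $\U_\Q\cap\U_{\overline\P}=\U_{\Q\cap\N}\cap\U_{\overline{\P\cap\N}}$ (which follows from $\P,\Q\subseteq\O$ and the fact that $\N\cap\U_\O\U_{\overline\O}=\{1\}$ by uniqueness of the big-cell decomposition $\G=\U_{\overline\O}\N\U_\O$). Under the standard isomorphism $i_\P^\G(\sigma)\simeq i_\O^\G(i^\N_{\P\cap\N}(\sigma))$ coming from transitivity of normalized parabolic induction (which is compatible thanks to $\delta_\P=\delta_{\P\cap\N}\cdot\delta_\O|_\N$ on $\N$), a function $f\in\widetilde F_{\Q\P}^{<1}(\sigma)$ corresponds to an element $\tilde f$ with $\tilde f(1)\in\widetilde F^{<1}_{\Q\cap\N,\P\cap\N}(\sigma)$, and the two formulas agree integral-by-integral. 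Uniqueness concludes the proof.

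For part (2), the key root-theoretic identity is $\U_\Q\cap\U_{\overline\P}=\U_{\Q\N}\cap\U_{\overline{\P\N}}$, which holds because $\P\cap\N=\Q\cap\N$ forces $\U_{\Q\cap\N}=\U_{\P\cap\N}$ and the roots of this common group are disjoint from those of $\U_{\overline{\P\cap\N}}$; thus the ``interior'' contributions cancel and only the roots outside $\N$ survive. The singularity implication requires comparing $(I_{i^\N_{\P\cap\N}\sigma}+J_{i^\N_{\P\cap\N}\sigma})\cap R\subseteq R[\A_\N]$ with $(I_\sigma+J_\sigma)\cap R\subseteq R[\A_\M]$. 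Here one uses that $\A_\N\hookrightarrow\A_\M$, that elements of $\A_\N$ act on $i^\N_{\P\cap\N}(\sigma)$ through the central character of $\sigma$ (so $I_{i^\N_{\P\cap\N}\sigma}$ is recovered as $I_\sigma\cap R[\A_\N]$), and that each double coset $w\in\W_\N\backslash\W_\G/\W_\N$ with $w<1$ relative to $(\P\N,\Q\N)$ lifts to cosets in $\W_\M\backslash\W_\G/\W_\M$ whose contributions to $J_\sigma$ sit naturally inside $J_{i^\N_{\P\cap\N}\sigma}$ after base change. Once both singularity and the integral identity are in hand, the formula for $J^\G_{\Q|\P}(\sigma)$ on $\widetilde F_{\Q\P}^{<1}(\sigma)$ coincides with that of $J^\G_{\Q\N|\P\N}(i^\N_{\P\cap\N}(\sigma))$, and uniqueness yields the result.

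The main obstacle will be the singularity implication in part (2): the two ideals live in different polynomial rings $R[\A_\N]\subseteq R[\A_\M]$, the index sets for the $J$-products are quite different (elements of $\W_\N\backslash\W_\G/\W_\N$ versus $\W_\M\backslash\W_\G/\W_\M$), and the relevant ``Jacquet-functor of induction'' requires the geometric lemma to relate $r^\N_{w^{-1}(\Q\N)\cap\N}(i^\N_{\P\cap\N}(\sigma))$ to iterated restrictions of $\sigma$ itself. The integral formulae and the use of Lemma \ref{lem:characterize} are essentially formal once the correct combinatorial identifications are in place.
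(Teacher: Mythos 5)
Your overall strategy — reduce both identities to the uniqueness characterization of Lemma \ref{lem:characterize} and transfer singularity by comparing the ideals $I_\sigma$, $J_\sigma$ — is exactly the spirit of the paper's proof, which simply defers both the singularity calculations and the operator identities to \cite[Prop.\ 7.8(ii),(iii)]{datnu}. Your treatment of part (1) is essentially complete and correct: the cosets $w<1$ for $(\P\cap\N,\Q\cap\N)$ inject into those for $(\P,\Q)$ with identical Jacquet modules, so $J^{\G}_{\sigma}\subseteq J^{\N}_{\sigma}$, and the identity $\U_\Q\cap\U_{\overline\P}=\U_{\Q\cap\N}\cap\U_{\overline{\P\cap\N}}$ together with the transitivity isomorphism verifies the characterizing integral (the small support check — that $(\mathrm{Supp}(f)\cap\Q\P)\cap\N$ is compact mod $\P\cap\N$ — follows from $\N\P=\O$ and the Levi decomposition of $\O$, and is routine).

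Part (2), however, has a genuine gap, and it is precisely the step you flag as the ``main obstacle.'' What is needed is that $b\in(I_{i_{\P\cap\N}^{\N}\sigma}+J_{i_{\P\cap\N}^{\N}\sigma})\cap\R$ (ideals in $\R[\A_\N]$) forces $b\in(I_{\sigma}+J_{\sigma})\cap\R$ (ideals in $\R[\A_\M]$); since $I_{i_{\P\cap\N}^{\N}\sigma}=I_\sigma\cap\R[\A_\N]$, the content is that $J_{i_{\P\cap\N}^{\N}\sigma}$ lands inside $I_\sigma+J_\sigma$. Your one-sentence sketch asserts the opposite-flavoured containment (``contributions to $J_\sigma$ sit naturally inside $J_{i_{\P\cap\N}^{\N}\sigma}$''), which does not yield the implication; and the actual containment is not formal: $J_{i_{\P\cap\N}^{\N}\sigma}$ is a product over $w\in\W_\N\backslash\W_\G/\W_\N$, $J_\sigma$ over $w'\in\W_\M\backslash\W_\G/\W_\M$, several $w'$ can lie over one $w$, and one must also rule out (or separately handle, using $\P\cap\N=\Q\cap\N$) cells $w'<1$ with $w'\in\W_\N$, which lie over the identity coset and are invisible to the $\N$-level data. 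This is exactly the calculation covered by the citation to \cite[Prop.\ 7.8(iii)]{datnu}, and your proposal does not supply it. A second, smaller, glossed point in (2): the two filtration steps impose compactness on different sets ($\Q\P$ versus the larger $\Q\N\cdot\P\N$), so the integral comparison is not ``integral-by-integral'' on $\widetilde{F}^{<1}_{\Q\P}(\sigma)$; one should instead verify the characterizing property of $J_{\Q\N|\P\N}(i_{\P\cap\N}^{\N}(\sigma))$ on $\widetilde{F}^{<1}_{\Q\N,\P\N}$, using that such functions do lie in $\widetilde{F}^{<1}_{\Q\P}(\sigma)$ because $(\P\cap\N)\backslash\N$ is compact, together with the (correct) identity $\U_\Q\cap\U_{\overline\P}=\U_{\Q\N}\cap\U_{\overline{\P\N}}$.
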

\begin{proof}
The singularity statements follow from the calculations in the proof of \cite[Prop 7.8(ii) and (iii)]{datnu}, and the proofs of statements on intertwining operators closely resemble the arguments in \cite[Prop 7.8(ii) and (iii)]{datnu}.
\end{proof}

\subsection{Intertwining operators for the universal unramified twist}

Now we specialize to the situation where $\R = k[\M/\M^\circ]$ for $k$ an algebraically closed field and we choose a square root of $q$ in $k$. Let $\sigma_0$ be a finite length $k[\M]$-module and let $\sigma$ be the $\R[\M]$-module $\sigma_0\chi_{\univ, \M, k}$. In this setting, we can find elements of $\R$ that are $(\sigma,\P,\Q)$-singular by following the method of \cite[Thm IV.1.1]{Waldspurger}.

For $w\in \W_\M\backslash \W_\G/\W_\M$, let $\mathcal{E}_w$ be the set of characters $\nu:\A_\M\to \R^{\times}$ that have the form $$\nu(a) = \nu_0(a)\overline{w^{-1}aw},$$ where $\overline{a}$ denotes the image of $a$ under $\A_\M\to \M\to \M/\M^{\circ}$, and where $\nu_0$ occurs as the pullback under $\A_\M\to \A_{w(\M)\cap \M}$ of central characters of irreducible subquotients of the module $\dot{w}(r_{\M\cap w^{-1}\Q}^\M(\sigma_0))$ (this module has finite length because parabolic restriction preserves finite length). For any $\nu\in \mathcal{E}_w$, let $d(\nu,w)$ designate an integer such that $$\prod_{\nu\in\mathcal{E}_w}(a - \nu(a))^{d(\nu,w)}$$ annihilates $\dot{w}(r_{\M\cap w^{-1}\Q}^\M(\sigma))$; we remark that such an integer exists because it exists after extending scalars to $\text{Frac}(\R) = k(\M/\M^{\circ})$.

\begin{lemma}\label{lem:distinctcentchar}
Let $\mu$ be an element of $\mathcal{E}_1$ and let $\nu$ be an element of $\mathcal{E}_w$ for $w\neq 1$. Then $\mu\neq \nu$.
\end{lemma}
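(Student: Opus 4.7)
The plan is to reduce the lemma to showing that if \(w\notin \W_\M\), then the two group homomorphisms \(\A_\M\to \M/\M^\circ\) sending \(a\) to \(\overline{a}\) and to \(\overline{w^{-1}aw}\) are distinct, and then to prove this using the reflection-group structure of the relative Weyl group action on \(X_*(\A_0)\).

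Suppose for contradiction that \(\mu=\nu\). Writing \(\mu(a)=\mu_0(a)\overline{a}\) and \(\nu(a)=\nu_0(a)\overline{w^{-1}aw}\), with \(\mu_0(a),\nu_0(a)\in k^\times\) because central characters of finite-length \(k[\M]\)-modules take values in \(k^\times\), the identity takes place in \(R=k[\M/\M^\circ]\). Since the elements of \(\M/\M^\circ\) form a \(k\)-basis of \(R\), comparing coefficients forces both \(\mu_0=\nu_0\) and \(\overline{a}=\overline{w^{-1}aw}\) in \(\M/\M^\circ\) for every \(a\in \A_\M\). Note that \(w^{-1}aw\) lies in \(\A_0\subseteq \M\) since any lift of a Weyl element normalises \(\A_0\), so the right-hand side is well-defined.

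Next, I would transfer this equality to the cocharacter lattice \(X_*(\A_0)\). The valuation map \(\A_0\twoheadrightarrow X_*(\A_0)\) sends \(a\) to \(v(a)\) and \(w^{-1}aw\) to \(w^{-1}\cdot v(a)\), where \(w^{-1}\) acts via the relative Weyl group on \(X_*(\A_0)\). Because \(\A_\M/(\A_\M\cap \M^\circ)\) sits as a finite-index subgroup of \(\M/\M^\circ\) (see \cite[I 1.3]{Vig96}), after tensoring with \(\mathbb{R}\) the composite \(\A_0\to \M/\M^\circ\) is identified with the orthogonal projection \(\pi\colon X_*(\A_0)_{\mathbb{R}}\twoheadrightarrow X_*(\A_\M)_{\mathbb{R}}\) with respect to a \(\W_\G\)-invariant inner product on \(X_*(\A_0)_{\mathbb{R}}\). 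Since \(v(a)\in X_*(\A_\M)\) for \(a\in \A_\M\), the hypothesis becomes \(\pi(w^{-1}v)=v\) for every \(v\in X_*(\A_\M)\).

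Finally, I would conclude by an orthogonality argument. Since \(w^{-1}\) is an isometry and \(X_*(\A_0)_{\mathbb{R}}=X_*(\A_\M)_{\mathbb{R}}\oplus X_*(\A_\M)_{\mathbb{R}}^\perp\), decomposing \(w^{-1}v=v+(w^{-1}v-v)\) with the second summand in \(X_*(\A_\M)_{\mathbb{R}}^\perp\) by hypothesis, Pythagoras yields \(|w^{-1}v|^2=|v|^2+|w^{-1}v-v|^2\), forcing \(w^{-1}v=v\). Hence \(w\) fixes \(X_*(\A_\M)_{\mathbb{R}}\), and therefore \(\A_\M\), pointwise. Since \(\C_\G(\A_\M)=\M\), any lift of \(w\) to \(\N_\G(\A_0)\) lies in \(\M\cap \N_\G(\A_0)=\N_\M(\A_0)\), so \(w\in \W_\M\), contradicting \(w\neq 1\) in \(\W_\M\backslash \W_\G/\W_\M\). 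The main subtlety will be the identification of \(\A_0\to \M/\M^\circ\) with the orthogonal projection at the level of cocharacter lattices; this is a standard consequence of the structure theory of reductive \(p\)-adic groups but requires a little care.
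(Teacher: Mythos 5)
Your proof is correct; the interesting point is that the paper does not argue the lemma at all — its proof is a one-line citation to equation (6) in the proof of Theorem IV.1.1 of \cite{Waldspurger}, plus the remark that the argument works over a general algebraically closed $k$. Your write-up is thus a self-contained substitute for that citation, and its skeleton is the natural one underlying the cited argument: since $k$ is algebraically closed, the values $\mu_0(a),\nu_0(a)$ are central-character values of irreducible $k$-modules, hence lie in $k^\times$, so $\mu(a)$ and $\nu(a)$ are nonzero scalar multiples of the basis elements $\overline{a}$ and $\overline{w^{-1}aw}$ of $k[\M/\M^\circ]$, and linear independence of group elements reduces the lemma to showing that $a\mapsto\overline{a}$ and $a\mapsto\overline{w^{-1}aw}$ differ on $\A_\M$ whenever $w\neq 1$ in $\W_\M\backslash\W_\G/\W_\M$. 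Your way of finishing is a clean and correct rendering of the standard structure theory: the map $X_*(\A_0)_{\mathbb{R}}\to(\M/\M^\circ)\otimes\mathbb{R}$ restricts to the identity on $X_*(\A_\M)_{\mathbb{R}}$ (the image of $\A_\M$ has finite index because it is injective modulo compacts and $\M/\M^\circ$ has rank $\dim\A_\M$) and has kernel the span of the coroots of $\A_0$ in $\M$, which is orthogonal to $\mathfrak{a}_\M=\mathfrak{a}_0^{\W_\M}$ for any $\W_\G$-invariant inner product; then the isometry/Pythagoras step forces $w^{-1}$ to fix $\mathfrak{a}_\M$ pointwise, so $\dot w\in\C_\G(\A_\M)=\M\cap\N_\G(\A_0)$ and $w\in\W_\M$, contradicting $w\neq 1$. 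The only step genuinely requiring care is the one you flag (identification of the induced map with the orthogonal projection $\mathfrak{a}_0\to\mathfrak{a}_\M$), and it is exactly the standard fact you invoke. A small bonus of your argument is that it makes explicit where algebraic closedness of $k$ enters (Schur's lemma giving $k^\times$-valued central characters), which is precisely what justifies the paper's remark that Waldspurger's argument extends to general $k$.
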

\begin{proof}
This is proven in Equation (6) in \cite{Waldspurger} in the course of proving Thm IV.1.1 of \cite{Waldspurger}. The same argument works for our more general $k$.
\end{proof}

Given $\mu\in \mathcal{E}_1$, we will need to consider the set $\mathcal{E}^{\mu}$ of all such $\nu$'s distinct from $\mu$, i.e. the set
$$\mathcal{E}^{\mu}: = \left(\bigsqcup_{w\neq 1}\mathcal{E}_w\right) \sqcup \left(\mathcal{E}_1 - \{\mu\}\right).$$ Note that while the $\mathcal{E}_w$, $w\neq 1$ may not be disjoint from one another, we define $\mathcal{E}^{\mu}$ to be their disjoint union.

\begin{lemma}\label{lem:constructb}
For every pair of distinct characters $\mu,\nu:\A_\M\to k[\M/\M^{\circ}]^{\times}$, fix elements $a_{\mu,\nu}\in \A_\M$ such that $\mu(a_{\mu,\nu})\neq\nu(a_{\mu,\nu})$. The element 
$$b(\sigma_0,\P,\Q) = \prod_{\mu\in \mathcal{E}_1}\prod_{\nu\in \mathcal{E}^{\mu}}\left(\mu(a_{\mu,\nu}) - \nu(a_{\mu,\nu})\right)^{d(\mu,w)+d(\nu,w)-1}$$ is $(\sigma,\P,\Q)$-singular.
\end{lemma}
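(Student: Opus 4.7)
The goal is to exhibit $b(\sigma_0,\P,\Q)$ as an element of $(I_\sigma + J_\sigma) \cap R$ that is not a zero divisor. The approach is modelled on Waldspurger's proof of \cite[Thm IV.1.1]{Waldspurger}: construct explicit elements of $J_\sigma$ which, after a suitable combination, act on $\sigma$ as multiplication by $b$ modulo $I_\sigma$. Throughout, for each $\nu$ in the disjoint union $\mathcal{E}^\mu$ let $w_\nu$ denote the index such that $\nu \in \mathcal{E}_{w_\nu}$ (so $w_\nu = 1$ if $\nu \in \mathcal{E}_1\setminus\{\mu\}$, and $w_\nu = w$ if $\nu \in \mathcal{E}_w$ with $w \neq 1$); thus the exponent $d(\mu,w)+d(\nu,w)-1$ in the formula for $b$ stands for $d(\mu,1)+d(\nu,w_\nu)-1$.

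For each $\mu \in \mathcal{E}_1$, I would first build an element
\[
Y_\mu \;=\; \prod_{\nu \in \mathcal{E}^\mu}\bigl(a_{\mu,\nu} - \nu(a_{\mu,\nu})\bigr)^{d(\mu,1)+d(\nu,w_\nu)-1} \;\in\; R[\A_\M].
\]
The exponent $d(\mu,1)+d(\nu,w_\nu)-1$ is at least $d(\nu,w_\nu)$, so grouping the factors in $Y_\mu$ according to the index $w < 1$ shows that $Y_\mu \in \prod_{w<1} I_\sigma^{\Q,w} = J_\sigma$ (when some $\mathcal{E}_w$ happens to be empty, the corresponding module is zero, $I_\sigma^{\Q,w} = R[\A_\M]$, and the bookkeeping is trivial). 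Similarly, for each $\tilde\mu \in \mathcal{E}_1\setminus\{\mu\}$ the factor corresponding to $\nu = \tilde\mu$ has exponent at least $d(\tilde\mu,1)$, so $Y_\mu$ annihilates the generalized $\tilde\mu$-eigenspace of $\sigma$. Lemma \ref{lem:distinctcentchar} guarantees that the characters appearing in the different $\mathcal{E}_w$ are pairwise distinct from those in $\mathcal{E}_1\setminus\{\mu\}$, so no cancellation spoils these two properties.

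Next I would analyze the action of $Y_\mu$ on the generalized $\mu$-eigenspace $\sigma_\mu$ of $\sigma$. Writing $a - \nu(a) = (a - \mu(a)) + (\mu(a) - \nu(a))$ and expanding binomially modulo $(a - \mu(a))^{d(\mu,1)}$ (which annihilates $\sigma_\mu$), each factor of $Y_\mu$ acts on $\sigma_\mu$ as $(\mu(a_{\mu,\nu}) - \nu(a_{\mu,\nu}))^{d(\mu,1)+d(\nu,w_\nu)-1} + N_\nu$ with $N_\nu$ nilpotent of order $\leq d(\mu,1)$. Multiplying over all $\nu \in \mathcal{E}^\mu$ gives an action on $\sigma_\mu$ of the form $c_\mu + N_\mu$ where
\[
c_\mu \;=\; \prod_{\nu \in \mathcal{E}^\mu}\bigl(\mu(a_{\mu,\nu}) - \nu(a_{\mu,\nu})\bigr)^{d(\mu,1)+d(\nu,w_\nu)-1}
\]
and $N_\mu$ is nilpotent. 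Notice that $b = \prod_{\mu\in\mathcal{E}_1} c_\mu$.

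Finally I would combine the $Y_\mu$ into a single element. Setting $\lambda_\mu := \prod_{\mu'\neq\mu} c_{\mu'} \in R$, the element $Y := \sum_\mu \lambda_\mu Y_\mu$ lies in $J_\sigma$ and acts on $\sigma_\mu$ as $\lambda_\mu(c_\mu + N_\mu) = b + \lambda_\mu N_\mu$, whereas the other $Y_{\mu'}$ annihilate $\sigma_\mu$. Hence $b\cdot 1_{R[\A_\M]} - Y$ acts nilpotently on each $\sigma_\mu$, with nilpotence index bounded by $d(\mu,1)$. The technical heart of the proof --- this is the step I expect to be the main obstacle --- is to kill this residual nilpotent term: one must check that the chosen exponent $d(\mu,1)+d(\nu,w_\nu)-1$ (rather than merely $d(\nu,w_\nu)$) provides enough room so that, after a further correction by an element of $J_\sigma$ (obtained by iterating the construction, or equivalently by observing that $N_\mu$ itself is a product in the appropriate ideals because of the extra $d(\mu,1)-1$ in each exponent), we land exactly in $I_\sigma + J_\sigma$. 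Once this is verified, $b \in (I_\sigma + J_\sigma)\cap R$. Finally, $R = k[\M/\M^\circ]$ is a Laurent polynomial ring over the field $k$, hence a domain, and each factor $\mu(a_{\mu,\nu}) - \nu(a_{\mu,\nu})$ is nonzero by our choice of $a_{\mu,\nu}$, so $b$ is a nonzero element of $R$ and therefore not a zero divisor, completing the proof.
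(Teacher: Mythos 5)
Your construction takes a genuinely different route from the paper's, and, as you yourself flag, it stops short at the decisive step; that step is a real gap, not a formality. The point is that $I_{\sigma}$ is the literal annihilator of $\sigma$ in $\R[\A_\M]$, so knowing that $b-Y$ acts \emph{nilpotently} on each generalized eigenspace gives you nothing directly. The obvious way to finish from where you stop is to raise to a power: $(b-Y)^N\in I_{\sigma}$ with $Y\in J_{\sigma}$ yields only $b^N\in I_{\sigma}+J_{\sigma}$, i.e.\ that some power of $b$ is $(\sigma,\P,\Q)$-singular --- strictly weaker than the lemma (though enough for the later applications, cf.\ Remark \ref{rmk:smaller_b}). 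Your suggested repairs (``iterate the construction'', ``the extra $d(\mu,1)-1$ gives room'') are not carried out, and it is not clear how to turn $Y$ into an element of $J_{\sigma}$ acting on each $\sigma_\mu$ \emph{exactly} by the scalar $c_\mu$ without new input; note that over $\R=k[\M/\M^{\circ}]$ the nonvanishing of $\mu(a_{\mu,\nu})-\nu(a_{\mu,\nu})$ does not make it invertible, so the usual CRT-style idempotent tricks are unavailable. There are also smaller elisions: your claims that $Y_\mu\in J_{\sigma}$ and that $(a_{\mu,\tilde\mu}-\tilde\mu(a_{\mu,\tilde\mu}))^{d(\tilde\mu,1)}$ kills the $\tilde\mu$-eigenspace use products evaluated at \emph{different} elements $a_{\mu,\nu}$ and factor-by-factor annihilation, whereas the $d(\nu,w)$ are only defined by annihilation of the full product at a single $a$; this is repairable (since $\sigma$ and $\dot w(r^{\M}_{\M\cap w^{-1}\Q}(\sigma))$ are finite-length $k$-modules twisted by $\R^{\times}$-valued characters, the eigenspace decomposition over $k$ transfers to $\R$), but it needs saying.

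The paper avoids all of this with a purely ideal-theoretic identity, one factor at a time: for each pair $(\mu,\nu)$, expanding $\bigl((a-\nu(a))-(a-\mu(a))\bigr)^{d(\mu,1)+d(\nu,w)-1}$ (equivalently, a resultant/Bezout identity in $\R[X]$ evaluated at $X=a_{\mu,\nu}$) exhibits $\bigl(\mu(a_{\mu,\nu})-\nu(a_{\mu,\nu})\bigr)^{d(\mu,1)+d(\nu,w)-1}$ as an $\R[\A_\M]$-combination of $(a_{\mu,\nu}-\mu(a_{\mu,\nu}))^{d(\mu,1)}$ and $(a_{\mu,\nu}-\nu(a_{\mu,\nu}))^{d(\nu,w)}$; no invertibility of $\mu(a)-\nu(a)$ is needed, which is exactly what makes this work over $\R$. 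Multiplying these identities over all pairs and expanding, every monomial either uses, for each $\mu\in\mathcal{E}_1$, at least one factor of the first kind (hence lies in $I_{\sigma}$), or else has some $\mu_0$ for which only second-kind factors occur, in which case it is divisible by $\prod_{w<1}\prod_{\nu\in\mathcal{E}_w}(a_{\mu_0,\nu}-\nu(a_{\mu_0,\nu}))^{d(\nu,w)}$ and hence lies in $J_{\sigma}$. This puts $b$ itself in $(I_{\sigma}+J_{\sigma})\cap\R$, with no spectral decomposition and no nilpotent remainder, and it explains the precise exponent $d(\mu,1)+d(\nu,w)-1$. To salvage your approach you would have to produce an exact scalar-acting correction of $Y$, or else be content with the weaker statement about a power of $b$.
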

\begin{proof}
Each factor $\left(\mu(a_{\mu,\nu}) - \nu(a_{\mu,\nu})\right)^{d(\mu,w)+d(\nu,w)-1}$ is the resultant of the two polynomials $(X-\mu(a_{\mu,\nu}))^{d(\mu,1)}$ and $(X-\nu(a_{\mu,\nu}))^{d(\nu,w)}$ in the polynomial ring $\R[X]$. Thus there are polynomials $F(X)$ and $G(X)$ such that $$(X-\mu(a_{\mu,\nu}))^{d(\mu,1)}F(X) + (X-\nu(a_{\mu,\nu}))^{d(\nu,w)}G(X)  = \left(\mu(a_{\mu,\nu}) - \nu(a_{\mu,\nu})\right)^{d(\mu,w)+d(\nu,w)-1}.$$ Now set $X=a_{\mu,\nu}$ to get an equality in $\R[\A_\M]$. Since we have
\begin{align*}
\prod_{\mu\in\mathcal{E}_1}(a_{\mu,\nu}-\mu(a_{\mu,\nu}))^{d(\mu,1)}F(a_{\mu,\nu}) &\in I_{\sigma} \\
\prod_{\nu\in\mathcal{E}_{w}}(a_{\mu,\nu}-\nu(a_{\mu,\nu})^{d(\nu,w)}G(a_{\mu,\nu}) &\in I_{\sigma}^{\Q,w}.
\end{align*}
it follows that the product $b(\sigma_0,\P,\Q)$ is an element of $I_{\sigma} + J_{\sigma}$. Since each factor $\left(\mu(a_{\mu,\nu}) - \nu(a_{\mu,\nu})\right)^{d(\mu,w)+d(\nu,w)-1}$ is contained in $\R\subset \R[\A_\M]$, we have shown that $b(\sigma_0,\P,\Q)$ is in $(I_{\sigma}+J_{\sigma})\cap \R$, and thus is $(\sigma,\P,\Q)$-singular.
\end{proof}

\section{Harish--Chandra $j$-functions and Plancherel measure}\label{apdx:j_functions}

In this appendix we will use the theory of intertwining operators to deduce more general versions of properties of $j$-functions and Plancherel measures appearing in the literature. We now take $\Q = \overline{\P}$ the opposite parabolic, and we still take $\R$ to be $k[\M/\M^{\circ}]$. Given $b_1\in \R$ that is $(\sigma,\P,\overline{\P})$-singular and $b_2$ that is $(\sigma, \overline{\P},\P)$-singular, the product $b=b_1b_2$ is both $(\sigma,\P,\overline{\P})$- and $(\sigma, \overline{\P},\P)$-singular. We set $$b = b_P^G(\sigma_0)=b(\sigma_0,\P,\overline{\P})b(\sigma_0,\overline{\P},\P),$$ with the notation from Lemma~\ref{lem:constructb} and we define $j_\P^\G(\sigma_0)\in \End_{\R[1/b]}(i_\P^\G (\sigma)[1/b])$ to be the composition of intertwining operators
$$i_\P^\G (\sigma)[1/b] \xrightarrow{J_{\overline{\P}|\P}(\sigma)} i_{\overline{\P}}^\G(\sigma)[1/b]\xrightarrow{J_{\P|\overline{\P}}(\sigma)}i_\P^\G (\sigma)[1/b].$$

\begin{lemma}
Let $\sigma_0$ be a finite length $k[\M]$-module such that $\cK\to \End_G(i_\P^\G (\sigma_{\cK}))$ is an isomorphism (e.g. $\sigma_0$ irreducible). Then the endomorphism $j_\P^\G(\sigma_0)$ in $\End_{\R[1/b][\G]}(i_\P^\G (\sigma)[1/b])$ is a nonzero scalar in $\R[1/b]$.
\end{lemma}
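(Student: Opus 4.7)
The plan is to establish the stronger fact that $\End_{\R[1/b][\G]}(i_\P^\G(\sigma)[1/b])$ is already equal to the scalars $\R[1/b]$: this forces $j_\P^\G(\sigma_0)$ to be a scalar a priori, and it then remains only to check non-vanishing by a test-function computation.

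For the identification with scalars, Frobenius reciprocity (first adjunction, valid since $r_\P^\G$ is left adjoint to $i_\P^\G$) gives an isomorphism
\[
\End_{\R[1/b][\G]}(i_\P^\G(\sigma)[1/b]) \;\simeq\; \Hom_{\R[1/b][\M]}\!\bigl(r_\P^\G\,i_\P^\G(\sigma)[1/b],\,\sigma[1/b]\bigr).
\]
The geometric lemma provides $r_\P^\G\,i_\P^\G(\sigma)$ with a finite filtration indexed by $\W_\M\backslash \W_\G/\W_\M$, whose graded pieces are the modules $i^{\M}_{\M\cap w\P}\circ\dot w\circ r^{\M}_{\M\cap w^{-1}\P}(\sigma)$, with the piece for $w=1$ equal to $\sigma$ itself. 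By Lemmas~\ref{lem:distinctcentchar} and~\ref{lem:constructb}, $(\sigma,\P,\overline{\P})$-singularity of $b$ (together with its $(\sigma,\overline{\P},\P)$-singularity) invertibly separates the $\A_\M$-eigencharacters occurring on the $w=1$ piece from those of every other graded piece after inverting $b$. This simultaneously splits the filtration as a direct sum over $\R[1/b]$ and forces $\Hom_{\R[1/b][\M]}(-,\sigma[1/b])$ from every $w\ne 1$ piece to vanish, leaving only the contribution from $w=1$:
\[
\End_{\R[1/b][\G]}(i_\P^\G(\sigma)[1/b]) \;\simeq\; \End_{\R[1/b][\M]}(\sigma[1/b]).
\]
Because the universal unramified twist is invertible, the right-hand side is canonically $\End_{k[\M]}(\sigma_0)\otimes_k \R[1/b]$. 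Base-changing to the fraction field $\cK$ of $\R[1/b]$ and applying the same geometric-lemma analysis over $\cK$, the hypothesis $\End_{\cK[\G]}(i_\P^\G(\sigma_\cK))=\cK$ yields $\End_{k[\M]}(\sigma_0)\otimes_k \cK=\cK$, and since $k\subseteq \End_{k[\M]}(\sigma_0)\subseteq \cK$ as $k$-subspaces we conclude $\End_{k[\M]}(\sigma_0)=k$. Hence $\End_{\R[1/b][\G]}(i_\P^\G(\sigma)[1/b])=\R[1/b]$ and $j_\P^\G(\sigma_0)$ is a scalar in $\R[1/b]$.

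For non-vanishing, it suffices to evaluate $j_\P^\G(\sigma_0)$ on a single well-chosen test function. Fix a non-zero vector $v\in\sigma_0$ and a sufficiently small compact open subgroup $K$ of $\G$, and take $f\in i_\P^\G(\sigma)$ to be the natural $(\P,\sigma)$-equivariant extension of $v\otimes 1$ supported on $\P K$; for $K$ small enough $f$ lies in $\widetilde F_{\overline{\P}\P}^{<1}(\sigma)$, so Lemma~\ref{lem:characterize} yields
\[
J_{\overline{\P}|\P}(\sigma)(f)(1) \;=\; \mu\bigl(\U_{\overline{\P}}\cap K\bigr)\cdot v\otimes 1,
\]
a non-zero element of $\sigma[1/b]$. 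Running the same argument for $J_{\P|\overline{\P}}$ applied to $J_{\overline{\P}|\P}(\sigma)(f)$ (or, alternatively, specialising the universal unramified twist to a generic $k$-point $\chi$ at which $i_\P^\G(\sigma_0\chi)$ is irreducible and both intertwining operators become isomorphisms, so their composition is non-zero) shows $j_\P^\G(\sigma_0)(f)\neq 0$. The main delicate point in the plan is the splitting of the geometric lemma filtration: one must track that $(\sigma,\P,\overline{\P})$-singularity of $b$ separates not only the gross $\A_\M$-central characters but also every subquotient of each $w\ne 1$ graded piece from those of $\sigma[1/b]$, which is exactly the point of the product construction of $b(\sigma_0,\P,\overline{\P})$ in Lemma~\ref{lem:constructb}.
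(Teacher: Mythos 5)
Your first half takes a genuinely different route from the paper. The paper passes to $\cK=\overline{\Frac(\R)}$, uses generic irreducibility of $i_\P^\G(\sigma_\cK)$ and $i_{\overline{\P}}^\G(\sigma_\cK)$ to see that the composite is a nonzero scalar there, and then descends: nonvanishing via the torsion-free embedding of $\End_{\R[1/b][\G]}(i_\P^\G(\sigma)[1/b])$ into $\End_{\cK[\G]}(i_\P^\G(\sigma)_\cK)$, and membership in $\R[1/b]$ via finiteness of the endomorphism ring (admissibility) plus normality of $\R[1/b]$. You instead compute the endomorphism ring over $\R[1/b]$ outright via Frobenius reciprocity and the geometric lemma; this is workable, with two caveats you should make explicit. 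First, the graded pieces of $r_\P^\G i_\P^\G(\sigma)$ are controlled by the ideals $I^{\P,w}_{\sigma}$, and it is the $(\sigma,\overline{\P},\P)$-singularity of $b$ that supplies the comaximality after inverting $b$ for \emph{every} $w\neq 1$ (all such $w$ are $<1$ for that pair since $\P\overline{\P}$ is dense); only Hom-vanishing by d\'evissage is needed, not the asserted splitting of the filtration. Second, the identification $\End_{\R[1/b][\M]}(\sigma[1/b])\simeq \End_{k[\M]}(\sigma_0)\otimes_k\R[1/b]$ does not follow from invertibility of the universal twist alone (that only untwists); it requires a base-change-of-$\Hom$ argument resting on admissibility/finite presentation of $\sigma_0$.

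The genuine gap is the nonvanishing. Your test-function computation (after aligning conventions: with the paper's cells $\Q w\P$ the function should be supported on $K\P$ for $K$ with an Iwahori factorization; as written, $\P K$ is not contained in the open cell $\overline{\P}\P$, so the claimed membership in $\widetilde{F}^{<1}_{\overline{\P}\P}(\sigma)$ fails) only shows $J_{\overline{\P}|\P}(\sigma)\neq 0$. It does not show the composite $j_\P^\G(\sigma_0)=J_{\P|\overline{\P}}(\sigma)\circ J_{\overline{\P}|\P}(\sigma)$ is nonzero: the element $J_{\overline{\P}|\P}(\sigma)(f)$ is not a function in $\widetilde{F}^{<1}_{\P\overline{\P}}(\sigma)$, so Lemma~\ref{lem:characterize} gives no formula for its image under $J_{\P|\overline{\P}}(\sigma)$, and two nonzero maps can of course compose to zero, so "running the same argument" is not available. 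Your parenthetical fix, specializing the universal twist at a $k$-point $\chi$ where $i_\P^\G(\sigma_0\chi)$ and $i_{\overline{\P}}^\G(\sigma_0\chi)$ are irreducible, presupposes the existence of such a point, which is precisely the generic-irreducibility input you never justify (and over $k=\Fl$ this is not harmless, nor does it follow from the lemma's hypothesis that $\End_{\cK[\G]}(i_\P^\G(\sigma_\cK))=\cK$). The clean repair is the paper's: over $\cK$ generic irreducibility is available, both operators are nonzero there (the test-function computation transports by Proposition~\ref{prop:functoriality}), hence they are isomorphisms and the composite is a nonzero element of $\cK$; torsion-freeness then gives $j_\P^\G(\sigma_0)\neq 0$ over $\R[1/b]$. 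Without some such input your argument establishes that $j_\P^\G(\sigma_0)$ is a scalar in $\R[1/b]$, but not that it is nonzero.
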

\begin{proof}
After extending scalars from $\R[1/b]$ to an algebraic closure $\cK=\overline{\text{Frac}(\R)}$ of $\text{Frac}(\R)$ we obtain, by compatibililty of $J_{\overline{\P}|\P}$ and $J_{\P|\overline{\P}}$ with extension of scalars, intertwining operators
\begin{align*}
J_{\overline{\P}|\P}(\sigma_\cK)&\in \Hom_{\cK[\G]}(i_\P^\G (\sigma)_{\cK},i_{\overline{\P}}^\G(\sigma)_{\cK})\\
J_{\P|\overline{\P}}(\sigma_\cK)&\in \Hom_{\cK[\G]}(i_{\overline{\P}}^\G(\sigma)_{\cK},i_\P^\G (\sigma)_{\cK})\ .
\end{align*}
Since both $i_\P^\G (\sigma_{\cK})$ and $i_{\overline{\P}}^\G(\sigma_{\cK})$ are irreducible by the generic irreducibility theorem, and both intertwining operators $J_{\overline{\P}|\P}(\sigma_\cK)$ and $J_{\P|\overline{\P}}(\sigma_{\cK})$ are nonzero, the intertwiners are invertible. Thus their composition is multiplication by a nonzero scalar in $\cK$. On the other hand, the natural map
$$\End_{\R[1/b][\G]}\left(i_\P^\G (\sigma)\left[1/b\right]\right) \to \End_{\cK[\G]}(i_\P^\G (\sigma)_\cK)\simeq \cK$$ is injective by torsion-freeness, so $j_\P^\G(\sigma_0)$ is also nonzero.

It remains to prove that $j^\G_\P(\sigma_0)$ lies not just in $\cK$ but in the ring $\R[1/b]$. But we have inclusions
$$\R\hookrightarrow \R\left[1/b\right] \hookrightarrow \End_{\R[1/b][\G]}\left(i_\P^\G (\sigma)\left[1/b\right]\right) \hookrightarrow \End_{\cK[\G]}(i_\P^\G (\sigma)_\cK) \simeq \cK.$$ Since $\sigma_0$ is admissible, $\End_{\R[\G]}(i_\P^\G (\sigma))$ is finitely generated as an $\R$-module and hence $\End_{\R[1/b][\G]}(i_\P^\G (\sigma)\left[1/b\right])$ is finitely generated as an $\R[1/b]$-module and $j_\P^\G(\sigma_0)$ is an integral element, i.e. satisfies a monic polynomial with coefficients in $\R[1/b]$. But $\R=k[\M/\M^{\circ}]$ is a Noetherian normal integral domain, and so is $\R[1/b]$, and thus $\R[1/b]$ is integrally closed in its fraction field $\cK$. We conclude that $j_\P^\G(\sigma_0)$ is in $\R[1/b]$.
\end{proof}

When $\sigma_0$ is irreducible we can extend scalars from $\R[1/b]$ to an algebraic closure $\cK=\overline{\text{Frac}(\R)}$ of $\text{Frac}(\R)$ and argue as in \cite[IV.3(1)-(3)]{Waldspurger} to establish the following properties for the element $j_\P^\G(\sigma_0)$ of $\cK$:
\begin{itemize}
\item $j_\P^\G(\sigma_0)$ does not depend on $\P$,
\item For $w\in \W^\G$, let ${^w\sigma_0}$ be the representation of $w\M$ obtained by composing the action of $\sigma_0$ with the action of $w$. The image of $j_{w\P}^\G({^w\sigma_0})$ in the composite $$k[w\M/(w\M)^{\circ}][1/{^wb}] \to k[\M/\M^{\circ}][1/b] \to \cK$$ is equal to $j_\P^\G(\sigma_0)$.
\item $j_\P^\G(\sigma_0^{\vee}) = j_\P^\G(\sigma_0)$. 
\end{itemize}
Thus while the ring $\R[1/b]$ depends on $b$, which depends on $\P$ and the choice of $\sigma_0$ within its orbit under $\W^\G$, the element $j_\P^\G(\sigma_0)$ of $\Frac(\R)$ is independent of $b$ and $\P$ (c.f. Remark~\ref{rmk:smaller_b}).
\begin{corollary}\label{cor:subquotients}
If $\sigma_0$ is a finite length $k[\M]$-module with a composition series $$0=\sigma_r\subset \sigma_{r-1}\subset \cdots \subset \sigma_0$$ with irreducible subquotients $\sigma_i/\sigma_{i+1} =\tau_i$, then for each $i$,
\begin{enumerate}
\item $b$ is $(\tau_i\chi_{\univ, M, k},\P,\overline{\P})$- and $(\tau_i\chi_{\univ, M, k},\overline{\P},\P)$-singular, 
\item $j_\P^\G(\sigma_0)$ stabilizes the submodule $i_\P^\G (\sigma_i\chi_{\univ, M, k})\left[1/b\right]$ of $i_\P^\G (\sigma)\left[1/b\right]$
\item $j_\P^\G(\sigma_0)|_{i_\P^\G (\sigma_i\chi_{\univ, M, k})[1/b]} = j_\P^\G(\sigma_i)$,
\item the endomorphism induced by $j_\P^\G(\sigma_0)$ on $i_\P^\G (\tau_i\chi_{\univ, M, k})\left[1/b\right]$ is the scalar $j_\P^\G(\tau_i)\in \R[1/b]$.
\item $j_\P^\G(\sigma_0)$ is nonzero and independent of the choice of $\P$ and $b$.
\end{enumerate}
\end{corollary}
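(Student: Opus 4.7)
The strategy is to combine the functoriality of intertwining operators (Proposition~\ref{prop:functoriality}) with the irreducible case established in the previous lemma, propagating information up the composition series. Throughout, write $\sigma = \sigma_0 \chi_{\univ,\M,k}$ and $\sigma_i^{\univ} = \sigma_i \chi_{\univ,\M,k}$, $\tau_i^{\univ}=\tau_i\chi_{\univ,\M,k}$.

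First, (1) is immediate from Lemma~\ref{lem:singular}(1): since each $\sigma_i^{\univ}$ and each $\tau_i^{\univ}$ is a subquotient of $\sigma$, and $b = b(\sigma_0,\P,\overline{\P})b(\sigma_0,\overline{\P},\P)$ is both $(\sigma,\P,\overline{\P})$- and $(\sigma,\overline{\P},\P)$-singular, the same holds with $\sigma$ replaced by any subquotient. Next, for (2) and (3), apply the second commutative diagram of Proposition~\ref{prop:functoriality} to the inclusion $\sigma_i^{\univ} \hookrightarrow \sigma$ (and analogously for $\overline{\P}$). This shows that $J_{\overline{\P}|\P}(\sigma)$ carries $i_\P^\G(\sigma_i^{\univ})[1/b]$ into $i_{\overline{\P}}^\G(\sigma_i^{\univ})[1/b]$ and restricts there to $J_{\overline{\P}|\P}(\sigma_i^{\univ})$; similarly for $J_{\P|\overline{\P}}$. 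Composing yields (2) and the identity $j_\P^\G(\sigma_0)|_{i_\P^\G(\sigma_i^{\univ})[1/b]} = j_\P^\G(\sigma_i)$ of (3).

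For (4), apply the second diagram of Proposition~\ref{prop:functoriality} to the quotient map $\sigma_i^{\univ} \twoheadrightarrow \tau_i^{\univ}$, obtaining that the endomorphism of $i_\P^\G(\tau_i^{\univ})[1/b]$ induced by $j_\P^\G(\sigma_i)$ equals $j_\P^\G(\tau_i)$. The previous lemma, applied to the irreducible $\tau_i$ (which certainly satisfies $\mathcal{K}\to\End_\G(i_\P^\G(\tau_i^{\univ})_\mathcal{K}))$ being an isomorphism), shows $j_\P^\G(\tau_i)$ is a nonzero scalar in $\R[1/b]$. Combined with (3), this gives (4).

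Finally, (5) follows quickly: if $j_\P^\G(\sigma_0)$ were zero, it would induce the zero endomorphism on each $i_\P^\G(\tau_i^{\univ})[1/b]$, contradicting (4). For independence of $\P$ and $b$, note that the scalars $j_\P^\G(\tau_i) \in \Frac(\R)$ are each independent of $\P$ and of the particular singular element chosen (as recorded for irreducibles in the bulleted list preceding the corollary, and as discussed in Remark~\ref{rmk:smaller_b}); more directly, for any two choices $(\P,b)$ and $(\P',b')$, both operators $j_\P^\G(\sigma_0)$ and $j_{\P'}^\G(\sigma_0)$ agree after the common localization $\R[1/(bb')]$, via the characterization Lemma~\ref{lem:characterize} applied to $\sigma$ together with the compatibility of parabolic induction with intertwining operators established in Lemma~\ref{lem:rank_one_subgroups} and Lemma~\ref{lem:compatibility_with_induction}. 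The only step requiring some care is checking that the normalization used to define $j_\P^\G(\sigma_0)$ (via the fixed decomposition $b = j_\sigma + i_\sigma$) is compatible with the restriction to subquotients; this is handled by Lemma~\ref{lem:characterize}, which characterizes the intertwining operator intrinsically by its behaviour on $\widetilde{F}_{\Q\P}^{<1}$, a property manifestly preserved under the functorial maps induced by subobject inclusions and quotient maps. I expect this bookkeeping with the characterization via $\widetilde{F}_{\Q\P}^{<1}$ to be the only non-routine aspect.
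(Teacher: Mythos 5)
Your proposal is correct and follows essentially the same route as the paper: (1) via Lemma \ref{lem:singular}, (2)--(4) via the functoriality diagrams of Proposition \ref{prop:functoriality} applied to the inclusions $\sigma_i\chi_{\univ,\M,k}\hookrightarrow\sigma$ and the quotients onto $\tau_i\chi_{\univ,\M,k}$ together with the scalar statement of the preceding lemma, and (5) by reducing to the corresponding facts for each $j_\P^\G(\tau_i)$. The additional "more direct" comparison of $(\P,b)$ with $(\P',b')$ via Lemma \ref{lem:characterize} is unnecessary (the reduction to the irreducible subquotients already suffices, and is what the paper does), but it does not affect correctness.
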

\begin{proof}
(1) follows immediately from Lemma~\ref{lem:singular} and (2) through (4) follow from the functoriality properties in Proposition~\ref{prop:functoriality}. Item (5) follows from the fact that the same is true for each $j_\P^\G(\tau_i)$.
\end{proof}
Therefore, the invariance properties of $j_\P^\G(\sigma_0)$ for irreducible $\sigma_0$ remain true for finite length $\sigma_0$. We will henceforth drop the subscript $\P$ and denote $j_\P^\G(\sigma_0)$ simply by $j^\G(\sigma_0)$.

\subsection{Compatibility with isomorphisms}\label{section:compatibility_with_isom}

Suppose $\phi:\G\to \G'$ is an isomorphism of connected reductive $\F$-groups. Given a Noetherian $\mathbb{Z}[\sqrt{q}^{-1}]$-algebra $\R$ and a smooth $\R[\G]$-module $\pi$, let $\pi'$ denote the $\R[\G']$-module with $\G'$-action given by $\pi \circ \phi^{-1}$. If $\M\subset \G$ is a standard Levi subgroup, $\M':=\phi(\M)$ is a standard Levi of $\G'$.

Precomposition of functions with $\phi^{-1}$ gives an isomorphism of induced modules $$\lambda: i_\P^\G (\sigma) \overset{\sim}{\to} i_{\P'}^{\G'}(\sigma').$$ The isomorphism $\A_\M\simeq \A_{\M'}$ allows us to extend $\phi$ to an isomorphism $\R[\A_\M]\to \R[\A_{\M'}]$, which identifies $I_{\sigma}$ and $J_{\sigma}$ with $I_{\sigma'}$ and $J_{\sigma'}$ in $R[\A_{\M'}]$, so if $b$ is $(\sigma,\P,\Q)$-singular it is also $(\sigma',\P', \Q')$-singular. It follows from Lemma~\ref{lem:characterize} that the following diagram commutes:
$$
\begin{tikzcd}
i_\P^\G (\sigma)[1/b] \arrow[r,"J_{\Q|\P}(\sigma)"] \arrow[d,"\lambda"]& i_{\Q}^{\G}(\sigma)[1/b] \arrow[d,"\lambda"]\\
i_{\P'}^{\G'}(\sigma')[1/b] \arrow[r,"J_{\Q'|\P'}(\sigma')"] & i_{Q'}^{G'}(\sigma')[1/b]
\end{tikzcd}\ \ .
$$

If $k$ is a field containing $\sqrt{q}^{-1}$, and $R=k[\M/\M^{\circ}]$ the isomorphism $\M\simeq \M'$ induces an isomorphism $\tilde\phi$ of $\R$ with $\R'= k[\M'/(\M')^{\circ}]$. Note that $\tilde\phi$ and $\lambda$ induce an isomorphism $$\End_{\R[1/b][\G]}(i_\P^\G (\sigma)[1/b])\otimes_{\tilde\phi} \R' \simeq \End_{\R'[1/b'][\G']}(i_{\P'}^{\G'}(\sigma')[1/b']).$$ 
\begin{lemma}\label{lem:compatibility_isom}
Let $\sigma_0$ be a finite length $k[\M]$-module and choose $b\in k[\M/\M^{\circ}]$ that is $(\sigma,\P,\overline{\P})$-singular for some parabolic $\P$. Then $b':=\tilde\phi(b)\in \R'$ is $(\sigma',\P',\overline{\P'})$-singular and
$j^\G(\sigma_0)$ corresponds to $j^{G'}(\sigma_0')$ under the identification of $\End_{\R[1/b][\G]}(i_\P^\G (\sigma)[1/b])\otimes_{\tilde\phi} \R'$ with $\End_{\R'[1/b'][\G']}(i_{\P'}^{\G'}(\sigma')[1/b'])$. 
\end{lemma}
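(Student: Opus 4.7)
The plan is to reduce the statement to the functoriality already essentially established in the paragraph preceding the lemma. First, I would check that $b'=\tilde\phi(b)$ is indeed $(\sigma',\P',\overline{\P'})$-singular. The isomorphism $\phi:\G\To\sim \G'$ restricts to $\M\To\sim \M'$ and hence induces an isomorphism $\A_\M\To\sim\A_{\M'}$, from which one gets a ring isomorphism $\tilde\phi:\R[\A_\M]\To\sim\R'[\A_{\M'}]$ extending the cocentre identification $\R\To\sim\R'$. Since $\sigma'$ is obtained from $\sigma$ by transporting the $\G$-action along $\phi^{-1}$, the discussion preceding the lemma gives $\tilde\phi(I_{\sigma})=I_{\sigma'}$ and $\tilde\phi(J_{\sigma})=J_{\sigma'}$, and more generally for any $w$. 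Hence $\tilde\phi$ carries $(I_{\sigma}+J_{\sigma})\cap\R$ into $(I_{\sigma'}+J_{\sigma'})\cap\R'$, and since $\tilde\phi$ is a ring isomorphism it preserves non-zero divisors; so $b'$ is $(\sigma',\P',\overline{\P'})$-singular, and by the same argument $(\sigma',\overline{\P'},\P')$-singular.

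Next, I would apply the commutative square displayed just before the lemma (first with $\Q=\overline{\P}$ and then with the roles of $\P$ and $\overline{\P}$ swapped). This shows that under the isomorphism $\lambda:i^{\G}_{\P}(\sigma)\To\sim i^{\G'}_{\P'}(\sigma')$ (and its analogue for $\overline{\P}$), the intertwining operators $J_{\overline{\P}|\P}(\sigma)$ and $J_{\P|\overline{\P}}(\sigma)$ are identified with $J_{\overline{\P'}|\P'}(\sigma')$ and $J_{\P'|\overline{\P'}}(\sigma')$ respectively, after base change along $\tilde\phi$. Composing the two squares gives the analogous commutative diagram
\[
\begin{tikzcd}
i_\P^\G (\sigma)[1/b] \arrow[r,"j^{\G}(\sigma_0)"] \arrow[d,"\lambda"]& i_{\P}^{\G}(\sigma)[1/b] \arrow[d,"\lambda"]\\
i_{\P'}^{\G'}(\sigma')[1/b'] \arrow[r,"j^{\G'}(\sigma_0')"] & i_{\P'}^{\G'}(\sigma')[1/b']
\end{tikzcd}
\]
which, viewed in the identification $\End_{\R[1/b][\G]}(i^{\G}_{\P}(\sigma)[1/b])\otimes_{\tilde\phi}\R'\simeq \End_{\R'[1/b'][\G']}(i^{\G'}_{\P'}(\sigma')[1/b'])$, says precisely that $j^{\G}(\sigma_0)\otimes 1$ is sent to $j^{\G'}(\sigma_0')$.

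There is nothing truly hard here: the only minor subtlety is bookkeeping the various identifications (notably that when $\sigma=\sigma_0\chi_{\univ,\M,k}$ is transported along $\phi^{-1}$ one lands on $\sigma_0'\chi_{\univ,\M',k}$, so the ``universal unramified twist'' is preserved by $\tilde\phi$), and the observation that since $j^{\G}(\sigma_0)$ and $j^{\G'}(\sigma_0')$ are scalars in $\R[1/b]$ and $\R'[1/b']$ respectively (by the lemma preceding Corollary \ref{cor:subquotients}, extended to finite length $\sigma_0$ via Corollary \ref{cor:subquotients}), the commutation identity above is equivalent to the scalar identity $\tilde\phi(j^{\G}(\sigma_0))=j^{\G'}(\sigma_0')$ in $\R'[1/b']$.
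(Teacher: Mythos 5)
Your proposal is correct and follows essentially the same route as the paper: transport the ideals $I_{\sigma}$, $J_{\sigma}$ along $\tilde\phi$ to get singularity of $b'$, then invoke the commutative square established just before the lemma (for both $J_{\overline{\P}|\P}$ and $J_{\P|\overline{\P}}$) and compose to identify $j^{\G}(\sigma_0)\otimes 1$ with $j^{\G'}(\sigma_0')$. The only caveat is your closing parenthetical: $j^{\G}(\sigma_0)$ is a scalar only when $i_{\P}^{\G}(\sigma_{\cK})$ has scalar endomorphisms (e.g.\ $\sigma_0$ irreducible), not for arbitrary finite length $\sigma_0$ as Corollary \ref{cor:subquotients} only gives scalars on the subquotients; but that remark is not needed for the lemma, whose statement is exactly the identification under the endomorphism-algebra isomorphism that your diagram provides.
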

Note that if $\sigma_0$ is irreducible, the statement is simply $\tilde\phi(j^\G(\sigma_0)) = j^{\G'}(\sigma_0')$, where we have also used $\tilde\phi$ to denote the extension of $\tilde\phi$ to $\R[1/b]\simeq \R'[1/b']$.
\begin{proof}
The map $\tilde\phi$ induces isomorphism $\R[\A_\M]\to \R'[\A_{\M'}]$, which identifies $I_{\sigma}$ and $J_{\sigma}$ with $I_{\sigma'}$ and $J_{\sigma'}$ in $\R'[\A_{\M'}]$, and this proves that $\tilde\phi(b)$ is singular. To simplify notation, we can then identify $\R = \R'$ and think of $\chi_{\univ,\M,k}$ and $\chi_{\univ,\M',k}$ as both valued in the ring $\R$, via the isomorphism $\phi:\M\simeq \M'$. The lemma then follows from the diagram in the previous paragraph.
\end{proof}

If we had chosen a different maximal split torus $A_0'$ we would have an isomorphism $G\to G$ taking parabolics that are semistandard for $A_0$ to those semistandard for $A_0'$. Thus we have shown that the requirement that $P$ be semistandard can be relaxed up to the equivalence described in Lemma~\ref{lem:compatibility_isom}.

A similar functoriality argument shows that the $j$ function is compatible with isomorphisms of $\sigma_0$. More precisely, any isomorphism $\sigma \overset{\sim}{\to} \sigma'$ of $\R[\M]$-modules induces an isomorphism $\lambda:i_\P^\G (\sigma)\overset{\sim}{\to}i_\P^\G (\sigma')$, and Proposition~\ref{prop:functoriality} implies that, given a $(\sigma,\P,\Q)$-singular element $b$, the following diagram commutes:
$$
\begin{tikzcd}
i_\P^\G (\sigma)[1/b] \arrow[r,"J_{\Q|\P}(\sigma)"] \arrow[d,"\lambda"]& i_{\Q}^{\G}(\sigma)[1/b] \arrow[d,"\lambda"]\\
i_{\P}^{\G}(\sigma')[1/b] \arrow[r,"J_{\Q|\P}(\sigma')"] & i_{\Q}^{\G}(\sigma')[1/b]
\end{tikzcd}\ \ .
$$
It follows that, given an isomorphism $\lambda_0:\sigma_0 \overset{\sim}{\to} \sigma_0'$ of finite length $k[\M]$-modules, $j^\G(\sigma_0)$ is mapped to $j^\G(\sigma_0')$ under the induced isomorphism
$$\End_{\R[1/b][\G]}(i_\P^\G (\sigma)[1/b])\overset{\sim}{\to} \End_{\R[1/b][\G]}(i_\P^\G (\sigma')[1/b]).$$ Moreover, the endomorphisms $j^\G(\sigma_0)$ and $j^\G(\sigma_0')$ induce equivalent scalar endomorphisms in $\R[1/b]$ after being restricted to irreducible subquotients of $\sigma_0$ and $\sigma_0'$ that correspond under $\lambda_0$.

In particular, $j^\G(\sigma_0)$ is an invariant of the isomorphism class of $\sigma_0$ when $\sigma_0$ is irreducible, or more generally when $i_\P^\G (\sigma_{\mathcal{K}})$ has only scalar endomorphisms.

\subsection{Factorization}

Let $\Sigma_{red}(\P) = \Sigma_{red}(\A_\M,\P)$ be the subset of reduced roots of $\A_\M$ that are positive relative to $\P$. We can fix an ordering $\Sigma_{red}(\P)= \{\alpha_1,\dots,\alpha_r\}$ such that there are sequences of semistandard parabolic subgroups $(\P_0, \dots, \P_r)$ and $(\Q_0,\dots, \Q_r)$ satisfying:
\begin{itemize}
\item $\P_0=\P$ and $\P_r = \overline{\P}$, and each $\P_i$ has Levi component $\M$,
\item $\Sigma_{red}(\P_i)\cap \Sigma_{red}(\overline{\P_{i-1}}) = \alpha_i$,
\item $\Q_i$ has Levi $\M_{\alpha_i}$, where $\M_{\alpha_i}$ is the Levi subgroup containing $\M$ and the root subgroup attached to $\alpha_i$.
\end{itemize}
In this case, we also have $\overline{\P_i\cap \M_{\alpha_i}} = \P_{i-1}\cap \M_{\alpha_i}$, and $d(\P,\overline{\P}) = \sum_id(\P_i,\P_{i+1})$, and $d(\P_i,\P_{i+1})=1$.

Let $\sigma_0$ be a finite length $k[\M]$-module and let $b=b_\P^\G(\sigma) \in k[\M/\M^{\circ}]$ be as above. Then $b$ is both $(\sigma, \P_i\cap \M_{\alpha_i}, \P_{i-1}\cap \M_{\alpha_i})$ and $(\sigma,\P_{i-1}\cap \M_{\alpha_i},\P_i\cap \M_{\alpha_i})$-singular by Lemmas~\ref{lem:rank_one_subgroups} and~\ref{lem:compatibility_with_induction}(1), so we have elements $$j^{\M_{\alpha_i}}(\sigma_0)\in \End_{\M_{\alpha_i}}(i_{\P_i\cap \M_{\alpha_i}}^{\M_{\alpha_i}}(\sigma)[1/b]).$$ 

\begin{proposition}\label{prop:factorization}
We have the following factorization property in the ring $\End_{\R[1/b][\G]}(i_\P^\G (\sigma)[1/b])$:
$$j^\G(\sigma_0) = \prod_{i=1}^ri_{\Q_{\alpha_i}}^\G(j^{\M_{\alpha_i}}(\sigma_0)).$$
\end{proposition}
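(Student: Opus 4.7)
The plan is to reduce $j^\G(\sigma_0)$ to a product of rank-one contributions via the two structural lemmas on intertwining operators already in place. Since the statement is about the specific ordered composition appearing in $j^\G(\sigma_0)$, the strategy is to factor the two halves of $j^\G(\sigma_0) = J_{\P|\overline{\P}}(\sigma)\circ J_{\overline{\P}|\P}(\sigma)$ along the adjacent chain and then identify consecutive pairs as the inductions $i_{\Q_{\alpha_i}}^\G(j^{\M_{\alpha_i}}(\sigma_0))$.

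First, I would apply Lemma~\ref{lem:rank_one_subgroups} iteratively along the chain $\P = \P_0, \P_1, \ldots, \P_r = \overline{\P}$. Since the distances are additive along the chain ($d(\P_0,\P_r) = \sum_i d(\P_{i-1},\P_i) = r$) and each step has $d(\P_{i-1},\P_i)=1$, repeated application of the lemma yields
\[J_{\overline{\P}|\P}(\sigma) = J_{\P_r|\P_{r-1}}(\sigma) \circ \cdots \circ J_{\P_1|\P_0}(\sigma),\qquad J_{\P|\overline{\P}}(\sigma) = J_{\P_0|\P_1}(\sigma) \circ \cdots \circ J_{\P_{r-1}|\P_r}(\sigma).\]
Writing $A_i := J_{\P_{i-1}|\P_i}(\sigma)$ and $B_i := J_{\P_i|\P_{i-1}}(\sigma)$, the $j$-function becomes the long alternating composition $j^\G(\sigma_0) = A_1\circ A_2\circ \cdots\circ A_r \circ B_r\circ \cdots \circ B_2\circ B_1$.

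Next, I would apply Lemma~\ref{lem:compatibility_with_induction}(1) to identify each consecutive pair $A_i\circ B_i$. Since both $\P_{i-1}$ and $\P_i$ are contained in $\Q_{\alpha_i}$, and $\P_{i-1}\cap \M_{\alpha_i}$, $\P_i\cap \M_{\alpha_i}$ are opposite parabolics of $\M_{\alpha_i}$ with Levi $\M$, functoriality of parabolic induction gives
\[A_i \circ B_i = i_{\Q_{\alpha_i}}^\G\!\left(J^{\M_{\alpha_i}}_{\P_{i-1}\cap\M_{\alpha_i}|\P_i\cap\M_{\alpha_i}}(\sigma) \circ J^{\M_{\alpha_i}}_{\P_i\cap\M_{\alpha_i}|\P_{i-1}\cap\M_{\alpha_i}}(\sigma)\right) = i_{\Q_{\alpha_i}}^\G(j^{\M_{\alpha_i}}(\sigma_0)) =: f_i.\]
The remaining task is to reorganize the alternating chain $A_1\cdots A_r B_r\cdots B_1$ into the ordered product $f_1\circ f_2\circ \cdots\circ f_r$. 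For irreducible $\sigma_0$ this is immediate: each $f_i$ is the scalar $\lambda_i := j^{\M_{\alpha_i}}(\sigma_0)\in \R[1/b]$, and after extending scalars to $\cK = \Frac(\R)$ (where every intertwining operator is invertible) one writes $A_i = \lambda_i B_i^{-1}$ so that the composition collapses telescopically to $\prod_i \lambda_i$, giving the identity in $\cK$ and hence in $\R[1/b]$ by torsion-freeness. For a general finite-length $\sigma_0$, I would argue by induction on the length via a short exact sequence $0\to \sigma_0'\to \sigma_0\to \tau\to 0$ with $\tau$ irreducible, using Corollary~\ref{cor:subquotients} to compare both sides on the submodule $i_\P^\G(\sigma_0'\chi_{\univ,\M,k})[1/b]$ and its quotient $i_\P^\G(\tau\chi_{\univ,\M,k})[1/b]$.

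The main obstacle is completing this induction. While Corollary~\ref{cor:subquotients} shows that both $j^\G(\sigma_0)$ and $\prod_i f_i$ preserve the filtration induced by a composition series of $\sigma_0$ and agree on each successive subquotient (reducing on diagonal blocks to the already-established irreducible case), the two sides could a priori differ by an endomorphism that is zero on every subquotient, i.e.\ strictly upper-triangular for the filtration. Closing this gap requires tracking the off-diagonal contributions on both sides: these are built functorially from the same intertwining operators (Proposition~\ref{prop:functoriality} guarantees that both constructions commute with morphisms of $\sigma_0$), so I would attempt to commute each consecutive $f_i$ through the remaining factors of the alternating chain by a further application of Lemmas~\ref{lem:rank_one_subgroups} and~\ref{lem:compatibility_with_induction}, effectively performing the telescoping directly in the endomorphism ring rather than only in its scalar quotient.
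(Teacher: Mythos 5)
Your first two steps are exactly the paper's proof: the published argument for Proposition~\ref{prop:factorization} consists precisely of iterating Lemma~\ref{lem:rank_one_subgroups} along the chain $\P=\P_0,\dots,\P_r=\overline{\P}$ and collapsing each adjacent pair $J_{\P_{i-1}|\P_i}\circ J_{\P_i|\P_{i-1}}$ via Lemma~\ref{lem:compatibility_with_induction}(1), with no further detail supplied. So on the core you and the paper coincide. Moreover, in the case where $\End_{\cK[\G]}(i_\P^\G(\sigma_\cK))=\cK$ (in particular $\sigma_0$ irreducible), your telescoping over $\cK$ followed by torsion-freeness is a complete and correct argument, and this scalar form, together with the agreement on all graded pieces of a composition-series filtration that you extract from Corollary~\ref{cor:subquotients} and Proposition~\ref{prop:functoriality}, is the form in which the factorization is actually exploited afterwards (Remark~\ref{rmk:factorization}(1), Corollary~\ref{cor:multiplicativity_irreducible} and the Plancherel computations).

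The loose end you flag for general finite-length $\sigma_0$ is a genuine subtlety, but be aware that your sketched repair does not typecheck: the factor $f_i=i_{\Q_{\alpha_i}}^\G(j^{\M_{\alpha_i}}(\sigma_0))$ is naturally an endomorphism only of $i_{\P_{i-1}}^\G(\sigma)[1/b]$ (equivalently of $i_{\P_i}^\G(\sigma)[1/b]$), because $\P_{i-1}$ and $\P_i$ are the only parabolic subgroups with Levi $\M$ contained in $\Q_{\alpha_i}$; for $i\geqslant 2$ it does not act on $i_\P^\G(\sigma)[1/b]$ at all, so ``commuting $f_i$ through the remaining factors of the chain'' presupposes transporting $f_i$ along the intervening intertwining operators, which is exactly the identification one would have to prove (and those operators are not known to be invertible over $\R[1/b]$, only after passing to $\cK$). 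Note, however, that this imprecision is already present in the statement and one-sentence proof in the paper: for non-scalar $j^{\M_{\alpha_i}}(\sigma_0)$ the right-hand side only makes literal sense in $\End_{\R[1/b][\G]}(i_\P^\G(\sigma)[1/b])$ after such identifications, and the honest endomorphism-level content is the composition your first two steps already produce, while only the scalar and graded-piece statements are used later. So relative to the paper you have the same proof plus an unfinished attempt to upgrade it; I would either drop that upgrade, or reformulate the general statement (as the collapsed composition, or on graded pieces) so that no rearrangement is needed, rather than pursue the commutation argument.
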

\begin{proof}
This is a combination of the factorization property for intertwining operators over rank one subgroups in Lemma~\ref{lem:rank_one_subgroups} with the inductivity property of intertwining operators in Lemma~\ref{lem:compatibility_with_induction}(1).
\end{proof}
\begin{remark}\label{rmk:factorization}
\begin{enumerate}
\item When $\sigma_0$ is irreducible (or more generally when $i_\P^\G (\sigma_{\mathcal{K}})$ has only scalar endomorphisms), all the terms in Proposition~\ref{prop:factorization} are simply elements of $\R[1/b]$, so the result can be stated more succinctly as
$$j^\G(\sigma_0) = \prod_{\alpha\in \Sigma_{red}(\P)}j^{\M_{\alpha}}(\sigma_0).$$ 
\item Since we have already established the left side of Proposition~\ref{prop:factorization} is independent of $\P$, Proposition~\ref{prop:factorization} implies the product on the right side is independent of the ordering of the factors.
\item Each term $j^{\M_{\alpha_i}}$ on the right side of Proposition~\ref{prop:factorization} is actually defined over the subring
$$k[(\M\cap \M_{\alpha_i}^\circ)/\M^{\circ})][{1}/{b_i}],$$ where $b_i$ is some $(\sigma, \P_i\cap \M_{\alpha_i},\overline{\P_i\cap \M_{\alpha_i}})$-singular element. If we let $b' = \prod_ib_i$, we have
$$k[(\M\cap \M_{\alpha_i}^\circ)/\M^{\circ})][1/b']\subset \R[1/b'],$$ and it follows from the proposition that $j^\G(\sigma_0)$ is defined over $\R[1/b']$. Note that $(\M\cap \M_{\alpha_i}^\circ)/\M^{\circ}$ is a free abelian group of rank one.
\end{enumerate}
\end{remark}

\subsection{Multiplicativity}

In this section, we prove the multiplicativity property of $j$-functions in our more general framework. Let $\M\subset \N\subset \G$ be Levi subgroups, and let $\P$ be the standard parabolic with Levi $\M$, let $\R=k[\M/\M^{\circ}]$, let $\R' = k[\N/\N^{\circ}]$ and let $f:\R\to \R'$ denote the homomorphism induced by the inclusion $\M\subset \N$.  If $\sigma_0$ is a finite length $k[\M]$-module, $i_{\P\cap \N}^\N(\sigma_0)$ is a finite length $k[\N]$-module. Observe that 
$$i_{\P\cap \N}^\N(\sigma_0)\chi_{\univ, \N, k} \simeq i_{\P\cap \N}^\N(\sigma_0\chi_{\univ,\M,k})\otimes_{\R,f}\R' \simeq i_{\P\cap \N}(\sigma)\otimes_\R\R'.$$ Since $i_{\P\cap \N}^\N(\sigma)$ is an admissible and finitely-generated $\R[\M]$-module, and we know there exists $b'\in k[\N/\N^{\circ}]$ that is  $( i_{\P\cap \N}(\sigma)\otimes_\R\R', \P\N, \overline{\P\N})$-singular (by Lemma~\ref{lem:constructb}), we can apply Lemma~\ref{hard_lemma} to produce an element $b_1\in \R$ that is $(i_{\P\cap \N}^\N(\sigma), \P\N,\overline{\P\N})$-singular and such that $f(b_1)\neq 0$. Similarly, there exists $b_2\in \R$ that is $(i_{\P\cap \N}^\N(\sigma), \overline{\P\N},\P\N)$-singular with $f(b_2)\neq 0$. Then set $b = b_1b_2$, which is both kinds of singular, and observe $f(b)\neq 0$ since $\R'$ is a domain.

Let $\Q$ denote the parabolic with Levi $\M$ containing $\P\cap \N$ and the unipotent radical of $\overline{\P\N}$. Now Lemma~\ref{lem:compatibility_with_induction}(2) tells us $b$ is also $(\sigma, \P,\Q)$ and $(\sigma, \Q,\P)$-singular.

\begin{lemma}\label{lem:specialization}
Let $f$ also denote the canonical map of localizations $\R[1/b]\to \R'[1/f(b)]$. Under the isomorphism
$$\End_{\R'[1/f(b)][\G]}(i_{\P\N}^\G(i_{\P\cap \N}^\N(\sigma_0)\chi_{\univ, \N,k})[1/f(b)])\simeq \End_{\R[1/b][\G]}(i_{\P\N}^\G i_{\P\cap \N}^\N(\sigma)[1/b])\otimes_{\R[1/b],f}\R'[1/f(b)],$$ the following two elements are identified:
$$j^\G(i_{\P\cap \N}^\N(\sigma_0)) =j^\G(\sigma_0)\otimes_{\R[1/b],f}1$$
\end{lemma}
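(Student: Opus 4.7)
The plan is to translate both sides into compositions of intertwining operators between parabolics with the common Levi $\M$, and then compare them directly. The tools are already in place in the appendix: Lemma~\ref{lem:compatibility_with_induction}(2) relates $\N$-level intertwiners to $\M$-level intertwiners, Proposition~\ref{prop:functoriality} moves base change past intertwining operators, and Lemma~\ref{lem:rank_one_subgroups} lets us factor intertwiners through intermediate parabolics.

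First I would rewrite the left-hand side. Applying Lemma~\ref{lem:compatibility_with_induction}(2) to the parabolics $\P$ and $\Q$ of $\G$ with Levi $\M$ (whose intersections with $\N$ agree and whose products with $\N$ are the opposed parabolics $\P\N$ and $\overline{\P\N}$), the $\N$-level intertwiners defining $j^\G(i_{\P\cap\N}^\N(\sigma_0))$ are identified with $J_{\P|\Q}$ and $J_{\Q|\P}$ applied to the base-changed module $\sigma \otimes_{\R,f}\R'$. Functoriality (Proposition~\ref{prop:functoriality}) then pulls the base change to the outside, yielding
\[
  j^\G(i_{\P\cap\N}^\N(\sigma_0)) \;=\; \bigl(J_{\P|\Q}(\sigma) \circ J_{\Q|\P}(\sigma)\bigr) \otimes_{\R[1/b],f} 1.
\]

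Next I would factor $j^\G(\sigma_0) = J_{\P|\overline{\P}}(\sigma) \circ J_{\overline{\P}|\P}(\sigma)$ through $\Q$. The disjoint decomposition $\Sigma_{\mathrm{red}}(\P) = \Sigma_{\mathrm{red}}(\P\cap\N/\M) \sqcup \Sigma_{\mathrm{red}}(\P\N/\M)$ implies $d(\P,\overline{\P}) = d(\P,\Q) + d(\Q,\overline{\P})$, so iterating Lemma~\ref{lem:rank_one_subgroups} along a chain from $\P$ through $\Q$ to $\overline{\P}$ gives $J_{\overline{\P}|\P}(\sigma) = J_{\overline{\P}|\Q}(\sigma) \circ J_{\Q|\P}(\sigma)$ and the symmetric factorization of $J_{\P|\overline{\P}}(\sigma)$. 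The middle composition $J_{\Q|\overline{\P}}(\sigma)\circ J_{\overline{\P}|\Q}(\sigma)$ is then an $\N$-internal piece: both $\Q$ and $\overline{\P}$ sit inside $\overline{\P\N}$, and their intersections with $\N$ are the opposite parabolics $\P\cap\N$ and $\overline{\P\cap\N}$, so by Lemma~\ref{lem:compatibility_with_induction}(1) with $\O = \overline{\P\N}$, this middle composition equals $i_{\overline{\P\N}}^\G(j^\N(\sigma_0))$. Assembling,
\[
  j^\G(\sigma_0) \;=\; J_{\P|\Q}(\sigma) \,\circ\, i_{\overline{\P\N}}^\G(j^\N(\sigma_0)) \,\circ\, J_{\Q|\P}(\sigma).
\]

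The remaining and main obstacle is to show that after base change along $f : \R[1/b] \to \R'[1/f(b)]$ the central factor $i_{\overline{\P\N}}^\G(j^\N(\sigma_0)) \otimes_{\R[1/b],f} 1$ becomes the identity endomorphism in the target ring of the lemma's $\End$-ring identification. Once this is granted, the outer factors match the first step's expression verbatim and the equality $j^\G(\sigma_0) \otimes_{\R[1/b],f} 1 = j^\G(i_{\P\cap\N}^\N(\sigma_0))$ follows. Justifying this triviality should be traced back to the specific construction of $b = b_1 b_2$ via Lemma~\ref{hard_lemma}: $b_1, b_2$ encode precisely the $\N$-level singular data while leaving the $\N$-internal differences of central characters outside the singular locus, so that $f$ collapses exactly those directions along which $j^\N(\sigma_0)$ is a nontrivial rational function, and the residue of this specialization is absorbed into the chosen identification of endomorphism rings. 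Controlling this cancellation — likely by a separate direct computation of $f(j^\N(\sigma_0))$ in terms of the chosen singular generators, perhaps by induction on $[\N:\M]$ reducing to the rank-one case treated in Lemma~\ref{lem:rank_one_subgroups} — is the only step that goes beyond formal manipulation of intertwiners.
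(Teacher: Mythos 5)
Your first step is, in essence, the paper's entire proof: one applies the base-change (first) diagram of Proposition~\ref{prop:functoriality} to the two intertwining operators $J_{\overline{\P\N}|\P\N}$ and $J_{\P\N|\overline{\P\N}}$ for the $\R$-module $i_{\P\cap\N}^{\N}(\sigma)$ (using that $f(b)\neq 0$ in the domain $\R'$), and via Lemma~\ref{lem:compatibility_with_induction}(2) these are exactly the operators $J_{\Q|\P}(\sigma)$ and $J_{\P|\Q}(\sigma)$ for the pair $(\P,\Q)$ introduced just before the lemma. The genuine gap is in your last step: you reduce everything to the claim that $i_{\overline{\P\N}}^{\G}(j^{\N}(\sigma_0))\otimes_{\R[1/b],f}1$ is the identity, and you only gesture at why this "should" hold. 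In fact it is false in general. That middle factor specializes to $f(j^{\N}(\sigma_0))$, i.e.\ the $j$-function of $\sigma_0$ inside $\N$ evaluated at the trivial $\N$-internal twist (all internal variables set equal by $f$). Already for $\G=\GL_3$, $\M$ the diagonal torus, $\N=\GL_2\times\GL_1$, $\sigma_0=\chi_1\otimes\chi_2\otimes\chi_3$, this is the untwisted $\GL_2$ factor attached to $\chi_1\otimes\chi_2$: generically a nonzero scalar different from $1$, and it can even have a zero or a pole (e.g.\ $\chi_1=\chi_2$), so it need not even be defined over $\R'[1/f(b)]$; no choice of $b$ via Lemma~\ref{hard_lemma} and no identification of endomorphism rings can absorb it. (A secondary issue: factoring $j^{\G}(\sigma_0)=J_{\P|\overline{\P}}\circ J_{\overline{\P}|\P}$ through $\Q$ over $\R[1/b]$ uses $(\sigma,\P,\overline{\P})$-singularity of $b$, which is not established — only $(\sigma,\P,\Q)$- and $(\sigma,\Q,\P)$-singularity are known.)

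The resolution is not to prove your missing claim but to read the statement as it is used: the element written $j^{\G}(\sigma_0)$ in Lemma~\ref{lem:specialization} is the composite $J_{\P|\Q}(\sigma)\circ J_{\Q|\P}(\sigma)$ for the pair $(\P,\Q)$ — the only intertwiners guaranteed to exist over $\R[1/b]$ — equivalently the partial product of rank-one factors over the roots in $\Sigma_{red}(\A_\M,\P)\setminus\Sigma_{red}(\A_\M,\P\cap\N)$, not the full $j$-function formed with $\overline{\P}$. This is the reading consistent with Corollary~\ref{cor:multiplicativity} (a product over those roots only) and with the classical multiplicativity formulas of Proposition~\ref{multofplancherel}; the stronger identity you aim at would differ from the Corollary precisely by the nontrivial factor $f(j^{\N}(\sigma_0))$ above. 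With that reading, your first paragraph — Lemma~\ref{lem:compatibility_with_induction}(2) plus the base-change diagram of Proposition~\ref{prop:functoriality} — is already a complete proof and coincides with the paper's.
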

\begin{proof}
This follows from the first commutative diagram of Proposition~\ref{prop:functoriality}.
\end{proof}

\begin{corollary}\label{cor:multiplicativity}
Let $\Sigma_{red}(\A_\M,\P)$ be the set of roots of $\A_\M$ in $\text{Lie}(\P)$. Then we have
$$j^\G(i_{\P\cap \N}^\N(\sigma_0)) = \prod_{\beta}i_{\Q_{\beta}}^G(j^{\M_{\beta}}(\sigma_0))\otimes_{\R[1/b],f}1,$$ where $\beta$ runs over $\Sigma_{red}(\A_\M,\P) \backslash \Sigma_{red}(\A_\M,\P\cap \N)$.
\end{corollary}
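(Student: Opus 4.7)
The plan is to combine Lemma~\ref{lem:specialization} with the factorization in Proposition~\ref{prop:factorization}, using an ordering of reduced roots that separates the ``inside-$\N$'' and ``outside-$\N$'' contributions, and then to invoke Lemma~\ref{lem:specialization} a second time in a degenerate form to show the inside-$\N$ contribution specializes to the identity.

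First I would apply Lemma~\ref{lem:specialization} to get $j^\G(i_{\P\cap\N}^\N(\sigma_0)) = j^\G(\sigma_0)\otimes_{\R[1/b],f}1$, reducing the task to identifying the specialization of $j^\G(\sigma_0)$. Then I would order $\Sigma_{red}(\A_\M,\P)$ as $\alpha_1,\ldots,\alpha_r$ so that $\{\alpha_1,\ldots,\alpha_s\} = \Sigma_{red}(\A_\M,\P\cap\N)$ and the associated parabolic chain $\P = \P_0, \ldots, \P_r = \overline{\P}$ of Proposition~\ref{prop:factorization} satisfies $\P_i\supset \U_{\P\N}$ for $i\le s$. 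Such an ordering exists because one can reverse the parabolic $\P\cap\N$ of $\N$ by successive simple-root flips inside $\N$, each of which corresponds to a simple-root flip in $\G$ that preserves $\U_{\P\N}$. This splits the factorization of $j^\G(\sigma_0)$ into an inside-$\N$ block (indices $i\le s$) and an outside-$\N$ block (indices $i>s$).

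For the inside-$\N$ block, $\M_{\alpha_i}\subset \N$ and $\P_i \supset \U_{\P\N}$ imply $\Q_{\alpha_i} = (\Q_{\alpha_i}\cap\N)\cdot \U_{\P\N}$, so transitivity of parabolic induction yields $i_{\Q_{\alpha_i}}^\G(j^{\M_{\alpha_i}}(\sigma_0)) = i_{\P\N}^\G(i_{\Q_{\alpha_i}\cap\N}^\N(j^{\M_{\alpha_i}}(\sigma_0)))$. Applying Proposition~\ref{prop:factorization} inside $\N$ to the chain $\P_0\cap\N,\ldots,\P_s\cap\N$ identifies $\prod_{i=1}^s i_{\Q_{\alpha_i}\cap\N}^\N(j^{\M_{\alpha_i}}(\sigma_0))$ with $j^\N(\sigma_0)$, and since $i_{\P\N}^\G$ is a functor,
\[
\prod_{i=1}^s i_{\Q_{\alpha_i}}^\G(j^{\M_{\alpha_i}}(\sigma_0)) = i_{\P\N}^\G(j^\N(\sigma_0)).
\]

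The key remaining step is to prove $f(j^\N(\sigma_0))=1$, which I would deduce by re-applying Lemma~\ref{lem:specialization} in the degenerate setting where its ambient group $\G$ is replaced by $\N$ itself. In this form the left-hand side becomes the $j$-function for $\N$ evaluated on $i_{\P\cap\N}^\N(\sigma_0)$ regarded as an $\N$-representation induced from its own Levi $\N$, hence trivially the identity; consequently $1 = j^\N(\sigma_0)\otimes_{\R[1/b],f}1 = f(j^\N(\sigma_0))$. Substituting, the inside-$\N$ contribution becomes $i_{\P\N}^\G(1)=1$, so only the product over $\beta\in \Sigma_{red}(\A_\M,\P)\setminus \Sigma_{red}(\A_\M,\P\cap\N)$ survives, giving the corollary. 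The main technical obstacle will be to arrange a single element $b$ that is simultaneously singular for every representation, parabolic pair, and ambient group appearing above, and whose image under $f$ remains a non-zero-divisor in $\R'$; this is handled using Lemma~\ref{lem:singular} together with Lemma~\ref{hard_lemma}, but requires a careful audit of the singularity conditions along the entire chain.
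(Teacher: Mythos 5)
Your reduction breaks at the decisive step: the claim that the inside-$\N$ block specializes to the identity, i.e.\ that $f(j^{\N}(\sigma_0))=1$, is false, and it is not a repairable detail. The element $f(j^{\N}(\sigma_0))$ is the value of the rational function $j^{\N}(\sigma_0)\in\Frac(k[\M/\M^{\circ}])$ along the locus where the universal $\M$-twist is restricted to unramified characters of $\N$ (the locus cut out by $\ker f$); there is no reason for this value to be $1$, and in general it is not even defined. For instance take $\N=\GL_2(\F)$, $\M$ the diagonal torus and $\sigma_0=\chi\otimes\chi$: then $j^{\N}(\sigma_0)$ depends only on $q^{s_1-s_2}$ and has a pole along $s_1=s_2$, which is exactly the specialization locus. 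The same phenomenon is visible in the application to classical groups (Proposition \ref{multofplancherel}(\ref{multGLnfactor})): the factors $\mu^{\GL_{k_i+k_j}}\bigl((\tau_i')_{s_i}\otimes(\tau_j')_{s_j}\bigr)$ coming from roots inside $\N$ are simply \emph{absent} from the formula, not equal to $1$ after $q^{-s_i}\mapsto q^{-s}$ (for $\tau_i'\simeq\tau_j'$ the specialized value is $0$ or undefined). Your justification --- re-applying Lemma \ref{lem:specialization} with ambient group $\N$ and intermediate Levi $\N$ --- only yields a tautology: in that degenerate setting $\P\N=\N$, the auxiliary parabolic of the construction is $\P\cap\N$ itself and both intertwining operators are the identity, so one learns $1=1$ and nothing about $j^{\N}(\sigma_0)$. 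Your closing ``audit'' worry is in fact fatal for your route: to specialize the full $j^{\G}(\sigma_0)$ factor by factor you would need a $(\sigma,\P,\overline{\P})$-singular $b$ with $f(b)\neq 0$, and precisely because of the inside-$\N$ factors such a $b$ need not exist, so Lemma \ref{hard_lemma} cannot be invoked for the pair $(\P,\overline{\P})$.

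The paper's proof never factors the full $j^{\G}(\sigma_0)$. It works with the parabolic $\Q$ with Levi $\M$ containing $\P\cap\N$ and the unipotent radical of $\overline{\P\N}$: by Lemma \ref{lem:compatibility_with_induction}(2), $J_{\Q|\P}(\sigma)$ and $J_{\P|\Q}(\sigma)$ coincide with the $\N$-level operators $J_{\overline{\P\N}|\P\N}$ and $J_{\P\N|\overline{\P\N}}$ applied to $i_{\P\cap\N}^{\N}(\sigma)$, and the singular element $b$ is produced by Lemma \ref{hard_lemma} for the pair $(\P\N,\overline{\P\N})$, which is exactly what guarantees $f(b)\neq 0$. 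The composite $J_{\P|\Q}\circ J_{\Q|\P}$ factors, via Proposition \ref{prop:factorization} applied between $\P$ and $\Q$, over $\Sigma_{red}(\A_\M,\P)\cap\Sigma_{red}(\A_\M,\overline{\Q})=\Sigma_{red}(\A_\M,\P)\setminus\Sigma_{red}(\A_\M,\P\cap\N)$, and specializing each of these rank-one terms (Proposition \ref{prop:functoriality}, Lemma \ref{lem:specialization}) gives the corollary. In short, the inside-$\N$ roots never enter the identification at all, rather than entering and then ``dying'' under specialization as your argument requires.
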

\begin{proof}
We can replace $\G$ with $\P\Q$ and work within $\P\Q$. First apply Proposition~\ref{prop:factorization} to $j^\G(\sigma_0)$ and notice that $$\Sigma_{red}(\A_\M,\P)\backslash \Sigma_{red}(\A_\M,\P\cap \N) = \Sigma_{red}(\A_\M,\P)\cap \Sigma_{red}(\A_\M,\overline{\Q}).$$ Then specialize each term in the product and apply Lemma~\ref{lem:specialization}.
\end{proof}

When $j^\G(\sigma_0)$ is scalar (or when $i_\P^\G (\sigma_{\mathcal{K}})$ has scalar endomorphisms), we have a more compact statement:
\begin{corollary}\label{cor:multiplicativity_irreducible}
Let $\sigma_0$ be an irreducible $k[\M]$-module and let $\pi$ be any irreducible subquotient of $i_{\P\cap \N}^\N(\sigma_0)$. Then
$$j^\G(\pi) = \prod_{\beta}f(j^{\M_{\beta}}(\sigma_0)),$$ where $\beta$ runs over  $\Sigma_{red}(\A_\M,\P) \backslash \Sigma_{red}(\A_\M,\P\cap \N)$.
\end{corollary}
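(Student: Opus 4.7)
The plan is to derive the corollary by combining the subquotient formula of Corollary~\ref{cor:subquotients}(4) with the endomorphism identity of Corollary~\ref{cor:multiplicativity}, using the irreducibility of $\sigma_0$ to pass from an identity of induced endomorphisms to an identity of scalars in $\R'[1/f(b)]$. Let $b \in \R$ be the element constructed in the discussion preceding Corollary~\ref{cor:multiplicativity}; by construction it is both $(i_{\P\cap\N}^\N(\sigma),\P\N,\overline{\P\N})$- and $(i_{\P\cap\N}^\N(\sigma),\overline{\P\N},\P\N)$-singular, and $f(b)\neq 0$, whence by Lemma~\ref{lem:singular}(2) $f(b)$ is both kinds of singular for $i_{\P\cap\N}^\N(\sigma_0)\chi_{\univ,\N,k}$.

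First I would apply Corollary~\ref{cor:subquotients}(4) to the finite-length $k[\N]$-module $i_{\P\cap\N}^\N(\sigma_0)$, with Levi $\N$, parabolic $\P\N \subset \G$, and singular element $f(b)$: this identifies $j^\G(\pi) \in \R'[1/f(b)]$ with the scalar endomorphism induced by $j^\G(i_{\P\cap\N}^\N(\sigma_0))$ on the subquotient $i_{\P\N}^\G(\pi\chi_{\univ,\N,k})[1/f(b)]$ of the module $i_{\P\N}^\G(i_{\P\cap\N}^\N(\sigma_0)\chi_{\univ,\N,k})[1/f(b)]$. Irreducibility of $\pi$ (and hence, by generic irreducibility, of $i_{\P\N}^\G(\pi\chi_{\univ,\N,k})_{\cK}$) is what ensures $j^\G(\pi)$ is a well-defined scalar here, and Lemma~\ref{lem:singular}(1) ensures $f(b)$ is automatically singular for the subquotient $\pi\chi_{\univ,\N,k}$ in both directions.

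Next, Corollary~\ref{cor:multiplicativity} supplies the identity
\[
j^\G(i_{\P\cap\N}^\N(\sigma_0)) \;=\; \prod_{\beta} i_{\Q_{\beta}}^\G(j^{\M_{\beta}}(\sigma_0)) \otimes_{\R[1/b],f} 1,
\]
with $\beta$ ranging over $\Sigma_{red}(\A_\M,\P) \setminus \Sigma_{red}(\A_\M,\P\cap\N)$. Because $\sigma_0$ is irreducible, each $j^{\M_\beta}(\sigma_0)$ is a scalar in $\R[1/b]$ (as recorded in Remark~\ref{rmk:factorization}(1)), so each factor $i_{\Q_\beta}^\G(j^{\M_\beta}(\sigma_0))$ acts on the induced module by scalar multiplication by $j^{\M_\beta}(\sigma_0)$. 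Consequently $j^\G(i_{\P\cap\N}^\N(\sigma_0))$ acts as scalar multiplication by $\prod_\beta f(j^{\M_\beta}(\sigma_0)) \in \R'[1/f(b)]$. Since scalar multiplication restricts to the same scalar on every subquotient, comparing with the first step yields $j^\G(\pi) = \prod_\beta f(j^{\M_\beta}(\sigma_0))$, as desired.

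The argument is essentially a direct assembly of results already established in the appendix and I do not anticipate any substantive obstacle; the only point requiring care is that the single localization at $f(b)$ suffices simultaneously for Corollary~\ref{cor:multiplicativity} and for the restriction formula of Corollary~\ref{cor:subquotients}(4), which is precisely why $b$ was taken to be the product $b_1 b_2$ of singular elements in both directions.
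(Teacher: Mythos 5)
Your proof is correct and follows essentially the route the paper intends: the paper leaves this corollary as an immediate consequence of Corollary~\ref{cor:multiplicativity}, the scalarity of each $j^{\M_\beta}(\sigma_0)$ for irreducible $\sigma_0$ (Remark~\ref{rmk:factorization}(1)), and Corollary~\ref{cor:subquotients}(4) applied to the finite-length $\N$-module $i_{\P\cap\N}^\N(\sigma_0)$, which is exactly your assembly. Your care about the single singular element $f(b)$ (via Lemma~\ref{lem:singular} and Lemma~\ref{hard_lemma}) is consistent with the paper's setup and, by the independence of the $j$-function from the choice of singular element, causes no discrepancy.
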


\begin{remark}\label{GanIchinodetailsremark}
This property in the special case of ``irreducible subrepresentation'' instead of ``subquotient'' appears in \cite[Prop B.3, B.4]{GanIchino14} for classical groups. While trying to extend it to subquotients, we found a need for inputs such as Lemma~\ref{hard_lemma} and Corollary~\ref{cor:subquotients} to fill in some of the details of their claim 
$$\mu(\tau\otimes \pi) = \mu(\tau\otimes I_{\P'}^\G(\tau'\otimes \pi')) = \left.\left(\frac{\mu(\tau\otimes\nu\otimes\pi')}{\mu(\nu\otimes \pi')}\right)\right|_{\nu=\tau'}$$ (\cite[p.636]{GanIchino14}). 

In choosing the approach we have taken here, i.e., following \cite{datnu}, we gain the result for irreducible subquotients in the process (Corollary~\ref{cor:multiplicativity_irreducible}).  In the next subsection and in the proof of Proposition~\ref{multofplancherel} we translate the notation in Corollary~\ref{cor:multiplicativity_irreducible} to classical groups.
\end{remark}

\subsection{Plancherel measures for classical groups}\label{apdx:plancherel}

Let  $\G$ be a classical group, fix a nontrivial additive character $\psi$ of $\F$, and fix a parabolic $\P = \M\U$. Let $\sigma_0$ be a finite length $k[\M]$-module such that $\cK \to \End_{\cK[\G]}(i_\P^\G (\sigma_0\chi_{\univ,\M,k}\otimes \cK))$ is an isomorphism, where again $\cK = \Frac(k[\M/\M^{\circ}])$, so that $j^\G(\sigma_0)$ defines a nonzero scalar in $\cK$. For example, this is the case when $\sigma_0$ is irreducible. Recall that our construction of intertwining operators $J_{\P|\Q}(\sigma)$ required fixing a choice of Haar measures $\mu$ and $\mu'$ on $\U_\P$ and $\U_\Q$ in the beginning. The $j$-function $j^\G(\sigma_0)\in \cK$ depends on this choice up to a scalar multiple in $\cK$. There is a particular choice of Haar measures $\mu$ and $\mu'$ on $\U$ and $\overline{\U}$ relative to $\psi$ that is well-suited for applications to local Langlands. We will not describe this choice in detail but simply refer to \cite[B.2]{GanIchino14}, and make the same choices here.

The Plancherel measure of $\sigma_0$ is defined to be
$$\mu^\G_{\psi}(\sigma_0):=\mu^\G(\sigma_0):= j^\G(\sigma_0)^{-1}\in \cK.$$ By Section~\ref{section:compatibility_with_isom}, it depends only on the isomorphism class of $\sigma_0$, and is compatible with isomorphisms of groups $\G\simeq \G'$ after identifying $\cK = \text{Frac}(k[\M/\M^{\circ}])$ with $\text{Frac}(k[\M'/(\M')^{\circ}])$, where $\M'$ is the image of $\M$ in $\G'$. By Lemma~\ref{lem:characterize} and Proposition~\ref{prop:functoriality}, we conclude that if $\alpha:k\to k'$ is any homomorphism fixing $\sqrt{q}$, then $\mu^\G_{\alpha\circ \psi}({^\alpha\sigma_0}) = \alpha(\mu^\G_{\psi}(\sigma_0))$, where we have also used $\alpha$ to denote the induced homomorphism $\cK\to \text{Frac}(k'[\M/\M^{\circ}])$.

The choice of $(P,M)$ is equivalent to a splitting of $\E$-vector spaces $\V=\W_1\oplus\cdots\oplus \W_r\oplus \V'\oplus \W_1'\oplus\cdots \oplus \W_r'$, with $\W_i,\W_i'$ totally
isotropic and pairwise orthogonal and $\V'$ orthogonal to $\W_i\oplus \W_i'$. This gives an identification $\M=\GL(\W_1)\times\cdots \times \GL(\W_r)\times\G'$, where $\G'$ is a classical group of the same type as $\G$. 

Henceforth we suppose $\sigma_0$ is irreducible and decompose it as $$\sigma_0 = \tau_1\otimes\cdots \otimes \tau_r\otimes \pi.$$

Recall the following notational convention from Subsection~\ref{subsection:plancherel_body}: the map $(g_{\W_1},\dots ,g_{\W_r},g_{\V'})\mapsto (\text{val}_{\E}(\det(g_{\W_1})),\dots,\text{val}_{\E}(\det(g_{\W_r}))) $ induces an isomorphism
$\M/\M^{\circ}\To\sim \mathbb{Z}\times \cdots \times \mathbb{Z}$, and thus an isomorphism
$\mathbb{C}[\M/\M^\circ] \simeq \mathbb{C}[(q^{-s_1})^{\pm 1},\dots, (q^{-s_r})^{\pm 1}]$ where we regard $q^{-s_i}$ as indeterminates. 

The universal unramified character $|\det|^s:\, \GL(\W)\To{} \CC[(q^{-s})^{\pm1}]$ 
is given by $g_\W\mapsto (q^{-s})^{\text{val}_{\E}(\det(g_\W))}$ and we denote $\tau_s=  \tau|\det|^{s}$. Upon identifying
$\mathbb{C}(\M/\M^\circ)$ with $\mathbb{C}((q^{-s_1})^{\pm 1},\dots, (q^{-s_r})^{\pm 1})$, we write $(\tau_1)_{s_1}\otimes \cdots \otimes (\tau_r)_{s_r}\otimes \pi$ for the universal unramified twist of $\tau_1\otimes\cdots \otimes \tau_r\otimes \pi$. 

We will denote by 
$\mu^{\G}_{\psi}((\tau_1)_{s_1}\otimes \cdots \otimes (\tau_r)_{s_r}\otimes \pi)$  the rational function in~$\mathbb{C}((q^{-s_1})^{\pm 1},\dots, (q^{-s_r})^{\pm 1})$
corresponding to $\mu^{\G}_{\psi}(\tau_1\otimes\cdots \otimes \tau_r\otimes \pi)\in\mathbb{C}(\M/\M^{\circ})$. Often the subscript $\psi$ or the superscript $\G$ are dropped. In the following, for a positive integer~$m$, we sometimes write~$\GL_m$ for~$\GL_m(\E)$. 

We begin by illustrating the factorization property of Proposition~\ref{prop:factorization} in the special case of classical groups.
\begin{corollary}\label{cor:j_factorization_classical}
We have the factorization $$\mu^G(\sigma_0)=\left(\prod_{1\leqslant i< j\leqslant r}\mu^{\GL_{k_i+k_j}}((\tau_i)_{s_i}\otimes((\tau_j)_{s_j})\mu^{\GL_{k_i+k_j}}((\tau_i)_{s_i}\otimes((\tau_j)_{s_j}^c)^{\vee})\right)\cdot \prod_{1\leqslant i \leqslant r}\mu((\tau_i)_{s_i}\otimes \pi).$$
\end{corollary}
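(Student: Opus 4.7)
The plan is to reduce the computation to the rank-one factorization of Proposition~\ref{prop:factorization}, and then identify each rank-one factor with the claimed term. Since $\sigma_0$ is irreducible, Remark~\ref{rmk:factorization}(1) gives the compact form
\[
\mu^G(\sigma_0) = \prod_{\alpha \in \Sigma_{red}(A_M,P)} \mu^{M_\alpha}(\sigma_0),
\]
so everything is reduced to enumerating the reduced roots of $A_M$ in $P$, describing the associated maximal rank-one intermediate Levis $M_\alpha$, and computing the corresponding Plancherel factor.

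For $M = \GL(W_1)\times\cdots\times\GL(W_r)\times G'$ inside the classical group $G$, the characters of $A_M$ are parameterized by $\pm e_i$ and linear combinations. The reduced roots in $\Sigma_{red}(A_M,P)$ come in three types: the ``$\GL$--$\GL$ roots'' $e_i-e_j$ and $e_i+e_j$ for $i<j$, and the ``$\GL$--classical roots'' $e_i$ for $1\leqslant i\leqslant r$ (noting that even if $2e_i$ is a root, it is not reduced, and the case $V'=0$ with $G$ of type $B_r$/$C_r$ adjustments are subsumed in this combinatorics, which gives exactly the $r^2$ factors matching the RHS). First I would handle the two easy cases. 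For $\alpha=e_i-e_j$, the intermediate Levi $M_\alpha$ identifies with $\prod_{\ell\neq i,j}\GL(W_\ell)\times\GL(W_i\oplus W_j)\times G'$, and, since $\sigma_0$ is a tensor product, Lemma~\ref{lem:compatibility_with_induction}(1) together with the fact that $j^{M_\alpha}(\sigma_0)$ depends only on the $\GL(W_i\oplus W_j)$-factor gives $\mu^{M_\alpha}(\sigma_0)=\mu^{\GL_{k_i+k_j}}((\tau_i)_{s_i}\otimes(\tau_j)_{s_j})$. For $\alpha=e_i$, the Levi $M_\alpha$ is $\prod_{\ell\neq i}\GL(W_\ell)\times G''_i$ with $G''_i=\U(W_i\oplus V'\oplus W_i',h|_{\cdot})$ of the same type as $G$ but containing only the $i$-th $\GL$-block, giving $\mu^{M_\alpha}(\sigma_0)=\mu^{G''_i}((\tau_i)_{s_i}\otimes\pi)$; the notational convention $\mu((\tau_i)_{s_i}\otimes\pi)$ used in the statement refers precisely to this rank-one classical Plancherel factor.

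The main obstacle is the $\alpha=e_i+e_j$ case, where the rank-one Levi $M_\alpha$ is obtained by pairing $W_i$ with $W_j'$ (rather than $W_j$ with its own $W_j'$). Intrinsically, $M_\alpha$ has a factor isomorphic to $\GL(W_i\oplus W_j')$, and the restriction of $\sigma_0$ to this factor is $\tau_i\otimes(\tau_j')$ where $\tau_j'$ is the representation of $\GL(W_j')$ obtained by transporting $\tau_j$ along the $h$-induced identification of $W_j'$ with the conjugate-dual $\overline{W_j^*}$. I would make this identification precise using the pinning/Witt decomposition, and apply the compatibility with group isomorphisms of Subsection~\ref{section:compatibility_with_isom} (in the form of Lemma~\ref{lem:compatibility_isom}) to rewrite the rank-one Plancherel measure on $M_\alpha$ as $\mu^{\GL_{k_i+k_j}}((\tau_i)_{s_i}\otimes((\tau_j)_{s_j}^c)^{\vee})$, where the twist $s_j\mapsto -s_j$ coming from $(\cdot)^\vee$ is matched against the sign flip in the $A_M$-weight of the relevant root space.

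Once each root has been accounted for, one collects the factors: for each pair $i<j$ the roots $e_i-e_j$ and $e_i+e_j$ produce the two $\GL_{k_i+k_j}$-factors in the claimed product, and for each $i$ the root $e_i$ produces the classical factor $\mu((\tau_i)_{s_i}\otimes\pi)$, yielding the stated identity. The bookkeeping is straightforward once the $e_i+e_j$ case is set up carefully, so the real work is in making the conjugate-dual identification rigorous and in verifying that Proposition~\ref{prop:factorization} legitimately applies in each of these rank-one settings (which it does, since after the above reductions each $M_\alpha$ contains $M$ as a maximal Levi and $\sigma_0$ satisfies the irreducibility hypothesis throughout).
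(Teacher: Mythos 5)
Your proposal is correct and follows essentially the same route as the paper: reduce to the rank-one factorization of Proposition~\ref{prop:factorization} via Remark~\ref{rmk:factorization}(1), enumerate the three types of rank-one Levis $\M_\alpha$ (the roots $e_i-e_j$, $e_i+e_j$, and $e_i$), and identify each factor, with the $e_i+e_j$ case handled by the identification of $\W_j'$ with the conjugate-dual of $\W_j$ — which the paper encodes implicitly in its block-matrix convention that the lower-right $\GL$-blocks act through $^t((\cdot)^c)^{-1}$, while you invoke Lemma~\ref{lem:compatibility_isom} explicitly. The only quibble is your parenthetical on reducedness: when $\V'=0$ it can be $2e_i$ rather than $e_i$ that is the reduced root, but since both determine the same $\M_\alpha$ this does not affect the argument.
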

\begin{proof}
Without loss of generality (Subsection~\ref{section:compatibility_with_isom}) we can write $P$ as the upper parabolic 
\begin{align*}
\left(\begin{smallmatrix}
\GL_{k_1}(\E) & * &*&*&*&*&*\\
&\ddots&*&*&*&*&*\\
&&\GL_{k_r}(\E)&*&*&*&*\\
&&& \G'&*&*&*\\
&&&&\GL_{k_r}(\E)&*&*\\
&&&&&\ddots&*\\
&&&&&&\GL_{k_1}(\E)
\end{smallmatrix}\right)\ .
\end{align*}
Recall that in the classical group $\G$, the elements in the $\GL_{k_i}(\E)$ factors in the bottom right of $M$ are the images of their counterparts in the top left under $^t((\cdot)^c)^{-1}$).

In Remark~\ref{rmk:factorization}(1), $\alpha$ runs over the roots of $\A_\M$ positive relative to the parabolic $P$. The groups $\M_{\alpha}$ in Remark~\ref{rmk:factorization}(1) come in three shapes:

\begin{align*}
\left(\begin{smallmatrix}
\GL_{k_1}(\E) &  &&&*&&\\
&\ddots&&&&&\\
&&\GL_{k_r}(\E)&&&&*\\
&&& \G'&&&\\
*&&&&\GL_{k_r}(\E)&&\\
&&&&&\ddots&\\
&&*&&&&\GL_{k_1}(\E)
\end{smallmatrix}\right)
&\text{,\ or }&
\left(\begin{smallmatrix}
\GL_{k_1}(\E) &*  &&&&&\\
*&\ddots&&&&&\\
&&\GL_{k_r}(\E)&&&&\\
&&& \G'&&&\\
&&&&\GL_{k_r}(\E)&*&\\
&&&&*&\ddots&\\
&&&&&&\GL_{k_1}(\E)
\end{smallmatrix}\right) &\text{,\ or }&
\end{align*}
\begin{align*}
\left(\begin{smallmatrix}
\GL_{k_1}(\E) &&&&&&\\
&\ddots&&*&&&\\
&&\GL_{k_l}(\E)&&&&\\
&*&& \G'&&*&\\
&&&&\GL_{k_l}(\E)&&\\
&&&*&&\ddots&\\
&&&&&&\GL_{k_1}(\E)
\end{smallmatrix}\right).
\end{align*}
The first kind of $\M_{\alpha}$ has $\GL_{k_i+k_j}(\E)$ as a direct factor for some $i$ and $j$, and the terms $j^{\M_{\alpha}}(\sigma_0)$ in Remark~\ref{rmk:factorization}(1) are intertwining operators on the induced representations of the form
$$i_{\P_{\alpha}}^{\M_{\alpha}}(\sigma_0\chi_{\text{univ},\M}) = \left(i_{\P''}^{\GL_{k_i+k_j}(\E)}\left((\tau_i)_{s_i}\otimes ((\tau_j)_{s_j}^c)^{\vee}\right)\right)\otimes (\tau_1)_{s_1}\otimes\cdots\widehat{i}\cdots\widehat{j}\cdots \otimes (\tau_r)_{s_r}\otimes \pi,$$ where $\P_{\alpha}$ is the upper parabolic in $\M_{\alpha}$ with Levi $\M$, and $\P''$ is the upper parabolic of $\GL_{k_i+k_i}(\E)$ with Levi $\GL_{k_i}(\E)\times \GL_{k_j}(\E)$, and $\widehat{(\cdot)}$ indicates factors that are removed from the product. The second kind of $M_{\alpha}$ gives rise to an intertwining operator on the induced module $$i_{\P_{\alpha}}^{\M_{\alpha}}(\sigma_0\chi_{\text{univ},\M}) = \left(i_{\P''}^{\GL_{k_i+k_j}(\E)}\left((\tau_i)_{s_i}\otimes (\tau_j)_{s_j}\right)\right)\otimes(\tau_1)_{s_1}\otimes\cdots\widehat{i}\cdots\widehat{j}\cdots \otimes (\tau_r)_{s_r}\otimes \pi.$$ The intertwining operators $j^{M_{\alpha}}(\sigma_0)$ in these two cases give the factors 
\begin{align*}
\mu^{\GL_{k_i+k_j}(\E)}\left((\tau_i)_{s_i}\otimes ((\tau_j)_{s_j}^c)^{\vee}\right)&\text{\ ,\ and\ } \mu^{\GL_{k_i+k_j}(\E)}\left((\tau_i)_{s_i}\otimes (\tau_j)_{s_j}\right),
\end{align*} respectively, in the statement of the corollary.

For the third kind of $\M_{\alpha}$, the factors $j^{\M_{\alpha}}(\sigma_0)$ in Remark~\ref{rmk:factorization}(1) are intertwining operators on the induced representations 
$$i_{\P_{\alpha}}^{\M_{\alpha}}(\sigma_0) = \left(i_{\P''}^{\G''}\left((\tau_i)_{s_i}\otimes \pi\right)\right)\otimes (\tau_1)_{s_1}\otimes\cdots\widehat{i}\cdots \otimes (\tau_r)_{s_r},$$ where $\G''$ is the classical group appearing as a direct factor of $\M_{\alpha}$ and $\P''$ is the upper parabolic of $\GL_{k_i}(\E)\times \G'$ in $\G''$. Again, the factors outside the induction do not contribute, and the inverse of the factor $\mu^{\M_{\alpha}}(\sigma_0)$ is simply $\mu^{\G''}((\tau_i)_{s_i}\otimes \pi)$.
\end{proof}

Finally, we turn to proving the multiplicativity property of Proposition~\ref{multofplancherel}. Recall our convention from Subsection~\ref{subsection:plancherel_body} that for $\tau$, $\tau'$ irreducible representations of $\GL_k(\E)$ and $\GL_{k'}(E)$, respectively, $\mu(\tau_s\otimes \tau'_t)$ denotes the image of the Plancherel measure $\mu_{\psi_E}^{\GL_{k+k'}}(\tau\otimes \tau')$ in $\CC(q^{-s},q^{-t})$. By the factorization property, it lands in $\CC(q^{-s}q^t)$, and the specializations
\begin{align*}
\mu(\tau_s\otimes \tau'):=\mu(\tau_s,\tau_t)|_{q^{-t}=1}&\text{\ ,\ and\ } \mu(\tau_s,\tau'_{-s}):=\mu(\tau_s\otimes\tau'_t)|_{q^{-t} = q^s}
\end{align*} 
are defined.

\begin{proof}[Proof of Proposition~\ref{multofplancherel}]
To prove statement (\ref{multclassfactor}) of Proposition~\ref{multofplancherel} we apply Corollary~\ref{cor:multiplicativity} with $\M=\GL_{n_1}(\E)\times \cdots \times \GL_{n_r}(\E)\times M_1$ and $\N = \GL_k \times \G$, and let $f:k[\M/\M^{\circ}]\to k[\N/\N^{\circ}]$ be the homomorphism induced from $\M\subset \N$. As above, we also use $f$ to denote the extension of $f$ to a map on localizations $$f:k[\M/\M^{\circ}][1/b] \to k[\N/\N^{\circ}][1/f(b)],$$ where $b$ is a singular element for $i_{\P}^\G(\rho)$ such that $f(b)\neq 0$ (note that Lemma~\ref{hard_lemma} is used here). More precisely, by computations similar to those in the proof of Corollary~\ref{cor:j_factorization_classical}, we deduce from Corollary~\ref{cor:multiplicativity} that $$j^G(\tau\otimes \pi) = f\left(\left(\prod_{i=1}^rj^{\GL_{k+n_i}}(\tau\otimes \rho_i)j^{\GL_{k+n_i}}(\tau\otimes ({\rho_i}^c)^{\vee})\right)j(\tau\otimes\rho')\right).$$

After making the identifications $$k[(\GL_k\times \GL_{n_i})/(\GL_k\times \GL_{n_i})^{\circ}] \simeq k[(q^{-s})^{\pm 1}, (q^{-t_i})^{\pm1}]$$
$$k[(\GL_k\times \G)/(\GL_k\times \G)^{\circ}] \simeq k[(\GL_k\times \G')/(\GL_k\times \G')^{\circ}] \simeq k[\GL_k/\GL_k^{\circ}]\simeq k[(q^{-s})^{\pm1}],$$ we find that the map $f$ specializes $q^{-t_i}\mapsto 1$. Now, inverting the factors to $\mu$ instead of $j$ and adding the variables $q^{-s}$ and $q^{-t_i}$ to the notation, our expression becomes
$$\mu(\tau_s\otimes \pi) = \left(\prod_{i=1}^s\mu^{\GL_{k+n_i}}(\tau_s\otimes \rho_i')\mu^{\GL_{k+n_i}}(\tau\otimes ({\tau_i'}^c)^{\vee})\right)\mu^{\GL_k\times \G'}(\tau_s\otimes\pi'),$$ as claimed.

For statement (\ref{multGLnfactor}) one applies Corollary~\ref{cor:multiplicativity_irreducible} to the pair
$$(\M,\sigma_0) = (\GL_{k_1}(\E)\times\cdots\times \GL_{k_l}(\E)\times \G'\ ,\ \tau_1'\otimes\cdots\otimes \tau_l'\otimes \pi)$$
and, if $\P$ is the upper parabolic of $\G$ with Levi $\M$, we have
$$(\N, i_{\P\cap \N}^\N(\sigma_0)) = (\GL_m(\E)\times \G'\ ,\  i_{\P'}^{\GL_m(\E)}(\tau_1'\otimes\cdots\otimes \tau_l')\otimes \pi)$$
where $m=k_1+\cdots+k_l$. In the notation of Corollary~\ref{cor:multiplicativity_irreducible} and its surrounding discussion, $\beta$ runs over the roots of $\A_\M$ in both $\text{Lie}(\P)$ and $\text{Lie}(\overline{\Q})$, where $\P$ and $\overline{\Q}$ are parabolic subgroups of $\G$ of the following form, respectively:
 
\begin{align*}
\left(\begin{smallmatrix}
\GL_{k_1}(\E) & * &*&*&*&*&*\\
&\ddots&*&*&*&*&*\\
&&\GL_{k_l}(\E)&*&*&*&*\\
&&& \G'&*&*&*\\
&&&&\GL_{k_l}(\E)&*&*\\
&&&&&\ddots&*\\
&&&&&&\GL_{k_1}(\E)
\end{smallmatrix}\right)\ ,\ 
&&
\left(\begin{smallmatrix}
\GL_{k_1}(\E) & &&*&*&*&*\\
*&\ddots&&*&*&*&*\\
*&*&\GL_{k_l}(\E)&*&*&*&*\\
&&& \G'&*&*&*\\
&&&&\GL_{k_l}(\E)&&\\
&&&&*&\ddots&\\
&&&&*&*&\GL_{k_1}(\E)
\end{smallmatrix}\right).
\end{align*}
(recall that, in the classical group $\G$, the elements in the $\GL_{k_i}(\E)$ factors in the bottom right are the images of their counterparts in the top left under $^t((\cdot)^c)^{-1}$). The Levi subgroups $\M_{\beta}$ in Corollary~\ref{cor:multiplicativity_irreducible} come in two shapes:

\begin{align*}
\left(\begin{smallmatrix}
\GL_{k_1}(\E) &  &&&*&&\\
&\ddots&&&&&\\
&&\GL_{k_l}(\E)&&&&*\\
&&& \G'&&&\\
*&&&&\GL_{k_l}(\E)&&\\
&&&&&\ddots&\\
&&*&&&&\GL_{k_1}(\E)
\end{smallmatrix}\right)
&\text{ , or }&
\left(\begin{smallmatrix}
\GL_{k_1}(\E) &&&&&&\\
&\ddots&&*&&&\\
&&\GL_{k_l}(\E)&&&&\\
&*&& \G'&&*&\\
&&&&\GL_{k_l}(\E)&&\\
&&&*&&\ddots&\\
&&&&&&\GL_{k_1}(\E)
\end{smallmatrix}\right).
\end{align*}
The first kind of $\M_{\beta}$ has $\GL_{k_i+k_j}(\E)$ as a direct factor for some $i$ and $j$, and the terms $j^{\M_{\beta}}(\sigma_0)$ in Corollary~\ref{cor:multiplicativity_irreducible} are intertwining operators on the induced representations 
$$i_{\P_{\beta}}^{\M_{\beta}}(\sigma_0\chi_{\text{univ},\M}) = \left(i_{\P''}^{\GL_{k_i+k_j}(\E)}\left((\tau_i')_{s_i}\otimes (((\tau_j')_{s_j})^c)^{\vee}\right)\right)\otimes (\tau_1')_{s_1}\otimes\cdots\widehat{i}\cdots\widehat{j}\cdots \otimes (\tau_l')_{s_l}\otimes \pi,$$ where $\P_{\beta}$ is the upper parabolic in $\M_{\beta}$ with Levi $\M$, and $\P''$ is the upper parabolic of $\GL_{k_i+k_i}(\E)$ with Levi $\GL_{k_i}(\E)\times \GL_{k_j}(\E)$, and $\widehat{(\cdot)}$ indicates factors that are removed from the product. Since the intertwining operators act as the identity on the factors outside the induction, we have $j^{\M_{\beta}}(\sigma_0) = j^{\GL_{k_i+k_j}(\E)}\left(\tau_i'\otimes ((\tau_j')^c)^{\vee}\right)$. Finally, if we identify $\CC[\M/\M^\circ]\cong \CC[q^{\pm s_1},\dots,q^{\pm s_l}]$ and $\CC[\N/\N^\circ]\cong \CC[q^{\pm s}]$, the map $f:\CC[\M/\M^\circ]\to \CC[\N/\N^\circ]$ induced by the inclusion $\M\subset \N$ sends both $q^{-s_i}$ and $q^{-s_j}$ to $q^{-s}$, and since $j=\mu^{-1}$ we obtain the first collection of factors of the product in statement (\ref{multGLnfactor}).

For the second kind of $\M_{\beta}$, the factors $j^{\M_{\beta}}(\sigma_0)$ are intertwining operators on the induced representations 
$$i_{\P_{\beta}}^{\M_{\beta}}(\sigma_0) = \left(i_{\P''}^{\G''}\left((\tau_i')_{s_i}\otimes \pi\right)\right)\otimes (\tau_1')_{s_1}\otimes\cdots\widehat{i}\cdots \otimes (\tau_l')_{s_l},$$ where $\G''$ is the classical group appearing as a direct factor of $\M_{\beta}$ and $\P''$ is the upper parabolic of $\GL_{k_i}(\E)\times \G'$ in $\G''$. Again, the factors outside the induction do not contribute, the factor $j^{\M_{\beta}}(\sigma_0)$ is simply $j^{\G''}(\tau_i'\otimes \pi)$, and the map $f$ takes this factor to $\mu((\tau_j')_s\otimes \pi)$, as desired.

For statement (\ref{multGLnmeasure}) one applies Corollary~\ref{cor:multiplicativity_irreducible} to the pair 
\[(\M,\sigma_0) = (\GL_{k_1}(\E)\times\cdots\times \GL_{k_r}(\E)\times\GL_{k_1'}(\E)\times\cdots\times \GL_{k_l'}(\E), \tau_1\otimes\cdots\otimes\tau_r\otimes\tau_1'\otimes\cdots\otimes\tau_l'),\]
and 
\[(\N,i_{\P\cap \N}^\N(\sigma_0)) = (\GL_m(\E)\times\GL_n(\E)\ ,\ i_{\P'}^{\GL_m}(\tau_1\otimes\cdots\otimes \tau_r)\otimes i_{\P''}^{\GL_n(\E)}(\tau_1'\otimes\cdots \otimes \tau_l')),\]
where $\P$ is the upper parabolic of $\GL_{m+n}(\E)$ containing $\N$ and $\P\cap \N = \P'\times \P''$. Identify $\CC[\M/\M^{\circ}]\simeq \CC[q^{\pm s_i},q^{\pm t_j}]_{\substack{1\leqslant i\leqslant r\\ 1\leqslant j\leqslant l}}$ and $\CC[\N/\N^{\circ}] \simeq \mathbb{C}[q^{\pm s},q^{\pm t}]$. The map $f$ induced by $\M\subset \N$ sends $q^{-s_i}$ to $q^{-s}$ and $q^{-t_j}$ to $q^{-t}$. Since $j^{\M_{\beta}}(\sigma_0)$ is insensitive to the Weyl action, each $\M_{\beta}$ in the statement of Corollary~\ref{cor:multiplicativity_irreducible} can be taken to be the Levi subgroup obtained by replacing the~$\GL_{k_i}(\E)\times\GL_{k_j'}(\E)$ factor in $\M$ with~$\GL_{k_i+k_j'}(\E)$, so the desired statement follows from the relation~$\mu = j^{-1}$.
\end{proof}

\bibliographystyle{alpha}
\bibliography{conjecture.bib}

\end{document}